\newlength{\fixboxwidth}
\renewcommand{\algorithmiccomment}[1]{\bgroup\hfill//~#1\egroup}
\newcommand{\ubar}[1]{\underaccent{\bar}{#1}}
\numberwithin{equation}{section}
\def\V{\mathfrak{V}}
\def\d{\mathfrak{d}}
\def\db{{\bf d}}
\def\R{\mathbb{R}}
\def\Rc{\mathcal{R}}
\def\S{\mathcal{S}}
\def\cP{\mathcal{P}}
\def\E{\mathbb{E}}
\def\Inv{\operatorname{Inv}}
\def\M{\mathcal{M}}
\def\C{{\bf C}}
\def\Pr{\mathcal{P}}
\def\B{\mathcal{B}}
\def\er{\mathcal{E}}
\def\one{\mathrm{I}}
\def\two{\mathrm{II}}
\def\G{\mathcal{G}}
\def\D{\mathcal{D}}
\def\I{\mathcal{I}}
\def\J{\mathcal{J}}
\def\L{\mathcal{L}}
\def\W{\mathfrak{W}}
\def\H{\mathcal{H}}
\def\N{\mathcal{N}}
\def\<{\big\langle}
\def\>{\big\rangle}
\def\Img{\operatorname{Im}}
\def\Ker{\operatorname{Ker}}
\def\Cond{\operatorname{Cond}}
\def\diiv{\operatorname{div}}
\def\Tr{\operatorname{Trace}}
\def\Trun{\operatorname{Trun}}
\def\Card{\operatorname{Card}}
\def\lsob{{\textrm{l}}}
\def\supp{{\operatorname{support}}}
\def\support{{\operatorname{supp}}}
\def\dim{{\operatorname{dim}}}
\def\f{{\operatorname{flux}}}
\def\Span{\operatorname{span}}
\def\Pr{\operatorname{Pr}}
\def\s{\sigma}
\def\pot{\mathrm{pot}}
\def\loc{{\rm loc}}
\def\app{{\mathrm{app}}}
\definecolor{red}{rgb}{0.9, 0, 0}
\newtheorem{Theorem}{Theorem}[section]
\newtheorem{Proposition}[Theorem]{Proposition}
\newtheorem{Lemma}[Theorem]{Lemma}
\newtheorem{Corollary}[Theorem]{Corollary}
\newtheorem{Remark}[Theorem]{Remark}
\newtheorem{Example}[Theorem]{Example}
\newtheorem{NE}[Theorem]{Illustration}
\newtheorem{Definition}[Theorem]{Definition}
\newtheorem{Construction}[Theorem]{Construction}
\newtheorem{Condition}[Theorem]{Condition}
\begin{document}
\title{ Universal Scalable Robust Solvers\\
from Computational Information  Games\\ and fast eigenspace adapted Multiresolution Analysis}

\date{\today}

\author{Houman Owhadi\footnote{California Institute of Technology, Computing \& Mathematical Sciences , MC 9-94 Pasadena, CA 91125, owhadi@caltech.edu} $\,$ and Clint Scovel\footnote{California Institute of Technology, Computing \& Mathematical Sciences , MC 9-94 Pasadena, CA 91125, clintscovel@gmail.com}}

\maketitle

\begin{abstract}
We show how the discovery/design of robust scalable numerical  solvers for arbitrary bounded linear operators can,  to some degree, be addressed/automated as a Game/Decision Theory problem  by reformulating the process of computing with partial information and limited  resources as that of playing underlying hierarchies of adversarial information  games. When the solution space is a Banach space $\B$ endowed with a quadratic norm $\|\cdot\|$, the optimal measure (mixed strategy) for such games (e.g. the adversarial recovery of $u\in \B$, given partial measurements  $([\phi_i, u])_{i\in \{1,\ldots,m\}}$ with $\phi_i\in \B^*$, using relative error in $\|\cdot\|$-norm as a loss) is a centered Gaussian field $\xi$ solely determined by the norm $\|\cdot\|$, whose conditioning (on measurements) produces optimal bets. When measurements are hierarchical, the process of conditioning this Gaussian field produces a hierarchy of   elementary gambles/bets (gamblets). These gamblets generalize the notion of  Wavelets and Wannier  functions in the sense that they are adapted to the norm $\|\cdot\|$ and induce a multi-resolution decomposition of $\B$  that is adapted to the eigensubspaces  of the operator defining the norm $\|\cdot\|$. When the operator is localized, we show that the resulting gamblets are localized both in space and frequency and introduce the Fast Gamblet Transform (FGT) with rigorous accuracy and (near-linear) complexity estimates. As the FFT can be used to solve and diagonalize arbitrary PDEs with constant coefficients, the FGT can be used to decompose a wide range of continuous linear operators (including arbitrary continuous linear bijections from $H^s_0(\Omega)$ to $H^{-s}(\Omega)$ or to $L^2(\Omega)$) into a sequence of independent linear systems with uniformly bounded condition numbers and leads to  $\mathcal{O}(N \operatorname{polylog} N)$  solvers and eigenspace adapted  Multiresolution Analysis (resulting in near linear complexity approximation of all eigensubspaces).
\end{abstract}

\tableofcontents
\section{Introduction}
\subsection{Motivations and historical perspectives}
\subsubsection{On universal solvers}
Is it possible to identify/design a scalable  solver that could be applied to nearly all linear operators?
One incentive to  ask this question is the vast and increasing  literature on the numerical approximation of linear operators
where the number of linear solvers seems to trail the number of possible linear systems.
  Paraphrasing Sard's assertion, one reason not to ask this question is the historical presupposition  that  \cite[pg.~223]{sard1967optimal} ``of course no one method of approximation of a linear operator can be universal.''
 Indeed,  this assertion is reasonable and is now rigorously supported by No Free Lunch theorems in Learning Theory (see
  \cite[Thm.~7.2]{devroye2013probabilistic} and
  \cite{wolpert1996lack})  and in  Optimization  \cite{wolpert1997no}.  However, such profound results do not preclude
 the existence of weak assumptions under which  universal algorithms may  exist. For example,  the recent success of Support Vector Machines
 \cite{steinwart2008support} is  an
 astonishing example which has transformed Learning Theory.
In this paper we investigate the possibility of achieving some degree of universality in answering this question in the setting  of linear operators on Banach spaces with quadratic norms (which include matrix equations). We show that, to some degree, a positive answer can be obtained when the operator is a continuous bijection
  under the following conditions on the image space: existence of a compact embedding and a multi-resolution decomposition (thereby generalizing the results of \cite{OwhadiMultigrid:2015}). The only condition on the actual operator is that
 it is a continuous bijection.

\subsubsection{On the game theoretic approach to numerical analysis}
Another purpose of this paper is to show that the discovery of these solvers can, to some degree, be automated through  a
 game/decision theoretic approach to numerical approximation and algorithm design  based on the observations that (1) to compute fast one must compute with partial information over hierarchies of increasing levels of complexity
(2) computing efficiently with partial information requires solving minimax problems against the missing information (3) these minimax problems are repeated and
mixed (game theoretic) optimal strategies emerge as natural solutions (and lead to natural Bayesian interpretations of the resulting methods and approximation errors).

  The connection between Information Theory and Numerical Analysis emerges naturally from the Information Based Complexity \cite{Woniakowski1986, Packel1987, Traub1988, Nemirovsky1992, Woniakowski2009} notion that computation on a continuous space (infinite dimensional space) can only be done with partial information.
 Here this notion will  be expanded to the principle that fast computation requires computing with partial information over a hierarchy of levels of complexity. Consider, for instance, the problem of inverting a $10^6\times 10^6$ matrix. A method that would require computing with all the entries of that matrix at once would lead to a slow method. To obtain a fast method one must compute with a few number of features of that matrix and these features typically do not represent all the entries of the matrix. Therefore one must bridge the information gap between these few features and the whole matrix. To be made efficient, this
  principle must be repeated over a hierarchy (e.g.  information gaps must be bridged between $4$,  $16$,  $64, \dots$ degrees of freedom). This principle  is evidently present in classical fast solvers such as
 multigrid methods \cite{Fedorenko:1961, Brandt:1973, Hackbusch:1978, Hackbusch:1985, RugeStuben1987, Stuben:2001}, multilevel finite element splitting \cite{Yserentant1986},  multilevel preconditioning \cite{Vassilevski89}, stabilized hierarchical basis methods \cite{Vassilevski1997sirev, VassilevskiWang1997a, VassilevskiWang1998},  multiresolution methods
 \cite{Beylkin:1995,Beylkin:1998,Beylkin:1998b, DorobantuEngquist1988, EngquistRunborg02},
the Fast Multipole Method \cite{GreengardRokhlin:1987}, Hierarchical matrices \cite{HackbuschGrasedyck:2002, Bebendorf:2008},
 Cholesky and multigrid solvers for Graph Laplacians \cite{kyng2016approximate, kyng2016sparsified} and fast solvers for Symmetric Diagonally Dominant Matrices \cite{cohen2014solving, spielman2004nearly, spielman2014nearly, koutis2010approaching, kelner2013simple}.
 While, for classical solvers,  these information gaps have been bridged by essentially guessing the form of interpolation operators, here we will, as in \cite{OwhadiMultigrid:2015} (see also \cite{OwZh:2016, owhadi2016, SchaeferSullivanOwhadi17}), reformulate the process of bridging these gaps as that of playing repeated adversarial games against the missing information and identify optimal mixed strategies for playing such games over hierarchies of
 increasing levels of complexity.

\subsubsection{On universal optimal recovery measures}\label{subsecdidhi3}
The game theoretic approach is relevant to numerical analysis/approximation for two main reasons: (1) Inaccurate approximations, in repeated intermediary calculations, lead to loss in CPU time and the total CPU time required to invert a given linear operator is the sum of these losses. Therefore finding optimal strategies for the repeated games describing intermediate numerical approximation steps translates into the minimization of the overall required CPU time. (2) As exposed in the reemerging field of probabilistic numerics/computing \cite{ChkrebtiiCampbell2015, schober2014nips, Owhadi:2014,Hennig2015, Hennig2015b, Briol2015, Conrad2015, OwhadiMultigrid:2015,  OwZh:2016, cockayne2016probabilistic, perdikaris2016multifidelity, raissi2017inferring, Cockayne2017,  SchaeferSullivanOwhadi17} (we refer to Section \ref{sec_correspondence} for an overview) by using a probabilistic description of numerical errors it is possible to seamlessly combine model and numerical errors in an encompassing Bayesian framework.
However, while  confidence intervals obtained from arbitrary priors may  be hard to justify to a numerical analyst, worst case measures (identified as optimal mixed strategies) are robust in  adversarial environments.
In this paper we will show (Section \ref{seccigsec}) that given a Banach space $\B$ endowed with a quadratic norm $\|\cdot\|$, one can identify a  Gaussian cylinder measure, solely determined by the norm $\|\cdot\|$, whose conditioning produces optimal mixed and pure strategies for playing the adversarial games inherent to numerical approximation. This measure is universal in the sense that it does not depend on the partial information entering in the numerical approximation problem. Furthermore its conditioning produces not only
a saddle point in the game theoretic formulation of numerical approximation, but also optimal recovery solutions  \cite{bounds1959michael, micchelli1977survey} that are optimal in the deterministic minimax formulation. In that sense these universal optimal
recovery measures form a natural bridge between probabilistic numerics and classical numerical analysis/approximation.

\paragraph{On the relation with Decision Theory.}
One of the landmark discoveries in Wald's theory of Statistical Decision Functions \cite{Wald:1945,Wald:essentially}, was the result that, under mild conditions, the optimal statistical decision function was obtained by extending the corresponding two person game to its mixed extension, obtaining a worst case measure as one component of a saddle point of the mixed extension, and then for the primary player to play as if the second player (nature) used this worst case  measure as their strategy. In  Section \ref{seccigsec} we  show that the minmax optimal solution to an optimal recovery problem can be  recovered in a similar way. However, in this case, the scaling properties of the loss function
of Section
\ref{subsecidyidudg2} lead  to a mixed extension
  which incorporates those properties. Although no true measure can
 be a component in a saddle point (i.e.~be a worst case measure)
 for this extension, we  obtain (Section \ref{sec_worstcase}) approximate saddle points to any degree of approximation using
Gaussian measures whose covariance operators are easily constructed (computable). These Gaussian measures converge to a Gaussian cylinder measure whose covariance  $Q:\B^{*}\rightarrow \B$
 is the same as that determining the inner product $\langle \cdot, \cdot\rangle :=[Q^{-1}\cdot,\cdot]$
of $\B$, establishing that such a Gaussian cylinder measure is a {\em universal} worst case (weak) probability measure, since, as discussed above, it is independent of the choice of the measurement functions (this {\em universal} Gaussian cylinder measure is an  isometry  from $\B^*$ to Gaussian space characterized by the fact that the image of $\phi \in \B^*$ is a real valued centered Gaussian random variable with variance $\|\phi\|_*^2$ where $\|\cdot\|_*$ is the dual norm of $\|\cdot\|$).

 \subsubsection{On operator adapted wavelets}

 Wavelets \cite{mallat1989theory, daubechies1990wavelet, coifman1992wavelet} have transformed signal and image processing.
Could they have a similar impact on numerical analysis?
This question has stimulated the development of adapted/adaptive wavelets aimed at  solving PDEs (or boundary integral equations) \cite{beylkin1991fast, Bacrywav92, Bacrywav92a, alpert1993wavelet, jawerth1993wavelet, Dahlkewav93, Dahlkewav94, Bertoluzzawav94, engquist1994fast, Vasilyevwav96, carnicer1996local, Monassewav98, Holwav98, Cogenwav00, Liandrat01, Cohendevrewav01, Barinkaetal01, Cohenalwav02, cohen2004adaptive, Sudarshanwav05, Dahmenwav05, Schwabwav08, Beylkinwav08, EngquistRunborg2009, yin2016coupling} or performing MRA on the solutions of PDEs \cite{Frohlwav94, Fargewav98, Sendovwav02}.
While first generation adaptive wavelets (such as   bi-orthogonal wavelets  \cite{cohen1992biorthogonal}, see \cite{stevenson2009adaptive} for an overview) can be constructed with
arbitrarily high preassigned  regularity (for adaptation to the regularity of the elements of the solution space of the operator)
and can replace  mesh refinement \cite{Cohenwav02} in numerical approximation (as an adaptation to the local regularity of a particular solution) their shift (and possible scale) invariance prevents their adaptation to irregular domains or non-homogeneous coefficients.

Second generation wavelets \cite{sweldens1998lifting, VassilevskiWang1997a, VassilevskiWang1998, Sudarshanwav05} (see \cite[Sec.~1.2]{Sudarshanwav05} for an overview) offer stronger
adaptability at the cost of a possible loss in shift and scale invariance.
The main idea of second generation wavelets is to start with a  \cite{sweldens1998lifting} ``lazy'' multiresolution decomposition of the solution space (such as hierarchical basis methods \cite{Yserentant1986, BankYserentant88}) that may not possess desirable properties (such as scale orthogonality with respect to the scalar product defined by the operator and vanishing polynomial moments) and then modify the hierarchy of basis functions to  achieve desirable properties, using  construction techniques
such as the  \emph{lifting scheme} of Sweldens, the \emph{stable construction} technique of Carnicer, Dahmen and Pe\~{n}a
\cite{carnicer1996local}, the orthogonalization procedure of Lounsbery et al. \cite{lounsbery1997multiresolution}, the \emph{wavelet-modified hierarchical basis} of Vassilevski and Wang
\cite{VassilevskiWang1997a, VassilevskiWang1998}, and the \emph{stable completion, Gram-Schmidt orthogonalization, and approximate Gram-Schmidt orthogonalization} of Sudarshan \cite{Sudarshanwav05}.

As emphasized in \cite[p.~83]{Sudarshanwav05} ideal adapted wavelets should be characterized by 3 properties:  (a) scale-orthogonality
(with respect to the scalar product associated with the operator norm to ensure block-diagonal stiffness matrices) (b) local support (or rapid decay) of the basis functions (to
ensures that the individual blocks are sparse) and (c) Riesz stability in the energy norm
(to ensure that the blocks are well-conditioned).
However, as discussed in \cite[p.~83]{Sudarshanwav05}, although adapted wavelets achieving 2 of these properties have been constructed,
``it is not known if there is a practical technique for ensuring all the three properties simultaneously
in general''.

In this paper we will introduce operator adapted wavelets (gamblets) exhibiting all 3 properties for local continuous linear bijections on Banach spaces.
 Gamblets are identified by conditioning the universal measure discussed Subsection \ref{subsecdidhi3} with respect to
 a (non operator adapted) multiresolution decomposition of the dual (or image) space and have (as a consequence) optimal adversarial  approximation properties (in both frameworks of optimal
recovery and game-theory).
  They are not only adapted to the regularity of the elements of the solution space but also to
 the eigen-subspaces of the operator itself (Theorem \ref{corunbcnOR}).
Through this adaptation, gamblets induce a near optimal sparse compression of the operator \eqref{corunbcnORfe}
and provide  near-linear complexity (Section \ref{secfgtas}) solutions to the problem of finding localized (Section \ref{secexpdecloc}) Wannier functions
 \cite{marzari1997maximally, kohn1959analytic, wannier1962dynamics, weinan2010localized, ozolicnvs2013compressed, OwhadiMultigrid:2015, OwZh:2016, hou2016sparse,  HouZhang2017II} (linear combinations of eigenfunctions concentrated around a given eigenvalue, that are localized in space).

\subsection{Outline of the paper}

We will present the main results and algorithms (with numerical Illustrations) in Sections \ref{secgamtrinrn} to \ref{sec8} and  proofs (along with further results) in Sections \ref{sec6} to  \ref{sec8proof}.
Section \ref{secgamtrinrn} presents the Gamblet Transform for the linear system $A x=b$ in $\R^N$ and for
 arbitrary continuous linear bijections mapping $H^s_0(\Omega)$ to $H^{-s}(\Omega)$ or to $L^2(\Omega)$. The main purpose of  Section \ref{secgamtrinrn} is to, at the cost of some redundancy,  facilitate the accessibility of the paper.
Section \ref{sec1or} introduces the Gamblet Transform (and its discrete version) on a Banach space $\B$ gifted with a quadratic norm $\|\cdot\|$.  The Gamblet Transform, which could be seen as a generalized  Wavelet Transform  \cite{daubechies1992ten, meyer1989orthonormal}  that is adapted to $(\B,\|\cdot\|)$, turns a multiresolution decomposition of $\B^*$ into a multiresolution decomposition of $\B$ with basis functions, called gamblets, that span orthogonal subspaces of $\B$,  akin to an eigenspaces,  enabling the multi-resolution decomposition of any element $u\in \B$ into components that are localized in \emph{space} and \emph{frequency}. As the Fourier Transform can be used to solve linear PDEs with constant coefficients,  Section  \ref{secsolvinvpb} shows that the Gamblet Transform can be used to transform an arbitrary continuous linear operator $\L$ mapping $\B$ to another Banach space $\B_2$ into a sequence of independent linear systems with uniformly bounded condition numbers.
Section \ref{seccigsec} introduces the Computational Information Games framework and shows how gamblets can be discovered and interpreted as elementary gambles/bets enabling computation with partial information of
 hierarchies of increasing levels of complexity. Sequences of approximations form a martingale under the mixed strategy emerging from the underlying games and underlying approximation errors are decomposed as sums of independent Gaussian fields acting at different levels of resolution. In particular Section \ref{seccigsec} provides a probabilistic description of numerical errors (in terms of posterior distributions) that can be used, as in Probabilistic Numerics or Scientific Computing, to seamlessly combine numerical errors with model errors in an encompassing Bayesian framework.
Section \ref{secexpdecloc} proves that gamblets are localized (exponentially decaying) based
 on properties of the dual space $\B^*$ or the image space $\B_2$. Therefore gamblets could  also be seen, as a generalization of Wannier basis functions \cite{marzari1997maximally, kohn1959analytic, wannier1962dynamics}. This exponential decay also provides a rigorous justification of the \emph{screening effect} seen in Kriging \cite{stein2002screening} where conditioning a spatial random field on a (homogeneously distributed) cloud of points leads to exponential decay in correlations.
Based on the exponential decay,
 Section \ref{secfgtas} introduces the Fast Gamblet Transform (FGT) whose complexity is $\mathcal{O}(N \ln^{3d} N)$ to compute the hierarchy of gamblets, and $\mathcal{O}(N \ln^{d+1} N)$ to decompose $u\in \B$ over the gamblet basis or invert a linear system $\L u =g$ with $g\in \B_2$. Since gamblets induce a multiresolution decomposition of $\B$  that is adapted to the eigensubspaces of the operator $Q:\B^*\rightarrow \B$ defining the norm $\|\cdot\|$, the
 FGT can  not only be used as a fast solver, but also as fast projection on approximations of these eigensubspaces, as a fast operator compression algorithm, as
 a near-linear complexity PCA algorithm \cite{jolliffe2002principal}, or  as a near-linear complexity active subspace decomposition method \cite{constantine2014active, liu2012active}.
Finally, Section \ref{sec_correspondence}  provides a short review of the reemerging and fascinating interplay between the fields of
  Numerical Analysis, Approximation Theory, and Statistical Inference that provides both the historical
 background of the paper and indicates future developments that are currently available.

\subsection{On the degree of universality of the method}
Given a Banach space $(\B,\|\cdot\|)$ and a nested hierarchy of measurement functions
 (a hierarchy of elements $\phi_i^{(k)} \in \B^*$
such that each level $k$ measurement function $\phi_i^{(k)}$ is a linear combination of level $k+1$ measurement functions $\phi_j^{(k+1)}$), the method (the gamblet transform), produces under \emph{stability conditions}, a hierarchy of localized elements of $\B$ that are scale-orthogonal with respect to the scalar product induced by $\|\cdot\|$ with well conditioned stiffness matrices.
These  \emph{stability conditions} (Conditions \ref{cond1OR} and \eqref{eqljdhelkjdhkh3}) are conditions involving the interplay
between the norm $\|\cdot\|$ (or equivalently its dual form) and the measurement functions $\phi_i^{(k)}$.
The universality of the method is derived from the fact that, given the measurement functions $\phi_i^{(k)}$,   these \emph{stability conditions} are an invariant (modulo proportional change of constants) of the equivalent class of the norm $\|\cdot\|$, i.e. if these \emph{stability conditions} are satisfied by another norm $\|\cdot\|_1$ on $\B$ such that $C_1 \|\cdot\|_1\leq \|\cdot\|\leq C_2 \|\cdot\|_1$, then they must also be satisfied by $\|\cdot\|$ (with constants scaled by $C_1$ and $C_2$).
As a consequence, if $\|\cdot\|$ is the operator norm of a continuous linear bijection $\L$ between $(\B,\|\cdot\|)$  and another Banach space
$(\B_2,\|\cdot\|_2)$ (with  quadratic norm $\|\cdot\|_2$) then, as shown in Theorem \ref{thmsjdhdhgd}, the \emph{stability conditions} can be expressed as conditions on the norm  $\|\cdot\|_2$ placed on the image space and the values of the continuity constants of $\L$ and $\L^{-1}$.  Consequently, modulo
this dependence on the continuity constants,
  these conditions are independent of the operator $\L$ itself. Said another way, the stability conditions do not depend on the structure of the operator $\L$ but only on its continuity constants.
In the Sobolev space setting of Section \ref{secgamtrinrn}, this transfer of  \emph{stability conditions} allows us to show that the method is efficient when $\|\cdot\|$ is the operator norm of an arbitrary continuous (symmetric, positive) linear bijection between $H^s_0(\Omega)$ and $H^{-s}(\Omega)$ by selecting measurement functions satisfying the required \emph{stability conditions} for the $\|\cdot\|_{H^{-s}(\Omega)}$ norm (Conditions \ref{condjehkgdedgu} and \ref{condkjehdu}).
Similarly, in the linear algebra setting of Section \ref{secgamtrinrn}, involving the inversion of the matrix system $A x=b$ (where $A$ is symmetric and positive), the
\emph{stability conditions} (Conditions \ref{conddiscrip3ordismatdisQQ} and \ref{condilwhiuhd}) are invariant
(modulo a proportional change of constants) with respect to the quadratic form defined by $A$. Therefore those \emph{stability conditions} are satisfied if
$A$ is obtained  by discretizing (using a stable numerical method) a continuous linear bijection between $H^s_0(\Omega)$ and $H^{-s}(\Omega)$.

\section{The Gamblet Transform on $\R^N$ and on Sobolev spaces}\label{secgamtrinrn}
\subsection{The exact gamblet transform on $\R^N$}
Here we write $|\cdot|$ for the Euclidean norm on $\R^N$.
Consider an $N\times N$ symmetric positive definite matrix  $A$, and
let  $|\cdot|_A$ denote the $A$-norm defined by $|u|_A:=\sqrt{u^T A u}$ and let
$\langle \cdot,\cdot \rangle_{A}$ defined by $\langle u, v\rangle_{A}:=u^T A v$
denote the corresponding inner product.  We say that two vectors $u, v$ of $\R^N$ are $A$-orthogonal if they are orthogonal with respect to the
$A$-inner product, i.e.~if $u^T A v=0$.
 For $V$ a linear subspace of $\R^N$, note that $v\in V$ is the $A$-orthogonal projection
of $u$ on $V$ if $v=\operatorname{argmin}_{w}|u-w|_A$. Hereafter we refer to such $A$-orthogonal projections
simply as $A$-projections.

Relabel $\{1,\ldots,N\}$ using an index tree $\I^{(q)}$ of depth $q\in \mathbb{N}^*$  defined below.
 \begin{Definition}\label{defindextree}
We say that $\I^{(q)}$ is an index tree of depth $q$   if it is the finite set of $q$-tuples of the form $i=(i_1,\ldots,i_q)$. For $1\leq k \leq q$ and $i=(i_1,\ldots,i_q)\in \I^{(q)}$,  write
$i^{(k)}:=(i_1,\ldots,i_k)$  and $\I^{(k)}:=\{i^{(k)}\,:\, i\in \I^{(q)}\}$.
\end{Definition}
Write $I^{(k)}$ for the $\I^{(k)}\times \I^{(k)}$ identity matrix.
\begin{Construction}\label{constpiQQ}
For $k\in \{1,\ldots,q-1\}$ let $\pi^{(k,k+1)}$ be a $\I^{(k)}\times \I^{(k+1)}$ matrix such that $\pi^{(k,k+1)}(\pi^{(k,k+1)})^T=I^{(k)}$.
\end{Construction}
 Algorithm \ref{alggambletcomutationnesQQ} below (the gamblet transform) computes a change of basis on $\R^N$ through an $A$-orthogonal decomposition
\begin{equation}\label{eqkjhdkejdiuhQQ}
\R^N=\V^{(1)}\oplus_A \W^{(2)}\oplus_A \cdots \oplus_A \W^{(q)}\,,
\end{equation}
where  $\oplus_A$ is the $A$-orthogonal direct sum,
the terms of which will be defined shortly. To analyze the performance of Algorithm \ref{alggambletcomutationnesQQ}, we will develop some conditions on $A$ regarding its relationship with the matrices $\pi^{(k,k+1)}$ of Construction \ref{constpiQQ}. To that end,
for $k\in \{1,\ldots,q\}$ write
 \begin{equation}\label{eqpikq}
 \pi^{(k,q)}:=\pi^{(k,k+1)}\pi^{(k+1,k+2)}\cdots \pi^{(q-1,q)}
 \end{equation}
  and let $\pi^{(q,k)}$ be the transpose of $\pi^{(k,q)}$. Write $\pi^{(q,q)}=I^{(q)}$.
\begin{Condition}\label{conddiscrip3ordismatdisQQ}
There exists constants
$C_\d\geq 1$ and $H\in (0,1)$  such that the following conditions are satisfied.
\begin{enumerate}
\item\label{linconmat5disQQ}  $\frac{1}{C_\d \sqrt{\lambda_{\min}(A)}}H^k  \leq \inf_{x\in \Img(\pi^{(q,k)}) } \frac{\sqrt{x^T A^{-1}x}}{|x|}$ for $k\in \{1,\ldots,q\}$.
\item\label{linconmat6disQQ}
$\sup_{x\in \Ker(\pi^{(k,q)}) }\frac{\sqrt{x^T  A^{-1}x}}{|x|}\leq \frac{C_\d}{ \sqrt{\lambda_{\min}(A)}} H^{k}$ for $k\in \{1,\ldots,q-1\}$.
\end{enumerate}
\end{Condition}
\begin{Remark}\label{rmklxkclkjdlf}
Item \ref{linconmat6disQQ} of Condition \ref{conddiscrip3ordismatdisQQ} is equivalent to the Poincar\'{e} inequality
\begin{equation}
\inf_{y\in \R^{\I^{(k)}}} |z-\pi^{(q,k)}y|\leq C_\d  H^k \frac{\sqrt{z^T  A z}}{\lambda_{\min}(A)}\text{ for $z\in \R^N$  and $k\in \{1,\ldots,q\}$.}
\end{equation}
Item \ref{linconmat5disQQ} of Condition \ref{conddiscrip3ordismatdisQQ} is implied by the inverse Poincar\'{e} inequality
\begin{equation}
C_\d^{-1}  H^k \frac{\sqrt{x^T  A x}}{\lambda_{\min}(A)}\leq |x| \text{ for $x\in \Img(\pi^{(q,k)})$ and  $k\in \{1,\ldots,q\}$.}
\end{equation}
\end{Remark}

We will show in Subsection \ref{subsecnatcondga} and the following sections that if $A$ is obtained as the discretization of a continuous linear bijection $\L$ between Sobolev spaces, then  matrices $\pi^{(k,k+1)}$ satisfying Condition \ref{conddiscrip3ordismatdisQQ} can naturally be identified from a multi-resolution decomposition of the image space, independently from the operator itself (i.e.~this identification is easy for linear systems obtained by discretizing continuous linear bijections between Sobolev spaces). Graph Laplacians are other
prototypical examples of practical importance \cite{belkin2001laplacian} and Remark \ref{rmklxkclkjdlf} implies that  Condition \ref{conddiscrip3ordismatdisQQ} represent natural analytical (Poincar\'{e} and inverse Poincar\'{e}) inequalities involving the interplay between the matrices  $\pi^{(k,k+1)}$ and the structure of the Graph Laplacian.

 Although \eqref{eqkjhdkejdiuhQQ} is not an exact eigenspace decomposition, the following  Theorem shows,
  under Condition \ref{conddiscrip3ordismatdisQQ},
 that  it shares many of its important characteristics.
\begin{Theorem}\label{thmdpedjoejdo}
Under Condition \ref{conddiscrip3ordismatdisQQ} it holds true that, for some constant $C$ depending only on $C_\d$,
\begin{equation}
C H^{2} \leq \lambda_{\min}(A) \frac{v^T A v}{|A v|^2}\leq C \text{ for } v\in \V^{(1)}
\end{equation}
and for $k\in \{2,\ldots,q\}$,
\begin{equation}
C H^{2k} \leq \lambda_{\min}(A) \frac{v^T A v}{|A v|^2}\leq C H^{2k-2}\text{ for } v\in \W^{(k)}
\end{equation}
\end{Theorem}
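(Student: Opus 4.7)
The plan is to rewrite the Rayleigh-type quotient $v^T A v/|Av|^2$, via the substitution $x:=Av$, as $x^T A^{-1} x/|x|^2$, and then read off the four inequalities directly from Condition~\ref{conddiscrip3ordismatdisQQ}, using the characterization of the subspaces $\V^{(1)}$ and $\W^{(k)}$ implicit in Algorithm~\ref{alggambletcomutationnesQQ}: namely (i) $v\in\V^{(k)}$ iff $Av\in\Img(\pi^{(q,k)})$, and (ii) for $k\ge 2$, $v\in\W^{(k)}$ iff $v\in\V^{(k)}$ and $v$ is $A$-orthogonal to $\V^{(k-1)}=A^{-1}\Img(\pi^{(q,k-1)})$, which unwinds to $v\in\Img(\pi^{(q,k-1)})^\perp=\Ker(\pi^{(k-1,q)})$. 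I would begin the proof by recording these two characterizations.

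For the two lower bounds, setting $x:=Av$ yields $x\in\Img(\pi^{(q,k)})$ (with $k=1$ in the $\V^{(1)}$ case) together with $v^T A v=x^T A^{-1}x$ and $|Av|^2=|x|^2$, so item~\ref{linconmat5disQQ} of Condition~\ref{conddiscrip3ordismatdisQQ} directly gives $x^T A^{-1}x\ge H^{2k}|x|^2/(C_\d^2\lambda_{\min}(A))$, i.e.\ the claimed lower bounds with $C=C_\d^{-2}$. The upper bound on $\V^{(1)}$ is equally immediate: combining the Cauchy--Schwarz inequality $(v^T A v)^2\le |v|^2|Av|^2$ with the trivial $|v|^2\le v^T A v/\lambda_{\min}(A)$ yields $\lambda_{\min}(A)\,v^T Av/|Av|^2\le 1$.

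The heart of the proof is the upper bound on $\W^{(k)}$ for $k\ge 2$. I would apply item~\ref{linconmat6disQQ} of Condition~\ref{conddiscrip3ordismatdisQQ} \emph{at level $k-1$} to $v\in\Ker(\pi^{(k-1,q)})$, obtaining $v^T A^{-1}v\le C_\d^2 H^{2k-2}|v|^2/\lambda_{\min}(A)$. The key move is then the interpolation-type Cauchy--Schwarz identity $|v|^4=\langle A^{-1/2}v,A^{1/2}v\rangle^2\le (v^T A^{-1}v)(v^T A v)$, which converts the kernel estimate into $\lambda_{\min}(A)|v|^2\le C_\d^2 H^{2k-2}\, v^T A v$. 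A final application of $(v^T A v)^2\le |v|^2|Av|^2$ then delivers $\lambda_{\min}(A)\,v^T A v/|Av|^2\le C_\d^2 H^{2k-2}$.

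The main obstacle is conceptual rather than technical: item~\ref{linconmat5disQQ} of Condition~\ref{conddiscrip3ordismatdisQQ} lives on the image side and is directly available through the substitution $x=Av$, but item~\ref{linconmat6disQQ} lives on the kernel side and must be applied to $v$ itself, not to $Av$. The interpolation inequality $|v|^4\le (v^T A^{-1}v)(v^T A v)$ is the bridge that makes the kernel bound compatible with the quotient $v^T A v/|Av|^2$; once it is in hand, everything else is routine Cauchy--Schwarz.
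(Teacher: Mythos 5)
Your proof is correct, and for the upper bound on $\W^{(k)}$ it takes a more direct route than the paper. The paper proves Theorem~\ref{thmdpedjoejdo} by identifying $\R^N$ with a Banach space $(\B,\|\cdot\|)$, $\|v\|^2=v^TAv/\lambda_{\min}(A)$, and invoking the abstract machinery of Theorems~\ref{thmconddisbndisbismatdis} and~\ref{thmcondlnum}: there the lower bounds come, exactly as in your argument, from the inverse inequality on $\Img(\pi^{(q,k)})$ applied to $x=Av$, but the upper bound on $\W^{(k)}$ is obtained from the orthogonality $\|w\|=\inf_{v\in\V^{(k-1)}}\|w-v\|\leq \bar H_{k-1}\|Q^{-1}w\|_0$, where $\bar H_{k-1}$ is the dual-norm approximation constant of $\Img(\pi^{(q,k-1)})$; extracting $\bar H_{k-1}\lesssim H^{k-1}$ from the kernel bound of Item~\ref{linconmat6disQQ} requires the Euclidean-orthogonal splitting $\R^N=\Img(\pi^{(q,k-1)})\oplus\Ker(\pi^{(k-1,q)})$ guaranteed by $\pi^{(k,k+1)}\pi^{(k+1,k)}=I^{(k)}$ (cf.\ Remarks~\ref{rmklxkclkjdlf} and~\ref{rmknt}). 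You instead observe directly that $A$-orthogonality of $\W^{(k)}$ to $\V^{(k-1)}=A^{-1}\Img(\pi^{(q,k-1)})$ places $\W^{(k)}$ inside $\Ker(\pi^{(k-1,q)})$, apply Item~\ref{linconmat6disQQ} to $v$ itself, and bridge to the quotient $v^TAv/|Av|^2$ with the interpolation inequality $|v|^4\leq(v^TA^{-1}v)(v^TAv)$ followed by $(v^TAv)^2\leq|v|^2|Av|^2$. This is essentially the paper's argument unrolled and made explicit at the matrix level, and it is self-contained: it bypasses the duality between kernel bounds and best-approximation bounds, and as a bonus it never uses the orthonormality $\pi^{(k,k+1)}\pi^{(k+1,k)}=I^{(k)}$ for the upper bound (only the $A$-orthogonality of the decomposition). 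The two characterizations you open with ($\V^{(k)}=A^{-1}\Img(\pi^{(q,k)})$ and $\W^{(k)}\subset\V^{(k)}\cap\Ker(\pi^{(k-1,q)})$) are correctly justified by the variational characterization of $\psi_i^{(k)}$ stated at the end of Section~\ref{secgamtrinrn}.1 and by the $A$-orthogonal decomposition $\V^{(k)}=\V^{(k-1)}\oplus_A\W^{(k)}$, and the constants you obtain ($C_\d^{-2}$ below, $C_\d^{2}$ and $1$ above) depend only on $C_\d$ as required.
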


Given $b\in \R^N$, Algorithm \ref{alggambletcomutationnesQQ} also computes the solution  $u\in \R^N$  of the linear system
\begin{equation}\label{eqkjkdjdhjQQ}
A u=b
\end{equation}
and performs the $A$-orthogonal decomposition of $u$ over the right hand side of
\eqref{eqkjhdkejdiuhQQ}, i.e.
\begin{equation}\label{esdddkejdiuhQQ}
u=v^{(1)}+v^{(2)}+\cdots+v^{(q)}\,.
\end{equation}
 Since the decomposition in \eqref{esdddkejdiuhQQ} is $A$-orthogonal, $v^{(1)}$ is the
$A$-projection of $u$ on $\V^{(1)}$ and, for $k\in \{2,\ldots,q\}$, $v^{(k)}$  is the
$A$-projection of $u$ on $\W^{(k)}$. Write $u^{(1)}:=v^{(1)}$ and, for $k\in \{2,\ldots,q\}$,
 $u^{(k)}:=v^{(1)}+v^{(2)}+\cdots+v^{(k)}$ for
the corresponding sequence of successive approximations of $u$ with $A$-orthogonal increments.
Writing $\V^{(k)}=\V^{(1)}\oplus_A \W^{(2)}\oplus_A \cdots \oplus_A \W^{(k)}$,
 it follows that $\V^{(k)}\subset \V^{(k+1)}$ and $u^{(k)}$ is the $A$-projection of $u$ on $\V^{(k)}$.

The following Theorem demonstrates how  Condition \ref{conddiscrip3ordismatdisQQ} implies performance guarantees when using  Algorithm \ref{alggambletcomutationnesQQ} to solve the linear system \eqref{eqkjkdjdhjQQ}.
\begin{Theorem}\label{thmljshjhxj6QQ}
We have $u^{(q)}=u$ and, under Condition \ref{conddiscrip3ordismatdisQQ}, there exists a constant $C$ depending only on $C_\d$ such that, for $k\in \{1,\ldots,q-1\}$, we have
\begin{equation}\label{corunbcnORfeQQ}
|u-u^{(k)}|_A \leq \frac{C}{\sqrt{\lambda_{\min}(A)}}  H^k |b|\, .
\end{equation}
\end{Theorem}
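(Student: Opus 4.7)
The plan breaks into a trivial completeness observation for $u^{(q)}=u$ and a short energy estimate powered by Condition \ref{conddiscrip3ordismatdisQQ}\,\ref{linconmat6disQQ}.

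For $u^{(q)}=u$, note that at the finest level $\pi^{(q,q)}=I^{(q)}$, so the gamblet subspace $\V^{(q)}=A^{-1}\Img(\pi^{(q,q)})$ equals $\R^N$; equivalently, the $A$-orthogonal decomposition \eqref{eqkjhdkejdiuhQQ} exhausts $\R^N$, and $u^{(q)}=v^{(1)}+\cdots+v^{(q)}=u$.

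For the quantitative bound I would use two elementary identities to reduce the statement to a Poincar\'{e}-type estimate on $\Ker(\pi^{(k,q)})$. Since $u^{(k)}$ is the $A$-projection of $u$ onto $\V^{(k)}=A^{-1}\Img(\pi^{(q,k)})$ and $Au=b$,
\begin{equation*}
|u-u^{(k)}|_A^2 \;=\; \langle u-u^{(k)},u\rangle_A \;=\; (u-u^{(k)})^T b\,;
\end{equation*}
and writing a generic $v\in\V^{(k)}$ as $A^{-1}\pi^{(q,k)}z$, the $A$-orthogonality $(u-u^{(k)})^T\pi^{(q,k)}z=0$ for all $z$ forces $u-u^{(k)}\in\Ker(\pi^{(k,q)})$. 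A Euclidean Cauchy--Schwarz then gives $|u-u^{(k)}|_A^2\leq |u-u^{(k)}|\,|b|$, and it remains to prove the reverse-direction Poincar\'{e}-type inequality $|x|\leq \frac{C_\d H^k}{\sqrt{\lambda_{\min}(A)}}\,|x|_A$ for every $x\in\Ker(\pi^{(k,q)})$: substituting $x=u-u^{(k)}$ and cancelling a factor of $|u-u^{(k)}|_A$ then yields \eqref{corunbcnORfeQQ} with $C=C_\d$.

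The single nontrivial step, and the main obstacle I anticipate, is this reverse-direction estimate, because Condition \ref{conddiscrip3ordismatdisQQ}\,\ref{linconmat6disQQ} is stated in the opposite direction---it controls $|x|_{A^{-1}}$ by $|x|$ rather than $|x|$ by $|x|_A$. The bridge is the self-dual Cauchy--Schwarz identity
\begin{equation*}
|x|^2 \;=\; \langle A^{1/2}x,\,A^{-1/2}x\rangle \;\leq\; |x|_A\,|x|_{A^{-1}},
\end{equation*}
valid for every $x\in\R^N$. Inserting the condition's bound $|x|_{A^{-1}}\leq \frac{C_\d H^k}{\sqrt{\lambda_{\min}(A)}}|x|$ on $\Ker(\pi^{(k,q)})$ and cancelling one power of $|x|$ produces exactly the desired inequality without introducing any extra factor---which is why the solver estimate \eqref{corunbcnORfeQQ} consumes only item \ref{linconmat6disQQ} of the condition (the matching lower bound from item \ref{linconmat5disQQ} is reserved for the stiffness-matrix conditioning underlying Theorem \ref{thmdpedjoejdo}).
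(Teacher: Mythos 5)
Your proof is correct, and it reaches the bound by a route that is dual to the one the paper uses. The paper proves Theorem \ref{thmljshjhxj6QQ} as an immediate corollary of the abstract estimate \eqref{corunbcnORfe} in Theorem \ref{corunbcnOR}, whose proof (see \eqref{eqdkjkduedn}) writes $|u-u^{(k)}|_A=\inf_{v\in\V^{(k)}}|u-v|_A=\inf_{z}\,|b-\pi^{(q,k)}z|_{A^{-1}}$ and then invokes the approximation property of $\Img(\pi^{(q,k)})$ in the $A^{-1}$-norm, which is exactly the Poincar\'{e} reformulation of Item \ref{linconmat6disQQ} recorded in Remark \ref{rmklxkclkjdlf} (split $b$ Euclidean-orthogonally into $\Img(\pi^{(q,k)})\oplus\Ker(\pi^{(k,q)})$ and bound the kernel part). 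You instead stay on the primal side: Galerkin orthogonality gives $|u-u^{(k)}|_A^2=(u-u^{(k)})^Tb$, the characterization $\V^{(k)}=A^{-1}\Img(\pi^{(q,k)})$ (Theorem \ref{thmgugyug0OR} in matrix form) places $u-u^{(k)}$ in $\Ker(\pi^{(k,q)})$, and the self-dual Cauchy--Schwarz $|x|^2\leq|x|_A\,|x|_{A^{-1}}$ converts Item \ref{linconmat6disQQ} into the inverse inequality $|x|\leq C_\d H^k\lambda_{\min}(A)^{-1/2}|x|_A$ on that kernel. The two arguments are the inf and sup sides of the duality in Lemma \ref{lemjhgjyguyguybis} and consume exactly the same hypothesis; yours is more self-contained in the linear-algebra setting and makes the role of the kernel explicit, while the paper's factors the work through the Banach-space machinery so that the same estimate serves the continuous case. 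Your treatment of $u^{(q)}=u$ and the range $k\leq q-1$ (where Item \ref{linconmat6disQQ} is available) are both fine.
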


Let us now describe Algorithm \ref{alggambletcomutationnesQQ}.
For $1<r<k$ and a $k$-tuple of the form $i=(i_1,\ldots,i_k)$ we write $i^{(r)}:=(i_1,\ldots,i_r)$.
\begin{Construction}\label{consjk}
For $k\in \{2,\ldots,q\}$ let $\J^{(k)}$ be a finite set of $k$-tuples of the form $j=(j_1,\ldots,j_k)$
such that $\{j^{(k-1)}\mid j\in \J^{(k)}\}=\I^{(k-1)}$ and for $i\in \I^{(k-1)}$, $\Card\{ j\in \J^{(k)}\mid j^{(k-1)}=i\}=\Card\{ s\in \I^{(k)}\mid s^{(k-1)}=i\}-1$.
\end{Construction}

Write $J^{(k)}$ for the $\J^{(k)}\times \J^{(k)}$ identity matrix.
\begin{Construction}\label{conswkQQ}
For $k=2,\ldots,q$ let $W^{(k)}$ be a $\J^{(k)}\times \I^{(k)}$ matrix such that  $\Img(W^{(k),T})=\Ker(\pi^{(k-1,k)})$ and $W^{(k)}(W^{(k)})^T=J^{(k)}$.
\end{Construction}

\begin{algorithm}[!ht]
\caption{Exact Gamblet Transform/Solve on $\R^N$.}\label{alggambletcomutationnesQQ}
\begin{algorithmic}[1]
\STATE\label{step5giORQQ}  $A^{(q)}_{i,j}=A $
\STATE\label{step5giORQQ2b} $\psi_i^{(q)}=e_i$
\STATE\label{stepjhdghdQQ} $b^{(q)}_i=b_i$
\FOR{$k=q$ to $2$}
\STATE\label{step7gORQQ} $B^{(k)}= W^{(k)}A^{(k)}W^{(k),T}$
\STATE\label{step8gORQQ} $w^{(k)}=B^{(k),-1} W^{(k)} b^{(k)}$
\STATE\label{step9gORQQ}  For $i\in \J^{(k)}$, $\chi^{(k)}_i=\sum_{j \in \I^{(k)}} W_{i,j}^{(k)} \psi_j^{(k)}$
\STATE\label{step10gORQQ} $v^{(k)}=\sum_{i\in \J^{(k)}}w^{(k)}_i \chi^{(k)}_i$
\STATE\label{step11gORQQ}  $ N^{(k)}= A^{(k)} W^{(k),T} B^{(k),-1}$
\STATE\label{step12gORQQ} $R^{(k-1,k)}=\pi^{(k-1,k)}(I^{(k)}- N^{(k)} W^{(k)})$
\STATE\label{step15gORQQ} $b^{(k-1)}=R^{(k-1,k)} b^{(k)}$
\STATE\label{step13gORQQ} $A^{(k-1)}= R^{(k-1,k)}A^{(k)}R^{(k,k-1)}$
\STATE\label{step14gORQQ} For $i\in \I^{(k-1)}$, $\psi^{(k-1)}_i=\sum_{j \in  \I^{(k)}} R_{i,j}^{(k-1,k)} \psi_j^{(k)}$
\ENDFOR
\STATE\label{step16gORQQ} $ w^{(1)}=A^{(1),-1}b^{(1)}$
\STATE\label{step17gORQQ} $v^{(1)}=\sum_{i \in \I^{(1)}} w^{(1)}_i \psi^{(1)}_i$
\STATE\label{step18gORQQ} $u=v^{(1)}+v^{(2)}+\cdots+v^{(q)}$
\end{algorithmic}
\end{algorithm}

Algorithm \ref{alggambletcomutationnesQQ} takes $A$, $b$, the matrices $\pi^{(k,k+1)}$ and $W^{(k)}$ as inputs and produces the following outputs: (1) $u$ the solution of \eqref{eqkjkdjdhjQQ} and its decomposition \eqref{esdddkejdiuhQQ} (2) families of nested vectors of $\R^N$, $(\psi_i^{(k)})$ indexed by $k\in \{1,\ldots,q\}$ and $i\in \I^{(k)}$ spanning the nested linear subspaces $\V^{(k)}:=\operatorname{span}\{\psi^{(k)}_i \mid i\in \I^{(k)}\}$ of $\R^N$
(3) families of  vectors of $\R^N$, $(\chi_i^{(k)})$ indexed by $k\in \{2,\ldots,q\}$ and $i\in \J^{(k)}$ spanning $A$-orthogonal linear subspaces $\W^{(k)}:=\operatorname{span}\{\chi^{(k)}_i \mid i\in \J^{(k)}\}$ of $\R^N$ such that the decomposition \eqref{eqkjhdkejdiuhQQ} holds and $\V^{(k)}=\V^{(k-1)}\oplus_A \W^{(k)}$
(4) positive definite $\I^{(k)}\times \I^{(k)}$ matrices $A^{(k)}$ indexed by $k\in \{1,\ldots,q\}$
(5) positive definite $\J^{(k)}\times \J^{(k)}$ matrices $B^{(k)}$.

Since the subspaces entering in the decomposition \eqref{eqkjhdkejdiuhQQ} are $A$-orthogonal and adapted to the eigenspaces of $A$, \eqref{eqkjkdjdhjQQ} can be solved independently on each one of them by solving $q$ well conditioned linear systems. More precisely
Algorithm \ref{alggambletcomutationnesQQ}, transforms the $N\times N$ linear system \eqref{eqkjkdjdhjQQ} into $q$ independent linear systems
\begin{equation}\label{eqlinsysQQ}
A^{(1)} w^{(1)}=  b^{(1)}\text{ and }B^{(k)} w^{(k)}= W^{(k)} b^{(k)}, \, k \in \{2,\ldots,q\}\, ,
\end{equation}
and the following theorem guarantees that these systems are well conditioned.
\begin{Theorem}\label{thmconddisbndisbismatdisQQ}
Under Condition \ref{conddiscrip3ordismatdisQQ}, there exists a constant $C$ depending only on $C_\d$ such that
$C^{-1}  I^{(1)} \leq \frac{A^{(1)}}{\lambda_{\min}(A)}\leq C H^{-2} I^{(1)}$, $\operatorname{Cond}(A^{(1)})\leq  C H^{-2}$.
and for $k\in \{2,\ldots,q\}$
$ C^{-1}H^{-2(k-1)} J^{(k)}   \leq \frac{B^{(k)}}{\lambda_{\min}(A)} \leq C H^{-2k} J^{(k)}$
   and $\operatorname{Cond}(B^{(k)})\leq  C H^{-2}$.
\end{Theorem}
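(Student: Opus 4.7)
The plan is to derive the explicit formula $A^{(k),-1} = \pi^{(k,q)} A^{-1} \pi^{(q,k)}$ and then read off the four bounds directly from Condition \ref{conddiscrip3ordismatdisQQ}. To that end I would first verify, by descending induction on $k$, that the gamblets $\psi^{(k)}_i$ produced by Algorithm \ref{alggambletcomutationnesQQ} are the constrained minimizers $\psi^{(k)}_i = \operatorname{argmin}\{v^T A v : \pi^{(k,q)} v = e_i\}$. The base case $k=q$ is immediate from $\psi^{(q)}_i = e_i$ and $\pi^{(q,q)} = I^{(q)}$. For the inductive step, step \ref{step14gORQQ} writes $\psi^{(k-1)}_i = \tilde\psi^{(k-1)}_i - p^{(k)}_i$, where $\tilde\psi^{(k-1)}_i := \sum_j \pi^{(k-1,k)}_{ij} \psi^{(k)}_j$ and $p^{(k)}_i$ is the $A$-projection of $\tilde\psi^{(k-1)}_i$ onto $\W^{(k)}$; using Construction \ref{constpiQQ} and the inductive hypothesis one computes $\pi^{(k-1,q)} \tilde\psi^{(k-1)}_i = \pi^{(k-1,k)} \pi^{(k,q)} \tilde\psi^{(k-1)}_i = \pi^{(k-1,k)} (\pi^{(k-1,k)})^T e_i = e_i$, and this is preserved after subtracting $p^{(k)}_i \in \W^{(k)} \subset \Ker(\pi^{(k-1,q)})$, so $\psi^{(k-1)}_i$ lies in $\V^{(k)}$, satisfies the constraint, and is $A$-orthogonal to $\W^{(k)}$ --- which identifies it with the constrained minimizer on $\V^{(k)}$ and, by a Lagrangian check showing that the global minimizer over $\R^N$ in fact lies in $\V^{(k-1)} \subset \V^{(k)}$, also on all of $\R^N$. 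The Lagrangian calculation for the resulting quadratic program then yields $A^{(k),-1} = \pi^{(k,q)} A^{-1} \pi^{(q,k)}$, while the telescoping identity $\pi^{(k,q)} \pi^{(q,k)} = I^{(k)}$ (obtained inductively from $\pi^{(r,r+1)} (\pi^{(r,r+1)})^T = I^{(r)}$) shows that $x := \pi^{(q,k)} w$ satisfies $|x| = |w|$ and $w^T A^{(k),-1} w = x^T A^{-1} x$.

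With this in hand, the $A^{(1)}$ bounds fall out: Condition \ref{conddiscrip3ordismatdisQQ} item \ref{linconmat5disQQ} at $k=1$ gives $x^T A^{-1} x \geq C_\d^{-2} H^2 \lambda_{\min}(A)^{-1} |w|^2$, hence $A^{(1)} \leq C_\d^2 \lambda_{\min}(A) H^{-2} I^{(1)}$, while the trivial estimate $x^T A^{-1} x \leq \lambda_{\min}(A)^{-1} |w|^2$ yields $A^{(1)} \geq \lambda_{\min}(A) I^{(1)}$, and division gives $\operatorname{Cond}(A^{(1)}) \leq C_\d^2 H^{-2}$. For $B^{(k)} = W^{(k)} A^{(k)} W^{(k),T}$, Construction \ref{conswkQQ} makes $W^{(k),T}$ an isometric embedding of $\R^{\J^{(k)}}$ onto $V := \Ker(\pi^{(k-1,k)})$, so $B^{(k)}$ has the same spectrum as the compression of $A^{(k)}$ to $V$. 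The upper bound $B^{(k)} \leq C_\d^2 \lambda_{\min}(A) H^{-2k} J^{(k)}$ is inherited from the global bound $A^{(k)} \leq C_\d^2 \lambda_{\min}(A) H^{-2k} I^{(k)}$ (Condition item \ref{linconmat5disQQ} at level $k$, argued exactly as for $A^{(1)}$). For the lower bound, for $w \in V$ the vector $x = \pi^{(q,k)} w$ satisfies $\pi^{(k-1,q)} x = \pi^{(k-1,k)} w = 0$, so $x \in \Ker(\pi^{(k-1,q)})$; Condition item \ref{linconmat6disQQ} at level $k-1$ then gives $w^T A^{(k),-1} w = x^T A^{-1} x \leq C_\d^2 H^{2(k-1)} \lambda_{\min}(A)^{-1} |w|^2$ on $V$, and the Cauchy--Schwarz identity $|w|^4 \leq (w^T A^{(k)} w)(w^T A^{(k),-1} w)$ converts this into $\lambda_{\min}(A^{(k)}|_V) \geq 1/\lambda_{\max}(A^{(k),-1}|_V) \geq C_\d^{-2} \lambda_{\min}(A) H^{-2(k-1)}$, so that $B^{(k)} \geq C_\d^{-2} \lambda_{\min}(A) H^{-2(k-1)} J^{(k)}$ and $\operatorname{Cond}(B^{(k)}) \leq C_\d^4 H^{-2}$.

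The main obstacle is the inductive verification in the first paragraph that Algorithm \ref{alggambletcomutationnesQQ} genuinely produces the constrained minimizers; once that identification and the resulting closed-form expression for $A^{(k),-1}$ are available, every remaining bound is a direct substitution into Condition \ref{conddiscrip3ordismatdisQQ} together with the elementary Cauchy--Schwarz trick. A small subtlety worth flagging is that Cauchy--Schwarz only converts upper bounds on $A^{(k),-1}|_V$ into lower bounds on $A^{(k)}|_V$ and not conversely, which is why the upper bound on $B^{(k)}$ is imported from the global upper bound on $A^{(k)}$ rather than derived from a restriction of $A^{(k),-1}$ to $V$.
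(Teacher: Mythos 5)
Your proof is correct, and it takes a genuinely more direct route than the paper's. The paper proves this theorem by reduction: it identifies $\R^N$ with a Banach space carrying the norm $\|x\|^2=x^TAx/\lambda_{\min}(A)$, translates Condition \ref{conddiscrip3ordismatdisQQ} into Condition \ref{conddiscrip3ordismatdis} via Remark \ref{rmknt} and then into Condition \ref{cond1OR}, and invokes the abstract estimate of Theorem \ref{thmodhehiudhehd}. In that abstract argument the lower bound on $\lambda_{\min}(B^{(k)})$ is obtained indirectly: one first shows $y^TB^{(k),-1}y\leq \bar\gamma_k\bar H_{k-1}^2\,|N^{(k)}y|^2$ using the approximation property (the analogue of item \ref{linconmat4dis} of Condition \ref{conddiscrip3ordismatdis}), and then bounds $\lambda_{\max}(N^{(k),T}N^{(k)})$ through Lemmas \ref{lemfdhgdf}, \ref{lemdjoidjdi} and \ref{lemddjoj3ir}, which control the oblique projection $P^{(k)}=\pi^{(k,k-1)}R^{(k-1,k)}$ on $\Ker(\pi^{(k-1,k)})$. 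You bypass all of this by exploiting the exact orthonormality $\pi^{(k,k+1)}\pi^{(k+1,k)}=I^{(k)}$ and $W^{(k)}W^{(k),T}=J^{(k)}$ available in Constructions \ref{constpiQQ} and \ref{conswkQQ} (but not in the general Banach-space Constructions \ref{constpi} and \ref{conswk}): once $A^{(k),-1}=\pi^{(k,q)}A^{-1}\pi^{(q,k)}$ is established, the maps $w\mapsto\pi^{(q,k)}w$ and $z\mapsto W^{(k),T}z$ are isometries, so every eigenvalue bound becomes a direct substitution of the two items of Condition \ref{conddiscrip3ordismatdisQQ}, with the Schur-complement inequality $\lambda_{\min}(A^{(k)}|_V)\geq 1/\lambda_{\max}(A^{(k),-1}|_V)$ (your Cauchy--Schwarz step) replacing the entire $N^{(k)}$ machinery. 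The identification of the algorithm's output with the constrained minimizers, which you verify by descending induction, is the matrix analogue of Theorems \ref{thmpsdk} and \ref{thm38dgdn} and is shared by both arguments. What the paper's longer route buys is generality (it covers non-orthonormal $\pi^{(k,k+1)}$ and $W^{(k)}$ and infinite-dimensional $\B$, where $|\pi^{(q,k)}w|\neq|w|$ and the approximation property is a separate hypothesis); what yours buys is a short, self-contained proof with explicit constants in the special case actually stated.
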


Let $R^{(k,k+1)}$ be the $\I^{(k)}\times \I^{(k+1)}$ interpolation matrices computed in Algorithm \ref{alggambletcomutationnesQQ}, write $R^{(k,q)}=R^{(k,k+1)}\cdots R^{(q-1,q)}$ and let $R^{(q,k)}$ be its $\I^{(q)}\times \I^{(k)}$ matrix transpose.
The matrices $A^{(k)}$ correspond to a $\I^{(k)}\times \I^{(k)}$ matrix compression of $A$ in the sense that, under Condition \ref{conddiscrip3ordismatdisQQ},
\begin{equation}\label{eqkjdkjdhdh}
\big|(A^{-1}  - R^{(q,k)} A^{(k),-1} R^{(k,q)}) b \big|_A \leq \frac{C}{\sqrt{\lambda_{\min}(A)}}  H^k |b|
\end{equation}
for $b \in \R^N$, where the constant $C$ depends only on $C_\d$. The compression \eqref{eqkjdkjdhdh} can be interpreted as  numerical homogenization and \cite{BeOw:2010} and \cite[Sec.~2.5]{OwhadiMultigrid:2015} establish its optimality (up to a multiplicative constant when compared to an exact eigenspace decomposition).

The vectors $\psi_i^{(k)}$ and $\chi_i^{(k)}$ can be interpreted as algebraic wavelets
 (in Section \ref{seccigsec}  we will  justify the name {\em gamblets}) adapted to the matrix $A$.  The vectors  $\psi_i^{(k)}, i \in \I^{(k)}$, spanning $\V^{(k)}$, satisfy the nesting relation $\psi_i^{(k)}=\sum_{j\in \I^{(k+1)}} R^{(k,k+1)}_{i,j}\psi_j^{(k+1)}$. The vectors
 $\chi_i^{(k)}=\sum_{j\in \I^{(k)}} W^{(k)}_{i,j}\psi_j^{(k)}, i\in \J^{(k)}$, spanning
$ \W^{(k)}$, are obtained by $A$-orthogonalization with respect to
the decomposition $\V^{(k)}=\V^{(k-1)}\oplus_A \W^{(k)}$.
Furthermore for $k\in \{1,\ldots,q\}$ and $w\in \R^{(k)}$, the minimizer of $|v|_A$ over $v\in \R^N$ subject to $w=\pi^{(k,q)}  v$ is
$v=\sum_{i\in \I^{(k)}} w_i \psi_i^{(k)}$.

\subsection{The gamblet transform for arbitrary symmetric linear operators on Sobolev spaces}\label{subsecnatcondga}

\begin{Definition}\label{defkdjdh99}
Given $\Omega$ a bounded open subset of $\R^d$ with uniformly lipschitz  boundary and $s\in \mathbb{N}$, let
$H^s(\Omega)$ be the Sobolev space \cite[Sec.~2.2.1]{gazzola2010polyharmonic} gifted with the norm
\begin{equation}
\|u\|^2_{H^s(\Omega)}:=\sum_{t=0}^s \|D^t u\|^2_{L^2(\Omega)}\text{ for } u\in H^s(\Omega)
\end{equation}
where $D^t u$ is the total derivative of $u$ of order $t$,  $D^0 u=u$, $|D^t u|=(D^t u \cdot D^t u)^\frac{1}{2}$ and $D^t u \cdot D^t v=\sum_{i_1,\ldots,i_t=1}^d \frac{\partial^t u}{\partial_{i_1}\cdots \partial_{i_t} }\frac{\partial^t v}{\partial_{i_1}\cdots \partial_{i_t} }$.
Write $H^s_0(\Omega)$ for the closure of the set of smooth functions with compact support in $\Omega$ with respect to the norm $\|\cdot\|_{H^s(\Omega)}$. Write $\Delta^k$ the $k$-th iterate of the Laplacian.
For $s=2k$, write $\|u\|_{H^s_0(\Omega)}=\|\Delta^k u\|_{L^2(\Omega)}$. For $s=2k+1$ write  $\|u\|_{H^s_0(\Omega)}=\|\nabla \Delta^k u\|_{L^2(\Omega)}$. Recall  \cite[Thm.~2.2]{gazzola2010polyharmonic}  that $\|\cdot\|_{H^s_0(\Omega)}$ defines a norm that is equivalent to $\|\cdot\|_{H^s(\Omega)}$ on $H^s_0(\Omega)$. Let $(H^{-s}(\Omega),\|\cdot\|_{H^{-s}(\Omega)})$ be the dual of
 $(H^s_0(\Omega),\|\cdot\|_{H^s_0(\Omega)})$ using the usual dual pairing obtained from the  Gelfand triple
$H^s_0(\Omega) \subset L^2(\Omega) \subset H^{-s}(\Omega)$  of Sobolev spaces;
  for $g\in H^{-s}(\Omega)$, $\|g\|_{H^{-s}(\Omega)}=\sup_{v\in H^s_0(\Omega)}\frac{\int_{\Omega} gv}{\|v\|_{H^s_0(\Omega)}}$.
\end{Definition}

Let
\begin{equation}\label{eqkjdlhejdjhii}
\L\,:\, H^s_0(\Omega)\rightarrow H^{-s}(\Omega)
\end{equation}
be an  symmetric continuous linear bijection  between $H^s_0(\Omega)$ and $H^{-s}(\Omega)$,
 and write $C_{\L}:=\sup_{u\in \H^s_0(\Omega)} \|\L u\|_{H^{-s}(\Omega)}/ \|u\|_{H^s_0(\Omega)}$ and $C_{\L^{-1}}:=\sup_{u\in \H^s_0(\Omega)}  \|u\|_{H^s_0(\Omega)}/\|\L u\|_{H^{-s}(\Omega)}$ for its continuity constants.
Write   $[\cdot,\cdot]$ for the  duality pairing between $H^{-s}(\Omega)$ and
 $H^s_0(\Omega)$ defined by the $L^2(\Omega)$  integral $[g,v]:=\int_{\Omega} gv,\, g \in H^{-s}(\Omega), v\in H^s_0(\Omega)$.
 Let $\|\cdot\|$ be the norm on $H^s_0(\Omega)$ defined by
\[ \|u\|:=[\L u, u]^\frac{1}{2}\]
 and write $\<\cdot,\cdot\>$  for the corresponding scalar product.

Let $\I^{(k)}$  and $\pi^{(k,k+1)}$ be defined as in Definition \ref{defindextree} and Construction \ref{constpiQQ}. Let $(\phi_i^{(q)})_{i\in \I^{(q)}}$ be orthonormal elements of $L^2(\Omega)$ and, for $k\in \{1,\ldots,q-1\}$ and $i\in \I^{(k)}$ define $\phi_i^{(k)}$ via induction by
\begin{equation}\label{eqndkjndnfj}
\phi_i^{(k)}=\sum_{j\in \I^{(k+1)}} \pi^{(k,k+1)}_j \phi_j^{(k+1)}
\end{equation}

For $k\in \{1,\ldots,q\}$ and $i\in \I^{(k)}$, let $\psi_i^{(k)}$ be the minimizer of the following variational problem.
\begin{equation}\label{eqminuhsob}
\begin{cases}
\text{Minimize }\|\psi\|\\
\text{Subject to }\psi\in H^s_0(\Omega)\text{ and }[\phi_j^{(k)},\psi]=\delta_{i,j}\text{ for }j\in \I^{(k)}
\end{cases}
\end{equation}

We will call the elements $\psi_i^{(k)}$ gamblets.
For $k\in\{1,\ldots,q\}$ let $\Theta^{(k)}$  be the $\I^{(k)}\times \I^{(k)}$ symmetric matrix defined by
$\Theta^{(k)}_{i,j}:=[\phi_i^{(k)},\L^{-1} \phi_j^{(k)}]$.
\begin{Theorem}\label{thmfix1}
For $k\in\{1,\ldots,q\}$, $\Theta^{(k)}$ is positive definite, and writing $\Theta^{(k),-1}$ its inverse, we have for $i\in \I^{(k)}$,
\begin{equation}
\psi_i^{(k)}=\sum_{j\in \I^{(k)}} \Theta^{(k),-1}_{i,j} \L^{-1} \phi_j^{(k)}
\end{equation}
\end{Theorem}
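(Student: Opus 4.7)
The plan is to proceed in three steps: establish the positive definiteness of $\Theta^{(k)}$, derive the claimed formula via Lagrange multipliers, and verify that the resulting element actually satisfies the constraints and minimizes the norm.

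First I would show that the $\phi_i^{(k)}$, $i \in \I^{(k)}$, are linearly independent in $L^2(\Omega)$. For $k=q$ this is immediate from the assumed $L^2$-orthonormality. For smaller $k$, the recursion \eqref{eqndkjndnfj} and the property $\pi^{(k,k+1)}(\pi^{(k,k+1)})^T = I^{(k)}$ from Construction \ref{constpiQQ} (which gives $\pi^{(k,k+1)}$ full row rank) imply by downward induction that the $\phi_i^{(k)}$ remain linearly independent. Given any vector $c \in \R^{\I^{(k)}}$ with $c \neq 0$, set $\phi = \sum_i c_i \phi_i^{(k)} \in H^{-s}(\Omega)$; this is nonzero and
\begin{equation*}
c^T \Theta^{(k)} c = \sum_{i,j} c_i c_j [\phi_i^{(k)}, \L^{-1} \phi_j^{(k)}] = [\phi, \L^{-1} \phi] = \|\L^{-1}\phi\|^2 > 0,
\end{equation*}
using the symmetry of $\L^{-1}$ and the fact that $\L^{-1}$ is a bijection so $\L^{-1}\phi \neq 0$. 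This proves $\Theta^{(k)}$ is positive definite (and in particular invertible).

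Next I would apply Lagrange multipliers to the convex quadratic problem \eqref{eqminuhsob}. The Lagrangian is
\begin{equation*}
\mathcal{L}(\psi, \lambda) = \tfrac{1}{2}[\L\psi, \psi] - \sum_{j \in \I^{(k)}} \lambda_j \bigl([\phi_j^{(k)}, \psi] - \delta_{i,j}\bigr).
\end{equation*}
Stationarity in $\psi$ yields $\L\psi = \sum_j \lambda_j \phi_j^{(k)}$ in $H^{-s}(\Omega)$, and since $\L$ is a bijection, $\psi = \sum_j \lambda_j \L^{-1} \phi_j^{(k)}$. Substituting this into the constraint $[\phi_\ell^{(k)}, \psi] = \delta_{i,\ell}$ for $\ell \in \I^{(k)}$ gives $\sum_j \Theta^{(k)}_{\ell,j} \lambda_j = \delta_{i,\ell}$, so by the symmetry and invertibility of $\Theta^{(k)}$ one finds $\lambda_j = \Theta^{(k),-1}_{i,j}$. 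Hence the candidate minimizer is exactly $\psi_i^{(k)} = \sum_{j \in \I^{(k)}} \Theta^{(k),-1}_{i,j} \L^{-1} \phi_j^{(k)}$.

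Finally, I would verify admissibility and optimality. The constraints are satisfied since
\begin{equation*}
[\phi_\ell^{(k)}, \psi_i^{(k)}] = \sum_j \Theta^{(k),-1}_{i,j} \Theta^{(k)}_{\ell,j} = (\Theta^{(k)} \Theta^{(k),-1})_{\ell, i} = \delta_{\ell, i}.
\end{equation*}
For optimality, note that $\|\cdot\|$ is a genuine Hilbert norm on $H^s_0(\Omega)$ by continuity of $\L$ and $\L^{-1}$, so the functional $\psi \mapsto \|\psi\|^2$ is strictly convex; the feasible set is a nonempty affine subspace (it contains our candidate), so the minimizer is unique and is precisely the critical point found above. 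No step is genuinely difficult; the only subtle point is justifying that $\Theta^{(k)}$ is invertible, which rests on the linear independence of $\phi_i^{(k)}$ afforded by the full-rank condition on $\pi^{(k,k+1)}$.
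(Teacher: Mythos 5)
Your proof is correct and follows essentially the same route as the paper: the paper likewise observes (via the orthogonality of the minimizer to the constraint kernel, which is exactly your stationarity condition) that $\psi_i^{(k)}\in \L^{-1}\Phi^{(k)}$ and then solves the resulting linear system $\Theta^{(k)}\lambda = e_i$, with positive definiteness of $\Theta^{(k)}$ coming from the linear independence of the $\phi_j^{(k)}$ and positivity of $\L^{-1}$. The only cosmetic difference is that you phrase the first-order condition through a Lagrangian while the paper phrases it through orthogonal decomposition of $H^s_0(\Omega)$; the content is identical.
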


For $k\in \{2,\ldots,q\}$, let $\J^{(k)}$ and  $W^{(k)}$ be as in Constructions \ref{consjk} and \ref{conswkQQ}.
For $k\in \{2,\ldots,q\}$ and $i\in \J^{(k)}$ write
\begin{equation}
\chi^{(k)}_i:=\sum_{j\in \I^{(k)}}W^{(k)}_{i,j}\psi^{(k)}_j
\end{equation}
For $k\in \{1,\ldots,q\}$ write $\V^{(k)}:=\Span\{\psi_i^{(k)}\mid i\in \I^{(k)}\}$ and for $k\in \{2,\ldots,q\}$ write
$\W^{(k)}:=\Span\{\chi_i^{(k)}\mid i\in \J^{(k)}\}$. Let $\W^{(q+1)}$ be the $\<\cdot,\cdot\>$-orthogonal complement of
$\V^{(q)}$ in $H^s_0(\Omega)$.  Write $\oplus$  for the $\<\cdot,\cdot\>$-orthogonal direct sum.

\begin{Theorem}\label{thmfix2}
For $k\in \{2,\ldots,q\}$, $\V^{(k-1)}\subset \V^{(k)}$ and $\V^{(k)}=\V^{(k-1)}\oplus \W^{(k)}$. In particular,
\begin{equation}
H^s_0(\Omega)=\V^{(1)}\oplus\W^{(2)}\oplus \cdots \oplus \W^{(q)}\oplus \W^{(q+1)}
\end{equation}
Furthermore, $k\in \{1,\ldots,q\}$, the $\<\cdot,\cdot\>$ orthogonal projection of $u\in H^s_0(\Omega)$ onto $\V^{(k)}$ is
\begin{equation}
u^{(k)}=\sum_{i\in \I^{(k)}} [\phi_i^{(k)},u]\psi_i^{(k)}
\end{equation}
and, for $k\in \{2,\ldots,q\}$, the $\<\cdot,\cdot\>$ orthogonal projection of $u\in H^s_0(\Omega)$ onto $\W^{(k)}$ is
$u^{(k)}-u^{(k-1)}$.
\end{Theorem}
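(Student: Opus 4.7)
The plan is to proceed in the order: nesting, orthogonality of $\W^{(k)}$ with $\V^{(k-1)}$, dimension count, the full decomposition, then the projection formulas.

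First, for nesting $\V^{(k-1)}\subset \V^{(k)}$, I would use Theorem \ref{thmfix1} which gives $\psi_i^{(k-1)}=\sum_{j\in \I^{(k-1)}}\Theta^{(k-1),-1}_{i,j}\L^{-1}\phi_j^{(k-1)}$. The nesting relation \eqref{eqndkjndnfj} expresses each $\phi_j^{(k-1)}$ as a linear combination of $\phi_\ell^{(k)}$, so by linearity $\L^{-1}\phi_j^{(k-1)}$ lies in $\Span\{\L^{-1}\phi_\ell^{(k)}\}$. Inverting the formula of Theorem \ref{thmfix1} at level $k$ (using invertibility of $\Theta^{(k)}$) gives $\Span\{\L^{-1}\phi_\ell^{(k)}\}=\V^{(k)}$, so $\psi_i^{(k-1)}\in \V^{(k)}$.

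Next, the key orthogonality $\W^{(k)}\perp \V^{(k-1)}$. The minimizing property of $\psi_j^{(k-1)}$ implies by Lagrangian duality that $\<\psi_j^{(k-1)},v\>=0$ whenever $[\phi_n^{(k-1)},v]=0$ for all $n\in\I^{(k-1)}$. It is therefore enough to check that the test condition $[\phi_n^{(k-1)},\chi_i^{(k)}]=0$ holds. Expanding, using \eqref{eqndkjndnfj} and the constraint $[\phi_m^{(k)},\psi_\ell^{(k)}]=\delta_{m,\ell}$, this reduces to
\begin{equation}
[\phi_n^{(k-1)},\chi_i^{(k)}]=\sum_{\ell,m}W^{(k)}_{i,\ell}\pi^{(k-1,k)}_{n,m}\delta_{m,\ell}=\bigl(\pi^{(k-1,k)}W^{(k),T}\bigr)_{n,i},
\end{equation}
which vanishes because $\Img(W^{(k),T})=\Ker(\pi^{(k-1,k)})$ by Construction \ref{conswkQQ}. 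This is the step I expect to be the main obstacle, since it requires carefully unpacking the four different matrix definitions simultaneously.

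For the direct sum, I first note that $\dim \V^{(k)}=|\I^{(k)}|$: the $\psi_i^{(k)}$ are linear images under $\Theta^{(k),-1}\L^{-1}$ of the $\phi_i^{(k)}$, and these are linearly independent (orthonormal at level $q$, and independent at lower levels because $\pi^{(k,k+1)}$ has full row rank by $\pi^{(k,k+1)}\pi^{(k,k+1),T}=I^{(k)}$). Similarly $\dim \W^{(k)}=|\J^{(k)}|$ because $W^{(k)}$ has full row rank. Construction \ref{consjk} gives $|\I^{(k-1)}|+|\J^{(k)}|=|\I^{(k)}|$. Combined with $\V^{(k-1)}\subset \V^{(k)}$, $\W^{(k)}\subset \V^{(k)}$, and $\W^{(k)}\cap \V^{(k-1)}=\{0\}$ (from the orthogonality above), dimensions force $\V^{(k)}=\V^{(k-1)}\oplus \W^{(k)}$. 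Iterating down from $k=q$ and appending the definition of $\W^{(q+1)}$ as the $\<\cdot,\cdot\>$-orthogonal complement of $\V^{(q)}$ yields the stated full decomposition of $H^s_0(\Omega)$.

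Finally, for the projection formula, set $u^{(k)}:=\sum_{i\in\I^{(k)}}[\phi_i^{(k)},u]\psi_i^{(k)}\in\V^{(k)}$. Using Theorem \ref{thmfix1}, a direct computation gives $\<\psi_i^{(k)},\psi_j^{(k)}\>=[\L\psi_j^{(k)},\psi_i^{(k)}]=\Theta^{(k),-1}_{j,i}$, so
\begin{equation}
\<u^{(k)},\psi_j^{(k)}\>=\sum_i [\phi_i^{(k)},u]\,\Theta^{(k),-1}_{j,i}=\sum_n \Theta^{(k),-1}_{j,n}[\phi_n^{(k)},u]=\<u,\psi_j^{(k)}\>,
\end{equation}
where the last equality uses $\L\psi_j^{(k)}=\sum_n\Theta^{(k),-1}_{j,n}\phi_n^{(k)}$ from Theorem \ref{thmfix1}. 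Hence $u-u^{(k)}\perp \V^{(k)}$ and $u^{(k)}$ is the orthogonal projection. Since the decomposition $\V^{(k)}=\V^{(k-1)}\oplus\W^{(k)}$ is $\<\cdot,\cdot\>$-orthogonal, subtracting the two projection formulas shows that $u^{(k)}-u^{(k-1)}$ is the orthogonal projection of $u$ onto $\W^{(k)}$, completing the proof.
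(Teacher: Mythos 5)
Your proof is correct and follows essentially the same route as the paper, which reduces Theorem \ref{thmfix2} to Theorems \ref{thmgugyug0OR} and \ref{thmgugyug2OR}: the same biorthogonality $[\phi_j^{(k)},\psi_i^{(k)}]=\delta_{i,j}$ drives the projection formula, the identity $\pi^{(k-1,k)}W^{(k),T}=0$ combined with the dimension count $|\J^{(k)}|=|\I^{(k)}|-|\I^{(k-1)}|$ gives the orthogonal splitting (exactly as in Lemma \ref{lemwk} and Proposition \ref{propduyge}), and the nesting follows from $\V^{(k)}=\L^{-1}\Phi^{(k)}$. The only difference is cosmetic: you verify the projection property directly via the Gram identity $\<\psi_i^{(k)},\psi_j^{(k)}\>=\Theta^{(k),-1}_{i,j}$ rather than invoking the abstract operator $P=\sum_i\psi_i\otimes\phi_i$ of Proposition \ref{prop_Gambletprojection}.
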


For $k\in \{1,\ldots,q\}$, write
\begin{equation}
\Phi^{(k)}:=\Span\{\phi_i^{(k)}\mid i \in \I^{(k)}\}\,.
\end{equation}

The analogue here to Condition \ref{conddiscrip3ordismatdisQQ} for the linear algebra case is as follows.
\begin{Condition}\label{condjehkgdedgu}
There exists constants
$C_{s}\geq 1$ and $H\in (0,1)$  such that the following conditions are satisfied for $k\in \{1,\ldots,q\}$.

\begin{enumerate}
\item\label{linconmat5disQQl}  $\|\phi \|_{L^2(\Omega)} \leq C_{s} H^{-k}\|\phi \|_{H^{-s}(\Omega)}$ for $\phi\in \Phi^{(k)}$.
\item\label{linconmat6disQQl} $\|\phi \|_{H^{-s}(\Omega)} \leq C_{s}  H^{k} \|\phi \|_{L^2(\Omega)}$ for $\phi\perp_{L^2(\Omega)} \Phi^{(k)}$.
\end{enumerate}
\end{Condition}

For $k\in \{1,\ldots,q\}$ let $A^{(k)}$ be the stiffness matrix of the operator $\L$ in $\V^{(k)}$, i.e.
$A^{(k)}_{i,j}=\<\psi_i^{(k)},\psi_j^{(k)}\>$ for $i,j\in \I^{(k)}$.
For $k\in \{2,\ldots,q\}$ let $B^{(k)}$ be the stiffness matrix of the operator $\L$ in $\W^{(k)}$, i.e.
$B^{(k)}_{i,j}=\<\chi_i^{(k)},\chi_j^{(k)}\>$ for $i,j\in \J^{(k)}$.

\begin{Theorem}\label{thmfix3}
Assume Condition \ref{condjehkgdedgu} to be satisfied. Then there exists a constant $C$ depending only on $C_s$, $C_\L$ and $C_{\L^{-1}}$ such that for $u\in \H^s_0(\Omega)$
\begin{equation}
\|u-u^{(k)}\|_{H^s_0(\Omega)}\leq C H^k \|\L u\|_{L^2(\Omega)}.
\end{equation}
Furthermore, $C^{-1}  I^{(1)} \leq \frac{A^{(1)}}{\lambda_{\min}(A)}\leq C H^{-2} I^{(1)}$, $\operatorname{Cond}(A^{(1)})\leq  C H^{-2}$, and for $k\in \{2,\ldots,q\}$,
$ C^{-1}H^{-2(k-1)} J^{(k)}   \leq \frac{B^{(k)}}{\lambda_{\min}(A)} \leq C H^{-2k} J^{(k)}$
   and $\operatorname{Cond}(B^{(k)})\leq  C H^{-2}$.
\end{Theorem}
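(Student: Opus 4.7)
The proof rests on three structural identities: from the Lagrange conditions in \eqref{eqminuhsob}, $[\phi_j^{(k)},\psi_i^{(k)}]=\delta_{i,j}$; $L^2(\Omega)$-orthonormality of $\{\phi_i^{(k)}\}_{i\in\I^{(k)}}$ for every $k$, proved by descending induction from $k=q$ using $\pi^{(k,k+1)}(\pi^{(k,k+1)})^T=I^{(k)}$; and, combining these with Theorem \ref{thmfix1}, the identity $A^{(k)}=\Theta^{(k),-1}$ (direct matrix computation: $\langle\psi_i^{(k)},\psi_j^{(k)}\rangle=[\L\psi_i^{(k)},\psi_j^{(k)}]=\sum_l\Theta^{(k),-1}_{i,l}[\phi_l^{(k)},\psi_j^{(k)}]=\Theta^{(k),-1}_{i,j}$). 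Throughout I use the equivalence $\|\cdot\|\asymp\|\cdot\|_{H^s_0(\Omega)}$ with constants controlled by $C_\L,C_{\L^{-1}}$.

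\emph{Approximation estimate and bounds on $A^{(1)}$.} Theorem \ref{thmfix2} gives $u^{(k)}=\sum_{i\in\I^{(k)}}[\phi_i^{(k)},u]\psi_i^{(k)}$, and the first identity above forces $u-u^{(k)}\perp_{L^2}\Phi^{(k)}$. Since $u^{(k)}\in\V^{(k)}$ is the $\<\cdot,\cdot\>$-projection of $u$, we obtain $\|u-u^{(k)}\|^2=[\L u,u-u^{(k)}]=[(I-P_{\Phi^{(k)}})\L u,\,u-u^{(k)}]$, where $P_{\Phi^{(k)}}$ denotes $L^2$-projection onto $\Phi^{(k)}$. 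The duality estimate $[\phi,v]\leq\|\phi\|_{H^{-s}(\Omega)}\|v\|_{H^s_0(\Omega)}$ combined with Condition~\ref{condjehkgdedgu}(\ref{linconmat6disQQl}) applied to $(I-P_{\Phi^{(k)}})\L u$ (which is $L^2$-orthogonal to $\Phi^{(k)}$) produces the target estimate after invoking the norm equivalence. For $A^{(k)}=\Theta^{(k),-1}$, set $\phi=\sum_ix_i\phi_i^{(k)}$ so $\|\phi\|_{L^2(\Omega)}=|x|$, and note $x^T\Theta^{(k)}x=[\phi,\L^{-1}\phi]=\|\L^{-1}\phi\|^2\asymp\|\phi\|_{H^{-s}(\Omega)}^2$. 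The continuous embedding $L^2(\Omega)\hookrightarrow H^{-s}(\Omega)$ and Condition~\ref{condjehkgdedgu}(\ref{linconmat5disQQl}) sandwich this quadratic form between $cH^{2k}|x|^2$ and $C|x|^2$, so $C^{-1}I^{(k)}\leq A^{(k)}\leq CH^{-2k}I^{(k)}$; specializing to $k=1$ yields the stated bounds on $A^{(1)}$.

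\emph{Bounds on $B^{(k)}$ for $k\geq 2$ (the crux).} From $B^{(k)}=W^{(k)}A^{(k)}W^{(k),T}$ and $W^{(k)}(W^{(k)})^T=J^{(k)}$, the upper bound on $A^{(k)}$ gives $B^{(k)}\leq CH^{-2k}J^{(k)}$ immediately. For the lower bound, write $\chi:=\sum_iz_i\chi_i^{(k)}=\sum_j\eta_j\psi_j^{(k)}$ with $\eta=W^{(k),T}z\in\Ker(\pi^{(k-1,k)})$ and $|\eta|=|z|$. The nesting \eqref{eqndkjndnfj} combined with $[\phi_j^{(k)},\psi_m^{(k)}]=\delta_{j,m}$ gives $[\phi_l^{(k-1)},\chi]=(\pi^{(k-1,k)}\eta)_l=0$, so $\chi\perp_{L^2}\Phi^{(k-1)}$. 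Writing $\|\chi\|_{L^2(\Omega)}$ as a dual supremum and restricting the supremum to $L^2$-functions orthogonal to $\Phi^{(k-1)}$ (legal because $\chi$ is), the inequality $[\phi,\chi]\leq\|\phi\|_{H^{-s}(\Omega)}\|\chi\|_{H^s_0(\Omega)}$ and Condition~\ref{condjehkgdedgu}(\ref{linconmat6disQQl}) at level $k-1$ yield the reverse-Poincar\'e estimate $\|\chi\|_{L^2(\Omega)}\leq C_sH^{k-1}\|\chi\|_{H^s_0(\Omega)}$. Orthonormality of $\{\phi_j^{(k)}\}$ in $L^2$ then gives $|z|^2=|\eta|^2=\sum_j[\phi_j^{(k)},\chi]^2\leq\|\chi\|_{L^2(\Omega)}^2\leq CH^{2(k-1)}\|\chi\|^2=CH^{2(k-1)}z^TB^{(k)}z$, delivering $B^{(k)}\geq C^{-1}H^{-2(k-1)}J^{(k)}$. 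Condition-number bounds follow by ratioing the eigenvalue bounds. The main obstacle is the reverse-Poincar\'e step; everything else is a clean consequence of $A^{(k)}=\Theta^{(k),-1}$ and the Bernstein/Jackson pair in Condition~\ref{condjehkgdedgu}, and its validity crucially rests on the translation of $\eta\in\Ker(\pi^{(k-1,k)})$ into $L^2$-orthogonality of $\chi$ to the coarse space $\Phi^{(k-1)}$.
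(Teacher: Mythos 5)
Your proof is correct, but it does not follow the paper's route. The paper proves Theorem \ref{thmfix3} by reduction: it checks that Condition \ref{condjehkgdedgu} (together with the continuity constants of $\L$ and $\L^{-1}$) implies the abstract Condition \ref{cond1OR}, and then invokes the Banach-space Theorem \ref{corunbcnOR}, whose hard step --- the lower bound $\lambda_{\min}(B^{(k)})\gtrsim H^{-2(k-1)}$ --- is obtained in Theorem \ref{thmodhehiudhehd} by first showing $y^TB^{(k),-1}y\leq \bar\gamma_k\bar H_{k-1}^2\,|N^{(k)}y|^2$ with $N^{(k)}=A^{(k)}W^{(k),T}B^{(k),-1}$, and then bounding $\lambda_{\max}(N^{(k),T}N^{(k)})$ through the oblique projection $P^{(k)}=\pi^{(k,k-1)}R^{(k-1,k)}$ (Lemmas \ref{lemfdhgdf}, \ref{lemdjoidjdi}, \ref{lemddjoj3ir}). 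Your argument short-circuits all of that: since $\chi\in H^s_0(\Omega)\subset L^2(\Omega)$ and the $\phi_j^{(k)}$ are $L^2$-orthonormal, Bessel gives $|z|^2=\sum_j[\phi_j^{(k)},\chi]^2\leq\|\chi\|_{L^2}^2$, and the self-duality step $\|\chi\|_{L^2}^2=\int\chi\cdot\chi\leq\|\chi\|_{H^{-s}}\|\chi\|_{H^s_0}\leq C_sH^{k-1}\|\chi\|_{L^2}\|\chi\|_{H^s_0}$ (Condition \ref{condjehkgdedgu}(\ref{linconmat6disQQl}) applied to $\chi$ itself, legal because $\chi\perp_{L^2}\Phi^{(k-1)}$) closes the loop. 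This is cleaner, but it is genuinely tied to the Gelfand triple $H^s_0\subset L^2\subset H^{-s}$: in the abstract setting $\B$ does not embed into $\B_0\subset\B^*$, so neither Bessel nor the pairing of $\chi$ with itself is available, which is precisely why the paper routes the general proof through $\|Q^{-1}w\|_0$ and the $N^{(k)}$ machinery. What the paper's detour buys is generality (and, via Theorem \ref{lemdjkdj}, the bounds on $N^{(k)}$ reused elsewhere); what yours buys is a transparent two-line proof of the crux in the Sobolev case. One shared wrinkle worth noting: the lower bound $C^{-1}I^{(1)}\leq A^{(1)}$ needs $\lambda_{\max}(\Theta^{(1)})\leq C$, i.e.\ the embedding constant of $L^2(\Omega)\hookrightarrow H^{-s}(\Omega)$, which depends on $\Omega$ rather than only on $C_s,C_\L,C_{\L^{-1}}$; this is a defect of the theorem's statement (the abstract Condition \ref{cond1OR} lists this constant explicitly as Item \ref{itcor1}, while Condition \ref{condjehkgdedgu} omits it), not of your argument, but you should flag that you are using it.
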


\begin{Condition}\label{cond7fyf}
Assume that there exists an index tree $\bar{\I}^{(q)}$, as in Definition \ref{defindextree}, and a finite set $\aleph=\{1,\ldots,|\aleph|\}$  such that  $\I^{(k)}=\bar{\I}^{(k)}\times \aleph$ for all $k \in \{1,\ldots q\}$, i.e. independent of $k$,  each label
   $(t,\alpha)\in \bar{\I}^{(k)}\times \aleph$ is in one to one correspondence with a label $i\in \I^{(k)}$ and we write $i^{\aleph}:=t$ for $i=(t,\alpha)$. From the Construction \ref{consjk} of $\J^{k+1}$,
  we write $j^{(k),\aleph}:=t$ for $j\in \J^{(k+1)}$ and $j^{(k)}=(t,\alpha)\in \I^{(k)}$.
  Assume that  $\pi^{(k-,k)}$ and $W^{(k)}$ are cellular, i.e. (1)
 $\pi^{(k-1,k)}_{i,j}=0$ for $j^{(k-1),\aleph}\not=i^{\aleph}$  and (2) $W^{(k)}_{i,j}=0$ for $j^{(k-1),\aleph}\not=i^{(k-1),\aleph}$ for $k \in \{2,\ldots q\}$.
\end{Condition}
\begin{Remark}
\label{rem_cond7fyf}
When $|\aleph|=1$,  Condition \ref{cond7fyf} simplifies to  (1)
$\pi^{(k-1,k)}_{i,j}=0$
for $(i,j)\in \I^{(k-1)}\times \I^{(k)}$ with  $i\not=j^{(k-1)}$ and (2) $W^{(k)}_{j,i}=0$ for $(j,i)\in \J^{(k)}\times \I^{(k)}$ with  $j^{(k-1)}\not=i^{(k-1)}$. We refer to \cite[Construction~4.13]{OwhadiMultigrid:2015} for a possible cellular choice for $W^{(k)}$ so that $W^{(k)}W^{(k),T}=J^{(k)}$ when $\pi^{(k,k+1)}$ is cellular with constant non-zero entries in each cell.
\end{Remark}

 \begin{Example}\label{egkajhdlkjdini}
 Let $h, \delta \in (0,1)$.
  For an open non-void convex subset $\tau$ of  $\Omega \subset \R^{d}$ write $\cP_{s-1}(\tau)$ the space of $d$-variate polynomials on $\tau$ of degree at most $s-1$. Let $n={s+d-1 \choose d}$ be the dimension of $\cP_{s-1}(\tau)$. Let $\aleph=\{1,\ldots,n\}$.
  Let $\bar{\I}^{(q)}$ and $\I^{(q)}$ be index trees defined as in Condition \ref{cond7fyf}.
   Let  $(\tau_t^{(k)})_{t\in \bar{\I}^{(k)}}$ be convex uniformly Lipschitz convex sets forming a nested partition of $\Omega$, i.e. $\Omega=\cup_{i\in \bar{\I}^{(k)}}\tau_i^{(k)}$ is a disjoint union except for the boundaries, for $k\in \{1,\ldots,q\}$ and $\tau_i^{(k)}=\cup_{j\in \bar{\I}^{(k+1)}: j^{(k)}=i}\tau_j^{(k+1)}$ for $k\in \{1,\ldots,q-1\}$. Assume that each $\tau_i^{(k)}$,  contains a ball of radius $h^k$, and is contained in a ball of radius $\delta h^k$. For $i\in \bar{\I}^{(k)}$, let $(\phi^{(k)}_{i,\alpha})_{\alpha\in \aleph}$ be an $L^2(\tau_i^{(k)})$-orthonormal basis of $\cP_{s-1}(\tau_i^{(k)})$. Note that the set of indices $(i,\alpha)\in \bar{\I}^{(q)}\times \aleph$ form the index tree  $\I^{(q)}$ and $\I^{(k)}=\bar{I}^{(k)}\times \aleph$.
\end{Example}

 \begin{Proposition}\label{propkajhdlkjdsob}
 Condition \ref{condjehkgdedgu} is satisfied  with the measurement functions
 presented in Example \ref{egkajhdlkjdini}  with  $H=h^s$ and a constant $C_{s}$ depending only on $d, \delta$ and $s$.
\end{Proposition}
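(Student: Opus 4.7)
\textbf{Plan for the proof of Proposition \ref{propkajhdlkjdsob}.}
I observe first that, by construction, $\Phi^{(k)} = \Span\{\phi^{(k)}_{i,\alpha} : i\in \bar\I^{(k)},\, \alpha\in\aleph\}$ is precisely the space of functions which restrict to an element of $\cP_{s-1}(\tau_i^{(k)})$ on each cell $\tau_i^{(k)}$ (with no continuity across cell boundaries). Since $H = h^s$, the two estimates of Condition \ref{condjehkgdedgu} become piecewise polynomial versions of a Poincaré inequality and its inverse, with scale $h^k$. I would prove these by a standard Bramble--Hilbert argument on each cell, combined with an explicit bump-function construction, all with constants depending only on $d, s, \delta$ (shape regularity being guaranteed by the fact that every $\tau_i^{(k)}$ is convex, contains a ball of radius $h^k$, and is contained in a ball of radius $\delta h^k$).

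\textbf{Step 1: the Poincaré estimate (item \ref{linconmat6disQQl}).} Let $\phi \perp_{L^2} \Phi^{(k)}$, so that $\phi$ is $L^2(\tau_i^{(k)})$-orthogonal to $\cP_{s-1}(\tau_i^{(k)})$ on every cell. Given a test $v\in H^s_0(\Omega)$, let $v_i$ be the $L^2(\tau_i^{(k)})$-projection of $v$ onto $\cP_{s-1}(\tau_i^{(k)})$. The standard Bramble--Hilbert / Deny--Lions estimate on a shape-regular convex set of diameter $\lesssim h^k$ gives
\[
\|v - v_i\|_{L^2(\tau_i^{(k)})} \le C\,(h^k)^s\, |v|_{H^s(\tau_i^{(k)})},
\]
with $C = C(d,s,\delta)$. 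Then $\int_{\tau_i^{(k)}}\phi v = \int_{\tau_i^{(k)}}\phi(v-v_i)$, and summing over $i$, applying Cauchy--Schwarz, and using the equivalence of $\|\cdot\|_{H^s(\Omega)}$ and $\|\cdot\|_{H^s_0(\Omega)}$ on $H^s_0(\Omega)$, I obtain $\bigl|\int\phi v\bigr| \le C h^{ks}\|\phi\|_{L^2(\Omega)}\|v\|_{H^s_0(\Omega)}$. Taking sup over $v$ gives $\|\phi\|_{H^{-s}(\Omega)} \le C H^k \|\phi\|_{L^2(\Omega)}$.

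\textbf{Step 2: the inverse estimate (item \ref{linconmat5disQQl}).} For $\phi\in\Phi^{(k)}$ with $\phi|_{\tau_i^{(k)}}=\phi_i\in\cP_{s-1}(\tau_i^{(k)})$, I build an explicit dual test. Fix an interior ball $\tilde\tau_i^{(k)}\subset \tau_i^{(k)}$ of radius $h^k/2$, and choose $\eta_i\in C^\infty_c(\tau_i^{(k)})$ with $0\le\eta_i\le 1$, $\eta_i\equiv 1$ on $\tilde\tau_i^{(k)}$, and $\|\partial^\alpha\eta_i\|_{L^\infty}\le C h^{-k|\alpha|}$ for $|\alpha|\le s$; this can be done with $C=C(d,s,\delta)$ because the cells are shape-regular. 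Set $v := \sum_i \phi_i\eta_i$; since each summand is $C^\infty$ with compact support in $\tau_i^{(k)}\subset\Omega$, we have $v\in C^\infty_c(\Omega)\subset H^s_0(\Omega)$. By equivalence of $L^2$ norms of polynomials of bounded degree on $\tilde\tau_i^{(k)}$ and $\tau_i^{(k)}$ (a Markov/Remez-type estimate on shape-regular convex sets),
\[
\int_\Omega \phi\,v \;=\; \sum_i \int_{\tau_i^{(k)}}\phi_i^2\eta_i \;\ge\; \sum_i \int_{\tilde\tau_i^{(k)}}\phi_i^2 \;\ge\; c\,\|\phi\|_{L^2(\Omega)}^2 .
\]
For the $H^s_0$-norm of $v$, I combine Leibniz on $\phi_i\eta_i$ with the polynomial inverse estimate $\|\partial^\beta\phi_i\|_{L^2(\tau_i^{(k)})}\le C h^{-k|\beta|}\|\phi_i\|_{L^2(\tau_i^{(k)})}$ and the bounds on $\partial^\alpha\eta_i$ to get $\|\phi_i\eta_i\|_{H^s(\tau_i^{(k)})}\le C h^{-ks}\|\phi_i\|_{L^2(\tau_i^{(k)})}$; summing yields $\|v\|_{H^s_0(\Omega)}\le C h^{-ks}\|\phi\|_{L^2(\Omega)}$. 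Hence
\[
\|\phi\|_{H^{-s}(\Omega)} \;\ge\; \frac{\int\phi v}{\|v\|_{H^s_0(\Omega)}} \;\ge\; c' h^{ks}\|\phi\|_{L^2(\Omega)},
\]
which rearranges to $\|\phi\|_{L^2(\Omega)}\le C_s H^{-k}\|\phi\|_{H^{-s}(\Omega)}$ with $H=h^s$.

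\textbf{Main obstacle.} The Poincaré part is essentially Bramble--Hilbert; the genuine work is in Step 2, where one must (i) verify that the cutoff $\eta_i$ with the stated derivative bounds genuinely exists with constants depending only on $d,s,\delta$ (using shape regularity together with standard convolution-against-a-mollifier constructions on the inscribed ball), (ii) use the polynomial inverse estimate with a constant depending only on $d,s,\delta$, which again follows from shape regularity, and (iii) show that the $L^2$ norm of a polynomial of degree $\le s-1$ on $\tau_i^{(k)}$ is controlled by its $L^2$ norm on the inner ball $\tilde\tau_i^{(k)}$ with a uniform constant. Each of these is classical, but packaging them so that the final constant $C_s$ depends only on $d,\delta,s$ is the delicate part.
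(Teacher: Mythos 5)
Your proof is correct and follows the same overall strategy as the paper, which establishes this result as a special case of Proposition \ref{propkajhdlkjd}: localize to the cells $\tau_i^{(k)}$, prove the Poincar\'e direction by a Bramble--Hilbert argument against the cellwise $L^2$-projection onto $\cP_{s-1}$, and prove the inverse direction by exploiting shape regularity and the finite dimensionality of $\cP_{s-1}$. Your Step 1 is essentially identical to the paper's treatment of the approximation/Poincar\'e items. The only real divergence is in Step 2: the paper bounds $\|\phi\|_{H^{-s}(\Omega)}^2$ from below by $\sum_i \|p_i\|^2_{H^{-s}(B(x_i^{(k)},\delta h^k))}$ by restricting the test functions in the variational characterization of the dual norm to disjoint inscribed balls, then rescales each ball to a reference configuration and invokes the equivalence of the two quadratic norms $\|\cdot\|_{H^{-s}(B(0,\delta))}$ and $\|\cdot\|_{L^2(B(0,2))}$ on the finite-dimensional space $\cP_{s-1}$ to get a uniform constant; you instead exhibit an explicit global test function $v=\sum_i \phi_i\eta_i$ and control $\int\phi v$ and $\|v\|_{H^s_0}$ via cutoff bounds, a Remez-type comparison on the inner ball, and the Markov inverse inequality. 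The two are interchangeable: the paper's route avoids constructing cutoffs and invoking polynomial inverse estimates (all of that is absorbed into one abstract norm-equivalence-after-rescaling step), while yours is more constructive and makes the dependence of $C_s$ on $d,\delta,s$ explicit at each stage. No gap.
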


\begin{Remark}
The gamblet transform turns the resolution of  $\L u=g$ into that of a sequence of independent linear systems with uniformly bounded condition numbers. When $\L$ is a local operator, and the measurement functions are localized, Subsection \ref{subsecexpdein} shows that the resulting gamblets are also localized, the fast gamblet transform described in Subsection \ref{subsefgtin} enables the computation of gamblets in $\L u=g$ in $\mathcal{O}(N \log^{3d}(N))$ complexity and the resolution of  $\L u=g$ (up to grid-size accuracy in $H^s_0(\Omega)$-norm) in $\mathcal{O}(N \log^{d+1}(N))$ complexity.
\end{Remark}

\subsection{Exponential decay of gamblets}\label{subsecexpdein}

Consider the operator $\L$ introduced in \eqref{eqkjdlhejdjhii}.
\begin{Construction}[Locality of measurement functions]\label{consmeasfomi}
Let $h>0$ and $\delta \in (0,1)$. For $|\aleph|,|\beth| \in \mathbb{N}^{*}$, Let  $\aleph, \beth$ be the finite sets $\{1,\ldots,|\aleph|\}$  and $\{1,\ldots,|\beth|\}$. Let $(\tau_{i})_{i\in \beth}$ be a partition of $\Omega$ such that each $\tau_i$ is convex, uniformly Lipschitz, contains a ball of radius $\delta h$ and is contained in a ball of radius $h$.
Let $(\phi_{i,\alpha})_{(i,\alpha)\in \beth \times \aleph} \in H^{-s}(\Omega)$ be  such that  the support of $\phi_{i,\alpha}$ is contained in $\tau_i$. For $i\in \beth$, let $\Omega_i\subset \Omega$ such that
  $\Omega_i$  contains $\tau_i$,  $\Omega_i^c \cap \Omega$ is at distance at least $ h$ from $\tau_i$, and
 $\partial \Omega_i$  is Lipschitz.
\end{Construction}

\begin{Definition}\label{defgraphmatdistbis}
Let $\I$ be a finite index set.
 For an $\I\times \I$ matrix $X$, we defined the graph distance $\db^X$ on $\I$ as follows.
For $(i,j)\in \I \times \I$ we define  $\db^X_{i,j}$, the  graph distance of $X$ between $i$ and $j$, as the minimal length of paths connecting $i$ and $j$ within the matrix graph of $X$, i.e. $\db^X_{i,j}$ is the smallest number $m$ such that there exists indices $i_0,i_1,\ldots,i_m\in \I$ with $i_0=i$, $i_m=j$ and
$X_{i_{t-1},i_t}\not=0$ for $t\in \{1,\ldots,m\}$ with the convention that $d_{i,j}^X=+\infty$ if no such path exists. Moreover, $\db^X_{i,j}=0$ if and only if $i=j$.
\end{Definition}

Let $\C$ be the $\beth\times \beth$ (connectivity) matrix defined by $\C_{i,j}=1$ if there exists
 a $(\chi_i,\chi_j) \in H^s_0(\Omega_i) \times H^s_0(\Omega_j)$ such that $\<\chi_j,\chi_i\>\not=0$, and $\C_{i,j}=0$ otherwise.
Let $\db:=\db^{\C}$ be the graph distance on $\beth$ induced by the connectivity matrix $\C$.
For $n\in \mathbb{N}$, let $\Omega_{i,n}:=\cup_{j:\db(i,j)\leq n}\Omega_j$, and note that
$\Omega_{i,0}=\Omega_i, i \in \beth$.

For $(i,\alpha)\in \beth\times \aleph$, let $\psi_{i,\alpha}$ be the minimizer of $\|\psi\|$ over $\psi \in H^s_0(\Omega)$ subject to
$[\phi_{j,\beta},\psi]=\delta_{i,j}\delta_{\alpha,\beta}$ for $(j,\beta)\in \beth\times \aleph$.
In addition, for each subset $ \Omega_i, i \in \beth$ and
each $\alpha\in  \aleph$, let $\psi_{i,\alpha}^n$ be the minimizer of $\|\psi\|$ over
 $\psi \in H^s_0(\Omega_{i,n})$ subject to $[\phi_{j,\beta},\psi]=\delta_{i,j}\delta_{\alpha,\beta}$ for $(j,\beta)\in \beth\times \aleph$.

Write $\Phi:=\Span\{\phi_{i,\alpha}\mid (i,\alpha)\in \beth \times \aleph\}$.
\begin{Condition}\label{condkjehdu}
There exists $C_{\min}, C_{\max}>0$ such that for $\varphi \in H^{-s}(\Omega)$.
 \begin{equation}\label{eqkjddlkjdldjjiejsob}
C_{\min}\, \inf_{\phi\in \Phi}\|\varphi-\phi\|_{H^{-s}(\Omega)}^2 \leq \sum_{i\in \beth} \inf_{\phi\in \Phi}\| \varphi-\phi\|_{H^{-s}(\Omega_i)}^2\leq C_{\max}\,\inf_{\phi\in \Phi} \|\varphi-\phi\|_{H^{-s}(\Omega)}^2\,.
\end{equation}
\end{Condition}
\begin{Theorem}\label{thmfix4}
Given Condition \ref{condkjehdu},
there exists a constant $C$ depending only on $C_{\L}, C_{\L^{-1}}$ and $C_{\max}/C_{\min}$ such that for $(i,\alpha)\in \beth \times \aleph$,
$
\|\psi_{i,\alpha}\|_{H^s_0(\Omega\setminus\Omega_{i,n})}\leq \|\psi_{i,\alpha}^0\| e^{-n/ C}
$
and
\begin{equation}
\|\psi_{i,\alpha}-\psi_{i,\alpha}^n\|_{H^s_0(\Omega)}\leq \|\psi_{i,\alpha}^0\| e^{-n/ C},\quad n \in \mathbb{N}\, .
\end{equation}
\end{Theorem}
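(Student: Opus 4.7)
I would prove Theorem \ref{thmfix4} by the standard Caccioppoli-iteration strategy that underlies the exponential decay of basis functions in numerical homogenization (LOD). Throughout, I will work with the scalar product $\<\cdot,\cdot\>=[\L\cdot,\cdot]$; since $\|\cdot\|$ is equivalent to $\|\cdot\|_{H^s_0(\Omega)}$ with constants $C_\L,C_{\L^{-1}}$, switching between the two costs only a factor absorbed into the final constant. The starting observation is that $\psi:=\psi_{i,\alpha}$ and $\psi^n:=\psi_{i,\alpha}^n$ are, respectively, the $\<\cdot,\cdot\>$-orthogonal projections onto the affine subspaces of $H^s_0(\Omega)$ and $H^s_0(\Omega_{i,n})$ cut out by the measurement constraints $[\phi_{j,\beta},\cdot]=\delta_{ij}\delta_{\alpha\beta}$. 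Their Euler--Lagrange conditions read $\<\psi,v\>=0$ for every $v\in H^s_0(\Omega)$ with $[\phi_{j,\beta},v]=0$ for all $(j,\beta)$, and similarly for $\psi^n$ with $v\in H^s_0(\Omega_{i,n})$. The locality encoded in the matrix $\C$ and the distance $\db$ then implies $\<u,v\>=0$ whenever the supports of $u$ and $v$ live in $\Omega_j,\Omega_{j'}$ with $\db(j,j')\geq 2$.

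The heart of the argument is a one-step decay estimate for $S_n:=\|\psi\|_{H^s_0(\Omega\setminus\Omega_{i,n})}^2$. Fix $n\geq 1$; I will build a test function $v\in H^s_0(\Omega)$ that vanishes on $\Omega_{i,n-1}$, satisfies $[\phi_{j,\beta},v]=0$ for all $(j,\beta)$, and agrees with $\psi$ outside $\Omega_{i,n}$. Construction: start from $\eta\psi$, where $\eta$ is a smooth cut-off equal to $1$ on $\Omega\setminus\Omega_{i,n}$ and $0$ on $\Omega_{i,n-1}$, then subtract a local element of $\Phi$ supported in the annulus $\Omega_{i,n}\setminus\Omega_{i,n-1}$ that restores all vanishing-measurement constraints. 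Condition \ref{condkjehdu} is what makes this correction well-controlled: the local-to-global equivalence \eqref{eqkjddlkjdldjjiejsob} shows that the best local $H^{-s}$ approximation by $\Phi$ on each $\tau_j$ is comparable to the global approximation error, which bounds the corrector in $H^s_0$ by $C'\|\psi\|_{H^s_0(\Omega_{i,n}\setminus\Omega_{i,n-1})}^2\leq C'(S_{n-1}-S_n)$. Inserting this $v$ into $\<\psi,v\>=0$, expanding, and applying Cauchy--Schwarz together with the locality of $\<\cdot,\cdot\>$ to confine cross-terms to the annulus yields the recursion
\[
S_n \;\leq\; C_1\,(S_{n-1}-S_n),\qquad\text{equivalently}\qquad S_n\leq\gamma\,S_{n-1}\ \text{ with }\ \gamma=\tfrac{C_1}{1+C_1}<1.
\]
Iterating gives $S_n\leq\gamma^n S_0\leq\gamma^n\|\psi\|^2$; since $\psi_{i,\alpha}^0$ extended by zero is admissible in the global minimization problem defining $\psi$, we have $\|\psi\|\leq\|\psi_{i,\alpha}^0\|$, establishing the first bound with $C=-1/\ln\gamma$.

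For the second bound, I would build a global admissible modification of $\psi$ whose restriction to $\Omega_{i,n}$ is admissible locally. Specifically, take $\eta_n\psi$ for a cut-off supported in $\Omega_{i,n}$ with $\eta_n\equiv 1$ on $\Omega_{i,\lfloor n/2\rfloor}$, then correct by a local element of $\Phi$ (again using Condition \ref{condkjehdu}) so the measurement constraints hold exactly; call the result $\tilde\psi^n\in H^s_0(\Omega_{i,n})$. By the minimality of $\psi^n$, $\|\psi-\psi^n\|\leq\|\psi-\tilde\psi^n\|$, and $\|\psi-\tilde\psi^n\|$ is controlled, via product-rule estimates for $(1-\eta_n)\psi$, by $\|\psi\|_{H^s_0(\Omega\setminus\Omega_{i,\lfloor n/2\rfloor})}$, which decays exponentially by the first bound (absorbing the factor $1/2$ into the constant $C$). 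The main obstacle is making the annular correctors of the second paragraph genuinely local while satisfying \emph{all} global measurement constraints simultaneously; Condition \ref{condkjehdu} is the precise hypothesis that decouples these corrections cell-by-cell. In the high-order case $s>1$, the cut-off $\eta$ also produces lower-order Sobolev terms that need to be controlled via the norm equivalence on each uniformly Lipschitz cell $\tau_j$, which is standard but tedious.
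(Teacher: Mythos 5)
Your proposal takes the classical annulus-iteration route (cut-off, annular corrector, Caccioppoli-type recursion $S_n\leq C_1(S_{n-1}-S_n)$), which is genuinely different from the paper's argument. The paper instead bounds the condition number of the additive-Schwarz operator $P=\sum_{i\in\beth}P_i$, the sum of the $\<\cdot,\cdot\>$-orthogonal projections onto the local spaces $\V_i^\perp=H^s_0(\Omega_i)\cap\V^\perp$: Theorem \ref{propjguyug6} identifies $\lambda_{\min}(P)$ and $\lambda_{\max}(P)$ as exactly the optimal constants in the dual-norm inequality \eqref{eqljdhelkjdhkh3}; Corollary \ref{corthmlkkjhkdfual}, Example \ref{egdkj3hrr} and Lemma \ref{lemdkljedhlkfjh} convert Condition \ref{condkjehdu} into that inequality at the cost of powers of $C_\L C_{\L^{-1}}$; and Theorem \ref{thmswkskjsh} then runs a damped Richardson iteration $\chi_{n+1}=\chi_n+\zeta P(\psi^0_{i,\alpha}-\chi_n)$ whose $n$-th iterate is automatically supported in $\Omega_{i,n}$ and converges at rate $\big(\Cond(P)-1\big)/\big(\Cond(P)+1\big)$. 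No cut-off functions appear anywhere in the paper's proof.

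The difference matters because your route has a genuine gap under the stated hypotheses. Theorem \ref{thmfix4} assumes only Condition \ref{condkjehdu}, and the constant may depend only on $C_\L$, $C_{\L^{-1}}$ and $C_{\max}/C_{\min}$. Condition \ref{condkjehdu} compares local and global best-approximation errors of an arbitrary $\varphi\in H^{-s}(\Omega)$ by elements of $\Phi$; by the duality of Lemma \ref{lemjhgjyguyguybis} it controls the spectrum of $P$, but it yields neither (a) a bounded local right-inverse of the measurement map, which you need to build the annular corrector with $H^s_0$-norm controlled by the defect $[\phi_{j,\beta},\eta\psi]$ (that is what \eqref{eqkjdhddkuieheu} of Condition \ref{condmeasfuncloc} provides), nor (b) local Poincar\'{e} inequalities $\|D^t f\|_{L^2}\leq C h^{s-t}\|f\|_{H^s_0}$ for $f\in\V^\perp$, which you need to bound the defect itself and, when $s>1$, the commutator terms $D^t\eta\, D^{s-t}\psi$ (that is \eqref{eqconlocmeas}--\eqref{eqconlocmeasnext}, not a mere norm equivalence on Lipschitz cells). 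Note also that the corrector must be an element of $H^s_0$ supported in the annulus, not ``a local element of $\Phi\subset H^{-s}(\Omega)$'' as written. What you sketch is essentially the proof of Theorem \ref{thmegdkj3hrrsuitebis}, which does assume Condition \ref{condmeasfuncloc} and locality of $\L$ and whose constant correspondingly depends on $d,\delta,s$; it does not establish Theorem \ref{thmfix4} as stated, and the paper's subspace-decomposition argument is designed precisely to avoid those extra hypotheses.
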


\begin{Condition}\label{condmeasfuncloc}
Given the
 Construction \ref{consmeasfomi} with  values $\delta$ and $h$, and Condition \ref{condkjehdu},
define
$\V^\perp:=\{f\in H^s_0(\Omega)\mid [\phi_{i,\alpha},f]=0\text{ for }(i,\alpha)\in \beth \times \aleph \}$.
There exists a constant $C_{\lsob}\geq |\aleph|$  such that
 \begin{equation}\label{eqconlocmeas}
 \|D^t f\|_{L^2(\Omega)}\leq C_{\lsob} h^{s-t} \|f\|_{H^s_0(\Omega)}  \text{ for }t\in \{0,1,\ldots,s\},\quad f
\in \V^\perp\, ,
 \end{equation}
 \begin{equation}\label{eqconlocmeasnext}
 \sum_{i\in \beth, \alpha \in \aleph}  [\phi_{i,\alpha},f]^2 \leq C_{\lsob} \big( \|f\|_{L^2(\Omega)}^2+h^{2s} \|f\|_{H^s_0(\Omega)}^2\big), \quad f\in H^s_0(\Omega)\,,
 \end{equation}
 and
\begin{equation}\label{eqkjdhddkuieheu}
|x|^2\leq C_{\lsob} h^{-2s} \|\sum_{\alpha \in \aleph} x_\alpha \phi_{i,\alpha}\|_{H^{-s}(\tau_i)}^2,\quad
i\in \beth, x\in \R^\aleph\, .
\end{equation}
\end{Condition}

\begin{Theorem}\label{thmegdkj3hrrsuitebis}
Assume that the operator $\L$ is local in the sense that $\<\psi,\psi'\>=0$ if $\psi$ and $\psi'$ have disjoint supports. Let  $(\phi_{i,\alpha})_{(i,\alpha)\in \beth \times \aleph}$ satisfy
 Condition \ref{condmeasfuncloc} with values $\delta$ and $h$ from Construction \ref{consmeasfomi}.   Then there exists a constant $C_{d,\delta,s}$ depending only on  $d, \delta$ and $s$ such that  Condition \ref{condkjehdu} holds true with $C_{\min}^{-1}\leq C_{d,\delta,s}$ and $ C_{\max}\leq C_{d,\delta,s}$. In particular, there exists a constant $C$ depending only on
 $d, \delta, s, C_\L$ and $C_{\L^{-1}}$ such that
for $(i,\alpha)\in \beth \times \aleph$,
$
\|\psi_{i,\alpha}\|_{H^s_0(\Omega\setminus\Omega_{i,n})}\leq C e^{-n/ C}
$
and
\begin{equation}
\|\psi_{i,\alpha}-\psi_{i,\alpha}^n\|_{H^s_0(\Omega)}\leq C e^{-n/ C}
\end{equation}
\end{Theorem}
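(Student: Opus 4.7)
The plan is to establish Condition \ref{condkjehdu} with $C_{\min}^{-1}, C_{\max} \le C(d,\delta,s)$, after which Theorem \ref{thmfix4} immediately delivers the two exponential-decay estimates (modulo a separate bound on the prefactor $\|\psi^0_{i,\alpha}\|$). The inequalities in Condition \ref{condkjehdu} have different flavour: the upper ($C_{\max}$ side) is essentially a geometric statement using the bounded overlap of the $\Omega_i$, while the lower ($C_{\min}$ side) is the technical heart of the result and uses all three items of Condition \ref{condmeasfuncloc}.

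For the upper bound, fix a global minimiser $\phi^*\in\Phi$ of $\|\varphi-\phi\|_{H^{-s}(\Omega)}$, write $e_i:=\|\varphi-\phi^*\|_{H^{-s}(\Omega_i)}$, and select $v_i\in H^s_0(\Omega_i)$ of unit $H^s_0$-norm with $[\varphi-\phi^*,v_i]=e_i$. Construction \ref{consmeasfomi} forces the overlap multiplicity of $\{\Omega_i\}$ to be bounded by some $M(d,\delta)$, so extending each $v_i$ by zero and setting $v:=\sum_i e_i v_i$ yields $\|v\|_{H^s_0(\Omega)}^2\le M\sum_i e_i^2$; pairing against $\varphi-\phi^*$ and dividing gives $\sum_i e_i^2\le M\|\varphi-\phi^*\|_{H^{-s}(\Omega)}^2$, so $C_{\max}\le M(d,\delta)$.

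The lower bound rests on Hahn--Banach duality, $\inf_{\phi\in\Phi}\|\varphi-\phi\|_{H^{-s}(\Omega)}=\sup_{f\in\V^\perp,\,\|f\|_{H^s_0(\Omega)}=1}[\varphi,f]$, together with its local counterpart over $\V^\perp_i:=\{v\in H^s_0(\Omega_i):[\phi_{j,\beta},v]=0\text{ for all }(j,\beta)\text{ with }\tau_j\subset\Omega_i\}$. By Cauchy--Schwarz, it then suffices to produce, for every $f\in\V^\perp$, a \emph{stable local decomposition} $f=\sum_i f_i$ with $f_i\in\V^\perp_i$ and $\sum_i\|f_i\|_{H^s_0(\Omega_i)}^2\le C(d,\delta,s)\|f\|_{H^s_0(\Omega)}^2$. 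I would build this in two stages. First, a smooth partition of unity $\{\eta_i\}$ subordinate to $\{\Omega_i\}$ with $|D^t\eta_i|\le Ch^{-t}$ gives trial pieces $\tilde f_i:=\eta_i f\in H^s_0(\Omega_i)$ summing to $f$; the Leibniz rule combined with Condition \ref{condmeasfuncloc}.(1) (which yields $\|D^t f\|_{L^2(\Omega)}\le C_{\lsob} h^{s-t}\|f\|_{H^s_0(\Omega)}$ precisely because $f\in\V^\perp$) controls the pollution introduced by the $\eta_i$-derivatives, and summing over $i$ with bounded overlap gives $\sum_i\|\tilde f_i\|_{H^s_0(\Omega_i)}^2\le C\|f\|_{H^s_0(\Omega)}^2$. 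Second, the $\tilde f_i$ generally fail to lie in $\V^\perp_i$, so I would use Condition \ref{condmeasfuncloc}.(3) to extract, for each $(j,\beta)$, a locally supported dual function $\xi_{j,\beta}\in H^s_0(\tau_j)$ with $[\phi_{j,\gamma},\xi_{j,\beta}]=\delta_{\beta,\gamma}$ and $\|\xi_{j,\beta}\|_{H^s_0(\tau_j)}\le Ch^{-s}$; setting $m_i^{(j,\beta)}:=[\phi_{j,\beta},\tilde f_i]$, noting that $\sum_i m_i^{(j,\beta)}=[\phi_{j,\beta},f]=0$ since $f\in\V^\perp$, and correcting to $f_i:=\tilde f_i-\sum_{(j,\beta):\tau_j\subset\Omega_i}m_i^{(j,\beta)}\xi_{j,\beta}$ produces $f_i\in\V^\perp_i$ while preserving $\sum_i f_i=f$. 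Bounding the residual mass $\sum_{j,\beta}|m_i^{(j,\beta)}|^2$ by $C_{\lsob}(\|\eta_i f\|_{L^2}^2+h^{2s}\|\eta_i f\|_{H^s_0}^2)$ via Condition \ref{condmeasfuncloc}.(2) matches the $h^{-s}$ blow-up of $\xi_{j,\beta}$ against the $h^s$ smallness of $\eta_i f$ in $L^2$ supplied by Condition \ref{condmeasfuncloc}.(1), producing an $h$-independent constant.

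The main obstacle is synchronising the corrections in the second stage so that $\sum_i f_i=f$ is preserved: this is precisely why the residuals $m_i^{(j,\beta)}$ must sum to zero in $i$ for every $(j,\beta)$, and why the three items of Condition \ref{condmeasfuncloc} must conspire together --- (1) controls the partition-of-unity pollution, (2) controls the residual masses, and (3) supplies the locally supported correctors. Once Condition \ref{condkjehdu} is in hand, Theorem \ref{thmfix4} yields $\|\psi_{i,\alpha}\|_{H^s_0(\Omega\setminus\Omega_{i,n})}\le\|\psi^0_{i,\alpha}\|e^{-n/C}$ and similarly for the approximation error with $C$ depending only on $d,\delta,s,C_\L,C_{\L^{-1}}$; the prefactor $\|\psi^0_{i,\alpha}\|$ is in turn bounded by plugging $\xi_{i,\alpha}$ from Condition \ref{condmeasfuncloc}.(3) into the variational problem defining $\psi^0_{i,\alpha}$, since $\xi_{i,\alpha}\in H^s_0(\tau_i)\subset H^s_0(\Omega_i)$ automatically satisfies every global constraint $[\phi_{j,\beta},\xi_{i,\alpha}]=\delta_{(i,\alpha),(j,\beta)}$, giving $\|\psi^0_{i,\alpha}\|\le\sqrt{C_\L}\,\|\xi_{i,\alpha}\|_{H^s_0}$ and completing the proof.
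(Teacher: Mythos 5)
Your proposal is correct and follows essentially the same route as the paper: the paper proves $C_{\max}\leq C(d,\delta)$ by the bounded-overlap argument (Lemma \ref{lemdkjdhjh3e}), and proves the $C_{\min}$ bound by exactly your two-stage decomposition — a partition of unity $\eta_i$ with $\|D^t\eta_i\|_{L^\infty}\leq Ch^{-t}$ followed by subtraction of the quasi-interpolant $\tilde P(\eta_i f)=\sum_{j,\beta}\tilde\psi_{j,\beta}[\phi_{j,\beta},\eta_i f]$ built from the locally supported biorthogonal elements of Condition \ref{condmeasfuncloc}.(3) (your $\xi_{j,\beta}$), with items (1), (2), (3) of Condition \ref{condmeasfuncloc} playing precisely the roles you assign them, before concluding via the abstract localization theorem (Theorem \ref{thmswkskjsh}/\ref{thmfix4}) and the test-function bound on $\|\psi^0_{i,\alpha}\|$. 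The only caveat, shared with the paper's own argument (Condition \ref{conduihiuh}.(2)), is that the corrected pieces must annihilate \emph{every} $\phi_{j,\beta}$ whose support meets $\Omega_i$, not only those with $\tau_j\subset\Omega_i$ — which is why one implicitly takes each $\Omega_i$ to be a union of cells $\tau_j$, so that the two index sets coincide.
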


For a local operator,
Theorem \ref{thmegdkj3hrrsuitebis} demonstrates that
  Condition \ref{condmeasfuncloc} is sufficient to guarantee the localization of the gamblets.
The following result demonstrates that the examples of measurement functions that follow it  satisfy the locality Condition \ref{condmeasfuncloc}.
\begin{Theorem}\label{thmpropegkdejkdhdjkbis}
The measurement functions of Examples \ref{egkdejkdhdjk}, \ref{egkdejkdhkjhdjk} and \ref{egkdejkdhdjkbis},  satisfy Condition \ref{condmeasfuncloc}.
\end{Theorem}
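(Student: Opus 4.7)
The plan is to verify the three inequalities \eqref{eqconlocmeas}, \eqref{eqconlocmeasnext}, \eqref{eqkjdhddkuieheu} constituting Condition \ref{condmeasfuncloc} separately for each of the three examples. In all three cases the workhorse is a scaling argument: each cell $\tau_i$ contains a ball of radius $\delta h$ and sits inside a ball of radius $h$, so a bi-Lipschitz affine map carries $\tau_i$ to a reference set $\hat\tau_i$ of unit size with controlled aspect ratio. Pulling back, every occurrence of $h^s$ (resp.\ $h^{s-t}$, resp.\ $h^{-s}$) in Condition \ref{condmeasfuncloc} is the natural scaling of the corresponding Sobolev or dual norm on a domain of diameter $h$, so the whole task reduces to proving dimensionless inequalities on reference domains with constants depending only on $d$, $\delta$, $s$, and $|\aleph|$.

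For \eqref{eqconlocmeas} I would first localize: for $f\in\V^\perp$, $f$ restricted to $\tau_i$ is $L^2(\tau_i)$-orthogonal to $\mathrm{span}\{\phi_{i,\alpha}\mid \alpha\in\aleph\}$ in each of the three examples (in the polynomial Example \ref{egkajhdlkjdini}-type setup this is literal; in the other two examples the local measurement functions still span a space that \emph{contains} a fixed set of polynomial moments, by construction, so the orthogonality forces vanishing of polynomial moments up to some degree that is at least $0$). The Bramble--Hilbert / deny-Lions lemma on $\hat\tau_i$ then gives $\|\hat D^t \hat f\|_{L^2(\hat\tau_i)}\leq \hat C\|\hat f\|_{H^s(\hat\tau_i)}$ for $t\le s$ on the reference cell; scaling back and summing over $i\in\beth$ yields \eqref{eqconlocmeas} with constant depending only on $d,\delta,s$. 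The same reference-domain lemma, applied in the form of a trace/dual estimate, yields \eqref{eqconlocmeasnext}: on $\hat\tau_i$ the linear functional $\hat f\mapsto[\hat\phi_{i,\alpha},\hat f]$ is continuous on $H^s(\hat\tau_i)$ and a standard interpolation inequality gives $\sum_\alpha[\hat\phi_{i,\alpha},\hat f]^2\lesssim \|\hat f\|_{L^2(\hat\tau_i)}^2+\|\hat f\|_{H^s(\hat\tau_i)}^2$; rescaling to $\tau_i$ produces the Jackson-style weighting $\|f\|_{L^2(\tau_i)}^2+h^{2s}\|f\|_{H^s(\tau_i)}^2$, and summing gives \eqref{eqconlocmeasnext} with the required $C_{\lsob}$.

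For \eqref{eqkjdhddkuieheu} the $\aleph$-indexed local functionals $(\phi_{i,\alpha})_{\alpha\in\aleph}$ must be uniformly linearly independent in $H^{-s}(\tau_i)$. By the same affine pullback, it suffices to prove that on the reference cell $\hat\tau_i$ the Gram matrix $G_{\alpha\beta}:=\langle\hat\phi_{i,\alpha},\hat\phi_{i,\beta}\rangle_{H^{-s}(\hat\tau_i)}$ is bounded below. Here one uses the concrete structure of each of Examples \ref{egkdejkdhdjk}, \ref{egkdejkdhkjhdjk}, \ref{egkdejkdhdjkbis}: in each case the restrictions $(\hat\phi_{i,\alpha})_\alpha$ span (or contain a basis of) a fixed finite-dimensional subspace of $H^{-s}(\hat\tau_i)$ that varies in a Lipschitz way with the cell's shape inside the compact family of admissible reference cells; compactness plus finite-dimensionality forces a uniform lower bound on $G$. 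Scaling back inserts the factor $h^{-2s}$ on the right-hand side of \eqref{eqkjdhddkuieheu} (since $\|\cdot\|_{H^{-s}}$ scales like $h^{d/2+s}$ on cells of diameter $h$, whereas the normalization of $\phi_{i,\alpha}$ chosen in the examples is $O(h^{-d/2})$ in $L^2$).

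The main obstacle will be \eqref{eqkjdhddkuieheu} for whichever of the three examples uses the most singular functionals (e.g.\ point evaluations or integrals against unnormalized $L^2$-orthonormal polynomial bases), since the $H^{-s}(\tau_i)$ norm must be controlled carefully and the permissible singularity depends on whether $s>d/2$ (for point evaluations). I would handle this by writing out, on the reference domain, an explicit test function in $H^s_0(\hat\tau_i)$ dual to each $\hat\phi_{i,\alpha}$ — concretely a suitably normalized bubble multiplied by the dual polynomial — and using duality $\|\varphi\|_{H^{-s}(\hat\tau_i)}\ge[\varphi,\psi]/\|\psi\|_{H^s_0(\hat\tau_i)}$ to extract the uniform lower bound. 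Once \eqref{eqkjdhddkuieheu} is established on the reference cell in each example, the rest of the verification is the routine scaling-and-summing outlined above, and the constant $C_{\lsob}$ can be taken to depend only on $d,\delta,s,|\aleph|$, which in particular verifies the required $C_{\lsob}\ge|\aleph|$.
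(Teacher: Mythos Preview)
Your scheme for \eqref{eqconlocmeasnext} and \eqref{eqkjdhddkuieheu} is essentially what the paper does: for the frame bound the paper uses $L^2$-orthonormality directly in the indicator and polynomial cases and a classical trace/sampling inequality in the Dirac case, and for the inverse bound it produces an explicit bubble test function in $H^s_0(\tau_i)$, which is exactly your duality argument. For Example~\ref{egkdejkdhkjhdjk} your treatment of \eqref{eqconlocmeas} via cell-wise Bramble--Hilbert is also correct, since there $(\phi_{i,\alpha})_\alpha$ spans $\mathcal{P}_{s-1}(\tau_i)$ and $f\in\V^\perp$ genuinely kills all moments of degree $\le s-1$ on each cell.

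There is, however, a real gap for \eqref{eqconlocmeas} in Examples~\ref{egkdejkdhdjk} and~\ref{egkdejkdhdjkbis}. In both of those $|\aleph|=1$, so $f\in\V^\perp$ forces only a \emph{single} scalar constraint per cell (vanishing mean, or vanishing point value). A cell-wise Bramble--Hilbert/Deny--Lions estimate with only the zeroth moment gone yields $\|\hat f\|_{L^2(\hat\tau_i)}\le C|\hat f|_{H^1(\hat\tau_i)}$ on the reference cell, hence only an $h^{1}$ gain after scaling, not $h^{s-t}$; a nontrivial polynomial of degree between $1$ and $s-1$, shifted to have zero mean on $\hat\tau_i$, is an explicit obstruction to the stronger reference-cell estimate you claim. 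Your parenthetical ``degree that is at least $0$'' already concedes this, but the sentence that follows proceeds as if the full degree-$(s-1)$ moments were available.

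The paper's fix is precisely not to argue cell-by-cell. For Dirac measurements it invokes the Madych--Potter sampling inequality (Lemma~\ref{lemduegduey3}): if $f\in H^s_0(\Omega)$ vanishes on a set of fill distance $\bar h$ then $\|D^t f\|_{L^2(\Omega)}\le C\bar h^{\,s-t}\|f\|_{H^s_0(\Omega)}$. For indicator measurements it proves an analogue for vanishing cell averages (Lemma~\ref{lemduegduswsaey3}) by adapting the Madych--Potter kernel argument. Both are \emph{global} estimates that use $f\in H^s_0(\Omega)$ in an essential way, not just $f|_{\tau_i}\in H^s(\tau_i)$. If you want to stay closer to your outline, one can also recover the $h^{s-t}$ gain by combining the single-moment Poincar\'e inequality $\|f\|_{L^2(\Omega)}\le Ch\|\nabla f\|_{L^2(\Omega)}$ with the Gagliardo--Nirenberg interpolation $\|D^j f\|_{L^2}\lesssim\|f\|_{L^2}^{1-j/s}\|D^s f\|_{L^2}^{j/s}$ on $H^s_0(\Omega)$ and bootstrapping; but this is again a global argument, and it is not the cell-wise Bramble--Hilbert route you describe.
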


\begin{Example}\label{egkdejkdhdjk}
Let $|\aleph|=1$ and $|\beth|=m$.
Let $\delta, h \in (0,1)$. Let $\tau_1,\ldots,\tau_m$ be a partition of $\Omega$
such that each $\tau_i$ is Lipschitz, convex, contained in a ball of radius $h$ and contains a ball of radius $\delta h$.
Let $\phi_i=1_{\tau_i}/\sqrt{|\tau_i|}$ where $1_{\tau_i}$ is the indicator function of $\tau_i$.
\end{Example}

\begin{Example}\label{egkdejkdhkjhdjk}
Let $\delta, h \in (0,1)$. Let $\tau_1,\ldots,\tau_{|\beth|}$ be a partition of $\Omega$
such that each $\tau_i$ is Lipschitz, convex, contained in a ball of radius $h$ and contains a ball of radius $\delta h$. Let $|\aleph|={s+d-1 \choose d}$.
For $i\in \{1,\ldots,|\beth|\}$, let $(\phi_{i,\alpha})_{\alpha \in \aleph}$ be an $L^2(\tau_i)$
 orthonormal basis of $\cP_{s-1}(\tau_i)$, the space of $d$-variate polynomials on $\tau_i$ of degree at most
 $s-1$.
\end{Example}

\begin{Example}\label{egkdejkdhdjkbis}
Assume that $s>d/2$.
Let $\delta, h \in (0,1)$. Let $\tau_1,\ldots,\tau_m$ be a partition of $\Omega$ such that each $\tau_i$ is convex, uniformly Lipschitz, contains a ball of center $x_i$ radius $\delta h$ and is contained in a ball of center $x_i$ and radius $h$.
 Let $|\aleph|=1$ and $\phi_i(x):=h^{d/2}\delta(x-x_i)$.
\end{Example}

\subsection{Fast Gamblet Transform}\label{subsefgtin}

When $A$ is sparse (e.g. banded) then Algorithm \ref{alggambletcomutationnesQQ} can be accelerated by using sparse matrices $\pi^{(k-1,k)}$ and $W^{(k)}$. For instance, the matrices $\pi^{(k-1,k)}$ and $W^{(k)}$ can be chosen to be cellular, as defined in  Condition \ref{cond7fyf}.

Under these sparsity conditions, we introduce in Section \ref{secfgtas} the Fast Gamblet Transform (Algorithm \ref{fastgambletsolvecase1g}) obtained by truncating the stiffness matrices $A^{(k)}$ and localizing the computation of the
vectors $\psi_i^{(k)}$ in Algorithm \ref{alggambletcomutationnesQQ}.
More precisely, if $A$ has $\mathcal{O}(N)$ non-zero entries and under the graph distance
 (Definition \ref{defgraphmatdistbis}) induced by $A$ on $\I^{(q)}$, a ball of radius $\rho$
has $\mathcal{O}(\rho^d)$ elements, and if
 Conditions \ref{conddiscrip3ordismatdis}, \ref{cond7fyf},  and \ref{condilwhiuhd} are  satisfied,
then  Theorem \ref{tmdiscrete1}
 proves that the
 complexity of the Fast Gamblet Transform is at most
$\mathcal{O}(N \ln^{3d} N)$ to compute the gamblets $\psi_i^{(k)}$, $\chi_i^{(k)}$ and their stiffness matrices $A^{(k)}$ and $B^{(k)}$, and $\mathcal{O}(N \ln^{d+1} N)$ to invert the linear system \eqref{eqkjkdjdhjQQ} and perform the decomposition \eqref{esdddkejdiuhQQ}
up to $|\cdot|_A$-norm accuracy $\mathcal{O}(H^{q})$.

The mechanism behind this acceleration is based on  the localization (exponential decay) of each gamblet $\psi_i^{(k)}$ away from $i$.  Item \ref{itt2} of Condition \ref{condilwhiuhd}, which provides a sufficient condition for this exponential decay, is the matrix version of Condition \ref{condkjehdu}.

Returning to  Sobolev spaces, recall that
Theorem \ref{thmpropegkdejkdhdjkbis} asserts that
 Condition \ref{condmeasfuncloc} is satisfied for a large  common class of measurement functions. Moreover,
  when $\L$ is local   Theorem \ref{thmegdkj3hrrsuitebis} asserts that  Condition  \ref{condmeasfuncloc} implies that
Condition \ref{condkjehdu} is satisfied, which is a condition on the range space and does not depend on the operator itself. Consequently,
as  with Condition \ref{conddiscrip3ordismatdisQQ} for matrix systems,
 if $A$ is obtained as the discretization of a local continuous bijection $\L$, then the equivalence
between Item \ref{itt2} of Condition \ref{condilwhiuhd}
and Condition \ref{condkjehdu} mentioned above implies
 Item \ref{itt2} of Condition \ref{condilwhiuhd} is a condition on the image space of that operator
 (independent from the operator itself) and the matrices $\pi^{(k,k+1)}$, that is,
 Item \ref{itt2} of Condition \ref{condilwhiuhd} is  naturally satisfied for
 continuous local bijections on Sobolev spaces.

\subsection{Non divergence form and non symmetric operators}

 Let $\L: H^s_0(\Omega) \rightarrow L^2(\Omega)$ be a continuous linear bijection.
 A prototypical example of $\L$ is the non-divergence form operator
 \begin{equation}
\L u = \sum_{0\leq |\alpha|\leq s} a_{\alpha}(x)  D^{\alpha}  u \text{ for }u\in H^s_0(\Omega)
\end{equation}
where $a$ is a tensor with entries in  $L^\infty(\Omega)$ such that $\L^{-1}$ is well defined and continuous.
The inverse problem $\L u=g$ is equivalent to $\L^* \L=\L^* g$ where $\L^*$ is the adjoint of $\L$.
Let $Q^{-1}$ be the linear operator mapping $H^s_0(\Omega)$ to $H^{-s}(\Omega)$ defined as
$Q^{-1}:=\L^* \L$. Observe that $Q^{-1}$ and $Q$ are continuous and, by applying the results of Subsections \ref{subsecnatcondga}, \ref{subsecexpdein} and \ref{subsefgtin} to the operator $Q^{-1}$,
 the inverse problem $\L u=g$ can be solved
in $\mathcal{O}(N \operatorname{polylog} N)$-complexity by the solving the linear system
$Q^{-1} u=\L^* \L u=\L^* g$ using gamblets defined by the norm induced by $Q$, i.e. by using gamblets minimizing the norm $\|\cdot\|$ where $\|u\|=[Q^{-1} u, u]^\frac{1}{2}=\|\L u\|_{L^2(\Omega)}$ for $u\in H^s_0(\Omega)$ and measurement functions satisfying Condition \ref{condkjehdu} (as in Example \ref{egkdejkdhkjhdjk}).

\section{The gamblet transform on a Banach space}\label{sec1or}
\subsection{Setting}\label{subsecttt}
For
any topological vector space  $V$,  we write the dual pairing between $V$
  and it topological dual $V^{*}$
by $[\cdot,\cdot]$.
Let $(\B,\|\cdot\|)$ be a reflexive separable Banach space such that the $\|\cdot\|$ norm is quadratic, i.e.
$\|u\|^2=[Q^{-1}u,u]$ for $u\in \B$,  and $Q$ is a symmetric positive bijective linear operator mapping $\B^*$  to $\B$ ($[v^*, Q w^*]=[w^*, Q v^*]$ and $[v^*,Q v^*]\geq 0$ for $v^*,w^*\in \B^*$). Write $\<\cdot,\cdot\>$  the corresponding inner product on $\B$ defined by
\begin{equation}
\<u,v\>:=[Q^{-1} u,v] \text{ for }u,v \in \B\,.
\end{equation}
Although $\B$ is also a Hilbert space under the quadratic norm $\|\cdot\|$, we will keep using the Banach terminology to emphasize the fact that our dual pairings will not be based on the $\<\cdot,\cdot\>$ scalar product but on  nonstandard (via the Riesz representation) realization of the dual space.

Let $(\B_0,\|\cdot\|_{0})$ be a Banach subspace of $\B^*$ such that the natural embedding $i:\B_0 \rightarrow \B^*$ is compact and dense. We summarize this embedding in the following diagram
\begin{equation}\label{eqQaQm1}
\text{\xymatrix{ \B  \ar@/^/[rr]|{Q^{-1}}  && \B^* (\supset \B_0 ) \ar@/^/[ll]|{Q} }}
\end{equation}
Write $\<\cdot,\cdot\>_*$  the scalar product on $\B^*$ defined by $\<\phi',\phi\>_*=[\phi',Q\phi]$ and let $\|\cdot\|_*$ be the corresponding norm. Observe that $\|\cdot\|_*$ is the natural norm induced by duality on $\B^*$, i.e.  $\|\phi\|_*=\sup_{v\in \B,\,v\not=0}[\phi,v]/\|v\|$ for $\phi \in \B^*$, and $Q$ and $Q^{-1}$ are isometries in the sense that $\|Q^{-1}u\|_*=\|u\|$ for $u\in \B$.

\begin{Example}\label{egproto00}
As a simple prototypical running example we will consider $\B=H^1_0(\Omega)$ and $\B_0=L^2(\Omega)$ with the following specifications.
 $\Omega$ is a bounded open subset of $\R^d$ (of arbitrary dimension $d\in \mathbb{N}^*$) with piecewise Lipschitz boundary.
 $\|\cdot\|_0=\|\cdot\|_{L^2(\Omega)}$ and
$\|u\|^2=\|u\|_a^2=\int_{\Omega} (\nabla u)^T a \nabla u$ is the energy norm defined by a uniformly elliptic $d\times d$ symmetric matrix $a$ with entries in $L^\infty(\Omega)$ (i.e. satisfying $\lambda_{\min}(a) |l|^2 \leq l^T a(x) l\leq \lambda_{\max}(a) |l|^2$ for $l\in \R^d$ with some strictly positive finite constants $\lambda_{\max}(a)$ and $\lambda_{\min}(a)$). Note that for this example the induced dual space
$\B^*$ can be identified with $H^{-1}(\Omega)$  with the norm
$\|\varphi\|_*=\sup_{v\in H^1_0(\Omega)}\int_{\Omega}\varphi v/\|v\|_a$. Under this identification the induced operator $Q^{-1}$ is the differential operator $-\diiv(a\nabla)$.
\end{Example}

\begin{Example}\label{egprotoalsobolevlori}
As another running example we will consider  $(\B_0, \|\cdot\|_0)=(L^2(\Omega), \|\cdot\|_{L^2(\Omega)})$ and
$(\B,\|\cdot\|)=(H^s_0(\Omega),\|\cdot\|)$, where $\|u\|^2=[Q^{-1} u,u]$ and  $Q$ is a symmetric continuous linear bijection mapping $H^{-s}(\Omega)$ to $H^{s}_0(\Omega)$ such that, for some constant $C_{e}\geq 1$,
\begin{equation}\label{eqjkhkkjhuiiu}
C_{e}^{-1}\|u\|_{H^s_0(\Omega)} \leq \|u\| \leq C_{e} \|u\|_{H^s_0(\Omega)},\,\text{ for } u\in H^s_0(\Omega)\,.
\end{equation}
Note that using  \cite[Thm.~2.2]{gazzola2010polyharmonic} the left and/or right hand sides of
\eqref{eqjkhkkjhuiiu} could be replaced by $\|u\|_{H^s(\Omega)}$ modulo changes in norm-equivalence constants.
Note that for this example the induced dual space
$\B^*$ can be identified with $H^{-s}(\Omega)$ with the induced norm $\|\phi\|_*=\sup_{v\in \H^s_0(\Omega)}\frac{[\phi,v]}{\|v\|}$, and \eqref{eqjkhkkjhuiiu} implies that
\begin{equation}\label{eqjjhykhkkjdedduiiu}
C_{e}^{-1}\|\phi\|_{H^{-s}(\Omega)} \leq \|\phi\|_* \leq C_{e} \|\phi\|_{H^{-s}(\Omega)},\,\text{ for } \phi\in H^{-s}(\Omega)
\end{equation}
\end{Example}

 \begin{figure}[h!]
	\begin{center}
			\includegraphics[width=0.7\textwidth]{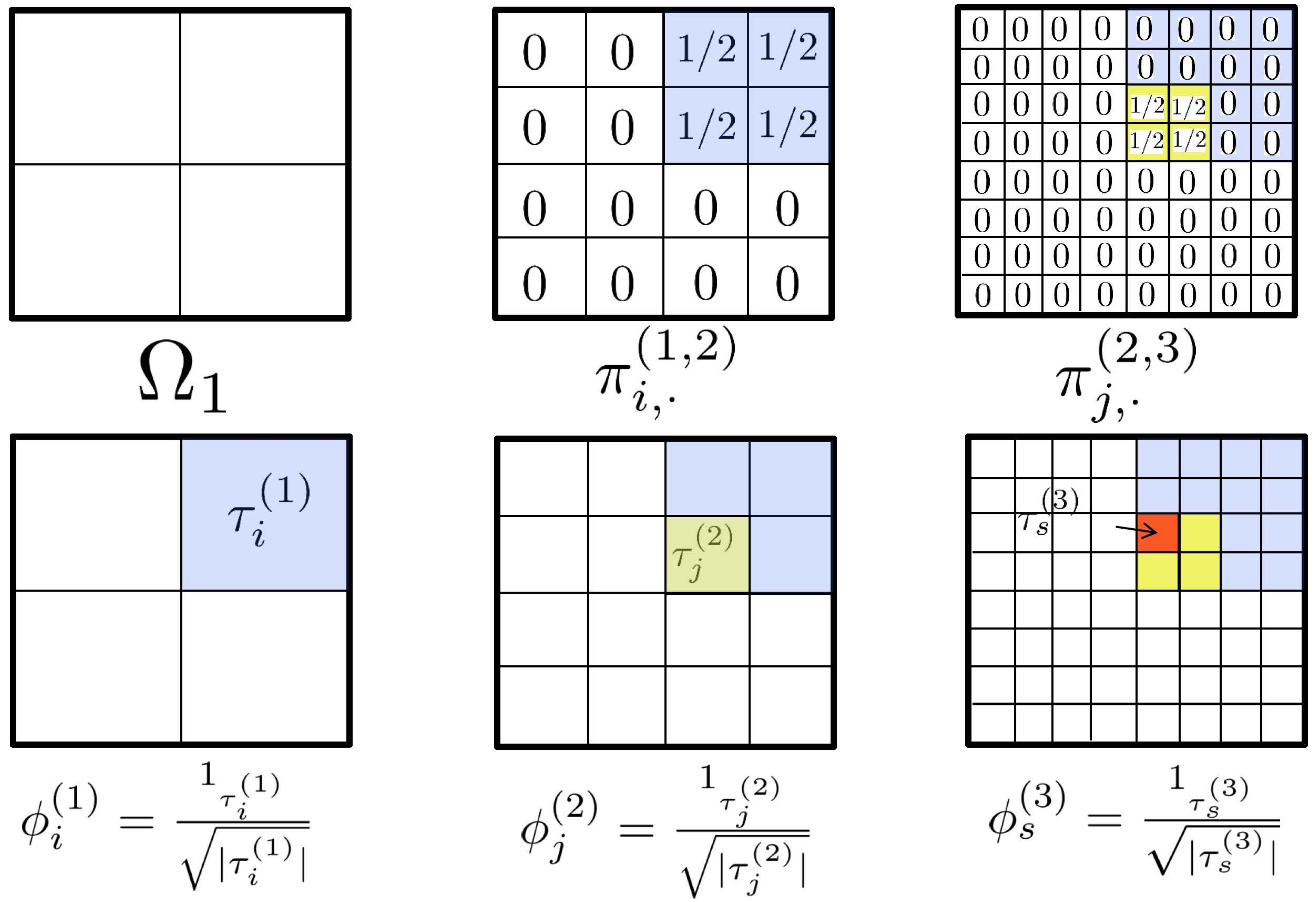}
		\caption{See Illustration \ref{nekdjdhhu}. $\Omega=(0,1)^2$. $\Omega_k$ corresponds to a uniform partition of $\Omega$ into $2^{-k}\times 2^{-k}$ squares. The bottom row shows the support of $\phi_i^{(1)}, \phi_j^{(2)}$ and $\phi_s^{(3)}$. Note that  $j^{(1)}=s^{(1)}=i$ and $s^{(2)}=j$. The top row shows the entries of $\pi^{(1,2)}_{i,\cdot}$ and $\pi^{(2,3)}_{j,\cdot}$.
}\label{figphipi}
	\end{center}
\end{figure}

\subsection{Hierarchy of nested measurements}\label{subsechajhe}

Let $q\in \mathbb{N}^* \cup \{\infty\}$. Let $\I^{(q)}$ be an index tree of level $q$ defined as in \ref{defindextree}.
For a finite set $\J$ write $|\J|:=\Card(\J)$ the number of elements of $\J$.
\begin{Construction}\label{constpi}
For $k\in \{1,\ldots,q-1\}$ let $\pi^{(k,k+1)}$ be a $\I^{(k)}\times \I^{(k+1)}$ matrix of rank $|\I^{(k)}|$.
\end{Construction}
Note that \ref{constpi} does not require the orthonormality condition $\pi^{(k,k+1)}(\pi^{(k,k+1)})^T=I^{(k)}$ of \ref{constpiQQ}.

\begin{Construction}\label{constphik}
Let $(\phi^{(q)}_i)_{i\in \I^{(q)}}$ be linearly independent elements of $\B^*$. For $k\in \{1,\ldots,q-1\}$ and $i\in \I^{(k)}$ let $\phi^{(k)}_i \in \B^*$ be defined by induction via
\begin{equation}\label{eq:eigdeiud3dd}
\phi^{(k)}_i=\sum_{j\in \I^{(k+1)}}\pi^{(k,k+1)}_{i,j}  \phi^{(k+1)}_j\, .
\end{equation}
\end{Construction}
Observe that $\text{rank}(\pi^{(k,k+1)})=|\I^{(k)}|$ implies the linear independence of the elements $(\phi^{(k)}_i)_{i\in \I^{(k)}}$.
For $k\in \{1,\ldots,q\}$, write
 \begin{equation}\label{eqdefPhik}
 \Phi^{(k)}:=\Span\{\phi_i^{(k)}\mid i\in \I^{(k)}\}\,.
 \end{equation}
Observe that \eqref{eq:eigdeiud3dd} implies that the spaces $\Phi^{(k)}$ are nested, i.e., $\Phi^{(k)}\subset \Phi^{(k+1)}$.

\begin{NE}\label{nekdjdhhu}
As a running illustration consider Example \ref{egproto00} with $\Omega=(0,1)^2$  illustrated in Figure \ref{figphipi}. For $k\in \N^*$, let $\Omega_{k}$ be a regular grid partition of $\Omega$ into $2^{-k}\times 2^{-k}$ squares $\tau^{(k)}_i$  and let $\phi_i^{(k)}=\frac{1_{\tau^{(k)}_i}}{\sqrt{|\tau^{(k)}_i|}}$ where  $1_{\tau^{(k)}_i}$ is the indicator function of $\tau^{(k)}_i$ and $|\tau^{(k)}_i|$ is the volume of $\tau^{(k)}_i$. The nesting of the indicator functions implies that of the measurement functions, i.e. \eqref{eq:eigdeiud3dd}. In this particular example, the nesting matrices $\pi^{(k,k+1)}$ are also cellular (in the sense that $\pi^{(k,k+1)}_{i,j}=0$ for $j^{(k)}\not=i$) and orthonormal (in the sense that $\pi^{(k,k+1)}(\pi^{(k,k+1)})^T=I^{(k)}$ where $I^{(k)}$ is the $\I^{(k)}\times \I^{(k)}$ identity matrix). Note that the measurement functions $\phi_i^{(k)}$ form a multiresolution decomposition of $L^2(\Omega)$.
\end{NE}

 \begin{Example}\label{egkajhdlkjdinibis}
 Consider, for Example \ref{egprotoalsobolevlori}, the measurement functions introduced in Example \ref{egkajhdlkjdini}.
   Observe that the measurement functions $(\phi_i^{(k)})_{i\in \I^{(k)}}$ are nested as in \eqref{eq:eigdeiud3dd} and satisfy $\int_{\Omega}\phi_i^{(k)}\phi_j^{(k)}=\delta_{i,j}$ for $i\not=j$, which implies $\pi^{(k,k+1)}(\pi^{(k,k+1)})^T=I^{(k)}$.
   Observe also that the matrices $\pi^{(k,k+1)}$ are cellular in the sense of Condition \ref{cond7fyf}, i.e.  $\pi^{(k,k+1)}_{i,j}=0$ for $j^{(k),\aleph}\not=i^{\aleph}$.
   Note also that the matrices $W^{(k)}$ can naturally be chosen so that $W^{(k)}W^{(k),T}=J^{(k)}$ and $W^{(k)}_{i,j}=0$ for $j^{(k),\aleph}\not=i^{(k),\aleph}$.
 Illustration \ref{nekdjdhhu} presents a particular instance of the proposed measurement functions for $s=1$.
\end{Example}

\subsection{The Gamblet Transform}\label{subseckjshgdhgdhOR}

Due to their game theoretic origin and interpretation (presented in Section \ref{seccigsec}),
we will refer to the following  defined  hierarchy of elements $\psi^{(k)}_i$ as gamblets.
\begin{Definition}(Gamblets)\label{defpsi}
For  $k\in \{1,\ldots,q\}$ and $i\in  \I^{(k)}$, let $\psi^{(k)}_i$ be the minimizer of
\begin{equation}\label{eq:dfddeytfewdaisq}
\begin{cases}
\text{Minimize }  &\|\psi\|\\
\text{Subject to } &\psi \in \B\text{ and }[\phi_j^{(k)}, \psi]=\delta_{i,j}\text{ for } j\in \I^{(k)}\,.
\end{cases}
\end{equation}
\end{Definition}

For $k\in\{1,\ldots,q\}$ let $\Theta^{(k)}$ be the symmetric positive definite $\I^{(k)}\times \I^{(k)}$ matrix defined by
\begin{equation}\label{eqtheta1}
\Theta^{(k)}_{i,j}:=[\phi_i^{(k)},Q \phi_j^{(k)}]\,,
\end{equation}
and write $\Theta^{(k),-1}:=(\Theta^{(k)})^{-1}$. The gamblets have the following  explicit form.
\begin{Theorem}\label{thmwhdguyd}
For $k\in\{1,\ldots,q\}$ and $i\in \I^{(k)}$, we have
\begin{equation}
\psi_i^{(k)}=\sum_{j\in \I^{(k)}} \Theta^{(k),-1}_{i,j} Q \phi_j^{(k)}\,.
\end{equation}
\end{Theorem}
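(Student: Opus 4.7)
The plan is to recognize that \eqref{eq:dfddeytfewdaisq} is a quadratic minimization with finitely many linear equality constraints on the Hilbert space $(\B,\langle\cdot,\cdot\rangle)$, and to solve it via Lagrange multipliers. First I would verify the basic well-posedness: existence and uniqueness of the minimizer follows from strict convexity of $\psi\mapsto \|\psi\|^2$ together with the fact that the feasible set is nonempty, closed, and affine. Nonemptiness is a consequence of the linear independence of $\{\phi_j^{(k)}\}_{j\in \I^{(k)}}$ in $\B^*$ (which was ensured by $\operatorname{rank}(\pi^{(k,k+1)})=|\I^{(k)}|$ together with the linear independence of the $\phi_i^{(q)}$): this independence makes the evaluation map $\psi\mapsto ([\phi_j^{(k)},\psi])_{j}$ from $\B$ to $\R^{\I^{(k)}}$ surjective.

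Next I would derive the first-order (Euler–Lagrange) conditions. The Lagrangian is
\begin{equation*}
L(\psi,\lambda)=\tfrac{1}{2}\|\psi\|^2-\sum_{j\in \I^{(k)}}\lambda_j\bigl([\phi_j^{(k)},\psi]-\delta_{i,j}\bigr),
\end{equation*}
and stationarity in $\psi$ reads $\langle \psi_i^{(k)}, v\rangle = \sum_j \lambda_j [\phi_j^{(k)}, v]$ for all $v\in\B$. Since $\langle u,v\rangle = [Q^{-1}u,v]$, the density/duality argument gives $Q^{-1}\psi_i^{(k)}=\sum_j \lambda_j\phi_j^{(k)}$, and applying $Q$ to both sides produces the representation
\begin{equation*}
\psi_i^{(k)}=\sum_{j\in\I^{(k)}}\lambda_j\, Q\phi_j^{(k)}.
\end{equation*}

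To determine the multipliers I would substitute this ansatz into the constraints $[\phi_l^{(k)},\psi_i^{(k)}]=\delta_{i,l}$, giving $\sum_{j}\Theta^{(k)}_{l,j}\lambda_j=\delta_{i,l}$, i.e. $\Theta^{(k)}\lambda = e_i$. A side computation (to be done once) shows that $\Theta^{(k)}$ is symmetric positive definite: for any $x\in\R^{\I^{(k)}}$,
\begin{equation*}
x^T\Theta^{(k)}x=\bigl[\textstyle\sum_i x_i\phi_i^{(k)},\,Q\sum_j x_j\phi_j^{(k)}\bigr]=\bigl\|\textstyle\sum_i x_i\phi_i^{(k)}\bigr\|_*^2,
\end{equation*}
which is strictly positive for $x\neq 0$ by the linear independence of $\{\phi_i^{(k)}\}$. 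Hence $\Theta^{(k)}$ is invertible, $\lambda_j=\Theta^{(k),-1}_{i,j}$ (using symmetry), and the claimed formula follows. Finally, because the objective is strictly convex and the constraints are affine, the Karush–Kuhn–Tucker conditions are both necessary and sufficient, so this critical point is the unique global minimizer $\psi_i^{(k)}$.

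There is no real obstacle here: the argument is a textbook Lagrange-multiplier computation in a Hilbert setting. The only points requiring care are (i) the use of the isometry $Q$ (together with the nonstandard identification of $\B^*$) to move between $\langle\cdot,\cdot\rangle$ and the duality bracket $[\cdot,\cdot]$, and (ii) invoking linear independence at two places — once to ensure feasibility of the constraints and once to ensure invertibility of $\Theta^{(k)}$.
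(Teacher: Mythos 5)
Your proof is correct and follows essentially the same route as the paper: the stationarity condition from your Lagrangian is exactly the paper's "usual orthogonality argument" that the minimizer lies in $Q\Phi^{(k)}$, and both arguments then determine the coefficients from the constraint system $\Theta^{(k)}\lambda=e_i$. Your extra checks (feasibility of the constraints and positive definiteness of $\Theta^{(k)}$ via $x^T\Theta^{(k)}x=\|\sum_i x_i\phi_i^{(k)}\|_*^2$) are consistent with what the paper asserts just before the theorem statement.
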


 \begin{figure}[h!]
	\begin{center}
			\includegraphics[width=\textwidth]{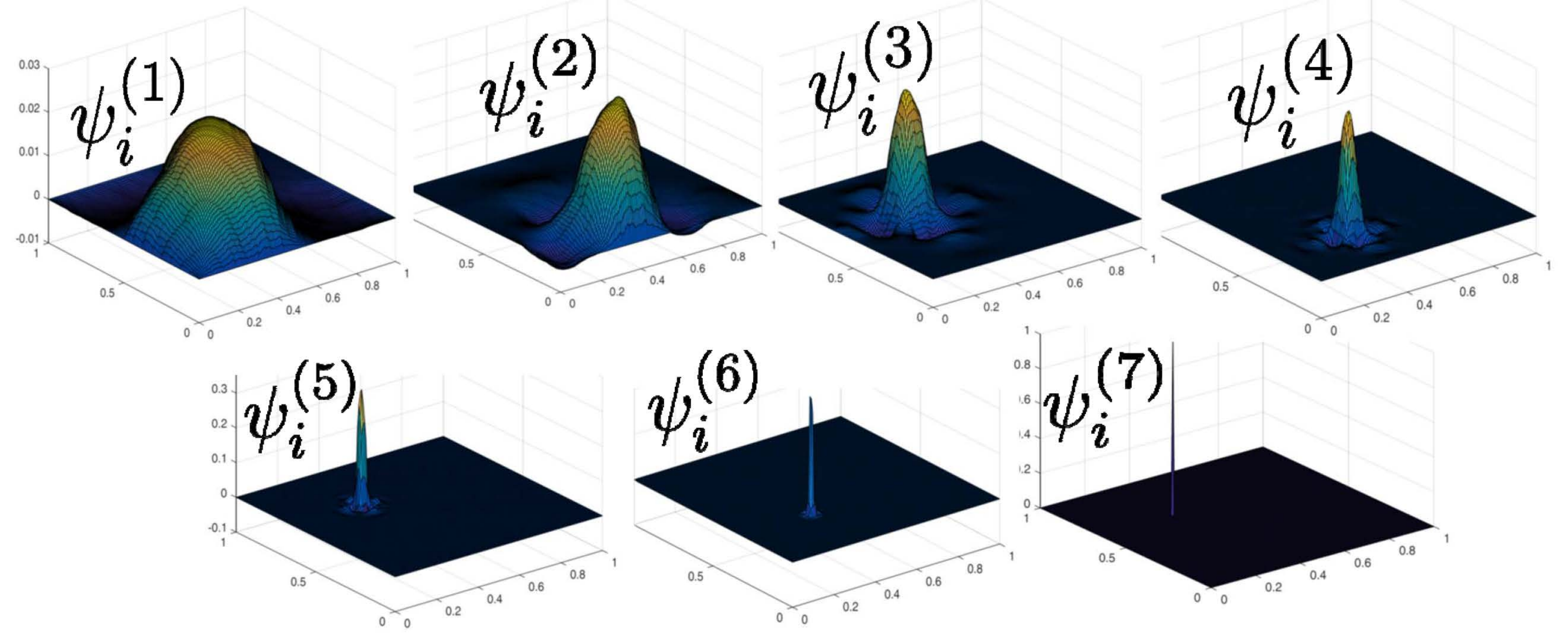}
		\caption{Gamblets $\psi_i^{(k)}$ for $1\leq k \leq 7$. See Illustration \ref{nekdjdhhdeu2}.
}\label{figpsi7}
	\end{center}
\end{figure}

\begin{NE}\label{nekdjdhhdeu2}
In the context of Example \ref{egproto00} and Illustration \ref{nekdjdhhu},
Figure \ref{figpsi7} provides a numerical illustration of the gamblets $\psi_i^{(k)}$ $(1\leq k \leq 7)$ using the Discrete Gamblet Transform described in Subsection \ref{subsecdisgambtra}.
\end{NE}

The following
 proposition provides the link between
gamblets and orthogonal projection in the immediately following Theorem \ref{thmgugyug0OR}.
We say that  a linear operator $Q:\B^* \rightarrow \B$ is positive  symmetric if
 $[\varphi_{1},Q \varphi_{2}]=[\varphi_{2},Q \varphi_{1}] $ and
 $[\varphi_{1},Q \varphi_{1}] \geq 0$ for $\varphi_{1},\varphi_{2} \in \B^* $.
 When such a  $Q$ is a continuous bijection it determines a Hilbert space inner product by
$\langle \varphi_{1},\varphi_{2}\rangle_{Q} :=[\varphi_{1},Q \varphi_{2}]$.
\begin{Proposition}
\label{prop_Gambletprojection}
Let $\B$ be a  separable Banach space, and $\B^*$ be a realization its dual with
$[\cdot,\cdot]$ the corresponding dual pairing.
 For a positive symmetric bijection $Q:
\B^* \rightarrow \B$, consider the Hilbert space $\B$ equipped with the inner product
$\langle u_{1},u_{2}\rangle:=[Q^{-1}u_{1}, u_{2}]$ and
the Hilbert space $\B^* $ equipped with the inner product
$\langle \varphi_{1},\varphi_{2}\rangle_{*}:=[ \varphi_{1},Q\varphi_{2}].$
Consider a
collection  $\phi_1,\ldots,\phi_m$ of $m$  linearly independent elements of $\B^*$, and let
$\Phi\subset \B^{*}$ denote its span.
Define the Gram  matrix $\Theta$ by
\begin{equation}\label{eqdefthet}
 \Theta_{ij}:=[ \phi_{i},Q\phi_{j}], \quad i,j=1,\ldots m\,,
 \end{equation}
and the elements $\psi_{i} \in \B, i=1,\ldots m$ by
\begin{equation}\label{eqhgudgud}
\psi_{i}:= \sum_{j=1}^{m}{\Theta^{-1}_{ij}Q\phi_{j}}, \quad i=1,\ldots, m\, .
\end{equation}
The  collection $\{\phi_{i},\psi_{j},\,i,j=1,\ldots m\}$ is a biorthogonal system, in that
$[\phi_{i},\psi_{j}]=\delta_{ij},\,i,j=1,\ldots m$.
Moreover, the operator
 $P:\B\rightarrow \B$ defined by
$P:= \sum_{i=1}^{m}{\psi_{i}\otimes \phi_{i}}\, $
is the  orthogonal projection onto $Q\Phi$,
and  the  operator $P^{*}:\B^{*}\rightarrow B^{*}$ defined by
$P^{*}:= \sum_{i=1}^{m}{Q^{-1}\psi_{i}\otimes Q\phi_{i}}\, $
 is the orthogonal projection onto $\Phi$.
In addition, $P^{*}$ is the adjoint of $P$ in the sense that
$[\varphi, P\psi]=[P^{*}\varphi, \psi], \, \varphi \in \B^{*}, \psi \in \B$, and we have
$P^{*}=Q^{-1}PQ\,.$
\end{Proposition}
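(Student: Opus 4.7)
The plan is to carry out a sequence of direct verifications; no deep ideas are involved, but one must keep careful track of the three layers of structure (the dual pairing $[\cdot,\cdot]$, the inner product $\langle\cdot,\cdot\rangle$ on $\B$ coming from $Q^{-1}$, and the inner product $\langle\cdot,\cdot\rangle_{*}$ on $\B^{*}$ coming from $Q$), together with the fact that $Q$ is an isometric isomorphism between them. Throughout one uses the symmetry of $Q$, which immediately yields symmetry of the Gram matrix $\Theta$ and hence of $\Theta^{-1}$.

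First I would establish the biorthogonality $[\phi_{i},\psi_{j}]=\delta_{ij}$ by direct substitution:
\[
[\phi_{i},\psi_{j}]=\sum_{k}\Theta^{-1}_{jk}[\phi_{i},Q\phi_{k}]=\sum_{k}\Theta^{-1}_{jk}\Theta_{ik}=(\Theta^{-1}\Theta)_{ji}=\delta_{ij},
\]
where at the last step symmetry of $\Theta$ was used. Rewriting \eqref{eqhgudgud} as a matrix identity also yields the useful inversion formula $Q\phi_{j}=\sum_{i}\Theta_{ji}\psi_{i}$, which will be needed below.

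Next I would prove that $P=\sum_{i}\psi_{i}\otimes\phi_{i}$ is the orthogonal projection onto $Q\Phi$ by checking three properties. (a) $\mathrm{Range}(P)\subseteq Q\Phi$, which is immediate from $\psi_{i}=Q\bigl(\sum_{j}\Theta^{-1}_{ij}\phi_{j}\bigr)\in Q\Phi$. (b) $P$ is symmetric with respect to $\langle\cdot,\cdot\rangle$: using $Q^{-1}\psi_{i}=\sum_{j}\Theta^{-1}_{ij}\phi_{j}$,
\[
\langle Pu,v\rangle=\sum_{i}[\phi_{i},u]\,[Q^{-1}\psi_{i},v]=\sum_{i,j}\Theta^{-1}_{ij}[\phi_{i},u][\phi_{j},v],
\]
which is symmetric in $(u,v)$ by symmetry of $\Theta^{-1}$. (c) $Pu=u$ for $u\in Q\Phi$: writing $u=\sum_{k}c_{k}Q\phi_{k}$ and using the inversion formula from the previous step,
\[
Pu=\sum_{i}\Bigl[\phi_{i},\sum_{k}c_{k}Q\phi_{k}\Bigr]\psi_{i}=\sum_{k}c_{k}\sum_{i}\Theta_{ki}\psi_{i}=\sum_{k}c_{k}Q\phi_{k}=u.
\]
Properties (a), (b), (c) together force $P$ to be the $\langle\cdot,\cdot\rangle$-orthogonal projection onto $Q\Phi$.

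The remaining assertions follow from a single computation of $Q^{-1}PQ$. Using symmetry of $Q$,
\[
Q^{-1}PQ\varphi=\sum_{i}[\phi_{i},Q\varphi]\,Q^{-1}\psi_{i}=\sum_{i}[\varphi,Q\phi_{i}]\,Q^{-1}\psi_{i},
\]
which (invoking reflexivity to interpret $Q\phi_{i}\in\B\simeq\B^{**}$ as a functional on $\B^{*}$) is precisely $\bigl(\sum_{i}Q^{-1}\psi_{i}\otimes Q\phi_{i}\bigr)\varphi$; hence $P^{*}=Q^{-1}PQ$. The adjoint identity then follows by combining this with the symmetry of $P$: $[P^{*}\varphi,\psi]=[Q^{-1}PQ\varphi,\psi]=\langle PQ\varphi,\psi\rangle=\langle Q\varphi,P\psi\rangle=[\varphi,P\psi]$. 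Finally, since $Q:(\B^{*},\langle\cdot,\cdot\rangle_{*})\to(\B,\langle\cdot,\cdot\rangle)$ is an isometric isomorphism carrying $\Phi$ onto $Q\Phi$, conjugation by $Q$ transports the orthogonal projection onto $Q\Phi$ to the orthogonal projection onto $\Phi$; i.e., $P^{*}=Q^{-1}PQ$ is the $\langle\cdot,\cdot\rangle_{*}$-orthogonal projection onto $\Phi$.

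No step is a genuine obstacle; the main care required is notational bookkeeping—making sure that the symmetry of $\Theta$, the symmetry of $Q$, and the biorthogonality are invoked in the right places, and that the rank-one operator notation $a\otimes b$ is interpreted consistently when we pass from $\B$ to $\B^{*}$ (reflexivity is what lets $Q\phi_{i}\in\B$ act on $\B^{*}$).
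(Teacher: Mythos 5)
Your proof is correct and follows essentially the same direct-verification route as the paper's; the only cosmetic differences are that you establish the projection property via the symmetric-idempotent characterization (where the paper instead checks directly that $P$ annihilates $(Q\Phi)^{\perp}$), and you obtain the $\Phi$-projection claim by conjugating with the isometry $Q$ rather than repeating the argument. All steps check out.
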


Consequently, we can characterize the gamblets as components of an orthogonal projection.
For  $k\in \{1,\ldots,q\}$, write
\begin{equation}\label{eqdefvk}
\V^{(k)}:=\operatorname{span}\{\psi^{(k)}_i \mid i\in \I^{(k)}\} .
\end{equation}
\begin{Theorem}\label{thmgugyug0OR}
We have $\V^{(k)}=Q \Phi^{(k)}$ and $\V^{(k)} \subset \V^{(k+1)}$.
Furthermore, the   mapping $u^{(k)}:\B \rightarrow \B$
defined by
  \begin{equation}\label{eqbdudbuysbdneq1}
u^{(k)}(u):=\sum_{i\in \I^{(k)}} [\phi_i^{(k)},u] \psi_i^{(k)}\,
\end{equation}
is the orthogonal projection of $\B$ onto $\V^{(k)}$ and therefore has the variational formulation
\begin{equation}\label{eqdkjkduednOR}
\|u-u^{(k)}(u)\|=\inf_{v\in \V^{(k)}}   \|u - v\|,\quad u\in \B\, .
\end{equation}
\end{Theorem}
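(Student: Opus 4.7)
The plan is to deduce Theorem \ref{thmgugyug0OR} as a direct consequence of Proposition \ref{prop_Gambletprojection} combined with the explicit formula for gamblets given by Theorem \ref{thmwhdguyd}. The content is essentially a translation of an abstract projection statement into the multilevel setting, so the steps are routine; the only thing requiring care is matching the hypotheses.

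First I would verify the identity $\V^{(k)} = Q\Phi^{(k)}$. By Theorem \ref{thmwhdguyd}, each $\psi_i^{(k)}$ is a linear combination of $\{Q\phi_j^{(k)}\}_{j\in \I^{(k)}}$, so $\V^{(k)} \subset Q\Phi^{(k)}$. Conversely, since $\Theta^{(k)}$ is invertible (as asserted by Theorem \ref{thmwhdguyd}), the linear transformation $\Theta^{(k),-1}$ maps $(Q\phi_j^{(k)})_j$ onto $(\psi_i^{(k)})_i$ bijectively at the level of spans, so $Q\Phi^{(k)} \subset \V^{(k)}$. The nesting $\V^{(k)} \subset \V^{(k+1)}$ is then immediate from the nesting $\Phi^{(k)} \subset \Phi^{(k+1)}$, which itself is a consequence of Construction \ref{constphik}, more precisely the induction relation \eqref{eq:eigdeiud3dd}.

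Next I would invoke Proposition \ref{prop_Gambletprojection} with the choice $m = |\I^{(k)}|$, $\phi_i := \phi_i^{(k)}$, $\Phi := \Phi^{(k)}$, and $\Theta := \Theta^{(k)}$. The linear independence of $(\phi_i^{(k)})_{i\in \I^{(k)}}$ required by the Proposition follows from Construction \ref{constphik}: the linear independence is assumed at level $q$ and propagates down via the matrices $\pi^{(k,k+1)}$, which have rank $|\I^{(k)}|$ by Construction \ref{constpi}. The Gram matrix \eqref{eqdefthet} in the Proposition coincides with $\Theta^{(k)}$ from \eqref{eqtheta1}, and the explicit formula \eqref{eqhgudgud} matches the one in Theorem \ref{thmwhdguyd}. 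Proposition \ref{prop_Gambletprojection} then tells us that $P := \sum_{i\in \I^{(k)}} \psi_i^{(k)} \otimes \phi_i^{(k)}$ is the $\<\cdot,\cdot\>$-orthogonal projection of $\B$ onto $Q\Phi^{(k)} = \V^{(k)}$. Expanding $P u$ gives exactly formula \eqref{eqbdudbuysbdneq1}, which identifies $u^{(k)}(u) = Pu$ as the orthogonal projection.

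Finally, the variational identity \eqref{eqdkjkduednOR} is the standard Hilbert space characterization of orthogonal projections: for any closed subspace $\V$ of a Hilbert space and any $u$, the orthogonal projection $u^{(k)}(u)$ minimizes $\|u-v\|$ over $v \in \V$. Since $\V^{(k)}$ is finite dimensional (hence closed) and $\B$ is a Hilbert space under $\<\cdot,\cdot\>$, this applies without modification. The only potential subtlety is the distinction between $\B$ as a Banach space and its Hilbert structure stressed in Subsection \ref{subsecttt}, but since minimization is over the same set $\V^{(k)}\subset \B$ and the norm $\|\cdot\|$ is the Hilbert norm, this causes no issue. There is no genuine obstacle in this proof; it is essentially a bookkeeping application of the already proved Proposition \ref{prop_Gambletprojection}.
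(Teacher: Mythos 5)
Your proof is correct and follows essentially the same route as the paper, which likewise deduces the result by combining the explicit gamblet formula of Theorem \ref{thmwhdguyd} with Proposition \ref{prop_Gambletprojection} to identify $u^{(k)}$ as the orthogonal projection onto $Q\Phi^{(k)}=\V^{(k)}$. Your write-up merely spells out the hypothesis-checking (linear independence, nesting) that the paper leaves implicit.
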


 \begin{figure}[h!]
	\begin{center}
			\includegraphics[width=0.7\textwidth]{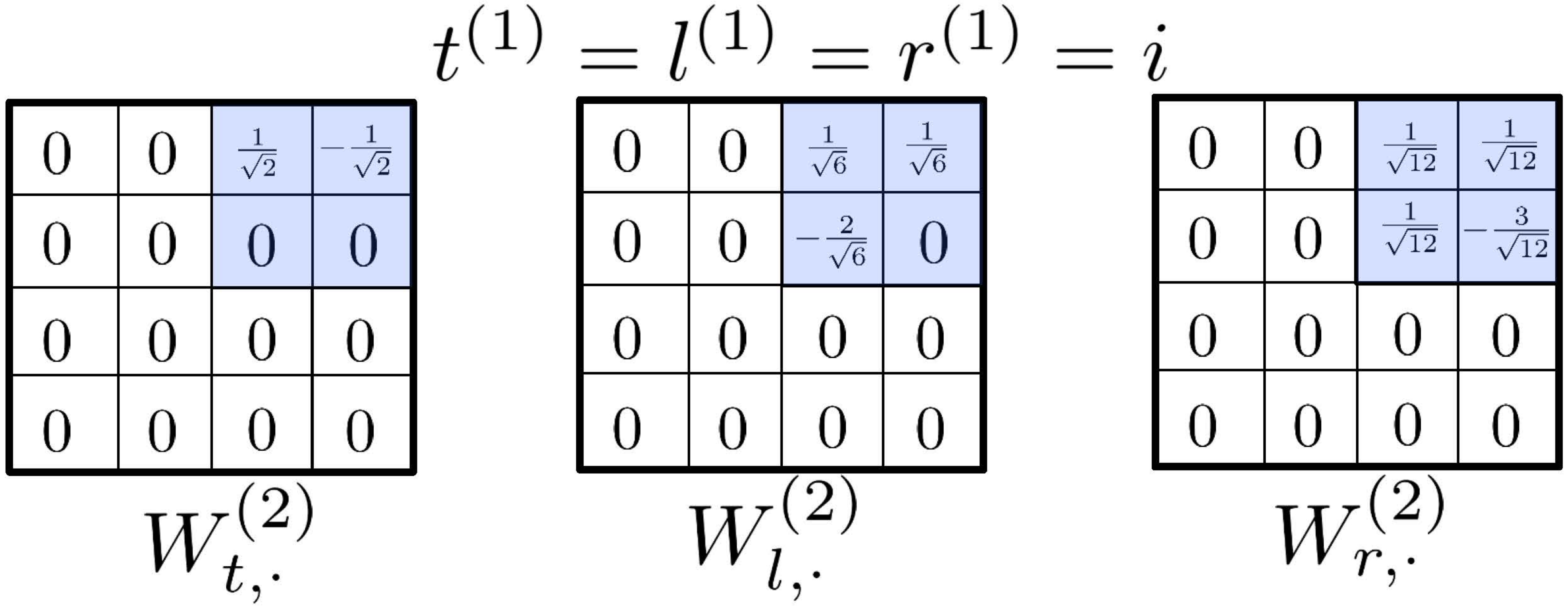}
		\caption{See Illustrations \ref{nekdjdhhu} and \ref{nekdjdhhu2}. Entries of $W^{(2)}_{t,\cdot}$, $W^{(2)}_{l,\cdot}$ and $W^{(2)}_{r,\cdot}$ with $t^{(1)}=l^{(1)}=r^{(1)}=i$.
}\label{figw1}
	\end{center}
\end{figure}

For $1<r<k$ and a $k$-tuple of the form $i=(i_1,\ldots,i_k)$ we write $i^{(r)}:=(i_1,\ldots,i_r)$.
For $k=2,\ldots,q$ let $\J^{(k)}$ be defined as in \ref{consjk}.
\begin{Construction}\label{conswk}
For $k=2,\ldots,q$ let $W^{(k)}$ be a $\J^{(k)}\times \I^{(k)}$ matrix such that  $\Img(W^{(k),T})=\Ker(\pi^{(k-1,k)})$.
\end{Construction}
Note that Construction \ref{conswk} does not require the orthonormality condition $W^{(k)}W^{(k),T}=J^{(k)}$ of Construction
\ref{conswkQQ}.

\begin{NE}\label{nekdjdhhu2}
We refer to Figure \ref{figw1} for an illustration of the entries of $W^{(k)}$ in the context discussed in Illustration \ref{nekdjdhhu}
and Figure \ref{figphipi}. Note that In this particular example, the  matrices $W^{(k)}$ are also cellular (in the sense that $W^{(k)}_{i,j}=0$ for $j^{(k)}\not=i$) and orthonormal (in the sense that $W^{(k)}(W^{(k)})^T=J^{(k)}$ where $J^{(k)}$ is the $\J^{(k)}\times \J^{(k)}$ identity matrix).
\end{NE}

\begin{Definition}
\label{def_chi}
For $k\in \{2,\ldots,q\}$ and $i\in \J^{(k)}$, write
  \begin{equation}\label{eqjkhdkdh}
\chi^{(k)}_i:=\sum_{j \in \I^{(k)}} W_{i,j}^{(k)} \psi_j^{(k)}
 \end{equation}
and
\begin{equation}\label{eqdefwk}
\W^{(k)}:=\operatorname{span}\{\chi^{(k)}_i \mid i\in \J^{(k)}\} .
\end{equation}
\end{Definition}
 \begin{figure}[h!]
	\begin{center}
			\includegraphics[width=\textwidth]{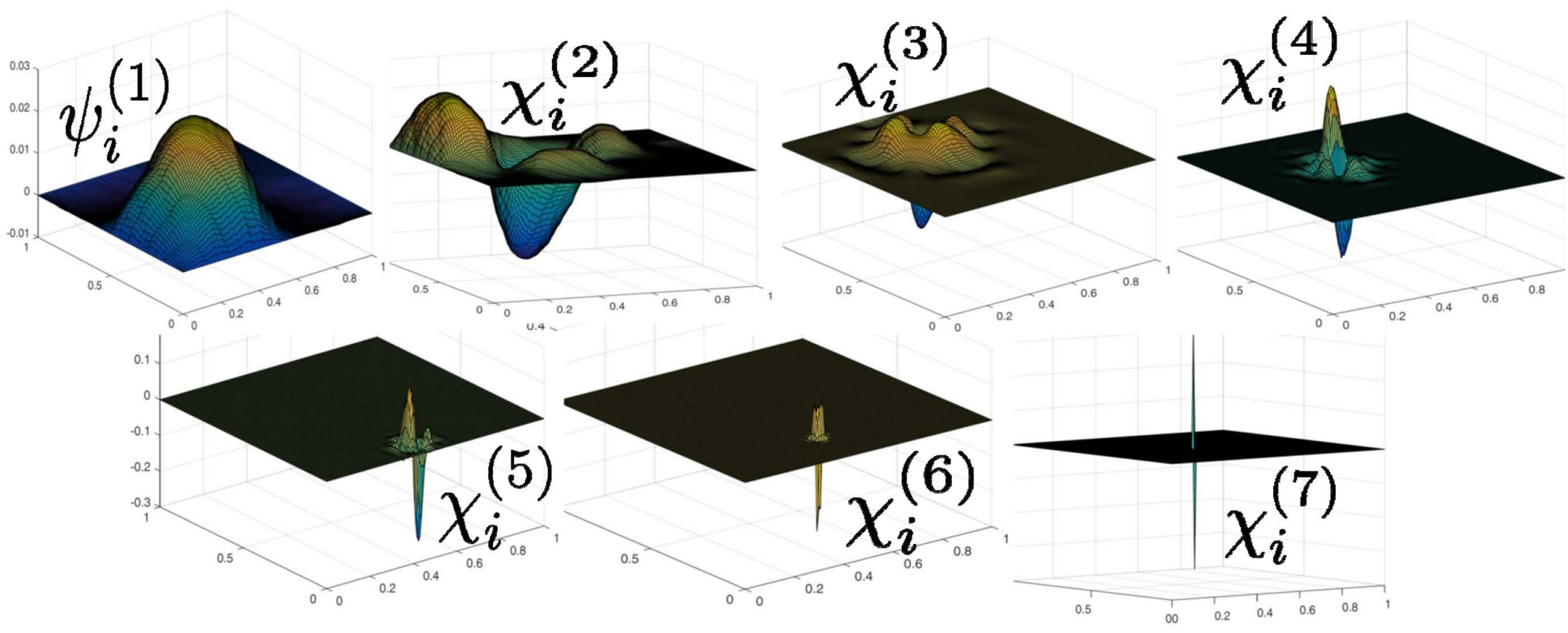}
		\caption{Gamblets $\psi_i^{(1)}$ and orthogonalized gamblets $\chi_i^{(k)}$ for $2\leq k \leq 7$. See Illustration \ref{nekdjdhhdseddeu2}.
}\label{figchi7}
	\end{center}
\end{figure}
\begin{NE}\label{nekdjdhhdseddeu2}
In the context of Example \ref{egproto00} and Illustrations \ref{nekdjdhhu} and \ref{nekdjdhhu2},
 Figure \ref{figpsi7} provides a numerical illustration of the Gamblets $\psi_i^{(1)}$ and orthogonalized gamblets $\chi_i^{(k)}$ ($2\leq k \leq 7$) using the Discrete Gamblet Transform described in Subsection \ref{subsecdisgambtra}.
\end{NE}

Write $\W^{(q+1)}$ for the $\<\cdot,\cdot\>$-orthogonal complement of $\V^{(q)}$ in $\B$.
\begin{Theorem}\label{thmgugyug2OR}
For $k\in \{2,\ldots,q\}$, $\W^{(k)}$ is the orthogonal complement of $\V^{(k-1)}$ in $\V^{(k)}$ with respect to the scalar product $\<\cdot,\cdot\>$, i.e. writing $\oplus$ the  $\<\cdot,\cdot\>$-orthogonal direct sum, we have
\begin{equation}\label{eqdedhhiuhe3OR}
\B=\V^{(1)}\oplus \W^{(2)} \oplus  \cdots \oplus \W^{(q)}\oplus \W^{(q+1)}\,.
\end{equation}
Furthermore,
$u=u^{(1)}(u)+(u^{(2)}(u)-u^{(1)})(u)+\cdots+(u^{(q)}(u)-u^{(q-1)}(u))+(u-u^{(q)}(u))$
is the  $\<\cdot,\cdot\>$-orthogonal decomposition of $u\in \B$ in corresponding to \eqref{eqdedhhiuhe3OR}.
\end{Theorem}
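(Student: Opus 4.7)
The plan is to establish the three assertions in sequence: (i) $\W^{(k)}\subset \V^{(k)}$ and $\W^{(k)}$ is $\<\cdot,\cdot\>$-orthogonal to $\V^{(k-1)}$, (ii) $\dim \W^{(k)}=\dim \V^{(k)}-\dim\V^{(k-1)}$ so that $\V^{(k)}=\V^{(k-1)}\oplus\W^{(k)}$, and (iii) the decomposition of $u$ follows from the identification of each summand with a $\<\cdot,\cdot\>$-orthogonal projection given by Theorem \ref{thmgugyug0OR}.

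The inclusion $\W^{(k)}\subset\V^{(k)}$ is immediate from Definition \ref{def_chi}. For the orthogonality, I would show the stronger statement that $[\phi^{(k-1)}_l,\chi^{(k)}_i]=0$ for every $l\in\I^{(k-1)}$ and $i\in\J^{(k)}$, which by Theorem \ref{thmgugyug0OR} implies $u^{(k-1)}(\chi^{(k)}_i)=0$, i.e.\ $\chi^{(k)}_i$ is orthogonal to $\V^{(k-1)}$. Using the nesting relation \eqref{eq:eigdeiud3dd}, the biorthogonality $[\phi^{(k)}_m,\psi^{(k)}_j]=\delta_{m,j}$ from \eqref{eq:dfddeytfewdaisq}, and the definition \eqref{eqjkhdkdh} of $\chi^{(k)}_i$, one computes
\begin{equation}
[\phi^{(k-1)}_l,\chi^{(k)}_i]=\sum_{j\in\I^{(k)}}W^{(k)}_{i,j}\sum_{m\in\I^{(k)}}\pi^{(k-1,k)}_{l,m}[\phi^{(k)}_m,\psi^{(k)}_j]=\bigl(\pi^{(k-1,k)}W^{(k),T}\bigr)_{l,i}.
\end{equation}
By Construction \ref{conswk}, $\Img(W^{(k),T})=\Ker(\pi^{(k-1,k)})$, so the right-hand side vanishes. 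This is the substantive computational step of the proof.

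For the dimension count, Construction \ref{constpi} gives $\operatorname{rank}(\pi^{(k-1,k)})=|\I^{(k-1)}|$, so $\dim\Ker(\pi^{(k-1,k)})=|\I^{(k)}|-|\I^{(k-1)}|$, which equals $|\J^{(k)}|$ by Construction \ref{consjk} (sum the cardinality relation over $i\in\I^{(k-1)}$). Thus $W^{(k)}$ has full row rank $|\J^{(k)}|$, and since the $\psi^{(k)}_j$ are linearly independent (Theorem \ref{thmwhdguyd}, $\Theta^{(k)}$ invertible), the $\chi^{(k)}_i$ are linearly independent, i.e.\ $\dim\W^{(k)}=|\J^{(k)}|$. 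Combined with the orthogonality and the inclusion $\V^{(k-1)}\oplus\W^{(k)}\subset\V^{(k)}$, the equality of dimensions $|\I^{(k-1)}|+|\J^{(k)}|=|\I^{(k)}|$ forces $\V^{(k)}=\V^{(k-1)}\oplus\W^{(k)}$. Iterating from $k=2$ to $k=q$ gives $\V^{(q)}=\V^{(1)}\oplus\W^{(2)}\oplus\cdots\oplus\W^{(q)}$, and appending the orthogonal complement $\W^{(q+1)}$ of $\V^{(q)}$ in $\B$ (which exists by the Hilbert space structure of $(\B,\<\cdot,\cdot\>)$) yields \eqref{eqdedhhiuhe3OR}.

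For the final assertion on the decomposition of $u$, by Theorem \ref{thmgugyug0OR} the map $u\mapsto u^{(k)}(u)$ is the $\<\cdot,\cdot\>$-orthogonal projection onto $\V^{(k)}$. Since $\V^{(k)}=\V^{(k-1)}\oplus\W^{(k)}$ with $\W^{(k)}\perp\V^{(k-1)}$, the difference $u^{(k)}(u)-u^{(k-1)}(u)$ is the orthogonal projection onto $\W^{(k)}$; likewise $u-u^{(q)}(u)$ is the projection onto $\W^{(q+1)}$. Summing these orthogonal components gives $u$ and identifies it with the orthogonal decomposition induced by \eqref{eqdedhhiuhe3OR}. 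The main technical obstacle is the identity $\pi^{(k-1,k)}W^{(k),T}=0$ chained with the nesting of the $\phi^{(k)}$; the remainder is bookkeeping through Construction \ref{consjk} to verify the dimensions match.
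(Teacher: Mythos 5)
Your proof is correct and follows essentially the same route as the paper, which establishes the result via Lemma \ref{lemwk} (the identity $\pi^{(k-1,k)}W^{(k),T}=0$ combined with the nesting \eqref{eq:eigdeiud3dd} and the dimension count $|\J^{(k)}|=|\I^{(k)}|-|\I^{(k-1)}|$) together with Proposition \ref{propduyge} identifying $u^{(k)}(u)-u^{(k-1)}(u)$ as the orthogonal projection onto $\W^{(k)}$. No substantive differences.
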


 \begin{figure}[h!]
	\begin{center}
			\includegraphics[width=\textwidth]{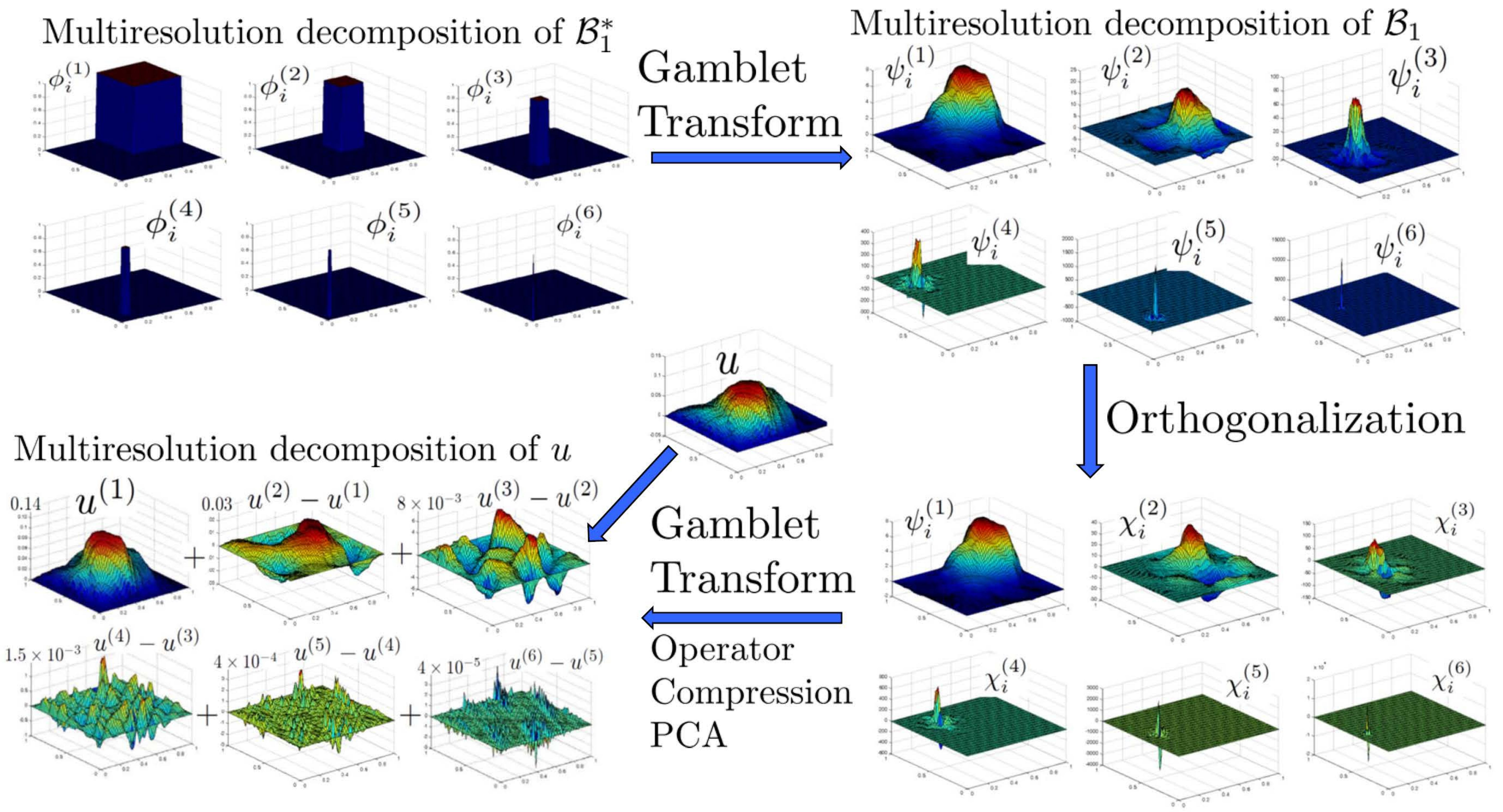}
		\caption{The Gamblet Transform.
}\label{figgt}
	\end{center}
\end{figure}

\begin{NE}\label{nsdeddrf}
Figure \ref{figgt} provides, in the context of Example \ref{egproto00} and Illustrations \ref{nekdjdhhu} and \ref{nekdjdhhu2},  an illustration of the Gamblet Transform described in Subsection \ref{subseckjshgdhgdhOR}.  Observe that the Gamblet Transform turns the hierarchy of nested measurements functions $\phi_i^{(k)}$ into the hierarchy of nested gamblets $\psi_i^{(k)}$. As it is commonly done with nested wavelets \cite{daubechies1992ten, meyer1989orthonormal}, the gamblets are then  orthogonalized into the elements $\chi_i^{(k)}$. These new gamblets enable the multiresolution decomposition of any element $u\in \B$ that could not only be employed for designing fast solvers, but also for operator compression, PCA or active subspace analysis. Observe that the transformation of the measurement functions $\phi_i^{(k)})\in \B^*$ into elements $\psi_i^{(k)}\in \B$
is also a generic method for automatically producing large and new classes of wavelets that are adapted to the space $(\B,\|\cdot\|)$.
More precisely we will show in Subsection \ref{subseghdhhgdjh} that the Gamblet transform turns a multiresolution decomposition of the compact embedding $(\B_0,\|\cdot\|_0)\rightarrow (\B,\|\cdot\|)$ into a multiresolution decomposition of the operator $Q^{-1}: (\B,\|\cdot\|) \rightarrow (\B^*,\|\cdot\|_*)$, which in the context  of Example \ref{egproto00} and Illustration \ref{nekdjdhhu}, corresponds to turning the  multiresolution decomposition of the compact embedding $(L_2(\Omega),\|\cdot\|_{L^2(\Omega)})\rightarrow (H^{-1}(\Omega),\|\cdot\|_{*})$ induced by Haar basis functions, into the multiresolution decomposition of the operator $-\diiv(a\nabla): (H^1_0(\Omega),\|\cdot\|_{a})\rightarrow (H^{-1}(\Omega),\|\cdot\|_{*})$.
\end{NE}

\subsection{Bounded condition numbers}\label{subseghdhhgdjh}
Let $A^{(k)}$ be the $\I^{(k)}\times \I^{(k)}$ stiffness matrix defined by
\begin{equation}\label{eqtheta2}
A^{(k)}_{i,j}=\< \psi_i^{(k)}, \psi_j^{(k)}\>
\end{equation}
Let $B^{(k)}$ be the $\J^{(k)}\times \J^{(k)}$ (stiffness) matrix
$B^{(k)}_{i,j}:=\<\chi_i^{(k)},\chi_j^{(k)}\>$.
Observe that
\begin{equation}\label{eqjgfytfjhyyyg}
B^{(k)}= W^{(k)}A^{(k)}W^{(k),T}
\end{equation}

For $k\in \{2,\ldots,q\}$ let
\begin{equation}\label{eqphikchi}
\Phi^{(k),\chi}:=\{\phi\in \Phi^{(k)}\mid \phi=\sum_{i\in \I^{(k)}} x_i \phi_i^{(k)}\text{ with } x\in \Ker(\pi^{(k-1,k)})\}
\end{equation}

Let $I^{(k)}$ be the identity $\I^{(k)}\times \I^{(k)}$ matrix and let $J^{(k)}$ be the identity $\J^{(k)}\times \J^{(k)}$ matrix.
\begin{Condition}\label{cond1OR}
There exists some constants $C_{\Phi}\geq 1$ and $H\in (0,1)$ such that
\begin{enumerate}
\item\label{itcor1} $\sup_{\phi\in \B_0} \frac{\|\phi \|_*}{\|\phi \|_0}\leq C_\Phi$.
\item\label{lincondORhdg} $\frac{1}{C_\Phi} H^k \leq \inf_{\phi\in \Phi^{(k)}} \frac{\|\phi \|_*}{\|\phi \|_0}$ for $k\in \{1,\ldots,q\}$.
\item\label{itcor2}  $\sup_{\varphi\in \B_0}\inf_{\phi \in \Phi^{(k)}} \frac{\|\varphi-\phi\|_*}{\|\varphi\|_0}\leq C_{\Phi} H^k$ for $k\in \{1,\ldots,q\}$.
\item\label{itcor3} $\sup_{\phi\in \Phi^{(k),\chi} }\frac{\| \phi\|_*}{\|\phi\|_0}\leq C_{\Phi} H^{k-1}$ for $k\in \{2,\ldots,q\}$.
\item\label{itcor4}  $\frac{1}{C_{\Phi}}|x|^2 \leq \|\sum_{i\in \I^{(k)}} x_i \phi_i^{(k)}\|_0^2 \leq C_{\Phi} |x|^2$ for $k\in \{1,\ldots,q\}$ and $x\in \R^{\I^{(k)}}$.
\item $C_{\Phi}^{-1} J^{(k)}\leq W^{(k)}W^{(k),T}\leq C_{\Phi} J^{(k)}$ for $k\in \{2,\ldots,q\}$.
\end{enumerate}
\end{Condition}
We write $\Cond(X):=\frac{\lambda_{\max}(X)}{\lambda_{\min}(X)}$ for the condition number of a symmetric positive matrix $X$.
\begin{Theorem}\label{corunbcnOR}
Under Condition \ref{cond1OR} it holds true that there exists a constant $C$ depending only on $C_{\Phi}$ such that
\begin{equation}\label{corunbcnORfe}
\|u-u^{(k)}(u)\|\leq C H^k \|Q^{-1} u\|_0,
\end{equation}
 for $u\in Q \B_0$ and $k\in \{1,\ldots,q\}$.
Furthermore, $C^{-1} I^{(1)} \leq A^{(1)}\leq C H^{-2} I^{(1)}$,  $\operatorname{Cond}(A^{(1)})\leq  C H^{-2}$,
and
\begin{equation}
  C^{-1} H^{-2(k-1)} J^{(k)} \leq B^{(k)}\leq C H^{-2k} J^{(k)}
\end{equation}
 for $k\in \{2,\ldots,q\}$.  In particular,
   \begin{equation}
   \operatorname{Cond}(B^{(k)})\leq  C H^{-2}\,.
   \end{equation}
\end{Theorem}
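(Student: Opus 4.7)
The plan is to reduce every claim to spectral estimates for the Gram matrix $\Theta^{(k)}$ of \eqref{eqtheta1}. By Theorem \ref{thmwhdguyd} and $[\phi_j^{(k)},\psi_l^{(k)}]=\delta_{j,l}$, a direct calculation yields $A^{(k)}=\Theta^{(k),-1}$, and hence $B^{(k)} = W^{(k)}\Theta^{(k),-1}W^{(k),T}$ by \eqref{eqjgfytfjhyyyg}. Writing $\tilde\phi_x:=\sum_{l\in\I^{(k)}}x_l\phi_l^{(k)}$, the quadratic form of $\Theta^{(k)}$ satisfies $x^T\Theta^{(k)}x=\|\tilde\phi_x\|_*^2$; all estimates will combine the norm comparisons in Condition \ref{cond1OR} with Item \ref{itcor4} to pass between $\|\tilde\phi_x\|_0$ and $|x|$.

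For the approximation inequality \eqref{corunbcnORfe}, I would invoke Theorem \ref{thmgugyug0OR}, which identifies $u^{(k)}(u)$ as the $\<\cdot,\cdot\>$-orthogonal projection of $u$ onto $\V^{(k)}=Q\Phi^{(k)}$, together with the fact that $Q$ is an isometry from $(\B^*,\|\cdot\|_*)$ to $(\B,\|\cdot\|)$. This yields
\begin{equation*}
\|u-u^{(k)}(u)\| = \inf_{\phi\in\Phi^{(k)}}\|u-Q\phi\| = \inf_{\phi\in\Phi^{(k)}}\|Q^{-1}u-\phi\|_*,
\end{equation*}
and for $u\in Q\B_0$ the element $\varphi:=Q^{-1}u$ lies in $\B_0$, so Item \ref{itcor2} applied to $\varphi$ gives the claim with $C=C_\Phi$.

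For the spectral bounds, combining $x^T\Theta^{(k)}x=\|\tilde\phi_x\|_*^2$ with Items \ref{lincondORhdg} and \ref{itcor4} gives $x^T\Theta^{(k)}x\ge (H^{2k}/C_\Phi^3)|x|^2$ for all $x\in\R^{\I^{(k)}}$, while Items \ref{itcor1} and \ref{itcor4} give the reverse bound $x^T\Theta^{(k)}x\le C_\Phi^3|x|^2$. Taking $k=1$ and inverting yields $C_\Phi^{-3}I^{(1)}\le A^{(1)}\le C_\Phi^3H^{-2}I^{(1)}$, as claimed. At general $k$ one extracts $\lambda_{\max}(\Theta^{(k),-1})\le C_\Phi^3H^{-2k}$, and combining this with the norm equivalence $C_\Phi^{-1}|y|^2\le|W^{(k),T}y|^2\le C_\Phi|y|^2$ (from the last item of Condition \ref{cond1OR}) immediately gives the upper bound $B^{(k)}\le C_\Phi^4H^{-2k}J^{(k)}$.

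The genuinely delicate step is the lower bound on $B^{(k)}$. For $y\in\R^{\J^{(k)}}$, set $x=W^{(k),T}y$; then $x\in\Img(W^{(k),T})=\Ker(\pi^{(k-1,k)})$, so $\tilde\phi_x\in\Phi^{(k),\chi}$ by \eqref{eqphikchi}, and Items \ref{itcor3} and \ref{itcor4} furnish the refined upper bound $x^T\Theta^{(k)}x\le C_\Phi^3H^{2(k-1)}|x|^2$ on this subspace. The Cauchy--Schwarz identity
\begin{equation*}
|x|^4 = \bigl(x^T\Theta^{(k),1/2}\Theta^{(k),-1/2}x\bigr)^2 \le \bigl(x^T\Theta^{(k)}x\bigr)\bigl(x^T\Theta^{(k),-1}x\bigr)
\end{equation*}
inverts this into $x^T\Theta^{(k),-1}x\ge|x|^2/(C_\Phi^3H^{2(k-1)})$, whence $y^TB^{(k)}y\ge(C_\Phi^4H^{2(k-1)})^{-1}|y|^2$; the bound $\Cond(B^{(k)})\le CH^{-2}$ then follows from the ratio of the upper and lower spectral bounds on $B^{(k)}$. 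The main obstacle is precisely this last inversion: $\Theta^{(k),-1}$ restricted to $\Ker(\pi^{(k-1,k)})$ is not the inverse of $\Theta^{(k)}$ restricted there, so a direct spectral argument fails; the Cauchy--Schwarz trick circumvents this by converting the sharp upper bound on $\Theta^{(k)}$ over the coefficient-kernel subspace $\Phi^{(k),\chi}$ (which is exactly what Item \ref{itcor3} provides) into the desired lower bound on $\Theta^{(k),-1}$ in the same direction.
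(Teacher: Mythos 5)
Your proposal is correct, and the reductions for \eqref{corunbcnORfe}, the $A^{(1)}$ bounds, and the upper bound on $B^{(k)}$ coincide with the paper's (the paper's \eqref{eqdkjkduedn} is exactly your projection-plus-isometry argument, and its \eqref{eqkhiduhdf7d}--\eqref{eqkhiduhde2df7d} are your spectral bounds on $\Theta^{(k)}$ read through Items \ref{itcor1}, \ref{lincondORhdg} and \ref{itcor4}). Where you genuinely diverge is the lower bound on $B^{(k)}$. The paper (proof of Theorem \ref{thmodhehiudhehd}) starts from the approximation property, i.e.\ the right-hand side of \eqref{eqkjddhuedf} built on Item \ref{itcor2}, obtains $y^TB^{(k),-1}y\le \bar\gamma_k\bar H_{k-1}^2\,|N^{(k)}y|^2$, and is then forced to control $\lambda_{\max}(N^{(k),T}N^{(k)})$ via the projection $P^{(k)}=\pi^{(k,k-1)}R^{(k-1,k)}$ and the chain of Lemmas \ref{lemfdhgdf}, \ref{lemdjoidjdi}, \ref{lemddjoj3ir} (the last of which uses Item \ref{itcor3}). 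You instead go straight from Item \ref{itcor3}, which bounds $x^T\Theta^{(k)}x\le C_\Phi^3H^{2(k-1)}|x|^2$ on $\Ker(\pi^{(k-1,k)})=\Img(W^{(k),T})$, and invert it pointwise via $|x|^4\le(x^T\Theta^{(k)}x)(x^T\Theta^{(k),-1}x)$; this is valid for each fixed $x$ regardless of the subspace, so the fact that $\Theta^{(k),-1}$ restricted to the kernel is not the inverse of the restriction causes no trouble. Your route is shorter, avoids Item \ref{itcor2} entirely for this part, and is essentially the dual variational characterization $x^T\Theta^{(k),-1}x\ge(x^Tx)^2/(x^T\Theta^{(k)}x)$ that underlies the shorted-operator identity \eqref{id_short} used elsewhere in the paper. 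What the paper's longer route buys is the explicit bound on $N^{(k),T}N^{(k)}$ (Theorem \ref{lemdjkdj}), which is reused later for the dual gamblets (Proposition \ref{propdkfjffh}) and the localization/complexity analysis (Lemma \ref{lembase}), so the extra machinery is not wasted there even though it is unnecessary for Theorem \ref{corunbcnOR} itself.
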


 \begin{figure}[h!]
	\begin{center}
			\includegraphics[width=0.8 \textwidth]{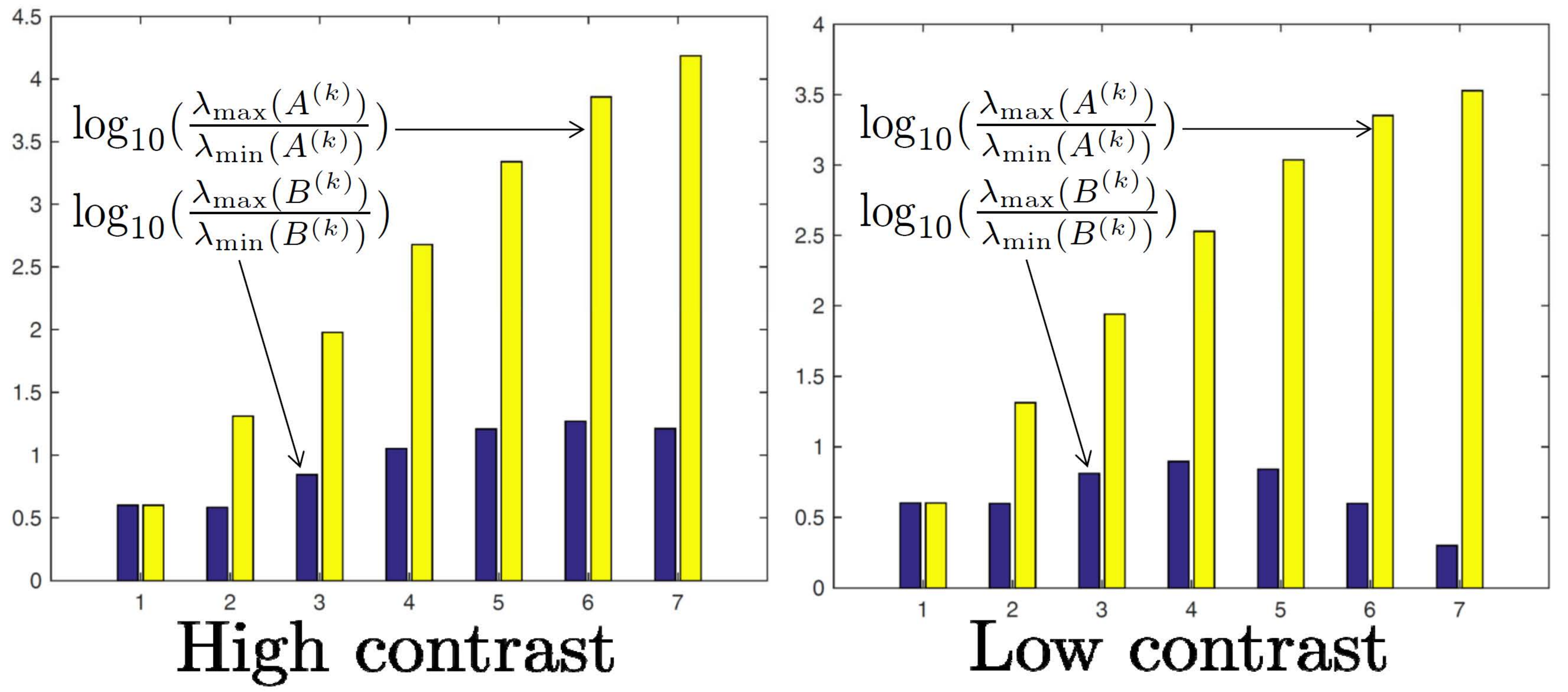}
		\caption{Condition numbers of $A^{(k)}$  and $B^{(k)}$, in $\log_{10}$ scale,  for $1\leq k \leq 7$. See Illustration \ref{ilfigbcn}.
}\label{figbcn}
	\end{center}
\end{figure}

\begin{NE}\label{ilfigbcn}
Figure \ref{figbcn} provides, in the context of Example \ref{egproto00} and Illustrations \ref{nekdjdhhu} and \ref{nekdjdhhu2},  an illustration of the condition numbers of the matrices $A^{(k)}$ and $B^{(k)}$. We define contrast as the ratio $\lambda_{\max}(a)/\lambda_{\min}(a)$. Figure \ref{figevranges} provides an illustration of the ranges of   the eigenvalues  of $A^{(7)}$, $A^{(1)}$, $B^{(2)},\ldots, B^{(7)}$. While the subspaces $\V^{(1)}, \W^{(2)}, \W^{(3)},\ldots$ are not exact eigenspaces
(e.g. in the context of Example \ref{egproto00} they are not orthogonal in $L^2(\Omega)$ and the angle between two successive subspace is of the order of a power of $H$) they retain several important characteristics of eigensubspaces: (1) Theorem \ref{thmgugyug2OR} shows that they are orthogonal in the $\<\cdot,\cdot\>$ scalar product (e.g., for  Example \ref{egproto00},  in the scalar product associated with the energy norm $\|\cdot\|_a$) (2) Theorem \ref{corunbcnOR} (and Figure \ref{figevranges}) shows that the ranges of eigenvalues of the operator $Q$ within each subspace define intervals of uniformly bounded lengths in log scale (3)
\cite{OwZh:2016} shows that the projections of the solutions of the
hyperbolic and parabolic versions of \ref{figevranges} on subspaces $\W^{(k)}$ (obtained using, possibly complex valued gamblets, that are non only adapted to the coefficients of the PDE but also to the implicit numerical scheme used for its resolution) produces space-time multiresolution decompositions of those solutions
(the evolution of their projected solution on $\W^{(k)}$ is slow for $k$ small and fast for $k$ large). In that sense,
 gamblets induce a multiresolution decomposition of $\B$ that is, to some degree, adapted to the eigen-subspaces of the operator $Q$.
\end{NE}

 \begin{figure}[h!]
	\begin{center}
			\includegraphics[width=0.8 \textwidth]{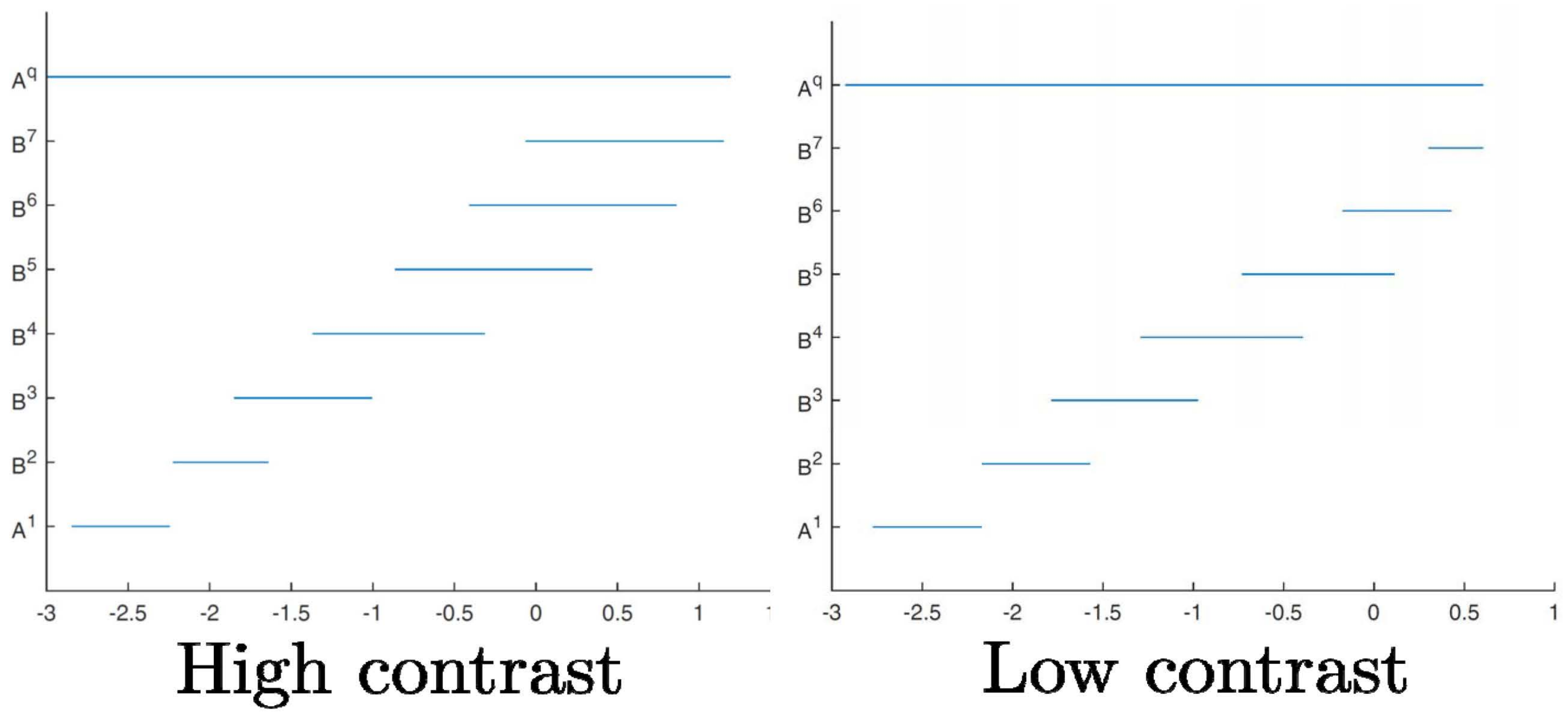}
		\caption{Ranges of the eigenvalues  of $A^{(7)}$, $A^{(1)}$, $B^{(2)},\ldots, B^{(7)}$, in $\log_{10}$ scale.  See Illustration \ref{ilfigbcn}.
}\label{figevranges}
	\end{center}
\end{figure}

\subsection{Hierarchical computation of gamblets}
Write $B^{(k),-1}$ the inverse of $B^{(k)}$.
Let $N^{(k)}$ be the $\I^{(k)}\times \J^{(k)}$ matrix defined by
\begin{equation}\label{eqjdhiudhiue}
N^{(k)}:=A^{(k)} W^{(k),T} B^{(k),-1},
\end{equation}
and write $N^{(k),T}$ its transpose.
Write $\pi^{(k+1,k)}$ the transpose of $\pi^{(k,k+1)}$.  $\text{rank}(\pi^{(k,k+1)})=|\I^{(k)}|$ implies that $\pi^{(k,k+1)}\pi^{(k+1,k)}$ is invertible. Let $\bar{\pi}^{(k,k+1)}$
 be the $\I^{(k)}\times \I^{(k+1)}$ matrix defined as the pseudo-inverse of $\pi^{(k+1,k)}$, i.e.
\begin{equation}\label{eqpibardef}
\bar{\pi}^{(k,k+1)}=(\pi^{(k,k+1)}\pi^{(k+1,k)})^{-1} \pi^{(k,k+1)}\,.
\end{equation}

Since the  spaces $\V^{(k)}$ are nested there exists a $\I^{(k)}\times  \I^{(k+1)}$   matrix $R^{(k,k+1)}$ such that
for $1\leq k \leq q-1$ and $i\in \I^{(k)}$
\begin{equation}\label{eq:ftfytftfx}
\psi^{(k)}_i=\sum_{j \in  \I^{(k+1)}} R_{i,j}^{(k,k+1)} \psi_j^{(k+1)}
\end{equation}
It is natural, using an analogy with multigrid theory, to refer to  $R^{(k,k+1)}$ as the restriction matrix and to its transpose
$R^{(k+1,k)}:=(R^{(k,k+1)})^T$ as the interpolation/prolongation matrix.
The following theorem is the basis of Algorithm \ref{alggambletcomutationnes}, which describes how gamblets are computed in a nested manner (from level $q$ to level $1$) by solving well conditioned linear systems.

\begin{Theorem}\label{thmpsdk}
It holds true that for $k\in \{1,\ldots,q-1\}$ and $i\in \I^{(k)}$,
\begin{equation}\label{eqdidhduhh}
\psi_i^{(k)}=\sum_{l\in \I^{(k+1)}}\bar{\pi}^{(k,k+1)}_{i,l}\psi^{(k+1)}_l- \sum_{j\in \J^{(k+1)}} (\bar{\pi}^{(k,k+1)} N^{(k+1)})_{i,j} \chi^{(k+1)}_j\,.
\end{equation}
In particular,
\begin{equation}\label{eqhuhiddeuv}
R^{(k,k+1)}= \bar{\pi}^{(k,k+1)}(I^{(k+1)}- N^{(k+1)} W^{(k+1)})\,.
\end{equation}
\end{Theorem}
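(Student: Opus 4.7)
The plan is to establish the matrix identity \eqref{eqhuhiddeuv} first; the expansion \eqref{eqdidhduhh} then follows immediately on substituting $\chi^{(k+1)}_j=\sum_{m\in \I^{(k+1)}}W^{(k+1)}_{j,m}\psi^{(k+1)}_m$ from Definition \ref{def_chi} and collecting coefficients. The nesting $\V^{(k)}\subset \V^{(k+1)}$ from Theorem \ref{thmgugyug0OR} guarantees the existence of a unique matrix $R^{(k,k+1)}$ with $\psi_i^{(k)}=\sum_{l\in \I^{(k+1)}}R^{(k,k+1)}_{i,l}\psi_l^{(k+1)}$, and my strategy is to characterize it by two linear matrix equations and then check that the claimed formula satisfies both.

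First I would derive the \emph{interpolation identity}. Applying $[\phi_m^{(k+1)},\cdot]$ to both sides of the expansion of $\psi_i^{(k)}$, and using $[\phi_m^{(k+1)},\psi_l^{(k+1)}]=\delta_{m,l}$ (Definition \ref{defpsi} at level $k+1$), yields $R^{(k,k+1)}_{i,m}=[\phi_m^{(k+1)},\psi_i^{(k)}]$. Combining with $\phi_j^{(k)}=\sum_m \pi^{(k,k+1)}_{j,m}\phi_m^{(k+1)}$ from Construction \ref{constphik} and with $[\phi_j^{(k)},\psi_i^{(k)}]=\delta_{i,j}$ gives
\begin{equation}\label{eqplan1}
R^{(k,k+1)}\pi^{(k+1,k)}=I^{(k)}\,.
\end{equation}
Second I would derive the \emph{orthogonality identity}. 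By Theorem \ref{thmgugyug2OR}, $\V^{(k)}$ is $\langle\cdot,\cdot\rangle$-orthogonal to $\W^{(k+1)}$, so $\langle\psi_i^{(k)},\chi_j^{(k+1)}\rangle=0$ for every $j\in \J^{(k+1)}$; expanding both arguments in the basis $(\psi_l^{(k+1)})_{l\in \I^{(k+1)}}$ and recalling $A^{(k+1)}_{l,m}=\langle\psi_l^{(k+1)},\psi_m^{(k+1)}\rangle$ converts this to
\begin{equation}\label{eqplan2}
R^{(k,k+1)}A^{(k+1)}W^{(k+1),T}=0\,.
\end{equation}

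Uniqueness of $R^{(k,k+1)}$ subject to \eqref{eqplan1} and \eqref{eqplan2} follows as a short linear-algebra argument: if $\Delta$ denotes the difference of two solutions, then \eqref{eqplan1} forces each row of $\Delta$ into $\Ker(\pi^{(k,k+1)})=\Img(W^{(k+1),T})$ by Construction \ref{conswk}, so $\Delta=ZW^{(k+1)}$ for some $Z$; plugging this into \eqref{eqplan2} gives $ZB^{(k+1)}=0$ via \eqref{eqjgfytfjhyyyg}, and $Z=0$ since $B^{(k+1)}$ is positive definite (as the Gram matrix of the linearly independent $\chi_j^{(k+1)}$). Finally I would verify that the candidate $R^{(k,k+1)}:=\bar{\pi}^{(k,k+1)}(I^{(k+1)}-N^{(k+1)}W^{(k+1)})$ satisfies both conditions: \eqref{eqplan1} reduces to $\bar{\pi}^{(k,k+1)}\pi^{(k+1,k)}=I^{(k)}$ (from \eqref{eqpibardef}) together with $W^{(k+1)}\pi^{(k+1,k)}=0$ (which restates $\Img(W^{(k+1),T})=\Ker(\pi^{(k,k+1)})$), while \eqref{eqplan2} collapses via the telescoping cancellation $N^{(k+1)}W^{(k+1)}A^{(k+1)}W^{(k+1),T}=A^{(k+1)}W^{(k+1),T}B^{(k+1),-1}B^{(k+1)}=A^{(k+1)}W^{(k+1),T}$ obtained from \eqref{eqjdhiudhiue} and \eqref{eqjgfytfjhyyyg}.

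The only conceptual step I view as nontrivial is recognizing that the variational (minimum-norm) definition of $\psi_i^{(k)}$ translates, via the $\langle\cdot,\cdot\rangle$-orthogonal decomposition $\V^{(k+1)}=\V^{(k)}\oplus \W^{(k+1)}$, into exactly the pair of conditions \eqref{eqplan1}--\eqref{eqplan2}; once this is in hand the rest is routine linear algebra. The main technical pitfall is bookkeeping transpose conventions between $\pi^{(k,k+1)}$ and $\pi^{(k+1,k)}$ (and between $W^{(k+1)}$ and $W^{(k+1),T}$) so that each identity is of the correct shape, but I do not anticipate a deeper obstacle.
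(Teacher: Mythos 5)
Your proof is correct, but it is organized differently from the paper's. The paper's argument is constructive: it introduces the ``naive'' interpolants $\bar{\psi}_s^{(k)}:=\sum_{l}\bar{\pi}^{(k,k+1)}_{s,l}\psi_l^{(k+1)}$, shows via a pairing-plus-dimension-count that $\{\bar{\psi}_s^{(k)}\}\cup\{\chi_j^{(k+1)}\}$ is a basis of $\V^{(k+1)}$, writes $\psi_i^{(k)}=\sum_s x_s\bar{\psi}_s^{(k)}+\sum_j y_j\chi_j^{(k+1)}$, and then determines $x_s=\delta_{i,s}$ from the constraints $[\phi_s^{(k)},\psi_i^{(k)}]=\delta_{i,s}$ and $y=-B^{(k+1),-1}W^{(k+1)}A^{(k+1)}\bar{\pi}^{(k+1,k)}_{\cdot,i}$ by solving the orthogonality equations $\<\psi_i^{(k)},\chi_j^{(k+1)}\>=0$; this produces \eqref{eqdidhduhh} first and \eqref{eqhuhiddeuv} as a corollary. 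You instead go the other way: you characterize $R^{(k,k+1)}$ as the unique solution of $R\pi^{(k+1,k)}=I^{(k)}$ and $RA^{(k+1)}W^{(k+1),T}=0$ and then verify the closed-form candidate. Both routes rest on exactly the same two facts (the biorthogonality of the level-$(k{+}1)$ system and the orthogonality $\V^{(k)}\perp\W^{(k+1)}$ from Theorem \ref{thmgugyug2OR}), and your two matrix identities are precisely what the paper later records as $R^{(k,k+1)}\pi^{(k+1,k)}=I^{(k)}$ in Theorem \ref{thm38dgdn} and $R^{(k-1,k)}A^{(k)}W^{(k),T}=0$ in Proposition \ref{propfund}, so there is no circularity. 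The trade-off is that the paper's construction explains where the term $N^{(k+1)}=A^{(k+1)}W^{(k+1),T}B^{(k+1),-1}$ comes from (it is the solution of the well-conditioned system in $y$), whereas your verification presupposes the formula but dispenses with the basis/dimension bookkeeping; your uniqueness step, using $\Ker(\pi^{(k,k+1)})=\Img(W^{(k+1),T})$ and the positive definiteness of $B^{(k+1)}$, is sound.
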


\begin{algorithm}[!ht]
\caption{Hierarchical computation of gamblets.}\label{alggambletcomutationnes}
\begin{algorithmic}[1]
\STATE\label{step5giOR}  $A^{(q)}_{i,j}=\<\psi^{(q)}_i,\psi^{(q)}_j\> $   \COMMENT{Level $q$, $\I^{(q)}\times \I^{(q)}$ stiffness matrix}
\FOR{$k=q$ to $2$}
\STATE\label{step7gOR} $B^{(k)}= W^{(k)}A^{(k)}W^{(k),T}$ \COMMENT{Eq.~\eqref{eqjgfytfjhyyyg}}
\STATE\label{step9gOR}  For $i\in \J^{(k)}$, $\chi^{(k)}_i=\sum_{j \in \I^{(k)}} W_{i,j}^{(k)} \psi_j^{(k)}$  \COMMENT{Eq.~\eqref{eqjkhdkdh}}
\STATE\label{step11gpiOR} $\bar{\pi}^{(k-1,k)}=(\pi^{(k-1,k)}\pi^{(k,k-1)})^{-1} \pi^{(k-1,k)}$ \COMMENT{Eq.~\ref{eqpibardef}}
\STATE\label{step11gOR}  $ N^{(k)}= A^{(k)} W^{(k),T} B^{(k),-1}$ \COMMENT{Eq.~\eqref{eqjdhiudhiue}}
\STATE\label{step12gOR} $R^{(k-1,k)}=\bar{\pi}^{(k-1,k)}(I^{(k)}- N^{(k)} W^{(k)})$ \COMMENT{Eq.~\eqref{eqhuhiddeuv}}
\STATE\label{step13gOR} $A^{(k-1)}= R^{(k-1,k)}A^{(k)}R^{(k,k-1)}$ \COMMENT{Eq.~\eqref{eqhuhiuv}}
\STATE\label{step14gOR} For $i\in \I^{(k-1)}$, $\psi^{(k-1)}_i=\sum_{j \in  \I^{(k)}} R_{i,j}^{(k-1,k)} \psi_j^{(k)}$ \COMMENT{Eq.~\eqref{eq:ftfytftfx}}
\ENDFOR
\end{algorithmic}
\end{algorithm}

\subsection{Discrete Gamblet transform}\label{subsecdisgambtra}

Let $u\in \B$. For $k\in \{1,\ldots,q\}$ let $b^{(k)}\in \R^{\I^{(k)}}$ be defined by $b^{(k)}_i:=\<\psi_i^{(k)},u\>$ (observe that \eqref{eq:ftfytftfx} implies $b^{(k-1)}=R^{(k-1,k)} b^{(k)}$).
Let
\begin{equation}\label{eqljekdhldkjd}
v^{(1)}:=\sum_{i\in \I^{(1)}} w^{(1)}_i \psi_i^{(1)}
\end{equation}
 where $w^{(1)}\in \R^{\I^{(1)}}$ is the solution
of
\begin{equation}\label{eqawb}
A^{(1)} w^{(1)}=b^{(1)}\,.
\end{equation}
Let
\begin{equation}\label{eqldjhdkjhed}
v^{(k)}:=\sum_{i\in \J^{(k)}} w^{(k)}_i \chi_i^{(k)}
\end{equation}
 where $w^{(k)}\in \R^{\J^{(k)}}$ is the solution of
\begin{equation}\label{eqjhgiggy6}
B^{(k)} w^{(k)}=W^{(k)}b^{(k)}\,.
\end{equation}

The following theorem is the basis of Algorithm \ref{alggamblettransf}, which describes how the gamblet transform of $u\in \B$, i.e. the decomposition of $u\in \B$ over $\V^{(1)}\oplus \W^{(2)}\oplus \cdots \oplus \W^{(q)}$, can be obtained by solving $q$ independent and well conditioned (under Condition \ref{cond1OR}, by Theorem \ref{corunbcnOR}) linear systems.

\begin{Theorem}\label{thmreffytfuyf}
Let $u\in \B$ and, for $k\in \{1,\ldots,q\}$.
It holds true that $u^{(q)}(u)=\sum_{k=1}^q v^{(k)}$ where the $v^{(k)}$ are defined in \eqref{eqljekdhldkjd} and \eqref{eqldjhdkjhed}
via the solutions of the (independent) linear systems \eqref{eqawb} and
and \eqref{eqjhgiggy6}. Furthermore $v^{(1)}=u^{(1)}(u)$ and for $k\in \{2,\ldots,q\}$, $v^{(k)}=u^{(k)}(u)-u^{(k-1)}(u)$.
\end{Theorem}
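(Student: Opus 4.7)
The plan is to identify each $v^{(k)}$ with the $\<\cdot,\cdot\>$-orthogonal projection of $u$ onto a subspace in the decomposition of Theorem \ref{thmgugyug2OR}, then assemble via telescoping. By that theorem, $\V^{(k)}=\V^{(k-1)}\oplus \W^{(k)}$ for $k\in\{2,\ldots,q\}$, so if $\Pi_{\W^{(k)}}$ denotes the $\<\cdot,\cdot\>$-orthogonal projection onto $\W^{(k)}$, then $u^{(k)}(u)=u^{(k-1)}(u)+\Pi_{\W^{(k)}}u$. Thus it suffices to prove (i) $v^{(1)}=u^{(1)}(u)$ and (ii) $v^{(k)}=\Pi_{\W^{(k)}}u$ for $k\in\{2,\ldots,q\}$; the claim $u^{(q)}(u)=\sum_{k=1}^q v^{(k)}$ will then follow by telescoping $u^{(q)}(u)=v^{(1)}+\sum_{k=2}^q\bigl(u^{(k)}(u)-u^{(k-1)}(u)\bigr)$.

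For (i), because $\V^{(1)}=\operatorname{span}\{\psi_i^{(1)}\mid i\in \I^{(1)}\}$, I would expand an arbitrary element of $\V^{(1)}$ as $\sum_i c_i\psi_i^{(1)}$ and write down the normal equations characterizing the $\<\cdot,\cdot\>$-orthogonal projection $u^{(1)}(u)$: for each $j\in \I^{(1)}$, $\sum_i c_i\<\psi_i^{(1)},\psi_j^{(1)}\>=\<u,\psi_j^{(1)}\>$. By the definition \eqref{eqtheta2} of $A^{(1)}$ and of $b^{(1)}$, this reads exactly $A^{(1)}c=b^{(1)}$, which is \eqref{eqawb}; hence $c=w^{(1)}$ and $v^{(1)}=u^{(1)}(u)$.

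For (ii), the argument is identical but in the $\chi$-basis. Writing an element of $\W^{(k)}=\operatorname{span}\{\chi_i^{(k)}\mid i\in \J^{(k)}\}$ as $\sum_i w_i\chi_i^{(k)}$, the normal equations for $\Pi_{\W^{(k)}}u$ are $\sum_i w_i\<\chi_i^{(k)},\chi_j^{(k)}\>=\<u,\chi_j^{(k)}\>$ for all $j\in \J^{(k)}$. Using Definition \ref{def_chi}, namely $\chi_j^{(k)}=\sum_l W^{(k)}_{j,l}\psi_l^{(k)}$, the right-hand side becomes $\sum_l W^{(k)}_{j,l}\<u,\psi_l^{(k)}\>=(W^{(k)}b^{(k)})_j$, while the left-hand side coefficient matrix is $W^{(k)}A^{(k)}W^{(k),T}=B^{(k)}$ by \eqref{eqjgfytfjhyyyg}. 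The system reduces to $B^{(k)}w=W^{(k)}b^{(k)}$, i.e. \eqref{eqjhgiggy6}, so $w=w^{(k)}$ and $v^{(k)}=\Pi_{\W^{(k)}}u=u^{(k)}(u)-u^{(k-1)}(u)$.

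There is no real obstacle: the proof is a direct consequence of Theorem \ref{thmgugyug2OR} (orthogonality of the multiresolution decomposition), the definitions of $A^{(k)}$ and $B^{(k)}$, and the change-of-basis identity $\chi^{(k)}=W^{(k)}\psi^{(k)}$. The one step warranting care is the translation of the $\chi$-basis normal equations into the compact form $B^{(k)}w=W^{(k)}b^{(k)}$, which hinges on the linearity of $\<\cdot,\cdot\>$ in its first argument and the identity $\<u,\psi_l^{(k)}\>=b^{(k)}_l$; notice that no invertibility or conditioning hypothesis (Condition \ref{cond1OR}) is required here, only the fact that $A^{(1)}$ and $B^{(k)}$ are the Gram matrices of linearly independent families and hence positive definite.
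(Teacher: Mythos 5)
Your proof is correct and follows essentially the same route as the paper, which derives the statement from the orthogonal decomposition $\V^{(k)}=\V^{(k-1)}\oplus\W^{(k)}$ (Theorem \ref{thmgugyug2OR}, via Proposition \ref{propduyge}) together with the characterization of $u^{(k)}(u)-u^{(k-1)}(u)$ as the $\<\cdot,\cdot\>$-orthogonal projection of $u$ onto $\W^{(k)}$. The only difference is that you spell out the normal equations in the $\psi^{(1)}$- and $\chi^{(k)}$-bases to identify them with \eqref{eqawb} and \eqref{eqjhgiggy6}, a step the paper leaves implicit; your observation that only positive definiteness of the Gram matrices (not Condition \ref{cond1OR}) is needed is also correct.
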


\begin{algorithm}[!ht]
\caption{Gamblet transform of $u\in \B$.}\label{alggamblettransf}
\begin{algorithmic}[1]
\STATE\label{step4gOR} For $i\in \I^{(q)}$, $b^{(q)}_i=\<\psi_i^{(q)},u\>$
\FOR{$k=q$ to $2$}
\STATE\label{step8gOR} $w^{(k)}=B^{(k),-1} W^{(k)} b^{(k)}$ \COMMENT{Eq.~\eqref{eqjhgiggy6}}
\STATE\label{step10gOR} $v^{(k)}=\sum_{i\in \J^{(k)}}w^{(k)}_i \chi^{(k)}_i$ \COMMENT{Thm.~\ref{thmreffytfuyf}, $v^{(k)}:=u^{(k)}-u^{(k-1)}\in \W^{(k)}$}
\STATE\label{step15gOR} $b^{(k-1)}=R^{(k-1,k)} b^{(k)}$ \COMMENT{Eq.~\eqref{eq:ftfytftfx}}
\ENDFOR
\STATE\label{step16gOR} $ w^{(1)}=A^{(1),-1}b^{(1)}$ \COMMENT{Eq.~\eqref{eqawb}}
\STATE\label{step17gOR} $v^{(1)}=\sum_{i \in \I^{(1)}} w^{(1)}_i \psi^{(1)}_i$ \COMMENT{Thm.~\ref{thmreffytfuyf}}
\STATE\label{step18gOR} $u^{(q)}(u)=v^{(1)}+v^{(2)}+\cdots+v^{(q)}$    \COMMENT{Thm.~\ref{thmreffytfuyf}}
\end{algorithmic}
\end{algorithm}

\subsection{Discrete gamblet transform/computation}\label{subsecgamdis}
When $\B$ is infinite dimensional  the practical application of the gamblet transform may require its discretization.
Algorithm \ref{alggambletcomutationnes} and \ref{alggamblettransf} only  require
the specification of these level $q$ gamblets and their stiffness matrices for their applications.
Therefore, when explicit/analytical formulas are available for level $q$ gamblets $\psi_i^{(q)}$ this discretization can naturally be done on the corresponding (meshless) gamblet basis.
However, when such formulas are not available the gamblet transform must be applied to a prior discretization $\B^{\d}$ of the space $\B$. This discretization can be done using linearly independent basis elements $(\varPsi_i)_{i\in \N}$ spanning
\begin{equation}\label{eqkdklrflkff}
\B^\d:=\Span\{\varPsi_i \mid i \in \N\},
\end{equation}
 a finite-dimensional subspace of $\B$.  $\N$ is a finite-set  and we write $N:=|\N|$. Let $\I^{(q)}$ be an index tree of depth $q$ (as in Def.~\ref{defindextree}) relabeling $\N$.

For $k\in \{2,\ldots,q\}$ let $\pi^{(k-1,k)}$ and $W^{(k)}$ be as in Construction \ref{constpi} and \ref{conswk}.
 Algorithm \ref{algdiscgambletsolvecase1g} describes the nested calculation of discrete gamblets from the basis functions $(\varPsi_i)_{i\in \N}$ and the corresponding gamblet transform of $u\in \B^\d$.

\begin{algorithm}[!ht]
\caption{Discrete Gamblet computation and transform of $u\in \B^\d$.}\label{algdiscgambletsolvecase1g}
\begin{algorithmic}[1]
\STATE\label{step3gdgpbdis} For $i\in \I^{(q)}$, $\psi^{(q)}_i= \varPsi_i$  \COMMENT{Level $q$ gamblets}
\STATE\label{step4gdis} Use Algorithm \ref{alggambletcomutationnes} to compute the  gamblets $\psi_i^{(k)}$, $\chi_i^{(k)}$
and matrices $A^{(k)}$, $B^{(k)}$ and $R^{(k-1,k)}$
\STATE\label{step7gdis} Use Algorithm \ref{alggamblettransf} to compute the gamblet transform of $u$  \COMMENT{$u=u^{(q)}(u)=v^{(1)}+v^{(2)}+\cdots+v^{(q)}$ }
\end{algorithmic}
\end{algorithm}

Let $A$  be the $\I^{(q)}\times \I^{(q)}$ stiffness matrix defined by
\begin{equation}\label{eqstiffnmata}
A_{i,j}=\<\varPsi_i,\varPsi_j\>\,.
\end{equation}
Define $\pi^{(k,q)}$ as in \eqref{eqpikq}.

\begin{Condition}\label{conddiscrip3ordismatdis}
There exists constants
$C_\d\geq 1$ and $H\in (0,1)$  such that the following conditions are satisfied.
\begin{enumerate}
\item\label{linconmat1dis} $C_\d^{-1} J^{(k)}\leq W^{(k)}W^{(k),T}\leq C_\d J^{(k)}$ for $k\in \{2,\ldots,q\}$.
\item\label{linconmat3dis}  $C_\d^{-1} I^{(k)} \leq \pi^{(k,q)}\pi^{(q,k)} \leq C_\d I^{(k)}$ for $k\in \{1,\ldots,q-1\}$.
\item\label{linconmat4dis}  $\sup_{x\in \R^{\I^{(q)}}} \inf_{y\in \R^{\I^{(k)}}} \frac{\sqrt{ (x-\pi^{(q,k)}y)^T A^{-1} (x-\pi^{(q,k)}y)}}{|x|}\leq \frac{C_\d}{ \sqrt{\lambda_{\min}(A)}} H^k$ for $k\in \{1,\ldots,q-1\}$.
\item\label{linconmat5dis}  $\frac{1}{C_\d \sqrt{\lambda_{\min}(A)}}H^k  \leq \inf_{y\in \R^{\I^{(k)}}} \frac{\sqrt{y^T \pi^{(k,q)} A^{-1}\pi^{(q,k)} y}}{|y|}$ for $k\in \{1,\ldots,q\}$.
\item\label{linconmat6dis}
$\sup_{y\in \Ker(\pi^{(k-1,k)}) }\frac{\sqrt{y^T \pi^{(k,q)} A^{-1}\pi^{(q,k)}y}}{|y|}\leq \frac{C_\d}{ \sqrt{\lambda_{\min}(A)}} H^{k-1}$ for $k\in \{2,\ldots,q\}$.
\end{enumerate}
\end{Condition}

\begin{Theorem}\label{thmconddisbndisbismatdis}
Let $A^{(1)}$ and $(B^{(k)})_{k\in \{2,\ldots,q\}}$ be the matrices computed in Algorithm \ref{algdiscgambletsolvecase1g}.
Under Conditions \ref{conddiscrip3ordismatdis} there exists a constant $C$ depending only on $C_\d$ such that
$C^{-1}  I^{(1)} \leq \frac{A^{(1)}}{\lambda_{\min}(A)}\leq C H^{-2} I^{(1)}$
  and $\operatorname{Cond}(A^{(1)})\leq  C H^{-2}$.
Furthermore, for $k\in \{2,\ldots,q\}$ it holds true that
$ C^{-1}H^{-2(k-1)} J^{(k)}   \leq \frac{B^{(k)}}{\lambda_{\min}(A)} \leq C H^{-2k} J^{(k)}$
   and $\operatorname{Cond}(B^{(k)})\leq  C H^{-2}$.
\end{Theorem}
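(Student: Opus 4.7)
The plan is to convert Algorithm \ref{algdiscgambletsolvecase1g} into explicit closed-form matrix expressions and then read the estimates directly off the analytical inequalities of Condition \ref{conddiscrip3ordismatdis}. Since Algorithm \ref{algdiscgambletsolvecase1g} just runs the abstract gamblet transform of Subsection \ref{subseckjshgdhgdhOR} on the finite-dimensional subspace $\B^\d\cong\R^N$ equipped with the $A$-inner product on coefficient vectors, identifying $\phi_i^{(q)}$ with the standard basis vector $e_i$ and identifying the abstract $Q$ of Subsection \ref{subsecttt} with $A^{-1}$, Theorem \ref{thmwhdguyd} together with the definitions of $A^{(k)}$ and $B^{(k)}$ yields the explicit formulas
\begin{equation}
\Theta^{(k)}=\pi^{(k,q)}A^{-1}\pi^{(q,k)},\qquad A^{(k)}=\Theta^{(k),-1},\qquad B^{(k)}=W^{(k)}\Theta^{(k),-1}W^{(k),T}.
\end{equation}
The whole theorem thereby reduces to two-sided spectral bounds on $\Theta^{(k)}$ on all of $\R^{\I^{(k)}}$ and restricted to the subspace $\Ker(\pi^{(k-1,k)})$.

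For the $A^{(1)}$ bounds, item \ref{linconmat5dis} of Condition \ref{conddiscrip3ordismatdis} at $k=1$ gives $\Theta^{(1)}\geq C_\d^{-2}H^{2}\lambda_{\min}(A)^{-1}I^{(1)}$, which upon inversion yields the upper bound $A^{(1)}\leq C_\d^{2}H^{-2}\lambda_{\min}(A)I^{(1)}$. For the matching lower bound, I estimate
\begin{equation}
y^T\Theta^{(1)}y=(\pi^{(q,1)}y)^TA^{-1}(\pi^{(q,1)}y)\leq \lambda_{\min}(A)^{-1}|\pi^{(q,1)}y|^2\leq C_\d\lambda_{\min}(A)^{-1}|y|^2,
\end{equation}
using $A^{-1}\leq \lambda_{\min}(A)^{-1}I$ and item \ref{linconmat3dis}; inverting gives $A^{(1)}\geq C_\d^{-1}\lambda_{\min}(A)I^{(1)}$. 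The conditioning estimate $\Cond(A^{(1)})\leq CH^{-2}$ is then immediate from the ratio.

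For the $B^{(k)}$ bounds the key observation is that $\Img(W^{(k),T})\subset\Ker(\pi^{(k-1,k)})$ by Construction \ref{conswk}, so only the behaviour of $\Theta^{(k),-1}$ on that kernel enters the quadratic form $x^TB^{(k)}x=(W^{(k),T}x)^T\Theta^{(k),-1}(W^{(k),T}x)$. Item \ref{linconmat6dis} controls $\Theta^{(k)}$ from above on $\Ker(\pi^{(k-1,k)})$ by $C_\d^{2}H^{2(k-1)}\lambda_{\min}(A)^{-1}$; combined with the elementary Cauchy-Schwarz bound $z^TM^{-1}z\geq |z|^4/(z^TMz)$ valid for any positive definite $M$, this yields $z^T\Theta^{(k),-1}z\geq C_\d^{-2}H^{-2(k-1)}\lambda_{\min}(A)|z|^2$ for every $z\in \Ker(\pi^{(k-1,k)})$. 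Pairing this with $W^{(k)}W^{(k),T}\geq C_\d^{-1}J^{(k)}$ from item \ref{linconmat1dis} delivers the lower bound on $B^{(k)}$. Conversely, the full-space lower bound on $\Theta^{(k)}$ from item \ref{linconmat5dis} gives $\Theta^{(k),-1}\leq C_\d^{2}H^{-2k}\lambda_{\min}(A)I^{(k)}$, and combining with the upper bound $W^{(k)}W^{(k),T}\leq C_\d J^{(k)}$ from item \ref{linconmat1dis} yields the upper bound $B^{(k)}\leq C_\d^{3}H^{-2k}\lambda_{\min}(A)J^{(k)}$; the conditioning estimate $\Cond(B^{(k)})\leq CH^{-2}$ is again immediate.

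The \emph{main obstacle}, which is essentially bookkeeping, is to justify rigorously that the matrix $A^{(k)}$ produced recursively by step \ref{step13gOR} of Algorithm \ref{alggambletcomutationnes} agrees at every level with the closed-form $\Theta^{(k),-1}$ I am using above. This consistency follows from Proposition \ref{prop_Gambletprojection} (the operator $\sum_i\psi_i^{(k)}\otimes\phi_i^{(k)}$ is the $\<\cdot,\cdot\>$-orthogonal projection onto $\V^{(k)}$) together with the nesting relation \eqref{eq:ftfytftfx}, which telescopes $R^{(k-1,k)}A^{(k)}R^{(k,k-1)}$ into $\Theta^{(k-1),-1}$ by a direct computation; once this identification is verified for all $k$, the rest of the proof is just the spectral algebra outlined above.
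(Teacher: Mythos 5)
Your proposal is correct, and for the crucial estimate it takes a genuinely different route from the paper. The setup is the same: the paper's own proof (Subsection \ref{subseckedjh}) performs exactly your identification of $\B^\d$ with $\R^N$ carrying the $A$-inner product, $\phi_i^{(q)}=e_i$, $Q$ a multiple of $A^{-1}$, so that $\Theta^{(k)}=\pi^{(k,q)}A^{-1}\pi^{(q,k)}$, $A^{(k)}=\Theta^{(k),-1}$ and $B^{(k)}=W^{(k)}\Theta^{(k),-1}W^{(k),T}$; the consistency of the recursively computed $A^{(k)}$ with $\Theta^{(k),-1}$ that you flag as the "main obstacle" is exactly Theorem \ref{thm38dgdn}, so that step is already discharged. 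The divergence is in the lower bound $\lambda_{\min}(B^{(k)})\geq C^{-1}H^{-2(k-1)}\lambda_{\min}(A)$. The paper obtains it via Theorem \ref{thmodhehiudhehd}: it first bounds $x^TB^{(k)}x/|A^{(k)}W^{(k),T}x|^2$ using the \emph{approximation property} (Item \ref{linconmat4dis}), and then must control $\lambda_{\max}(N^{(k),T}N^{(k)})$ through the oblique projection $P^{(k)}=\pi^{(k,k-1)}R^{(k-1,k)}$ and Lemmas \ref{lemfdhgdf}, \ref{lemdjoidjdi} and \ref{lemddjoj3ir}, which is where Item \ref{linconmat6dis} and Item \ref{linconmat3dis} enter. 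You instead apply the pointwise duality $z^TM^{-1}z\geq|z|^4/(z^TMz)$ (equivalently, $z^T\Theta^{(k),-1}z=\sup_y (z^Ty)^2/(y^T\Theta^{(k)}y)$ evaluated at $y=z$) directly to $z=W^{(k),T}x\in\Ker(\pi^{(k-1,k)})$ and invoke only Item \ref{linconmat6dis} there; together with Item \ref{linconmat1dis} this yields the same order of lower bound. This is a legitimately more elementary argument for the theorem as stated, and it shows that Item \ref{linconmat4dis} is not needed for the condition-number estimates at all. What the paper's heavier machinery buys is the quantitative control of $N^{(k),T}N^{(k)}$ (Theorem \ref{lemdjkdj}), which is reused in the localization and fast-transform analysis, and the finer constants of Theorem \ref{thmodhehiudhehd} separated into the quantities $\bar H_k$, $\ubar H_k$, $\hat H_k$, $\bar\gamma_k$, $\ubar\gamma_k$.
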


\begin{Remark}\label{rmknt}
If $\pi^{(k,q)}\pi^{(q,k)}$ is a multiple of identity for $k\in \{1,\ldots,q-1\}$ then items \ref{linconmat4dis} and \ref{linconmat6dis} of Condition \ref{conddiscrip3ordismatdis} are equivalent to
$\sqrt{x^T A^{-1} x}\leq \frac{C_\d}{\sqrt{\lambda_{\min}(A)}} H^k |x|$ for $x\in \Ker(\pi^{(q,k)})$ and $k\in \{1,\ldots,q-1\}$.
\end{Remark}

 \begin{figure}[h!]
	\begin{center}
			\includegraphics[width=0.75 \textwidth]{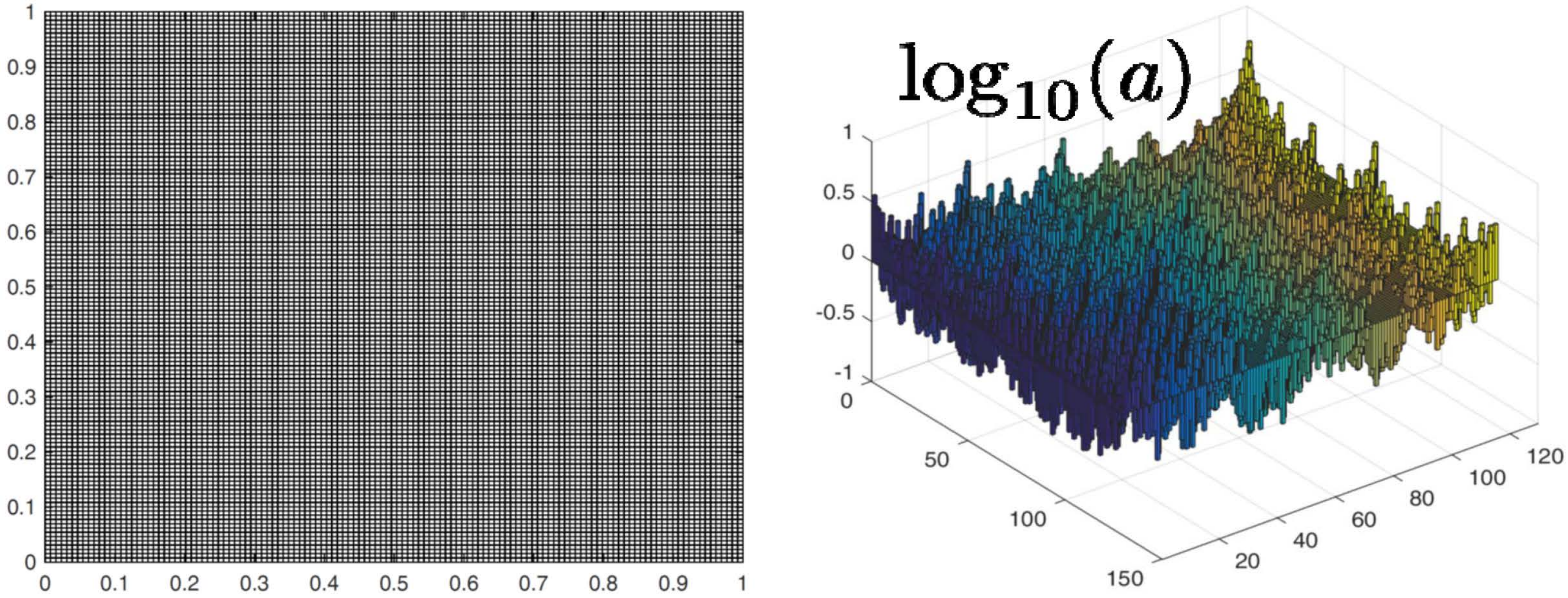}
		\caption{The fine grid and $a$ in $\log_{10}$ scale. See Illustration \ref{illoganumsim}.
}\label{figlog10a}
	\end{center}
\end{figure}
 \begin{figure}[h!]
	\begin{center}
			\includegraphics[width=\textwidth]{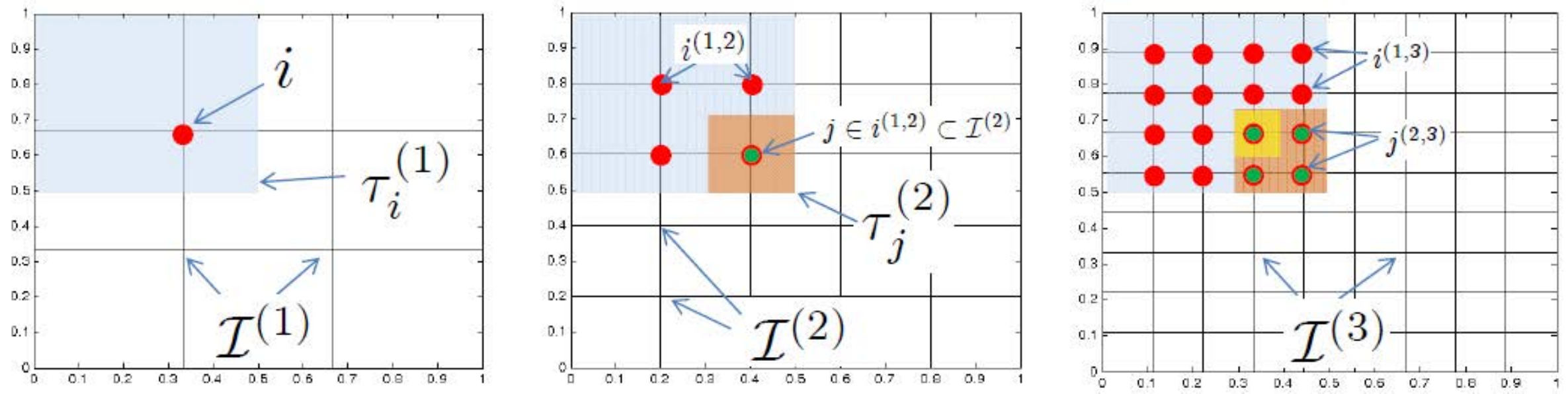}
		\caption{$\I^{(1)}$, $\I^{(2)}$ and $\I^{(3)}$. See Illustration \ref{illoganumsim}.}\label{fig:Pi}
	\end{center}
\end{figure}

\begin{NE}\label{illoganumsim}
Our running numerical example is obtained from the numerical discretization of  Example \ref{egproto00}.
More precisely we consider the uniform grid of $\Omega=(0,1)^2$ with $2^q\times 2^q$ interior points ($q=7$) illustrated in Figure
\ref{figlog10a}. $a$ is piecewise constant on each square of that grid, and given by
$a(x)=\prod_{k=1}^7 \Big(1+0.2 \cos\big(2^k \pi (\frac{i}{2^q+1}+\frac{j}{2^q+1})\big)\Big) \Big(1+0.2 \sin\big(2^k \pi (\frac{j}{2^q+1}-3\frac{i}{2^q+1})\big)\Big)$, as illustrated in $\log_{10}$ scale in Figure \ref{illoganumsim}.
To construct the hierarchy of indices $\I^{(k)}$ we partition the unit square into nested sub-squares $\tau_i^{(q)}$ of side $2^{-k}$ as illustrated in Figure \ref{fig:Pi} and we label each node $i\in \I^{(q)}$ of the fine mesh (of resolution $2^{-q}$) by the square $\tau_i^{(q)}$ of side $2^{-q}$ containing that node, and we determine the hierarchy of labels through set inclusion: i.e. for $i\in \I^{(q)}$, $i^{(k)}\in \I^{(k)}$ is the label of the square of side $2^{-k}$ containing $i$. The finite-element discretization  of $\B$ is  obtained using
continuous nodal bilinear basis elements $\varPsi_i$ spanned by $\{1,x_1,x_2,x_1 x_2\}$ in each square of the fine mesh. As described in Algorithm \ref{algdiscgambletsolvecase1g}, these fine mesh bilinear finite elements form our level $q$ gamblets (i.e. $\psi_i^{(q)}=\varPsi$). Although the continuous Gamblet Transform relies on the specification of the measurement functions $\phi_i^{(k)}$, the discrete Gamblet Transform only requires the specification of the matrices $\pi^{(k,k+1)}$ and $W^{(k)}$. For our numerical example these matrices are the same as those presented in a continuous setting in Illustrations \ref{nekdjdhhu} and \ref{nekdjdhhu2}. We refer to Figures \ref{figpsi7} and \ref{figchi7} for the corresponding illustrations of the gamblets $\psi_i^{(k)}$ and $\chi_i^{(k)}$. We refer to Figures \ref{figbcn} and \ref{figevranges} for the corresponding illustrations of the condition numbers of $A^{(k)}$, $B^{(k)}$ and the intervals containing their eigenvalues.
\end{NE}

\section{Operator inversion with the gamblet transform}\label{secsolvinvpb}

Here we discuss the gamblet transform in the context of inverse problems and its relationship with the
 choice of  error norms in the context of nonstandard dual pairings.
\subsection{The inverse problem and its variational formulation}
Let $(\B_{2},\|\cdot\|_{2})$  be a separable Banach space and let
 $\L:\B \rightarrow \B_{2}$ be a continuous linear bijection.
Since $\L$ is continuous, and therefore bounded,
 it follows from the open mapping theorem that $\L^{-1}$ is also bounded.
 Let us define its continuity constants to be
\begin{equation}\label{eqdjkehjdidudhus}
C_{\L^{-1}}:=\sup_{g \in \B_2} \frac{\|\L^{-1}g\|}{\|g\|_2} \text{ and } C_{\L}:=
\sup_{v \in \B} \frac{\|\L v \|_2}{\|v\|}.
\end{equation}
For a given $g \in  \B_2$, consider solving the inverse problem
\begin{equation}\label{eqn:scalar}
\L u =g\,
\end{equation}
for $u\in \B$.
Since $Q^{-1}u=Q^{-1}\L^{-1}g$ and $\<u,v\>=[Q^{-1}u,v]$, \eqref{eqn:scalar} is equivalent to the weak formulation (in the error norm $\|\cdot\|$)
\begin{equation}\label{eqn:scalarfem}
\<u,v\>=[Q^{-1}\L^{-1}g,v] \text{ for }v\in \B\,.
\end{equation}

\begin{Example}\label{egprotoa}
We will consider the following prototypical PDE as a running illustrative example,
\begin{equation}\label{eqn:scalarprotoa}
\begin{cases}
    -\diiv \big(a(x)  \nabla u(x)\big)=g(x) \quad  x \in \Omega; \\
    u=0 \quad \text{on}\quad \partial \Omega,
    \end{cases}
\end{equation}
where $\Omega$ is as in Example \ref{egproto00}, and  $a$ is a uniformly elliptic $d\times d$ matrix (which may or may not be symmetric) with entries in $L^\infty(\Omega)$. We define $\lambda_{\min}(a)$ as the largest constant and $\lambda_{\max}(a)$ as the smallest constant such that for all $x\in \Omega$ and $l\in \R^d$,
\begin{equation}
\lambda_{\min}(a) |l|^2 \leq l^T a(x) l \text{ and } l^T a^T(x)a(x) l \leq \big(\lambda_{\max}(a)\big)^2|l|^2.
\end{equation}
\end{Example}

\subsection{Identification of $Q$ and variational formulation of $\L u =g$}\label{subgeneralcaseNNNOR}
The practical application of the gamblet transform to the resolution of \eqref{eqn:scalarfem} requires the identification of the operator $Q$ identifying the error norm.
\subsubsection{General case}\label{subgeneralcaseNNN}
The identification of the operator $Q$ can be done by selecting $\G$, a self-adjoint positive continuous linear bijection from $\B_2$ onto $\B_2^*$ and $\D$,
a continuous linear bijection from $\B_2^*$ onto $\B^*$ such that $\D \G \L$ is self-adjoint and positive.
 $Q$ is then defined by $Q=\L^{-1} \G^{-1} \D^{-1}$, i.e.
\begin{equation}
\label{gc_constr}
Q^{-1}=\D \G \L
\end{equation}
making the following diagram
\begin{equation}\label{eqcase1NNN}
\text{\xymatrix{
\B \ar[d]^{Q^{-1}} \ar[r]^\L &\B_2\ar[d]^\G\\
\B^*           &\B_2^* \ar[l]^{\D}}}
\end{equation}
commutative. Throughout the rest of this paper all such diagrams will be commutative.
$\G$ can, a priori, be chosen independently from $\L$ and the definition $\|u\|^2=[Q^{-1}u, u]$ leads to
\begin{equation}
\|u\|^2=[\D \G \L u,  u]
\end{equation}
The variational formulation of the equation $\L u=g$ is then
\begin{equation}\label{eqlfkjNNN}
[Q^{-1} u, \varPsi]=[\D \G g,\varPsi]\text{ for } \varPsi\in \B
\end{equation}
which can be written (using the scalar product $\<u,\varPsi\>=[Q^{-1} u, \varPsi]$)
\begin{equation}\label{eqklajhdjfNNN}
\<u,\varPsi\>=[\D \G g,\varPsi] \text{ for } \varPsi\in \B
\end{equation}

\begin{Example}\label{egprotoh10normNNN}
Consider the prototypical Example \ref{egprotoa} and assume $a$ to be symmetric.  Let
$\B=H^1_0(\Omega)$ and define $\|\cdot\|$ as the energy norm $\|u\|^2=\|u\|_a^2:=\int_{\Omega} (\nabla u)^T a \nabla u$ (as in Example \ref{egproto00}).
Let $\B_2=H^{-1}(\Omega)$ and let $\|g\|_2=\|g\|_{H^{-1}(\Omega)}=\sup_{v\in H^1_0(\Omega)}\frac{\int_{\Omega} gv}{\|v\|_{H^1_0(\Omega)}}$ where $\|v\|_{H^1_0(\Omega)}^2:=\int_{\Omega}|\nabla v|^2$.
Observe that $\B_2^*=H^1_0(\Omega)$ and  the (dual) norm on $\B_2^*$ is $\|v\|_2^*=\|v\|_{H^1_0(\Omega)}$.
 The gamblet transform for the PDE \eqref{eqn:scalarprotoa} can be defined by (1)
considering  the operator $\L=-\diiv(a\nabla)$ mapping  $\B$ onto $\B_2$ (2) taking $\D=-\Delta$ and $\G=\D^{-1}=-\Delta^{-1}$ in Diagram \ref{eqcase1NNN}, where    $\D=-\Delta$ is the Laplace-Dirichlet operator mapping $H^1_0(\Omega)$ onto $H^{-1}(\Omega)$ (note that $\|v\|_2^2=[\G^{-1} v,v]$). Under these choices $Q^{-1}$ is (as in Example \ref{egproto00}) the operator $- \diiv(a\nabla)$ illustrated in the following diagram
\begin{equation}\label{eqcase1exnewbNNN}
\text{\xymatrixcolsep{4pc}\xymatrix{
H^1_0(\Omega) \ar[d]^{Q^{-1}} \ar[r]^{-\diiv(a\nabla)} &H^{-1}(\Omega)\ar[d]^{-\Delta^{-1}}\\
H^{-1}(\Omega)           &H^1_0(\Omega)\,\, . \ar[l]^{-\Delta}}\, }\,
\end{equation}
The variational formulation of the PDE \eqref{eqn:scalarprotoa} can then be written
\begin{equation}\label{eqklajhdjdef2bihjhsNNN}
\int_{\Omega}(\nabla \varPsi)^T a  \nabla u=\int_{\Omega} \varPsi g \text{ for } \varPsi\in H^1_0(\Omega)\, .
\end{equation}
Observe that by placing the norm $\|g\|_{H^{-1}(\Omega)}$ on $\B_2$ we have $C_\L\leq \sqrt{\lambda_{\max}(a)}$ and  $C_{\L^{-1}}\leq 1/\sqrt{\lambda_{\min}(a)}$.
\end{Example}

\begin{Example}\label{egprotoalsobolevl}
Let $\L$ be a continuous, symmetric, positive linear bijection from $H^s_0(\Omega)$ to $H^{-s}(\Omega)$ and
define $Q:=\L^{-1}$ to be its inverse.
 Write $\|u\|:=[ \L u,u]^\frac{1}{2}$ and let $\|\cdot\|_*$ be the dual norm of $\|\cdot\|$.
Define $(\B,\|\cdot\|)$ and $(\B^*,\|\cdot\|_*)$ as in Example \ref{egprotoalsobolevlori}.
 Consider the inverse problem \eqref{eqn:scalar}.
Let $\B_2=H^{-s}(\Omega)$ and let $\|g\|_2=\|g\|_{H^{-s}(\Omega)}$. Observe that
 $\B_2^*=H^s_0(\Omega)$ and  the (dual) norm on $\B_2^*$ is $\|v\|_2^*=\|v\|_{H^s_0(\Omega)}$.
 The gamblet transform for the inverse problem \eqref{eqn:scalar} can then be defined by
considering  the operator $\L$ mapping  $\B$ onto $\B_2$ (2) taking $\D=(-\Delta)^s$ and $\G=\D^{-1}$ in
 Diagram \ref{eqcase1NNN},
where $(- \Delta)^{s}$ is the operator defined by the $s$-th iterate of the Laplacian mapping $H^s_0(\Omega)$ to $H^{-s}(\Omega)$. Under these choices $Q^{-1}$ is  the operator $\L$ as illustrated in the following diagram

\begin{equation}\label{eqcase1exnedewbNdNN}
\text{\xymatrixcolsep{4pc}\xymatrix{
H^s_0(\Omega) \ar[d]^{Q^{-1}} \ar[r]^{\L} &H^{-s}(\Omega)\ar[d]^{\G}\\
H^{-s}(\Omega)           &H^s_0(\Omega)\,\, . \ar[l]^{(- \Delta)^s}}}
\end{equation}
\end{Example}

 \begin{Lemma}\label{lemdkljedhlkfjh}
The norm inequalities \eqref{eqjkhkkjhuiiu} in Example \ref{egprotoalsobolevlori} are equivalent to the continuity of the operator $\L$ (mapping $H^s_0(\Omega)$ to $H^{-s}(\Omega)$) and of its inverse in Example \ref{egprotoalsobolevl}.
Moreover,
let us introduce  two-parameter versions
 \begin{equation}\label{eqjkhkkjhuiiu2}
C_{e}^{-1}\|u\|_{H^s_0(\Omega)} \leq \|u\| \leq C_{e,2} \|u\|_{H^s_0(\Omega)},\,\text{ for } u\in H^s_0(\Omega)\,
\end{equation}
of \eqref{eqjkhkkjhuiiu} in Example \ref{egprotoalsobolevlori} and its consequence
\begin{equation}\label{eqjjhykhkkjdedduiiu2}
C_{e}^{-1}\|\phi\|_{H^{-s}(\Omega)} \leq \|\phi\|_* \leq C_{e,2} \|\phi\|_{H^{-s}(\Omega)},\,\text{ for } \phi\in H^{-s}(\Omega)\, .
\end{equation}
  Then, if  $C_{e}$  and $C_{e,2}$ are  the smallest constants such that \ref{eqjkhkkjhuiiu2} holds, then
$C_{e,2}^2=C_{\L}$, $C_{e}^2 \geq C_{\L^{-1}}$, and $C_{e}\leq  C_{\L} C_{\L^{-1}}^2$.
 \end{Lemma}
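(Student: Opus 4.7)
The core observation is that in the setup of Example \ref{egprotoalsobolevl} we have $Q^{-1}=\L$, so the inner product $\langle u,v\rangle=[Q^{-1}u,v]=[\L u,v]$ turns $H^s_0(\Omega)$ into a genuine Hilbert space with $\|u\|^2=\langle u,u\rangle=[\L u,u]$. In particular, Cauchy--Schwarz in $\langle\cdot,\cdot\rangle$ gives $[\L u,v]\le \|u\|\|v\|$. All of the work then reduces to combining this with the elementary duality estimate $[\L u,u]\le \|\L u\|_{H^{-s}(\Omega)}\|u\|_{H^s_0(\Omega)}$, and with the characterization $\|\L u\|_{H^{-s}(\Omega)}=\sup_{v}[\L u,v]/\|v\|_{H^s_0(\Omega)}$.

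\textbf{Step 1 (the upper constant, $C_{e,2}^2=C_\L$).} Assume \eqref{eqjkhkkjhuiiu2}. From $\|u\|^2=[\L u,u]\le \|\L u\|_{H^{-s}(\Omega)}\|u\|_{H^s_0(\Omega)}\le C_\L\|u\|_{H^s_0(\Omega)}^2$, I read off $C_{e,2}^2\le C_\L$. Conversely, for the reverse inequality, use Cauchy--Schwarz in $\langle\cdot,\cdot\rangle$ followed by the upper bound in \eqref{eqjkhkkjhuiiu2} applied twice:
\begin{equation*}
\|\L u\|_{H^{-s}(\Omega)}=\sup_{v}\frac{[\L u,v]}{\|v\|_{H^s_0(\Omega)}}\le \sup_{v}\frac{\|u\|\|v\|}{\|v\|_{H^s_0(\Omega)}}\le C_{e,2}\|u\|\le C_{e,2}^2\|u\|_{H^s_0(\Omega)},
\end{equation*}
so $C_\L\le C_{e,2}^2$, giving equality.

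\textbf{Step 2 (the lower constant, $C_e^2\ge C_{\L^{-1}}$).} Still assuming \eqref{eqjkhkkjhuiiu2}, chain the duality estimate with the lower bound in \eqref{eqjkhkkjhuiiu2}: $\|u\|^2=[\L u,u]\le \|\L u\|_{H^{-s}(\Omega)}\|u\|_{H^s_0(\Omega)}\le C_e\|\L u\|_{H^{-s}(\Omega)}\|u\|$. Dividing by $\|u\|$ gives $\|u\|\le C_e\|\L u\|_{H^{-s}(\Omega)}$, hence $\|u\|_{H^s_0(\Omega)}\le C_e\|u\|\le C_e^2\|\L u\|_{H^{-s}(\Omega)}$. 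Setting $g=\L u$ this is exactly the statement $C_{\L^{-1}}\le C_e^2$.

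\textbf{Step 3 (the reverse bound on $C_e$).} Finally, assuming $\L$ and $\L^{-1}$ are continuous, I first invoke the already established $C_{e,2}^2\le C_\L$ (Step 1, easy direction) to obtain $\|v\|\le\sqrt{C_\L}\|v\|_{H^s_0(\Omega)}$ for every $v\in H^s_0(\Omega)$. Plugging this into Cauchy--Schwarz,
\begin{equation*}
\|\L u\|_{H^{-s}(\Omega)}=\sup_{v}\frac{[\L u,v]}{\|v\|_{H^s_0(\Omega)}}\le \|u\|\sup_{v}\frac{\|v\|}{\|v\|_{H^s_0(\Omega)}}\le \sqrt{C_\L}\,\|u\|,
\end{equation*}
and then composing with $\|u\|_{H^s_0(\Omega)}\le C_{\L^{-1}}\|\L u\|_{H^{-s}(\Omega)}$ gives $\|u\|_{H^s_0(\Omega)}\le C_{\L^{-1}}\sqrt{C_\L}\,\|u\|$, i.e.\ $C_e\le C_{\L^{-1}}\sqrt{C_\L}$. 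The submultiplicativity $C_\L C_{\L^{-1}}\ge 1$ (from $\|u\|_{H^s_0(\Omega)}=\|\L^{-1}\L u\|_{H^s_0(\Omega)}\le C_{\L^{-1}}C_\L\|u\|_{H^s_0(\Omega)}$) then upgrades this to the looser bound stated in the lemma, $C_e\le C_\L C_{\L^{-1}}^2$.

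The only non-routine ingredient is the realization that Cauchy--Schwarz for $\langle\cdot,\cdot\rangle=[\L\cdot,\cdot]$ is what converts the one-sided inequality $[\L u,u]\le \|\L u\|_{H^{-s}}\|u\|_{H^s_0}$ (which only sees $u$ on the right slot) into the sharp two-sided comparison between $\|\L u\|_{H^{-s}(\Omega)}$ and $\|u\|$ that underpins all four inequalities; once this is in place, the four estimates fall out by bookkeeping.
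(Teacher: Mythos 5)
Your argument follows essentially the same route as the paper's. The one genuine difference is in Step 3: where you chain $\|u\|_{H^s_0(\Omega)}\le C_{\L^{-1}}\|\L u\|_{H^{-s}(\Omega)}\le C_{\L^{-1}}\sqrt{C_\L}\,\|u\|$ directly, the paper instead derives the quadratic-form inequality $\L^{-1}\le C_\L C_{\L^{-1}}^2(-\Delta)^{-s}$ on $H^{-s}(\Omega)$ and inverts it to $(-\Delta)^s\le C_\L C_{\L^{-1}}^2\,\L$, which requires operator monotonicity of the inverse. Your route is more elementary and lands on the same quantitative conclusion, namely $C_e^2\le C_\L C_{\L^{-1}}^2$, i.e.\ $C_e\le \sqrt{C_\L}\,C_{\L^{-1}}$. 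Your use of Cauchy--Schwarz for $\langle\cdot,\cdot\rangle$ in Steps 1 and 3 is also just the isometry $\|\L u\|_*=\|u\|$ combined with \eqref{eqjjhykhkkjdedduiiu2}, which is what the paper uses.

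The one flawed step is the final ``upgrade.'' You claim that $C_\L C_{\L^{-1}}\ge 1$ promotes $C_e\le \sqrt{C_\L}\,C_{\L^{-1}}$ to $C_e\le C_\L C_{\L^{-1}}^2$. That implication requires $\sqrt{C_\L}\,C_{\L^{-1}}\ge 1$, which does \emph{not} follow from $C_\L C_{\L^{-1}}\ge 1$ (take $C_\L=4$, $C_{\L^{-1}}=1/4$: then $C_\L C_{\L^{-1}}=1$ but $\sqrt{C_\L}\,C_{\L^{-1}}=1/2$). In fact the inequality $C_e\le C_\L C_{\L^{-1}}^2$ as printed in the lemma fails for $\L=\lambda(-\Delta)^s$ with $\lambda>1$, where $C_e=\lambda^{-1/2}$ while $C_\L C_{\L^{-1}}^2=\lambda^{-1}$; the paper's own proof actually establishes $C_e^2\le C_\L C_{\L^{-1}}^2$, and the missing square in the statement appears to be a typographical slip. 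So you should stop at $C_e\le \sqrt{C_\L}\,C_{\L^{-1}}$ (equivalently $C_e^2\le C_\L C_{\L^{-1}}^2$) rather than attempt the invalid upgrade; everything up to that point is correct.
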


\begin{Example}\label{egkljkhdekjd}
A particular instance of Example \ref{egprotoalsobolevl} is the self-adjoint differential operator $\L$ mapping $H^s_0(\Omega)$ to $H^{-s}(\Omega)$ and defined by
\begin{equation}\label{eqjgkgjgkjhgjhg}
\L u = \sum_{0\leq |\alpha|, |\beta|\leq s} (-1)^{|\alpha|}D^{\alpha}(a_{\alpha,\beta}(x) D^{\beta} u),\, \text{ for }u\in H^s_0(\Omega)\, ,
\end{equation}
where $a$ is symmetric with entries in  $L^\infty(\Omega)$ and $\alpha, \beta$ are $d$ dimensional multi-indices $\alpha=(\alpha_1,\ldots,\alpha_d)$ (with $\alpha_i \in \mathbb{N}$ and $|\alpha|=\sum_{i=1}^d \alpha_i$).
Lemma \ref{lemdkljedhlkfjh} implies that \eqref{eqjkhkkjhuiiu} is equivalent to the continuity of $\L$ and $\L^{-1}$.
The continuity of $\L$ (right inequality in \eqref{eqjkhkkjhuiiu}) follows from the uniform bound on the entries of $a$. The left hand side of \eqref{eqjkhkkjhuiiu} is a classical coercivity condition (ensuring the well-posedness of \eqref{eqn:scalar}) and we refer to \cite{Agmon58coer} for its characterization.
\end{Example}

\subsubsection{General case with $\D=\L^*$}\label{subgeneralcase}
Write $\L^*$  the adjoint of $\L$ defined as the operator mapping $\B_2^*$ onto $\B^*$ such that for $(v,w^*)\in \B\times \B_2^*$, $[w^*,\L v]=[\L^* w, v]$.
Consider the general case discussed in Subsection \ref{subgeneralcaseNNN} and take $\D=\L^*$. In that case, we have $Q^{-1}=\L^* \G \L$, i.e.
\begin{equation}\label{eqkhhuhuhij}
Q= \L^{-1} \G^{-1} \L^{-1,*},
\end{equation}
and Diagram \ref{eqcase1NNN} reduces to
\begin{equation}\label{eqcase1}
\text{\xymatrix{
\B \ar[d]^{Q^{-1}} \ar[r]^\L &\B_2\ar[d]^\G\\
\B^*           &\B_2^* \,\,.\ar[l]^{\L^*}}}
\end{equation}
Furthermore, $\|u\|^2=[\G \L u, \L u]$ and the
 variational formulation of the equation $\L u=g$ is then
\begin{equation}\label{eqlfkj}
[Q^{-1} u, \varPsi]=[Q^{-1}\L^{-1}g,\varPsi],\,\text{ for } \varPsi\in \B\, ,
\end{equation}
which can be written (using the scalar product $\<u,\varPsi\>=[Q^{-1} u, \varPsi]=[\G \L u, \L \varPsi]$)
\begin{equation}\label{eqklajhdjf}
\<u,\varPsi\>=[\L^* \G g,\varPsi],\, \text{ for } \varPsi\in \B\, .
\end{equation}

\begin{Example}\label{egprotoflux}
Consider the prototypical Example \ref{egprotoa}.
 The gamblet transform for the PDE \eqref{eqn:scalarprotoa} can be defined by (1) considering the operator $\L=-\diiv(a\nabla)$ mapping  $\B=H^1_0(\Omega)$ onto $\B_2=H^{-1}(\Omega)$ (2) taking $\G=-\Delta^{-1}$ in
 Diagram \ref{eqcase1}, i.e. the inverse of  $-\Delta$, the Laplace-Dirichlet operator mapping $H^1_0(\Omega)$ onto $H^{-1}(\Omega)$. Under these choices $Q^{-1}$ is the operator $- \diiv(a^T\nabla)\Delta^{-1}\diiv(a\nabla \cdot)$ illustrated in the following diagram
\begin{equation}\label{eqcase1ex}
\text{\xymatrixcolsep{4pc}\xymatrix{
H^1_0(\Omega) \ar[d]^{Q^{-1}} \ar[r]^{-\diiv(a\nabla)} &H^{-1}(\Omega)\ar[d]^{-\Delta^{-1}}\\
H^{-1}(\Omega)           &H^1_0(\Omega)\,\,. \ar[l]^{-\diiv(a^T\nabla)}}}
\end{equation}
Furthermore, $\|\cdot\|$ is the flux norm  $\|u\|=\|u\|_\f$ introduced in \cite{BeOw:2010} (and generalized in \cite{Sym12, Wang:2012}, see also \cite{bebendorf2016low} for its application to low rank approximation with high contrast coefficients),   defined by $\|u\|^2_\f=\|\nabla \Delta^{-1}\L u\|_{L^2(\Omega)}^2$ (recall that \cite{BeOw:2010} $\|u\|_\f=\big\|(a\nabla u)_{\pot}\big\|_{(L^2(\Omega))^d}$ where $(a\nabla u)_{\pot}$ is the potential part of the vector field $a\nabla u$).
The variational formulation of the PDE \eqref{eqn:scalarprotoa}
 can then be written
\begin{equation}\label{eqklajhdjdef2}
\int_{\Omega}(a \nabla \varPsi)^T_{\pot}  (a\nabla u)_{\pot}=\int_{\Omega}(\nabla \varPsi)^T a^T \nabla  \Delta^{-1}g \text{ for } \varPsi\in H^1_0(\Omega)\, .
\end{equation}
Writing $\|u\|_{H^1_0(\Omega)}^2:=\int_{\Omega}|\nabla u|^2$, recall \cite{BeOw:2010} that for $u\in H^1_0(\Omega)$,
\begin{equation}
\lambda_{\min}(a) \|u\|_{H^1_0(\Omega)} \leq \|u\|_\f\leq \lambda_{\max}(a) \|u\|_{H^1_0(\Omega)}\, .
\end{equation}
Observe also that if $u$ is the solution of \eqref{eqn:scalarprotoa} then $\|u\|_\f=\|\Delta^{-1}g\|_{H^1_0(\Omega)}$. Since the flux norm of $u$ is independent from $a$,  $C_\L$ and $C_{\L^{-1}}$ are also independent from $a$. In particular placing the norm $\|g\|_2:=||g\|_{H^{-1}(\Omega)}=\|\nabla \Delta^{-1}g\|_{L^2(\Omega)}$ on $\B_2=H^{-1}(\Omega)$ we have $C_\L=1$ and $C_{\L^{-1}}=1$). Therefore, under these choices,
 the efficiency of the corresponding gamblet transform is robust to high contrast in $a$ (e.g. the conditions numbers of the stiffness matrix $B^{(k)}$ are uniformly bounded independently from $a$).
\end{Example}

\begin{Example}\label{egprotoalsobolevlnondivrep}
Let $\L$ be a continuous linear bijection from $H^{s}_0(\Omega)$ to $H^{-s}(\Omega)$. We do not assume $\L$ to be symmetric.
The inverse problem \eqref{eqn:scalar} is equivalent to
\begin{equation}\label{eqldjkehhgjcckjhjsk}
\L^* (-\Delta)^{-s} \L=\L^* (-\Delta)^{-s} g
\end{equation}
Let $Q=(\L^* (-\Delta)^{-s} \L)^{-1}$ be the symmetric operator mapping $H^{-s}(\Omega)$ to $H^s_0(\Omega)$.
 Write $\|u\|:=[Q^{-1}u,u]$ and let $\|\cdot\|_*$ be the dual norm of $\|\cdot\|$.
Define $(\B,\|\cdot\|)$ and $(\B^*,\|\cdot\|_*)$ as in Example \ref{egprotoalsobolevlori}.
 Consider the inverse problem \eqref{eqn:scalar}.
Let $\B_2=H^{-s}(\Omega)$ and let $\|g\|_2=\|g\|_{H^{-s}(\Omega)}$. Observe that
 $\B_2^*=\H^s_0(\Omega)$.
 The gamblet transform for the inverse problem \eqref{eqn:scalar} can then be defined by
 considering  the operator $\L$ mapping  $\B$ onto $\B_2$ (2) taking $\D=L^*$ and $\G=(-\Delta)^{-s}$ in
Diagram \ref{eqcase1NNN}. Under these choices $Q^{-1}$ is  the (continuous, symmetric, positive) operator $\L^* (-\Delta)^{-s} \L$  as illustrated in the following diagram
\begin{equation}\label{eqcase1exnedewbNdNNnondivrep}
\text{\xymatrixcolsep{4pc}\xymatrix{
H^s_0(\Omega) \ar[d]^{Q^{-1}} \ar[r]^{\L} &H^{-s}(\Omega)\ar[d]^{(-\Delta)^{-s}}\\
H^{-s}(\Omega)           &H^s_0(\Omega)\,\,. \ar[l]^{\L^*}}}
\end{equation}
Note that, under the norm $\|\cdot\|$, the solution $u$ of \eqref{eqn:scalar} satisfies $\|u\|=\|g\|_{H^{-s}(\Omega)}$ which leads to the robustness of the corresponding gamblets with respect to the  coefficients of $\L$.

\end{Example}

\subsubsection{Self-adjoint case with the energy norm}\label{subsecselfadjoint}
From the general case discussed in Subsection \ref{subgeneralcase}, assume that
  $\B_2=\B^*$, $\L$ is self-adjoint and positive definite (i.e.~for $v,w \in \B$, $[\L v,w]=[\L w,v]$, $[\L v, v]\geq 0$ and $[\L v v, v]=0$ is equivalent to $v=0$). Take $\G=\L^{-1}$, i.e.~$Q^{-1}=\L$. In that case $\|\cdot\|$ is the energy norm
  $\|u\|^2=[\L u, u]$ on $\B$ and Diagram \ref{eqcase1} reduces to
\begin{equation}\label{eqcase2}
\text{\xymatrix{ \B  \ar@/^/[rr]|\L  && \B^* . \ar@/^/[ll]|{Q} }}
\end{equation}
The variational formulation of the equation $\L u=g$ can then be written (using the scalar product $\<u,\varPsi\>=[\L u, \varPsi]$)
\begin{equation}\label{eqklajhdjf2}
\<u,\varPsi\>=[ g,\varPsi],\, \text{ for } \varPsi\in \B\, .
\end{equation}
Observe that, since  $\|g\|_2=\|g\|_*=\|Q^{-1} u\|_*=\|u\|$, we have  $C_\L=1$ and $C_{\L^{-1}}=1$.

\begin{Example}\label{egprotocase12}
Consider the prototypical Example \ref{egprotoa} when $a=a^T$.
The gamblet transform for the PDE \eqref{eqn:scalarprotoa} can be defined by (1) considering the (self-adjoint) operator $\L=-\diiv(a\nabla)$ mapping  $\B=H^1_0(\Omega)$ onto $\B_2=\B_1^*=H^{-1}(\Omega)$ (2) taking $\G=\L^{-1}$ in Diagram \ref{eqcase1}. Under these choices $Q^{-1}=\L$  and Diagram \ref{eqcase1} reduces to
\begin{equation}\label{eqcase2ex}
\text{\xymatrix{ H^1_0(\Omega)  \ar@/^/[rr]|{-\diiv(a\nabla \cdot)}  && H^{-1}(\Omega)\,. \ar@/^/[ll]|{Q} }}
\end{equation}
 $\|\cdot\|$ is the energy norm $\|u\|=\|u\|_a$, defined by $\|u\|_a^2:=\int_{\Omega}(\nabla u)^T a \nabla u$, for $u\in H^1_0(\Omega)$.
 $\|g\|_2=\|g\|_*$ is the dual norm $\|g\|_*=\sup_{v\in H^1_0(\Omega)}\frac{\int_{\Omega} g v}{\|v\|_a}=\|u\|_a$.
 Therefore,  $C_\L=1$ and $C_{\L^{-1}}=1$.
The variational formulation of the PDE \eqref{eqn:scalarprotoa} can then be written as in \eqref{eqklajhdjdef2bihjhsNNN}.
\end{Example}

\subsubsection{When $\B_2=\B_2^*$}\label{subsecselfadjointbis}
From the general case discussed in Subsection \ref{subgeneralcase}, assume that
  $\B_2^*=\B_2$ and choose $\G:=\textrm{id}$ to be the identity operator on $\B_2$. Then
$Q^{-1}=\L^*  \L$ (i.e. $Q=\L^{-1}  \L^{*,-1}$),  $\|u\|^2=[ \L u, \L u]$ for $u\in \B$, and Diagram \ref{eqcase1} reduces to
\begin{equation}\label{eqcase3}
\text{\xymatrix{
\B \ar[d]^{Q^{-1}} \ar[r]^\L &\B_2\ar[d]^{\textrm{id}}\\
\B^*           &\B_2^*\, .\ar[l]^{\L^*}}}
\end{equation}
The variational formulation of the equation $\L u=g$ can then be written (using the scalar product $\<u,\varPsi\>=[\L u, \L \varPsi]$)
\begin{equation}\label{eqklajhdjfds23}
\<u,\varPsi\>=[ g,\L \varPsi] \text{ for } \varPsi\in \B\, .
\end{equation}
Note that, by using the norm $\|g\|_2=[g,g]$ on $\B_2$, we have $C_{\L}=1$ and $C_{\L^{-1}}=1$.

\begin{Example}\label{egprotocase13}
Consider the prototypical Example \ref{egprotoa}.
The gamblet transform for the PDE \eqref{eqn:scalarprotoa} can be defined by (1) considering the operator $\L=-\diiv(a\nabla)$ mapping  $\B=\{u\in H^1_0(\Omega)\mid \diiv(a\nabla u)\in L^2(\Omega) \}$ onto $\B_2=L^2(\Omega)$ (2) taking $\G=\textrm{id}$ as in Diagram \ref{eqcase3}. Under these choices $Q^{-1}=\L^*  \L$  and $\|u\|=\|\L u\|_{L^2(\Omega)}$. The variational formulation of the PDE \eqref{eqn:scalarprotoa} can then be written
\begin{equation}\label{eqklajhdjdef23}
\int_{\Omega}\diiv (a\nabla \varPsi) \diiv(a \nabla u)=\int_{\Omega}(\nabla \varPsi)^T a \nabla g \text{ for } \varPsi\in \B\, .
\end{equation}
Note that, by taking $\|\cdot\|_2=\|\cdot\|_{L^2(\Omega)}$, we have $C_\L=C_{\L^{-1}}=1$.
\end{Example}

\begin{Example}\label{egprotoalsobolevlnondiv}
Let $\L$ be a continuous linear bijection from $H^s_0(\Omega)$ to $L^2(\Omega)$.
The inverse problem \eqref{eqn:scalar} is equivalent to
\begin{equation}\label{eqldjkejcckjhjsk}
\L^* \L=\L^* g
\end{equation}
Let $Q=(\L^* \L)^{-1}$ be the symmetric operator mapping $H^{-s}(\Omega)$ to $H^s_0(\Omega)$.
 Write $\|u\|:=\|\L u\|_{L^2(\Omega)}=[Q^{-1}u,u]$ and let $\|\cdot\|_*$ be the dual norm of $\|\cdot\|$.
Define $(\B,\|\cdot\|)$ and $(\B^*,\|\cdot\|_*)$ as in Example \ref{egprotoalsobolevlori}.
 Consider the inverse problem \eqref{eqn:scalar}.
Let $\B_2=L^2(\Omega)$ and let $\|g\|_2=\|g\|_{L^2(\Omega)}$. Observe that
 $\B_2^*=\B_2$.
 The gamblet transform for the inverse problem \eqref{eqn:scalar} can then be defined by
 considering  the operator $\L$ mapping  $\B$ onto $\B_2$ (2) taking $\D=L^*$ and $\G=I_d$ in Diagram \ref{eqcase1NNN}. Under these choices $Q^{-1}$ is  the operator $\L^* \L$ (with the associated norm $\|u\|=\|\L u\|_{L^2(\Omega)}$) as illustrated in the following diagram
\begin{equation}\label{eqcase1exnedewbNdNNnondiv}
\text{\xymatrixcolsep{4pc}\xymatrix{
H^s_0(\Omega) \ar[d]^{Q^{-1}} \ar[r]^{\L} &L^2(\Omega)\ar[d]^{I_d}\\
H^{-s}(\Omega)           &L^2(\Omega)\, . \ar[l]^{\L^*}}}
\end{equation}
\end{Example}

\begin{Example}\label{egprotoalsobolevluygunondiv}
A particular instance of Example \ref{egprotoalsobolevlnondiv} is the differential operator
 $\L:H^s_0(\Omega)\rightarrow L^2(\Omega)$  defined by
\begin{equation}
\L u = \sum_{0\leq |\alpha|\leq s} a_{\alpha}(x)  D^{\alpha}  u,\,\text{ for }u\in H^s_0(\Omega),
\end{equation}
where $a$ is a tensor with entries in  $L^\infty(\Omega)$ such that $\L^{-1}$ is well defined and continuous.
\end{Example}

\subsection{Identification of measurement functions}
The application of the gamblet transform to the resolution of \eqref{eqn:scalar} requires the prior identification of the measurement functions $\phi_i^{(k)} \in \B^*$ that satisfy Condition \ref{cond1OR}. We will now show that these measurement functions can simply be obtained as the image, by $\D \G$ of a multiresolution decomposition on $\B_2$.
Let $(\B_c,\|\cdot\|_c)$ be a Banach subspace of $\B_{2}$ such that the natural embedding $i:\B_c \rightarrow \B_{2}$ is compact and dense.

\begin{Construction}\label{constphikfk}
Let $(f^{(q)}_i)_{i\in \I^{(q)}}$ be linearly independent elements of $\B_c$. For $k\in \{1,\ldots,q-1\}$ and $i\in \I^{(k)}$ let $f^{(k)}_i \in \B_c$ be defined by induction via
\begin{equation}\label{eq:eigdeiud3ddfk}
f^{(k)}_i=\sum_{j\in \I^{(k+1)}}\pi^{(k,k+1)}_{i,j}  f^{(k+1)}_j
\end{equation}
where the $\pi^{(k,k+1)}$ and $\I^{(k)}$ are as in Construction \ref{defindextree} and \ref{constpi}.
\end{Construction}
For $k\in \{1,\ldots,q\}$ let

 \begin{equation}\label{eqdefPhikfk}
 F^{(k)}:=\Span\{f_i^{(k)}\mid i\in \I^{(k)}\}\,.
 \end{equation}
For $k\in \{2,\ldots,q\}$ let
\begin{equation}\label{eqphikchifk}
F^{(k),\chi}:=\{f\in F^{(k)}\mid f=\sum_{i\in \I^{(k)}} x_i f_i^{(k)}\text{ with } x\in \Ker(\pi^{(k-1,k)})\}
\end{equation}

The analogue of Condition \ref{cond1OR} to the multiresolution decomposition on $\B_{2}$ is as follows.
\begin{Condition}\label{cond1ORfk}
There exists some constants $C_{F}\geq 1$ and $H\in (0,1)$ such that
\begin{enumerate}
\item $\sup_{f\in \B_c} \frac{\|f \|_2}{\|f \|_c}\leq C_F$.
\item $\frac{1}{C_F} H^k \leq \inf_{f\in F^{(k)}} \frac{\|f \|_2}{\|f \|_0}$ for $k\in \{1,\ldots,q\}$.
\item $\sup_{g\in \B_c}\inf_{f \in F^{(k)}} \frac{\|g-f\|_2}{\|g\|_c}\leq C_{F} H^k$ for $k\in \{1,\ldots,q\}$.
\item $\sup_{f\in F^{(k),\chi} }\frac{\| f\|_2}{\|f\|_c}\leq C_{F} H^{k-1}$ for $k\in \{2,\ldots,q\}$.
\item $\frac{1}{C_{F}}|x|^2 \leq \|\sum_{i\in \I^{(k)}} x_i f_i^{(k)}\|_c^2 \leq C_{F} |x|^2$ for $k\in \{1,\ldots,q\}$ and $x\in \R^{\I^{(k)}}$.
\item $\frac{1}{C_{F}} J^{(k)}\leq W^{(k)}W^{(k),T}\leq C_{F} J^{(k)}$ for $k\in \{2,\ldots,q\}$.
\end{enumerate}
\end{Condition}
Let $\B_0=\D \G \B_c$ and $\|\phi\|_0=\|\G^{-1}\D^{-1} \phi\|_c$.
Observe that $\B_0$ is a Banach subspace of $\B^*$ and the natural embedding $i:\B_0 \rightarrow \B^*$ is compact and dense.
For $k\in \{1,\ldots,q\}$ and $i\in \I^{(k)}$ let
\begin{equation}
\phi_i^{(k)}=\D \G f^{(k)}
\end{equation}

\begin{Theorem}\label{thmsjdhdhgd}
If Condition \ref{cond1ORfk} is satisfied then Condition \ref{cond1OR} is satisfied with a constant $C_{\Phi}$ depending only on $C_{\L}$, $C_{\L^{-1}}$ and  $C_F$. Furthermore, the (finite-element) solution of \eqref{eqn:scalarfem} in $\V^{(k)}$ is $u^{(k)}(u)$, which satisfies $\|u-u^{(k)}(u)\|\leq C H^k \|g\|_c$.
\end{Theorem}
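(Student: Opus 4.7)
\medskip

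\noindent\textbf{Proof plan.} The strategy is to translate the six items of Condition \ref{cond1ORfk} into the six items of Condition \ref{cond1OR} using the map $\D\G$, and then read off the error bound from Theorem \ref{corunbcnOR}. The whole argument reduces to checking that $\D\G$ induces a Banach-space isomorphism $(\B_c,\|\cdot\|_c)\to(\B_0,\|\cdot\|_0)$ and $(\B_c,\|\cdot\|_2)\to(\B_0,\|\cdot\|_*)$, both with constants depending only on $C_\L,C_{\L^{-1}},C_F$.

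The first isomorphism is trivial: by the very definition of $\|\cdot\|_0$ we have $\|\D\G f\|_0=\|f\|_c$. The second is the only computational point. For $\phi=\D\G f\in \B_0$, I would use $Q^{-1}=\D\G\L$, i.e.\ $Q\D\G=\L^{-1}$, to write $Q\phi=\L^{-1}f$, and then invoke the isometry $\|\phi\|_*=\|Q\phi\|$ (a direct consequence of the quadratic identity $\|Q\phi\|^2=[Q^{-1}Q\phi,Q\phi]=[\phi,Q\phi]=\|\phi\|_*^2$). Combining these gives
\begin{equation*}
\|\phi\|_*=\|\L^{-1}f\|,
\end{equation*}
and the continuity constants \eqref{eqdjkehjdidudhus} of $\L$ and $\L^{-1}$ yield
$C_\L^{-1}\|f\|_2\le\|\phi\|_*\le C_{\L^{-1}}\|f\|_2$.

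With these two equivalences in hand, the six items of Condition \ref{cond1OR} follow directly from the corresponding items of Condition \ref{cond1ORfk} by substitution $\varphi=\D\G g$, $\phi=\D\G f$: items 1--4 pick up a factor at most $\max(C_\L,C_{\L^{-1}})$ on top of $C_F$; item 5 transfers verbatim since $\|\sum x_i\phi_i^{(k)}\|_0=\|\sum x_if_i^{(k)}\|_c$; item 6 is purely combinatorial (it involves only $W^{(k)}$) and is the same statement. The subspaces match as well: $\Phi^{(k)}=\D\G F^{(k)}$ and $\Phi^{(k),\chi}=\D\G F^{(k),\chi}$ follow from $\phi_i^{(k)}=\D\G f_i^{(k)}$ and the identical definitions \eqref{eqphikchi}, \eqref{eqphikchifk}. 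Consequently Condition \ref{cond1OR} holds with some $C_\Phi=C_\Phi(C_\L,C_{\L^{-1}},C_F)$.

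For the second assertion, the variational problem \eqref{eqn:scalarfem} is $\<u,v\>=[Q^{-1}u,v]$, so its Galerkin restriction to $\V^{(k)}$ coincides with the $\<\cdot,\cdot\>$-orthogonal projection of $u$ onto $\V^{(k)}$, which by Theorem \ref{thmgugyug0OR} is exactly $u^{(k)}(u)$. Applying Theorem \ref{corunbcnOR} now gives $\|u-u^{(k)}(u)\|\le CH^k\|Q^{-1}u\|_0$. Since $u$ solves $\L u=g$, we have $Q^{-1}u=\D\G\L u=\D\G g$, and the isometry $\|\D\G g\|_0=\|g\|_c$ finishes the bound. The only place any real work happens is the norm identification $\|\phi\|_*=\|\L^{-1}f\|$; everything else is bookkeeping, so I do not anticipate a genuine obstacle.
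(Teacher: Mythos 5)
Your proposal is correct and follows essentially the same route as the paper: the paper's (one-line) proof likewise rests on the identity $\|\phi\|_*=\|Q\phi\|$ combined with the continuity constants to get $\frac{1}{C_\L}\|\L Q\phi\|_2\le\|\phi\|_*\le C_{\L^{-1}}\|\L Q\phi\|_2$, together with $f_i^{(k)}=\L Q\phi_i^{(k)}$, which is exactly your norm identification $\|\D\G f\|_*=\|\L^{-1}f\|$. Your treatment of the second assertion (Galerkin solution equals the orthogonal projection, then Theorem \ref{corunbcnOR} with $Q^{-1}u=\D\G g$ and $\|\D\G g\|_0=\|g\|_c$) is the intended bookkeeping the paper leaves implicit.
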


\begin{Example}\label{egleddoiejd}
Consider  Examples \ref{egproto00} and \ref{egprotoh10normNNN}  and take $(\B_0,\|\cdot\|_0)=(L^2(\Omega),\|\cdot\|_{L^2(\Omega)})$. Since $\D \G$ is the identity operator, Condition \ref{cond1ORfk} translates to (1) the compact embedding inequality
$\|\phi \|_{H^{-1}(\Omega)} \leq C \|\phi \|_{L^2(\Omega)} $ for $\phi\in L^2(\Omega)$ (2)  the inverse Sobolev inequality $\|\phi \|_{L^2(\Omega)} \leq C H^{-k}\|\phi \|_{H^{-1}(\Omega)}$ for $\phi\in \Phi^{(k)}$ (3) the approximation property
$\inf_{\phi \in \Phi^{(k)}} \|\varphi-\phi\|_{H^{-1}(\Omega)} \leq C H^k \|\varphi\|_{L^2(\Omega)}$ for $\varphi\in L^2(\Omega)$  (4)  the
 Poincar\'{e} inequality $\|\phi \|_{H^{-1}(\Omega)} \leq C  H^{k-1} \|\phi \|_{L^2(\Omega)}$ for $\phi\in \Phi^{(k),\chi}$  (5)  and the Riesz basis/frame inequality $\frac{1}{C}|x|^2 \leq \|\sum_{i\in \I^{(k)}} x_i \phi_i^{(k)}\|_{L^2(\Omega)}^2 \leq C |x|^2$ for $x\in \R^{\I^{(k)}}$. These conditions are therefore natural regularity conditions on the elements $\phi_i^{(k)}$ and it is easy to check that they are satisfied in the context of Illustrations \ref{nekdjdhhu} and \ref{nekdjdhhu2} with $H=1/2$.
\end{Example}

\begin{Example}\label{egleddoiddedejd}
Consider Examples \ref{egprotoalsobolevlori} and \ref{egprotoalsobolevl} (or a particular instance, Example \ref{egkljkhdekjd}) and take $(\B_0,\|\cdot\|_0)=(L^2(\Omega),\|\cdot\|_{L^2(\Omega)})$. Since
 $\D \G$ is the identity operator, Condition \ref{cond1ORfk} translates to (1) the compact embedding inequality
$\|\phi \|_{H^{-s}(\Omega)} \leq C \|\phi \|_{L^2(\Omega)} $ for $\phi\in L^2(\Omega)$ (2)  the inverse Sobolev inequality $\|\phi \|_{L^2(\Omega)} \leq C H^{-k}\|\phi \|_{H^{-s}(\Omega)}$ for $\phi\in \Phi^{(k)}$ (3) the approximation property
$\inf_{\phi \in \Phi^{(k)}} \|\varphi-\phi\|_{H^{-s}(\Omega)} \leq C H^k \|\varphi\|_{L^2(\Omega)}$ for $\varphi\in L^2(\Omega)$ (4) the
 Poincar\'{e} inequality $\|\phi \|_{H^{-s}(\Omega)} \leq C  H^{k-1} \|\phi \|_{L^2(\Omega)}$ for $\phi\in \Phi^{(k),\chi}$ (5) and the Riesz basis/frame inequality $\frac{1}{C}|x|^2 \leq \|\sum_{i\in \I^{(k)}} x_i \phi_i^{(k)}\|_{L^2(\Omega)}^2 \leq C |x|^2$ for $x\in \R^{\I^{(k)}}$. These conditions are, as in Example \ref{egleddoiejd}, natural regularity conditions on the elements $\phi_i^{(k)}$.
 \end{Example}

 \begin{Proposition}\label{propkajhdlkjd}
 The conditions equivalent to Conditions \ref{cond1ORfk}, derived in Example \ref{egleddoiddedejd}, are satisfied with the measurement functions
 presented in Examples \ref{egkajhdlkjdini} and \ref{egkajhdlkjdinibis} with  $H=h^s$ and a constant $C$ depending only on $d, \delta$ and $s$.
\end{Proposition}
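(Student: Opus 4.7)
The plan is to verify, one by one, the conditions of Condition \ref{cond1ORfk} (equivalently the five translated bounds of Example \ref{egleddoiddedejd} together with the orthonormality condition on $W^{(k)}$) for the cellular $L^2$-orthonormal polynomial measurement functions described in Examples \ref{egkajhdlkjdini} and \ref{egkajhdlkjdinibis}. Throughout I will take $H=h^s$, and all constants will depend only on $d$, $\delta$ and $s$. Bound (1) is immediate from the continuous embedding $L^2(\Omega)\hookrightarrow H^{-s}(\Omega)$. The Riesz frame bound (5) is trivial since the $\phi^{(k)}_{i,\alpha}$ have pairwise disjoint supports across cells and are $L^2(\tau_i^{(k)})$-orthonormal within each cell, so $\|\sum x_{i,\alpha}\phi^{(k)}_{i,\alpha}\|_{L^2(\Omega)}^2 = |x|^2$ exactly. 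Finally $W^{(k)}W^{(k),T}=J^{(k)}$ can be arranged by a cellular construction along the lines of Remark \ref{rem_cond7fyf}.

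The three remaining estimates all involve the $H^{-s}(\Omega)$-norm and will be handled by duality against test functions in $H^s_0(\Omega)$ combined with scaling to a reference cell. For the inverse Sobolev inequality (2), given $\phi=\sum c_{i,\alpha}\phi^{(k)}_{i,\alpha}\in\Phi^{(k)}$, I will build a test function $v=\sum_i w_i$ by placing in the inscribed ball of radius $\delta h^k$ of each cell $\tau_i^{(k)}$ a smoothly truncated copy $w_i$ of the local polynomial $p_i:=\sum_\alpha c_{i,\alpha}\phi^{(k)}_{i,\alpha}$; pulling back to a unit reference cell and using both the equivalence of norms on the finite-dimensional space $\cP_{s-1}$ and the shape regularity imposed by $\delta$, one obtains $\|w_i\|_{H^s_0(\Omega)}\leq C h^{-sk}\|p_i\|_{L^2(\tau_i^{(k)})}$ while $\int \phi\, w_i \geq c\|p_i\|_{L^2(\tau_i^{(k)})}^2$. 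Summing over the disjoint cells yields $\|\phi\|_{H^{-s}(\Omega)} \geq c h^{sk}\|\phi\|_{L^2(\Omega)}$, which is (2) with $H=h^s$.

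Conditions (3) (approximation) and (4) (Poincar\'{e} on $\Phi^{(k),\chi}$) follow from the same circle of ideas applied from the opposite side. For (3), take $\pi_k\varphi$ to be the cellwise $L^2$-projection of $\varphi$ onto $\Phi^{(k)}$, so that $\varphi-\pi_k\varphi\perp_{L^2}\Phi^{(k)}$; then for any $v\in H^s_0(\Omega)$, orthogonality gives $\int(\varphi-\pi_k\varphi)v=\int(\varphi-\pi_k\varphi)(v-\pi_k v)$, and the cellwise Bramble--Hilbert estimate $\|v-\pi_k v\|_{L^2(\Omega)}\leq C h^{sk}\|v\|_{H^s_0(\Omega)}$ combined with Cauchy--Schwarz delivers (3). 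For (4) I will first observe that $\Phi^{(k),\chi}=\Phi^{(k)}\cap(\Phi^{(k-1)})^{\perp_{L^2}}$, which follows from \eqref{eq:eigdeiud3dd} together with the $L^2$-orthonormality of the $\phi^{(k)}_{i,\alpha}$, and then apply the same duality trick with $\pi_{k-1}v$ in place of $\pi_k v$ to produce the sharper gain $h^{s(k-1)}$.

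The main technical issue is to ensure that all of the reference-cell constants (Bramble--Hilbert, polynomial inverse inequality, bump-function construction) depend only on $d$, $\delta$ and $s$, uniformly across the nested family $(\tau_i^{(k)})$. This reduces to the classical fact that on the class of convex Lipschitz domains with ratio of inscribed-to-circumscribed ball radii bounded below by $\delta$, the constants in the Bramble--Hilbert lemma and polynomial inverse estimates are uniform, established by a standard scaling and compactness argument on a reference configuration.
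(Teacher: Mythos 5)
Your proposal is correct and follows essentially the same route as the paper's proof: (1) and (5) handled as immediate, (2) by localizing to the inscribed balls and reducing to a norm-equivalence on $\cP_{s-1}$ over a $\delta$-shape-regular reference configuration via scaling (the paper realizes your explicit bump-times-polynomial test function implicitly through the variational characterization $\|\phi\|_{H^{-s}}^2=\sup_v[2\int v\phi-\|v\|_{H^s_0}^2]$ restricted to locally supported $v$), and (3)--(4) by the cellwise $L^2$-projection, the orthogonality insertion of $v-\pi v$, and the Bramble--Hilbert estimate, with (4) reduced to (3) by exactly your observation that $\Phi^{(k),\chi}\perp_{L^2}\Phi^{(k-1)}$.
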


\begin{algorithm}[!ht]
\caption{Discrete Gamblet solve of \eqref{eqklajhdjfdis}.}\label{algdiscgambletsolvecase1gso}
\begin{algorithmic}[1]
\STATE\label{step3gdgpbdisso} For $i\in \I^{(q)}$, $\psi^{(q)}_i= \varPsi_i$  \COMMENT{Level $q$ gamblets}
\STATE\label{step4gdisso} Use Algorithm \ref{alggambletcomutationnes} to compute the  gamblets $\psi_i^{(k)}$, $\chi_i^{(k)}$
and matrices $A^{(k)}$, $B^{(k)}$ and $R^{(k-1,k)}$
\STATE\label{step4gso} For $i\in \I^{(q)}$, $b^{(q)}_i=[\D \G g,\psi_i^{(q)}]$
\FOR{$k=q$ to $2$}
\STATE\label{step8gso} $w^{(k)}=B^{(k),-1} W^{(k)} b^{(k)}$ \COMMENT{Eq.~\eqref{eqjhgiggy6}}
\STATE\label{step10gso} $v^{(k)}=\sum_{i\in \J^{(k)}}w^{(k)}_i \chi^{(k)}_i$ \COMMENT{Thm.~\ref{thmreffytfuyf}, $v^{(k)}:=u^{(k)}-u^{(k-1)}\in \W^{(k)}$}
\STATE\label{step15gso} $b^{(k-1)}=R^{(k-1,k)} b^{(k)}$ \COMMENT{Eq.~\eqref{eq:ftfytftfx}}
\ENDFOR
\STATE\label{step16gso} $ w^{(1)}=A^{(1),-1}g^{(1)}$ \COMMENT{Eq.~\eqref{eqawb}}
\STATE\label{step17gso} $u^{(1)}=\sum_{i \in \I^{(1)}} w^{(1)}_i \psi^{(1)}_i$ \COMMENT{Thm.~\ref{thmreffytfuyf}}
\STATE\label{step18gso} $u^\d=u^{(1)}+v^{(2)}+\cdots+v^{(q)}$    \COMMENT{Thm.~\ref{thmreffytfuyf}}
\end{algorithmic}
\end{algorithm}

\subsection{Using the gamblet transform to solve inverse problems/linear systems}
Consider elements  $(\varPsi_i)_{i\in \N}\in \B$, as in  Subsection \ref{subsecgamdis}, to be used in
the discretization of the operator $\L$ to their  span $\B^\d:=\Span\{\varPsi_i, i\in \N\}\subset \B$.
For $g\in \B_2$ let $u^\d\in \B^\d$ be the Galerkin approximation of the solution of $\L u=g$ in $\B^\d$, i.e. the solution of the discrete linear system (obtained from the variational formulation \eqref{eqklajhdjfNNN})
\begin{equation}\label{eqklajhdjfdis}
\<u^\d,\varPsi\>=[\D \G g,\varPsi] \text{ for } \varPsi\in \B^\d\,.
\end{equation}
When $\B$ is finite-dimensional one can select $\B^\d=\B$ and obtain $u^\d=u$. When $\B$ is infinite-dimensional $u^\d$ is only an approximation of $u$ whose error corresponds to the distance between $u$ and $\B^\d$ ($u^\d=\arg\min_{v\in \B^\d} \|u-v\|$).
Algorithm \ref{algdiscgambletsolvecase1gso}, which is a direct variant of the discrete gamblet transform introduced in Algorithm \ref{alggamblettransf}, turns the resolution of \eqref{eqklajhdjfdis} into that of $q$ linearly independent linear systems (with uniformly bounded condition numbers under Condition \ref{conddiscrip3ordismatdis}).

\begin{Remark}
By  decomposing the inversion of \eqref{eqklajhdjfdis} into the inversion of $q$ linear systems of uniformly bounded condition numbers, Algorithm \ref{algdiscgambletsolvecase1gso}
provides an alternative regularization of ill-conditioned linear systems.
Recall that classical regularization methods include singular value truncation and the traditional (least square) Tikhonov regularization \cite{Neumaier98}.
Recall also that well conditioned linear systems  can be solved efficiently using iterative methods.
such as the Conjugate Gradient (CG) method \cite{HestenesStiefel1952}.
\end{Remark}

 \begin{figure}[h!]
	\begin{center}
			\includegraphics[width=\textwidth]{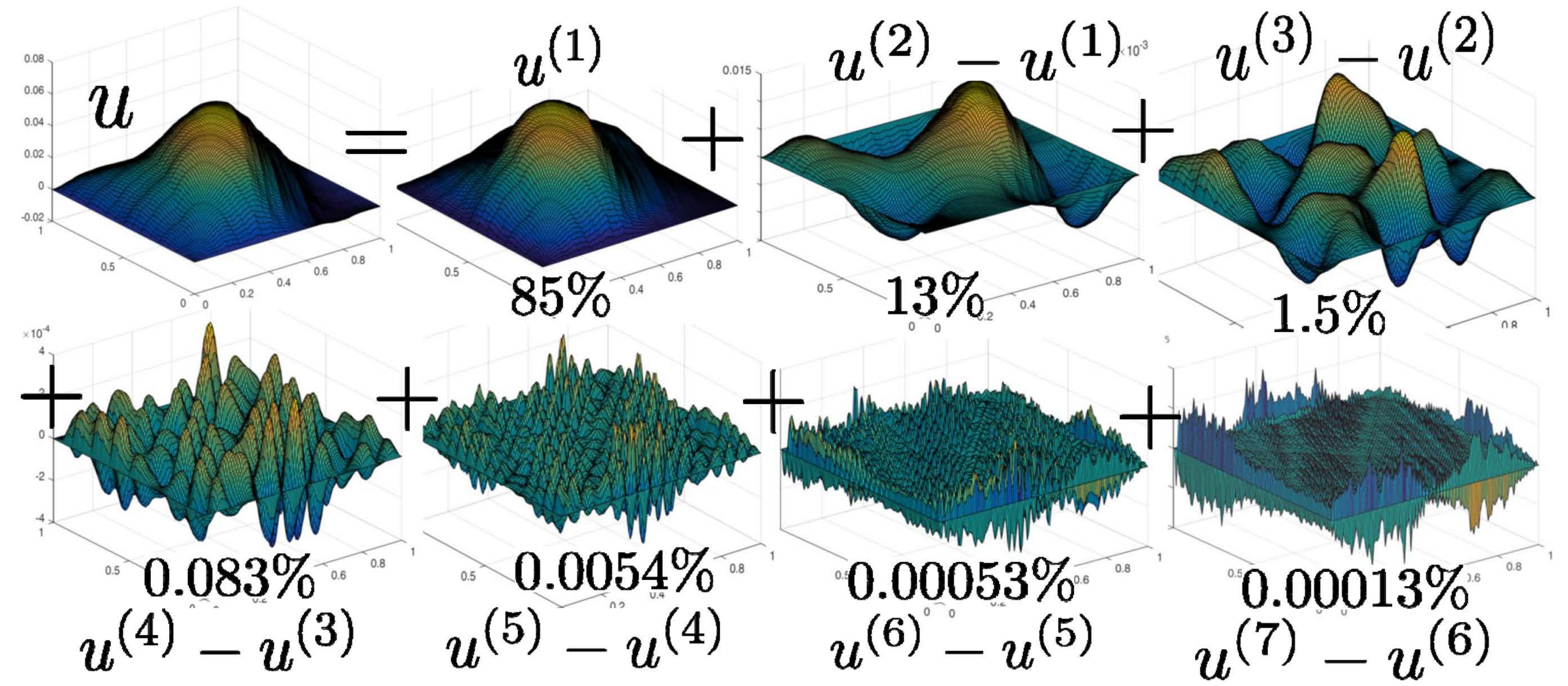}
		\caption{Decomposition of $u$ with smooth $g$. See Illustration \ref{nedkjdhijh3}.}\label{figu7}
	\end{center}
\end{figure}

\begin{NE}\label{nedkjdhijh3}
Consider the numerical example described in Illustration \ref{illoganumsim} of Example \ref{egproto00}. Figures \ref{figu7} and \ref{figu7dirac} show the decomposition the finite-element  solution $u$ of \eqref{eqn:scalarprotoa} into subband solutions $u^{(1)}$ and $u^{(k+1)}-u^{(k)}$ along with the relative energy content of each subband. We consider two right hand sides $g$, one smooth given by
$g(x)=\sum_{i\in \N} \big(\cos(3z_{i,1}+z_{i,2})+\sin(3z_{i,2})+\sin(7z_{i,1}-5z_{i,2})\big) \varPsi_i(x)$ (writing $z_i$ the positions of the interior nodes of the fine mesh) and one singular defined as the (approximate) discrete mass of Dirac $g(x)=4^{q}\psi_{i_0}$ where $i_0$ is the label of an interior node of the fine mesh in the center of the square. Observe that when $g$ is regular then the energy content in higher subbands quickly decreases towards $0$ (and therefore those subband solutions may not need to be computed depending on the desired accuracy which enables  computation in sublinear complexity) whereas when $g$ singular, the energy content in higher subbands remain significant and all subband solutions may need to be computed.
\end{NE}

 \begin{figure}[h!]
	\begin{center}
			\includegraphics[width=\textwidth]{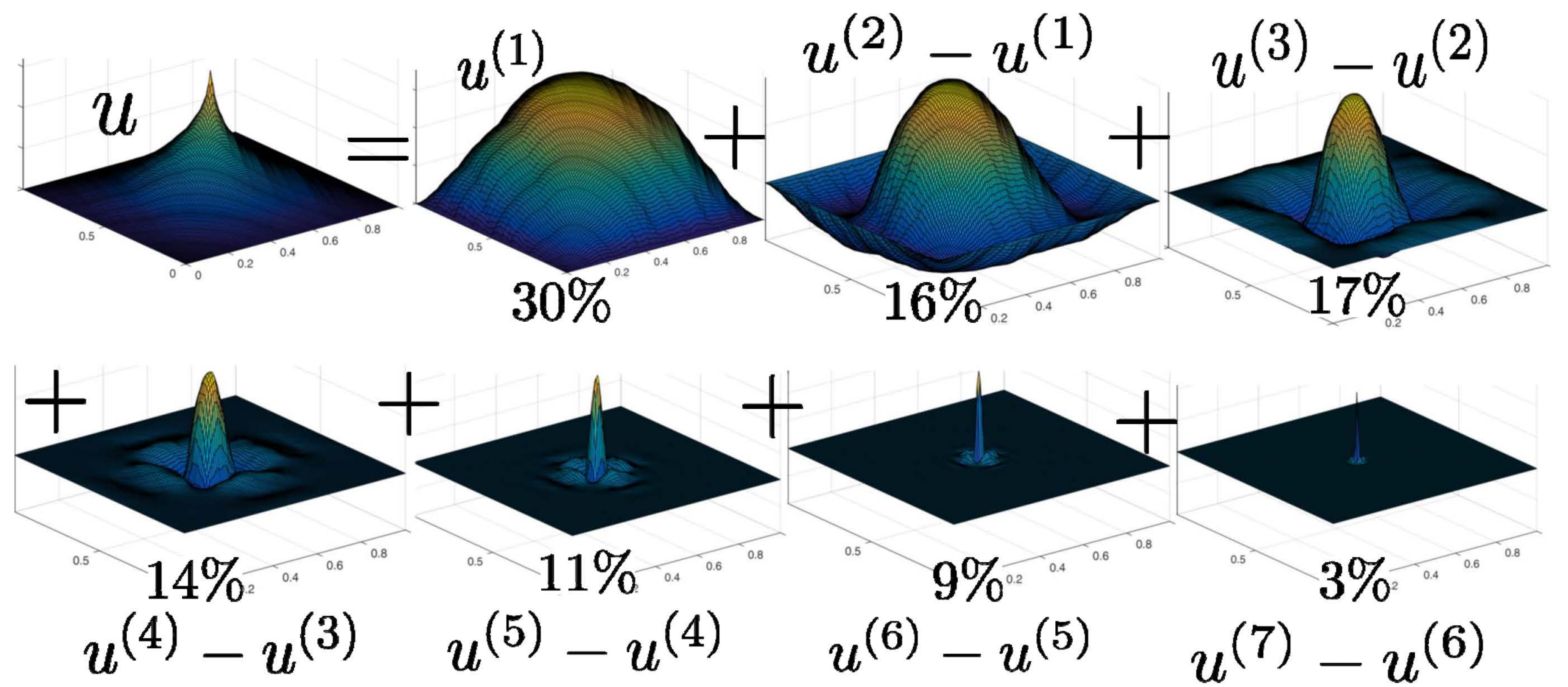}
		\caption{Decomposition of $u$ with a singular $g$. See Illustration \ref{nedkjdhijh3}.}\label{figu7dirac}
	\end{center}
\end{figure}

\section{Computational Information Games}\label{seccigsec}

Although the full development of our approach to Computational Information Games will involve
 Gelfand triples of Banach and Hilbert spaces, Gaussian cylinder measures and  their corresponding Wiener measures in the context of the interplay between Numerical Analysis, Approximation Theory and Statistical Decision
 Theory, as described at length in Section \ref{sec_correspondence},
to reduce the complexity of this paper,
here we
 consider Hilbert spaces paired with non-standard realizations of their dual spaces and consider Gaussian cylinder measures in the form of Gaussian fields, all to be defined. To begin, we now describe
  the connection between
 the Information-Based Complexity approach to the problem of Optimal Recovery and its connections with
 Game Theory.

\subsection{From Information Based Complexity to Game Theory}\label{subsecidyidudg}
It is well understood in Information Based Complexity \cite{Woniakowski2009} (IBC) that computation in infinite dimensional spaces can only be done with partial information. In the setting of Subsection \ref{subsecttt}
this means that, if $\B$ is infinite dimensional space, then one cannot directly compute with $u$ but only with a finite number of \emph{features} of $u$. These \emph{features} can represented as the vector $u_m:=\big([\phi_1,u],\ldots,[\phi_m,u]\big)\in \R^m$ where
$\phi_1,\ldots,\phi_m$ are $m$ linearly independent elements of $\B^*$. Similarly one can, for $g\in \B_2$, define $g_m\in \R^m$ (as a function of $g$) representing finite information about $g$. To solve
 the inverse the problem \eqref{eqn:scalar}, since one cannot directly compute with $u$ and $g$ but only with $u_m$ and $g_m$, one must identify a reduced operator mapping $u_m$ into $g_m$. If we know the mapping $\L$
  and the mapping $g \mapsto g_{m}$ then, as illustrated in \eqref{eqcase1NNNgydgyredop}, this
 identification requires  the determination of a mapping from $u_m$ to $u$, bridging the information gap between  $\R^m$ and $\B$.

\begin{equation}\label{eqcase1NNNgydgyredop}
\text{\xymatrixcolsep{10pc}\xymatrix{
u  \ar[r]^\L &g\ar[d]\\
u_m \ar[u] \ar@/^2.5pc/[r]_{\text{Reduced operator}}          &g_m \ar[l]^{\text{Discretized inverse problem}}}}
\end{equation}

We apply the Optimal Recovery approach, see e.g.~Micchelli and Rivlin \cite{micchelli1977survey}, to bridging the information gap  as follows:
Corresponding to the
collection  $\dot{\Phi}:=\phi_1,\ldots,\phi_m$ of $m$ linearly independent elements of $\B^*$, let
$\Phi:\B \rightarrow \R^{m}$ be defined by
$\Phi(u):=\big([\phi_1,u],\ldots,[\phi_m,u]\big)$ be the information operator. Moreover, it should cause no confusion to also denote by $\Phi\subset \B^{*}$ the span $\Phi:=\Span{\dot{\Phi}}$.  A solution operator
is a possibly nonlinear map
$\Psi:\R^{m} \rightarrow \B$ which uses only the values of the information operator $\Phi$.
 For any solution operator $\Psi$ and any state $u \in \B$, the relative error
 corresponding to the identity operator on $\B$ and the information operator
$\Phi:\B \rightarrow \R^{m}$  can be written
\[\mathcal{E}(u,\Psi):=\frac{\|u -\Psi(\Phi(u))\|}{\|u\|}\, ,\]
see e.g.~\cite{micchelli1984orthogonal},
from which the error associated with the solution operator $\Psi$ is
\[\mathcal{E}(\Psi):=\sup_{u \in \B}{\frac{\|u -\Psi(\Phi(u))\|}{\|u\|}}\, ,\]
and the optimal solution error is
\begin{equation}
\label{eobiiubuikn}
\mathcal{E^{*}}= \inf_{\Psi}{\mathcal{E}(\Psi)}=\inf_{\Psi}\sup_{u \in \B}{\frac{\|u -\Psi(\Phi(u))\|}{\|u\|}}\, .
\end{equation}
Micchelli \cite[Thm.~2]{micchelli1984orthogonal} provides the solution to this problem. In the setting of nonstandard dual pairings it appears as follows:
\begin{Theorem}[Micchelli]
\label{thm_micchelli}
Let $\B$ be a  separable Banach space, and $\B^*$ be a realization its dual with
$[\cdot,\cdot]$ the corresponding dual pairing.
 For a positive symmetric bijection $Q:
\B^* \rightarrow \B$, consider the Hilbert space $\B$ equipped with the inner product
$\langle u_{1},u_{2}\rangle:=[Q^{-1}u_{1}, u_{2}]$ .
Corresponding to the
collection  $\dot{\Phi}:=\phi_1,\ldots,\phi_m$ of $m$ linearly independent elements of $\B^*$, let
$\Phi:\B \rightarrow \R^{m}$ be defined by
$\Phi(u):=\big([\phi_1,u],\ldots,[\phi_m,u]\big)$ be the information operator.
Define the Gram  matrix $\Theta$ by
\[ \Theta_{ij}:=[ \phi_{i},Q\phi_{j}], \quad i,j=1,\ldots m\,, \]
and the elements $\psi_{i} \in \B, i=1,\ldots m$ by
\begin{equation}
\label{def_gam}
\psi_{i}:= \sum_{j=1}^{m}{\Theta^{-1}_{ij}Q\phi_{j}}, \quad i=1,\ldots, m\, .
\end{equation}
Then the mapping $\Psi:\R^{m} \rightarrow \B$ defined by
$\Psi(x):=\sum_{i=1}^{m}{x_{i}\psi_{i}}, x\in \R^{m}$ is the unique optimal  minmax solution to
\eqref{eobiiubuikn}.
\end{Theorem}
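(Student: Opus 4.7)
The plan is to leverage Proposition~\ref{prop_Gambletprojection}, which identifies the linear map $u\mapsto Pu:=\sum_{i=1}^{m}[\phi_i,u]\psi_i$ with the $\langle\cdot,\cdot\rangle$-orthogonal projection of $\B$ onto $Q\Phi:=\Span\{Q\phi_1,\ldots,Q\phi_m\}$. Since the proposed candidate satisfies $\Psi(\Phi(u))=Pu$ for every $u\in\B$, the minmax analysis reduces to the study of this orthogonal projection. A preliminary identification is that $\ker(\Phi)$ coincides with the $\langle\cdot,\cdot\rangle$-orthogonal complement $(Q\Phi)^{\perp}$, which follows from $\langle u,Q\phi_i\rangle=[Q^{-1}u,Q\phi_i]=[\phi_i,u]$ using the symmetry of $Q$; consequently $I-P$ is the orthogonal projection onto $\ker(\Phi)$.

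For the upper bound $\mathcal{E}(\Psi)\leq 1$, fix $u\neq 0$ and set $w:=u-Pu\in\ker(\Phi)$. Pythagoras gives $\|u\|^2=\|Pu\|^2+\|w\|^2$, while $u-\Psi(\Phi(u))=u-Pu=w$, so
\begin{equation*}
\mathcal{E}(u,\Psi)=\frac{\|w\|}{\sqrt{\|Pu\|^2+\|w\|^2}}\leq 1,
\end{equation*}
with equality along any nonzero $u\in\ker(\Phi)$. Conversely, for an arbitrary solution operator $\Psi':\R^m\to\B$: if $\Psi'(0)\neq 0$, a sequence $u_n\in\ker(\Phi)$ with $\|u_n\|\to 0$ drives $\mathcal{E}(u_n,\Psi')\to\infty$; if $\Psi'(0)=0$, every nonzero $u\in\ker(\Phi)$ gives $\Psi'(\Phi(u))=0$ and hence $\mathcal{E}(u,\Psi')=1$. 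In either case $\mathcal{E}(\Psi')\geq 1$, so $\mathcal{E}^{*}=1=\mathcal{E}(\Psi)$.

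The hardest part is uniqueness, since it cannot hold unrestrictedly: for example $\Psi'=2\Psi$ gives $I-\Psi'\Phi=I-2P$, an $\langle\cdot,\cdot\rangle$-isometry that also achieves $\mathcal{E}(\Psi')=1$. To pin down $\Psi$ among natural competitors, write $\Psi'(y)-\Psi(y)=\xi_\parallel(y)+\xi_\perp(y)$ with $\xi_\parallel(y)\in Q\Phi$ and $\xi_\perp(y)\in\ker(\Phi)$; the fiber identity
\begin{equation*}
\|u-\Psi'(y)\|^2=\|w-\xi_\perp(y)\|^2+\|\xi_\parallel(y)\|^2\qquad\text{for }u=\Psi(y)+w\in\Phi^{-1}(y)
\end{equation*}
evaluated at $w=-(\|\Psi(y)\|^2/\|\xi_\perp(y)\|^2)\,\xi_\perp(y)$ produces a ratio of the form $1+\|\xi_\perp(y)\|^2/\|\Psi(y)\|^2+(\text{nonneg.})$, which strictly exceeds $1$ whenever $\xi_\perp(y)\neq 0$ and $y\neq 0$, so minmax optimality forces $\xi_\perp\equiv 0$. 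The natural recovery constraint $\Psi'(\Phi(u))=u$ for $u\in Q\Phi$, which any bona fide optimizer inherits from the requirement that elements reconstructible from their measurements be recovered exactly, then propagates through the bijection $\Phi|_{Q\Phi}:Q\Phi\to\R^m$ to force $\xi_\parallel\equiv 0$, pinning $\Psi'$ to $\Psi$.
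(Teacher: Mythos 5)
Your optimality argument is essentially the paper's own proof: the same identification of $u\mapsto\Psi(\Phi(u))$ with the orthogonal projection $P_{Q\Phi}$ via Proposition~\ref{prop_Gambletprojection}, the same Pythagorean upper bound $\mathcal{E}(\Psi)\leq 1$, and the same lower bound obtained by restricting the supremum to the kernel of $\Phi$ after forcing $\Psi'(0)=0$ (the paper uses the scaling pencil $u_{\lambda}=\lambda u^{*}$ where you split into two cases, but the mechanism is identical). That part is correct, modulo the implicit assumption, shared with the paper, that $\ker(\Phi)\neq\{0\}$.

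The gap is in uniqueness. Your own counterexample $\Psi'=2\Psi$, for which $I-2P_{Q\Phi}$ is an isometry and hence $\mathcal{E}(\Psi')=1$, is correct and shows that uniqueness \emph{fails} for the relative-error criterion \eqref{eobiiubuikn} as literally posed; so the remainder of your argument cannot succeed without changing the problem. Indeed, your repair imports a ``natural recovery constraint'' $\Psi'(\Phi(u))=u$ for $u\in Q\Phi$ that is not a consequence of minmax optimality (your counterexample violates it while still being optimal), so the conclusion $\xi_{\parallel}\equiv 0$ is assumed rather than derived. The fiber computation forcing $\xi_{\perp}\equiv 0$ is also shaky: the chosen $w$ is a single test point, and the claimed ratio strictly exceeding $1$ needs $\Psi(y)\neq 0$ and a careful supremum over the whole fiber, neither of which is spelled out. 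For what it is worth, the paper's own proof in Section~\ref{sec_micchelli} establishes only optimality and is silent on uniqueness, so you have not missed an argument that the paper supplies; but as a proof of the theorem as stated, the uniqueness clause remains unproven (and, by your own example, needs reinterpretation --- e.g.\ uniqueness among linear/homogeneous solution operators, or in Micchelli's original absolute-error formulation --- to be true at all).
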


The minmax problem \eqref{eobiiubuikn} corresponds naturally to an adversarial zero sum game involving two players.
Let us denote by
\begin{equation}
\label{def_L}
L(\Phi,\B)
\end{equation}
 the set of $\bigl(\s(\Phi),\s(\B)\bigr)$-measurable  functions, where
$\s(\Phi)$ denotes the Borel  $\s$-algebra generated by $\dot{\Phi}$ and $\s(\B)$ the Borel
$\s$- algebra of $\B$.
  In this notation, using the fact, see e.g.~\cite[Thm.~2.12.3]{Bogachev1},
 that $v\in L(\Phi,\B)$ is equivalent to $v=\Psi \circ \Phi$ for some Borel measurable function $\Psi$, the game can be formulated as in the following diagram
  \begin{equation}\label{eqdkjdhkjhffORgame}
\text{\xymatrixcolsep{0pc}\xymatrix{
\text{(Player I)} & u^\one \ar[dr]_{\max}   &      &u^\two \ar[ld]^{\min}&\text{(Player II)}\\
&&\frac{\|u^\one -u^\two(u^\one)\|}{\|u^\one\|}& &
}}
\end{equation}
 and the objective of Player II (from a deterministic worst case numerical perspective) would be to minimize
\begin{equation}\label{eqdkjdhkjhffOR}
\inf_{u^{\two} \in L(\Phi,\B) }\sup_{u^\one\in \B}\frac{\|u^\one -u^\two(u^\one)\|}{\|u^\one\|}\, .
\end{equation}

Using  the notation $\Phi:=\Span{\dot{\Phi}}\subset \B^{*}$ we obtain the following corollary.
\begin{Corollary}[Micchelli]
\label{cor_micchelli}
Consider the situation of Theorem \ref{thm_micchelli}.
Then the  optimal minmax solution to \eqref{eqdkjdhkjhffOR}
is the orthogonal projection $P_{Q\Phi}$ onto $Q\Phi \subset \B$ and
$P_{Q\Phi}=\Psi\circ \Phi$, where $\Psi$ is the optimal minmax solution  to
\eqref{eobiiubuikn}.
\end{Corollary}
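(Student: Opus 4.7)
The plan is to invoke Theorem \ref{thm_micchelli} and Proposition \ref{prop_Gambletprojection} together, with a small measurability reconciliation step bridging \eqref{eobiiubuikn} and \eqref{eqdkjdhkjhffOR}.

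First I would identify $\Psi \circ \Phi$ with $P_{Q\Phi}$ explicitly. By Proposition \ref{prop_Gambletprojection}, the operator $P = \sum_{i=1}^{m} \psi_i \otimes \phi_i$ is the $\langle\cdot,\cdot\rangle$-orthogonal projection onto $Q\Phi \subset \B$. For $u \in \B$,
\[
(\Psi \circ \Phi)(u) \;=\; \Psi\bigl([\phi_1,u],\ldots,[\phi_m,u]\bigr) \;=\; \sum_{i=1}^{m} [\phi_i,u]\,\psi_i \;=\; Pu \;=\; P_{Q\Phi} u,
\]
so $\Psi \circ \Phi = P_{Q\Phi}$ as elements of $L(\Phi,\B)$ (the map is linear, hence Borel).

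Second, I would reconcile the two infima. By the Doob--Dynkin factorization (the result \cite[Thm.~2.12.3]{Bogachev1} recalled in the paragraph preceding \eqref{eqdkjdhkjhffOR}), every $u^{\two} \in L(\Phi,\B)$ admits a representation $u^{\two} = \tilde{\Psi} \circ \Phi$ for some Borel measurable $\tilde{\Psi} : \R^{m} \to \B$, and conversely every such $\tilde{\Psi} \circ \Phi$ belongs to $L(\Phi,\B)$. Therefore
\[
\inf_{u^{\two} \in L(\Phi,\B)} \sup_{u^\one \in \B} \frac{\|u^\one - u^{\two}(u^\one)\|}{\|u^\one\|}
\;=\;
\inf_{\tilde{\Psi}\text{ Borel}} \sup_{u \in \B} \frac{\|u - \tilde{\Psi}(\Phi(u))\|}{\|u\|}.
\]
The right-hand side is bounded below by $\mathcal{E}^{*}$ of \eqref{eobiiubuikn} (which is the same infimum taken over \emph{all} maps $\R^{m} \to \B$, measurable or not). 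On the other hand, the minimizer produced by Theorem \ref{thm_micchelli}, namely $\Psi(x) = \sum_i x_i \psi_i$, is linear and therefore Borel measurable, so the right-hand side is also bounded above by $\mathcal{E}^{*}$. Hence equality holds, and the infimum in \eqref{eqdkjdhkjhffOR} is attained at $\Psi \circ \Phi = P_{Q\Phi}$.

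I do not anticipate a substantive obstacle: both steps are essentially bookkeeping once Theorem \ref{thm_micchelli} is available. The only delicate point is ensuring that restricting to $L(\Phi,\B)$ does not strictly increase the optimal value; this is exactly where the measurability (in fact, linearity) of the minimizer from Theorem \ref{thm_micchelli} is used. Uniqueness of $P_{Q\Phi}$ as the minmax optimizer over $L(\Phi,\B)$ then transfers directly from the uniqueness assertion of Theorem \ref{thm_micchelli} via the factorization $u^{\two} = \tilde{\Psi} \circ \Phi$.
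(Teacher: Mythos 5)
Your proof is correct and follows essentially the route the paper intends: the paper treats this as an immediate consequence of Theorem \ref{thm_micchelli}, Proposition \ref{prop_Gambletprojection} (which gives $\Psi\circ\Phi=P_{Q\Phi}$), and the Doob--Dynkin factorization $v=\tilde\Psi\circ\Phi$ already recalled in the paragraph preceding \eqref{eqdkjdhkjhffOR}. The only content beyond Theorem \ref{thm_micchelli} is the observation that the linear (hence Borel) minimizer makes the restriction to $L(\Phi,\B)$ harmless, which you handle correctly.
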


\subsection{The game theoretic solution to numerical approximation}\label{subsecidyidudg2}
Bivariate loss functions such as $\mathcal{E}(u^\one,u^\two):=\frac{\|u^\one -u^\two(u^\one)\|}{\|u^\one\|}$ with $
u^\one \in \B, \, u^\two \in  L(\Phi,\B) $, do not, in general have a saddle point. In fact it is easy to see
that
\[\sup_{u^\one\in \B} \inf_{u^{\two} \in L(\Phi,\B) }\frac{\|u^\one -u^\two(u^\one)\|}{\|u^\one\|}=0\, .\]
We know, from Von Neumann's remarkable minimax theorem \cite{VNeumann28}, that, at least finite games, although minimax problems do not, in general, have a saddle point of pure strategies, saddle points of mixed strategies do always exist. These mixed strategies are randomized strategies   obtained by lifting minimax problems to distributions over pure strategies \cite{VonNeumann:1944, Nash:1951}.
Although the information game described in \eqref{eqdkjdhkjhffORgame} is zero sum, it is not finite.
Nevertheless, we also know, from  Wald's Decision Theory  \cite{Wald:1945}, that under sufficient regularity conditions such games can be made compact and can, as a result, be approximated by a finite game (we also refer to Le Cam's substantial generalizations \cite{LeCam1955, LeCam} and also to \cite{strasser1985mathematical}).
Instead of looking for the deterministic  worst case (numerical analysis) solution we will  therefore lift \eqref{eqdkjdhkjhffOR} to a minimax problem over measures and look for a game theoretic, mixed strategy, saddle point/solution. Our motivation in doing so is twofold: (1) The game theoretic solution is in general easier to identify than the numerical analysis solution (2) When solving a large linear system, minimax problems such as \eqref{eqdkjdhkjhffORgame} occur in a repeated manner (over a range of levels of complexity) and mixed strategies are the optimal solutions of such repeated games.

Although, in the context of \eqref{eqdkjdhkjhffORgame}, mixed strategies for Player II  correspond to selecting
 $u^{\two}$ at random by placing a probability distribution over $L(\Phi,\B)$, we will show that the optimal mixed strategy for Player II is a pure (non random) strategy, obtained by (1)
 identifying the optimal mixed strategy of Player I as selecting $u^\one$ at random by placing a (weak)
 probability distribution $\pi^\one$ on $\B$ (and projecting that distribution onto the orthogonal complement of $Q \Phi$) (2) taking the conditional
 expectation of $\pi^\one$  on the observation of $\Phi$ (i.e. the vector $([\phi_1,u^\one],\ldots,[\phi_m,u^\one])$). Furthermore the optimal weak distribution for $\pi^\one$ corresponds to that of a Gaussian field on $\B$ with covariance operator $Q$.
To that end, we now introduce basic measure theoretic  terminology, including the notions of {\em weak} distributions in their  equivalent form
 of cylinder measures, and describe their relation with
 Gaussian fields.

\subsection{Measures, cylinder measures, weak distributions, and Gaussian fields}
Let us begin by establishing some notational conventions and  basic facts.
For a topological space $B$ we let $\s(B)$ denote the corresponding Borel $\s$-algebra.
For measurable spaces $(X_{1}, \Sigma_{1})$ and $(X_{2}, \Sigma_{2})$ the notation
$f:(X_{1},\Sigma_{1}) \rightarrow (X_{2},\Sigma_{2})$ will indicate that the function
$f:X_{1}\rightarrow X_{2}$ is measurable, that is $f^{-1}(A)\in \Sigma_{1}$ for $ A \in \Sigma_{2}$.
Let $\dot{\Phi}:=\{\phi_{i}, i=1,..,m\}$ denote the collection of functions
$\phi_{i}:\B \rightarrow \R$
defined by the elements $\phi_{i} \in \B^{*},i=1,\ldots ,m$ and let
$\s(\Phi)$ denote the induced $\s$-algebra of
 subsets of $\B$ generated by the collection $\dot{\Phi}$. Corresponding to
such a collection, let us  denote by
 $\Phi:\B\rightarrow \R^m$ the  mapping
defined by $\Phi(x):=\sum_{i=1}^{m}{x_{i}\phi_{i}}, x \in \R^{m}$.
Since each component $\phi_{i}$ is continuous it is Borel measurable. Therefore,
  a function
$f:\B\rightarrow \B$ is $\bigl(\s(\Phi),\s(\B)\bigr)$-measurable, that is
$f:(\B,\s(\Phi))\rightarrow \bigl(\B,\s(\B)\bigr)$, if and only if
 $f=\psi\circ \Phi$  where
$\psi:\R^{m}\rightarrow \B$ is Borel measurable, that is $\bigl(\s(R^{m}),\s(\B)\bigr)$
 measurable.
  Consequently, the added assumption that the solution map
$\Psi$ be measurable is equivalent to the function $u^{\two}:=\Psi\circ \Phi$ being
$\bigl(\s(\Phi),\s(\B)\bigr)$-measurable.
Recall the set
$L(\Phi,\B)$
  of $\bigl(\s(\Phi),\s(\B)\bigr)$-measurable  functions introduced in \eqref{def_L}.

According to Gross   \cite[Pg.~33]{gross1967abstract}, the notion of a weak distribution, introduced by Segal \cite{segal1956tensor}, is equivalent to  that of a cylinder measure.
To describe the latter notion,  for a Banach space $X$, the cylinder sets are sets  of the form
$F^{-1}(B)$ where $F:X \rightarrow \R^{n}$ for some $n$ is continuous,
and $B$ is a Borel subset of $\R^{n}$. The cylinder set algebra is the $\s$-algebra generated
by all choices of $F,n$, and $B$. According to Bogachev \cite[Thm.~A.3.7]{bogachev1998gaussian} when  $X$
is separable, this $\s$-algebra is the Borel $\s$-algebra.
According to Bogachev \cite{bogachev1998gaussian}, we say that $\nu$ is a {\em cylinder   measure}
if $\nu$ is {\em finitely} additive set function on the cylinder set $\s$-algebra such that
for every continuous linear map $F:X \rightarrow \R^{n}$, the pushforward
$F_{*}\nu$, defined by $F_{*}\nu(B):=\nu(F^{-1}(B))$  for Borel sets $B \subset \R^{n}$, is a true measure.
When these are centered Gaussian measures, we say that $\nu$ is a {\em Gaussian cylinder  measure}.

To define a Gaussian field,  we say that
 a linear subspace $W\subset L^{2}(\Omega,\Sigma,\mu)$, where $(\Omega,\Sigma,\mu)$ is probability space,
is a {\em Gaussian space} if each element
$w\in W$ is a centered Gaussian random variable. If $W$ is a closed subspace, we say
that it is a {\em Gaussian Hilbert space}.
Let us now summarize our definition of a weak Gaussian distribution defined in terms of a  Gaussian field
in the  setting of a Banach space and {\em realization of its dual}.
\begin{Definition}[Gaussian Field]
\label{def_Gaussianfield}
Let $\B$ be a  separable Banach space, and $\B^*$ be a realization its dual with
$[\cdot,\cdot]$ the corresponding dual pairing.
 For a positive symmetric bijection $Q:
\B^* \rightarrow \B$, consider the Hilbert space $\B^*$ equipped with the inner product
$\langle\varphi_{1},\varphi_{2}\rangle_{Q}:=[\varphi_{1}, Q\varphi_{2}]$. Then we say that
  $\xi$ is a  Gaussian field with
 covariance operator $Q$, which we write $\xi\sim \N(0,Q)$, if
\[\xi:(\B^*, \langle \cdot,\cdot\rangle_{Q})\rightarrow L^{2}(\Omega,\Sigma,\mu) \quad \text{is an
 isometry}\] to a Lebesgue probability space
such that image $\xi(\B^*)$ is a Gaussian space.
\end{Definition}
Let us now mention a particularly useful abuse of notation that we will use throughout the paper.
Consider a Gaussian field $\xi:(\B^*, \langle \cdot,\cdot\rangle_{Q})\rightarrow L^{2}(\Omega,\Sigma,\mu) $
and an element  $\varphi \in \B^*$. Then the random variable
$\xi(\varphi) \in  L^{2}(\Omega,\Sigma,\mu)$ is a  real-valued function
  $\xi(\varphi):\Omega \rightarrow \R$ on $\Omega$.
Since, for $\omega$ fixed,
the function $\xi(\varphi)(\omega)$ of $\varphi \in \B^{*}$ is linear,  for each $\omega \in \Omega$,
there exists  an element
$\hat{\xi}(\omega)$ in the algebraic dual
$\B^{*}$, so that
$ \xi(\varphi)(\omega)=[\varphi,\hat{\xi}(\omega)], \omega \in \Omega\, ,$
where the bracket $[\cdot,\cdot]$ is the bracket corresponding to the algebraic dual.
If we abuse notation by removing the hat from $\hat{\xi}$ and using  the same bracket notation for algebraic dual and topological dual,  then we obtain the notation
\[\xi(\varphi)=[\varphi,\xi]\, \]
where the function on the righthand side is defined by $[\varphi,\xi](\omega)=[\varphi,\xi(\omega)],
 \, \omega \in \Omega$ .
Consequently, using this notation, the isometric nature of the Gaussian field $\xi$ can be written as
\[\E\big[ [\varphi_{1},\xi][\varphi_{2},\xi]\big]=[\varphi_{1},Q \varphi_{2}]\,, \quad
 \varphi_{1},\varphi_{2} \in \B^*\, .\]
Moreover, since $\B^{*}$ is reflexive, using the close relationship between the algebraic dual of $\B^{*}$
and its topological dual $\B$,  $\xi$ has the interpretation as a $\B$-valued (weak) random variable. Consequently,
we say that a Gaussian field  $\xi:(\B^*, \langle \cdot,\cdot\rangle_{Q})\rightarrow L^{2}(\Omega,\Sigma,\mu) $ is a {\em Gaussian field  on $\B$.}

\begin{Remark}
Observe that if $\|\cdot\|$ is the norm on $\B$ defined by $\|u\|=[Q^{-1}u,u]^\frac{1}{2}$ and if $\xi \sim \N(0,Q)$ then for $\varphi, \phi\in \B^*$, $[\phi,\xi]\sim \N(0,\|\varphi\|_*^2)$ and $\E\big[[\varphi,\xi][\phi,\xi]\big]=\<\varphi,\phi\>_*$ where $\|\cdot\|_*$ is the dual norm of $\|\cdot\|$ and $\<\cdot,\cdot\>_*$ its associated scalar product.
\end{Remark}

The
conditional expectation of a Gaussian field is determined as the field of conditional expectations. That is,
for a Gaussian field $\xi$ and a sub $\s$-algebra  $\Sigma'\subset \Sigma$,
$\E[\xi|\Sigma']:\B^* \rightarrow L^{2}(\Omega,\Sigma,\mu)$ is defined by
\begin{equation}
\label{de_gfcond}
\E[\xi|\Sigma'](\varphi):=\E[\xi(\varphi)|\Sigma']\,,
\end{equation}
where here and throughout the rest of the paper  will refrain from constantly mentioning
 {\em almost everywhere}. Recall that we use the symbol $\Phi \subset \B^{*}$
  for the span of $\dot{\Phi}$.
Since  $\xi(\varphi) \in L^{2}(\Omega,\Sigma,\mu)$ is a centered Gaussian random variable for each
 element $ \varphi \in \Phi$ and $\Phi$ is finite dimensional,
 it follows that $\xi(\Phi) \subset L^{2}(\Omega,\Sigma,\mu)$ is a Gaussian Hilbert space.
Consequently, if we let $P_{\xi(\Phi)}:L^{2}(\Omega,\Sigma,\mu)\rightarrow L^{2}(\Omega,\Sigma,\mu)$
denote the orthogonal projection onto  $\xi(\Phi)$ and
let $\s(\Phi)$ denote the $\s$-algebra generated by $\Phi$, using the standard relation between
 conditioning on a $\s$-algebra and conditioning on the set of random variables generating it,
  according to
Janson \cite[Thm.~9.1]{janson1997gaussian},  for all $\varphi\in \B^* $ we have
\[\E[\xi(\varphi)|\s(\Phi)]=P_{\xi(\Phi)}\bigl(\xi(\varphi)\bigr)
 \]
 Consequently,  the
 conditional expectation is also a Gaussian field and has the particularly simple and useful form
\begin{equation}
\label{def_gausscond}
\E[\xi|\s(\Phi)]=P_{\xi(\Phi)}\xi\,,
\end{equation}
where the Gaussian field $P_{\xi(\Phi)}\xi:\B^* \rightarrow L^{2}(\Omega,\Sigma,\mu)$
is defined by$(P_{\xi(\Phi)}\xi)(\varphi):=P_{\xi(\Phi)}\bigl(\xi(\varphi)\bigr), \varphi
\in  \B^* $.

In this paper, we will use the { \em canonical} instantiation of the ambient space $L^{2}(\Omega,\Sigma,\mu)$  for the Hilbert space  $\B$
 throughout the rest of the paper.
Consider the countable product $\R^{\mathbb{N}}$ equipped with the product   $\gamma_{\mathbb{N}}$
of standard Gaussian measure on $\R$ and the resulting Lebesgue space
$L^{2}(\R^{\mathbb{N}}, \s(\R^{\mathbb{N}}),\gamma_{\mathbb{N}})$. It is well known that
$ \s(\R^{\mathbb{N}})=\s(\R)^{\mathbb{N}}$. We simplify notation by writing this space as
$L^{2}(\R^{\mathbb{N}},\gamma_{\mathbb{N}})$.

Let $f^{*}_{i}, i \in \mathbb{N}$ denote the set of functions on $\R^{\mathbb{N}}$
 which satisfy $f^{*}_{i}(x)=x_{i}, x \in \R^{\mathbb{N}}$.
Select an  orthonormal basis $\{e_{i} \in \B^{*} , i\in \mathbb{N}\}$  for $\B^{*}$, and
  consider the mapping $\xi:\B^{*} \rightarrow L^{2}(\R^{\mathbb{N}},\gamma_{\mathbb{N}})$
  determined by defining it on the basis elements as
$\xi(e_{i})= \acute{e}_{i}$ and extending it by linearity to the rest of $\B^{*}$.
Clearly, $\xi(e_{i})=\acute{e}_{i}$ is a centered Gaussian random variable on $\R^{\mathbb{N}}$
with variance $1$. Moreover,  Parseval's formula can be used to prove, that for $\varphi \in \B^{*}$,
$\xi(\varphi)$ is a centered Gaussian random variable of variance $\|\varphi\|^{2}$.
It follows that
$\xi:\B^{*} \rightarrow L^{2}(\R^{\mathbb{N}},\gamma_{\mathbb{N}})$  is an isometry and therefore
a Gaussian field by Definition \ref{def_Gaussianfield}.
  This field can be shown to be independent of the chosen
  orthonormal basis.
Moreover, as asserted by Strasser \cite[Ex.~68.7.3]{strasser1985mathematical}, it follows
 that such a Gaussian field corresponds to a Gaussian cylinder measure.
Therefore, henceforth we restrict the ambient space to be $L^{2}(\R^{\mathbb{N}},\gamma_{\mathbb{N}})$
chosen in this way.

\begin{Remark}
\label{rem_measures}
We have  mentioned the  equivalences between  cylinder measures and weak distributions,  and the
equivalence between Gaussian cylinder measures, Gaussian weak distributions and
Gaussian fields. Moreover, unless it is important to make the distinction
 we may refer to such weak objects simply as {\em measures}.
\end{Remark}

\subsection{Mixed extension of the game  and optimal minmax solutions}
In Section \ref{sec_worstcase} we describe how weak distributions arise naturally as worst case (weak) measures  for the
optimal recovery problem, and demonstrate that {\em Gaussian fields are universal worst case measures}
  in the sense that they are worst case measures independent of the measurement
functions $\dot{\Phi}$.
The following theorem shows that
$\xi \sim \N(0,Q)$ is such a universal worst case measure,
 producing the optimal minmax strategy.
\begin{Theorem}\label{thmdlkdjh3e}
The optimal   strategy  of \eqref{eqdkjdhkjhffORgame} for Player II is the pure strategy
  $u^\two=\E_{\xi \sim \N(0,Q)}\big[\xi \mid \s(\Phi)\big]$ corresponding to the mixed strategy
$\xi \sim \N(0,Q)$ of player $I$ which is a worst case (weak) measure in the sense described in
Section
\ref{sec_worstcase}. In particular, the function $u^\two \in L(\Phi,\B)$  defined by
\begin{equation}\label{eqbhbdhbdjhb3eor}
u^\two(u)=\E_{\xi \sim \N(0,Q)}\big[\xi \mid \text{$[\phi_i,\xi]=[\phi_i,u]$ for $i=1,\ldots,m$}\big]\,,\quad
u\in \B
\end{equation}
is  the  optimal  minmax solution of  \eqref{eqdkjdhkjhffOR}.
Moreover, the gamblets \eqref{def_gam} determined to be optimal by Theorem \ref{thm_micchelli},   have the representation
\begin{equation}
\label{def_gambletexp}
\psi_i=\E_{\xi \sim \N(0,Q)}\big[\xi\big| [\phi_j,\xi]=\delta_{i,j},\, j=1,\ldots m\,]\, .
\end{equation}
\end{Theorem}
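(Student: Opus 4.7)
The plan is to derive formula \eqref{eqbhbdhbdjhb3eor} directly by computing the conditional expectation of the Gaussian field $\xi \sim \N(0,Q)$ given $\sigma(\Phi)$, and then to recognize the result as Micchelli's optimal recovery map from Theorem \ref{thm_micchelli}. The claim that $\xi$ is a worst-case measure is deferred to Section \ref{sec_worstcase}, so at this stage I only need to show that the pure strategy \eqref{eqbhbdhbdjhb3eor} coincides with $\Psi\circ\Phi$ for the solution operator $\Psi(x)=\sum_i x_i \psi_i$ identified in Theorem \ref{thm_micchelli}.

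First I apply identity \eqref{def_gausscond}, which gives $\E[\xi|\sigma(\Phi)] = P_{\xi(\Phi)}\xi$, where $\xi(\Phi) \subset L^2(\R^{\mathbb{N}},\gamma_{\mathbb{N}})$ is the finite-dimensional Gaussian Hilbert space spanned by $\xi(\phi_1),\ldots,\xi(\phi_m)$. By the isometry property of $\xi$ in Definition \ref{def_Gaussianfield}, the Gram matrix of this basis is
\begin{equation*}
\E\bigl[\xi(\phi_i)\xi(\phi_j)\bigr] = [\phi_i, Q\phi_j] = \Theta_{ij},
\end{equation*}
so the orthogonal projection of an arbitrary $\xi(\varphi)$ onto $\xi(\Phi)$ can be written
\begin{equation*}
P_{\xi(\Phi)}\xi(\varphi) = \sum_{i,j}\Theta^{-1}_{ij}\,\E\bigl[\xi(\varphi)\xi(\phi_j)\bigr]\,\xi(\phi_i) = \sum_{i,j}\Theta^{-1}_{ij}[\varphi, Q\phi_j]\,\xi(\phi_i).
\end{equation*}
Using the notational convention $\xi(\phi_i)=[\phi_i,\xi]$ and pulling $\varphi$ out through linearity of the bracket, this rearranges as $[\varphi,\,\sum_i\psi_i[\phi_i,\xi]]$ with $\psi_i=\sum_j\Theta^{-1}_{ij}Q\phi_j$ exactly as in \eqref{def_gam}. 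Under the abuse of notation identifying the Gaussian-field conditional expectation with its $\B$-valued realization (valid because $P_{\xi(\Phi)}\xi$ takes values in the finite-dimensional subspace $Q\Phi\subset\B$, on which the weak distribution is a genuine Gaussian measure), we obtain $\E[\xi|\sigma(\Phi)] = \sum_i \psi_i[\phi_i,\xi]$.

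Next I specialize the standard Gaussian regression formula to the conditioning event $\{[\phi_i,\xi]=[\phi_i,u],\,i=1,\ldots,m\}$, obtaining
\begin{equation*}
\E_{\xi\sim\N(0,Q)}\bigl[\xi\bigm| [\phi_i,\xi]=[\phi_i,u],\,i=1,\ldots,m\bigr] = \sum_{i=1}^m [\phi_i,u]\,\psi_i,
\end{equation*}
which is precisely $\Psi(\Phi(u))$ from Theorem \ref{thm_micchelli}. Since that theorem asserts $\Psi\circ\Phi$ is the unique minmax optimal element of $L(\Phi,\B)$ for \eqref{eqdkjdhkjhffOR}, the optimality of the proposed $u^\two$ in \eqref{eqbhbdhbdjhb3eor} is immediate. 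Setting $[\phi_j,\xi]=\delta_{i,j}$ in the same Gaussian regression formula then collapses $\sum_k [\phi_k,u]\psi_k$ to $\psi_i$, yielding the gamblet representation \eqref{def_gambletexp}.

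The main obstacle is the rigorous interpretation of the $\B$-valued conditional expectation, since $\xi$ is only a Gaussian cylinder measure on $\B$ rather than a Radon measure. I would handle this by consistently treating $\E[\xi|\sigma(\Phi)]$ as a Gaussian \emph{field} in the sense of \eqref{de_gfcond}, and only identifying it with an element of $\B$ after the projection $P_{\xi(\Phi)}$ has landed us in the finite-dimensional image space $Q\Phi$, where no weak-versus-strong distinction is needed; the conditioning event in \eqref{eqbhbdhbdjhb3eor} is also meaningful because $([\phi_1,\xi],\ldots,[\phi_m,\xi])$ is a bona fide $\R^m$-valued Gaussian random vector under the cylinder measure.
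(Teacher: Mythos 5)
Your proposal is correct and follows essentially the same route as the paper: both compute $\E[\xi\mid\s(\Phi)]$ as the orthogonal projection $P_{\xi(\Phi)}\xi$ using the isometry and the Gram matrix $\Theta$, identify the result with $\sum_i\psi_i[\phi_i,\xi]=P_{Q\Phi}$, and conclude optimality from Micchelli's theorem (the paper cites Corollary \ref{cor_micchelli}, the composed form of Theorem \ref{thm_micchelli}), with the worst-case-measure claim deferred to Section \ref{sec_worstcase}. The only cosmetic difference is that the paper routes the projection through $\B^*$ as $\xi(P_\Phi\varphi)$ while you expand it directly in the Gaussian space; the computations are identical.
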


\subsection{Repeated games across hierarchies of increasing levels of complexity}
It is not only computation with continuous operators that requires with partial information, to compute fast one must also compute with partial information. For example, the inversion of a $10^6\times 10^6$ matrix would be a slow process if one tries to compute with all the entries of that matrix at once, the only way to compute fast is to compute with a few \emph{features} of that matrix (that could be mapped to 64 degrees of freedom) and these \emph{features} typically do not represent all the  matrix entries.
Similarly, to obtain near optimal complexity solvers, one must compute with partial information over
 hierarchies of increasing levels of complexity and bridge hierarchies of information gaps.
In the proposed framework, we use the hierarchy of measurement functions $\phi_i^{(k)}$ introduced in Subsection \ref{subsechajhe} to generate a filtration on $\B$ representing a hierarchy of partial information about $u\in \B$.
As in Subsection \ref{subsecidyidudg}, the process of bridging information gaps across this hierarchy can then be formulation as an adversarial game, in which Player I
selects $u^\one \in \B$  and Player II is shown the level $k$ measurements $\big([\phi_i^{(k)},u^\one]\big)_{i\in \I^{(k)}}$ and must approximate $u^\one$ and level $k+1$ measurements (i.e., $\big([\phi_i^{(k)},u^\one]\big)_{i\in \I^{(k+1)}}$). This game is repeated across $k$ (the hierarchy of partial information/measurements about $u^\one$) and the choice of Player I does not change as $k$ progresses from $1$ to $q$.

We now extend Theorem \ref{thmdlkdjh3e} to the hierarchy.
\begin{Theorem}\label{thmdoiedhiu}
The optimal   strategy  of \eqref{eqdkjdhkjhffORgame} at level $\Phi^{(k)}$ for Player II
   is the pure strategy
  $u^{(k)}:=\E_{\xi \sim \N(0,Q)}\big[\xi \mid \s(\Phi^{(k)})\big]$ corresponding to the mixed strategy
$\xi \sim \N(0,Q)$ of player $I$. That is, the function $u^{(k)} \in L(\Phi^{(k)},\B)$  defined by
\begin{equation}\label{eqbhbdhbdjhb3e}
u^{(k)}(u):=\E_{\xi \sim \N(0,Q)}\big[\xi \mid \text{$[\phi^{(k)}_i,\xi]=[\phi^{(k)}_i,u]$ for $i=1,\ldots,m$}\big]\,,\quad
u\in \B
\end{equation}
is  the  optimal  minmax solution of  \eqref{eqdkjdhkjhffOR} at level $\Phi^{(k)}.$
Moreover, the gamblets at level $k$, defined in \ref{defpsi}
 with explicit representation in Theorem \ref{thmwhdguyd},  have the representation
\begin{equation}
\label{def_gambletexpk}
\psi^{(k)}_i=\E\big[\xi\big| [\phi_j^{(k)},\xi]=\delta_{i,j},\,  j\in \I^{(k)}\big]\,,\quad
i\in \I^{(k)}\,
\end{equation}
and the
interpolation matrix implicitly defined in  \eqref{eq:ftfytftfx} has  the representation
\begin{equation}\label{eqhjgjhgjgjgOR}
R^{(k,k+1)}_{i,j}= \E\big[ [\phi^{(k+1)}_j,\xi] \big| [\phi^{(k)}_l,\xi]=\delta_{i,l},\,l\in  \I^{(k)}\big],\quad i\in \I^{(k)}, j\in \I^{(k+1)}\, .
\end{equation}
Finally, the measure $\xi \sim \N(0,Q)$ is a worst case measure at all levels of the hierarchy.
\end{Theorem}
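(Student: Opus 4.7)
The plan is to reduce Theorem \ref{thmdoiedhiu} to a level-by-level application of Theorem \ref{thmdlkdjh3e}, together with some bookkeeping based on the biorthogonality of gamblets and measurement functions and the linearity of conditional expectation. Since $\pi^{(k,k+1)}$ has full rank $|\I^{(k)}|$ (Construction \ref{constpi}) and the $\phi_i^{(q)}$ are linearly independent (Construction \ref{constphik}), the elements $\{\phi_i^{(k)} : i\in \I^{(k)}\}$ are linearly independent for each $k$. Thus for every fixed $k$ the hypotheses of Theorem \ref{thmdlkdjh3e} are satisfied with the collection $\dot{\Phi}^{(k)} := \{\phi_i^{(k)} : i\in \I^{(k)}\}$ playing the role of $\dot{\Phi}$, yielding immediately the optimality of the pure strategy $u^{(k)}(u)=\E_{\xi\sim \N(0,Q)}\bigl[\xi \mid \sigma(\Phi^{(k)})\bigr]$ in the form \eqref{eqbhbdhbdjhb3e}, and the representation \eqref{def_gambletexpk} for the gamblets $\psi_i^{(k)}$.

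Next, I would derive the representation \eqref{eqhjgjhgjgjgOR} of the interpolation matrix $R^{(k,k+1)}$. The defining relation \eqref{eq:ftfytftfx} gives $\psi_i^{(k)} = \sum_{l\in \I^{(k+1)}} R^{(k,k+1)}_{i,l}\,\psi_l^{(k+1)}$. Applying the functional $[\phi_j^{(k+1)},\cdot]$ and using the biorthogonality $[\phi_j^{(k+1)},\psi_l^{(k+1)}]=\delta_{j,l}$ provided by Proposition \ref{prop_Gambletprojection} (specialized to the level-$(k{+}1)$ collection), we obtain $R^{(k,k+1)}_{i,j}=[\phi_j^{(k+1)},\psi_i^{(k)}]$. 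Substituting the conditional-expectation representation \eqref{def_gambletexpk} of $\psi_i^{(k)}$ and exchanging the continuous linear functional $[\phi_j^{(k+1)},\cdot]$ with the conditional expectation yields
\begin{equation*}
R^{(k,k+1)}_{i,j} \;=\; \bigl[\phi_j^{(k+1)},\,\E[\xi\mid [\phi_l^{(k)},\xi]=\delta_{i,l},\,l\in \I^{(k)}]\bigr] \;=\; \E\bigl[[\phi_j^{(k+1)},\xi] \,\big|\, [\phi_l^{(k)},\xi]=\delta_{i,l},\,l\in \I^{(k)}\bigr],
\end{equation*}
which is precisely \eqref{eqhjgjhgjgjgOR}. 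The interchange is justified because $\phi_j^{(k+1)}$ acts as a fixed continuous linear functional on $\B$ while the conditional expectation is a $\B$-valued (weak) integral of $\xi$, and equivalently this is just the statement that the conditional expectation of the scalar Gaussian $[\phi_j^{(k+1)},\xi]$ equals the action of $\phi_j^{(k+1)}$ on the conditional mean of $\xi$.

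For the final assertion, that $\xi\sim \N(0,Q)$ is a worst case measure at all levels of the hierarchy, the point is simply that Theorem \ref{thmdlkdjh3e} asserts the universality of the Gaussian field $\N(0,Q)$ for \emph{any} linearly independent finite collection of measurement functions in $\B^{*}$: the covariance operator $Q$ is determined by the norm $\|\cdot\|$ alone and does not depend on the $\phi_i^{(k)}$. Hence the same field $\xi\sim \N(0,Q)$ realizes a worst case (weak) measure simultaneously at every level $k\in\{1,\ldots,q\}$; only the conditioning $\sigma$-algebra $\sigma(\Phi^{(k)})$ varies with $k$.

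There is no serious obstacle to the proof; the slightly delicate point, which justifies stating this as a separate theorem, is the observation of universality across the hierarchy: one single Gaussian field $\xi\sim\N(0,Q)$ produces saddle-point strategies, gamblets, and interpolation operators at \emph{every} level, so the hierarchical decomposition \eqref{eqdedhhiuhe3OR} inherits a coherent probabilistic interpretation. The only notational care needed is ensuring that the duality pairing $[\phi,\cdot]$ commutes with the $\B$-valued conditional expectation, which is immediate from the definition \eqref{de_gfcond} of conditional expectation for Gaussian fields combined with the abuse-of-notation identification $\xi(\varphi)=[\varphi,\xi]$ established after Definition \ref{def_Gaussianfield}.
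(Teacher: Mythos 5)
Your proof is correct, and its skeleton matches the paper's: the first two assertions and the final one are obtained exactly as in the paper, by applying Theorem \ref{thmdlkdjh3e} level by level and invoking the independence of $Q$ from the measurement functions. The only place where you take a different route is the derivation of \eqref{eqhjgjhgjgjgOR}. You first establish $R^{(k,k+1)}_{i,j}=[\phi_j^{(k+1)},\psi_i^{(k)}]$ by pairing \eqref{eq:ftfytftfx} against $\phi_j^{(k+1)}$ and using biorthogonality (this identity is in fact proved separately in the paper as \eqref{eqhjgjhgjgjg}), and then commute the fixed functional $[\phi_j^{(k+1)},\cdot]$ with the conditional expectation — which, as you note, is immediate from the definition \eqref{de_gfcond} of conditioning a Gaussian field. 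The paper instead proceeds via the tower property $\E[\xi\mid\sigma(\Phi^{(k)})]=\E\bigl[\E[\xi\mid\sigma(\Phi^{(k+1)})]\mid\sigma(\Phi^{(k)})\bigr]$, expands the inner conditional expectation as $\sum_{j}\psi_j^{(k+1)}[\phi_j^{(k+1)},\xi]$, and identifies $R^{(k,k+1)}_{i,j}$ by matching coefficients against \eqref{eq:ftfytftfx}. The two derivations are equally rigorous; yours is marginally more elementary, while the paper's version has the side benefit of exhibiting the filtration structure that is then exploited in the martingale theorem (Theorem \ref{thmdgdjdgygugyd}).
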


For a Gaussian field $\xi$, let
\begin{equation}\label{eqmdedeanvspde}
\xi^{(k)}:=\E\big[\xi\big| \sigma(\Phi^{(k)}) \big]
\end{equation}
denote the conditional expectation with respect to the $\s$-algebra $\sigma(\Phi^{(k)})$ generated by the observation functions at the $k$-th level. We now conclude the theoretical portion of this section with
 important Martingale properties of our constructions. We say that a sequence $\xi^{(k)}, k=1,\ldots$
 of Gaussian fields with common domain is a {\em martingale} if, for each $\varphi$ in its domain, its sequence of images
$[\varphi,\xi^{(k)}]$ is a martingale.
\begin{Theorem}\label{thmdgdjdgygugyd}
It holds true that (1) $\sigma(\Phi^{(1)}),\ldots,\sigma(\Phi^{(q)})$ forms a filtration, i.e. $\sigma(\Phi^{(k)})\subset \sigma(\Phi^{(k+1)})$
(2) $\xi^{(k)}$ is a martingale with respect to the filtration $\big(\sigma(\Phi^{(k)})\big)_{k\geq 1}$, i.e.
$\xi^{(k)}=\E\big[\xi^{(k+1)}\big| \sigma(\Phi^{(k)})\big]$ (3) $\xi^{(1)}$ and the increments $(\xi^{(k+1)}-\xi^{(k)})_{k\geq 1}$ are independent Gaussian fields. Furthermore,
\begin{equation}\label{eqmdedeanvspdeOR}
\xi^{(k)}=\sum_{i\in  \I^{(k)}} \psi^{(k)}_i [\phi^{(k)}_i,\xi]
\end{equation}
\end{Theorem}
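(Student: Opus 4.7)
The plan is to derive all four assertions from standard properties of conditional expectation applied to Gaussian fields, leveraging the explicit formula from Theorem \ref{thmwhdguyd} for the gamblets.

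\medskip

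\noindent\textbf{Filtration.} First I would observe that the nesting relation \eqref{eq:eigdeiud3dd} gives $[\phi_i^{(k)},\xi]=\sum_{j\in \I^{(k+1)}}\pi^{(k,k+1)}_{i,j}[\phi_j^{(k+1)},\xi]$, so each generator of $\sigma(\Phi^{(k)})$ is a Borel function (in fact a linear combination) of the generators of $\sigma(\Phi^{(k+1)})$. This immediately yields $\sigma(\Phi^{(k)})\subset \sigma(\Phi^{(k+1)})$.

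\medskip

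\noindent\textbf{Explicit representation \eqref{eqmdedeanvspdeOR}.} Because $\xi$ is Gaussian and $\sigma(\Phi^{(k)})$ is generated by the finite Gaussian family $\{[\phi_j^{(k)},\xi]\}_{j\in\I^{(k)}}$, the identity \eqref{def_gausscond} reduces $\xi^{(k)}(\varphi)$ to the $L^2$ orthogonal projection of $[\varphi,\xi]$ onto $\Span\{[\phi_j^{(k)},\xi]\}$. Writing $\xi^{(k)}(\varphi)=\sum_j c_j(\varphi)[\phi_j^{(k)},\xi]$ and imposing the normal equations $\E\big[[\phi_i^{(k)},\xi]\xi^{(k)}(\varphi)\big]=\E\big[[\phi_i^{(k)},\xi][\varphi,\xi]\big]=[\phi_i^{(k)},Q\varphi]$ gives, via the isometry of $\xi$ and \eqref{eqtheta1}, $\sum_j \Theta^{(k)}_{i,j}c_j(\varphi)=[\phi_i^{(k)},Q\varphi]$. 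Inverting and using the symmetry of $Q$ together with Theorem \ref{thmwhdguyd} yields $c_j(\varphi)=\sum_l \Theta^{(k),-1}_{j,l}[\varphi,Q\phi_l^{(k)}]=[\varphi,\psi_j^{(k)}]$, so that in the abused-notation sense of the paper, $\xi^{(k)}=\sum_{j\in\I^{(k)}}\psi_j^{(k)}[\phi_j^{(k)},\xi]$. Specializing $\varphi=\phi_l^{(k)}$ with unit at $i$ recovers \eqref{def_gambletexpk}.

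\medskip

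\noindent\textbf{Martingale property.} Given the filtration established in the first step, the tower property of conditional expectation applied pointwise in $\varphi$ gives
\[
\E\big[\xi^{(k+1)}(\varphi)\big|\sigma(\Phi^{(k)})\big]=\E\Big[\E\big[\xi(\varphi)\big|\sigma(\Phi^{(k+1)})\big]\Big|\sigma(\Phi^{(k)})\Big]=\E\big[\xi(\varphi)\big|\sigma(\Phi^{(k)})\big]=\xi^{(k)}(\varphi),
\]
which is exactly the claimed martingale identity for the Gaussian fields.

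\medskip

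\noindent\textbf{Independence of increments.} Since $\xi^{(k+1)}-\xi^{(k)}=\xi^{(k+1)}-\E\big[\xi^{(k+1)}\big|\sigma(\Phi^{(k)})\big]$, each random variable $\big[\varphi,\xi^{(k+1)}-\xi^{(k)}\big]$ is, by the defining property of orthogonal projection, $L^2$-orthogonal to every $\sigma(\Phi^{(k)})$-measurable square-integrable random variable. In particular it is orthogonal to $[\varphi',\xi^{(1)}]$ for every $\varphi'$ and, because $\sigma(\Phi^{(j+1)})\subset \sigma(\Phi^{(k)})$ for $j<k$, also to $[\varphi',\xi^{(j+1)}-\xi^{(j)}]$. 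All the random variables in sight are linear images under the Gaussian field $\xi$ of elements of $\B^*$, hence jointly Gaussian; for jointly Gaussian families, pairwise orthogonality upgrades to joint independence of the generated fields, which is the third assertion.

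\medskip

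\noindent The only step requiring any care is the second one (the explicit formula), since it silently relies on the canonical $L^2(\R^{\mathbb{N}},\gamma_{\mathbb{N}})$ realization so that $\E[\xi(\varphi_1)\xi(\varphi_2)]=[\varphi_1,Q\varphi_2]$ is available and on the symmetry of $Q$ in identifying $c_j(\varphi)$ with $[\varphi,\psi_j^{(k)}]$; everything else is standard Hilbert-space projection plus the Gaussian fact that conditional expectations on sub-$\sigma$-algebras generated by Gaussian variables are $L^2$ projections onto the corresponding Gaussian subspaces.
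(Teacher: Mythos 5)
Your proof is correct and follows essentially the same route as the paper: the filtration from the nesting relation, the martingale property from the tower rule, independence of increments via zero covariance plus joint Gaussianity, and the representation \eqref{eqmdedeanvspdeOR} from the $L^2$ projection onto the Gaussian span with the Gram matrix $\Theta^{(k)}$ (the paper obtains this last piece by citing Theorems \ref{thmdlkdjh3e} and \ref{thmgugyug0OR}, whose proofs carry out the same normal-equations computation you write out explicitly). No gaps.
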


Theorem  \ref{thmdoiedhiu} shows that, if  \eqref{eqdkjdhkjhffORgame} is used to measure loss, then   $u^{(k)}$ is not only optimal in a Galerkin sense (i.e. it is the best approximation of $u$ in $\V^{(k)}$ as shown in Theorem \ref{thmgugyug0OR}), it is also the optimal (pure) bet for Player II for playing the repeated game described in this section. Furthermore,  Theorem \ref{thmdoiedhiu} and  the following Theorem \ref{thmdgdjdgygugyd} show that
 the elements $\psi_i^{(k)}$ obtained in Subsection \ref{subseckjshgdhgdhOR} form a basis of  elementary  gambles/bets for playing the game,  providing the motivation for referring to them  as {\em gamblets}.
 Note that \eqref{eqhjgjhgjgjgOR} shows that $R^{(k,k+1)}_{i,j}$ can be identified as the best bet of Player II on the value of
 $[\phi^{(k+1)}_j,u]$ given the information that $[\phi^{(k)}_l,u]=\delta_{i,l}$ for $l\in  \I^{(k)}$.

Moreover, Theorem \ref{thmdgdjdgygugyd}   enables the application of classical  results concerning martingales to the numerical analysis of $\xi^{(k)}$ (and therefore $u^{(k)}$). In particular (1) Martingale (concentration) inequalities can  be used to control the fluctuations of $\xi^{(k)}$ (2) Optimal stopping times can be used to derive optimal strategies for stopping numerical simulations  based on  loss functions mixing computation costs with the cost of imperfect decisions (3) Taking $q=\infty$ in the construction of the basis elements $\psi^{(k)}_i$  and using the martingale convergence theorem imply that,  for all $\phi\in B^*$, $[\phi,\xi^{(k)}] \rightarrow [\phi,\xi^{(\infty)}]$
 as $k\rightarrow \infty$ (a.s. and in $L^1$). Furthermore, the independence of the increments $\xi^{(k+1)}-\xi^{(k)}$ is related to the orthogonal multiresolution decompositions \eqref{eqdedhhiuhe3OR}.

Let us now describe these results in the context of  Example \ref{egproto00}, Definition \ref{def_chi}, and
 Illustrations \ref{nekdjdhhu} and \ref{nekdjdhhu2}.

  \begin{figure}[h!]
	\begin{center}
			\includegraphics[width=\textwidth]{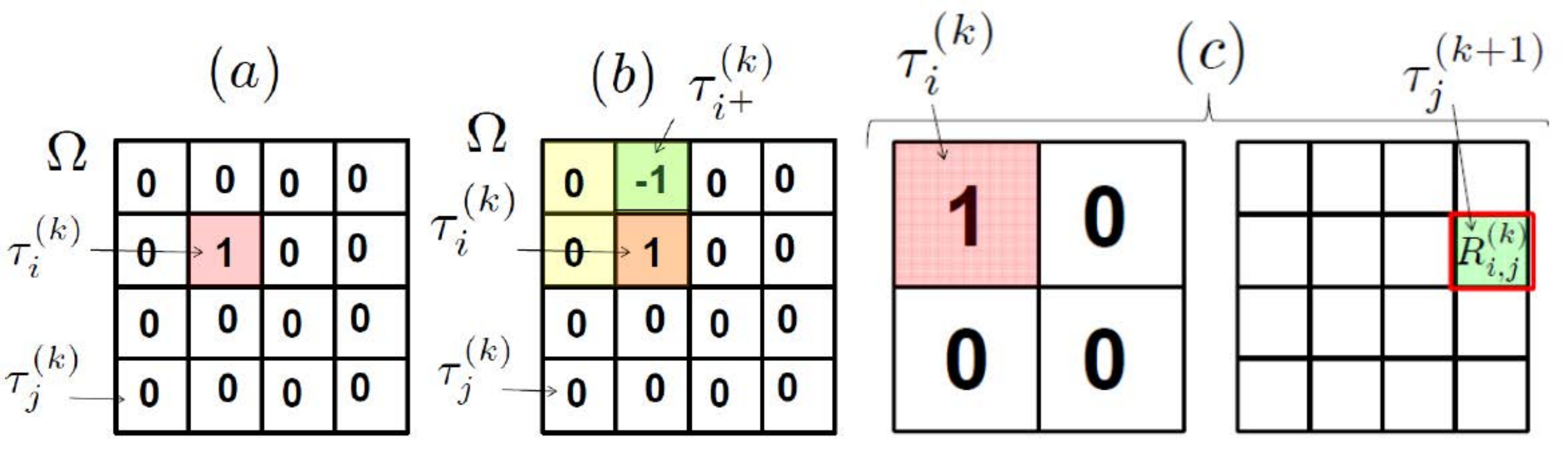}
		\caption{$\psi_i^{(k)}$ is the best bet of Player II on the value of $u\in \B$ given that $[\phi_j^{(k)},u]=\delta_{i,j}$ for $j\in \I^{(k)}$.  $\chi_i^{(k)}$ is the best bet of Player II on the value of $u\in \B$ given that $[\phi_j^{(k)},u]=W_{i,j}$ for $j\in \I^{(k)}$. $R_{i,j}^{(k,k+1)}$ is the best bet of Player II on the value of $[\phi_j^{(k+1)},u]$ given that $[\phi_j^{(k)},u]=\delta_{i,j}$ for $j\in \I^{(k)}$. See Illustration \ref{NEfigbets}.}\label{figbets}
	\end{center}
\end{figure}

\begin{NE}\label{NEfigbets}
Consider Figure \ref{figbets} and the context of Example \ref{egproto00} and Illustrations \ref{nekdjdhhu} and \ref{nekdjdhhu2} where $\phi_i^{(k)}=1_{\tau_i^{(k)}}/\sqrt{|\tau_i^{(k)}|}$ and $(\tau_s^{(k)}, s\in \I^{(k)})$ is a nested rectangular partition of $\Omega=(0,1)^2$. $\psi_i^{(k)}$ is Player II's best bet on the value of the solution $u\in (H^1_0(\Omega),\|\cdot\|_a)$  given  $\int_{\tau^{(k)}_j}u=\sqrt{|\tau^{(k)}_j|}\delta_{i,j}$ for $j\in \I^{(k)}$ (b) $\chi_i^{(k)}$ is Player II's best bet on $u$ given  $\int_{\tau^{(k)}_j}u=\sqrt{|\tau^{(k)}_j|} (\delta_{i,j}-\delta_{i^+,j})$ for $j\in \I^{(k)}$ (where $i^+$ is an adjacent square of $i$) (c) $R^{(k,k+1)}_{i,j}$ is Player II's best bet on  $\int_{\tau^{(k+1)}_j} u$  given  $\int_{\tau^{(k)}_s}u= \sqrt{|\tau^{(k)}_s|} \delta_{i,s}$ for $s\in \I^{(k)}$.
\end{NE}

\subsection{Probabilistic interpretation of numerical errors}\label{subseccoverror}
One popular objective of Probabilistic Numerics, see e.g.\cite{ChkrebtiiCampbell2015, schober2014nips, Owhadi:2014,Hennig2015, Hennig2015b, Briol2015, Conrad2015, OwhadiMultigrid:2015,cockayne2016probabilistic, Cockayne2017},
 is to, to some degree,  go beyond the classical deterministic bounds of numerical analysis and infer posterior  probability distributions on numerical approximation errors.
 In later sections we will demonstrate the existence of a game theoretic optimal Gaussian field $\xi$ in the estimation of the solution $u$ of a linear operator equation.  Then the martingale and multi-resolution decompositions of Theorems \ref{thmdgdjdgygugyd} and \ref{thmgugyug2OR} allow us
to represent the approximation  $u^{(k)}$ of $u$ as the conditional expectation of
  the conditional Gaussian field $\xi^{(k)}$, which  through the multiresolution analysis
 is  a sum of independent Gaussian fields.
 If we consider the Gaussian field $\xi^{(k)}$ as an approximation to the Gaussian field $\xi$, then the errors of this approximation are distributed according the Gaussian field $\xi-\xi^{(k)}$.

We will now determine the covariance operators
 of the   approximation error  $\xi-\xi^{(k)}$. To that end,
for $k\in\{1,\ldots,q\}$ let $\Theta^{(k)}$ be as in \eqref{eqtheta1} and let us consider
the Gaussian field $\xi\in \N(0,Q)$ as in Theorem \ref{thmdlkdjh3e}.
 Since, \eqref{def_gausscond}
implies that $\xi^{(k)}=P_{Q\Phi^{(k)}}\xi$ it follows that the Gaussian field of errors $\xi-\xi^{(k)}$
has covariance $ \Gamma^{(k)}:\B^{*}\rightarrow \B$ defined by
\[ \Gamma^{(k)}=(I-P_{Q\Phi^{(k)}})Q(I-P_{Q\Phi^{(k)}})^{*},\] and using the representation
$P_{Q\Phi^{(k)}}=\sum_{i \in \mathcal{I}^{(k)}}{\psi^{(k)}_{i}\otimes \phi^{(k)}_{i}}$ of
Proposition \ref{prop_Gambletprojection} of the orthogonal projection  and
$P_{Q\Phi^{(k)}}^{*}=Q^{-1}P_{Q\Phi^{(k)}}Q$ of its adjoint, we obtain
\[ \Gamma^{(k)}=(I-P_{Q\Phi^{(k)}})Q=Q-\sum_{i \in \mathcal{I}^{(k)}}{\psi^{(k)}_{i}\otimes Q\phi^{(k)}_{i}}\, .
\]
Moreover,  since for the initial estimate we have  $\xi^{(1)}=P_{Q\Phi^{(1)}}\xi$, it
 is a Gaussian field with covariance operator
$\Gamma^{(1),*}:\B_{*}\rightarrow \B$ defined
\[ \Gamma^{(1),*}=P_{Q\Phi^{(1)}}QP^{*}_{Q\Phi^{(1)}}=P_{Q\Phi^{(1)}}Q
\]
For $k\in \{1,\ldots,q-1\}$, since
 $\xi^{(k+1)}-\xi^{(k)}=P_{Q\Phi^{(k+1)}}\xi- P_{Q\Phi^{(k)}}\xi$,
$P_{Q\Phi^{(k+1)}} P_{Q\Phi^{(k)}}=P_{Q\Phi^{(k)}}$ and $ P_{Q\Phi^{(k)}}P_{Q\Phi^{(k+1)}}=P_{Q\Phi^{(k)}}$, it follows that $\xi^{(k+1)}-\xi^{(k)}$ is a
  Gaussian field with covariance operator $\Gamma^{(k+1),*}$ defined by
\[
\Gamma^{(k+1),*}=(P_{Q\Phi^{(k+1)}}- P_{Q\Phi^{(k)}})Q(P_{Q\Phi^{(k+1)}}- P_{Q\Phi^{(k)}})^{*}\\
=(P_{Q\Phi^{(k+1)}}- P_{Q\Phi^{(k)}})^{2}Q
=(P_{Q\Phi^{(k+1)}}- P_{Q\Phi^{(k)}})Q
\]
so that
\[ \Gamma^{(k+1), *}=\sum_{i \in \mathcal{I}^{(k+1)}}{\psi^{(k+1)}_{i}\otimes Q\phi^{(k+1)}_{i}}-
\sum_{i \in \mathcal{I}^{(k)}}{\psi^{(k)}_{i}\otimes Q\phi^{(k)}_{i}}\, .
\]

One notorious difficulty (complexity bottleneck) in the probabilistic numerics approaches to numerical analysis
 is the complexity of the inversion of dense
 covariance operators required by
 the computation of  posterior probabilities on numerical errors. However, \cite{SchaeferSullivanOwhadi17} shows that,
in the proposed  framework,  covariance operators can be inverted in near-linear complexity if $Q^{-1}$ is a local (e.g. differential) operator on a Sobolev space.

\subsection{Gaussian filtering}

The following proposition shows how Gaussian fields transform under transformation of their base space.
Its proof is straightforward.
\begin{Proposition}\label{prop_filtering}
Consider a continuous bijection $\L:\B\rightarrow \B_{2}$ between Banach spaces and
a Gaussian field
$\xi:(\B^*, \langle \cdot,\cdot\rangle_{Q})\rightarrow L^{2}(\Omega,\Sigma,\mu)$
with covariance $Q:\B^* \rightarrow \B$.
Define the pushforward field
$\L\xi:\B_{2}^*  \rightarrow L^{2}(\Omega,\Sigma,\mu)$
by
\[\L\xi(\varphi):=\xi(\L^{*}\varphi),\quad  \varphi \in \B_{2}^{*}\,.\]
Then $\L\xi$ is a Gaussian field on $\B_{2}^{*}$ with covariance operator
$Q_{2}=\L Q\L^{*}$. In particular,
\[\L \xi:(\B_{2}^*, \langle \cdot,\cdot\rangle_{Q_{2}})\rightarrow L^{2}(\Omega,\Sigma,\mu) \quad \text{is an
 isometry}\, \] with image space
$\L\xi(\B^*_{2})=\xi(\B^*)\subset L^{2}(\Omega,\Sigma,\mu)$,  the same Gaussian Hilbert space
as $\xi$.
\end{Proposition}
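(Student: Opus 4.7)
The plan is to verify each assertion of Proposition \ref{prop_filtering} essentially by unwinding definitions, so the proof is almost bookkeeping. First I would check that $\L\xi$ is well-defined, linear, and takes values in a Gaussian space. Continuity of $\L:\B\to\B_2$ gives continuity of the adjoint $\L^{*}:\B_2^{*}\to \B^{*}$, so $\varphi\mapsto \L^{*}\varphi$ is a well-defined bounded linear map, and $\L\xi(\varphi):=\xi(\L^{*}\varphi)$ lies in $\xi(\B^{*})\subset L^{2}(\Omega,\Sigma,\mu)$, which is a Gaussian space by assumption. Linearity of $\L\xi$ is immediate from linearity of $\xi$ and $\L^{*}$.

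Next I would compute the covariance. For $\varphi_{1},\varphi_{2}\in \B_2^{*}$, using the isometry property of $\xi$ and the defining relation $[\L^{*}\varphi, v]=[\varphi,\L v]$ of the adjoint,
\begin{equation*}
\E\bigl[\L\xi(\varphi_{1})\,\L\xi(\varphi_{2})\bigr]
=\E\bigl[\xi(\L^{*}\varphi_{1})\,\xi(\L^{*}\varphi_{2})\bigr]
=[\L^{*}\varphi_{1},Q\L^{*}\varphi_{2}]
=[\varphi_{1},\L Q\L^{*}\varphi_{2}],
\end{equation*}
identifying the candidate covariance $Q_{2}:=\L Q\L^{*}:\B_2^{*}\to \B_2$. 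I would then verify $Q_{2}$ qualifies as a covariance in the sense required by Definition \ref{def_Gaussianfield}: symmetry and positivity of $Q_{2}$ follow from those of $Q$ (since $[\varphi_{1},\L Q\L^{*}\varphi_{2}]=[\L^{*}\varphi_{1},Q\L^{*}\varphi_{2}]=[\L^{*}\varphi_{2},Q\L^{*}\varphi_{1}]$), while bijectivity follows from bijectivity of $Q$ together with the fact that $\L$ is a continuous bijection, so by the open mapping theorem $\L^{-1}$ is bounded and hence $\L^{*}:\B_2^{*}\to\B^{*}$ is a continuous bijection with bounded inverse $\L^{-1,*}$. This in turn gives the inner product $\langle\varphi_{1},\varphi_{2}\rangle_{Q_{2}}=[\varphi_{1},Q_{2}\varphi_{2}]$ on $\B_2^{*}$, and combining with the covariance computation above yields
\begin{equation*}
\E\bigl[\L\xi(\varphi_{1})\,\L\xi(\varphi_{2})\bigr]=\langle\varphi_{1},\varphi_{2}\rangle_{Q_{2}},
\end{equation*}
i.e., $\L\xi:(\B_2^{*},\langle\cdot,\cdot\rangle_{Q_{2}})\to L^{2}(\Omega,\Sigma,\mu)$ is an isometry. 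Finally, the image $\L\xi(\B_2^{*})=\xi(\L^{*}\B_2^{*})=\xi(\B^{*})$, since $\L^{*}$ is a bijection between $\B_2^{*}$ and $\B^{*}$, showing that the two Gaussian Hilbert spaces coincide.

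There is no real obstacle: the argument is a direct unwinding of the Gaussian-field definition and the adjoint identity. The one point worth making carefully is that $\L^{*}$ is a linear bijection $\B_2^{*}\to\B^{*}$, which is where the hypothesis that $\L$ is a continuous \emph{bijection} (not merely continuous) enters, via the open mapping theorem; everything else is formal.
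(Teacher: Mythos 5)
Your proof is correct and is exactly the routine verification the paper has in mind when it declares the proof ``straightforward'' and omits it: the covariance computation via the adjoint identity, the bijectivity of $\L^{*}$ from the open mapping theorem, and the identification of the image under the surjectivity of $\L^{*}$ are precisely the needed steps. Nothing is missing.
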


In applications, probability distributions can be more naturally placed on the range space of a linear operator, that is, in general, we have much better prior knowledge regarding the righthand side of an operator equation
 than on the set of solutions. Consequently,
to connect with our the analysis  of
the operator in  both \eqref{eqcase1} and \eqref{eqcase1NNN}  derived in Subsection \ref{subgeneralcase}, we invert the above proposition.
\begin{Proposition}\label{propdfflkj}
Let $\L:\B\rightarrow \B_{2}$ be a continuous bijection between Banach spaces.
Let  $\xi_2$ be a Gaussian field of $\B_2$ with covariance operator  $\G^{-1}$,
and let $\xi=\L^{-1}\xi_2$ be the pullback of $\xi_2$ under $\L$.
 It holds true  that $\xi$ is a Gaussian field on $\B$ with covariance operator  $Q= \L^{-1} \G^{-1} \L^{-1,*}$.
\end{Proposition}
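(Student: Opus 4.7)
The plan is to deduce Proposition \ref{propdfflkj} as an essentially immediate corollary of Proposition \ref{prop_filtering} by applying the latter with the roles of $\L$ and its inverse interchanged. Since $\L:\B\to \B_2$ is a continuous linear bijection between Banach spaces, the open mapping theorem guarantees that $\L^{-1}:\B_2 \to \B$ is itself a continuous linear bijection, so it is a legitimate input to Proposition \ref{prop_filtering}.

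Concretely, following the convention of Proposition \ref{prop_filtering}, I would interpret the pullback $\xi := \L^{-1}\xi_2$ as the map $\xi:\B^*\to L^2(\Omega,\Sigma,\mu)$ defined by
\begin{equation}
\xi(\phi) := \xi_2\bigl((\L^{-1})^{*}\phi\bigr), \qquad \phi \in \B^{*}\, .
\end{equation}
Applying Proposition \ref{prop_filtering} with the substitutions $\B \leftarrow \B_2$, $\B_2 \leftarrow \B$, $\L \leftarrow \L^{-1}$, $\xi \leftarrow \xi_2$, $Q \leftarrow \G^{-1}$, the conclusion is that $\xi$ is a Gaussian field on $\B$ whose covariance operator is
\begin{equation}
Q = (\L^{-1})\,\G^{-1}\,(\L^{-1})^{*}\, .
\end{equation}

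The only identification left is to match $(\L^{-1})^{*}$ with the symbol $\L^{-1,*}$ used in the statement. This is a standard fact: for a continuous linear bijection $\L$ between Banach spaces the adjoint is a continuous linear bijection between the dual spaces and $(\L^{-1})^{*} = (\L^{*})^{-1}$, which is precisely what $\L^{-1,*}$ denotes in the notational conventions of Section \ref{subgeneralcase} (see \eqref{eqkhhuhuhij}). Substituting this identity yields $Q = \L^{-1}\G^{-1}\L^{-1,*}$, as claimed.

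There is no real obstacle here; the statement is a symmetry of Proposition \ref{prop_filtering}. For completeness one might also note that the positivity and symmetry of $Q$ follow immediately from those of $\G^{-1}$ via the sandwich $\L^{-1}\G^{-1}\L^{-1,*}$, and that the image Gaussian Hilbert space $\xi(\B^{*})$ coincides with $\xi_2(\B_2^{*})$ since $(\L^{-1})^{*}$ is a bijection on the dual, which is the analogue of the last statement of Proposition \ref{prop_filtering}.
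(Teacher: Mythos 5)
Your proposal is correct and follows essentially the same route as the paper: the paper's proof is the explicit covariance computation $\E\big[[v^*,\L^{-1}\xi_2][w^*,\L^{-1}\xi_2]\big]=[v^*,\L^{-1}\G^{-1}\L^{-1,*}w^*]$, which is exactly the content of Proposition \ref{prop_filtering} specialized to the continuous bijection $\L^{-1}$ (the paper itself introduces the result as "inverting the above proposition"). Your invocation of Proposition \ref{prop_filtering} with the roles swapped, together with the identification $(\L^{-1})^{*}=(\L^{*})^{-1}=\L^{-1,*}$, is the same argument packaged by substitution rather than by redoing the computation.
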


From a Game Theoretic perspective this result can be understood as a transfer of optimal mixed strategy from a Game played on $\B_2$ to a game played on $\B$. From a Bayesian perspective this result allows us to construct accurate priors on the solution space of $\L$.
Consider for instance the prototypical PDE \eqref{eqn:scalarprotoa} and Example \ref{egprotocase13}. If $g$ lives in $L^2(\Omega)$ then $u$ lives in a subspace $V$ of $H^1_0(\Omega)$. Here, although it would be difficult to directly specify a good prior for $u$, it remains a simple task to specific a good prior on $g$ (e.g. white noise) and push that prior through the inverse of the operator to obtain a good prior on $u$. This is the strategy introduced in \cite{Owhadi:2014} where it is also shown that Rough Polyharmonic Splines \cite{OwhadiZhangBerlyand:2014} and Polyharmonic Splines  \cite{Harder:1972, Duchon:1976,Duchon:1977,Duchon:1978} can be re-discovered as  solutions of  Gaussian filtering problems, i.e. in the setting of Example \ref{egprotocase13}, if the $\phi_i^{(k)}$ are masses of Diracs (so that the value of $u$ is observed at a finite number of points of $\Omega$ in the game theoretic formulation) then
the gamblets \eqref{defpsi} and \eqref{prop_Gambletprojection} are Polyharmonic Splines  when $a$ is identity matrix and Rough Polyharmonic Splines in the general case.

\subsection{Emergence of probabilistic computation, quantum mechanics and the simulation hypothesis}
The proposed game theoretic interpretation of numerical approximation suggests that probabilistic computation emerges as a natural form of computation with limited resources and partial information.
 Could this approach be related to the form of computation described by Quantum Mechanics, which as shown in \cite{Caves2002-CAVQPA, Benavolietal2016}, could naturally be interpreted as a form of Bayesian computation with complex numbers, and if yes, what would that suggest about the nature of reality?
 Wheeler advocated \cite{Wheeler1990} that ``Quantum Physics requires a new view of reality'' integrating physics with  digital (quanta) information. As observed in  \cite{testingsimhyp2017}, two such views emerge from the presupposition that reality could be computed. The first one, which includes
Digital Physics \cite{Zuse1967} and the cellular automaton interpretation of Quantum Mechanics  \cite{HooftG2016}, proposes that the universe \emph{is} the computer. The second one, which includes the simulation hypothesis \cite{BOSTROM2003, testingsimhyp2017, WhitworthB2007}, suggests that the observable reality is entirely virtual and the system performing the simulation (the computer) is distinct from its simulation (the universe). \cite{testingsimhyp2017} argues that the second view could be analyzed (and tested) based on the assumption that the system performing the simulation has limited computational resources.
Therefore,  in the  simulation hypothesis, the system rendering reality would use computational complexity as a minimization/selection principle for algorithm design, and to achieve near optimal computational complexity by computing with partial information and limited resources, such a system would have to \emph{play dice}. Given these observations it is tempting to analyze/interpret/understand Quantum Mechanics as an optimal form of computation in presence of incomplete information (it is interesting to note that in \cite{Benavolietal2016} the Bayesian formulation of Quantum Mechanics is also derived in a game theoretic setting).

\subsection{Universal worst case measure for optimal recovery}
\label{sec_worstcase}
Here we will demonstrate  the assertion in  Theorem \ref{thmdlkdjh3e} that the appropriately chosen
 Gaussian cylinder measure, or equivalently Gaussian field, is a worst case measure with respect to the mixed extension of a game related to the optimal recovery problem. Moreover, it is universal in the sense that it is independent of the observation functions $\Phi$.
First let us demonstrate how the scaling properties of the optimal recovery problem
 lead naturally to a mixed extension which is also invariant to scalings.

As in Section \ref{subsecttt}, let
 $(\B,\|\cdot\|)$ be a reflexive separable Banach space such that the $\|\cdot\|$ norm is quadratic, i.e.
$\|u\|^2=[Q^{-1}u,u]$ for $u\in \B$,  and $Q$ is a symmetric positive bijective linear operator mapping $\B^*$  to $\B$,  and write $\<\cdot,\cdot\>$ for  the corresponding inner product on $\B$ defined by
\begin{equation}
\label{def_Q}
\<u,v\>:=[Q^{-1} u,v] \text{ for }u,v \in \B\,.
\end{equation}
Recall  the set $L(\Phi,\B) $  of $\bigl(\s(\Phi),\s(\B)\bigr)$-measurable  functions
introduced above  Corollary \ref{cor_micchelli}.
Define the {\em value} $\nu(v)$ of a putative solution $v \in L(\Phi,\B)$, by
\[    \nu(v):=
\sup_{x\in \B}\frac{\|x -v(x)\|^{2}}{\|x\|^{2}}
\]
so that the value
 \[\lambda_{*}:=   \inf_{v \in L(\Phi,\B) }\sup_{x\in \B}\frac{\|x -v(x)\|^{2}}{\|x\|^{2}}\]
of the minmax problem satisfies
$\lambda_{*}=   \inf_{v \in L(\Phi,\B) }{\nu(v)} $.
For $\epsilon \geq 0$, we say that $v\in L(\Phi,\B) $ is an $\epsilon$-optima of the minmax problem
if
 $ \nu(v)-\lambda_{*} \leq \epsilon\, . $
Evidently, the appropriate saddle function with which to define this game and its mixed extension appears to be
\begin{equation}
\Phi(v,x):=\frac{\|x -v(x)\|^{2}}{\|x\|^{2}}, \quad v \in L(\Phi,\B), x \in \B\setminus \{0\}\,
\end{equation}
so that
\[    \nu(v):=
\sup_{x\in \B}{\Phi(v,x)}
\]
and
\[\lambda_{\Phi}=   \inf_{v \in L(\Phi,\B) }{\nu(v)} =\inf_{v \in L(\Phi,\B) }\sup_{x\in \B}{\Phi(v,x)}\, .\]

The minmax problem can be shown to be reducible to linear solutions and so corresponds
to a  minmax problem with objective function
$\|x -v(x)\|^{2}$ subject to the constraint
$\|x\| \leq 1$. Since this problem is homogeneously related to the same problem
with constraint $\|x\| \leq t$ for every $ t >0$ this homogeneity appears to generate its connection
with a worst case distribution on the whole space $\B$ instead of its unit ball, as follows.
Let $\mathcal{M}_{2}(\B)$ denote the Borel probability measures with finite second moments on $\B$,
 and consider
the following saddle function
\begin{equation*}
\Psi(v,\mu):=\int{\|x -v(x)\|^{2}d\mu(x)}
\end{equation*}
and corresponding minmax problem
$\lambda_{\Psi}=   \inf_{v \in L(\Phi,\B) }\sup_{\mu\in \mathcal{M}_{2}(\B)}{\Psi(v,\mu)}\, .$
The saddle function $\Psi$ is clearly convex in $v$ and affine in $\mu$ and consequently it is
convex-concave.
The covariance operator $S_{\mu}:\B \rightarrow \B$ of any measure in $\mathcal{M}_{2}(\B)$ is
  $Q$-symmetric in that $QS_{\mu}^{*}=S_{\mu}Q$ where
 $S_{\mu}^{*}$ is the adjoint of $S_{\mu}$ defined through the dual pairing
$[S^{*}_{\mu}\varphi, u]=[\varphi, S_{\mu}u], \varphi \in \B^{*}, u \in \B$.
However, it is easy to show that scaling the covariance operator $S_{\mu}$ of a measure $\mu$
to $tS_{\mu}$ with $t >0$ produces a measure $\mu_{t}$  such that
\[\int{\|x\|^{2}d\mu_{t}}=t\int{\|x\|^{2}d\mu}\, .\]
Since
 Wasilkowski and Wozniakowsi \cite{wasilkowski1986average} show that  the optimal solution
is an orthogonal projection,  independent of such scaling, it follows that the optimal value is infinite.  Therefore, it appears appropriate to either constrain
the second moment or scale by the second moment. Since the above analysis led
to scaling for the worst case problem, this suggests we proceed with scaling. To that end,
consider the following scaled saddle function
\begin{equation}
\label{def_saddle}
\Psi(v,\mu):=\frac{\int{\|x -v(x)\|^{2}d\mu(x)}}{\int{\|x\|^{2}d\mu(x)}}
\end{equation}
This saddle function  is a fractional function and is easily seen to be quasi-concave in its second argument, see Mangasarian \cite[Sec.~9.6]{mangasarian1994nonlinear}.
Since it is convex in its first argument
it is a quasi-convex/quasi-concave saddle function. Therefore, given
 compactness of one of the domains the minmax theorem of Sion \cite{sion1958general}
may be used to demonstrate that it satisfies a minmax equality. The following theorem demonstrates
that $\Psi$ does indeed satisfy a minmax equality, without any need for compactness.

\begin{Theorem}
\label{thm_minmax}
Under the condition $dim(\Phi) < dim(\B)$, we have
\begin{equation}
\label{minmax1}
  \inf_{v \in L(\Phi,\B) }\sup_{\mu\in \mathcal{M}_{2}(\B)}{\Psi(v,\mu)}
=  \sup_{\mu\in \mathcal{M}_{2}(\B)}\inf_{v \in L(\Phi,\B) }{\Psi(v,\mu)}=1\,
\end{equation}

\end{Theorem}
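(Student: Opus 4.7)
The plan is to establish the sandwich
$$ 1 \;\geq\; \inf_{v \in L(\Phi,\B)} \sup_{\mu \in \M_{2}(\B)} \Psi(v,\mu) \;\geq\; \sup_{\mu \in \M_{2}(\B)} \inf_{v \in L(\Phi,\B)} \Psi(v,\mu) \;\geq\; 1, $$
in which the middle step is the universal weak minmax inequality (obtained pointwise from $\inf_v \Psi(v,\mu_0) \le \Psi(v_0,\mu_0) \le \sup_\mu \Psi(v_0,\mu)$). Only the outer bounds require work, and the upper one is essentially immediate: the constant rule $v\equiv 0$ lies in $L(\Phi,\B)$ and satisfies $\Psi(0,\mu)=1$ for every $\mu$ that is not concentrated at the origin, so $\sup_{\mu}\Psi(0,\mu)=1$ and $\inf_{v}\sup_{\mu}\Psi\leq 1$.

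The matching lower bound $\sup_{\mu}\inf_{v}\Psi\geq 1$ is the real content. I would obtain it by exhibiting a sequence of centered Gaussian measures $\mu_{N}$ whose optimal-recovery ratio tends to $1$. Set $m:=\dim\Phi$. Because $\dim\Phi<\dim\B$ and $\phi_{1},\ldots,\phi_{m}$ are linearly independent, for each $N>m$ one can pick an $\<\cdot,\cdot\>$-orthonormal family $e_{1},\ldots,e_{N}\in\B$ on whose span the $\phi_{i}$ remain linearly independent; e.g.\ take $e_{1},\ldots,e_{m}$ to be a Gram--Schmidt basis of the span of the Riesz representers $Q\phi_{1},\ldots,Q\phi_{m}$ and extend orthonormally. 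Let $\mu_{N}$ be the pushforward of the standard Gaussian on $\R^{N}$ under $(z_{1},\ldots,z_{N})\mapsto \sum_{j}z_{j}e_{j}$. Since $\mu_{N}$ is Gaussian and $\Phi$ is linear, the $L^{2}$-conditional expectation $v^{\star}(x):=\E_{\mu_{N}}[X\mid\Phi(X)=\Phi(x)]$ is a linear function of $\Phi(x)$ and hence belongs to $L(\Phi,\B)$; by the $L^{2}$-projection characterization of conditional expectation it is the global minimizer of $v\mapsto\int\|x-v(x)\|^{2}\,d\mu_{N}$ over the entire class $L(\Phi,\B)$, not only over linear $v$. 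Writing $M$ for the $m\times N$ matrix $M_{ij}:=[\phi_{i},e_{j}]$ (of rank $m$ by construction), the standard Gaussian conditioning formula then yields
$$ \int \|x\|^{2}\, d\mu_{N} \;=\; N \qquad\text{and}\qquad \min_{v}\int \|x-v(x)\|^{2}\, d\mu_{N} \;=\; \operatorname{tr}\bigl(I_{N}-M^{T}(MM^{T})^{-1}M\bigr) \;=\; N-m, $$
so $\inf_{v}\Psi(v,\mu_{N})=(N-m)/N\to 1$ as $N\to\infty$.

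The only delicate point is the identification of the Gaussian conditional expectation as the infimizer over all Borel-measurable selectors in $L(\Phi,\B)$ rather than only over linear ones, which follows from the $L^{2}$-projection characterization of conditional expectation together with the linearity of Gaussian regression. Once that is in hand, assembling the three inequalities above gives both displayed equalities in \eqref{minmax1} simultaneously, with no need for compactness of $\M_{2}(\B)$ or an appeal to Sion's theorem.
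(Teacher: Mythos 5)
Your upper bound ($v\equiv 0$ gives $\Psi(0,\mu)=1$) and your use of the classical weak minmax inequality match the paper. Your lower bound is where you genuinely diverge, and it is also where there is a gap.

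\textbf{The gap.} Your sequence $\mu_N$ of isotropic Gaussians on $N$-dimensional $\langle\cdot,\cdot\rangle$-orthonormal spans yields $\inf_v\Psi(v,\mu_N)=(N-m)/N$, which tends to $1$ only if $N$ can be taken to infinity, i.e.\ only if $\B$ is infinite-dimensional. The theorem's hypothesis is merely $\dim(\Phi)<\dim(\B)$, and the finite-dimensional case is genuinely in scope (the companion Theorem \ref{thm_saddle} treats it explicitly; if $\B$ were always infinite-dimensional the hypothesis would be vacuous). When $\dim\B=n<\infty$ your construction caps out at $(n-m)/n<1$ and the lower bound is not established. The fix is cheap and in the spirit of your own argument: since $\dim(Q\Phi)=m<\dim\B$, take $\mu$ to be any nontrivial centered Gaussian supported on $(Q\Phi)^{\perp}$ (in infinite dimensions, one with trace-class covariance). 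Then $[\phi_i,x]=\langle Q\phi_i,x\rangle=0$ $\mu$-a.s., so every $v\in L(\Phi,\B)$ is $\mu$-a.s.\ constant, the best constant is the mean $0$, and $\inf_v\Psi(v,\mu)=1$ exactly — no limit needed. The paper instead perturbs such a degenerate covariance, $S_\epsilon=S_0+\epsilon T$ with $\Phi\subset\ker S_0$, and invokes the Wasilkowski–Wo\'{z}niakowski average-case formula $\inf_v\Psi(v,\mu_{S_\epsilon})=1-O(\epsilon)$; the perturbation is there precisely because that formula requires the Gram matrix $\Theta^{S_\mu}$ to be invertible, which the degenerate measure violates.

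\textbf{What your route buys in the infinite-dimensional case.} There your argument is correct and self-contained: the identification of the Gaussian conditional expectation as the minimizer over all of $L(\Phi,\B)$ (not just linear rules) is the standard $L^2$-projection characterization, the isometry $z\mapsto\sum_j z_je_j$ makes the $\B$-norm of the residual equal to the Euclidean one, and $\operatorname{tr}\bigl(I_N-M^T(MM^T)^{-1}M\bigr)=N-m$ since your choice of $e_1,\dots,e_m$ guarantees $\operatorname{rank}(M)=m$. This replaces the paper's citation of the average-case optimality results of \cite{wasilkowski1986average} with an explicit finite-dimensional Gaussian conditioning computation, which is more elementary. But as written it proves a weaker theorem than the one stated; you must either add the degenerate-measure argument above or restrict to infinite-dimensional $\B$.
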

By the classical relationship between saddle points and worst case components of minmax problems,
a measure $\mu^{*} \in \mathcal{M}_{2}(\B)$ is a worst case measure if it is a component
of a saddle point $(v^{*},\mu^{*})$ for $\Psi$, that is we have
\[\Psi(v^{*},\mu) \leq \Psi(v^{*},\mu^{*}) \leq  \Psi(v,\mu^{*}),\quad v\in  L(\Phi,\B), \mu \in  \mathcal{M}_{2}(\B).\,\]

When $\B$ is infinite dimensional, it is straightforward to show that such saddle points do not exist in the class of countably additive measures.
 On the other hand, in that case, we now show that if we extend the notion of saddle point slightly,
then a Gaussian cylinder measure is not only a component of a saddle point, it is {\em computable} in the sense that we can  determine countably additive Gaussian measures which are components of approximate saddle points, which approximate it.
To that end, let $CM$ denote the space of cylinder
 measures on $\B$ and  let $\mathcal{F}(\B)$ be the set of continuous linear finite-rank projections on $\B$, and
  define the {\em weak cylinder measure topology} $\omega_{CM}$ by
saying that
 \[\mu_{n} \xrightarrow{\omega_{CM}} \mu\]
 if
\begin{equation}
\label{def_omegaCM}
F_{*}\mu_{n}  \xrightarrow{\omega} F_{*}\mu,\quad  F  \in \mathcal{F}(\B)\, .
\end{equation}
We have the following.
\begin{Proposition}
\label{prop_cmcomplete}
The space $(CM,\omega_{CM})$ is sequentially complete.
\end{Proposition}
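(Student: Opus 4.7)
The plan is to reduce sequential completeness of $(CM,\omega_{CM})$ to the well-known sequential completeness of weak convergence of Borel probability measures on finite-dimensional Euclidean spaces, and then to reassemble the resulting finite-dimensional weak limits into a single cylinder measure on $\B$ via a Kolmogorov-type consistency argument. Given a Cauchy sequence $(\mu_n)\subset CM$ in $\omega_{CM}$, for every $F\in\mathcal{F}(\B)$ the pushforwards $(F_*\mu_n)$ form a weakly Cauchy sequence of Borel probability measures on the finite-dimensional space $F(\B)\cong \R^{\operatorname{rank}(F)}$. Since the space of Borel probability measures on $\R^{d}$ is Polish under the Prokhorov metric, weakly Cauchy implies weakly convergent, and I define $\nu_F:=\lim_n F_*\mu_n$.

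Next I would verify consistency of the family $\{\nu_F\}_{F\in\mathcal{F}(\B)}$. Whenever $F_1,F_2\in\mathcal{F}(\B)$ satisfy $\ker F_2\subset\ker F_1$, there is a unique continuous linear $G\colon F_2(\B)\to F_1(\B)$ with $F_1=G\circ F_2$; by continuity of pushforward under $G$ and the definition of $\nu_F$,
\begin{equation*}
G_*\nu_{F_2}=G_*\lim_n F_{2,*}\mu_n=\lim_n (G\circ F_2)_*\mu_n=\lim_n F_{1,*}\mu_n=\nu_{F_1}.
\end{equation*}
Now an arbitrary continuous linear map $L\colon\B\to\R^n$ is determined by finitely many functionals $\phi_1,\ldots,\phi_n\in\B^*$, and in the Hilbert-space setting of the paper (the quadratic norm $\|u\|^2=[Q^{-1}u,u]$) $\B$ admits the orthogonal projection $F_L$ onto the finite-dimensional span of the Riesz representatives of $\phi_1,\ldots,\phi_n$; this $F_L$ lies in $\mathcal{F}(\B)$ and satisfies $L=L\circ F_L$, so I may define $L_*\mu:=L_*\nu_{F_L}$. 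The consistency just established shows this value is independent of the choice of $F\in\mathcal{F}(\B)$ factoring $L$.

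This enables me to define the candidate limit cylinder measure by $\mu\bigl(L^{-1}(B)\bigr):=L_*\mu(B)$ on any cylinder set. Well-definedness and finite additivity on the cylinder algebra follow by routine bookkeeping from the consistency of $\{\nu_F\}$, and by construction $L_*\mu=L_*\nu_{F_L}$ is a Borel probability measure on $\R^n$ for every continuous linear $L\colon \B\to\R^n$, so $\mu\in CM$. Finally, for every $F\in\mathcal{F}(\B)$, $F_*\mu=\nu_F=\lim_n F_*\mu_n$ weakly, which is precisely $\mu_n\xrightarrow{\omega_{CM}}\mu$.

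I expect the main technical difficulty to lie in the consistency step, specifically in ensuring that every continuous linear $L\colon\B\to\R^n$ factors through some $F\in\mathcal{F}(\B)$ so that the family $\{\nu_F\}_{F\in\mathcal{F}(\B)}$ actually determines $L_*\mu$ on \emph{all} cylinder sets (not only those cut out by finite-rank projections). In a general Banach space without the approximation property this step could fail, but in the Hilbert-space setting of Subsection~\ref{subsecttt} the required finite-rank orthogonal projections always exist, so the difficulty reduces to the standard Kolmogorov-type compatibility of projective limits of Borel probability measures on finite-dimensional spaces, for which tightness is automatic.
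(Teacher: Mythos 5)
Your proof is correct and follows essentially the same route as the paper: pass to the finite-dimensional marginals, use completeness of weak convergence of probability measures on $\R^{d}$ to obtain the limits $\nu_{F}$, and then deduce consistency of the limit family from the weak continuity of the pushforward operators $G_{*}$ associated with the continuous connecting maps, which is exactly what makes the family $\{\nu_{F}\}$ a cylinder measure. The extra bookkeeping you supply (factoring an arbitrary continuous linear $L:\B\to\R^{n}$ through a finite-rank orthogonal projection) is handled implicitly in the paper by its definition of a cylinder measure as a consistent family indexed by $\mathcal{F}(\B)$, and is unproblematic in the Hilbert-space setting at hand.
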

We say that a pair $(v^{*},\mu^{*}) \in  L(\Phi,\B) \times  \mathcal{M}_{2}(\B)$ is an $\epsilon$-saddle point
of $\Psi$ if
\[ \Psi(v^{*},\mu) -\epsilon \leq  \Psi(v^{*},\mu^{*}) \leq  \Psi(v,\mu^{*}) +\epsilon\, ,
\quad v \in   L(\Phi,\B),\mu
\in \mathcal{M}_{2}(\B)\, . \]

\begin{Definition}
\label{def_saddledef}
Consider a saddle function $\Psi: L(\Phi,\B) \times  \mathcal{M}_{2}(\B) \rightarrow \R$. We say that a pair
$(v^{*},\mu^{*}) \in  L(\Phi,\B) \times CM$ is a saddle point of $\Psi$ if there exists a sequence
$\mu^{*}_{n} \in \mathcal{M}_{2}(\B), n=1,\ldots $ such that
\[ \mu^{*}_{n} \xrightarrow{\omega_{CM}} \mu^{*}\, \] and
\[(v^{*},\mu_{n}^{*})\,\, \text{ is  a $\frac{1}{n}$-saddle point of $\Psi$}\]
for all $n$.
\end{Definition}

Our primary result in this section  asserts that the appropriately chosen Gaussian cylinder measure is a worst case measure for optimal recovery in infinite dimensions. To include the finite dimensional case recall
 the characterization of Anderson and Trapp  \cite[Thm.~6]{anderson1975shorted} of the short
 $ \Phi^{\perp}(Q)$ of the operator $Q$ to $\Phi^{\perp}$ defined by
\begin{equation}
\label{id_short}
  \bigl[ \Phi^{\perp}(Q)s,s\bigr]=\inf\Bigl\{\Bigl[ Q(s+t),(s+t)
  \Bigr], \, t \in \Phi  \Bigr\},\quad  s \in  \B^{*}\, .
\end{equation}

\begin{Theorem}
\label{thm_saddle}
Consider a separable  Hilbert space $\B$, with inner product defined by
$\langle u_{1},u_{2} \rangle:=[Q^{-1}u_{1},u_{2}]$ where $Q:\B^{*}\rightarrow \B$ is a continuous bijection.
Suppose that $dim(\Phi) < dim(\B)$.
 Let
$P_{Q\Phi}$ denote orthogonal projection onto $Q\Phi$.
 Consider the
short  $\Phi^{\perp}(Q)$ \eqref{id_short}  of the operator $Q$ to the subspace $(Q\Phi)^{\perp}$ and the corresponding Gaussian measure $\mu_{\Phi^{\perp}(Q)}$.
Then
the pair
\[(P_{Q\Phi},\mu_{\Phi^{\perp}(Q)})\]
 is a saddle point of
$\Psi$.
Moreover, in the infinite dimensional case,
 the approximating sequence can be chosen to be a sequence of classical Gaussian measures of probability on $\B$
(which define a sequence of Gaussian random vectors on $\B$).
\end{Theorem}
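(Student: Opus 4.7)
The plan is to establish the extended saddle-point property of Definition \ref{def_saddledef} by exhibiting an approximating sequence $\mu_n^*$ of genuine (finite-rank) centered Gaussian probability measures on $\B$ that are in fact \emph{exact} saddle points with $P_{Q\Phi}$, and then proving $\mu_n^*\xrightarrow{\omega_{CM}}\mu_{\Phi^\perp(Q)}$. The starting observation is that by the Anderson--Trapp characterization \eqref{id_short} together with symmetry of $Q$ and the identity $QP_{Q\Phi}^*=P_{Q\Phi}Q$ (a consequence of $P_{Q\Phi}$ being $\langle\cdot,\cdot\rangle$-self-adjoint),
\[
\Phi^\perp(Q) \;=\; (I-P_{Q\Phi})Q \;=\; (I-P_{Q\Phi})Q(I-P_{Q\Phi})^*,
\]
so a Gaussian field with covariance $\Phi^\perp(Q)$ is concentrated, in the cylinder sense, on $(Q\Phi)^\perp = \Ker(\Phi)\subset\B$. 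This concentration is what drives both saddle inequalities.

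Using separability of $\B$, I would pick an increasing sequence of finite-dimensional subspaces $V_n\subset (Q\Phi)^\perp$ with $\overline{\bigcup_n V_n}=(Q\Phi)^\perp$ and let $R_n:\B\to V_n$ denote the $\langle\cdot,\cdot\rangle$-orthogonal projection. Define $\mu_n^*:=(R_n)_*\mu_Q$ as the pushforward under $R_n$ of the cylinder Gaussian measure $\mu_Q=\N(0,Q)$. Because $R_n$ has finite-dimensional range, $\mu_n^*$ is a bona fide countably additive centered Gaussian probability measure on $\B$, supported in $V_n\subset (Q\Phi)^\perp$, with covariance operator $R_nQR_n^*$ and strictly positive finite second moment (for $V_n$ nontrivial, which is ensured by the hypothesis $\dim(\Phi)<\dim(\B)$).

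Verifying that $(P_{Q\Phi},\mu_n^*)$ is an exact saddle point is then short. For the right-hand inequality, any $v\in L(\Phi,\B)$ has the form $v=\psi\circ\Phi$ for some Borel $\psi$; since $\mu_n^*$ is supported in $V_n\subset\Ker(\Phi)$, we have $v(x)=\psi(0)=:c_v$ constantly on the support, and centering of $\mu_n^*$ gives
\[
\int\|x-c_v\|^2\,d\mu_n^*(x) \;=\; \int\|x\|^2\,d\mu_n^*(x) + \|c_v\|^2,
\]
so $\Psi(v,\mu_n^*)\geq 1$, with equality at $v=P_{Q\Phi}$ because $P_{Q\Phi}x=0$ on $V_n$. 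For the left-hand inequality, the pointwise bound $\|(I-P_{Q\Phi})x\|^2\leq \|x\|^2$ yields $\Psi(P_{Q\Phi},\mu)\leq 1=\Psi(P_{Q\Phi},\mu_n^*)$ for every $\mu\in\mathcal{M}_2(\B)$. Both inequalities hold with zero slack, so $(P_{Q\Phi},\mu_n^*)$ is a fortiori a $\tfrac{1}{n}$-saddle point; this simultaneously settles the final assertion that the approximants may be taken as classical countably additive Gaussian measures on $\B$.

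The remaining step, which I expect to be the main technical point, is the convergence $\mu_n^*\xrightarrow{\omega_{CM}}\mu_{\Phi^\perp(Q)}$. For any continuous finite-rank projection $F:\B\to\B$, both $F_*\mu_n^*$ and $F_*\mu_{\Phi^\perp(Q)}$ are centered Gaussian measures on the finite-dimensional space $F\B$, so weak convergence reduces to entrywise convergence of their covariance forms, namely
\[
[\varphi, F R_n Q R_n^* F^*\varphi] \;=\; \|R_n^* F^*\varphi\|_*^2 \;\longrightarrow\; \|(I-P_{Q\Phi})^* F^*\varphi\|_*^2 \;=\; [\varphi, F\Phi^\perp(Q) F^*\varphi]
\]
for $\varphi$ ranging over $(F\B)^*$. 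This follows from the strong convergence $R_n\to I-P_{Q\Phi}$ on $\B$ (immediate from $V_n\uparrow (Q\Phi)^\perp$) combined with the Banach adjoint identity $R_n^*=Q^{-1}R_nQ$, which transfers strong convergence to $\B^*$; continuity of $\|\cdot\|_*$ then gives the diagonal convergence displayed above, and polarization handles the cross terms.
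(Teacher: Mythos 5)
Your proof is correct, but it takes a genuinely different route from the paper's. For the approximating sequence, the paper takes $\mu_{S_n}$ with $S_n=\Phi^{\perp}(Q_n)$ for trace-class $Q_n$ increasing strongly to $Q$, whereas you push $\mu_Q$ forward under orthogonal projections $R_n$ onto an exhausting sequence of finite-dimensional subspaces $V_n\subset (Q\Phi)^{\perp}$; both families converge to $\mu_{\Phi^{\perp}(Q)}$ in $\omega_{CM}$ by the same mechanism (covariance convergence of finite-dimensional marginals plus L\'evy continuity, as in Theorem \ref{thm_gaussweak}). The more substantive divergence is in the right-hand saddle inequality: the paper imports Wasilkowski--Wo\'zniakowski's average-case optimality of the spline algorithm together with the Householder-reflection invariance of $\mu_Q$ under $2P_{Q\Phi}-I$ (and, in infinite dimensions, the optimality of $P_{Q_n\Phi}$ for $\mu_{S_n}$ followed by a limiting argument that only yields $\epsilon$-saddle points), whereas you observe directly that any $v\in L(\Phi,\B)$ is constant on $\Ker(\Phi)=(Q\Phi)^{\perp}$ and that centering then forces $\Psi(v,\mu_n^*)\geq 1=\Psi(P_{Q\Phi},\mu_n^*)$. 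This makes your $(P_{Q\Phi},\mu_n^*)$ \emph{exact} saddle points, removes the dependence on the external optimality theorem, and handles the finite-dimensional case as the degenerate instance $V_n=(Q\Phi)^{\perp}$, at the cost of having to verify the identity $\Phi^{\perp}(Q)=(I-P_{Q\Phi})Q(I-P_{Q\Phi})^{*}$ from \eqref{id_short} and the adjoint transfer $R_n^{*}=Q^{-1}R_nQ$ — both of which you do correctly and which the paper itself records elsewhere (Proposition \ref{prop_Gambletprojection} and Subsection \ref{subseccoverror}). The paper's route, in exchange, makes the connection to the average-case/IBC literature explicit, which is part of the expository point of Section \ref{sec_correspondence}.
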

\begin{Remark}
\label{rmk_iubibu}
It is interesting to note that in the infinite dimensional case one can also prove that the Gaussian cylinder measure, without conditioning, is also a worst case measure. That is,
\[(P_{Q\Phi},\mu_{Q})\]
 is a saddle point of
$\Psi$.
\end{Remark}

\begin{Remark}[{\bf  Universal Worst Case Measure}]
\label{rm_universalcomputablewc}
Theorem \ref{thm_saddle} implies, for a set $\dot{\Phi}$
 of observation functions,
that the cylinder measure $\mu_{\Phi^{\perp}(Q)}$
 is a worst case measure.
 Consequently, we say that $\mu_{Q}$ is a universal measure, in that it generates
the worst case measure $\mu_{\Phi^{\perp}(Q)}$ through the shorting operation.
\end{Remark}
\begin{Remark}
Recently, \cite{OwhadiScovelSchur} have  established that, when $S$ positive trace class, that
 the short $\Phi^{\perp}(S)$ is the covariance operator associated with conditioning the
Gaussian measure $\mu_{S}$ on the subspace $S\Phi$. Extending this result to cylinder measures,
 one obtains that
 the worst case (cylinder) measure $\mu_{\Phi^{\perp}(Q)}$ then has the interpretation of being obtained by conditioning the universal measure $\mu_{Q}$.
   Moreover, in infinite dimensions,   Remark \ref{rmk_iubibu} asserts that
the Gaussian cylinder measure $\mu_{Q}$,  without conditioning, is also a worst case measure. Since
 Theorem \ref{thm_saddle} also asserts that $\mu_{Q}$ is the limit of approximate saddle points consisting of
countably additive (classical) Gaussian measures,
 this implies that  $\mu_{Q}$ is a {\bf computable} universal worst case measure.
Finally, the connection between the shorted operator and the conditional measures implies
 the following addendum to Theorem  \ref{thmdlkdjh3e}:
The optimal   strategy  of \eqref{eqdkjdhkjhffORgame} for Player II is the pure strategy
\begin{equation}\label{eqbhbdhbdjhb3eor2}
u^\two(u)=\E_{\xi \sim \N(0,Q)}\big[\xi \mid \text{$[\phi_i,\xi]=[\phi_i,u]$ for $i=1,\ldots,m$}\big]\,,\quad
u\in \B
\end{equation}
 corresponding to the worst case mixed strategy
\begin{equation}
\label{eq_yuvuyvuyv}
 u^{\one}\sim \xi -\E_{\xi \sim \N(0,Q)}{[\xi \mid [\phi_{i},\xi],  i \in \mathcal{I}]}
\end{equation}
 of player $I$.
\end{Remark}

\section{Exponential decay and localization of gamblets}\label{secexpdecloc}
Although the analysis of the exponential decay and localization of gamblets could be restricted to the discrete case (i.e. linear algebra with finite dimensional matrices), we will also perform this analysis in the continuous case and show that localization can be expressed as a property of the image (or dual) space that can be pulled back to a property of the solution space via the continuity of the operator.
The characterization of the exponential decay of gamblets is relative to a notion of \emph{physical} distance
  that is distinct from the norm $\|\cdot\|$ of the Banach space $\B$ under consideration, but more closely related to the metric structure of its domain when it is a function space over that domain.
Although, in general,
an arbitrary space $\B$  does not possess a natural \emph{physical} distance, in this section we demonstrate how
such a notion  emerges from a subspace decomposition of $\B$ in a way that generalizes
 the domain decomposition in the computation of PDEs.

\subsection{Subspace decomposition}\label{subsecejhdg9877eg8e}
In a first step we will, in this subsection, provide localization results based on a generalization of the subspace iteration method (and related conditions) introduced in \cite{KornhuberYserentant16, KornhuberYserentant16bis}. As in \cite{KornhuberYserentant16, KornhuberYserentant16bis}, this approach is analogous to a  Schwarz subspace decomposition and correction method \cite{xu1992iterative, griebel1995abstract}.

For $|\aleph|,|\beth| \in \mathbb{N}^{*}$, consider index sets
$\aleph:=\{1,\ldots,|\aleph|\}$  and $\beth:= \{1,\ldots,|\beth|\}$, and let
 $(\phi_{i,\alpha})_{(i,\alpha)\in \beth \times \aleph}$ be $|\beth|\times |\aleph|$ linearly independent elements of $\B^*$.  Throughout this section, all internal sums will be {\em non-direct} in that the components in the sum may have nontrivial intersection.

\begin{Construction}\label{constlocop}
For all $i\in \beth$
 let $\B_{i} \subset \B$ be a closed subspace
 such that (1) $\B=\B_{1}+\cdots+\B_{|\beth|}$ (2) For each $(i,\alpha) \in \beth \times \aleph$, there exists  $\tilde{\psi}_{i,\alpha}\in \B_{i}$ such that $[\phi_{j,\beta},\tilde{\psi}_{i,\alpha}]=\delta_{i,j}\delta_{\alpha,\beta}$ for $(j,\beta)\in \beth \times \aleph$. Equip each of these subspaces
  $\B_{i}$  with the norm $\|\cdot\|_{i}$ induced by $\|\cdot\|$.
\end{Construction}
Item (2) of Construction \ref{constlocop} ensures that there exists an element $\psi$ in the localized subspace $\B_{i}$ satisfying the constraints imposed by the measurement functions $(\phi_{i,\alpha})_{(i,\alpha)\in \beth \times \aleph}$ appearing in the variational formulation, derived from Definition \eqref{eq:dfddeytfewdaisq},
 of gamblets that we will use in this section. In particular,
 this property  implies that \eqref{eqhihdjkjhiudiduh} below has a solution for $n=0$.

Let $\V^\perp:=\{\psi \in \B\mid [\phi_{i,\alpha},\psi]=0\text{ for }(i,\alpha)\in \beth\times \aleph\}$ and
for $i\in \beth$ write
$\V^\perp_i:=\B_{i} \cap \V^\perp$.
For $i\in \beth$, let $P_{i}$ be the $\<\cdot,\cdot\>$-orthogonal projection mapping $\B$ onto $\V^{\perp}_i$, i.e. for $\psi \in \B$, $P_{i} \psi$ is the unique element of $\V^{\perp}_i$ such that
\begin{equation}
\<P_{i} \psi,\chi\>=\<\psi,\chi\>\text{ for }\chi\in \V^{\perp}_i\, .
\end{equation}
Write
\begin{equation}\label{eqP}
P:=P_{1}+\cdots+P_{|\beth|}
\end{equation}
and define $\lambda_{\min}(P)$  and $\lambda_{\max}(P)$ (respectively) as the largest and smallest constants such that for all $\chi\in \V^\perp$
\begin{equation}\label{eqkdjdjhdj}
\lambda_{\min}(P)\, \|\chi\|^2 \leq \<\chi, P \chi \> \leq \lambda_{\max}(P)\, \|\chi\|^2\,,
\end{equation}
and denote the condition number of $P$ by
\begin{equation}
\Cond(P):=\frac{\lambda_{\max}(P)}{\lambda_{\min}(P)}\,.
\end{equation}

\begin{Lemma}\label{lemequivpv}  $P$ restricted to $\V^\perp$ is  a symmetric linear operator
$P:\V^\perp \rightarrow \V^\perp $.  Furthermore $\V^\perp=\sum_{i\in \beth} \V_i^\perp$
 is equivalent to $\lambda_{\min}(P)>0$ and also equivalent to the bijectivity of $P$.
\end{Lemma}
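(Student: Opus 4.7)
The plan is to first dispense with the easy structural claim (that $P$ restricts to a symmetric operator on $\V^\perp$), and then establish the three-way equivalence by a cyclic argument, with the main technical step being an open-mapping / stable-decomposition argument in the direction $\V^\perp=\sum_i\V_i^\perp\Rightarrow \lambda_{\min}(P)>0$.

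First I would observe that $\V^\perp$ is closed in $\B$ (as the intersection of the kernels of the continuous functionals $\phi_{i,\alpha}$) and each $\V_i^\perp=\B_i\cap\V^\perp$ is closed, so all of these are Hilbert subspaces of $(\B,\<\cdot,\cdot\>)$. Since $P_i$ is the $\<\cdot,\cdot\>$-orthogonal projection onto $\V_i^\perp\subset\V^\perp$, each $P_i$ is self-adjoint and maps $\B$ into $\V^\perp$; hence $P=\sum_i P_i$ maps $\B$ into $\V^\perp$ and, in particular, restricts to an operator $\V^\perp\to\V^\perp$. Symmetry on $\V^\perp$ follows from $\<P_i\chi,\psi\>=\<\chi,P_i\psi\>$ summed over $i$.

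Next I would prove the equivalence by the chain (a) $\V^\perp=\sum_i\V_i^\perp$ $\Rightarrow$ (b) $\lambda_{\min}(P)>0$ $\Rightarrow$ (c) $P$ is bijective on $\V^\perp$ $\Rightarrow$ (a). For (b)$\Rightarrow$(c): $P$ is bounded and self-adjoint on the Hilbert space $\V^\perp$, so the coercivity bound $\<\chi,P\chi\>\geq \lambda_{\min}(P)\|\chi\|^2$ together with Lax--Milgram (or equivalently the closed-range theorem applied to a bounded-below self-adjoint operator) yields bijectivity. For (c)$\Rightarrow$(a): the range of $P_i$ is $\V_i^\perp$, so $\mathrm{range}(P)\subseteq\sum_i\V_i^\perp\subseteq\V^\perp$, and surjectivity of $P$ onto $\V^\perp$ forces equality.

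The hard part is (a)$\Rightarrow$(b). My plan is to introduce the bounded linear map
\[
T:\prod_{i\in\beth}\V_i^\perp \longrightarrow \V^\perp,\qquad T(v_1,\ldots,v_{|\beth|}):=\sum_{i\in\beth} v_i,
\]
where the product space is equipped with the Hilbert norm $\|(v_i)\|^2:=\sum_i\|v_i\|^2$. Under assumption (a), $T$ is a bounded surjection between Hilbert spaces, so the open mapping theorem provides a constant $C>0$ such that every $v\in\V^\perp$ admits a decomposition $v=\sum_i v_i$ with $v_i\in\V_i^\perp$ and $\sum_i\|v_i\|^2\leq C\|v\|^2$. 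Using $P_iv_i=v_i$ and self-adjointness of $P_i$, I would compute
\[
\|v\|^2=\sum_i\<v,v_i\>=\sum_i\<P_iv,v_i\>\leq \Bigl(\sum_i\|P_iv\|^2\Bigr)^{1/2}\Bigl(\sum_i\|v_i\|^2\Bigr)^{1/2},
\]
and then, using $\|P_iv\|^2=\<P_iv,v\>$, recognize $\sum_i\|P_iv\|^2=\<Pv,v\>$. Combining, $\|v\|^2\leq \sqrt{C}\sqrt{\<Pv,v\>}\|v\|$, giving $\<Pv,v\>\geq \|v\|^2/C$ and thus $\lambda_{\min}(P)\geq 1/C>0$.

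The only real obstacle is the (a)$\Rightarrow$(b) direction: one must be careful that the product $\prod_i\V_i^\perp$ really is a Hilbert space (which it is, since each $\V_i^\perp$ is closed and $|\beth|$ is finite) so that the open mapping theorem applies, and that the resulting decomposition constant $C$ depends only on the subspace geometry. Everything else is a routine consequence of self-adjointness and the closed range / Lax--Milgram theorems.
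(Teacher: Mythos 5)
Your proof is correct and takes essentially the same route as the paper's: both reduce the key implication $\V^\perp=\sum_{i\in\beth}\V_i^\perp\Rightarrow\lambda_{\min}(P)>0$ to a Banach isomorphism theorem applied to the summation map from the external direct sum $\oplus_{i}\V_i^\perp$ onto $\V^\perp$ (the paper, via Feshchenko's lemma, applies the closed range theorem to that map $\bar{A}$ and uses $\bar{A}\bar{A}^{*}=P$, whereas you apply the open mapping theorem to extract a stable decomposition and then conclude by Cauchy--Schwarz, which is precisely the quantitative argument the paper reuses to show $K_{\min}\le\lambda_{\min}(P)$). The remaining implications (coercive self-adjoint $\Rightarrow$ bijective, and bijective $\Rightarrow$ the sum decomposition via $\operatorname{range}(P)\subseteq\sum_i\V_i^\perp$) likewise match the paper's use of the closed range theorem.
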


In Lemma \ref{lemdeiudygddf} of  Subsection \ref{subsecdklejhd78} below, we will provide simple and natural conditions that are equivalent to the following condition in terms of the alignment of the measurement functions with the subspaces $\B_{i}$.
\begin{Condition}\label{confviperp}
Assume that  $\lambda_{\min}(P)>0$\,.
\end{Condition}

Let $\C$ be the $\beth\times \beth$ connectivity matrix defined by $\C_{i,j}=1$ if there exists $(\chi_i,\chi_j) \in \B_{i} \times \B_{j}$ such that $\<\chi_j,\chi_i\>\not=0$ and $\C_{i,j}=0$ otherwise.
Let $\db:=\db^{\C}$ be the graph distance on $\beth$ induced by the connectivity matrix $\C$ (see Definition \ref{defgraphmatdistbis}).
Let $(\psi_{i,\alpha})_{(i,\alpha)\in \beth \times \aleph}$ be the gamblets
 (per Definition \eqref{eq:dfddeytfewdaisq}) corresponding to $(\phi_{i,\alpha})_{(i,\alpha)\in \beth \times \aleph}$, i.e. for $(i,\alpha)\in \beth \times \aleph$,  $\psi_{i,\aleph}$ is the minimizer of $\|v\|$  over $v\in \B$ such that $[\phi_{j,\beta},v]=\delta_{i,j}\delta_{\alpha,\beta}$ for $(j,\alpha)\in \beth\times \aleph$.

Let us now  widen each $\B_{i}$ to a $\B_{i}^{n}$ for each $n\in \mathbb{N}$,
 by including its neighbors in a ball of radius $n$  in the graph distance $\db$, by  defining $\B_{i}^{n}:=\sum_{j:\db(i,j)\leq n}\B_{j}$. For each $n\in \mathbb{N}$, we now define some modified gamblets using these widened spaces  $\B_{i}^{n}$ as follows:
for $(i,\alpha)\in \beth \times \aleph$, let $\psi_{i,\alpha}^n$ be the unique minimizer of
\begin{equation}\label{eqhihdjkjhiudiduh}
\begin{cases}
\text{Minimize }\|\psi\|\\
\text{Subject to }\psi\in \B_{i}^{n} \text{ and }[\phi_{j,\beta},\psi]=\delta_{i,j}\delta_{\alpha,\beta}\text{ for }(j,\beta)\in \beth\times \aleph\,.
\end{cases}
\end{equation}
The following theorem shows that if $\Cond(P)<\infty$ then difference between $\psi_{i,\alpha}$  and
 $\psi_{i,\alpha}^n$ decays exponentially in $n$ so that the computation of $\psi_{i,\alpha}$  can be localized.
\begin{Theorem}\label{thmswkskjsh}
Under Condition \ref{confviperp}, it holds true that  $\|\psi_{i,\alpha}-\psi_{i,\alpha}^n\| \leq \big(\frac{\Cond(P)-1}{\Cond(P)+1}\big)^n \|\psi_{i,\alpha}^0\|$ for $n\geq 0$.
\end{Theorem}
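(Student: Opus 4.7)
The plan is to recast the theorem as a quantitative bound for iterative approximation of $y := Q_{\V^\perp}\psi_{i,\alpha}^0$ by elements of the nested subspaces $V_n := \B_i^n \cap \V^\perp$, where $Q_W$ denotes the $\<\cdot,\cdot\>$-orthogonal projection onto a closed subspace $W\subset \B$. Since $\psi_{i,\alpha}$ and $\psi_{i,\alpha}^n$ are the minimum-$\|\cdot\|$ elements of their respective affine constraint sets and $\psi_{i,\alpha}^0 \in \B_i$ is a particular solution, one has
\[
\psi_{i,\alpha} = \psi_{i,\alpha}^0 - Q_{\V^\perp}\psi_{i,\alpha}^0,\qquad \psi_{i,\alpha}^n = \psi_{i,\alpha}^0 - Q_{V_n}\psi_{i,\alpha}^0.
\]
Using $\psi_{i,\alpha}\perp V_n$ to rewrite $Q_{V_n}\psi_{i,\alpha}^0 = Q_{V_n}y$ gives $\psi_{i,\alpha} - \psi_{i,\alpha}^n = Q_{V_n}y - y$, which together with $\|y\|\leq \|\psi_{i,\alpha}^0\|$ reduces the theorem to the bound $\|y - Q_{V_n}y\| \leq \rho^n \|y\|$, where $\rho := (\Cond(P)-1)/(\Cond(P)+1)$.

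By the best-approximation property of $Q_{V_n}$ it suffices to exhibit, for each $n$, some $y^n \in V_n$ with $\|y - y^n\| \leq \rho^n \|y\|$. I would construct it via the preconditioned Richardson iteration in $\V^\perp$,
\[
y^0 := 0,\qquad y^{k+1} := y^k + \tau\,P(y - y^k),\qquad \tau := \frac{2}{\lambda_{\min}(P)+\lambda_{\max}(P)},
\]
which is implementable from $\psi_{i,\alpha}^0$ alone because each $P_j$ depends only on the pairings $\<y - y^k,\chi\> = \<\psi_{i,\alpha}^0 - y^k,\chi\>$ for $\chi \in \V_j^\perp \subset \V^\perp$. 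By Lemma \ref{lemequivpv} together with \eqref{eqkdjdjhdj}, $P|_{\V^\perp}$ is a bounded, self-adjoint, positive operator with spectrum contained in $[\lambda_{\min}(P),\lambda_{\max}(P)]$, so $I-\tau P$ has operator norm $\rho$ on $\V^\perp$ for this choice of $\tau$. Hence the residual $r^k := y - y^k$ satisfies $r^{k+1} = (I-\tau P)r^k$ and $\|r^k\|\leq \rho^k\|y\|$.

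The decisive step, and the main technical point, is an induction showing $y^k \in V_k$; this is what converts the contraction into a localization statement. The key observation is that the graph distance $\db$ dictates how a single application of $P$ enlarges supports by exactly one unit: for any $v = \sum_{m:\,\db(i,m)\leq k} v_m$ with $v_m \in \B_m$ and any $j$ with $\db(i,j)\geq k+2$, the triangle inequality forces $\db(m,j)\geq 2$ for every relevant $m$, so $\C_{m,j}=0$ and $\<v_m,\chi\>=0$ for every $\chi\in \B_j$, giving $P_j v = 0$. Applied to $v = y$ (whose pairings against $\V^\perp$ agree with those of $\psi_{i,\alpha}^0\in\B_i$ and therefore vanish on $\V_j^\perp$ as soon as $\db(i,j)\geq 2$) this yields $Py\in V_1$; applied to $v = y^k$ it yields $Py^k \in V_{k+1}$. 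Combined with $V_k\subset V_{k+1}$, one concludes $y^{k+1} = y^k + \tau(Py - Py^k) \in V_{k+1}$, closing the induction. The chain
\[
\|\psi_{i,\alpha}-\psi_{i,\alpha}^n\| = \|y - Q_{V_n}y\| \leq \|y - y^n\| \leq \rho^n\|y\| \leq \rho^n\|\psi_{i,\alpha}^0\|
\]
then completes the proof; I expect the only delicate bookkeeping to be in the support-propagation argument, which must track both the known element $y$ and the unknown iterates $y^k$ through the nested subspaces.
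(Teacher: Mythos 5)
Your proof is correct and follows essentially the same route as the paper: the same preconditioned Richardson/subspace-correction iteration with relaxation parameter $2/(\lambda_{\min}(P)+\lambda_{\max}(P))$, the same contraction factor $\|I-\tau P\|\leq(\Cond(P)-1)/(\Cond(P)+1)$ on $\V^\perp$, and the same one-unit-per-step support propagation governed by the graph distance $\db$, with the final comparison to $\psi_{i,\alpha}^n$ via the best-approximation property of the localized minimizer (the paper phrases this last step through Pythagorean identities rather than $Q_{V_n}$, but it is the same fact). Your explicit induction showing $y^k\in V_k$ is a more detailed account of what the paper compresses into ``by the definition of the distance $\db$ we have $\chi_{i,\alpha,n}\in\B_i^n$.''
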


Let $A$ be the $(\beth \times \aleph)\times (\beth \times \aleph)$ stiffness matrix defined by $A_{(i,\alpha),(j,\beta)}=\<\psi_{i,\alpha},\psi_{j,\beta}\>$. The following theorem shows that if $\Cond(P)<\infty$ then $A$ decays exponentially away from its diagonal (which will provide sufficient bounds on approximation errors introduced by  truncating $A$).
\begin{Theorem}\label{thmhgguyg65OR}
Under Condition \ref{confviperp}, it holds true that
\begin{equation}
|A_{(i,\alpha),(j,\beta)}|\leq \|\psi_{i,\alpha}^0\|\|\psi_{j,\beta}^0\| \big(\frac{\Cond(P)-1}{\Cond(P)+1}\big)^{\frac{\db(i,j)}{2}-1}\,
\end{equation}
for all  $(i,\alpha),(j,\beta)\in \beth \times \aleph$.
\end{Theorem}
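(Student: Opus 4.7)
The plan is to combine two elementary orthogonality identities with the exponential decay supplied by Theorem \ref{thmswkskjsh}, and to choose the localization radii so that the localized gamblets become mutually $\<\cdot,\cdot\>$--orthogonal.

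First I would record the two facts that underpin the argument. By first--order optimality for the problem defining $\psi_{j,\beta}$ (minimizing $\|\cdot\|$ subject to $[\phi_{l,\gamma},\psi]=\delta_{j,l}\delta_{\beta,\gamma}$), the gamblet $\psi_{j,\beta}$ is $\<\cdot,\cdot\>$--orthogonal to $\V^\perp$, and the same holds for $\psi_{i,\alpha}$. Second, the residuals $r_i:=\psi_{i,\alpha}-\psi_{i,\alpha}^n$ and $r_j:=\psi_{j,\beta}-\psi_{j,\beta}^m$ themselves lie in $\V^\perp$, since the two terms in each difference satisfy the same set of linear constraints.

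Next I would expand the inner product. Writing $\psi_{i,\alpha}=\psi_{i,\alpha}^n+r_i$ and $\psi_{j,\beta}=\psi_{j,\beta}^m+r_j$ and using $\<\psi_{i,\alpha},r_j\>=0$ together with $\<r_i,\psi_{j,\beta}\>=0$ produces the identity
\begin{equation*}
A_{(i,\alpha),(j,\beta)}=\<\psi_{i,\alpha}^n,\psi_{j,\beta}^m\>-\<r_i,r_j\>.
\end{equation*}
To eliminate the first term I would pick $n,m\in\mathbb{N}$ with $n+m\leq \db(i,j)-2$. Indeed $\B_i^n$ and $\B_j^m$ are sums of $\B_k$'s and $\B_l$'s with $\db(i,k)\leq n$ and $\db(j,l)\leq m$; by the triangle inequality any such pair satisfies $\db(k,l)\geq \db(i,j)-n-m\geq 2$, so $\C_{k,l}=0$, and hence $\B_k$ and $\B_l$ are $\<\cdot,\cdot\>$--orthogonal by the very definition of $\C$. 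Summing over all relevant pairs yields $\<\psi_{i,\alpha}^n,\psi_{j,\beta}^m\>=0$.

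It then remains to bound $|\<r_i,r_j\>|$. Cauchy--Schwarz together with Theorem \ref{thmswkskjsh} gives $|\<r_i,r_j\>|\leq \rho^{n+m}\|\psi_{i,\alpha}^0\|\|\psi_{j,\beta}^0\|$, where $\rho:=(\Cond(P)-1)/(\Cond(P)+1)$. Choosing any balanced split with $n+m=\db(i,j)-2$ gives $\rho^{\db(i,j)-2}\|\psi_{i,\alpha}^0\|\|\psi_{j,\beta}^0\|$, which since $\rho<1$ is bounded by the asserted $\rho^{\db(i,j)/2-1}\|\psi_{i,\alpha}^0\|\|\psi_{j,\beta}^0\|$ whenever $\db(i,j)\geq 2$; the remaining regime $\db(i,j)\in\{0,1\}$ is handled trivially by Cauchy--Schwarz, since $\rho^{\db(i,j)/2-1}\geq 1$ there while $\|\psi_{i,\alpha}\|\leq \|\psi_{i,\alpha}^0\|$ by optimality. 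I foresee no substantive obstacle: the decisive structural point is the algebraic identity expressing $A_{(i,\alpha),(j,\beta)}$ as the difference of a \emph{distant} inner product (which vanishes by the connectivity orthogonality) and a \emph{residual} inner product (controlled by Theorem \ref{thmswkskjsh}).
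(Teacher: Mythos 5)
Your proof is correct, and while it rests on the same two pillars as the paper's argument (the vanishing of the inner product of the two localized gamblets once their supports are separated by the graph distance, plus the exponential decay of the residuals from Theorem \ref{thmswkskjsh}), your algebraic decomposition is genuinely different and in fact sharper. The paper telescopes as
$\<\psi_{i,\alpha},\psi_{j,\beta}\>=\<\psi_{i,\alpha}-\psi_{i,\alpha}^n,\psi_{j,\beta}\>+\<\psi_{i,\alpha}^n,\psi_{j,\beta}^{n}\>+\<\psi_{i,\alpha}^n,\psi_{j,\beta}-\psi_{j,\beta}^{n}\>$,
kills the middle term for $2n<\db(i,j)$, and bounds the two remaining terms separately, each contributing one factor of $\rho^{n}$ with $\rho:=\frac{\Cond(P)-1}{\Cond(P)+1}$ (strictly speaking this produces $2\rho^{\db(i,j)/2-1}$, and the factor $2$ is silently dropped). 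You instead invoke the Galerkin orthogonality $\<\psi_{i,\alpha},\V^\perp\>=0$ together with $r_i,r_j\in\V^\perp$ to collapse the cross terms into the single identity $A_{(i,\alpha),(j,\beta)}=\<\psi_{i,\alpha}^n,\psi_{j,\beta}^m\>-\<r_i,r_j\>$, so that both residual factors multiply and you land at $\rho^{\db(i,j)-2}$, which implies the stated $\rho^{\db(i,j)/2-1}$ (and does so without any extra constant). You are also more careful than the paper about the regimes $\db(i,j)\in\{0,1\}$, where the paper's choice $n=\db(i,j)/2-1$ is not meaningful and the bound must be, as you note, read off directly from Cauchy--Schwarz and $\|\psi_{i,\alpha}\|\leq\|\psi_{i,\alpha}^0\|$. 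The only point worth making explicit is that the existence of the localized minimizers $\psi_{i,\alpha}^n$ for all $n\geq 0$ is guaranteed by item (2) of Construction \ref{constlocop}; with that noted, the argument is complete.
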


Write
\begin{equation}
n_{\max}=\max_{i\in \beth} \operatorname{Card}\{j \in \beth\mid \db(i,j)\leq 1\}
\end{equation}
for the maximum number of elements of a  $\db$ ball of radius one.
Moreover, let
 $K_{\max}$ be the smallest constant such that
\begin{equation}
\|\chi\|^2 \leq K_{\max}\sum_{i\in \beth} \|\chi_i\|^2
\end{equation}
for $\chi=\sum_{i\in \beth} \chi_i$ with  $\chi_i\in \V_{i}^\perp, i \in \beth$.
Similarly,  define $K_{\min}$ as the largest constant such that, for all $\chi\in \V^\perp$, there exists a decomposition $\chi=\sum_{i\in \beth}\chi_i$ with $\chi_i\in \V_i^\perp, i \in \beth$ such that
\begin{equation}\label{eqlkjdhlkdhd}
K_{\min}\sum_{i\in \beth} \|\chi_i\|^2 \leq \|\chi\|^2\,.
\end{equation}

The strategy introduced in \cite{KornhuberYserentant16, KornhuberYserentant16bis} is to bound $\Cond(P)$ by $K_{\max}/K_{\min}$ as described by the following lemma, which is a simple generalization of Lemma 3.1 of \cite{KornhuberYserentant16}.
\begin{Lemma}\label{lemdkjdhjh3e}
It holds true that $ K_{\min}\leq \lambda_{\min}(P)$ and $\lambda_{\max}(P) \leq K_{\max}\leq n_{\max}$.
\end{Lemma}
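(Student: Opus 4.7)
The plan is to establish the three inequalities $K_{\min}\leq \lambda_{\min}(P)$, $\lambda_{\max}(P)\leq K_{\max}$, and $K_{\max}\leq n_{\max}$ separately; the first two are pure Hilbert-space identities using that each $P_i$ is an orthogonal projection onto $\V_i^{\perp}$, and the third is a combinatorial Cauchy–Schwarz argument using the connectivity matrix $\C$. None of the three requires delicate analysis; the main thing to be careful about is invoking the $\<\cdot,\cdot\>$-orthogonality $\<P_i\chi,\eta\>=\<\chi,\eta\>$ for $\eta\in \V_i^{\perp}$ in the right direction.

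For $\lambda_{\max}(P)\leq K_{\max}$: I would start from $\<P\chi,\chi\>=\sum_{i}\<P_i\chi,\chi\>=\sum_{i}\|P_i\chi\|^2$, which holds because $P_i$ is an orthogonal projection onto $\V_i^{\perp}$. Then the key observation is that $P\chi=\sum_{i}P_i\chi$ is itself a valid admissible decomposition of $P\chi$ into elements $P_i\chi\in \V_i^{\perp}$, so by the definition of $K_{\max}$, $\|P\chi\|^2\leq K_{\max}\sum_{i}\|P_i\chi\|^2=K_{\max}\<P\chi,\chi\>\leq K_{\max}\|P\chi\|\,\|\chi\|$, yielding $\|P\chi\|\leq K_{\max}\|\chi\|$ and hence the bound.

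For $K_{\min}\leq \lambda_{\min}(P)$: Given $\chi\in \V^{\perp}$ and any admissible decomposition $\chi=\sum_{i}\chi_i$ with $\chi_i\in \V_i^{\perp}$, I use $\<\chi_i,\chi\>=\<\chi_i,P_i\chi\>$ (again because $P_i$ is the orthogonal projection onto $\V_i^{\perp}$ and $\chi_i\in \V_i^{\perp}$) together with Cauchy–Schwarz applied twice, first within each term and then across $i$, to obtain $\|\chi\|^2=\sum_i\<\chi_i,P_i\chi\>\leq \bigl(\sum_i\|\chi_i\|^2\bigr)^{1/2}\bigl(\sum_i\|P_i\chi\|^2\bigr)^{1/2}=\bigl(\sum_i\|\chi_i\|^2\bigr)^{1/2}\<P\chi,\chi\>^{1/2}$. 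Squaring gives $\|\chi\|^4\leq \bigl(\sum_i\|\chi_i\|^2\bigr)\<P\chi,\chi\>$, and infimizing the decomposition using \eqref{eqlkjdhlkdhd} produces $\|\chi\|^4\leq (\|\chi\|^2/K_{\min})\<P\chi,\chi\>$, i.e.\ $K_{\min}\|\chi\|^2\leq \<P\chi,\chi\>$, as desired.

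For the final bound $K_{\max}\leq n_{\max}$, I exploit the locality encoded in the connectivity matrix $\C$: if $\chi_i\in \V_i^{\perp}\subset \B_i$ and $\chi_j\in \V_j^{\perp}\subset \B_j$, then the very definition of $\C$ forces $\<\chi_i,\chi_j\>=0$ whenever $\C_{i,j}=0$, i.e.\ whenever $\db(i,j)>1$. Consequently, for any decomposition $\chi=\sum_i\chi_i$ with $\chi_i\in \V_i^{\perp}$, $\|\chi\|^2=\sum_{i,j:\,\db(i,j)\leq 1}\<\chi_i,\chi_j\>\leq \tfrac{1}{2}\sum_{i,j:\,\db(i,j)\leq 1}\bigl(\|\chi_i\|^2+\|\chi_j\|^2\bigr)\leq n_{\max}\sum_i\|\chi_i\|^2$, after counting that each index $i$ appears in at most $n_{\max}$ pairs. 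Taking the infimum over admissible decompositions yields $K_{\max}\leq n_{\max}$.

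Since each piece uses only elementary Hilbert space identities plus the combinatorial support-counting argument, I do not expect any substantive obstacle; the only bookkeeping subtlety is keeping straight which side of the Cauchy–Schwarz inequality controls which constant, and recognizing that $P\chi=\sum_i P_i\chi\in \V^{\perp}$ is itself an admissible decomposition (this is what allows $K_{\max}$ to upper-bound $\lambda_{\max}(P)$ rather than the other way around).
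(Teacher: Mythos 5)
Your proof is correct and follows essentially the same route as the paper's: the $K_{\max}\leq n_{\max}$ bound via the connectivity expansion and the arithmetic-geometric mean inequality, the $\lambda_{\max}(P)\leq K_{\max}$ bound by recognizing $P\chi=\sum_i P_i\chi$ as an admissible decomposition together with $\sum_i\|P_i\chi\|^2=\langle P\chi,\chi\rangle$, and the $K_{\min}\leq\lambda_{\min}(P)$ bound via the double Cauchy--Schwarz applied to $\|\chi\|^2=\sum_i\langle P_i\chi,\chi_i\rangle$ for a near-optimal decomposition. The only cosmetic differences are that you pass through the intermediate operator-norm estimate $\|P\chi\|\leq K_{\max}\|\chi\|$ where the paper divides through the quadratic-form inequality directly, and your phrase ``taking the infimum over admissible decompositions'' should really be ``since the bound holds for every admissible decomposition''; neither affects correctness.
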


The following Proposition shows that the inequality $ K_{\min}\leq \lambda_{\min}(P)$ obtained in  Lemma \ref{lemdkjdhjh3e} (and in \cite{KornhuberYserentant16} for divergence form elliptic PDEs) is in fact an equality.
\begin{Proposition}\label{propkjshkdjhdkjh}
It holds true that $\lambda_{\min}(P)=K_{\min}$.
\end{Proposition}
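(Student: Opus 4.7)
The plan is to complement Lemma \ref{lemdkjdhjh3e}, which already yields $K_{\min}\leq \lambda_{\min}(P)$, by establishing the reverse inequality $K_{\min}\geq \lambda_{\min}(P)$. Since $K_{\min}$ is defined as a sup of lower bounds over all admissible decompositions $\chi=\sum_{i\in\beth}\chi_i$ with $\chi_i\in \V_i^\perp$, what we really need to do is, for each $\chi\in\V^\perp$, exhibit a particular decomposition whose squared-norm sum is at most $\|\chi\|^2/\lambda_{\min}(P)$.

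First I would dispatch the degenerate case. If $\lambda_{\min}(P)=0$, then Lemma \ref{lemdkjdhjh3e} forces $K_{\min}\leq 0$; on the other hand $K_{\min}\geq 0$ by definition (and $K_{\min}=0$ whenever $\V^\perp\neq\sum_i\V_i^\perp$, interpreting the infimum over an empty set of decompositions as $+\infty$), so the equality is trivial there. Thus I may assume $\lambda_{\min}(P)>0$, which by Lemma \ref{lemequivpv} makes $P:\V^\perp\to\V^\perp$ a symmetric bijection and so lets me invert it.

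The key idea is to use $P^{-1}$ to produce the optimal decomposition. Given $\chi\in\V^\perp$, set $\xi:=P^{-1}\chi\in\V^\perp$ and define $\chi_i:=P_i\xi\in\V_i^\perp$. Then by the definition \eqref{eqP} of $P$, $\sum_{i\in\beth}\chi_i=P\xi=\chi$, so this is an admissible decomposition. Using $P_i^2=P_i$ together with the self-adjointness of $P_i$ with respect to $\<\cdot,\cdot\>$, I compute
\begin{equation*}
\sum_{i\in\beth}\|\chi_i\|^2=\sum_{i\in\beth}\<P_i\xi,P_i\xi\>=\sum_{i\in\beth}\<P_i\xi,\xi\>=\<P\xi,\xi\>=\<\chi,P^{-1}\chi\>.
\end{equation*}
The symmetric positive operator $P$ on $\V^\perp$ has smallest eigenvalue $\lambda_{\min}(P)$, so $\<\chi,P^{-1}\chi\>\leq \|\chi\|^2/\lambda_{\min}(P)$, and combining gives $\lambda_{\min}(P)\sum_i\|\chi_i\|^2\leq\|\chi\|^2$.

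Since this decomposition exists for every $\chi\in\V^\perp$, the definition of $K_{\min}$ as the largest constant for which such an inequality holds (with some admissible decomposition) yields $K_{\min}\geq\lambda_{\min}(P)$, completing the proof. There is no real obstacle here beyond recognizing that $\chi_i=P_iP^{-1}\chi$ is the correct stable decomposition to test against; everything else is a direct computation using the symmetry and idempotence of the $P_i$. This is the standard Lions identity from the abstract additive Schwarz framework, adapted to the current non-standard dual pairing setting.
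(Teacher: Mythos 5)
Your proof is correct and is essentially the paper's argument: the paper's decomposition $\chi_i:=\tilde{Q}_i\bar{Q}^{-1}\chi$ (with $\bar{Q}=\sum_i\tilde{Q}_i=P\tilde{Q}$) reduces to exactly your $\chi_i=P_iP^{-1}\chi$, and the key inequality is the same quadratic-form bound $\<\chi,P^{-1}\chi\>\leq\|\chi\|^2/\lambda_{\min}(P)$. Your version simply bypasses the quotient operators $\tilde{Q}_i$ and works directly with $P$ and the $P_i$, which is a cleaner presentation of the same idea.
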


\subsection{Conditions on dual and image spaces}\label{subsecdklejhd78}
In a second step,  in this subsection, we will bound $\Cond(P)$ based on equivalent necessary and sufficient conditions expressed on  the dual space $\B^*$ or the image space $\B_2$.

For the simplicity of the notations we will continue using $[\cdot,\cdot]$ for the duality product between $\B_{i}^*$ and $\B_{i}$.
Write $\|\cdot\|_{*,i}$ for dual the norm induced by $\|\cdot\|_{i}$ on $\B_{i}^*$, defined by
 $\|\varphi\|_{*,i}:=\sup_{\psi \in \B_{i}}\frac{[\varphi,\psi]}{\|\psi\|_{i}}$ for $\varphi\in \B_{i}^*$.
Write $Q_i:\B_{i}^{*} \rightarrow \B_{i}$ for the positive symmetric linear bijection satisfying   $\|\varphi\|_{*,i}^2=[\varphi,Q_i \varphi]$ for $\varphi \in B_{i}^*$.
For $i\in \beth$, let  $\Rc_i$ be the adjoint of the subspace injection $\B_{i}\rightarrow \B$, so that for
 $\varphi\in \B^*$, $\Rc_i \varphi$
 is the unique element of $\B_{i}^*$ such that
$[\varphi,\psi]=[\Rc_i \varphi, \psi]$ for $\psi \in \B_{i}$. That is, $\Rc_i \varphi$ is obtained by restricting the action of $\varphi$ to $\B_{i}$.
\begin{Theorem}\label{propjguyug6}
It holds true that $\lambda_{\min}(P)$  and $\lambda_{\max}(P)$ are also (respectively) the largest and smallest constants such that any of the following conditions hold,
\begin{itemize}
\item For all $\varphi\in \B^*$,
\begin{equation}\label{eqkhohihi}
\lambda_{\min}(P)\, \sup_{\chi'\in \V^\perp} \frac{[\varphi,\chi']^2}{\|\chi'\|^2} \leq \sum_{i\in \beth} \sup_{\chi'\in \V_i^\perp} \frac{[\varphi,\chi']^2}{\|\chi'\|^2} \leq \lambda_{\max}(P)\, \sup_{\chi'\in \V^\perp} \frac{[\varphi,\chi']^2}{\|\chi'\|^2}\,.
\end{equation}
\item For all $\varphi\in \B^*$,
\begin{equation}\label{eqljdhelkjdhkh3}
\lambda_{\min}(P)\, \inf_{\phi\in \Phi}\|\varphi-\phi\|_*^2 \leq \sum_{i\in \beth} \inf_{\phi\in \Phi}\|\Rc_i (\varphi-\phi)\|_{*,i}^2\leq \lambda_{\max}(P)\, \inf_{\phi\in \Phi}\|\varphi-\phi\|_*^2 \,.
\end{equation}
\end{itemize}
\end{Theorem}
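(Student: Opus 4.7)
The plan is to reduce both conditions \eqref{eqkhohihi} and \eqref{eqljdhelkjdhkh3} to the defining inequality \eqref{eqkdjdjhdj} via the Riesz correspondence between orthogonal projections and $\sup$/inf duality. The key identities are that for any $\varphi \in \B^*$,
\[
\sup_{\chi'\in \V^\perp} \frac{[\varphi,\chi']^2}{\|\chi'\|^2} \;=\; \|P_{\V^\perp} Q\varphi\|^2 \;=\; \inf_{\phi\in \Phi}\|\varphi-\phi\|_*^2,
\]
and, for each $i \in \beth$,
\[
\sup_{\chi'\in \V_i^\perp} \frac{[\varphi,\chi']^2}{\|\chi'\|^2} \;=\; \|P_i Q\varphi\|^2 \;=\; \inf_{\phi\in \Phi}\|\Rc_i(\varphi-\phi)\|_{*,i}^2,
\]
where $P_{\V^\perp}$ denotes the $\<\cdot,\cdot\>$-orthogonal projection onto $\V^\perp$.

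First, I would establish the equivalence of \eqref{eqkhohihi} with \eqref{eqkdjdjhdj}. For any $\chi' \in \B$, $[\varphi,\chi'] = [Q^{-1}Q\varphi,\chi'] = \<Q\varphi,\chi'\>$, so the sup over $\chi' \in \V^\perp$ (resp.\ $\V_i^\perp$) of $[\varphi,\chi']^2/\|\chi'\|^2$ equals $\|P_{\V^\perp}Q\varphi\|^2$ (resp.\ $\|P_i Q\varphi\|^2$) by Riesz representation within the finite- or infinite-dimensional Hilbert subspace. Setting $\chi := P_{\V^\perp} Q\varphi \in \V^\perp$, the identity $P_i = P_i P_{\V^\perp}$ (which holds because $\V_i^\perp \subset \V^\perp$ so $P_i$ annihilates the orthogonal complement of $\V^\perp$) yields $\|P_i Q\varphi\|^2 = \|P_i \chi\|^2 = \<\chi,P_i\chi\>$. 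Summing gives $\sum_{i\in\beth}\|P_iQ\varphi\|^2 = \<\chi, P\chi\>$, and \eqref{eqkhohihi} becomes exactly $\lambda_{\min}(P)\|\chi\|^2 \leq \<\chi, P\chi\> \leq \lambda_{\max}(P)\|\chi\|^2$. As $\varphi$ ranges over $\B^*$, $\chi = P_{\V^\perp}Q\varphi$ ranges over all of $\V^\perp$ (since $Q$ is bijective and $P_{\V^\perp}$ surjects onto $\V^\perp$), recovering \eqref{eqkdjdjhdj} with the same sharp constants.

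Next, I would establish the equivalence of \eqref{eqkhohihi} with \eqref{eqljdhelkjdhkh3} by proving the two displayed sup-inf identities term by term. For the left-hand side, decompose $Q\varphi = P_{Q\Phi}Q\varphi + P_{\V^\perp}Q\varphi$ (noting $\V^\perp$ is the $\<\cdot,\cdot\>$-orthogonal complement of $Q\Phi$ because $\<Q\phi,\chi\> = [\phi,\chi]$ vanishes for $\phi\in\Phi$ iff $\chi\in\V^\perp$). Writing $\phi_* := Q^{-1}P_{Q\Phi}Q\varphi \in \Phi$, one gets $\|P_{\V^\perp}Q\varphi\|^2 = \|Q(\varphi-\phi_*)\|^2 = [\varphi-\phi_*,Q(\varphi-\phi_*)] = \|\varphi-\phi_*\|_*^2 = \inf_{\phi\in\Phi}\|\varphi-\phi\|_*^2$, the last equality following from the fact that $\phi_*$ is the $\<\cdot,\cdot\>_*$-projection of $\varphi$ onto $\Phi$ (equivalently, the minimizer of $\|\varphi-\phi\|_*^2$). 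For the right-hand side, the same argument applied inside $(\B_i,\|\cdot\|_i)$ gives $\|P_i Q\varphi\|^2 = \inf_{\tilde\phi \in \Phi_i}\|\Rc_i\varphi - \tilde\phi\|_{*,i}^2$, where $\Phi_i \subset \B_i^*$ is the annihilator of $\V_i^\perp$ in $\B_i^*$; the replacement of $[\varphi,\chi']$ by $[\Rc_i\varphi,\chi']$ for $\chi' \in \B_i$ is immediate from the definition of $\Rc_i$. The mildly nontrivial point is identifying $\Phi_i = \Rc_i \Phi$: the inclusion $\Rc_i\Phi \subset \Phi_i$ is clear, and the reverse follows because $\V_i^\perp$ is by definition the kernel in $\B_i$ of the finite family $(\Rc_i \phi_{j,\beta})_{(j,\beta)\in \beth\times\aleph}$, whose annihilator in $\B_i^*$ is precisely $\Rc_i\Phi$.

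Combining these two reductions gives the desired chain of equivalences and shows that the sharp constants in \eqref{eqkhohihi} and \eqref{eqljdhelkjdhkh3} are exactly $\lambda_{\min}(P)$ and $\lambda_{\max}(P)$. The main (mild) obstacle is the identification $\Phi_i = \Rc_i\Phi$ in the finite-dimensional annihilator computation; once this is secured, everything else is a bookkeeping exercise in Hilbert space projections and Riesz duality.
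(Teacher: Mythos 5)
Your proof is correct and follows essentially the same route as the paper: the two sup--inf identities you establish are exactly the paper's Lemmas \ref{lemjhgjyguyguybis} and \ref{lemdihi3bis} (distance-to-$\Phi$ equals sup over the annihilator $\V^\perp$, resp.\ $\V_i^\perp$, with the annihilator identification $\B_{i}\cap(\Rc_i\Phi)^\perp=\V_i^\perp$ being the dual form of your $\Phi_i=\Rc_i\Phi$), and your reduction to $\lambda_{\min}(P)\|\chi\|^2\leq\<\chi,P\chi\>\leq\lambda_{\max}(P)\|\chi\|^2$ via $\sum_i\|P_i\chi\|^2=\<\chi,P\chi\>$ and the surjectivity of $\varphi\mapsto P_{\V^\perp}Q\varphi$ onto $\V^\perp$ is the paper's Lemma \ref{lempropjguyug6} with the quotient operator $\tilde{Q}$ unwound into the projection $P_{\V^\perp}Q$. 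The only difference is notational: the paper factors through the quotient space $\B^{*,\sim}=\B^*/\Phi$ and the operators $\tilde{Q},\tilde{Q}_i$, while you work directly with orthogonal projections in $\B$; the underlying computations coincide.
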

\begin{Remark}
We also refer to Lemma \ref{lempropjguyug6} for equivalent conditions expressed in terms of quadratic form inequalities satisfied
by the actions of the operators $Q$ and its localized versions $Q_i$ on equivalence classes induced by measurement functions. In the context of Example \ref{egprotoh10normNNN} these conditions can be viewed as inequalities on Green's functions acting on equivalence classes induced by measurement functions.
\end{Remark}

The following theorem allows us to express the localization property of gamblets as a property of, or condition on, the image space $\B_{2}$.

\begin{Theorem}\label{thmlkhkjhlkh}
For a continuous bijection $\L:\B\rightarrow \B_{2}$, let
$\B_{2,i}:=\L B_{i}$ be equipped with metric $\|\cdot\|_{2,i}$ induced as a subspace $\B_{2,i} \subset \B_{2}$.
For each $i\in \beth$, let $\L_i:B_{i}\rightarrow \B_{2,i}$ denote the corresponding induced bijection and
write $C_{\L_i}$ and $C_{\L_i^{-1}}$ for the continuity constants of  $\L_i$ and $\L_i^{-1}$. Write
 $\bar{C}_{\L}:=\max(C_{\L},\max_{i} C_{\L_i})$ and $\bar{C}_{\L^{-1}}:=\max(C_{\L^{-1}},\max_{i} C_{\L_i^{-1}})$.
Let $C_{\min}$ and $C_{\max}$ be  constants such that for all $\varphi\in \B^*$,
\begin{equation}
C_{\min}\, \inf_{\phi\in \Phi}\|\L Q (\varphi-\phi)\|_2^2 \leq \sum_{i\in \beth} \inf_{\phi\in \Phi}\|\L_i Q_i \Rc_i ( \varphi-\phi)\|_{2,i}^2\leq C_{\max}\, \inf_{\phi\in \Phi}\|\L Q (\varphi-\phi)\|_2^2 \,.
\end{equation}
It holds true that $\lambda_{\max}(P)\leq (\bar{C}_{\L} \bar{C}_{\L^{-1}})^2 C_{\max}$ and
$\lambda_{\min}(P)\geq  \frac{C_{\min}}{(\bar{C}_{\L} \bar{C}_{\L^{-1}})^2}$.
\end{Theorem}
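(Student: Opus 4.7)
The plan is to reduce the claim to the dual-space characterization of $\lambda_{\min}(P)$ and $\lambda_{\max}(P)$ provided by \eqref{eqljdhelkjdhkh3} in Theorem \ref{propjguyug6}, and then transfer the hypothesized image-space estimates into dual-space estimates through the continuity of $\L, \L^{-1}$ and of the restrictions $\L_i, \L_i^{-1}$. The entire argument is a two-step chain of continuity bounds; the theorem amounts to bookkeeping of the constants $\bar{C}_\L$ and $\bar{C}_{\L^{-1}}$.

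The key observation I would start from is the isometric identification $\|\varphi\|_* = \|Q\varphi\|$ for $\varphi \in \B^*$, which follows from $\|u\|^2 = [Q^{-1}u,u]$ applied to $u = Q\varphi$, together with its local analogue $\|\Rc_i \varphi\|_{*,i} = \|Q_i \Rc_i \varphi\|_i$ for $\varphi \in \B^*$. Composing these with the continuity inequalities $\|u\|/C_{\L^{-1}} \leq \|\L u\|_2 \leq C_\L \|u\|$ and their local analogues on $\B_i$ yields the two-sided bounds
\begin{equation*}
\frac{\|\varphi-\phi\|_*}{\bar{C}_{\L^{-1}}} \;\leq\; \|\L Q(\varphi-\phi)\|_2 \;\leq\; \bar{C}_\L \|\varphi-\phi\|_*
\end{equation*}
and
\begin{equation*}
\frac{\|\Rc_i(\varphi-\phi)\|_{*,i}}{\bar{C}_{\L^{-1}}} \;\leq\; \|\L_i Q_i \Rc_i(\varphi-\phi)\|_{2,i} \;\leq\; \bar{C}_\L \|\Rc_i(\varphi-\phi)\|_{*,i}
\end{equation*}
for every $\phi \in \Phi$. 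Evaluating each of these bounds at the $\phi$ that minimizes the relevant side immediately transfers them to the corresponding infima over $\phi \in \Phi$.

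Substituting the transferred bounds into the hypothesized inequality produces the chain
\begin{equation*}
\sum_{i\in \beth} \inf_{\phi\in \Phi} \|\Rc_i(\varphi-\phi)\|_{*,i}^2 \;\leq\; \bar{C}_{\L^{-1}}^2 \sum_{i\in \beth} \inf_{\phi\in \Phi} \|\L_i Q_i \Rc_i(\varphi-\phi)\|_{2,i}^2 \;\leq\; \bar{C}_{\L^{-1}}^2 C_{\max} \inf_{\phi\in \Phi} \|\L Q(\varphi-\phi)\|_2^2 \;\leq\; (\bar{C}_\L \bar{C}_{\L^{-1}})^2 C_{\max} \inf_{\phi\in \Phi} \|\varphi-\phi\|_*^2,
\end{equation*}
with the completely analogous chain in the reverse direction obtained by applying the other halves of the two-sided bounds to the $C_{\min}$ hypothesis. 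By the extremal characterization of $\lambda_{\max}(P)$ and $\lambda_{\min}(P)$ in \eqref{eqljdhelkjdhkh3}, these two chains give exactly $\lambda_{\max}(P) \leq (\bar{C}_\L \bar{C}_{\L^{-1}})^2 C_{\max}$ and $\lambda_{\min}(P) \geq C_{\min}/(\bar{C}_\L \bar{C}_{\L^{-1}})^2$.

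The hard part will be less the algebra than the bookkeeping: one must verify that each continuity constant enters on the correct side of each chain, and that the composition $\L_i Q_i \Rc_i$ is indeed the appropriate local counterpart of $\L Q$. The latter is immediate from the definitions, since $Q_i$ is the Riesz-type operator making $\|\cdot\|_{*,i}$ the dual of $\|\cdot\|_i$, so $Q_i \Rc_i \varphi$ is the representative in $\B_i$ of the restriction $\Rc_i \varphi$, on which $\L_i$ coincides with the restriction of $\L$ to $\B_i$. With this identification in place, the proof reduces to a few lines.
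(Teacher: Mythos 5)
Your proposal is correct and follows essentially the same route as the paper's own (very terse) proof: both rest on the isometry $\|\varphi-\phi\|_*=\|Q(\varphi-\phi)\|$ and its localized analogue, the two-sided continuity bounds with constants $\bar{C}_{\L}$ and $\bar{C}_{\L^{-1}}$, and the characterization of $\lambda_{\min}(P),\lambda_{\max}(P)$ via \eqref{eqljdhelkjdhkh3} in Theorem \ref{propjguyug6}. The constants land on the correct sides of each chain, so nothing further is needed.
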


\begin{Example}\label{egprotoh10normNNNlocal}
Consider Example \ref{egprotoh10normNNN}.
 Let $\Omega_1,\ldots,\Omega_{|\beth|}$ be open subsets of $\Omega$   such that $\Omega=\cup_{i\in \beth} \Omega_i$.
Let $\B_{i}=H^1_0(\Omega_i)$  with the norm $\|v\|_{i}^2=\int_{\Omega_i}(\nabla v)^T a \nabla v$.
Let $\B_{2,i}=H^{-1}(\Omega_i)$ with the norm $\|g\|_{2,i}=\sup_{v\in H^1_0(\Omega_i)}\frac{\int_{\Omega_i} gv}{\|v\|_{H^1_0(\Omega_i)}}=\|\nabla \Delta_i^{-1} g\|_{L^2(\Omega_i)}$ (writing $-\Delta_i$ the Laplace-Dirichlet operator on $\Omega_i$). Let $\L_i$ be the differential operator $-\diiv(a\nabla)$ mapping $\B_{i}$ to $\B_{2,i}$.
 Note that that the continuity constants provided in Example \ref{egprotoh10normNNN} imply $C_{\L_i}\leq \sqrt{\lambda_{\max}(a)}$  for all $i\in \beth$, and therefore $\bar{C}_{\L}\leq \sqrt{\lambda_{\max}(a)}$. Similarly we have
 $\bar{C}_{\L^{-1}}\leq 1/\sqrt{\lambda_{\min}(a)}$.
Observe that $\L Q $ and $\L_i Q_i$ are the identity operators (on the same spaces with different metrics, e.g. $\L Q$ is identity operator mapping $H^{-1}(\Omega)$ with the  $\|\cdot\|_{*}$-norm to $H^{-1}(\Omega)$ with the  usual/classical $\|\cdot\|_{H^{-1}(\Omega)}$-norm), and $\Rc_i \varphi=\varphi$ on $\Omega_i$.
Theorem \ref{thmlkhkjhlkh} implies that
\begin{equation}\label{eqlkjhkhiui}
\Cond(P)\leq \frac{C_{\max}}{C_{\min}} \big(\frac{\lambda_{\max}(a)}{\lambda_{\min}(a)}\big)^2,
\end{equation}
where $C_{\min}$ is the largest constant, and $C_{\max}$ is the smallest constant such that, for all $\varphi\in H^{-1}(\Omega)$,
\begin{equation}\label{eqkjddlkjdlj}
C_{\min}\, \inf_{\phi\in \Phi}\|\varphi-\phi\|_{H^{-1}(\Omega)}^2 \leq \sum_{i\in \beth} \inf_{\phi\in \Phi}\| \varphi-\phi\|_{H^{-1}(\Omega_i)}^2\leq C_{\max}\,\inf_{\phi\in \Phi} \|\varphi-\phi\|_{H^{-1}(\Omega)}^2,
\end{equation}
where we abuse notations by writing $\Phi \subset \B^*$ for $\L Q \Phi \subset \B_2$ since $\L Q$ is the identity operator on the same space with different metrics (and we have used a similar natural simplification for $\L_i Q_i$).
\end{Example}

The following theorem (whose proof is a straightforward consequence of Theorem \ref{propjguyug6}) allows us to
express the localization property of gamblets as a property of, or condition on, the dual space  $\B^*$.
\begin{Theorem}\label{thmlkhkdfual}
Let $\|\cdot\|_{e,*}$ and $\|\cdot\|_{e,*,i}$ be alternate norms on $\B^*$ and $\B_{i}^*$ and let $C_{e}\geq 1$
 be  a constant such that
$C_{e}^{-1}\|\cdot\|_{e,*} \leq \|\cdot\|_{*}\leq C_{e} \|\cdot\|_{e,*}$ and
$C_{e}^{-1}\|\cdot\|_{e,*,i} \leq \|\cdot\|_{*,i}\leq C_{e} \|\cdot\|_{e,*,i}$ for $i\in \beth$.
Let $C_{\min}$ and $C_{\max}$ be  constants such that for all $\varphi\in \B^*$,
for all $\varphi\in \B^*$,
\begin{equation}\label{eqljdhelfkjdhrtkh3}
C_{\min}\, \inf_{\phi\in \Phi}\|\varphi-\phi\|_{e,*}^2 \leq \sum_{i\in \beth} \inf_{\phi\in \Phi}\|\Rc_i (\varphi-\phi)\|_{e,*,i}^2\leq C_{\max}\, \inf_{\phi\in \Phi}\|\varphi-\phi\|_{e,*}^2 \,.
\end{equation}
It holds true that $\lambda_{\max}(P)\leq C_{e}^4 C_{\max}$ and
$\lambda_{\min}(P)\geq  C_{e}^{-4} C_{\min}$.
\end{Theorem}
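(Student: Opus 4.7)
The plan is to deduce the bounds on $\lambda_{\max}(P)$ and $\lambda_{\min}(P)$ directly from the characterization provided by Theorem \ref{propjguyug6}, namely
\begin{equation*}
\lambda_{\min}(P)\,\inf_{\phi\in \Phi}\|\varphi-\phi\|_*^2 \leq \sum_{i\in \beth} \inf_{\phi\in \Phi}\|\Rc_i (\varphi-\phi)\|_{*,i}^2\leq \lambda_{\max}(P)\,\inf_{\phi\in \Phi}\|\varphi-\phi\|_*^2,
\end{equation*}
by transferring each of the three expressions above from the original norms $\|\cdot\|_*$ and $\|\cdot\|_{*,i}$ to the alternate norms $\|\cdot\|_{e,*}$ and $\|\cdot\|_{e,*,i}$, picking up a factor of $C_e^2$ at every transfer of a squared norm.

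The one observation to justify is that a pointwise comparison $f(\phi)\leq C\, g(\phi)$ for all $\phi\in \Phi$ implies $\inf_{\phi\in \Phi} f(\phi) \leq C\,\inf_{\phi\in \Phi} g(\phi)$: indeed, evaluating at any near-minimizer $\phi^{*}$ of $g$ gives $\inf_{\phi} f \leq f(\phi^{*})\leq C\, g(\phi^{*})$ and letting $g(\phi^{*})\to \inf_{\phi} g$ yields the claim. Using this together with the assumed inequalities $\|\cdot\|_* \leq C_e \|\cdot\|_{e,*}$, $\|\cdot\|_{e,*} \leq C_e \|\cdot\|_{*}$, and analogously for $\|\cdot\|_{*,i}$ and $\|\cdot\|_{e,*,i}$, we obtain the four estimates
\begin{equation*}
C_e^{-2}\,\inf_{\phi}\|\varphi-\phi\|_{e,*}^2 \leq \inf_{\phi}\|\varphi-\phi\|_*^2\leq C_e^{2}\,\inf_{\phi}\|\varphi-\phi\|_{e,*}^2,
\end{equation*}
and the analogous pair of bounds with $\Rc_i$, $\|\cdot\|_{*,i}$ and $\|\cdot\|_{e,*,i}$.

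For the upper bound on $\lambda_{\max}(P)$, I chain these with the hypothesis:
\begin{equation*}
\sum_{i\in \beth}\inf_{\phi}\|\Rc_i(\varphi-\phi)\|_{*,i}^2 \leq C_e^2\sum_{i\in \beth}\inf_{\phi}\|\Rc_i(\varphi-\phi)\|_{e,*,i}^2 \leq C_e^2 C_{\max}\inf_{\phi}\|\varphi-\phi\|_{e,*}^2 \leq C_e^4 C_{\max}\inf_{\phi}\|\varphi-\phi\|_*^2,
\end{equation*}
so that Theorem \ref{propjguyug6}, which identifies $\lambda_{\max}(P)$ as the smallest constant making the corresponding right-hand inequality valid, yields $\lambda_{\max}(P)\leq C_e^4 C_{\max}$. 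For the lower bound on $\lambda_{\min}(P)$, I chain in the opposite direction:
\begin{equation*}
C_e^{-4} C_{\min}\inf_{\phi}\|\varphi-\phi\|_*^2 \leq C_e^{-2} C_{\min}\inf_{\phi}\|\varphi-\phi\|_{e,*}^2 \leq C_e^{-2}\sum_{i\in \beth}\inf_{\phi}\|\Rc_i(\varphi-\phi)\|_{e,*,i}^2 \leq \sum_{i\in \beth}\inf_{\phi}\|\Rc_i(\varphi-\phi)\|_{*,i}^2,
\end{equation*}
and again invoke Theorem \ref{propjguyug6} (this time in its role of identifying $\lambda_{\min}(P)$ as the largest valid constant on the left) to conclude $\lambda_{\min}(P)\geq C_e^{-4} C_{\min}$. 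There is no substantive obstacle: the entire proof is a two-sided bookkeeping exercise with the norm-equivalence constant $C_e$, squared once on each side of each inequality, and the only minor subtlety lies in the pointwise-to-infimum passage recalled above.
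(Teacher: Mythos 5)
Your proof is correct and follows exactly the route the paper intends: the paper states that Theorem \ref{thmlkhkdfual} is a straightforward consequence of Theorem \ref{propjguyug6}, and your argument is precisely that deduction, transferring the characterization \eqref{eqljdhelkjdhkh3} through the norm equivalences at a cost of $C_e^2$ per squared norm on each side. The bookkeeping and the pointwise-to-infimum passage are both handled correctly.
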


The following corollary (whose proof is a direct consequence of Theorems \ref{propjguyug6} and \ref{thmlkhkdfual}) allows us to transfer the localization property via norm equivalence  onto the primary space $\B$.

\begin{Corollary}\label{corthmlkkjhkdfual}
Let $\|\cdot\|_{e}$ and $\|\cdot\|_{e,i}$ be alternate norms on $\B$ and $\B_{i}$ and let $C_{e}\geq 1$ be a
  constant such that
$C_{e}^{-1}\|\cdot\|_{e} \leq \|\cdot\|\leq C_{e} \|\cdot\|_{e}$ and
$C_{e}^{-1}\|\cdot\|_{e,i} \leq \|\cdot\|_{i}\leq C_{e} \|\cdot\|_{e,i}$ for $i\in \beth$.
Let $\|\cdot\|_{e,*}$ and $\|\cdot\|_{e,*,i}$ be the induced dual norms on $\B^*$ and $\B_{i}^*$.
If $C_{\min}$ and $C_{\max}$ are such that \eqref{eqljdhelfkjdhrtkh3} holds  for all $\varphi\in \B^*$, then
 $\lambda_{\max}(P)\leq C_{e}^4 C_{\max}$ and
$\lambda_{\min}(P)\geq  C_{e}^{-4}C_{\min}$.
\end{Corollary}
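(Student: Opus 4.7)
The plan is to reduce this corollary directly to Theorem \ref{thmlkhkdfual} by observing that the norm equivalence hypothesis on the primary spaces $\B$ and $\B_i$ transfers, with the same constant $C_e$, to the induced dual norms on $\B^*$ and $\B_i^*$. Once that transfer is established, the hypothesis \eqref{eqljdhelfkjdhrtkh3} of the corollary matches exactly the hypothesis of Theorem \ref{thmlkhkdfual}, and the conclusion follows verbatim.

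First I would verify the dual norm equivalence. For any $\varphi \in \B^*$, the original dual norm is $\|\varphi\|_* = \sup_{v\in \B,\,v\neq 0} [\varphi,v]/\|v\|$, and the alternate induced dual norm is $\|\varphi\|_{e,*} = \sup_{v\in \B,\,v\neq 0} [\varphi,v]/\|v\|_e$. Substituting the two-sided primal bound $C_e^{-1}\|v\|_e \leq \|v\| \leq C_e \|v\|_e$ into the denominators, one sees immediately that $C_e^{-1}\|\varphi\|_{e,*}\leq \|\varphi\|_* \leq C_e \|\varphi\|_{e,*}$. The same argument applied to each $\B_i$ and its dual pairing yields $C_e^{-1}\|\varphi\|_{e,*,i}\leq \|\varphi\|_{*,i} \leq C_e \|\varphi\|_{e,*,i}$ for every $i\in \beth$.

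Next I would invoke Theorem \ref{thmlkhkdfual} with these two equivalences in hand. Since its hypothesis is precisely that the alternate dual norms are $C_e$-equivalent to $\|\cdot\|_*$ and $\|\cdot\|_{*,i}$ respectively, and since the inequality \eqref{eqljdhelfkjdhrtkh3} is assumed in the statement of the corollary, the theorem applies and yields the bounds $\lambda_{\max}(P)\leq C_e^4 C_{\max}$ and $\lambda_{\min}(P)\geq C_e^{-4} C_{\min}$, which is exactly the conclusion to be proved.

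The proof has no real obstacle: the only nontrivial observation is the automatic passage of norm equivalence from a Banach space to its dual with the identical equivalence constant, which is a one-line supremum manipulation. Everything else is an appeal to the two quoted theorems. I would therefore write the proof in three or four lines, essentially stating: apply the sup/inf interchange to the dual pairing to transfer the norm equivalences to $\B^*$ and each $\B_i^*$, and then conclude by Theorem \ref{thmlkhkdfual}.
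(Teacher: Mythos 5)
Your proposal is correct and matches the paper's intended argument: the paper states the corollary is a direct consequence of Theorems \ref{propjguyug6} and \ref{thmlkhkdfual}, and the only content needed is exactly your observation that the primal norm equivalences on $\B$ and each $\B_{i}$ pass to the induced dual norms with the same constant $C_{e}$, after which Theorem \ref{thmlkhkdfual} applies verbatim. No gap.
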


\begin{Example}\label{egdkj3hrr}
Consider Example \ref{egprotoalsobolevl}.
Let $(\phi_{i,\alpha})_{(i,\alpha)\in \beth\times \aleph} \in H^{-s}(\Omega)$ and write $\Phi=\Span\{\phi_{i,\alpha}\mid (i,\alpha)\in \beth \times \aleph\}$.
 Let $\Omega_1,\ldots,\Omega_{|\beth|}$ be open subsets of $\Omega$   such that $\Omega=\cup_{i\in \beth} \Omega_i$.
Let $\B_{i}=(H^s_0(\Omega_i),\|\cdot\|_{i})$  with the norm $\|\cdot\|_{i}$
induced by the restriction of $\|\cdot\|$ to $H^s_0(\Omega_i)$.
  Let $C_{e}\geq 1$ be a constant such that
$C_{e}^{-1}\|\cdot\|_{H^s_0(\Omega)} \leq \|\cdot\|\leq C_{e} \|\cdot\|_{H^s_0(\Omega)}$. Then we have
$C_{e}^{-1}\|\cdot\|_{H^s_0(\Omega_i)} \leq \|\cdot\|_{i}\leq C_{e} \|\cdot\|_{H^s_0(\Omega_i)}$ for $i\in \beth$.
Moreover, Corollary \ref{corthmlkkjhkdfual} implies that  $\lambda_{\max}(P)\leq  C_{e}^4 C_{\max}$ and
$\lambda_{\min}(P)\geq  C_{e}^{-4}C_{\min}$ where $C_{\min}$ is the largest constant, and $C_{\max}$ is the smallest constant, such that for all $\varphi\in H^{-s}(\Omega)$,
\begin{equation}\label{eqkjddlkjdldjjiej}
C_{\min}\, \inf_{\phi\in \Phi}\|\varphi-\phi\|_{H^{-s}(\Omega)}^2 \leq \sum_{i\in \beth} \inf_{\phi\in \Phi}\| \varphi-\phi\|_{H^{-s}(\Omega_i)}^2\leq C_{\max}\,\inf_{\phi\in \Phi} \|\varphi-\phi\|_{H^{-s}(\Omega)}^2\,.
\end{equation}
\end{Example}

\subsection{Conditions on localized measurement functions}\label{subsecdhgiudg6}

We now specify conditions on the relationship between the  subspaces $\B_{i}$
and the measurement functions $\phi_{i,\alpha}\in \B^{*}, (i,\alpha)\in \beth\times \aleph$.
\begin{Condition}\label{conduihiuh}
For $(i,\alpha)\in \beth\times \aleph$ there exists $\tilde{\psi}_{i,\alpha}\in \B_{i}$ such that (1) $[\phi_{j,\beta},\tilde{\psi}_{i,\alpha}]=\delta_{i,j}\delta_{\alpha,\beta}$ for $(j,\beta)\in \beth\times \aleph$, and (2) if $\tilde{\psi}_{i,\alpha} \not\in \B_{j}$ then $[\phi_{i,\alpha},v]=0$ for $v\in \B_{j}$.
\end{Condition}
For $\tilde{\psi}_{i,\alpha} \in \B_{i}, (i,\alpha)\in \beth\times \aleph$ satisfying
 Condition \ref{conduihiuh},
let $\tilde{P}:\B \rightarrow \B$ be the linear operator  defined by
\begin{equation}\label{eqkllhkjhju9}
\tilde{P} v:=\sum_{(i,\alpha)\in \beth\times \aleph} \tilde{\psi}_{i,\alpha} [\phi_{i,\alpha},v],\,\text{ for } v \in \B\, .
\end{equation}

\begin{Lemma}\label{lemdeiudygddf}
Condition \ref{conduihiuh} implies $\V^\perp=\V_1^\perp+\cdots+\V_{|\beth|}^\perp$ and $\lambda_{\min}(P)>0$.
\end{Lemma}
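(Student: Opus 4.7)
\medskip

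\noindent\textbf{Proof plan.} By Lemma \ref{lemequivpv}, the second conclusion $\lambda_{\min}(P)>0$ will follow automatically from the first one, so the plan is to focus on establishing the subspace decomposition $\V^\perp=\V_1^\perp+\cdots+\V_{|\beth|}^\perp$. The inclusion $\sum_{i}\V_i^\perp\subset \V^\perp$ is immediate from the definitions, so the only nontrivial task is the reverse inclusion.

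\medskip

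\noindent\textbf{The key construction.} I would start from an arbitrary $v\in \V^\perp$ and exploit item (1) of Construction \ref{constlocop} to write $v=v_1+\cdots+v_{|\beth|}$ with $v_j\in \B_j$. The components $v_j$ need not belong to $\V^\perp$ individually, so I would correct them using the biorthogonal family $\tilde\psi_{l,\delta}$ furnished by Condition \ref{conduihiuh}. Concretely, define
\begin{equation*}
w_j := v_j - \sum_{(l,\delta)\in \beth\times\aleph} [\phi_{l,\delta},v_j]\, \tilde\psi_{l,\delta},\qquad j\in \beth.
\end{equation*}
The plan is then to verify three properties: (a) $w_j\in \B_j$, (b) $[\phi_{k,\gamma},w_j]=0$ for every $(k,\gamma)\in\beth\times\aleph$, and (c) $\sum_j w_j=v$. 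Properties (a)--(b) give $w_j\in \V_j^\perp$, and (c) yields the desired decomposition $v=\sum_j w_j\in \V_1^\perp+\cdots+\V_{|\beth|}^\perp$.

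\medskip

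\noindent\textbf{Verification of the three properties.} For (b), the biorthogonality of Condition \ref{conduihiuh}(1) gives $[\phi_{k,\gamma},\tilde\psi_{l,\delta}]=\delta_{k,l}\delta_{\gamma,\delta}$, so $[\phi_{k,\gamma},w_j]=[\phi_{k,\gamma},v_j]-[\phi_{k,\gamma},v_j]=0$. For (c), since $v\in \V^\perp$ and $v=\sum_j v_j$, we have $\sum_j [\phi_{l,\delta},v_j]=[\phi_{l,\delta},v]=0$ for each $(l,\delta)$; summing the defining expression for $w_j$ over $j$ therefore cancels every correction term and leaves $\sum_j w_j=\sum_j v_j=v$. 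The main (and only) obstacle is property (a), which is precisely where Condition \ref{conduihiuh}(2) is needed: in the contrapositive form, Condition \ref{conduihiuh}(2) states that whenever some $v'\in \B_j$ satisfies $[\phi_{l,\delta},v']\neq 0$, one must have $\tilde\psi_{l,\delta}\in \B_j$. Applied to $v'=v_j$, this shows that every term $[\phi_{l,\delta},v_j]\tilde\psi_{l,\delta}$ appearing with a nonzero coefficient in the sum defining $w_j$ satisfies $\tilde\psi_{l,\delta}\in \B_j$; hence $w_j\in \B_j$.

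\medskip

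\noindent\textbf{Conclusion.} Combining the three verifications shows $w_j\in \V_j^\perp$ and $v=\sum_j w_j$, establishing $\V^\perp=\V_1^\perp+\cdots+\V_{|\beth|}^\perp$. Lemma \ref{lemequivpv} then delivers $\lambda_{\min}(P)>0$, completing the proof. The delicate step is property (a); everything else is a mechanical consequence of the biorthogonality relation and the assumption $v\in \V^\perp$.
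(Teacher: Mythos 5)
Your proof is correct and is essentially the paper's own argument: your $w_j = v_j - \sum_{(l,\delta)}[\phi_{l,\delta},v_j]\tilde\psi_{l,\delta}$ is exactly $v_j-\tilde P v_j$ for the operator $\tilde P$ of \eqref{eqkllhkjhju9}, your step (c) is the observation that $\tilde P v=0$ for $v\in\V^\perp$, and your step (a) is the paper's remark that $\tilde P$ maps $\B_j$ into itself under Condition \ref{conduihiuh}(2). The only difference is that you unpack the definition of $\tilde P$ rather than invoking it by name.
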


\begin{Condition}\label{conduihiuhno2}
Let $\tilde{P}$ be as in \eqref{eqkllhkjhju9}.
There exists $T_{\max}>0$ such that every $v\in \V^\perp$ can be decomposed as  $v=\sum_{i\in \beth} v_i$ with $v_i \in \B_{i}$ and (1) $\sum_{i\in \beth} \|v_i\|^2 \leq T_{\max} \|v\|^2$ and (2) $\sum_{i\in \beth} \|\tilde{P} v_i\|^2 \leq T_{\max} \|v\|^2$.
\end{Condition}

\begin{Theorem}\label{thmjhg8g87g}
Under Conditions \ref{conduihiuh} and \ref{conduihiuhno2} it holds true that $\frac{1}{4 T_{\max} }\leq K_{\min} = \lambda_{\min}(P)$.
\end{Theorem}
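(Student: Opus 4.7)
The plan is to first invoke Proposition \ref{propkjshkdjhdkjh} to reduce the claim to showing $K_{\min}\geq 1/(4T_{\max})$, since the equality $K_{\min}=\lambda_{\min}(P)$ is already established there. So I only need to exhibit, for each $\chi\in\V^\perp$, a decomposition $\chi=\sum_{i\in\beth}\chi_i$ with $\chi_i\in\V_i^\perp$ and $\sum_i\|\chi_i\|^2\leq 4T_{\max}\|\chi\|^2$.

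The natural candidate is to start from the decomposition $\chi=\sum_i v_i$ with $v_i\in\B_i$ provided by Condition \ref{conduihiuhno2}, and then project each $v_i$ onto $\V^\perp$ using $I-\tilde P$. The first key observation will be that $I-\tilde P$ indeed maps $\B$ into $\V^\perp$: for any $v\in\B$ and any $(j,\beta)$, the biorthogonality in part~(1) of Condition \ref{conduihiuh} yields $[\phi_{j,\beta},\tilde P v]=\sum_{(i,\alpha)}\delta_{i,j}\delta_{\alpha,\beta}[\phi_{i,\alpha},v]=[\phi_{j,\beta},v]$, so $[\phi_{j,\beta},(I-\tilde P)v]=0$. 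The second key observation, and the crucial use of part~(2) of Condition \ref{conduihiuh}, is that $\tilde P$ preserves each subspace $\B_i$: if $v_i\in\B_i$, then in the sum defining $\tilde Pv_i$ the term indexed by $(j,\beta)$ survives only when $[\phi_{j,\beta},v_i]\neq 0$, which by the contrapositive of part~(2) forces $\tilde\psi_{j,\beta}\in\B_i$; hence $\tilde P v_i\in\B_i$.

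Setting $\chi_i:=v_i-\tilde P v_i$, both observations then give $\chi_i\in\B_i\cap\V^\perp=\V_i^\perp$. Since $\chi\in\V^\perp$ implies $\tilde P\chi=0$, we also recover $\sum_i\chi_i=\chi-\tilde P\chi=\chi$. Finally, the elementary inequality $\|a-b\|^2\leq 2\|a\|^2+2\|b\|^2$ combined with the two bounds in Condition \ref{conduihiuhno2} yields
\begin{equation*}
\sum_{i\in\beth}\|\chi_i\|^2 \leq 2\sum_{i\in\beth}\|v_i\|^2+2\sum_{i\in\beth}\|\tilde P v_i\|^2 \leq 4T_{\max}\|\chi\|^2,
\end{equation*}
which is precisely the bound defining $K_{\min}\geq 1/(4T_{\max})$.

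I do not expect a serious obstacle: the argument is essentially an algebraic verification that $I-\tilde P$ is a uniformly bounded projector from the (oblique, non-$\<\cdot,\cdot\>$-orthogonal) decomposition furnished by Condition \ref{conduihiuhno2} onto the genuinely orthogonal decomposition required by $K_{\min}$. The only subtle point is the use of part~(2) of Condition \ref{conduihiuh} to keep $\tilde Pv_i$ inside $\B_i$; without it, $\chi_i$ would lie in $\V^\perp$ but not necessarily in $\V_i^\perp$, and the bound on $K_{\min}$ would fail. Everything else is a factor-of-two loss from the triangle inequality applied componentwise.
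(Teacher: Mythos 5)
Your proposal is correct and is essentially identical to the paper's own proof: the same decomposition $\chi_i=v_i-\tilde P v_i$, the same verification that $\tilde P$ preserves each $\B_i$ via part (2) of Condition \ref{conduihiuh}, the same factor-of-four bound from the componentwise triangle inequality, and the same final appeal to Proposition \ref{propkjshkdjhdkjh}. You merely spell out details the paper compresses into "observe that."
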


We will now show that \eqref{eqkjddlkjdlj} and \eqref{eqkjddlkjdldjjiej} are satisfied with
 measurement functions that are localized as in Construction \ref{consmeasfomi} and satisfy Condition \ref{condmeasfuncloc}.

\begin{Theorem}\label{thmegdkj3hrrsuite}
Given a bounded open subset of $\Omega \subset \R^d$ with uniformly lipschitz  boundary,
let  $s\in \mathbb{N}^*$ and consider  a continuous bijection
 $\L:H^s_0(\Omega)\rightarrow H^{-s}(\Omega)$ (as in Examples \ref{egprotoalsobolevlori}, \ref{egprotoalsobolevl} and \ref{egdkj3hrr}). Furthermore, assume that the operator $\L$ is local in the sense that $\<\psi,\psi'\>=0$ if $\psi$ and $\psi'$ have disjoint supports. Let  $(\phi_{i,\alpha})_{(i,\alpha)\in \beth \times \aleph}$ be localized at level $\delta$ as in Construction \ref{consmeasfomi} and satisfy Condition \ref{condmeasfuncloc}.   Let $P$ be defined as in \eqref{eqP}. Then there exists a constant $C_1$ depending only on  $d, \delta$ and $s$ such that inequalities \eqref{eqkjddlkjdldjjiej} hold with $C_{\min}^{-1}, C_{\max}\leq C_1$. In particular, there exists a constant $C_2$ depending only on $C_1$, $C_\L$ and $C_{\L^{-1}}$, such that
   $\Cond(P)\leq C_2$. Furthermore, for each $(i,\alpha)\in \beth\times \aleph$,
  \eqref{eqhihdjkjhiudiduh} has a minimizer for $n=0$ satisfying
    $\|\psi_{i,\alpha}^0\|\leq C_2$ .
\end{Theorem}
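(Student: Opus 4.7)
The strategy is to instantiate the abstract subspace–decomposition machinery of Subsection~\ref{subsecejhdg9877eg8e} in the present Sobolev setting, using the subspaces $\B_{i}=(H^s_0(\Omega_i),\|\cdot\|_i)$ (with $\|\cdot\|_i$ the restriction of $\|\cdot\|$). More specifically, the plan is to verify Conditions \ref{conduihiuh} and \ref{conduihiuhno2} with constants depending only on $d,\delta,s$ (and on $C_\L,C_{\L^{-1}}$ through norm equivalence), then invoke Theorem~\ref{thmjhg8g87g}, Proposition~\ref{propkjshkdjhdkjh}, and Theorem~\ref{propjguyug6} to bound $\lambda_{\min}(P)$ from below and $\lambda_{\max}(P)$ from above; the estimate \eqref{eqkjddlkjdldjjiej} in the $H^{-s}$-norm then follows by applying Corollary~\ref{corthmlkkjhkdfual} (equivalently Example~\ref{egdkj3hrr}) with the norm-equivalence constants controlled via Lemma~\ref{lemdkljedhlkfjh}. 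The bound on $\|\psi_{i,\alpha}^0\|$ will come essentially for free from the explicit biorthogonal test functions produced while verifying Condition~\ref{conduihiuh}.

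First I would construct, for each $(i,\alpha)\in\beth\times\aleph$, an explicit test function $\tilde\psi_{i,\alpha}\in H^s_0(\Omega_i)$ with $[\phi_{j,\beta},\tilde\psi_{i,\alpha}]=\delta_{ij}\delta_{\alpha\beta}$. Since the supports of $\phi_{j,\beta}$ are the disjoint cells $\tau_j$, only the $\phi_{i,\beta}$ (for $\beta\in\aleph$) act nontrivially on any function localized near $\tau_i$. Take a cutoff $\eta_i\in C_c^\infty$ supported in $\Omega_i$, equal to $1$ on a neighbourhood of $\tau_i$, with $\|D^t\eta_i\|_\infty\lesssim h^{-t}$, and use the inverse inequality \eqref{eqkjdhddkuieheu} together with a local duality argument on $\tau_i$ to produce coefficients $c^{(i,\alpha)}_{\beta}$ such that $\tilde\psi_{i,\alpha}:=\eta_i\sum_{\beta}c^{(i,\alpha)}_{\beta}\zeta_{\beta}^{(i)}$ is biorthogonal to the local $\phi_{i,\beta}$'s. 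Combined with inverse Sobolev estimates on a cell of size $\asymp h$, this will yield $\|\tilde\psi_{i,\alpha}\|$ bounded by a constant depending only on $d,\delta,s$ and $C_\L,C_{\L^{-1}}$; since $\psi_{i,\alpha}^0$ is the $\|\cdot\|$-minimizer subject to the same constraints, $\|\psi_{i,\alpha}^0\|\leq \|\tilde\psi_{i,\alpha}\|$ delivers the last assertion of the theorem.

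Second I would verify the stable decomposition of Condition~\ref{conduihiuhno2}. Choose a partition of unity $(\eta_i)_{i\in\beth}$ with $\sum_i\eta_i\equiv 1$ on $\Omega$, $\operatorname{supp}(\eta_i)\subset\Omega_i$, and $\|D^t\eta_i\|_\infty\lesssim h^{-t}$. Given $v\in\V^\perp$, set $v_i:=\eta_i v\in H^s_0(\Omega_i)$; then $v=\sum_i v_i$. By Leibniz,
\begin{equation*}
\|v_i\|_{H^s_0(\Omega_i)}^2 \;\lesssim\; \sum_{t=0}^{s} h^{-2(s-t)}\|D^t v\|_{L^2(\Omega_i)}^2,
\end{equation*}
and summing over $i$ using the bounded overlap of the $\Omega_i$'s (whose multiplicity depends only on $d,\delta$) followed by the crucial Poincar\'e bound \eqref{eqconlocmeas}, which gives $\|D^t v\|_{L^2(\Omega)}\lesssim h^{s-t}\|v\|_{H^s_0(\Omega)}$ since $v\in\V^\perp$, yields $\sum_i\|v_i\|_{i}^2\lesssim \|v\|^2$, i.e., part~(1). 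For part~(2), use the frame bound \eqref{eqconlocmeasnext} on each $v_i$ combined with the previous estimate on $\|v_i\|_{L^2}$ and $\|v_i\|_{H^s_0}$ to control $\sum_{j,\beta}[\phi_{j,\beta},v_i]^2$, then use the bound on $\|\tilde\psi_{j,\beta}\|$ from the previous step to propagate to $\sum_i\|\tilde P v_i\|^2\lesssim\|v\|^2$.

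Combining these two steps, Theorem~\ref{thmjhg8g87g} gives $\lambda_{\min}(P)\geq 1/(4T_{\max})$, while Lemma~\ref{lemdkjdhjh3e} bounds $\lambda_{\max}(P)\leq n_{\max}$ with $n_{\max}$ controlled by $d,\delta$ through the geometry of $\Omega_i$. By Theorem~\ref{propjguyug6} and Corollary~\ref{corthmlkkjhkdfual} (absorbing the $\|\cdot\|\leftrightarrow\|\cdot\|_{H^s_0}$ equivalence via $C_{\L},C_{\L^{-1}}$ as in Lemma~\ref{lemdkljedhlkfjh}), this delivers inequalities \eqref{eqkjddlkjdldjjiej} with $C_{\min}^{-1},C_{\max}\leq C_1(d,\delta,s)$ and, by locality of $\L$, the bound $\operatorname{Cond}(P)\leq C_2(C_1,C_\L,C_{\L^{-1}})$. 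The main technical obstacle is the stable decomposition: the $h^{-(s-t)}$ factors from differentiating $\eta_i$ must be compensated exactly by the $h^{s-t}$ factors from the Poincar\'e inequality \eqref{eqconlocmeas}, so the argument depends critically on the hypothesis $v\in\V^\perp$; without it the decomposition would lose a factor of $h^{-2s}$ per level of derivative. The second delicate point is verifying property~(2) of Condition~\ref{conduihiuhno2}, where the effect of $\tilde P$ on $v_i$ must be bounded against $\|v\|^2$ and not $\|v_i\|^2$ (which would be far weaker), requiring the frame bound \eqref{eqconlocmeasnext} to be applied in conjunction with, rather than separately from, the Poincar\'e estimates on $v$.
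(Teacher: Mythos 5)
Your proposal is correct and follows essentially the same route as the paper's proof: explicit local biorthogonal test functions built from the inverse inequality \eqref{eqkjdhddkuieheu} to verify Condition \ref{conduihiuh} and bound $\|\psi_{i,\alpha}^0\|$, a partition of unity with Leibniz plus the Poincar\'e and frame bounds \eqref{eqconlocmeas}--\eqref{eqconlocmeasnext} to verify Condition \ref{conduihiuhno2}, then Theorem \ref{thmjhg8g87g} and Lemma \ref{lemdkjdhjh3e} for $\Cond(P)$ and the norm-equivalence transfer (Corollary \ref{corthmlkkjhkdfual} / Theorem \ref{thmlkhkjhlkh} with Lemma \ref{lemdkljedhlkfjh}) for \eqref{eqkjddlkjdldjjiej}. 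Your closing observations about the exact cancellation of the $h^{\pm(s-t)}$ factors and the need to use \eqref{eqconlocmeasnext} jointly with the global Poincar\'e bound on $v$ are precisely the points the paper's argument turns on.
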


We summarize the results obtained in the examples presented in this Section in the following theorem.
\begin{Theorem}\label{thmjff76f57}
Consider Example \ref{egprotoh10normNNN} or \ref{egprotoalsobolevl}.
Let $(\phi_{i,\alpha})_{(i,\alpha)\in \beth\times \aleph}$ be as in Example \ref{egkdejkdhdjk}, \ref{egkdejkdhkjhdjk} or \ref{egkdejkdhdjkbis} or satisfy Condition \ref{condmeasfuncloc} and Construction \ref{consmeasfomi}(with $s=1$ for Example \ref{egprotoh10normNNN}) with values $\delta$ and $h$. Let $(\psi_{i,\alpha})_{(i,\alpha)\in \beth\times \aleph}$ be the gamblets corresponding to $(\phi_{i,\alpha})_{(i,\alpha)\in \beth\times \aleph}$ localized to sub-domains of diameter $n h$ as in \eqref{eqhihdjkjhiudiduh}. Then
$\|\psi_{i,\alpha}-\psi_{i,\alpha}^n\|\leq C e^{- n/C} h^{-s}$,
  with a constant $C$ depending only on $d, \delta, s$,
    $C_{e}, C_{e, 2}$ (for Example \ref{egprotoalsobolevl}) and $\lambda_{\min}(a),\lambda_{\max}(a)$ (for Example \ref{egprotoh10normNNN}).
\end{Theorem}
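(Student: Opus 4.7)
The plan is to chain three already-established results with a single new scaling calculation. First, by Theorem~\ref{thmpropegkdejkdhdjkbis} each of Examples~\ref{egkdejkdhdjk}, \ref{egkdejkdhkjhdjk}, and \ref{egkdejkdhdjkbis} satisfies Condition~\ref{condmeasfuncloc}, so it suffices to work under Condition~\ref{condmeasfuncloc} together with Construction~\ref{consmeasfomi}. For Example~\ref{egprotoh10normNNN} (with $s=1$) the operator $-\diiv(a\nabla)$ is local and satisfies $C_\L\le \sqrt{\lambda_{\max}(a)}$, $C_{\L^{-1}}\le 1/\sqrt{\lambda_{\min}(a)}$; for Example~\ref{egprotoalsobolevl} locality of $\L$ is the implicit standing assumption required to invoke Theorem~\ref{thmegdkj3hrrsuite}, and Lemma~\ref{lemdkljedhlkfjh} controls $C_\L,C_{\L^{-1}}$ by $C_e$ and $C_{e,2}$. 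Under these hypotheses Theorem~\ref{thmegdkj3hrrsuite} yields $\Cond(P)\le C_2$ with $C_2$ depending only on $d,\delta,s,C_\L,C_{\L^{-1}}$, and Theorem~\ref{thmswkskjsh} gives the geometric contraction
\begin{equation*}
\|\psi_{i,\alpha}-\psi_{i,\alpha}^n\|\;\le\;\Bigl(\tfrac{C_2-1}{C_2+1}\Bigr)^n\|\psi_{i,\alpha}^0\|\;\le\;C'e^{-n/C'}\|\psi_{i,\alpha}^0\|
\end{equation*}
for a new constant $C'$ depending only on $C_2$.

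The remaining task is to show $\|\psi_{i,\alpha}^0\|\le Ch^{-s}$. Since $\psi_{i,\alpha}^0$ is the $\|\cdot\|$-minimizer of the affine set $\{\psi\in H^s_0(\Omega_i)\mid [\phi_{j,\beta},\psi]=\delta_{ij}\delta_{\alpha\beta}\}$, its norm is bounded by that of any feasible competitor. By norm equivalence between $\|\cdot\|$ and $\|\cdot\|_{H^s_0(\Omega)}$, with constants controlled respectively by $(C_e,C_{e,2})$ and by $(\sqrt{\lambda_{\min}(a)},\sqrt{\lambda_{\max}(a)})$ in the two examples, it suffices to produce $\tilde\psi\in H^s_0(\Omega_i)$ satisfying the constraints with $\|\tilde\psi\|_{H^s_0(\Omega)}\le Ch^{-s}$. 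I would build $\tilde\psi$ by pullback of a fixed reference construction via the affine rescaling $y=(x-x_i)/h$: fix $\eta\in C_c^\infty(B(0,1))$ equal to $1$ on $B(0,\delta/2)$, and select a polynomial $p$ of degree $\le s-1$ on the reference cell so that, after rescaling, $p\cdot\eta$ realises the desired moments against $(\phi_{i,\beta})_{\beta\in\aleph}$. The linear independence of the $\phi_{i,\beta}$ on $\tau_i$ (with Gram matrix bounded in terms of $d,\delta,s$ alone, exactly as used in the proof of Theorem~\ref{thmpropegkdejkdhdjkbis}) yields coefficients for $p$ that are $O(1)$ uniformly in $h$, and then the chain-rule bound $\|D^t\tilde\psi\|_{L^2}\lesssim h^{-t}$ for $0\le t\le s$ gives $\|\tilde\psi\|_{H^s_0(\Omega)}\le Ch^{-s}$. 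Disjointness of the cells $\tau_j$ (or of the points $x_j$ in Example~\ref{egkdejkdhdjkbis}) together with $\supp(\tilde\psi)\subset\Omega_i$ ensures $[\phi_{j,\beta},\tilde\psi]=0$ automatically for $j\ne i$, so the only nontrivial constraints are the finitely many moment conditions at the single cell $i$.

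Combining the two steps yields the stated bound $\|\psi_{i,\alpha}-\psi_{i,\alpha}^n\|\le Ce^{-n/C}h^{-s}$ with $C$ depending only on the quantities listed in the theorem. The main obstacle is the scaling verification in the second paragraph: the three families of measurement functions are normalised differently (unit $L^2$-mass indicators in Example~\ref{egkdejkdhdjk}, $L^2(\tau_i)$-orthonormal polynomials in Example~\ref{egkdejkdhkjhdjk}, and rescaled Diracs $h^{d/2}\delta_{x_i}$ in Example~\ref{egkdejkdhdjkbis}), and one must confirm that the powers of $h$ introduced by the reference rescaling cancel against each normalisation to leave exactly the factor $h^{-s}$. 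The Dirac case additionally relies on the standing hypothesis $s>d/2$ of Example~\ref{egkdejkdhdjkbis}, which is precisely what makes pointwise evaluation a bounded functional on $H^s_0(\Omega)$ and is essential both for the test-function construction to be valid and for the moment $[\phi_{i,\beta},\tilde\psi]=\tilde\psi(x_i)$ to even make sense.
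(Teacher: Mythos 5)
Your overall architecture is exactly the paper's: Theorem \ref{thmpropegkdejkdhdjkbis} puts all three examples under Condition \ref{condmeasfuncloc}, Theorem \ref{thmegdkj3hrrsuite} (via the locality of $\L$ and Lemma \ref{lemdkljedhlkfjh} to trade $C_\L,C_{\L^{-1}}$ for $C_e,C_{e,2}$ or $\lambda_{\min}(a),\lambda_{\max}(a)$) bounds $\Cond(P)$, and Theorem \ref{thmswkskjsh} delivers the geometric contraction. The only place you depart from the paper is the bound $\|\psi^0_{i,\alpha}\|\le Ch^{-s}$. The paper gets this with no new construction: inside the proof of Theorem \ref{thmegdkj3hrrsuite} the local minimizer $\tilde\psi_{i,\alpha}\in H^s_0(\tau_i)$ has, by the dual characterization of Theorem \ref{thmsudgdygdgy}, squared norm equal to a diagonal entry of the inverse of the local Gram matrix $[\phi_{i,\alpha},Q\phi_{i,\beta}]$, and inequality \eqref{eqkjdhddkuieheu} of Condition \ref{condmeasfuncloc} is precisely the statement that this Gram matrix is bounded below by $C^{-1}h^{2s}$; the $h^{-s}$ falls out immediately and uniformly over the three normalisations, which is the cancellation you flag as your ``main obstacle.'' Your explicit competitor is a legitimate alternative, but it re-proves a special case of what \eqref{eqkjdhddkuieheu} already encodes.

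One concrete flaw in your construction as written: support in $\Omega_i$ does \emph{not} make the constraints at $j\ne i$ vanish, since $\Omega_i$ by Construction \ref{consmeasfomi} extends a distance $h$ beyond $\tau_i$ and therefore overlaps the neighbouring cells $\tau_j$ (and may contain neighbouring points $x_j$ in the Dirac case). You need $\supp(\tilde\psi)\subset\tau_i$, which your rescaling $y=(x-x_i)/h$ with $\eta$ supported in $B(0,1)$ does not guarantee, because $\tau_i$ is only assumed to \emph{contain} a ball of radius $\delta h$ while being \emph{contained} in one of radius $h$. The fix is to support the bump in $B(x_i,\delta h)\subset\tau_i$ (at the cost of a $\delta$-dependent constant, which is allowed); this is exactly why the paper's competitor is defined as the minimizer over $H^s_0(\tau_i)$ rather than over $H^s_0(\Omega_i)$.
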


\subsection{Numerical homogenization}

\begin{Corollary}\label{corjff76f57bis}
Consider Example  \ref{egprotoalsobolevl} and let
 $(\phi_{i,\alpha})_{(i,\alpha)\in \beth\times \aleph}$ be as in Example \ref{egkdejkdhdjk}, \ref{egkdejkdhkjhdjk} or \ref{egkdejkdhdjkbis} under Construction \ref{consmeasfomi} with values $\delta$ and $h$.
Let $u^n$ be the finite-element solution of \eqref{eqn:scalar} in $\Span\{\psi_{i,\alpha}^n\mid (i,\alpha)\in \beth\times \aleph\}$.
 For $n\geq C(1+\ln \frac{1}{h})$ we have
 \begin{equation}
\frac{\|u-u^n\|_{H^s_0(\Omega)}}{\|\L u\|_{L^2(\Omega)}} \leq  C h^s
\end{equation}
where the constant $C$ depends only on $d, \delta, s, C_{e}$ and $C_{e, 2}$.
\end{Corollary}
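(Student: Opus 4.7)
My plan is to split the error via the triangle inequality
\begin{equation*}
\|u-u^n\| \leq \|u-u^{*}\| + \|u^{*}-u^n\|,
\end{equation*}
where $u^{*}$ denotes the $\langle\cdot,\cdot\rangle$-orthogonal projection of $u$ onto the non-localized gamblet space $V:=\Span\{\psi_{i,\alpha}\}$. For the first term I would invoke the single-level approximation result: by Proposition \ref{propkajhdlkjd} (and its analogues for the other two measurement families) the measurement functions satisfy Condition \ref{condjehkgdedgu} with $H=h^{s}$, so the $k=q=1$ instance of Theorem \ref{thmfix3} (equivalently of Theorem \ref{corunbcnOR}) yields $\|u-u^{*}\|_{H^{s}_{0}(\Omega)} \leq C h^{s} \|\L u\|_{L^{2}(\Omega)}$.

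For the localization error, since $u^{n}$ is the $\langle\cdot,\cdot\rangle$-orthogonal projection of $u$ onto $V^{n}:=\Span\{\psi^{n}_{i,\alpha}\}$, the best-approximation identity $\|u-u^n\|=\inf_{v\in V^n}\|u-v\|$ applies, and I would evaluate it at the natural candidate $v^{n}:=\sum c^{*}_{i,\alpha}\psi^{n}_{i,\alpha}$ with $c^{*}_{i,\alpha}:=[\phi_{i,\alpha},u]$; by \eqref{eqbdudbuysbdneq1} these are exactly the coefficients of $u^{*}$ in the $(\psi_{i,\alpha})$ basis. Triangle inequality then gives $\|u-u^n\|\leq \|u-u^{*}\|+\|u^{*}-v^n\|$ with
\begin{equation*}
\|u^{*}-v^{n}\| \leq \sum_{(i,\alpha)} |c^{*}_{i,\alpha}|\, \|\psi_{i,\alpha}-\psi^{n}_{i,\alpha}\| \leq C e^{-n/C} h^{-s} \sum_{(i,\alpha)} |c^{*}_{i,\alpha}|,
\end{equation*}
where the last inequality uses Theorem \ref{thmjff76f57}. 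To control the remaining $\ell^{1}$ sum I would apply Cauchy--Schwarz across the $N=|\beth|\cdot|\aleph|=\mathcal{O}(h^{-d})$ indices, reducing the task to $\|c^{*}\|_{\ell^{2}}\leq C\|u\|_{L^{2}(\Omega)}$; this is Bessel's inequality on $L^{2}(\tau_{i})$-orthonormal polynomial bases in Example \ref{egkdejkdhkjhdjk}, the Cauchy--Schwarz bound on $\phi_{i}=1_{\tau_{i}}/\sqrt{|\tau_{i}|}$ in Example \ref{egkdejkdhdjk}, and a Riemann-sum estimate on $h^{d/2}u(x_{i})$ (valid since $s>d/2$) in Example \ref{egkdejkdhdjkbis}. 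The Poincar\'e and energy-identity chain $\|u\|_{L^{2}}\leq C\|u\|_{H^{s}_{0}}\leq CC_{e}\|u\|$ together with $\|u\|^{2}=[\L u,u]\leq \|\L u\|_{L^{2}}\|u\|_{L^{2}}$ gives $\|u\|_{L^{2}}\leq C\|\L u\|_{L^{2}}$, so altogether $\|u^{*}-v^{n}\| \leq C e^{-n/C}h^{-s-d/2}\|\L u\|_{L^{2}(\Omega)}$.

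Combining the two estimates and returning to the $H^{s}_{0}$-norm via the equivalence of Example \ref{egprotoalsobolevlori} yields $\|u-u^{n}\|_{H^{s}_{0}}\leq C\bigl(h^{s}+e^{-n/C}h^{-s-d/2}\bigr)\|\L u\|_{L^{2}}$, and choosing $n\geq C_{1}(1+\ln(1/h))$ with $C_{1}$ large enough that $n/C\geq (2s+d/2)\ln(1/h)$ forces $e^{-n/C}h^{-s-d/2}\leq h^{s}$, closing the proof. The main obstacle is the factor $h^{-d/2}$ produced by the crude $\ell^{1}$--$\ell^{2}$ Cauchy--Schwarz, which is precisely what forces $n$ to grow logarithmically in $1/h$ rather than remaining bounded. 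In principle one could sharpen the estimate by exploiting the additional spatial locality of $\psi_{i,\alpha}-\psi^{n}_{i,\alpha}$ so that the cross-terms $\langle\psi_{i,\alpha}-\psi^{n}_{i,\alpha},\psi_{j,\beta}-\psi^{n}_{j,\beta}\rangle$ in the expansion of $\|u^{*}-v^{n}\|^{2}$ decay exponentially in the graph distance between $i$ and $j$, but since the corollary only asks for logarithmic $n$ the naive bound already suffices.
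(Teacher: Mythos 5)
Your proof is correct and follows essentially the same route as the paper's, which factors through Theorem \ref{thmdkehjgdkjdhj}: the same splitting into an approximation error plus a coefficient-weighted sum of per-gamblet localization errors, the same $\sqrt{m}$ Cauchy--Schwarz step producing the $h^{-d/2}$ factor, and the same logarithmic choice of $n$. The only cosmetic difference is that you bound $\|c^{*}\|_{\ell^{2}}$ directly via Bessel/frame inequalities on the measurement functions, whereas the paper routes through $\lambda_{\min}(A)$ and Lemma \ref{lemljkhlkhhu}; the two are equivalent, and your direct argument is essentially the modification the paper itself indicates is needed for the Dirac case of Example \ref{egkdejkdhdjkbis}.
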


\begin{Remark}
 The localization problem \cite{ BaLip10, OwZh:2011,  MaPe:2012, GrGrSa2012, OwhadiZhangBerlyand:2014, HouLiu2015, OwhadiMultigrid:2015, gallistl2016computation, KornhuberYserentant16, KornhuberYserentant16bis,  HouZhang2017II}, in the context of the prototypical Example \eqref{egprotoa},   has been one of the major challenges of numerical homogenization  \cite{WhHo87, BaOs:1983, BaCaOs:1994, Beylkin:1995, HoWu:1997, EEngquist:2003,   OwZh:2007a,  OwZh06c, OwZh:2007b, BraWu09, BeOw:2010, DesDonOw:2012}.
 Babu{\v{s}}ka and Lipton \cite{BaLip10} proved localization in the context of generalized finite-element methods
using local eigenfunctions and harmonic extensions of the solution. Owhadi and Zhang  \cite{OwZh:2011} proved localization using the resonance-error reduction techniques of Gloria \cite{Gloria:2011}.   M{\aa}lqvist and Peterseim \cite{MaPe:2012} proved localization in the context  Variational Multiscale Methods \cite{hughes1998variational}
based on the properties of the Clement interpolation operator \cite{clement1975approximation}. Owhadi, Zhang and Berlyand obtained localized basis functions with rough polyharmonic splines \cite{OwhadiZhangBerlyand:2014}.
\cite[Sec.~3.6]{OwhadiMultigrid:2015} provides a proof of localization of gamblets based on local Poincar\'{e} inequalities.
Kornhuber and Yserentant proved localization of the basis functions of \cite{MaPe:2012} based on the  Schwarz subspace decomposition and correction method \cite{xu1992iterative, griebel1995abstract}.
 Theorem \ref{thmlkhkdfual} has  been motivated by the necessity to provide simple and natural conditions for exponential decay for a wide range of operators by identifying stability conditions of the Schwarz projection operator as conditions on the interplay between measurement functions and the image (or dual) space (these conditions are independent of the operator itself if it is invertible and continuous).
Indeed, recently techniques based on mass chasing in the physical domain via  the introduction of mollifiers and local Poincar\'{e} inequalities
provide a simple proof of exponential decay for gamblets associated with second order elliptic operators  \cite{OwhadiMultigrid:2015}. Moreover, these results have been generalized by Hou and Zhang in \cite{HouZhang2017II}
 to higher order PDEs, presented in Example \ref{egkljkhdekjd}, by requiring the
  additional assumptions  that the operator is strongly elliptic, that $h$ is sufficiently small and that the measurement functions be higher order polynomials.
As corollaries of Theorem \ref{thmlkhkdfual}, Theorems \ref{thmegdkj3hrrsuite}, \ref{thmegdkj3hrrsuitebis} and \ref{thmjff76f57} and
  Condition \ref{condmeasfuncloc} provide simple and natural conditions
  (that can be expressed as Poincar\'{e}, frame and inverse Sobolev inequalities) for the localization  of arbitrary bounded invertible linear operators mapping $H^s_0(\Omega)$ to $H^{-s}(\Omega)$ (or $L^2(\Omega)$). In particular, for Example \ref{egkljkhdekjd}, Theorem \ref{thmjff76f57} does not require the additional  assumptions  used in \cite{HouZhang2017II} to obtain localization.
   Note also that  Theorem  \ref{thmlkhkdfual} provides a natural path to exponential decay
   based on equivalence between Green's functions.
\end{Remark}

\begin{Remark}
As in  ``all roads lead to Rome'', many routes lead to basis functions that can be represented as optimal recovery splines
 \cite{bounds1959michael, micchelli1977survey}. These routes include
Variational Multiscale Methods \cite{hughes1998variational},   Polyharmonic Splines  \cite{Harder:1972, Duchon:1976,Duchon:1977,Duchon:1978} and Rough Polyharmonic Splines \cite{OwhadiZhangBerlyand:2014} (which can  be recovered as gamblets using the operator $Q^{-1}=\L^* \L$ with $\L=-\diiv(a\nabla)$ and Dirac delta functions as measurement functions),
the local orthogonal decomposition method of M{\aa}lqvist and Peterseim \cite{MaPe:2012} (using $Q^{-1}=-\diiv(a\nabla)$  and
 linear combinations of piecewise linear elements as measurement functions),  the basis functions obtained from the Bayesian Inference interpretation of Numerical Homogenization \cite{Owhadi:2014} (for arbitrary operators $\L$, using arbitrary   measurement functions and $Q^{-1}=\L^* K^{-1} \L$ where $K$ is the covariance operator of the Gaussian prior placed on source terms),
  the gamblets introduced in \cite{OwhadiMultigrid:2015} (using $Q^{-1}=-\diiv(a\nabla)$  and arbitrary measurement functions, the hierarchical analysis is done with indicator functions in \cite{OwhadiMultigrid:2015},
    the operator compression rates of gamblets are also characterized by Hou and Zhang   \cite{HouZhang2017II},
in the context of the  numerical homogenization of Example \ref{egkljkhdekjd},
through a  direct comparison between numerical homogenization convergence rates and the spectrum of the operator \eqref{eqjgkgjgkjhgjhg} using the energy norm and higher order polynomials as measurement functions),
  the reduced bases of \cite{binev2017data} (for approximating solution spaces of parametric PDEs).
 In this paper, as in \cite{OwhadiMultigrid:2015}, the route leading to optimal recovery splines has been that of Game Theory  based on the conditioning of the universal measure associated with the norm $\|\cdot\|$.
\end{Remark}

\section{Fast Gamblet Transform and Solve}\label{secfgtas}

Consider the general setting of Section \ref{subgeneralcaseNNN},
where  $\L:\B \rightarrow \B_{2}$ is a continuous linear bijection from  a Banach space $\B$, equipped with
 inner product $\langle u, v \rangle:=[Q^{-1}u, v]$ determined by a  symmetric linear bijection
$Q:\B^{*}\rightarrow \B$, and  where
 $\G:\B_2 \rightarrow \B_2^*$ is a symmetric continuous linear bijection and $\D:B_2^*\rightarrow \B^*$ is a continuous linear bijection.
 The only constraint between these operators is that
$Q^{-1}=\D \G \L$ from \eqref{gc_constr} holds,
making  Diagram
\ref{eqcase1NNN} commute.
 Our goal is, for $ g \in \B_{2}$, to efficiently compute a
variational solution \eqref{eqklajhdjfNNN} $u$  of the equation $\L u=g$.
 To that end,
consider a discretization  of Subsection \ref{subsecgamdis} where, instead of solving on the
 full space $\B$, we look for a solution on  a  discrete subspace
 $\B^\d:=\Span\{\varPsi_i \in \B,  i \in \N\}$    consisting of the span  \eqref{eqkdklrflkff}  of
a finite set of  linearly independent basis elements.
 For fixed $g \in \B_{2}$, we develop the fast gamblet transform to solve the
discretized version \eqref{eqklajhdjfdis}
\begin{equation}
\label{def_discretetete}
\<u^\d,\varPsi\>=[\D \G g,\varPsi] \text{ for } \varPsi\in \B^\d\,
\end{equation}
 of the full variational formulation
 \eqref{eqklajhdjfNNN} for  the solution $u^\d$  of the equation $\L u=g$.

These basis elements $ \varPsi_i,  i \in  \N$ will be  used as level $q$ gamblets corresponding to
measurement functions  defined  in \eqref{eqjhddhghd} and \eqref{eqkjdhkdhstpsiphitil} in Section
\ref{subsectiondiimspa}.
Let $A$ be the stiffness matrix $A=A^{(q)}$, defined
by $A_{i,j}=\<\varPsi_i,\varPsi_j\>$,  in Algorithm \ref{algdiscgambletsolvecase1g} and satisfying Condition \ref{conddiscrip3ordismatdis}.
When the matrices $\pi^{(k,k+1)}$ and $W^{(k)}$ are cellular as defined in Condition \ref{cond7fyf}, see also Remark \ref{cond7fyf},
 (or when these matrices have exponentially decaying coefficients) and when $A$ is banded (or exponentially decaying), (grid size accuracy) approximations of the outputs of  Algorithms \ref{alggambletcomutationnes}, \ref{alggamblettransf} and \ref{algdiscgambletsolvecase1gso}
 can be computed in $N \operatorname{polylog} N$ complexity based on the exponential decay of the gamblets. Here we use this exponential decay to develop fast versions of these algorithms  by truncating the stiffness matrices $B^{(k)}$ and $A^{(k)}$
 away from the diagonal and localizing (over the hierarchy) the computation of the gamblets. The natural notion of distance for these truncations will be the graph distance induced by $A$ over the hierarchy of indices as described in the following definition.

\begin{Definition}\label{Defdlkejhdkjh3}
Let $k\in \{1,\ldots,q\}$. Let $C^{(k),\d}$ be the $\I^{(k)}\times \I^{(k)}$ matrix such that $C^{(k),\d}_{i,j}=1$ if there exists $s,t\in \I^{(q)}$ with $s^{(k)}=i$,
$t^{(k)}=j$ and $A_{i,j}\not=0$ and $C^{(k),\d}_{i,j}=0$ otherwise.
Let $\db^{(k)}$ be the graph distance on $\I^{(k)}$ induced by the connectivity matrix $C^{(k),\d}$ (i.e. $\db^{(k)}:=\db^{C^{(k),\d}}$ with Definition \ref{defgraphmatdistbis}).
\end{Definition}

\subsection{The algorithm}
The Fast Gamblet transform, Algorithm \ref{fastgambletsolvecase1g} below,  will be a localized version of Algorithm
\ref{algdiscgambletsolvecase1gso}, and as such will require a localized version of
Algorithm \ref{alggambletcomutationnes}. To that end, we
will use the truncation operator  of Definition \ref{deftrunc} along with the localized inverse operation in Definition \ref{deflocinv}, below, to define localized versions of the
 components of Algorithm \ref{algdiscgambletsolvecase1gso} along with
localized versions of the components in Algorithm  \ref{alggambletcomutationnes} which Algorithm
\ref{algdiscgambletsolvecase1gso} requires.
To express this relationship with these previous algorithms, in the
 righthand  column of Algorithm \ref{fastgambletsolvecase1g},  a symbol such as
 Alg.~\ref{algdiscgambletsolvecase1gso}.x
 means that that line of Algorithm \ref{fastgambletsolvecase1g} corresponds with  line x in Algorithm  \ref{algdiscgambletsolvecase1gso}. We also use the same convention for lines in Algorithm  \ref{alggambletcomutationnes}. Note that
the notation $g$ in  Algorithm \ref{fastgambletsolvecase1g}  corresponds with
$b$ in Alg.~\ref{algdiscgambletsolvecase1gso}. The algorithm also depends on a sequence
$\rho_{k}, k =1,\ldots q-1$ of localization radii.

\begin{algorithm}[!ht]
\caption{Fast Gamblet transform/solve of \eqref{def_discretetete}.}\label{fastgambletsolvecase1g}
\begin{algorithmic}[1]
\STATE\label{step2gif} For $i,j\in \I^{(q)}$, $A_{i,j}=\<\varPsi_i,\varPsi_j\>$ \COMMENT{Alg.~\ref{alggambletcomutationnes}.\ref{step5giOR}}
\STATE\label{step3gdgpbf} For $i\in \I^{(q)}$, $\psi^{(q),\loc}_i= \varPsi_i$  \COMMENT{Level $q$ gamblets}
\STATE\label{step5gif}  $A^{(q),\loc}= A$
\STATE\label{step4gf} For $i\in \I^{(q)}$, $g^{(q),\loc}_i=[\D \G g,\psi_i^{(q),loc}]$ \COMMENT{Alg.~\ref{algdiscgambletsolvecase1gso}.\ref{step4gso}}
\FOR{$k=q$ to $2$}
\STATE\label{step7gf} $B^{(k),\loc}= W^{(k)}A^{(k),\loc}W^{(k),T}$ \COMMENT{Alg.~\ref{alggambletcomutationnes}.\ref{step7gOR}}
\STATE\label{step8gf} $w^{(k),\loc}=(B^{(k),\loc})^{-1} W^{(k)} g^{(k),\loc}$
\COMMENT{Alg.~\ref{algdiscgambletsolvecase1gso}.\ref{step8gso}}
\STATE\label{step9gf}  For $i\in \J^{(k)}$, $\chi^{(k),\loc}_i=\sum_{j \in \I^{(k)}} W_{i,j}^{(k)} \psi_j^{(k),\loc}$
\COMMENT{Alg.~\ref{alggambletcomutationnes}.\ref{step9gOR}}
\STATE\label{step10gf} $v^{(k),\loc}=\sum_{i\in \J^{(k)}}w^{(k),\loc}_i \chi^{(k),\loc}_i$
\COMMENT{Alg.~\ref{algdiscgambletsolvecase1gso}.\ref{step10gso}}
\STATE\label{step11gpif} $\bar{\pi}^{(k-1,k)}=(\pi^{(k-1,k)}\pi^{(k,k-1)})^{-1} \pi^{(k-1,k)}$
\COMMENT{Alg.~\ref{alggambletcomutationnes}.\ref{step11gpiOR}}
\STATE\label{step11gf}  $ D^{(k,k-1),\loc}=   \Inv_{\rho_{k-1}}(B^{(k),\loc}, W^{(k)} A^{(k),\loc}\bar{\pi}^{(k,k-1)})$ \COMMENT{Alg.~\ref{alggambletcomutationnes}.\ref{step11gOR}}
\STATE\label{step12gf} $R^{(k-1,k),\loc}=\bar{\pi}^{(k-1,k)}- D^{(k-1,k),\loc} W^{(k)}$
\COMMENT{Alg.~\ref{alggambletcomutationnes}.\ref{step12gOR}}
\STATE\label{step13gf} $A^{(k-1),\loc}= \Trun(R^{(k-1,k),\loc}A^{(k),\loc}R^{(k,k-1),\loc},\rho_{k-2})$ \COMMENT{ Alg.~\ref{alggambletcomutationnes}.\ref{step13gOR}}
\STATE\label{step14gf} For $i\in \I^{(k-1)}$, $\psi^{(k-1),\loc}_i=\sum_{j \in  \I^{(k)}} R_{i,j}^{(k-1,k),\loc} \psi_j^{(k),\loc}$
\COMMENT{Alg.~\ref{alggambletcomutationnes}.\ref{step14gOR}}
\STATE\label{step15gf} $g^{(k-1),\loc}=R^{(k-1,k),\loc} g^{(k),\loc}$
\COMMENT{Alg.~\ref{algdiscgambletsolvecase1gso}.\ref{step15gso}}
\ENDFOR
\STATE\label{step16gf} $ w^{(1),\loc}=(A^{(1),\loc})^{-1}g^{(1),\loc}$
\COMMENT{Alg.~\ref{algdiscgambletsolvecase1gso}.\ref{step16gso}}
\STATE\label{step17gf} $u^{(1),\loc}=\sum_{i \in \I^{(1)}} w^{(1),\loc}_i \psi^{(1),\loc}_i$
\COMMENT{Alg.~\ref{algdiscgambletsolvecase1gso}.\ref{step17gso}}
\STATE\label{step18gf} $u^{\d,\loc}=u^{(1),\loc}+v^{(2),\loc}+\cdots+v^{(q),\loc}$
\COMMENT{Alg.~\ref{algdiscgambletsolvecase1gso}.\ref{step18gso}}
\end{algorithmic}
\end{algorithm}

Now we present  Definitions \ref{deflocinv} and \ref{deftrunc}
 describing the localized linear solves and truncations performed in lines
 \ref{step11gf} and \ref{step13gf} of
 Algorithm \ref{fastgambletsolvecase1g}. They depend on the specification, performed below, of a sequence
$\rho_{k}, k =1,\ldots q-1$ of localization radii.

\begin{Definition}\label{deflocinv}
For $k\in \{2,\ldots,q\}$ we write $ D^{(k-1,k),\loc}$ the transpose of the $\J^{(k)}\times \I^{(k-1)}$ matrix
$ D^{(k,k-1),\loc}:=   \Inv_{\rho_{k-1}}(B^{(k),\loc}, W^{(k)} A^{(k),\loc}\bar{\pi}^{(k,k-1)})$  defined as follows.
Let $i\in \I^{(k-1)}$. Let $\J_i^{(k)}:=\{j\in \J^{(k)}: \db^{(k-1)}(i,j^{(k-1)})\leq \rho_{k-1}\}$. Let $z$ be the $\J_i^{(k)}$ vector defined by $z_j=(W^{(k)} A^{(k),\loc}\bar{\pi}^{(k,k-1)})_{j,i}$. Let $X$ be the  $\J_i^{(k)}\times \J_i^{(k)}$ sub matrix of $B^{(k),\loc}$ defined by $X_{j,j'}=B^{(k),\loc}_{j,j'}$. Let $y$ be the  $\J_i^{(k)}$ vector defined as the solution of
$X y=z$. We then define $ D^{(k,k-1),\loc}_{j,i}:=y_j$ for $j\in \J_i^{(k)}$ and $ D^{(k,k-1),\loc}_{j,i}:=0$ for $j\not\in \J_i^{(k)}$.
\end{Definition}

\begin{Definition}\label{deftrunc}
For $k\in \{3,\ldots,q\}$ we write $A^{(k-1),\loc}= \Trun(R^{(k-1,k),\loc}A^{(k),\loc}R^{(k,k-1),\loc},\rho_{k-2})$ the $\I^{(k-1)}\times \I^{(k-1)}$ matrix defined by $A^{(k-1),\loc}_{i,j}= (R^{(k-1,k),\loc}A^{(k),\loc}R^{(k,k-1),\loc})_{i,j}$ for $\db^{(k-2)}(i^{(k-2)},j^{(k-2)})\leq 2 \rho_{k-2}$ and $A^{(k-1),\loc}_{i,j}= 0$ for $\db^{(k-2)}(i^{(k-2)},j^{(k-2)})> 2 \rho_{k-2}$.
\end{Definition}

The following condition describes the accuracies to which the linear solves of lines \ref{step8gf}, \ref{step11gf} and \ref{step16gf} of Algorithm \ref{fastgambletsolvecase1g} must be performed. The value of the constant $C_a$ will be determined in Theorem \ref{tmdiscrete1} below. Recall the norm notation  $|y|_M:=\sqrt{y^T M y}$ for a symmetric matrix $M$.
\begin{Condition}\label{Condonrhok}
We  assume that (1)
$\rho_k\geq C_a \big((1+\frac{1}{\ln(1/H)})\ln \frac{1}{H^k}+\ln \frac{1}{\epsilon}\big)$ for $k\in \{1,\ldots,q\}$ (2)
The equation $w^{(k),\loc}=(B^{(k),\loc})^{-1} W^{(k)} g^{(k),\loc}$ on Line \ref{step8gf} of Algorithm \ref{fastgambletsolvecase1g} is solved to accuracy $|w^{(k),\loc}-w^{(k),\app}|_{B^{(k),\loc}}\leq \frac{\epsilon}{2 k^2}$ (writing $w^{(k),\app}$ the approximation of $w^{(k),\loc}$) (3) The linear systems $X y=z$ in Definition \ref{deflocinv} describing the equation $ D^{(k,k-1),\loc}=   \Inv_{\rho_{k-1}}(B^{(k),\loc}, W^{(k)} A^{(k),\loc}\bar{\pi}^{(k,k-1)})$
on Line \ref{step11gf} of Algorithm \ref{fastgambletsolvecase1g}  are solved to accuracy $|y-y^{\app}|_{X}\leq C^{-1}_a H^{3-k+kd/2} \epsilon/k^2$
(4)The equation  $ w^{(1),\loc}=(A^{(1),\loc})^{-1}g^{(1),\loc}$ on Line \ref{step16gf} of Algorithm \ref{fastgambletsolvecase1g} is solved to accuracy $|w^{(1),\loc}-w^{(1),\app}|_{A^{(1),\loc}}\leq \frac{\epsilon}{2}$.
\end{Condition}

For $k\in \{1,\ldots,q\}$ and $i\in \I^{(k)}$, let
\begin{equation}
\I^{(k,q)}_i:=\{j\in \I^{(q)}\mid \db^{(k)}(i,j^{(k)})\leq r\}\,.
\end{equation}
and
\begin{equation}
\I^{(k)}_i:=\{j\in \I^{(k)}\mid \db^{(k)}(i,j)\leq r\}\,.
\end{equation}
where $r$ is a constant independent from $k$ and $q$ with $r=1$ as a prototypical example.
For $i\in \I^{(k)}$, let $A^{i}$ be $\I^{(k,q)}_i\times \I^{(k,q)}_i$ matrix defined by
$A^{i}_{j,l}=A^{(q)}_{j,l}$ for $j,l\in \I^{(k,q)}_i$. Let $A^{i,-1}:=(A^{i})^{-1}$.
For $i\in \I^{(k)}$ and $x\in \R^{\I^{(q)}}$ write $\Pr_i^{(k,q)} x$ the projection of $x$ onto the $\I^{(k,q)}_i$ coefficients, i.e.
$\Pr_i^{(k,q)} x \in \R^{\I^{(k,q)}_i}$ is defined by $(\Pr_i^{(k,q)} x)_j=x_j$ for $j\in \I^{(k,q)}_i$.
For $i\in \I^{(k)}$ and $y\in \R^{\I^{(k)}}$ write $\Pr_i^{(k)} y$ the projection of $y$ onto the $\I^{(k)}_i$ coefficients, i.e.
$\Pr_i^{(k)} y \in \R^{\I^{(k)}_i}$ is defined by $(\Pr_i^{(k)} y)_j=y_j$ for $j\in \I^{(k)}_i$.

The following conditions are translations (to the discrete setting) of the sufficient conditions for localization obtained in the continuous setting in Section \ref{secexpdecloc}. More precisely Item \ref{itt1} of Condition \ref{condilwhiuhd} is a condition on the
number of elements in a $\db^{(k)}$ ball of radius $\rho$. Item \ref{itt2} is the direct translation of \eqref{eqljdhelkjdhkh3}, which as shown in Theorem \ref{thmlkhkjhlkh} can be expressed as a condition on the image space (that is pulled back via the continuity of the operator). Item \ref{itt3} (which corresponds to \eqref{equbarhkloc}), is a regularity condition on  measurement functions corresponding to localized versions of the inverse Sobolev inequalities appearing  in Item \ref{linconmat5dis} of Condition \ref{conddiscrip3ordismatdis} or Line \ref{lincondORhdg} of Condition \ref{cond1OR}.

\begin{Condition}\label{condilwhiuhd}
There exists  constants $C_{\loc,1}, C_{\loc,2}, C_{\loc,3}, d, d_s>0$ such that for $k\in \{1,\ldots,q\}$,
\begin{enumerate}
\item\label{itt1} $\Card(\I^{(k)})\leq C_{\loc,3} H^{-kd_s}$ and for $i\in \I^{(k)}$,
 \begin{equation}\label{eqkdhkdhkdjh}
 \Card\{j: \db^{(k)}(i,j)\leq \rho\}\leq C_{\loc,3} \rho^{d}
 \end{equation}
\item\label{itt2} For $x\in \R^{\I^{(q)}}$ such that $x\not \in \Img(\pi^{(q,k)})$,
\begin{equation}\label{eqlskdjlkd}
\frac{1}{C_{\loc,1}} \leq \frac{\sum_{i\in \I^{(k)}} \inf_{y\in \R^{\I^{(k)}}}(x-\pi^{(q,k)} y)^T \Pr_i^{(k,q)} A^{i,-1}\Pr_i^{(k,q)} (x-\pi^{(q,k)} y)}{\inf_{y\in \R^{\I^{(k)}}}(x-\pi^{(q,k)} y)^T A^{-1}(x-\pi^{(q,k)} y)}\leq C_{\loc,2}
\end{equation}
\item\label{itt3} $\frac{1}{C_{\loc,3}}H^k \leq \inf_{y\in \R^{\I^{(k)}}} \frac{\sqrt{y^T \pi^{(k,q)} \Pr_i^{(k,q)} A^{i,-1} \Pr_i^{(k,q)} \pi^{(q,k)} y}}{|\Pr_i^{(k)} y|}$ for $k\in \{1,\ldots,q\}$.
\end{enumerate}
\end{Condition}

The following theorem provides rigorous estimates on the accuracy and performance of Algorithm \ref{fastgambletsolvecase1g}.
\begin{Theorem}\label{tmdiscrete1}
Let $u^\d$ be the solution of the discrete system \eqref{def_discretetete} and let $u^{\d, \loc}$
 be the output of Algorithm \ref{fastgambletsolvecase1g}. Under Conditions \ref{conddiscrip3ordismatdis}, \ref{cond7fyf} and \ref{condilwhiuhd},
if Condition \ref{Condonrhok} is satisfied with $C_a\geq C_0$ (where $C_0$ depends only on  $C_{\d},|\aleph|, d_s, C_{\loc,1}, C_{\loc,2}$ and $C_{\loc,3}$), then
(1) $\|u^\d - u^{\d,\loc}\|\leq  \epsilon \|g\|_{2}$.
and (2) the results of Theorem \ref{thmconddisbndisbismatdis} remain true with $B^{(k)}$ replaced by $B^{(k),\loc}$ and $A^{(1)}$ replaced by $A^{(1),\loc}$. Furthermore, writing $u^{(1),\loc}$, $v^{(k),\loc}$, $u^{(1)}$ and $v^{(k)}$ the outputs of Algorithm \ref{fastgambletsolvecase1g}
and \ref{algdiscgambletsolvecase1gso} respectively, and writing $u^{(k),\loc}:=u^{(1),\loc}+\sum_{j=2}^k v^{(j),\loc}$ and $u^{(k)}:=u^{(1)}+\sum_{j=2}^k v^{(j)}$,
we have for $k\in \{1,\ldots,q-1\}$,
$\|u^{(k)} - u^{(k),\loc}\| \leq   \epsilon \|g\|_{2}$ and $\|u^{(k+1)}-u^{(k)}-(u^{(k+1),\loc}-u^{(k),\loc})\|\leq \frac{\epsilon}{2 (k+1)^2} \|g\|_{2}$.
\end{Theorem}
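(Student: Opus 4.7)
The plan is to establish the approximation and conditioning bounds by propagating localization (truncation) errors through the nested structure of Algorithm~\ref{fastgambletsolvecase1g}, using as the main analytic engine a discrete version of the exponential decay results of Section~\ref{secexpdecloc}. First, I would show that Condition~\ref{condilwhiuhd} plays the role of Condition~\ref{condkjehdu}/\ref{condmeasfuncloc} at the matrix level: Item~\ref{itt2} is the matrix transcription of \eqref{eqljdhelkjdhkh3}, Item~\ref{itt3} provides the local inverse Poincar\'e inequality needed to pass from norms on the coefficient vectors $W^{(k)}b^{(k)}$ to norms on the underlying combinations of gamblets, and Item~\ref{itt1} bounds the combinatorial size of neighborhoods. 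Combined with the cellular Condition~\ref{cond7fyf} (so that one step of $\pi^{(k-1,k)}$ and $W^{(k)}$ does not widen supports) and Condition~\ref{conddiscrip3ordismatdis} (which, by Theorem~\ref{thmconddisbndisbismatdis}, gives $\operatorname{Cond}(B^{(k)})\le CH^{-2}$ uniformly), these hypotheses yield a Kornhuber--Yserentant style subspace decomposition at each level~$k$, and hence via Theorems~\ref{thmswkskjsh}--\ref{thmhgguyg65OR} an exponential decay $|A^{(k)}_{i,j}|\lesssim \lambda_{\min}(A)H^{-2k}\gamma^{\db^{(k)}(i,j)}$ for some $\gamma=\gamma(C_{\loc,\cdot})<1$, together with the bound $\|\psi_i^{(k)}-\psi_i^{(k),n}\|_A\lesssim H^{-k}\gamma^n$ for the truncated gamblets.

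Next I would analyze Algorithm~\ref{fastgambletsolvecase1g} inductively from level~$q$ down to level~$1$. At the top level $A^{(q),\loc}=A$ and there is no error. The inductive hypothesis at level $k$ is twofold: (a) $\|A^{(k)}-A^{(k),\loc}\|$ is bounded (in an $H^{-2k}\lambda_{\min}(A)$-normalized spectral sense) by $CH^{2k-2}\gamma^{\rho_{k-1}}$, and (b) the gamblets $\psi_i^{(k),\loc}$ agree with the true $\psi_i^{(k)}$ up to an $H$-weighted exponential tail in $\|\cdot\|_A$. The induction step combines three ingredients: (i) $B^{(k),\loc}=W^{(k)}A^{(k),\loc}W^{(k),T}$ perturbs $B^{(k)}$ by the same amount, and by Item~\ref{itt1} its sparsity pattern is supported within $\db^{(k-1)}$-radius~$O(1)$ on $\I^{(k-1)}$; (ii) the localized inverse $D^{(k,k-1),\loc}=\Inv_{\rho_{k-1}}(\cdot,\cdot)$ introduces an additional error controlled by the exponential decay of $B^{(k),-1}$ away from the diagonal (which follows from $\operatorname{Cond}(B^{(k)})\le CH^{-2}$ and the truncated Neumann / Demko--Moss--Smith argument, exactly as in the spirit of the exponential decay Theorem~\ref{thmegdkj3hrrsuitebis}); (iii) the subsequent truncation $A^{(k-1),\loc}=\Trun(\cdots,\rho_{k-2})$ contributes a further tail of size $\gamma^{\rho_{k-2}}$ in the $H^{-2(k-1)}\lambda_{\min}(A)$-normalized norm. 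Assembling (i)--(iii) and choosing $C_a$ large enough that all three contributions are $\le\epsilon H^{-2(k-1)}\lambda_{\min}(A)/k^{2}$, the inductive hypothesis is maintained at level $k-1$, and simultaneously one preserves $\operatorname{Cond}(B^{(k),\loc})\le CH^{-2}$ and the two-sided bounds of Theorem~\ref{thmconddisbndisbismatdis} by a standard Weyl perturbation argument.

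With the matrices controlled, the second half of the proof converts matrix perturbations into solution errors. For each $k$, write
\[
v^{(k)}-v^{(k),\loc}=\bigl(v^{(k)}-\tilde v^{(k)}\bigr)+\bigl(\tilde v^{(k)}-v^{(k),\loc}\bigr),
\]
where $\tilde v^{(k)}$ solves the exact system $B^{(k)}\tilde w^{(k)}=W^{(k)}b^{(k),\loc}$ with the localized right-hand side. The first term is estimated by a standard backward-error bound in the $\<\cdot,\cdot\>$ norm, using $\|v^{(k)}\|^2=w^{(k),T}B^{(k)}w^{(k)}$ together with $\|b^{(k)}-b^{(k),\loc}\|_{B^{(k),-1}}\lesssim \epsilon\|g\|_2/k^2$ obtained from the level-$k$ inductive control of $R^{(k-1,k),\loc}$ and Theorem~\ref{corunbcnOR} applied to the exact algorithm (which guarantees that $v^{(k)}$ has the variational interpretation \eqref{eqldjhdkjhed} and hence is bounded by $\|g\|_2$ up to constants depending only on $C_\Phi,C_{\L},C_{\L^{-1}}$). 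The second term is controlled by Condition~\ref{Condonrhok}(2), which explicitly bounds the inner solver error by $\epsilon/(2k^2)$ in the $B^{(k),\loc}$-norm, and the $\<\cdot,\cdot\>$-orthogonality of the $\W^{(k),\loc}$ pieces (inherited from the $\<\cdot,\cdot\>$-orthogonal decomposition of Theorem~\ref{thmgugyug2OR} applied to the localized gamblets, up to the perturbations already controlled). Summing over $k$ using $\sum_k 1/k^2<\infty$ yields $\|u^{(k)}-u^{(k),\loc}\|\le\epsilon\|g\|_2$ for every $k$, and in particular $\|u^\d-u^{\d,\loc}\|\le\epsilon\|g\|_2$. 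The single-level estimate $\|u^{(k+1)}-u^{(k)}-(u^{(k+1),\loc}-u^{(k),\loc})\|\le\epsilon\|g\|_2/(2(k+1)^2)$ is simply the $k$-th summand of this telescoping bound.

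The main obstacle is bookkeeping the three simultaneous sources of error (truncation of $A^{(k),\loc}$, localized solves in $\Inv_{\rho_{k-1}}$, and inner solves for $w^{(k),\loc}$) across the $q$ levels while keeping the matrix conditioning from degrading. The delicate point is that the localization radii $\rho_k$ must beat both the combinatorial growth $H^{-kd}$ from Item~\ref{itt1} of Condition~\ref{condilwhiuhd} and the algebraic $H^{-2k}$ scaling of $B^{(k)}$; this is exactly why the logarithmic prescription $\rho_k\gtrsim (1+1/\ln(1/H))\ln(1/H^k)+\ln(1/\epsilon)$ in Condition~\ref{Condonrhok} is tight, and verifying that the constant $C_a$ can be taken to depend only on $C_\d,|\aleph|,d_s,C_{\loc,1},C_{\loc,2},C_{\loc,3}$ (and not on $q$ or $N$) is where the argument must be most careful.
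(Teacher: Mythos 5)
Your proposal is correct and follows essentially the same route as the paper: derive exponential decay of $A^{(k)}$ and of the gamblet tails from the subspace-decomposition machinery of Section \ref{secexpdecloc} (Theorems \ref{thmswkskjsh}--\ref{thmhgguyg65OR}, instantiated discretely via Condition \ref{condilwhiuhd}), propagate the localization error inductively from level $q$ down to level $1$ (Theorem \ref{thmerrorpropagation} and Theorem \ref{thmdjjuud}, where the paper's induction variable is $\er(k)=\bigl(\sum_i\|\psi_i^{(k)}-\psi_i^{(k),\loc}\|^2\bigr)^{1/2}$, which you also carry as hypothesis (b)), then transfer basis perturbations to stiffness-matrix conditioning and subband-solution errors via a perturbation lemma (Lemma \ref{lemshgjhgdhg3e}, Lemma \ref{thmdhdjh3}) and sum with the $1/k^2$ weights. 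The bookkeeping you flag as the delicate point is exactly what Theorems \ref{thmdjjuud} and \ref{tmshjgeydg} carry out.
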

\begin{Remark}
In the context of Examples \ref{egkdejkdhkjhdjk}, \ref{egprotoalsobolevl} and \ref{egprotoalsobolevlori},  Condition \ref{Condonrhok} is satisfied with $H=h^s$, $d_s=d/s$, and $d$ is the dimension of the physical space.
\end{Remark}

\subsection{Complexity}

Theorem \ref{tmdiscrete1} allows us to derive the complexity of Algorithm \ref{fastgambletsolvecase1g}. To state this complexity we will assume that the dimension $d$ introduced in Item \ref{itt1} of Condition \ref{condilwhiuhd} is sharp in the sense defined by the following condition.
\begin{Condition}\label{condexpdecaybis}
We have for $k\in \{1,\ldots,q-1\}$,  $\Card(\I^{(k)})\geq C_{\loc,3}^{-1} H^{-kd_s}$ and, for $i\in \I^{(k)}$ and $\rho \leq H^{-k d_s/d}$,
 $\Card\{j: \db^{(k)}(i,j)\leq \rho\}\geq C_{\loc,3}^{-1} \rho^{d}$.
\end{Condition}
Observe that under Conditions \ref{condilwhiuhd} and \ref{condexpdecaybis} the number of degrees of freedom of the discrete system \eqref{def_discretetete} is $N\approx H^{-qd_s}$.
The following table summarizes the complexity of Algorithm \ref{fastgambletsolvecase1g}. Note that the complexity bottleneck in the first solve lies in lines \ref{step11gf} and \ref{step13gf}. Once gamblets and stiffness matrices have been computed the complexity drops and the bottleneck is line \ref{step8gf}. Line 3 of Table \ref{tabcomplexity} gives the (sub-linear) complexity of the Algorithm for subsequent solves in the numerical homogenization regime (i.e. the desired accuracy is  $H^k \gg H^q$
  and $g$ has sufficient regularity to compute the coefficients $g^{(k),\loc}$ in $\mathcal{O}(H^{-kd_s})$ complexity).

\begin{table}[!h]
\begin{center}
\begin{tabular}{ l || c | r }
  \hline			
 Compute and store $\psi_i^{(k),\loc}$, $\chi_i^{(k),\loc}$, $A^{(k),\loc}$, $B^{(k),\loc}$   & $\epsilon\leq H^{q}$ & $\epsilon\geq H^{q}$\\
 and $u^{\loc}$ s.t. $\|u - u^{\loc}\|\leq  \epsilon \|g\|_2$ & &  \\\hline
  First solve & $N \ln^{3d} \frac{1}{\epsilon} $  & $N \ln^{3d} N$ \\\hline
  Subsequence solves & $N \ln^{d+1} \frac{1}{\epsilon}$  & $N \ln^{d} N \ln \frac{1}{\epsilon} $ \\ \hline  \hline
  Subsequent solves to compute the coefficients $c_i^{(k)}$  & & \\ of $u^{(k),\hom}=\sum_{i\in \I^{(k)}} c_i^{(k)} \psi_i^{(k)}$  & &$\epsilon^{-d_s} \ln^{d+1}  \frac{1}{\epsilon}$ \\
  s.t.  $\|u - u^{(k),\hom}\| \leq  C \epsilon \|g\|_{c}$  & &  \\
   \hline
\end{tabular}
 \end{center}
  \caption{Complexity of Algorithm \ref{fastgambletsolvecase1g}.}
    \label{tabcomplexity}
\end{table}

\subsection{Equivalent conditions for localization}

As in Section \ref{secexpdecloc}, Item  \ref{itt2} of Condition \ref{condilwhiuhd} can be expressed as equivalent conditions on the operator $A$ instead of its inverse $A^{-1}$. We will present these conditions here. We also refer to Theorem \ref{thmequivcondbis} for a  discrete version of Lemma \ref{lempropjguyug6}.

\begin{Theorem}\label{thmequivcond}
Item  \ref{itt2} of Condition \ref{condilwhiuhd} is equivalent to the following inequalities being satisfied for
for $x\in \R^{\I^{(q)}}/\Img(\pi^{(q,k)})$,
\begin{equation}\label{eqldodjdoijebisloc}
\frac{1}{C_{\loc,1}}  \leq  \frac{\sum_{i\in \I^{(k)}} \sup_{z \in \Ker(\pi^{(k,q),i})} \frac{(z^T \Pr_i^{(k,q)} x)^2}{z^T  A^i z}}{\sup_{z\in \Ker(\pi^{(k,q)})} \frac{(x^T z)^2}{z^T A z}} \leq C_{\loc,2}
\end{equation}
\end{Theorem}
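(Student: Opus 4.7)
The plan is to show that both the ratio appearing in Item \ref{itt2} of Condition \ref{condilwhiuhd} and the ratio appearing in \eqref{eqldodjdoijebisloc} are, in fact, equal term-by-term, so the equivalence is really an identity. The vehicle is the following elementary min--max duality: for any symmetric positive definite matrix $M$ on $\R^n$, any linear map $B:\R^m \to \R^n$, and any $x \in \R^n$,
\begin{equation}\label{eqdualityplan}
\inf_{y \in \R^m}(x - By)^T M^{-1}(x - By) \;=\; \sup_{0\neq z \in \Ker(B^T)}\frac{(x^T z)^2}{z^T M z}.
\end{equation}
I would prove \eqref{eqdualityplan} by the standard orthogonal decomposition $x = By^{*} + u$ with $u$ being $M^{-1}$-orthogonal to $\Img(B)$: the orthogonality condition $B^T M^{-1} u = 0$ forces $M^{-1}u \in \Ker(B^T)$, so $u = Mz^{*}$ for some $z^{*} \in \Ker(B^T)$. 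The infimum then equals $u^T M^{-1} u = z^{*,T} M z^{*}$, while Cauchy--Schwarz in the $M$-inner product gives $(x^T z)^2 = (z^{*,T} M z)^2 \leq (z^{*,T} M z^{*})(z^T M z)$ for any $z\in \Ker(B^T)$, with equality at $z = z^{*}$.

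Next I would apply \eqref{eqdualityplan} to the denominator with $B = \pi^{(q,k)}$ and $M = A$. Since $B^T = \pi^{(k,q)}$, this yields
\begin{equation*}
\inf_{y\in \R^{\I^{(k)}}}(x - \pi^{(q,k)} y)^T A^{-1}(x - \pi^{(q,k)} y) = \sup_{0\neq z\in \Ker(\pi^{(k,q)})}\frac{(x^T z)^2}{z^T A z},
\end{equation*}
which is precisely the denominator of \eqref{eqldodjdoijebisloc}.

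For each $i \in \I^{(k)}$ I would then apply \eqref{eqdualityplan} locally by specializing to the space $\R^{\I^{(k,q)}_i}$ with $x \mapsto \Pr_i^{(k,q)} x$, $B \mapsto \Pr_i^{(k,q)} \pi^{(q,k)}$, and $M \mapsto A^i$. The transpose of this local $B$ is $\pi^{(k,q)} \Pr_i^{(k,q),T}$, which is exactly the restriction $\pi^{(k,q),i}$ of $\pi^{(k,q)}$ to the index block $\I^{(k,q)}_i$, so its kernel matches the set $\Ker(\pi^{(k,q),i})$ used in \eqref{eqldodjdoijebisloc}. This gives
\begin{equation*}
\inf_{y}\bigl(\Pr_i^{(k,q)}(x - \pi^{(q,k)}y)\bigr)^T A^{i,-1}\bigl(\Pr_i^{(k,q)}(x - \pi^{(q,k)}y)\bigr) = \sup_{0\neq z \in \Ker(\pi^{(k,q),i})}\frac{\bigl((\Pr_i^{(k,q)}x)^T z\bigr)^2}{z^T A^i z}.
\end{equation*}
Summing over $i\in \I^{(k)}$ shows that the numerators of Item \ref{itt2} and of \eqref{eqldodjdoijebisloc} coincide; combined with the identity for the denominators, the two ratios are equal for every $x \not\in \Img(\pi^{(q,k)})$, so the two bracketings by $C_{\loc,1}^{-1}$ and $C_{\loc,2}$ are trivially equivalent.

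The only real obstacle is notational bookkeeping: one must keep straight that $\Pr_i^{(k,q)}$ on the two sides of $A^{i,-1}$ in Condition \ref{condilwhiuhd} is to be read as $\Pr_i^{(k,q),T} A^{i,-1} \Pr_i^{(k,q)}$, and that the transpose of the localized operator $\Pr_i^{(k,q)} \pi^{(q,k)}$ truly is $\pi^{(k,q),i}$ (i.e., the restriction of $\pi^{(k,q)}$ to the columns indexed by $\I^{(k,q)}_i$). Once this identification is pinned down, \eqref{eqdualityplan} delivers the theorem without any further estimates, and in fact with the same constants $C_{\loc,1}, C_{\loc,2}$ on both sides.
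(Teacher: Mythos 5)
Your proposal is correct and follows essentially the same route as the paper: the min--max duality \eqref{eqdualityplan} you state is exactly the content of Lemmas \ref{lemuyg76ggkhj} and \ref{lemuyg76ggkhjlocal} (the decomposition $x=\pi^{(q,k)}y+Az$ with $z\in\Ker(\pi^{(k,q)})$, followed by Cauchy--Schwarz in the $A$-inner product on the kernel), applied globally and then locally on each block $\I^{(k,q)}_i$. The paper's proof likewise reduces the theorem to the term-by-term identity of the two ratios, so the same constants carry over, exactly as you conclude.
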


Write $n_{\max}$ the maximum number of neighbors of a subset $\I^{(k)}_i$ defined as
\begin{equation}
n_{\max}=\max_{k\in \{1,\ldots,q\}, i\in \I^{(k)}} \operatorname{Card}\{ j \in \I^{(k)}\mid \db^{(k)}(j,j')\leq 1 \text{ for some }j'\in \I^{(k)}_i \}\,.
\end{equation}
Let $K_{\min}$ be the largest constant such that for all $k\in \{1,\ldots,q\}$ and $z\in \Ker(\pi^{(k,q)})$, there exists a decomposition $z=\sum_{i\in \I^{(k)}} z^i$ with $z^i\in \Ker(\pi^{(k,q)})$, $z^i_j=0$ for $j\not \in \I^{(k,q)}_i$, and
\begin{equation}
K_{\min}\sum_{i\in \I^{(k)}} (z^i)^T A z^i \leq z^T A z
\end{equation}

\begin{Theorem}\label{thmdlkjdhjdh}
It holds true that Item (2) of Condition \ref{condilwhiuhd} holds with $\C_{\loc,2} \leq n_{\max}$ and
$\C_{\loc,1} = 1/K_{\min}$.
\end{Theorem}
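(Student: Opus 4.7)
The plan is to work with the equivalent formulation of Item~(2) of Condition~\ref{condilwhiuhd} provided by Theorem~\ref{thmequivcond}, which rewrites the bilateral bound in terms of Rayleigh quotients $\frac{(x^{T}z)^{2}}{z^{T}Az}$ on $\Ker(\pi^{(k,q)})$ globally and $\frac{(z^{T}\Pr_{i}^{(k,q)}x)^{2}}{z^{T}A^{i}z}$ on $\Ker(\pi^{(k,q),i})$ locally. In this formulation the lower and upper bounds decouple cleanly: the constant $C_{\loc,1}$ controls the passage from a global maximizer to localized pieces (and so is dual to $K_{\min}$), while $C_{\loc,2}$ controls the gluing of local maximizers into a global one (and so is governed by the bounded overlap $n_{\max}$). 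This is the discrete analogue of Lemma~\ref{lemdkjdhjh3e} and Proposition~\ref{propkjshkdjhdkjh}.

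For the inequality $C_{\loc,1}\leq 1/K_{\min}$, I would fix $x\in\R^{\I^{(q)}}/\Img(\pi^{(q,k)})$ and pick $z^{*}\in\Ker(\pi^{(k,q)})$ attaining (up to $\varepsilon$) the supremum $\sup_{z\in\Ker(\pi^{(k,q)})}\frac{(x^{T}z)^{2}}{z^{T}Az}$. By the definition of $K_{\min}$, decompose $z^{*}=\sum_{i\in\I^{(k)}}z^{i}$ with $z^{i}\in\Ker(\pi^{(k,q)})$, $z^{i}_{j}=0$ for $j\not\in\I^{(k,q)}_{i}$, and $K_{\min}\sum_{i}(z^{i})^{T}Az^{i}\leq(z^{*})^{T}Az^{*}$. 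Since $z^{i}$ is supported in $\I^{(k,q)}_{i}$, one has $x^{T}z^{i}=(\Pr_{i}^{(k,q)}x)^{T}\Pr_{i}^{(k,q)}z^{i}$ and $(z^{i})^{T}Az^{i}=(\Pr_{i}^{(k,q)}z^{i})^{T}A^{i}(\Pr_{i}^{(k,q)}z^{i})$, while the kernel conditions transfer so that $\Pr_{i}^{(k,q)}z^{i}\in\Ker(\pi^{(k,q),i})$. Cauchy--Schwarz then gives
\begin{equation*}
(x^{T}z^{*})^{2}=\Big(\sum_{i}x^{T}z^{i}\Big)^{2}\leq\Big(\sum_{i}\frac{(x^{T}z^{i})^{2}}{(z^{i})^{T}Az^{i}}\Big)\Big(\sum_{i}(z^{i})^{T}Az^{i}\Big),
\end{equation*}
whence dividing by $(z^{*})^{T}Az^{*}$ and using the $K_{\min}$ bound yields the desired estimate with $1/C_{\loc,1}\geq K_{\min}$.

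For the inequality $C_{\loc,2}\leq n_{\max}$, I would for each $i$ pick a (near) maximizer $z^{i}\in\Ker(\pi^{(k,q),i})$ of the local quotient, extend it by zero to $\tilde{z}^{i}\in\R^{\I^{(q)}}$ supported in $\I^{(k,q)}_{i}$, and note that $\tilde{z}^{i}\in\Ker(\pi^{(k,q)})$ and $(\tilde{z}^{i})^{T}A\tilde{z}^{i}=(z^{i})^{T}A^{i}z^{i}$. Set $a_{i}:=(x^{T}\tilde{z}^{i})/((\tilde{z}^{i})^{T}A\tilde{z}^{i})$ and $z:=\sum_{i}a_{i}\tilde{z}^{i}\in\Ker(\pi^{(k,q)})$. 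Then $x^{T}z=\sum_{i}\frac{(x^{T}\tilde{z}^{i})^{2}}{(\tilde{z}^{i})^{T}A\tilde{z}^{i}}$, while Cauchy--Schwarz in the $A$-inner product combined with AM--GM bounds each cross term $a_{i}a_{i'}(\tilde{z}^{i})^{T}A\tilde{z}^{i'}$ by $\tfrac{1}{2}(a_{i}^{2}(\tilde{z}^{i})^{T}A\tilde{z}^{i}+a_{i'}^{2}(\tilde{z}^{i'})^{T}A\tilde{z}^{i'})$. Since cross terms vanish unless the supports of $\tilde{z}^{i}$ and $\tilde{z}^{i'}$ are adjacent in the graph of $A$, the definition of $n_{\max}$ gives $z^{T}Az\leq n_{\max}\sum_{i}a_{i}^{2}(\tilde{z}^{i})^{T}A\tilde{z}^{i}=n_{\max}\,x^{T}z$, and hence
\begin{equation*}
\sup_{z\in\Ker(\pi^{(k,q)})}\frac{(x^{T}z)^{2}}{z^{T}Az}\geq\frac{(x^{T}z)^{2}}{z^{T}Az}\geq\frac{1}{n_{\max}}\sum_{i}\frac{(x^{T}\tilde{z}^{i})^{2}}{(\tilde{z}^{i})^{T}A\tilde{z}^{i}},
\end{equation*}
which after passing to the supremum in each $i$ delivers $C_{\loc,2}\leq n_{\max}$.

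The main obstacle will be keeping the local/global kernel bookkeeping correct, i.e.\ verifying that the notation $\pi^{(k,q),i}$ is consistent with both the restriction of the global kernel to locally supported vectors and with the extension-by-zero of elements of $\Ker(\pi^{(k,q),i})$ into $\Ker(\pi^{(k,q)})$. Under Condition~\ref{cond7fyf} on the cellular structure of $\pi^{(k-1,k)}$ (hence of $\pi^{(k,q)}$), this consistency is automatic, but the argument will need a brief unpacking at the start to pin down conventions; once that is done, both bounds reduce to the two Cauchy--Schwarz arguments above.
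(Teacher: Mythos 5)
Your proof is correct and follows essentially the paper's route: the paper's one-line proof defers to Lemma \ref{lemdkjdhjh3e} and Proposition \ref{propkjshkdjhdkjh}, and your two Cauchy--Schwarz arguments (the $K_{\min}$-decomposition for the lower bound, bounded overlap plus AM--GM for the upper bound) are exactly the discrete transcriptions of those, passed through the Rayleigh-quotient reformulation of Theorem \ref{thmequivcond}, with the cellular structure of Condition \ref{cond7fyf} justifying the extension-by-zero step as you note. The only caveat is that you establish $C_{\loc,1}\leq 1/K_{\min}$, which is all that is needed for the condition to hold; if the equality sign in the statement is read as optimality of that constant, one would additionally need the discrete analogue of Proposition \ref{propkjshkdjhdkjh}.
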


\section{Correspondence between Numerical Analysis, Approximation Theory, and Statistical Inference}
\label{sec_correspondence}

The correspondence between Numerical Analysis and Statistical Inference is not new.  As exposed by Diaconis \cite{Diaconis:1988},
a compelling example of such a correspondence is the rediscovery of classical quadrature  rules  (such as the trapezoidal rule) by reformulating  numerical integration as a Bayesian inference problem with integrals of the Brownian Motion as priors and values of the integrand at quadrature points as data.  Although this correspondence can be traced back to  Poincar{\'e}'s course in Probability Theory  \cite{Poincare:1896} it appears to have remained ignored until the pioneering works of Sul'din \cite{sul1959wiener}, Palasti and Renyi \cite{PalastiRenyi1956}, Sard
 \cite{sard1963linear},  Kimeldorf and Wahba \cite{Kimeldorf70} (on the correspondence between Bayesian estimation and spline smoothing/interpolation, see also Van der Linde \cite{van1995splines}),
and the systematic investigation of  Larkin \cite{larkin1972gaussian} of the connections between conditioning Gaussian measures/processes and numerical approximation. As noted by Larkin \cite{larkin1972gaussian}, the application of probabilistic concepts and techniques to numerical integration/approximation ``attracted little attention among numerical analysts'', perhaps, as observed in \cite{OwhadiMultigrid:2015}, ``due to the counterintuitive nature of the process of randomizing a known function''.

However, as presented in \cite{OwhadiMultigrid:2015}, a natural framework for understanding this  process of randomization emerged
in the pioneering works of Kadane and Wasilkowsi \cite{kadane1983average}, Wo{\'z}niakowski \cite{Woniakowski1986}, Packel \cite{Packel1987}, and Traub, Wasilkowski,
and Wo\'{z}niakowski \cite{Traub1988} on Information Based Complexity (IBC)  \cite{Nemirovsky1992, Woniakowski2009}, where the performance of an algorithm operating on incomplete information can be analyzed in the usual worst case setting or the average case (randomized) setting \cite{Ritter2000, Novak2010} with respect to the missing information. Although the measure of probability (on the solution space) employed in the average case setting  may be arbitrary, as observed by Packel \cite{Packel1987}, the average case setting could be interpreted as a possible mixed strategy in an adversarial game obtained by lifting a (worst case) min max problem  to a min max problem over mixed (randomized) strategies. Such results for certain
 minmax statistical estimators have been found in
Li \cite{li1982minimaxity} and
 Sacks and Ylvisaker
\cite{sacks1978linear}. This observation, as presented in \cite{OwhadiMultigrid:2015, OwhadiScovel:2015w} and \cite{owhadi2017game},  initiates a natural connection between Numerical Analysis (and model reduction) and  Wald's Decision Theory \cite{Wald:1945}, evidently influenced by Von Neumann's Game Theory \cite{VNeumann28, VonNeumann:1944}.

The randomized setting has also been investigated, independently from the IBC purpose of computing with limited resources and partial information, from the perspective of providing statistical descriptions of numerical errors through
a natural correction between Numerical Analysis and Bayesian Inference (where confidence intervals can be derived from posterior distributions by conditioning prior distributions on values at quadrature points). Here, we refer to the pioneering works of  Diaconis \cite{Diaconis:1988}, Shaw \cite{Shaw:1988}, O'Hagan  \cite{Hagan:1991, Hagan:1992} (where new quadratures are discovered by exploiting this connection) and Skilling \cite{Skilling1992}. Moreover, Bayesian interpretation of  inverse problems
  have been developed in Evans and Stark \cite{evans2002inverse},
O'Sullivan \cite{o1986statistical},
 Tenorio \cite{tenorio2001statistical},
 Tarantola \cite{tarantola2005inverse}, Backus \cite{backus1988bayesian}
 Backus \cite{backus1996trimming},
 O'Sullivan \cite{o1986statistical}, Tenorio \cite{tenorio2001statistical}, Stuart \cite{stuart2010inverse},
Hennig \cite{Hennig2015b}, Hennig and Kiefel \cite{hennig2013quasi},
Cockayne, Oates,  Sullivan, and Girolami \cite{cockayne2016probabilistic, Cockayne2017}.

The possibilities offered by combining numerical uncertainties/errors with  model uncertainties/errors in a unified framework  are now stimulating a resurgence of  the statistical inference approach to numerical analysis, we refer in particular to recent results
by Briol,  Chkrebtii,  Campbell,  Calderhead,
Conrad, Duvenaud, Girolami,  Hennig,    Karniadakis, Maziar, Oates, Osborne, Owhadi, Paris, Sejdinovic,  S\"{a}rk\"{a},  Sch\"{a}fer, Schober, Scovel,  Sullivan,  Stuart, Venturi, Zhang and Zygalakis
\cite{ChkrebtiiCampbell2015, schober2014nips, Owhadi:2014,Hennig2015, Hennig2015b, Briol2015, Conrad2015, OwhadiMultigrid:2015,  OwhadiScovel:2015w, OwZh:2016, cockayne2016probabilistic, perdikaris2016multifidelity, raissi2017inferring, Cockayne2017,  SchaeferSullivanOwhadi17}. We also refer to \cite{schober2014nips} where it is shown that by placing a (carefully chosen) probability distribution  on the solution space of an ODE  and conditioning on quadrature points, one obtains a posterior distribution on the solution whose mean may coincide with classical numerical integrators such as Runge-Kutta methods. We also refer to  \cite{ChkrebtiiCampbell2015} where it is shown that the statistical approach is particularly well suited for chaotic dynamical systems for which deterministic worst case error bounds may provide little information.

For PDEs or integro-differential operators, \cite{Owhadi:2014} shows that numerical homogenization has a Bayesian Inference interpretation in which, filtering Gaussian noise through the inverse operator, when combined with conditioning, produces accurate finite-element basis functions for the solution space whose  deterministic  worst case errors can be bounded by standard deviation errors using the reproducing kernel structure of the covariance function of the filtered Gaussian field.

Now let us turn to the correspondence between
Approximation Theory and Statistical Inference.  Evidently, the first such connection is the Gauss-Markov
Theorem, see e.g. Kruskal \cite{kruskal1968gauss}.
 It says that, for a random vector in a finite dimensional euclidean space
 whose first moment is known to live in a subspace $S$ and whose covariance
is an unknown multiple of a known positive semidefinite matrix $V$, that the least squares estimate
of the mean
using the euclidean structure is the same as the minimum variance linear unbiased estimator if and only if
$V$ leaves $S$ invariant.  According to  Rao \cite{rao1976estimation}, ``ever since Gauss introduced the theory of least squares there
has been considerable interest in the estimation of parameters by linear functions of observations."
He also says that ``with the advent of decision theory by Wald, attempts are being made
to find estimators which may be biased but closer to the true values in some sense", and asserts
that the methods developed are all special cases of Bayes linear estimators, and that these in turn
are examples of admissible linear estimators.  Moreover, in \cite[Thm.~5.1]{rao1976estimation} Rao
demonstrates when the set  $X$  of  parameters $x$ is  finite dimensional
ellipse, and for each $x$ the random variable has mean $x$  and the variance  known up to a scalar multiple, and the objective is to estimate a rank one linear function $Sx$,
that minimax estimators  are Bayes linear estimators,   and a Bayes linear estimator obtained using a prior
with covariance matrix this same scalar multiple of  the matrix defining the constraint ellipse $X$, is minimax.
As a consequence he obtains the famous result of Kuks and Olman  \cite{kuks1972minimax,kuks1971minimax}, see also
 Speckman \cite[Lem.~3.1]{speckman1985spline} for a nice
 statement and proof of the Kuks Olman result, for minimax estimation along with the assertion of
Bunke \cite{bunke1975minimax} that the Kuks-Olman estimator is  minimax with respect to a natural matrix risk. Evidently,
  L{\"a}uter \cite{lauter1975minimax} generalizes Kuks and Olman's results
 to arbitrary $S$  in finite dimensions.

Now consider the more general linear setting of
 Donoho  \cite{donoho1994statistical}, where
for a convex subset $X$ of a  separable Hilbert space $H$, we consider
a linear model
\begin{equation}
\label{def_linearmodel}
  y=\Phi x+z
\end{equation}
where $x \in X$ and $z$ is a noise term, and we are interested in estimating
$Sx$ using a  (not necessarily linear) functions of the observation $y$,  where $S$ is a linear operator. When $z$ is random with covariance $\Sigma$, then this is a statistical estimation problem. However,
when $z$ is not random but is only known to lie in some subset $Z$,  then this is a problem in optimal recovery,
see e.g. Micchelli  and Rivlin  \cite{micchelli1977survey} and Golomb and Weinberger \cite{bounds1959michael}.    Donoho states that
``While the two problems are superficially different, there are a number of underlying similarities. Suppose that $S$,
 $\Phi$
 and $\Sigma$ are fixed, but we approach the problem two different ways: one time assuming the noise is random Gaussian, and the other time assuming the noise is chosen by an antagonist, subject to a quadratic constraint.
 In some cases both ways of stating the problem have been solved, and what happens is that while the
two solutions are different in detail, they belong to the same family -i.e.~the same family of splines,
 of kernel estimators, or of regularized least squares estimates -only the "tuning" constants" are chosen differently.
Also, a number of theoretical results in the two different fields bear a resemblance. For example,
 Micchelli \cite{micchelli1975optimal} showed in the optimal recovery model that minimax linear estimates are
 generally minimax even among all nonlinear estimates."
The  results of Donoho \cite{donoho1994statistical} amount to a rather comprehensive analysis of the connection
between these two problems.

However, here
we are interested in the case where the observations are made without noise, so that
this connection between optimal recovery and linear statistical estimation
 appears to have limited utility for us.
For example, consider the case of numerical quadrature. That is suppose that there is a
real function $f$ and observations are made of the values $f(x_{i}), i=1,..,n$ at $n$ points
$x_{i}, i=1,..,n$. Then we desire to estimate a function of $f$, such as
$f(x^{*})$ for a specified $x^{*}$ or $\int{fd\mu}$, the integral of $f$ with respect to  some measure
$\mu$.
That is,   instead of the classical linear model
\eqref{def_linearmodel}, to estimate the value
\[Sx\]
of a linear operator $S$ based on the values of linear observations
\begin{equation}
 \label{def_homlinearmodel1}
 y=\Phi x \, .
\end{equation}
In the Information-Based Complexity (IBC) approach to this problem, see e.g.~
Traub, Wasilkowski and Wo{\'z}niakowski \cite{traubh},   there is the worst-case approach, which amounts to Optimal Recovery, and the
average case approach.  Here, similar to that described by Donoho \cite{donoho1994statistical},
 there is also a fairly complete connection between the worst case approach and the
average case approach.  To describe an important example,  let $X_{1}$ and $Y$ be real linear spaces,
 $Z$s and $ X_{2}$  be
 Hilbert spaces,
and consider  bounded linear operators $S:X_{1}\rightarrow X_{2}$ and $T:X_{1}\rightarrow Z$ such that
$T$ is injective with closed range.
Errors obtained by constraining to a balanced subset $X'\subset X_{1}$ can be analyzed. However,
the application of Gaussian measures, restricted to such sets  according to
\cite[Sec.~6.5.8]{traubh}, appears to
essentially has to assume large radius to obtain approximations. Consequently, to obtain the connection
between worst case and average, it appears more appropriate to consider relative error. In the
case where the optimal solution is known to be linear then this relative error amounts to a constraint.
Indeed, the question of which IBC problems admit linear optimal solutions is an important complexity reduction.
According to Novak \cite{novak1993quadrature} ``Although adaptive methods are widely used, most
theoretical results show that adaption does not help under various conditions, see, for
example, Bakhvalov \cite{bakhvalov1959approximate}, see \cite{bakhvalov2015approximate} for the English translation,
 Gal and Micchelli \cite{gal1980optimal}, Micchelli and  Rivlin \cite{micchelli1977survey,micchelli1985lectures}
Traub and Wo´zniakowski \cite{traub1980general}, Traub, Wasilkowski and Wo´zniakowski \cite{traubh}, and
Wasilkowski and Wo´zniakowski \cite{wasilkowski1984can}." In particular, note that in the discussion below
the choice of Gaussian measure admits linear optimal solutions.

Let the relative error of an estimator $v=\phi\circ \Phi$, defined by a measurable function
$\phi:Y \rightarrow X$ which uses only the information provided by $\Phi$ to estimate $x \in X$,
be defined as
\begin{equation}
\label{def_wc}
 e^{wc}(v):= \sup_{x \in X_{1}: x \neq 0}\frac{\|Sx-v(x)\|_{X_{2}}}{\|Tx\|_{Z}}\, .
\end{equation}
On the other hand consider the average case error
\begin{equation}
\label{def_avg}
 e^{avg}(v):= \Bigl(\int_{X_{1}}{\|Sx-v(x)\|_{X_{2}}^{2}d\mu(x)}\Bigr)^{\frac{1}{2}}\,,
\end{equation}
where $\mu$ is a probability measure on $X_{1}$. In each case the objective is to minimize
this error. Moreover, according to Wasilkowski \cite{wasilkowski1983local}
this minimal error
   can be defined independent of algorithms and are the same as those defined using algorithms, see
\cite[Thm.~3.2.1, Pg.~50]{traubh}.
Let
\[\s(y):=\arg\min_{x \in X_{1}}{\{\|Tx\|_{Z}: \Phi x=y\}}\]
denote the $T$-spline determining the function
 $\s:Y \rightarrow X_{1}$, and
consider the corresponding spline algorithm
$\hat{s}:=S\s\circ \Phi$. Then, according to Traub Wasilkowski and Wo{\'z}niakowski \cite{traub1984average},
the spline algorithm $\hat{s}$ is a worst case optimal minmax solution, that is, it minimizes
\eqref{def_wc}.
Moreover, when $X_{1}=\R^{m}$ is finite dimensional and $\mu$ is a centered  Gaussian measure
on $X_{1}$ with  covariance operator $T^{*}T$,  then
the $T$-spline is also an optimal solution to the average case problem, that is, it minimizes
\eqref{def_avg}.  Remarkably, these results depend very weakly on the structure of the spaces
other than $Z$. Evidently, this is related to the importance
of the hypercircle inequality in optimal recovery, see Golomb and  Weinberger \cite{bounds1959michael},
Larkin \cite{larkin1972gaussian}.
 For example, Wasilkowski and Wo{\'z}niakowski \cite{wasilkowski1986average}
show that if  $r:X_{2} \rightarrow \R$ is convex and symmetric about the origin, then
$\hat{\s}$ also is  minimizer  of the more general
average error function
\begin{equation}
\label{def_avg2}
 e^{avg}(v):= \int_{X_{1}}{r\bigl(Sx-v(x)\bigr)d\mu(x)}\, .
\end{equation}
Wasilkowski and Wo{\'z}niakowski \cite{wasilkowski1986average}, see also
Traub, Wasilkowski and Wozniakowski \cite[Rmk.~6.5.4:1]{traubh}, demonstrate that
these average case results do not apply when $X_{1}$ is infinite dimensional. For example,
if we let $X_{1}$ be a separable Hilbert space and $\mu$ a centered Gaussian measure
with covariance operator $C$, then if we define the $C^{-1}$ spline by
\[\s(y):=\arg\min_{x \in X_{1}}{\{\langle C^{-1}x,x\rangle : x \in C(X_{1}),  \Phi x=y\}}\]
    then Wasilkowski and Wo{\'z}niakowski \cite{wasilkowski1986average} assert that
 this spline is optimal for the average case error \eqref{def_avg2}, see also
Traub, Wasilkowski and Wozniakowski \cite[Rmk.~6.5.4:1]{traubh} and Novak and Wozniakowski \cite[Thm.~4.28]{novak2008tractability}.
 However, since it is known
that the covariance operator $C$ of a Gaussian measure is trace class, and therefore compact,
then any $T$ such that $C=T^{*}T$ cannot have closed range, thus violating the assumptions of the worst case
result.  More examples of this phenomena, along with an
 analysis of both the worst-case and average-case
situation for separable Banach spaces can be found in
Lee and Wasilkowski \cite{lee1986approximation}.

In this paper, we are interested in a variation on this theme.
Let $X_{1}$ and $X_{2}$ be separable Hilbert spaces and
let \[  \L:X_{2} \rightarrow X_{1} \]
be an isomorphism whose inverse  $S:=\L^{-1}$ determines implicitly a solution
operator $S:X_{1} \rightarrow X_{2}$. We are interest in estimating
the solution $Sx$ but, in this situation, we want to do so
using observation data obtained from a linear map $\Phi:X_{2} \rightarrow Y$.
Formally,  there is an equivalence to that above, obtained by changing the roles
of $X_{1}$ and $X_{2}$   and determining a new information operator
$\Phi':=\Phi S$. However, now the transformed observation operator  $\Phi':X_{1} \rightarrow Y$ involves
the implicitly defined solution operator $S$. Moreover, here it is important to represent
a convex balanced subset of $X_{2}$  through the an injection $i:X_{0}\rightarrow X_{2}$,  thus determining
a convex balanced subset $\L^{-1}(i(X_{0}))$ as a constraint subset. Such a constraint set in general
cannot be represented through a constraint operator $T$ as above. Then, instead of wanting
to estimate the value of the solution operator, we are interested in
determining an optimal spline in the following sense.
 Push forward the norm  $\|\cdot\|_{0}$   of $X_{0}$ to an extended norm on
$ X_{2}$, which we also denote by $\|\cdot\|_{0}$,    by
\[\|u\|_{0}=
\begin{cases}
\|u'\|_{0}&  u=iu'\\
\infty &  u \notin R(i)\, .
\end{cases}
\]
and consider the relative error criteria for a function $v:=\phi\circ \Phi$ defined by
 \[e(v)=\sup_{u\in B: u \neq 0}\frac{\|u -v(u)\|}{\|\L u\|_{0}}
\]
 That is, this situation is like the estimation of the identity operator and the operator
$\L$ and the injection $i:X_{0} \rightarrow X_{2}$ determine the denominator.
Although, such constraints sets generally can not be obtained through the application of a restriction operator,
 Packel \cite{packel1986linear} asserts that when the constraint set is a balanced subset, then
it is generally known that the optimal solution may not be linear. However, he provides
simple general criteria so that it has extended-real valued linear optimal solutions.

   Smale's \cite{smale1985efficiency} discusses the computational complexity of the quadrature problem
mentioned above line \eqref{def_homlinearmodel1}
in the context of Traub and Wozniakowski's \cite{traub1984information} theory of
Information-Based Complexity (IBC) and mentions the 1972 paper of Larkin \cite{larkin1972gaussian} as an
 ``important earlier paper in this area".
 Somewhat later
Diaconis \cite{diaconis1988bayesian} introduces
 Bayesian Numerical Analysis, citing  O'Hagan \cite{OHagan1985}
 (see also  O'Hagan \cite{Hagan:1992}),
Smale \cite{smale1985efficiency}, and the IBC results of
Lee and Wasilkowsi \cite{lee1986approximation} as related approaches. We note
that Kadane and Wasilkowski \cite{kadane1983average}
address the Bayesian nature of the IBC average case approach in an unpublished report.
Moreover, in the introduction,  Larkin \cite{larkin1972gaussian}  appears to take a  different position
than Donoho with regard the numerical analysis problem and a problem of statistical estimation as follows: The numerical analyst will assume that the function lies in some special class, such as a polynomial of a certain order, or a reproducing kernel Hilbert space, and then
 as an estimator, select from this space an element which interpolates the observational data.
To make this selection, one would generally minimize some metric associated with this special class. On the other hand,
he mentions, that a statistician might put a probability measure on the space of feasible functions
and then, as an estimate, compute its conditional expectation given the observed values
$f(x_{i}), i=1,..,n$.  This is identical to the approach mentioned in the introduction
to Diaconis \cite{diaconis1988bayesian}.

Larkin, on the other hand,
 mentions that his approach is a sort of hybrid, where
 to solve such a quadrature problem we put a (prior) measure on a  Hilbert  subspace of  {\em interpolation} functions
 and then compute
the conditional expectation of the function to be estimated conditioned on the observations.
 That is, let $X$ denote a linear space corresponding to the feasible functions,  and let
$H \subset X$ denote a Hilbert subspace of interpolation functions. Then,  instead of the classical linear model
\eqref{def_linearmodel}, we consider the homogeneous linear observation model \eqref{def_homlinearmodel1}
where we observe $y$ and wish to determine $x$, and to do so we put a probability measure
on $H\subset X$ making $x$ into a random variable with values in $H\subset X$ and
 then  estimate the solution to \eqref{def_homlinearmodel1} by computing the conditional expectation of $x$
conditioned on the observation $ y=\Phi x$.

The primary motive of considering a Hilbert subspace $H\subset X$ is that probability measures  and cylinder measures
on separable
Hilbert spaces are well understood and that  Hilbert space geometry, in particular that
  associated with optimal approximation, is well understood.   Moreover, it is well known that the value of a discontinuous linear function of a state
provides little information about the state. Consequently, in the quadrature problem, to have  pointwise
valuations $f \mapsto f(x)$  be continuous for all $x$, it follows that reproducing kernel
spaces, in particular reproducing kernel Hilbert spaces,
make their appearance naturally. Moreover, if the injection $H \rightarrow X$  is continuous with a dense image,
 it follows that we have a continuous
 injection  $X' \rightarrow H'$ of the topological duals,
and using the self duality $H'=H$ of Hilbert space, we obtain
the Gelfand triple
\[  X' \subset H \subset X \, ,\]
see Gelfand and Vilenkin \cite{gelfand1964vilenkin}, which is central to both the theory of Abstract Wiener
spaces, see Gross \cite{gross1967abstract}, and the full development of the Dirac formulation of Quantum Mechanics,
see e.g.~De la Madrid \cite{de2005role}.

According to
Larkin  \cite{larkin1972gaussian},  it was Sard \cite{sard1963linear}  who introduced probabilistic concepts
into the theory of optimal linear approximation.  Moreover, he also asserts that
 it was Sul'din \cite{sul1959wiener,sul1960wiener} who began the investigation
of the use of Wiener measure in approximation theory.
In particular, Larkin  mentions that with the exception of Sul'din,
  developments of interpolation and quadrature methods based on Optimal Approximation,
initiated by Sard \cite{sard1949best},  developments in Splines as initiated by
Schoenberg \cite{schoenberg1964spline}, and the developments of Stochastic Processes, in particular
the developments of Time Series Analysis in the context of RKHSs, initiated by
Parzen, see e.g. \cite{parzen1961regression},  ``the concepts and techniques developed in these areas
have attracted little attention among numerical analysts."
Since then, other works include
Ritter \cite{ritter1990approximation}, Lee \cite{lee1986approximation2},
Lee and Wasilkowski \cite{lee1986approximation},  Wasilkowski \cite{wasilkowski1993integration},
Wasilkowski and Wo{\'z}niakowski \cite{wasilkowski1986average}, and the early work along different lines of
  Kuelbs \cite{kuelbs1969abstract},  based, curiously enough,
  on Cameron and Martin \cite{cameron1944expression}.

  Larkin's idea is essentially, to extend the idea of Sul'din from  the classic Wiener measure
to the Abstract Wiener measure as initiated by Gross  \cite{gross1967abstract}. To prepare for our treatment
of using probabilistic methods for optimal approximation on Banach spaces,
    let us briefly describe how the Abstract Wiener space formulation
of Gross is relevant here.
To begin, for a Banach space $X$, the cylinder sets are the sets  of the form
$F^{-1}(B)$ where $F:X \rightarrow E$ is a continuous linear map to a finite dimensional topological vector space
$E$
and $B$ is a Borel subset of $E$. The cylinder set algebra is the $\s$-algebra generated
by all choices of $F,E$, and $B$. According to Bogachev \cite[Thm.~A.3.7]{bogachev1998gaussian} when  $X$
is separable, this $\s$-algebra is the Borel $\s$-algebra. Now recall the Gaussian cylinder measure
$\nu$ defined on a separable Hilbert space as the Gaussian field  on $H$ such that each element
  $h\in H$, considered as a continuous linear function on $H$,
has the distribution  of a Gaussian measure, with mean zero and variance equal to $\|h\|^{2}$.
According to Gross   \cite[Pg.~33]{gross1967abstract}, the notion of a cylinder measure is equivalent
to the alternatively defined notion of weak distribution introduced by Segal \cite{segal1956tensor}.
Then a  seminorm $\|\cdot\|_{1}$ on $H$ is said to be a {\em  measurable  seminorm} if for each $\epsilon >0$ there is a
finite dimensional  projection $P_{0}$ such that, for every finite dimensional projection $P$
 orthogonal to $P_{0}$, we have
$\nu(\|Ph\|_{1} > \epsilon) < \epsilon$. A {\em measurable norm} is a measurable seminorm which is a norm.
   Let  $X:=C$ be the Banach space of continuous functions on $[0,1]$ which vanish
at $0$, equipped with the Wiener measure and consider the subspace
$C' \subset C$ of absolutely continuous functions equipped with the Hilbert norm
$\|u\|^{2}:=\int_{[0,1]}{u'(t)^{2}dt}$.
In particular, $C$ is the completion of $C'$ with respect to the $\sup$ norm on $C'$ which is much weaker that the Hilbert norm $\|u\|^{2}:=\int_{[0,1]}{u'(t)^{2}dt}$ on $C'$ and enjoys the property of being
a  measurable norm on $C'$.

  Gross' contribution is an abstraction of this relationship by
replacing $C'$ by a separable Hilbert space $H$ and the $\sup$ norm by its generalization -a measurable norm on
$H$. His  principal result \cite[Thm.~1]{gross1967abstract}  asserts that on the completion of a separable Hilbert space
$H$ with respect to a measurable norm,
the standard Gaussian cylinder measure on $H$ becomes a bonafide (countably additive) measure on its completion with respect to the measurable norm.  Conversely, Gross   \cite[Rmk.~2]{gross1967abstract} asserts that
for any separable real Banach space $X$, there is a real Hilbert space and a measurable norm
defined on it such that $B$ is the completion of $H$ in this norm.
As important examples, note that  Gross   \cite[Ex.~1]{gross1967abstract} asserts that
$\|x\|_{1}:=\langle Ax,x\rangle_{H}$ is a measurable seminorm when $A$ is symmetric, nonnegative and trace class, and when
 $A$ is injective it is a measurable norm. This indeed produces an abstraction of the classical Wiener measure,
since for  the Hilbert subspace $C' \subset C$ of the Wiener space,
the sup norm is a measurable norm on $C'$, and $C$ is the completion of $C'$ with respect to this norm.
When we are in the situation of Gross' theorem, that is, $H$ is a separable
Hilbert space equipped with the Gaussian cylinder measure, and $B$  is a Banach space
 which is the completion of $H$ with respect to a measurable norm,  producing by Gross' theorem
 a Gaussian measure $\mathcal{W}$ on $B$, then we say
that $(H,B,\mathcal{W})$ is an abstract Wiener space and that $\mathcal{W}$ is an abstract Wiener measure.
Moreover, in this case we say that
the separable Hilbert space $H$ generates $(H,B,\mathcal{W})$. We also say that $H$ generates $B$.

Now let us turn to the utility of Abstract Wiener measure in the development of infinite dimensional minimum
variance estimation problems.
 Paraphrasing Gross \cite[Intro]{gross1967abstract},
``Although $C'$ is a set of Wiener measure zero, the Euclidean structure of this Hilbert space
determines the form of the formulas developed by Cameron and Martin, and, to a large extent, also the nature of the
hypothesis of their theorems. However it only became apparent with the work of Segal
 \cite{segal1956tensor,segal1958distributions}, dealing with the normal distribution on Hilbert space, that the role
of the Hilbert space $C'$ was central, and that in so far as analysis is concerned, the role of $C$ itself
was auxiliary for many of Cameron and Martin's theorems, and some instances even unnecessary. Thus Segal's
theorem
\cite[Thm.~3]{segal1958distributions} on the transformation of the normal distribution under affine transformations,
which is formulated for an arbitrary Hilbert space $H$, extends and clarifies the corresponding theorem of Cameron
and Martin  \cite{cameron1944transformations,cameron1945transformations} when $H$ is specialized to $C'$."

To fully develop the framework of
Larkin \cite{larkin1972gaussian},
 Kuelbs, Larkin and Williamson \cite{kuelbs1972weak} develop
the  Hilbertian integration theory and its relationship to Wiener measure. It is interesting to note
that Kuelbs \cite{kuelbs1969abstract,kuelbs1971expansions} began this development a bit earlier,
where he develops a stochastic inner product
of $\langle x, h\rangle $ for arbitrary $x \in B, h \in H$ where $H$ is the generating Hilbert space
for $B$, and from that a stochastic expansion for each element of $B$. Of particular interest is the fact that for
 a Banach space with a
Schauder basis, that this basis determines, in an elementary way, an orthonormal basis of a
Hilbert space  generating the Banach space $B$, and  that the
stochastic expansion of an arbitrary element of $B$ is the same as its basis expansion.

The Cameron-Martin RKHS is well known in the theory of Wiener measure.
Let us make some remarks about the relationship between Gaussian measures on separable Banach spaces,
a Hilbert space $H$ which generates it and the resulting measure on $B$, and reproducing kernel Hilbert spaces.
Let $B$ be a separable Banach space and let $\mu$ be a Gaussian measure on $B$ with $0$ mean.
Then the covariance
\[  K(s,t):=\E_{ b \sim \mu}{\langle s, b\rangle\langle t,b\rangle},\quad s,t \in B^{*}\]
is easily seen to be a reproducing kernel defining a RKHS $H(K)$ of real valued
functions on $B^{*}$. Since $B$ is separable, it follows that it is Polish and therefore $\mu$ is a Radon measure.
Consequently, see e.g.~Bogachev \cite[Thm.~3.2.3]{bogachev1998gaussian},
one obtains that  $H(K) \subset B$.
Now let $(H,B,\mathcal{W})$ be an abstract Wiener space. Then,
  Kallianpur \cite[Cor.~1]{kallianpur1971abstract} asserts that, for the RKHS $H(\Gamma)$  corresponding
to the covariance
kernel
\[ \Gamma(s,t):=\E_{ b \sim \mathcal{W}}{\langle s, b\rangle\langle t,b\rangle},\quad s,t \in B^{*}\, ,\]
we have $H=H(\Gamma)$. See also  Bogachev \cite[Thm.~3.9.4]{bogachev1998gaussian}. In particular, if $(H_{1},B,\mathcal{W})$ and $(H_{2},B,\mathcal{W})$  are two abstract Wiener
spaces with identical measures, then $H_{1}=H_{2}$. $H(\Gamma)$ is known as the Cameron-Martin
(reproducing kernel Hilbert) space. Conversely, Bogachev\cite[Thm.~3.9.6]{bogachev1998gaussian}
asserts that if $\mu$ is a centered Gaussian measure on a separable Banach space $B$ with
covariance $\Gamma$,  such that $H(\Gamma)$ is dense in $B$,  then
$(H(\Gamma),B,\mu)$ is an abstract Wiener space. More generally, by using the Banach-Mazur
Theorem, see e.g.~Albiac and Kalton \cite[Thm.~1.4.3]{albiac2006topics}, which asserts
that every separable real Banach space is isometrically isomorphic to a closed subspace
of $C[0,1]$, the continuous functions on the unit interval with the sup norm, Kallianpur
provides a more general analysis. In particular,  Kallianpur \cite[Thm.~7]{kallianpur1971abstract}
asserts that for an arbitrary Gaussian measure $\mu$ on a separable Banach space with covariance
kernel $\Gamma$, that
 \[\overline{H(\Gamma)}=\support{(\mu)}\, ,\]
where $\overline{H(\Gamma)}$ is the closure
of $H(\Gamma)$ in the topology of $B$ and $\support$ is the support of the measure $\mu$, that is the unique
closed set $F$ of full measure such that for every open set $G$ such that $F\cap G \neq \emptyset$, we have
$\mu(F\cap G) > 0$. It is interesting to note that his proof uses a special structure of the Banach-Mazur theorem,
 namely that the isometry $\Psi:B \rightarrow C_{0}$ with the closed subspace of $C[0,1]$
is obtained  by
\[ b  \mapsto b(t):=\langle b, f_{t}\rangle,  t \in [0,1],\quad b \in B\, ,\]
where the map $t \mapsto f_{t} \in B^{*}$ is a continuous map to the unit ball of $B^{*}$.

Let us mention here that the well known fact that the Cameron-Martin space has Wiener measure
$0$ has a strong parallel with  $0-1$ laws regarding the membership of stochastic paths in RKHSs.
For a comprehensive treatment, see Lukic and Beder \cite{lukic2001stochastic}, who uses
Kallianpur \cite{kallianpur1970zero} to
 fully develop results of Driscoll \cite{driscoll1975signal}, based on
Driscoll \cite{driscoll1973reproducing}.

\section{Additional  properties of the gamblet transform}\label{sec8}
This section presents supplementary material to that of
 Section \ref{sec1or} and assumes its notations.

\subsection{Dual variational formulation and dual gamblets}\label{subsecdual}
 Let $\bar{\psi}_1,\ldots,\bar{\psi}_m$ be linearly independent elements of $\B$ and  consider
\begin{equation}
\bar{A}_{i,j}:=[Q^{-1}\bar{\psi}_i, \bar{\psi}_j].
\end{equation}
The linear independence of the $\bar{\psi}_i$ implies that $\bar{A}$ is invertible and we write its inverse as
 $\bar{A}^{-1}$.  For $i=1,\ldots,m$, let $\bar{\phi}_1,\ldots,\bar{\phi}_m \in \B^*$ be defined as
\begin{equation}\label{eqhgudgudphi}
\bar{\phi}_i:=\sum_{j=1}^m \bar{A}^{-1}_{i,j}Q^{-1} \bar{\psi}_j
\end{equation}
and observe that $\{\bar{\phi}_i,\bar{\psi}_{j}\}$ form a biorthogonal system, in that
 $[\bar{\phi}_i,\bar{\psi}_j]=\delta_{i,j}$.
The following theorems shows that the elements $\bar{\phi}_i$ have optimal variational properties in $\|\cdot\|_*$-norm $\|\cdot\|_*:=\sqrt{[\cdot, Q\cdot]}$.

\begin{Theorem}\label{thmsudgdygdgyphi}
Let $(\bar{\phi}_i)_{i=1,\ldots,m}$ be defined as in \eqref{eqhgudgudphi}. Then it holds true that
for $c\in \R^m$, $\sum_{i=1}^m c_i \bar{\phi}_i$ is the unique minimizer of
\begin{equation}\label{eqhihdiudiduhphi}
\begin{cases}
\text{Minimize }\|\phi\|_*\\
\text{Subject to }\phi\in \B^*\text{ and }[\phi,\bar{\psi}_j]=c_j\text{ for }j=1,\ldots,m\, .
\end{cases}
\end{equation}
Furthermore, consider  arbitrary  $\phi_i \in \B, i=1,\ldots, m$, and define
 $\Theta$ by $\Theta_{ij}:=[\phi_{i},Q\phi_{j}]$ as  in \eqref{eqdefthet} and
  $\psi_{i}:= \sum_{j=1}^{m}{\Theta^{-1}_{ij}Q\phi_{j}}$ as in  \eqref{eqhgudgud}. Then if
$\bar{\psi}_i=\psi_i, i=1, \ldots, m$, it follows that $\bar{\phi}_i=\phi_i, i=1,\ldots, m$ and $\bar{A}=\Theta^{-1}$.
\end{Theorem}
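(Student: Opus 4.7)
The plan is to treat the theorem as the natural dual of the primal gamblet variational problem. For the first assertion, the main point is to recognize that the minimization problem \eqref{eqhihdiudiduhphi} is a constrained quadratic minimization in the Hilbert space $(\B^*,\langle\cdot,\cdot\rangle_*)$. Since the linear constraint $[\phi,\bar{\psi}_j]=c_j$ can be rewritten as $\langle \phi, Q^{-1}\bar{\psi}_j\rangle_*=c_j$ using the identity $[\phi,v]=[\phi,Q(Q^{-1}v)]=\langle\phi,Q^{-1}v\rangle_*$, the Riesz representative of each constraint functional in $(\B^*,\langle\cdot,\cdot\rangle_*)$ is $Q^{-1}\bar{\psi}_j$. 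Hence the minimizer must lie in $\Span\{Q^{-1}\bar{\psi}_1,\ldots,Q^{-1}\bar{\psi}_m\}$, and uniqueness follows from the strict convexity of $\|\cdot\|_*^2$.

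The next step is to pin down the coefficients. Writing $\phi=\sum_j \lambda_j Q^{-1}\bar{\psi}_j$ and imposing $[\phi,\bar{\psi}_k]=c_k$ yields
\begin{equation*}
\sum_j [Q^{-1}\bar{\psi}_j,\bar{\psi}_k]\lambda_j=\sum_j \bar{A}_{k,j}\lambda_j=c_k,
\end{equation*}
so $\lambda=\bar{A}^{-1}c$ (here I use linear independence of the $\bar{\psi}_j$ to ensure $\bar{A}$ is invertible). Substituting back gives $\phi=\sum_i c_i\bar{\phi}_i$ with $\bar{\phi}_i$ as in \eqref{eqhgudgudphi}.

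For the second assertion, the plan is a direct calculation using the explicit form $\psi_i=\sum_j \Theta^{-1}_{i,j}Q\phi_j$ from \eqref{eqhgudgud} together with the symmetry of $\Theta$ (hence of $\Theta^{-1}$). Under the substitution $\bar{\psi}_i=\psi_i$,
\begin{equation*}
\bar{A}_{i,j}=[Q^{-1}\psi_i,\psi_j]=\sum_{k,l}\Theta^{-1}_{i,k}\Theta^{-1}_{j,l}[\phi_k,Q\phi_l]
=\sum_{k,l}\Theta^{-1}_{i,k}\Theta_{k,l}\Theta^{-1}_{l,j}=\Theta^{-1}_{i,j},
\end{equation*}
so $\bar{A}=\Theta^{-1}$ and $\bar{A}^{-1}=\Theta$. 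Feeding this into \eqref{eqhgudgudphi}:
\begin{equation*}
\bar{\phi}_i=\sum_j\Theta_{i,j}Q^{-1}\psi_j=\sum_{j,k}\Theta_{i,j}\Theta^{-1}_{j,k}\phi_k=\phi_i,
\end{equation*}
completing the identification.

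There is no serious obstacle; the theorem is essentially bookkeeping once one identifies the correct Riesz isomorphism between $(\B,\langle\cdot,\cdot\rangle)$ and $(\B^*,\langle\cdot,\cdot\rangle_*)$ induced by $Q$ and $Q^{-1}$. The only subtle point is making the duality between the primal problem \eqref{eq:dfddeytfewdaisq} (minimize $\|\psi\|$ under $[\phi_j,\psi]=\delta_{i,j}$) and \eqref{eqhihdiudiduhphi} (minimize $\|\phi\|_*$ under $[\phi,\bar{\psi}_j]=c_j$) cleanly transparent, so that the biorthogonality $[\bar{\phi}_i,\bar{\psi}_j]=\delta_{i,j}$ noted just before the theorem statement is consistent with the two pure-strategy games being mirror images of each other under the isomorphism $Q$.
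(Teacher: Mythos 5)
Your proposal is correct and follows essentially the same route as the paper: the paper proves the first assertion by orthogonally decomposing $\phi$ with respect to $\bar{\Phi}_m=\Span\{\bar{\phi}_1,\ldots,\bar{\phi}_m\}$ in $(\B^*,\<\cdot,\cdot\>_*)$ and invoking Pythagoras plus biorthogonality, which is the same standard constrained-quadratic-minimization argument you phrase via Riesz representatives $Q^{-1}\bar{\psi}_j$ of the constraints (the two spans coincide since $\bar{A}^{-1}$ is invertible). Your explicit computation of the second assertion ($\bar{A}=\Theta^{-1}$ and $\bar{\phi}_i=\phi_i$) is the direct calculation the paper leaves implicit, and it is correct.
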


\begin{Theorem}\label{thmoptgalerkinphi}
 Define $\phi^\two:\B^* \rightarrow \B^*$ by
\begin{equation}\label{eqbdudbuysbdphi}
\phi^\two(\phi)=\sum_{i=1}^m [\phi,\bar{\psi}_i] \bar{\phi}_i ,\, \phi \in \B^*\, .
\end{equation}
The, for $\phi\in \B^*$, it holds true that
\begin{equation}
\|\phi-\phi^\two(\phi)\|_*=\inf_{\phi' \in \operatorname{span}\{Q^{-1} \bar{\psi}_1,\ldots,Q^{-1} \bar{\psi}_m\}} \|\phi-\phi'\|_*\, .
\end{equation}
\end{Theorem}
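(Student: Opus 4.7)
The statement to prove is that $\phi^{\two}(\phi)$ is the orthogonal projection of $\phi$ onto $V:=\operatorname{span}\{Q^{-1}\bar{\psi}_1,\ldots,Q^{-1}\bar{\psi}_m\}$ in the Hilbert space $(\B^*, \langle\cdot,\cdot\rangle_*)$ where $\langle \phi_1,\phi_2\rangle_*:=[\phi_1, Q\phi_2]$. Once this is established, the claimed best-approximation identity is exactly the characterization of the orthogonal projection in a Hilbert space.

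My plan is to verify the two defining properties of an orthogonal projection: membership in $V$ and orthogonality of the residual to $V$.

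First I would observe that $\phi^{\two}(\phi)\in V$ for every $\phi\in \B^*$. Indeed, by the formula \eqref{eqhgudgudphi}, each $\bar{\phi}_i = \sum_j \bar{A}^{-1}_{i,j}Q^{-1}\bar{\psi}_j$ lies in $V$, hence so does any linear combination $\phi^{\two}(\phi)=\sum_i [\phi,\bar{\psi}_i]\bar{\phi}_i$.

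Second, I would verify the orthogonality condition $\langle \phi-\phi^{\two}(\phi),\, v\rangle_*=0$ for every $v\in V$. By linearity it suffices to test against the generators $v=Q^{-1}\bar{\psi}_j$, $j=1,\ldots,m$. The key identity is
\[
\langle \varphi, Q^{-1}\bar{\psi}_j\rangle_* \;=\; [\varphi, Q\,Q^{-1}\bar{\psi}_j]\;=\;[\varphi,\bar{\psi}_j],\qquad \varphi\in \B^*,
\]
so the orthogonality to prove reduces to $[\phi,\bar{\psi}_j]=[\phi^{\two}(\phi),\bar{\psi}_j]$. Invoking the biorthogonality $[\bar{\phi}_i,\bar{\psi}_j]=\delta_{i,j}$ established in Theorem \ref{thmsudgdygdgyphi}, we compute
\[
[\phi^{\two}(\phi),\bar{\psi}_j]\;=\;\sum_{i=1}^m [\phi,\bar{\psi}_i]\,[\bar{\phi}_i,\bar{\psi}_j]\;=\;[\phi,\bar{\psi}_j],
\]
as required.

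With $\phi^{\two}(\phi)\in V$ and $\phi-\phi^{\two}(\phi)\perp_* V$, standard Hilbert space theory (the projection theorem in the finite-dimensional closed subspace $V\subset (\B^*,\langle\cdot,\cdot\rangle_*)$) immediately yields
\[
\|\phi-\phi^{\two}(\phi)\|_*\;=\;\inf_{\phi'\in V}\|\phi-\phi'\|_*,
\]
which is the claim. I do not anticipate any real obstacle: the argument is a direct dual analogue of Theorem \ref{thmgugyug0OR} (and of Proposition \ref{prop_Gambletprojection}), with the roles of $\B$ and $\B^*$ interchanged and the operator $Q^{-1}$ replaced by $Q$. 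The only subtlety is keeping track of which dual pairing/inner product is used where, which is taken care of by the identity $\langle \varphi, Q^{-1}\psi\rangle_*=[\varphi,\psi]$ displayed above.
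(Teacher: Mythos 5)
Your proof is correct and follows essentially the same route as the paper, whose proof consists precisely of observing that $\phi-\phi^{\two}(\phi)$ is $\langle\cdot,\cdot\rangle_*$-orthogonal to $Q^{-1}\bar{\psi}_1,\ldots,Q^{-1}\bar{\psi}_m$; you have simply filled in the computation (membership in the span, the identity $\langle\varphi,Q^{-1}\bar{\psi}_j\rangle_*=[\varphi,\bar{\psi}_j]$, and the biorthogonality $[\bar{\phi}_i,\bar{\psi}_j]=\delta_{i,j}$, which the paper states in the text just before Theorem \ref{thmsudgdygdgyphi} rather than in that theorem).
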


\subsection{Dual measurement functions of the hierarchy of orthogonalized gamblets}
Let $k\in \{2,\ldots,q\}$ let $N^{(k)}:=A^{(k)} W^{(k),T} B^{(k),-1}$ be defined as in \eqref{eqjdhiudhiue} and
for $i\in \J^{(k)}$ write
\begin{equation}\label{eqhdgudgu}
\phi^{(k),\chi}_i:=\sum_{j\in \I^{(k)}} N^{(k),T}_{i,j}\phi^{(k)}_j\,.
\end{equation}
The following propositions shows that $(\phi_i^{(1)})_{i\in \I^{(1)}}$ and $(\phi^{(k),\chi}_j)_{2\leq k \leq q, j\in \J^{(k)}}$ form the dual gamblets of $(\psi_i^{(1)})_{i\in \I^{(1)}}$ and $(\chi^{(k)}_j)_{2\leq k \leq q, j\in \J^{(k)}}$ respectively.
\begin{Proposition}\label{propdkfjffh}
It holds true that for $k\in \{2,\ldots,q\}$, $(i,j)\in \J^{(k)}\times \J^{(k)}$ and $l\in \I^{(k-1)}$
\begin{equation}\label{eqjgjhgyjh}
[\phi^{(k),\chi}_i,\chi^{(k)}_j]=\delta_{i,j}\text{ and } [\phi^{(k),\chi}_i,\psi^{(k-1)}_l]=0\,.
\end{equation}
For $2\leq k,k'\leq q$, $k\not=k'$ and $(i,j)\in \J^{(k)}\times \J^{(k')}$, $[\phi^{(k),\chi}_i,\chi^{(k')}_j]=0$.
For $i\in \I^{(k)}$ and $j\in \J^{(k)}$, $N^{(k)}_{i,j}=[\phi^{(k),\chi}_j,\psi^{(k)}_i]$.
Furthermore, under the Conditions \ref{cond1OR}, there exists a constant $C\geq 1$ depending only on $C_{\Phi}$ such that for $z\in \R^{\J^{(k)}}$,
\begin{equation}\label{eqidyiuyihs}
\frac{ H^{2}}{ C} |z|^2  \leq \|\sum_{j\in \J^{(k)}} z_j  \phi_j^{(k),\chi}\|_0^2 \leq C |z|^2\,.
\end{equation}
\end{Proposition}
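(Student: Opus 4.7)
The plan is to verify parts 1 and 3 by direct matrix computation. Two identities carry everything: $W^{(k)}N^{(k)} = W^{(k)}A^{(k)}W^{(k),T}B^{(k),-1} = J^{(k)}$ (from the definition $B^{(k)} = W^{(k)}A^{(k)}W^{(k),T}$), and then $R^{(k-1,k)}N^{(k)} = \bar{\pi}^{(k-1,k)}(I - N^{(k)}W^{(k)})N^{(k)} = 0$. Combined with $[\phi^{(k)}_l,\psi^{(k)}_m] = \delta_{l,m}$, they expand to $[\phi^{(k),\chi}_i,\chi^{(k)}_j] = (W^{(k)}N^{(k)})_{j,i} = \delta_{i,j}$, $[\phi^{(k),\chi}_i,\psi^{(k-1)}_l] = (R^{(k-1,k)}N^{(k)})_{l,i} = 0$, and $[\phi^{(k),\chi}_j,\psi^{(k)}_i] = N^{(k)}_{i,j}$. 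For part 2, the case $k<k'$ reduces via the nesting $\phi^{(k)}_l = \sum_m\pi^{(k,k')}_{l,m}\phi^{(k')}_m$ to checking $\pi^{(k,k')}W^{(k'),T} = 0$, which holds because $\pi^{(k'-1,k')}W^{(k'),T} = 0$ is a right factor; the case $k>k'$ uses $\chi^{(k')}_j \in \V^{(k')}\subset \V^{(k-1)}$ to reduce to part 1.

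For part 4 the plan rests on two parallel representations of $\phi := \sum_j z_j\phi^{(k),\chi}_j$. First, verifying $\phi^{(k),\chi}_j = \sum_l B^{(k),-1}_{j,l}Q^{-1}\chi^{(k)}_l$ gives $\phi = Q^{-1}u$ with $u := \sum_l(B^{(k),-1}z)_l\chi^{(k)}_l \in \W^{(k)}$, whence $\|\phi\|_*^2 = \|u\|^2 = z^TB^{(k),-1}z$, which Theorem \ref{corunbcnOR} bracket as $C^{-1}H^{2k}|z|^2 \leq \|\phi\|_*^2 \leq CH^{2(k-1)}|z|^2$. Second, the coefficient vector $N^{(k)}z$ of $\phi$ in the $\phi^{(k)}$-basis decomposes along the Euclidean direct sum $\R^{\I^{(k)}} = \Img(\pi^{(k,k-1)}) \oplus \Img(W^{(k),T})$, the two summands being Euclidean-orthogonal since $\pi^{(k-1,k)}W^{(k),T} = 0$, as $N^{(k)}z = \pi^{(k,k-1)}p + W^{(k),T}q$ with $q = (W^{(k)}W^{(k),T})^{-1}z$ and $p = \bar{\pi}^{(k-1,k)}N^{(k)}z$. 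This induces the splitting $\phi = \phi_1 + \phi_2$ with $\phi_1 := \sum_l p_l\phi^{(k-1)}_l \in \Phi^{(k-1)}$ and $\phi_2 := \sum_i(W^{(k),T}q)_i\phi^{(k)}_i \in \Phi^{(k),\chi}$.

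For the upper bound in part 4, Items 4 and 6 of Condition \ref{cond1OR} immediately give $\|\phi_2\|_0^2 \leq C_\Phi|W^{(k),T}q|^2 \leq C_\Phi^2|q|^2 \leq C_\Phi^4|z|^2$. The main obstacle will be bounding $|p|$: the plan is to use the annihilation $[\phi,\psi^{(k-1)}_l] = 0$ from part 1 to write $p_l = -[\phi_2,\psi^{(k-1)}_l]$, then expand $\psi^{(k-1)}_l = \sum_m A^{(k-1)}_{l,m}Q\phi^{(k-1)}_m$ and identify $\Theta^{(k-1)} = A^{(k-1),-1}$ as the Gram matrix of $\{\phi^{(k-1)}_m\}$ in $\|\cdot\|_*$; setting $\delta_m := \<\phi_2,\phi^{(k-1)}_m\>_*$, one obtains $p = -A^{(k-1)}\delta$, so $|p|^2 = \delta^T A^{(k-1),2}\delta \leq \lambda_{\max}(A^{(k-1)})\,\delta^T A^{(k-1)}\delta \leq CH^{-2(k-1)}\|\phi_2\|_*^2$ (using $\delta^TA^{(k-1)}\delta$ is the squared $\|\cdot\|_*$-norm of the projection of $\phi_2$ onto $\Phi^{(k-1)}$). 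Item 3 of Condition \ref{cond1OR}, applicable because $\phi_2\in\Phi^{(k),\chi}$, gives $\|\phi_2\|_*^2 \leq CH^{2(k-1)}\|\phi_2\|_0^2 \leq CH^{2(k-1)}|z|^2$, and the two $H$-factors cancel exactly to yield $|p|^2 \leq C|z|^2$ and hence $\|\phi\|_0^2 \leq C|z|^2$. For the lower bound, the Euclidean orthogonality of the splitting gives $|N^{(k)}z|^2 \geq |W^{(k),T}q|^2 \geq C_\Phi^{-3}|z|^2$, so Item 4 of Condition \ref{cond1OR} delivers $\|\phi\|_0^2 \geq C_\Phi^{-4}|z|^2$, which is in fact stronger than the stated $H^2/C$ bound. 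The delicate cancellation $H^{-2(k-1)}\cdot H^{2(k-1)} = 1$ in the $|p|$-estimate is the quantitative content of Item 3 of Condition \ref{cond1OR} and is what keeps the constant uniform in $k$.
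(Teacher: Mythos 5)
Your parts 1--3 are correct and coincide in substance with the paper's argument: the paper also reduces everything to the two identities $W^{(k)}N^{(k)}=J^{(k)}$ and $R^{(k-1,k)}N^{(k)}=0$ (packaged as Proposition \ref{propfund}) together with the biorthogonality $[\phi_i^{(k)},\psi_j^{(k)}]=\delta_{i,j}$, and handles the cross-level case exactly as you do.

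Part 4 is also correct but proved differently. The paper writes $\sum_j z_j\phi_j^{(k),\chi}=\sum_i(N^{(k)}z)_i\phi_i^{(k)}$, applies the Riesz-frame inequality (Item \ref{itcor4} of Condition \ref{cond1OR}, i.e.\ \eqref{eqgam1}) to get $\ubar{\gamma}_k|N^{(k)}z|^2\le\|\cdot\|_0^2\le\bar{\gamma}_k|N^{(k)}z|^2$, and then simply cites Theorem \ref{lemdjkdj} for the two-sided spectral bound $C^{-1}H^2\le N^{(k),T}N^{(k)}\le C$. You instead re-derive the needed bounds on $|N^{(k)}z|$ in place: your upper bound, via the Euclidean-orthogonal splitting $N^{(k)}z=\pi^{(k,k-1)}p+W^{(k),T}q$ and the control of $p$ through $[\phi,\psi_l^{(k-1)}]=0$, the Gram matrix $\Theta^{(k-1)}=A^{(k-1),-1}$, and the cancellation $\lambda_{\max}(A^{(k-1)})\cdot\hat H_{k-1}^2\le C$, is essentially the content of the paper's Lemmas \ref{lemfdhgdf}, \ref{lemdjoidjdi} and \ref{lemddjoj3ir} unpacked and specialized; your lower bound, extracted from $W^{(k)}N^{(k)}=J^{(k)}$ alone, is cleaner than the paper's route (which passes through $\lambda_{\max}(B^{(k)})$ and $\bar H_{k-1}$) and in fact yields $\lambda_{\min}(N^{(k),T}N^{(k)})\ge C^{-1}$ rather than $C^{-1}H^2$, so you obtain a lower bound $C^{-1}|z|^2$ in \eqref{eqidyiuyihs} that is stronger than the stated $H^2|z|^2/C$. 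The trade-off is length versus self-containment: the paper's proof is one line given Theorem \ref{lemdjkdj}, whereas yours is independent of it and sharpens the constant in the lower bound.
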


For $k\in \{2,\ldots,q\}$ write
\begin{equation}
\bar{W}^{(k)}= (W^{(k)}W^{(k),T})^{-1} W^{(k)}\, .
\end{equation}
\begin{Proposition}\label{propdkfjffother}
It holds true that for $k\in \{2,\ldots,q\}$ and $c \in \R^{\J^{(k)}}$, $\sum_{i\in \J^{(k)}}c_i \phi^{(k),\chi}_i$ is the unique minimizer of
\begin{equation}\label{eqhihdiudiduhphichior}
\begin{cases}
\text{Minimize }\|\phi\|_*\\
\text{Subject to }\phi\in \B^*\text{ and }[\phi,\chi^{(k)}_j]=c_j\text{ for }j\in \J^{(k)}\,.

\end{cases}
\end{equation}
Furthermore for $i\in \J^{(k)}$,
\begin{equation}\label{eqmhjgjhgy}
\phi_i^{(k),\chi}=\sum_{j\in \I^{(k)}} \bar{W}^{(k)}_{i,j}\phi_j^{(k)} +\sum_{l\in \I^{(k-1)}} (N^{(k),T}\bar{\pi}^{(k,k-1)})_{i,l}\phi_l^{(k-1)}
\end{equation}
which implies
\begin{equation}\label{eqjskdkddj}
N^{(k),T}=\bar{W}^{(k)}+N^{(k),T}\bar{\pi}^{(k,k-1)}\pi^{(k-1,k)}
\end{equation}
Moreover $\tilde{\phi}^{(k-1)}_i:=-\sum_{l\in \I^{(k-1)}} (N^{(k),T}\bar{\pi}^{(k,k-1)})_{i,l}\phi_l^{(k-1)}$ is the unique minimizer of
\begin{equation}\label{eqhihdiudiduhphichiorbis}
\begin{cases}
\text{Minimize }\|\sum_{j\in \I^{(k)}} \bar{W}^{(k)}_{i,j}\phi_j^{(k)}-\phi\|_*\\
\text{Subject to }\phi\in \Phi^{(k-1)}
\end{cases}
\end{equation}
Therefore, $\bar{\pi}^{(k-1,k)} N^{(k)}=-A^{(k-1)} \pi^{(k-1,k)}\Theta^{(k)}\bar{W}^{(k),T}$, i.e.,
\begin{equation}\label{eqkjddhk}
\bar{\pi}^{(k-1,k)}A^{(k)} W^{(k),T} = -A^{(k-1)} \pi^{(k-1,k)}\Theta^{(k)}\bar{W}^{(k),T} B^{(k)}
\end{equation}
\end{Proposition}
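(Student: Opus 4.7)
The plan is to treat the assertions of the proposition in the order: the minmax characterization \eqref{eqhihdiudiduhphichior}, then the matrix identity \eqref{eqjskdkddj} together with its expanded form \eqref{eqmhjgjhgy}, then the projection characterization \eqref{eqhihdiudiduhphichiorbis}, and finally the relation \eqref{eqkjddhk}.

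For \eqref{eqhihdiudiduhphichior} I would specialize Theorem \ref{thmsudgdygdgyphi} to $\bar{\psi}_i := \chi^{(k)}_i$. The Gram matrix $\bar{A}$ is then exactly $B^{(k)}$ and the dual elements produced by formula \eqref{eqhgudgudphi} are $\sum_j B^{(k),-1}_{i,j} Q^{-1}\chi^{(k)}_j$. Unfolding the definition of $\chi^{(k)}_j$, using Theorem \ref{thmwhdguyd} together with the identity $A^{(k)} = \Theta^{(k),-1}$ (a one-line computation from $\psi^{(k)}_l = \sum_m \Theta^{(k),-1}_{l,m} Q\phi^{(k)}_m$), and invoking $N^{(k),T} = B^{(k),-1} W^{(k)} A^{(k)}$, one sees that the dual elements coincide with $\phi^{(k),\chi}_i$; the minmax claim then follows directly from Theorem \ref{thmsudgdygdgyphi}.

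For the matrix identity \eqref{eqjskdkddj} the key lemma is that in $\R^{\I^{(k)}}$ equipped with its standard Euclidean inner product, $W^{(k),T}\bar{W}^{(k)} + \bar{\pi}^{(k,k-1)}\pi^{(k-1,k)} = I^{(k)}$, since both summands on the left are the orthogonal projections onto the mutually orthogonal subspaces $\Img(W^{(k),T}) = \Ker(\pi^{(k-1,k)})$ and $\Img(\pi^{(k,k-1)}) = \Ker(W^{(k)})$. Combining this with the identity $N^{(k),T} W^{(k),T} = I^{(\J^{(k)})}$, which is a direct rewriting of $B^{(k)} = W^{(k)} A^{(k)} W^{(k),T}$, and multiplying on the right by $\bar{W}^{(k)}$ yields \eqref{eqjskdkddj}. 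The decomposition \eqref{eqmhjgjhgy} then follows from \eqref{eqjskdkddj} by substituting the nesting relation $\phi^{(k-1)}_l = \sum_m \pi^{(k-1,k)}_{l,m}\phi^{(k)}_m$ into the right-hand side of \eqref{eqmhjgjhgy} and invoking the linear independence of $(\phi^{(k)}_j)_{j \in \I^{(k)}}$.

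For \eqref{eqhihdiudiduhphichiorbis} I would observe, using \eqref{eqmhjgjhgy}, that $\sum_j \bar{W}^{(k)}_{i,j}\phi^{(k)}_j - \tilde{\phi}^{(k-1)}_i = \phi^{(k),\chi}_i$; by the standard variational characterization of $\|\cdot\|_*$-orthogonal projection it suffices to show this residual is $\<\cdot,\cdot\>_*$-orthogonal to $\Phi^{(k-1)}$. Since $Q\phi^{(k-1)}_l = \sum_m \Theta^{(k-1)}_{l,m}\psi^{(k-1)}_m$, the required orthogonality reduces to $[\phi^{(k),\chi}_i, \psi^{(k-1)}_m] = 0$, already provided by Proposition \ref{propdkfjffh}. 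Finally, \eqref{eqkjddhk} is obtained by writing out the normal equations for that same $\|\cdot\|_*$-projection: the projection onto $\Phi^{(k-1)}$ of $\sum_j c_j \phi^{(k)}_j$ has coefficients $A^{(k-1)}\pi^{(k-1,k)}\Theta^{(k)} c$ in the $\phi^{(k-1)}$ basis (using $\Theta^{(k-1)} = \pi^{(k-1,k)}\Theta^{(k)}\pi^{(k,k-1)}$ together with $A^{(k-1)} = \Theta^{(k-1),-1}$), so taking $c$ to be the $i$-th column of $\bar{W}^{(k),T}$ and comparing with the coefficients of $\tilde{\phi}^{(k-1)}_i$ exhibited in \eqref{eqmhjgjhgy} yields \eqref{eqkjddhk} column by column. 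The main obstacle throughout is bookkeeping with the pseudo-inverses $\bar{\pi}^{(k-1,k)}$, $\bar{W}^{(k)}$ and their transposes, and recognizing the two Euclidean orthogonal-projection identities that drive the argument.
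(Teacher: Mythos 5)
Your proof is correct, and for the two middle assertions it takes a genuinely different (and arguably cleaner) route than the paper. The first step is identical: both you and the paper specialize Theorem \ref{thmsudgdygdgyphi} to $\bar{\psi}_i=\chi_i^{(k)}$, identify $\bar{A}=B^{(k)}$, and unfold $Q^{-1}\chi_j^{(k)}=\sum_{l}(W^{(k)}A^{(k)})_{j,l}\phi_l^{(k)}$ to recognize $\bar{\phi}_i=\phi_i^{(k),\chi}$. Where you diverge is the order and method for \eqref{eqmhjgjhgy} and \eqref{eqjskdkddj}: the paper first establishes \eqref{eqmhjgjhgy} by a dimension count showing that $\{\sum_j\bar{W}^{(k)}_{s,j}\phi_j^{(k)}\}\cup\{\phi_l^{(k-1)}\}$ is a basis of $\Phi^{(k)}$, then identifies the coefficients by pairing against $\chi^{(k)}_\cdot$ and $\psi^{(k-1)}_\cdot$ and invoking $\bar{W}^{(k)}R^{(k,k-1)}=-N^{(k),T}\bar{\pi}^{(k,k-1)}$, and only then deduces \eqref{eqjskdkddj}; you instead prove \eqref{eqjskdkddj} first by pure matrix algebra in $\R^{\I^{(k)}}$, from $W^{(k),T}\bar{W}^{(k)}+\bar{\pi}^{(k,k-1)}\pi^{(k-1,k)}=I^{(k)}$ (the two Euclidean orthoprojectors onto $\Img(W^{(k),T})=\Ker(\pi^{(k-1,k)})$ and its complement $\Img(\pi^{(k,k-1)})$) together with $N^{(k),T}W^{(k),T}=J^{(k)}$, and then obtain \eqref{eqmhjgjhgy} by substituting the nesting relation. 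This buys you a shorter argument that avoids the interpolation matrix $R^{(k-1,k)}$ entirely, in fact directly establishing the identity \eqref{eqjskdkjkfjhddj} that the paper states afterwards as a corollary. For \eqref{eqhihdiudiduhphichiorbis} you argue via orthogonality of the residual $\phi_i^{(k),\chi}$ to $\Phi^{(k-1)}$, reducing to $[\phi_i^{(k),\chi},\psi_m^{(k-1)}]=0$ from Proposition \ref{propdkfjffh}, whereas the paper explicitly minimizes the quadratic form $z^T\Theta^{(k-1)}z-2z^T\pi^{(k-1,k)}\Theta^{(k)}\bar{W}^{(k),T}e_i+\cdots$; your final normal-equation computation for \eqref{eqkjddhk} is then the same calculation as the paper's, just phrased as a projection formula. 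All the identities you rely on ($N^{(k)}=A^{(k)}W^{(k),T}B^{(k),-1}$, $A^{(k)}=\Theta^{(k),-1}$, $\Theta^{(k-1)}=\pi^{(k-1,k)}\Theta^{(k)}\pi^{(k,k-1)}$, $Q\phi_l^{(k-1)}=\sum_m\Theta^{(k-1)}_{l,m}\psi_m^{(k-1)}$) are available in the paper, so no gap remains.
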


The following theorem is a direct consequence of \eqref{eqjskdkddj}.
\begin{Theorem}
It holds true that
\begin{equation}\label{eqjskdkjkfjhddj}
N^{(k),T}(I^{(k)}- \bar{\pi}^{(k,k-1)}\pi^{(k-1,k)})=\bar{W}^{(k)}
\end{equation}
\end{Theorem}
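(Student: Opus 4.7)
The plan is to observe that this identity is a direct algebraic rearrangement of equation \eqref{eqjskdkddj}, which was already established in Proposition \ref{propdkfjffother}. I would start from
\begin{equation*}
N^{(k),T}=\bar{W}^{(k)}+N^{(k),T}\bar{\pi}^{(k,k-1)}\pi^{(k-1,k)}
\end{equation*}
and simply transpose the second term on the right-hand side to the left, factoring out $N^{(k),T}$:
\begin{equation*}
N^{(k),T}-N^{(k),T}\bar{\pi}^{(k,k-1)}\pi^{(k-1,k)}=\bar{W}^{(k)},
\end{equation*}
which is precisely \eqref{eqjskdkjkfjhddj} after factoring $N^{(k),T}$ on the left.

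Since the preceding proposition already does the real work (deriving \eqref{eqjskdkddj} from the variational characterization of $\phi_i^{(k),\chi}$ together with the representation \eqref{eqmhjgjhgy}), there is no genuine obstacle here; the statement is essentially a restatement of \eqref{eqjskdkddj} in a form that isolates the action of $N^{(k),T}$ on the complementary projector $I^{(k)}-\bar{\pi}^{(k,k-1)}\pi^{(k-1,k)}$. The only remark worth making for the reader is a conceptual one: the matrix $\bar{\pi}^{(k,k-1)}\pi^{(k-1,k)}$ is the $(\pi^{(k-1,k)}\pi^{(k,k-1)})$-weighted projector onto $\Img(\pi^{(k,k-1)})$, so $I^{(k)}-\bar{\pi}^{(k,k-1)}\pi^{(k-1,k)}$ projects onto a complement of that image, and the identity \eqref{eqjskdkjkfjhddj} expresses that $N^{(k),T}$ restricted to this complement coincides with $\bar{W}^{(k)}$, consistent with the splitting in \eqref{eqmhjgjhgy} between the $\Phi^{(k)}$ and $\Phi^{(k-1)}$ components of $\phi_i^{(k),\chi}$.
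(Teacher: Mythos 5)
Your proposal is correct and coincides with the paper's own argument: the paper explicitly states that this theorem is a direct consequence of \eqref{eqjskdkddj}, obtained by moving the term $N^{(k),T}\bar{\pi}^{(k,k-1)}\pi^{(k-1,k)}$ to the left-hand side and factoring out $N^{(k),T}$, exactly as you do. The dimensions check out ($N^{(k),T}$ is $\J^{(k)}\times\I^{(k)}$ and $\bar{\pi}^{(k,k-1)}\pi^{(k-1,k)}$ is $\I^{(k)}\times\I^{(k)}$), so nothing further is needed.
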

Let $U^{(k)}$ be the $\J^{(k)}\times \J^{(k)}$ matrix defined by
\begin{equation}
U^{(k)}= W^{(k)} \Theta^{(k)} W^{(k),T}
\end{equation}
The following proposition is a direct consequence of \eqref{eqkhiduhdf7d} and Lemma \ref{lemddjoj3ir} in
  Section \ref{sec6}, which  contains the proofs of the results of  Sections  \ref{sec1or} and \ref{secsolvinvpb}.
\begin{Proposition}
Under Conditions \ref{cond1OR} it holds true that
\begin{equation}
\frac{ H^{2k}}{C} \leq \lambda_{\min}(U^{(k)}) \text{ and }
\lambda_{\max}(U^{(k)})\leq C H^{2(k-1)},
\end{equation}
for some constant $C$ depending only on $C_{\Phi}$.
\end{Proposition}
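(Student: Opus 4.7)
The plan is to recognize $U^{(k)} = W^{(k)}\Theta^{(k)}W^{(k),T}$ as the Gram matrix, in the dual inner product $\<\cdot,\cdot\>_{*}$ on $\B^{*}$, of a particular family of linear combinations of measurement functions that happen to land in the ``detail'' subspace $\Phi^{(k),\chi}$ defined in \eqref{eqphikchi}. Once this identification is made, the two bounds follow by simply pairing the right items of Condition \ref{cond1OR}.

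Concretely, for $y\in\R^{\J^{(k)}}$ I would set $x:=W^{(k),T}y\in\R^{\I^{(k)}}$ and $\phi:=\sum_{j\in\I^{(k)}}x_j\,\phi_j^{(k)}\in\Phi^{(k)}$. By definition of $\Theta^{(k)}$ in \eqref{eqtheta1}, a direct expansion gives
\begin{equation*}
y^{T}U^{(k)}y \;=\; x^{T}\Theta^{(k)}x \;=\; [\phi,Q\phi] \;=\; \|\phi\|_{*}^{2}.
\end{equation*}
The key structural observation is that $x\in\Img(W^{(k),T})=\Ker(\pi^{(k-1,k)})$ by Construction \ref{conswk}, so $\phi\in\Phi^{(k),\chi}$. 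Next I would chain together the quantitative estimates: item (6) of Condition \ref{cond1OR} gives $C_{\Phi}^{-1}|y|^{2}\le|x|^{2}\le C_{\Phi}|y|^{2}$; item (5) gives $C_{\Phi}^{-1}|x|^{2}\le\|\phi\|_{0}^{2}\le C_{\Phi}|x|^{2}$; hence $\|\phi\|_{0}^{2}$ is comparable to $|y|^{2}$ up to a factor of $C_{\Phi}^{2}$.

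Finally, the lower bound on $y^{T}U^{(k)}y$ comes from item (2), which since $\Phi^{(k),\chi}\subset\Phi^{(k)}$ yields $\|\phi\|_{*}\ge C_{\Phi}^{-1}H^{k}\|\phi\|_{0}$, and the upper bound comes from item (3), which since $\phi\in\Phi^{(k),\chi}$ yields $\|\phi\|_{*}\le C_{\Phi}H^{k-1}\|\phi\|_{0}$. Combining these gives $C_{\Phi}^{-4}H^{2k}|y|^{2}\le y^{T}U^{(k)}y\le C_{\Phi}^{4}H^{2(k-1)}|y|^{2}$, which is exactly the proposition with $C=C_{\Phi}^{4}$. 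There is no serious obstacle here; the only thing to check carefully is the constraint bookkeeping (that the parameterization $x=W^{(k),T}y$ lands precisely in $\Ker(\pi^{(k-1,k)})$, which is the content of Construction \ref{conswk}). After that the proof is a two-line chain of the four relevant items of Condition \ref{cond1OR}, exactly paralleling the argument used for $B^{(k)}$ in Theorem \ref{corunbcnOR}, which is why the conclusion is advertised as a direct consequence of Lemma \ref{lemddjoj3ir} and \eqref{eqkhiduhdf7d}.
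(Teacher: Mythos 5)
Your proof is correct and takes essentially the same route as the paper's: the identity $y^{T}U^{(k)}y=\|\phi\|_{*}^{2}$ with $\phi\in\Phi^{(k),\chi}$ (since $\Img(W^{(k),T})=\Ker(\pi^{(k-1,k)})$), bounded above by the Poincar\'e and Riesz items of Condition \ref{cond1OR}, is exactly the content of Lemma \ref{lemddjoj3ir}, while the lower bound is \eqref{eqkhiduhdf7d} combined with the bound on $W^{(k)}W^{(k),T}$. The only cosmetic slip is your item numbering: the Poincar\'e inequality on $\Phi^{(k),\chi}$ is the fourth enumerated item of Condition \ref{cond1OR} (its label happens to be \ref{itcor3}), but the inequality you invoke is the right one.
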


\subsection{Minimum angle between gamblets}
The following Proposition provides a lower bound on the angle between linear spaces spanned by distinct gamblets.
\begin{Proposition}\label{lemddkjhed}
Let $k\in \{2,\ldots,q\}$. Let $\S_1$ and $\S_2$ be disjoint non empty subsets of $\J^{(k)}$. Under Conditions \ref{cond1OR} it holds true that for $\chi_1 \in \Span\{\chi_i^{(k)}|i\in \S_1\}$ and $\chi_2 \in\Span\{\chi_i^{(k)}|i\in \S_2\}$,
\begin{equation}
\big|\<\chi_1,\chi_2\>\big|\leq \delta \|\chi_1\| \|\chi_2\|
\end{equation}
with $\delta=1-\frac{H^2 }{ C } $ for some constant $C\geq 1$ depending only on $C_{\Phi}$.
\end{Proposition}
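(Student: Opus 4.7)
The plan is to reduce the angle bound to a spectral statement about the stiffness matrix $B^{(k)}$ and then invoke the condition number estimate from Theorem \ref{corunbcnOR}. First I would write $\chi_1=\sum_{i\in\S_1} z_i\chi_i^{(k)}$ and $\chi_2=\sum_{i\in\S_2} y_i\chi_i^{(k)}$, extending $z$ and $y$ by zero outside $\S_1$ and $\S_2$; then $\langle\chi_1,\chi_2\rangle = z^T B^{(k)} y$ and $\|\chi_a\|^2 = z_a^T B^{(k)} z_a$ for $a=1,2$. Since $\S_1\cap\S_2=\emptyset$, the Euclidean supports of $z$ and $y$ are disjoint, so
\begin{equation*}
|z\pm y|^2 = |z|^2 + |y|^2.
\end{equation*}
This is the property I will exploit; it is what makes the inequality nontrivial.

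By homogeneity it suffices to treat the normalized case $\|\chi_1\|=\|\chi_2\|=1$, i.e.\ $z^T B^{(k)} z=y^T B^{(k)} y=1$. Using the lower Rayleigh bound for $B^{(k)}$ and the upper Rayleigh bound applied to the normalizations, I obtain
\begin{equation*}
\|\chi_1\pm\chi_2\|^2 = (z\pm y)^T B^{(k)}(z\pm y) \geq \lambda_{\min}(B^{(k)})\bigl(|z|^2+|y|^2\bigr) \geq \frac{\lambda_{\min}(B^{(k)})}{\lambda_{\max}(B^{(k)})}\bigl(\|\chi_1\|^2+\|\chi_2\|^2\bigr) = \frac{2}{\operatorname{Cond}(B^{(k)})}.
\end{equation*}
Combining with the parallelogram expansion $\|\chi_1\pm\chi_2\|^2 = 2\pm 2\langle\chi_1,\chi_2\rangle$ immediately yields
\begin{equation*}
|\langle\chi_1,\chi_2\rangle| \leq 1-\frac{1}{\operatorname{Cond}(B^{(k)})}.
\end{equation*}

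To finish, I would invoke Theorem \ref{corunbcnOR}, which under Condition \ref{cond1OR} gives a constant $C$ depending only on $C_\Phi$ such that $\operatorname{Cond}(B^{(k)})\leq C H^{-2}$, hence $1/\operatorname{Cond}(B^{(k)})\geq H^2/C$. Undoing the normalization by rescaling $\chi_1,\chi_2$ produces the claimed inequality with $\delta = 1-H^2/C$. There is no real obstacle here; the only thing worth noting is the role of the disjoint-support hypothesis, which is what permits replacing the trivial Cauchy--Schwarz bound by the strict gap $1/\operatorname{Cond}(B^{(k)})$, and the fact that the bound is invariant under rescaling of $\chi_1$ and $\chi_2$ so one may normalize freely.
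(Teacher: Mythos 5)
Your proof is correct, and it takes a genuinely different route from the paper's. The paper argues via the dual gamblets $\phi_i^{(k),\chi}$ of Proposition \ref{propdkfjffh}: it writes $\frac{|\<\chi_1,\chi_2\>|}{\|\chi_1\|\|\chi_2\|}=\sqrt{1-I^2}$ with $I=\sup_{\psi\perp\chi_2}\frac{\<\chi_1,\psi\>}{\|\chi_1\|\|\psi\|}$, constructs an explicit test element $\psi=\sum_{i\in\S_1}y_i\,Q\phi_i^{(k),\chi}+Q\phi$ that is orthogonal to $\chi_2$ by biorthogonality and disjointness of $\S_1,\S_2$, and then bounds $\|\psi\|$ using the approximation property $\bar H_{k-1}$ together with the frame inequality \eqref{eqidyiuyihs} for the $\phi_j^{(k),\chi}$, finishing with $\sqrt{1-x^2}\leq 1-x^2/2$. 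You instead isolate a clean linear-algebra fact: if $z$ and $y$ are Euclidean-orthogonal (here because their supports are disjoint) and $B$ is symmetric positive definite with condition number $\kappa$, then $|z^TBy|\leq(1-\kappa^{-1})(z^TBz)^{1/2}(y^TBy)^{1/2}$, which follows from the parallelogram identity and the two Rayleigh bounds exactly as you write. Your argument consumes only the conclusion $\operatorname{Cond}(B^{(k)})\leq CH^{-2}$ of Theorem \ref{corunbcnOR} as a black box, so it is shorter, more modular, and makes transparent that the result is really a statement about well-conditioned Gram matrices restricted to disjoint index sets; the paper's version goes back to the underlying measurement-function structure and is more in the spirit of the biorthogonal machinery of Section \ref{sec8}, but yields the same form of constant. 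The only implicit point worth stating is that $B^{(k)}$ is positive definite (the $\chi_i^{(k)}$ are linearly independent by Lemma \ref{lemwk}), so the normalization $\|\chi_1\|=\|\chi_2\|=1$ is legitimate and the case where either vector vanishes is trivial.
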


\subsection{Dirac deltas as measurement functions, averaging vs sub-sampling, higher order PDEs and relaxed conditions}

Consider Examples \ref{egprotoalsobolevl} or \ref{egkljkhdekjd}  with $s>d/2$.
Although, in those examples, measurement functions can be designed as linear combinations of Dirac delta functions, the space $\B_0$ can no longer be defined as $L^2(\Omega)$ since Dirac deltas  are elements of $\B^*$ but not $L^2(\Omega)$.
However, although Theorem \ref{corunbcnOR} and Condition \ref{cond1OR} assume  that measurement functions  belong to  $\B_0$, this requirement is not necessary and Condition \ref{cond1OR} can be replaced by the following conditions.

\begin{Condition}\label{cond1ORd}
There exists some constants $C_{\Phi}\geq 1$ and $H\in (0,1)$  such that
\begin{enumerate}
\item\label{itsubst1} $\frac{1}{C_\Phi} H^k \leq  \frac{\|\phi \|_*}{|x|}$ for $k\in \{1,\ldots,q\}$, $x\in \R^{\I^{(k)}}$ and
$\phi=\sum_{i\in \I^{(k)}} x_i \phi_i^{(k)}$.
and $x\in \R^{\I^{(q)}}$.
\item $\frac{\| \phi\|_*}{|x|}\leq C_{\Phi}$ for $\phi=\sum_{i\in \I^{(1)}} x_i \phi_i^{(1)}$
and $x\in \R^{\I^{(1)}}$.
\item $\|\pi^{(k-1,k)}\|_2 \leq C_{\Phi}$ for $k\in \{2,\ldots,q\}$.
\item $\frac{1}{C_{\Phi}} J^{(k)}\leq W^{(k)}W^{(k),T}\leq C_{\Phi} J^{(k)}$ for $k\in \{2,\ldots,q\}$.
\end{enumerate}
\end{Condition}

\begin{Condition}\label{cond1ORdvar1}
The constants $C_{\Phi}$ and $H$ in Condition \ref{cond1ORd} also satisfy
\begin{enumerate}
\item \label{itdftf} $\inf_{\phi'\in \Phi^{(k-1)}}\frac{\| \phi-\phi'\|_*}{|x|}\leq C_{\Phi} H^{k-1}$ for $\phi=\sum_{i\in \I^{(k)}} x_i \phi_i^{(k)}$, $k\in \{2,\ldots,q\}$,
and $x\in \R^{\I^{(k)}}$.
\item\label{itsjhdhjdgdj}
 $\frac{\| \phi\|_*}{|x|}\leq C_{\Phi} H^{k-1}$ for $\phi=\sum_{i\in \I^{(k)}} x_i \phi_i^{(k)}$, $k\in \{2,\ldots,q\}$,
and $x\in \Ker(\pi^{(k-1,k)})$.
\end{enumerate}
\end{Condition}

The proof of the following theorem is similar to that of Theorem \ref{corunbcnOR}.
\begin{Theorem}\label{corunbcnORd}
Under Conditions \ref{cond1ORd} and \ref{cond1ORdvar1} the results of Theorem \ref{corunbcnOR} (except  \eqref{corunbcnORfe}) hold true.
\end{Theorem}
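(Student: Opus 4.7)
The plan is to revisit the proof of Theorem \ref{corunbcnOR} and observe that the eigenvalue estimates for $A^{(1)}$ and for the family $B^{(k)}$ do not in fact require the intermediate norm $\|\cdot\|_0$; they only use two-sided bounds relating $\|\phi\|_*$ to the Euclidean norm $|x|$ of the coordinates of $\phi=\sum_i x_i\phi_i^{(k)}$, and Conditions \ref{cond1ORd} and \ref{cond1ORdvar1} supply exactly such bounds. The structural ingredients I will use throughout are the identity $A^{(k)}=\Theta^{(k),-1}$ from Theorem \ref{thmwhdguyd}, the relation $B^{(k)}=W^{(k)}A^{(k)}W^{(k),T}$ from \eqref{eqjgfytfjhyyyg}, and the quadratic form identity
\[
x^T\Theta^{(k)}x = \Big\|\sum_{i\in\I^{(k)}}x_i\phi_i^{(k)}\Big\|_*^2,\quad x\in\R^{\I^{(k)}},
\]
immediate from \eqref{eqtheta1}. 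For $A^{(1)}$, specializing Item \ref{itsubst1} of Condition \ref{cond1ORd} to $k=1$ and combining it with the second item of the same condition yields $C_\Phi^{-2} H^2 |x|^2 \leq x^T \Theta^{(1)} x \leq C_\Phi^2 |x|^2$ for every $x\in\R^{\I^{(1)}}$; inverting this sandwich produces the desired bounds on $A^{(1)}$, and $\operatorname{Cond}(A^{(1)})\leq CH^{-2}$ follows immediately.

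For $B^{(k)}$ with $k\in\{2,\ldots,q\}$, set $z:=W^{(k),T}y\in\R^{\I^{(k)}}$ for $y\in\R^{\J^{(k)}}$ and invoke the standard Riesz-type dual representation of a positive quadratic form to write
\[
y^T B^{(k)} y = z^T\Theta^{(k),-1}z = \sup_{x\neq 0}\frac{(x^T z)^2}{\big\|\sum_i x_i\phi_i^{(k)}\big\|_*^2}.
\]
The upper bound is immediate from Item \ref{itsubst1} of Condition \ref{cond1ORd}, which bounds the denominator below by $C_\Phi^{-2}H^{2k}|x|^2$, so that the supremum is at most $C_\Phi^2 H^{-2k}|z|^2$, combined with the fourth item of the same condition, which gives $|z|^2=y^T W^{(k)}W^{(k),T}y\leq C_\Phi|y|^2$. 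For the lower bound, Construction \ref{conswk} ensures $z\in\Img(W^{(k),T})=\Ker(\pi^{(k-1,k)})$, so I restrict the supremum to $x\in\Ker(\pi^{(k-1,k)})$ and specialize to $x=z$, obtaining
\[
y^T B^{(k)} y \geq \frac{|z|^4}{\big\|\sum_i z_i\phi_i^{(k)}\big\|_*^2}.
\]
The Poincar\'e-type Item \ref{itsjhdhjdgdj} of Condition \ref{cond1ORdvar1} then controls the denominator above by $C_\Phi^2 H^{2(k-1)}|z|^2$, while the fourth item of Condition \ref{cond1ORd} gives $|z|^2\geq C_\Phi^{-1}|y|^2$, yielding $y^T B^{(k)} y\geq C^{-1}H^{-2(k-1)}|y|^2$. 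The condition number bound $\operatorname{Cond}(B^{(k)})\leq CH^{-2}$ follows.

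There is no serious obstacle in this argument: the only nontrivial step is the dual representation of $z^T\Theta^{(k),-1}z$, which is the standard Rayleigh-quotient duality for a symmetric positive definite quadratic form, and everything else is a mechanical substitution of the new two-sided bounds for the $\|\cdot\|_0$-based inequalities appearing in the proof of Theorem \ref{corunbcnOR}. The items of Conditions \ref{cond1ORd} and \ref{cond1ORdvar1} that are not invoked above, namely Item 3 of Condition \ref{cond1ORd} and the approximation estimate \ref{itdftf} of Condition \ref{cond1ORdvar1}, are precisely those one would need to transport information across levels of the hierarchy as in the excluded estimate \eqref{corunbcnORfe}; the fact that the matrix bounds go through without them is consistent with the statement of the theorem.
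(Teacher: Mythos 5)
Your proof is correct, but for the key lower bound $\lambda_{\min}(B^{(k)})\geq C^{-1}H^{-2(k-1)}$ it takes a genuinely different, and shorter, route than the paper. The paper's argument mirrors the proof of Theorem \ref{thmodhehiudhehd}: it writes $z^TB^{(k)}z=\inf_{y}\|\sum_j z_j\chi_j^{(k)}-\sum_i y_i\psi_i^{(k-1)}\|^2$ using the $\<\cdot,\cdot\>$-orthogonality of $\V^{(k-1)}$ and $\W^{(k)}$, invokes the approximation estimate (Item \ref{itdftf} of Condition \ref{cond1ORdvar1}) to bound this by $C_\Phi^2H^{2(k-1)}|A^{(k)}W^{(k),T}z|^2$, and then must separately establish a uniform bound on $\lambda_{\max}(N^{(k),T}N^{(k)})$ via Lemmas \ref{lemfdhgdf} and \ref{lemdjoidjdi} and a modified Lemma \ref{lemddjoj3ir} (which is where Item \ref{itsjhdhjdgdj} and the bound on $\|\pi^{(k-1,k)}\|_2$ enter). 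You instead use the Rayleigh-quotient duality $z^T\Theta^{(k),-1}z=\sup_x (x^Tz)^2/(x^T\Theta^{(k)}x)$ and test it with $x=z\in\Ker(\pi^{(k-1,k)})$, which by Cauchy--Schwarz gives $z^T\Theta^{(k),-1}z\geq |z|^4/(z^T\Theta^{(k)}z)$, after which Item \ref{itsjhdhjdgdj} applies directly; this bypasses Item \ref{itdftf}, Item 3 of Condition \ref{cond1ORd}, and the whole $N^{(k)}$ machinery. What each approach buys: yours is more elementary and shows the matrix bounds require only a strict subset of the stated hypotheses; the paper's detour through $N^{(k)}$ is not wasted, since the bound on $\lambda_{\max}(N^{(k),T}N^{(k)})$ is reused elsewhere (Theorem \ref{lemdjkdj}, the localization estimates, and the complexity analysis of the fast transform). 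One small caveat: your closing remark that the unused items are ``precisely those one would need'' only for the excluded estimate \eqref{corunbcnORfe} slightly mischaracterizes the paper, whose own proof of this theorem does use Item \ref{itdftf}; but that is a comment about the paper's proof strategy, not a flaw in yours.
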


The core element in the proof of complexity and accuracy of the Fast Gamblet Transform is the exponential decay of gamblets, which has been established for Example \ref{egprotoalsobolevl} (with $s>d/2$) in Theorems \ref{thmegdkj3hrrsuite}, \ref{thmegdkj3hrrsuitebis} and
Example \ref{egkdejkdhdjkbis}.

\begin{Remark}
To attain Theorem \ref{corunbcnORd} including  a bound on the  numerical homogenization approximation error of the type \eqref{corunbcnORfe}, one may need, as in \cite{OwhadiZhangBerlyand:2014}, to quantify the approximation in a weaker norm than the $\|\cdot\|$-norm.
\end{Remark}

 Two natural strategies could be employed in the design of a hierarchy measurement functions with Dirac delta functions.
 One is based on
sub-sampling, as presented in Figure \ref{figsubset} and Illustration \ref{nekdjdhhus2}, and
 the other is based on averaging, as presented in Figure \ref{figaverage}
 and Illustration \ref{nekdjdhhus2aver}.
  Although an averaging strategy may satisfy
 Conditions \ref{cond1ORd} and \ref{cond1ORdvar1}, we will demonstrate that  a sub-sampling strategy
 cannot  satisfy Item \ref{itsjhdhjdgdj} of Condition \ref{cond1ORdvar1}.

 \begin{figure}[h!]
	\begin{center}
			\includegraphics[width=0.9\textwidth]{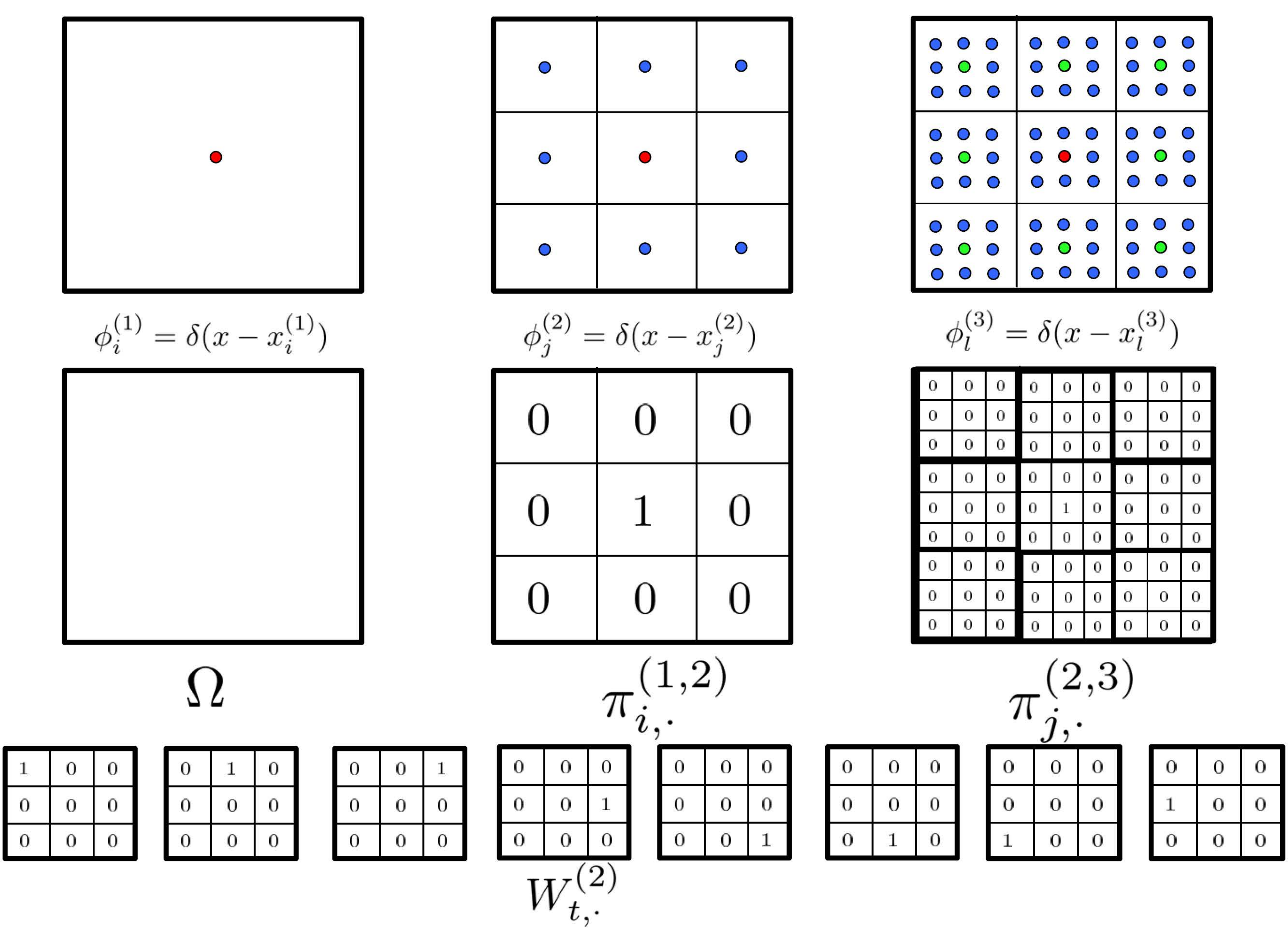}
		\caption{See Illustration \ref{nekdjdhhus2}.  $\Omega=(0,1)^2$. $\phi_i^{(k)}$ are Dirac delta functions at centers of $3^{1-k}\times 3^{1-k}$ squares forming a uniform partition of $\Omega$. The top row shows the support of $\phi_i^{(1)}, \phi_j^{(2)}$ and $\phi_l^{(3)}$.  The middle row shows the  entries of $\pi^{(1,2)}_{i,\cdot}$ and $\pi^{(2,3)}_{j,\cdot}$. The bottom row shows the entries of $W^{(k)}_{t,\cdot}$.
}\label{figsubset}
	\end{center}
\end{figure}

However, as shown in \cite{SchaeferSullivanOwhadi17}, Condition \ref{cond1ORdvar1} in Theorem \ref{corunbcnORd} can be relaxed to the following (weaker) condition (at the cost of a slight increase in the uniform on $\Cond(B^{(k)})$) that can be satisfied by a sub-sampling strategy.
\begin{Condition}\label{cond1ORdmod}
There exists  $d_{\Phi}\geq 0$ such that the constants $C_{\Phi}$ and $H$ in Condition \ref{cond1ORd} also satisfy
 \begin{equation}
 \inf_{y\in \R^{\I^{(k-1)}}, |y|\leq C_{\Phi} |x| H^{-d_{\Phi}/2}}\frac{\| \phi-\sum_{i\in \I^{(k-1)}}y_i \phi_i^{(k-1)}\|_*}{|x|}\leq C_{\Phi} H^{k-1}
 \end{equation}
  for $\phi=\sum_{i\in \I^{(k)}} x_i \phi_i^{(k)}$, $k\in \{2,\ldots,q\}$,
and $x\in \R^{\I^{(k)}}$.
\end{Condition}
The following theorem is proven in \cite{SchaeferSullivanOwhadi17}.
\begin{Theorem}\label{corunbcnORdmod}
Under Conditions \ref{cond1ORd} and \ref{cond1ORdmod} it holds true that there exists a constant $C$ depending only on $C_{\Phi}$ such that
 $C^{-1} I^{(1)} \leq A^{(1)}\leq C H^{-2} I^{(1)}$,  $\operatorname{Cond}(A^{(1)})\leq  C H^{-2}$,
and $C^{-1} H^{-2(k-1)+d_{\Phi}} J^{(k)} \leq B^{(k)}\leq C H^{-2k} J^{(k)}$
 for $k\in \{2,\ldots,q\}$.  In particular,
   \begin{equation}
   \operatorname{Cond}(B^{(k)})\leq  C H^{-2-d_{\Phi}}\,.
   \end{equation}
\end{Theorem}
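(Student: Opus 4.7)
The proof strategy parallels that of Theorem \ref{corunbcnOR}, but with every step rewritten so as to use only the weaker Condition \ref{cond1ORdmod} in place of Items \ref{itcor2}, \ref{itcor3}, \ref{itcor4} of Condition \ref{cond1OR}. The backbone of the argument is the identity $A^{(k)}=\Theta^{(k),-1}$, which follows from Theorem \ref{thmwhdguyd} together with the biorthogonality $[\phi_i^{(k)},\psi_j^{(k)}]=\delta_{i,j}$. This reduces all spectral bounds on $A^{(k)}$ and $B^{(k)}$ to spectral bounds on the Gram matrix $\Theta^{(k)}$, which in turn is controlled by $z^T\Theta^{(k)} z=\|\sum_l z_l \phi_l^{(k)}\|_*^2$ for $z\in \R^{\I^{(k)}}$.

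First I would bound $A^{(1)}$ directly: Items \ref{itsubst1} (at $k=1$) and 2 of Condition \ref{cond1ORd} give $C_\Phi^{-2} H^{2}\, I^{(1)}\leq \Theta^{(1)}\leq C_\Phi^{2}\, I^{(1)}$, so that $C_\Phi^{-2}\, I^{(1)}\leq A^{(1)}\leq C_\Phi^{2} H^{-2}\, I^{(1)}$ and $\Cond(A^{(1)})\leq C_\Phi^{4} H^{-2}$. Next, the upper bound on $B^{(k)}=W^{(k)}A^{(k)}W^{(k),T}$ follows by combining the level-$k$ form of Item \ref{itsubst1} of Condition \ref{cond1ORd} (which gives $\Theta^{(k)}\geq C_\Phi^{-2}H^{2k}I^{(k)}$ and hence $A^{(k)}\leq C_\Phi^{2}H^{-2k}I^{(k)}$) with Item 4 of Condition \ref{cond1ORd} ($W^{(k)}W^{(k),T}\leq C_\Phi J^{(k)}$), yielding $B^{(k)}\leq C_\Phi^{3}H^{-2k}J^{(k)}$.

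The crucial step is the lower bound on $B^{(k)}$, where Condition \ref{cond1ORdmod} enters. For $y\in \R^{\J^{(k)}}$, set $x:=W^{(k),T}y$; by Construction \ref{conswk} we have $x\in \Ker(\pi^{(k-1,k)})$, and Item 4 of Condition \ref{cond1ORd} gives $|x|^2\geq C_\Phi^{-1}|y|^2$. The duality identity
\[
 y^T B^{(k)} y \;=\; x^T A^{(k)} x \;=\; x^T\Theta^{(k),-1}x \;=\; \sup_{z\in \R^{\I^{(k)}}}\frac{(z^T x)^2}{z^T\Theta^{(k)} z}
\]
is then bounded below by a specific witness $z := x-\pi^{(k,k-1)}y'$, where $y'\in \R^{\I^{(k-1)}}$ is supplied by Condition \ref{cond1ORdmod} applied to $\phi_x:=\sum_l x_l\phi_l^{(k)}$, with $|y'|\leq C_\Phi |x| H^{-d_\Phi/2}$ and $\|\phi_x-\sum_i y'_i \phi_i^{(k-1)}\|_* \leq C_\Phi H^{k-1}|x|$. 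Since $\pi^{(k-1,k)}x=0$, one gets $z^T x=|x|^2$, and, using the nesting relation $\phi_i^{(k-1)}=\sum_j \pi^{(k-1,k)}_{i,j}\phi_j^{(k)}$, the numerator/denominator match gives $z^T\Theta^{(k)}z=\|\phi_x-\sum_i y'_i\phi_i^{(k-1)}\|_*^2\leq C_\Phi^2 H^{2(k-1)}|x|^2$. Combined with $|x|^2\geq C_\Phi^{-1}|y|^2$, this delivers $y^T B^{(k)} y\geq C^{-1}H^{-2(k-1)}|y|^2$; the $H^{d_\Phi}$ deficit in the stated lower bound of the theorem tracks the interplay between the size constraint $|y'|\leq C_\Phi |x|H^{-d_\Phi/2}$ and the iterated estimates on the hierarchy, as carried out in detail in \cite{SchaeferSullivanOwhadi17}.

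The main obstacle is that the strong inequality $\|\phi\|_*\leq C_\Phi H^{k-1}\|\phi\|_0$ for $\phi\in \Phi^{(k),\chi}$ used in Theorem \ref{corunbcnOR} (Item \ref{itcor3} of Condition \ref{cond1OR}) fails for sub-sampling hierarchies such as those built from restrictions of Dirac measurement functions; one cannot therefore bound $z^T\Theta^{(k)}z$ directly on $\Ker(\pi^{(k-1,k)})$. Instead one must first subtract the best $\Phi^{(k-1)}$-approximation of $\phi_x$, and then control the size of the approximation coefficients $y'$, since these sizes are what ultimately enter the worst-case estimates propagated through the hierarchy and are responsible for the admissible slight degradation $H^{-d_\Phi}$ in $\Cond(B^{(k)})$.
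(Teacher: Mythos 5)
The paper does not actually prove Theorem \ref{corunbcnORdmod}; it is stated with a pointer to \cite{SchaeferSullivanOwhadi17}, so there is no in-paper proof to compare against. Your argument is a legitimate self-contained route, and it genuinely differs from the one the paper uses for the analogous Theorems \ref{corunbcnOR} and \ref{corunbcnORd}: there the lower bound on $B^{(k)}$ is obtained by bounding $\lambda_{\max}(N^{(k),T}N^{(k)})$ via the projection $P^{(k)}$ and the Poincar\'e-type inequality on $\Phi^{(k),\chi}$ (Lemmas \ref{lemfdhgdf}--\ref{lemddjoj3ir}), which is precisely the ingredient that fails under sub-sampling. You instead use the duality $x^T\Theta^{(k),-1}x=\sup_z (z^Tx)^2/(z^T\Theta^{(k)}z)$ with the witness $z=x-\pi^{(k,k-1)}y'$, i.e.\ a Schur-complement/Cauchy--Schwarz argument that converts the approximability of $\sum_l x_l\phi_l^{(k)}$ by $\Phi^{(k-1)}$ directly into a lower bound on $\lambda_{\min}(B^{(k)})$. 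The individual steps check out: $z^Tx=|x|^2$ because $x\in\Ker(\pi^{(k-1,k)})$, $z^T\Theta^{(k)}z=\|\sum_l x_l\phi_l^{(k)}-\sum_i y'_i\phi_i^{(k-1)}\|_*^2$ by the nesting relation, $z\neq 0$ for $y\neq 0$ since $\Ker(\pi^{(k-1,k)})\cap\Img(\pi^{(k,k-1)})=\{0\}$, and $|x|^2\geq C_\Phi^{-1}|y|^2$ by Item 4 of Condition \ref{cond1ORd}; the bounds on $A^{(1)}$ and the upper bound on $B^{(k)}$ are also correct.

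The one point you should confront rather than wave at is your final sentence. Your derivation never uses the coefficient-size constraint $|y'|\leq C_\Phi|x|H^{-d_\Phi/2}$ --- only the tolerance $C_\Phi H^{k-1}|x|$, which the constrained infimum of Condition \ref{cond1ORdmod} dominates --- and it therefore yields $B^{(k)}\geq C^{-1}H^{-2(k-1)}J^{(k)}$ with no $H^{d_\Phi}$ loss. Taking Condition \ref{cond1ORdmod} at face value this is harmless (your bound is stronger than the stated one, so the theorem follows a fortiori), but it means the stated degradation cannot ``track the interplay with the size constraint'' through your argument; that sentence proves nothing and should be removed. The honest conclusion is either that the stated bound is not tight under Condition \ref{cond1ORdmod} as written, or that the condition is mis-transcribed from \cite{SchaeferSullivanOwhadi17} (e.g.\ the tolerance should read $C_\Phi H^{k-1-d_\Phi/2}$, in which case your computation delivers exactly $C^{-1}H^{-2(k-1)+d_\Phi}$, and the size constraint is what makes the weakened tolerance satisfiable by sub-sampling). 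Say this explicitly instead of deferring the discrepancy to the reference.
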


When $\B=H^s_0(\Omega)$ ($s\geq 1$),  \cite{SchaeferSullivanOwhadi17} shows that Conditions \ref{cond1ORd} and \ref{cond1ORdmod}
are satisfied with $d_{\Phi}=d$ with weighted indicator functions or Dirac delta functions as measurement functions (i.e. for each $k$, the $\phi_i^{(k)}$ are as in Example \ref{egkdejkdhdjk} or Example \ref{egkdejkdhdjkbis}).

\begin{figure}[h!]
	\begin{center}
			\includegraphics[width=0.9\textwidth]{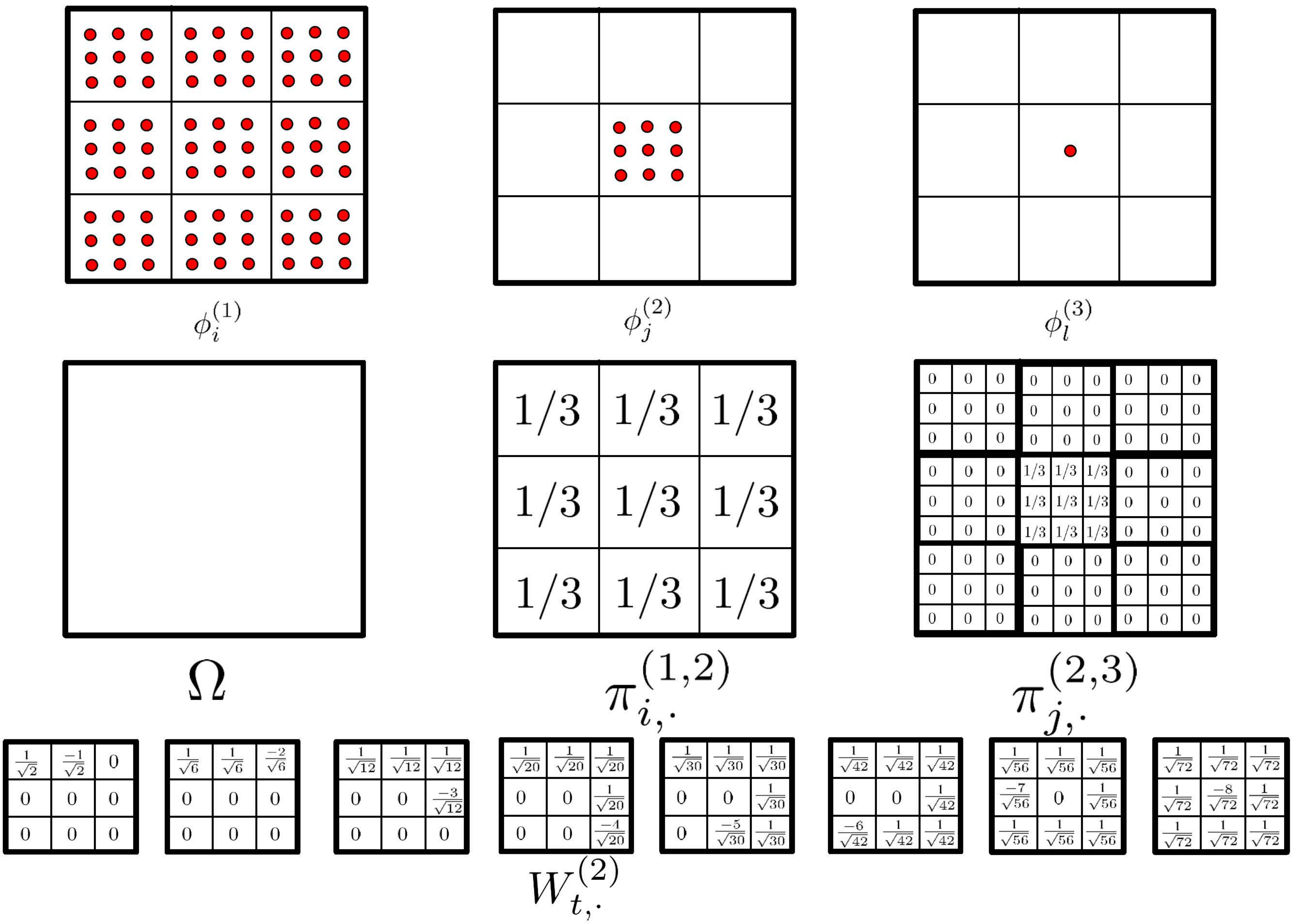}
		\caption{See Illustration \ref{nekdjdhhus2aver}. $\Omega=(0,1)^2$. $\phi_i^{(k)}$ are sums of Dirac delta functions contained in $3^{1-k}\times 3^{1-k}$ squares forming a uniform partition of $\Omega$. The top row shows the support of $\phi_i^{(1)}, \phi_j^{(2)}$ and $\phi_l^{(3)}$.  The middle row shows the  entries of $\pi^{(1,2)}_{i,\cdot}$ and $\pi^{(2,3)}_{j,\cdot}$. The bottom row shows the entries of $W^{(k)}_{t,\cdot}$.
}\label{figaverage}
	\end{center}
\end{figure}

\begin{NE}\label{nekdjdhhus2}
Consider  Figure \ref{figsubset}. For $k\in \N^*$, let $\Omega_{k}$ be a regular grid partition of $\Omega=(0,1)^2$ into $3^{1-k}\times 3^{1-k}$ squares $\tau_i^{(k)}$ with centers $x_i^{(k)}$ and let $\phi_i^{(k)}=\delta(x-x_i^{(k)})$ where  $\delta(x-x_i^{(k)})$ is the Dirac delta function at $x_i^{(k)}$. Level $k$ masses of Dirac are obtained by taking a subset of the masses of Dirac at level $k+1$.  The  matrices $\pi^{(k,k+1)}$ and $W^{(k)}$ are  cellular and satisfy $\pi^{(k,k+1)}\pi^{(k+1,k)}=I^{(k)}$ and $W^{(k)}W^{(k),T}=J^{(k)}$. Although Condition \ref{cond1ORd} is satisfied, Item \ref{itsjhdhjdgdj} of Condition \ref{cond1ORdvar1} cannot be satisfied by sub-sampling. However, as shown in \cite{SchaeferSullivanOwhadi17}, the relaxed Condition \ref{cond1ORdmod} is satisfied by sub-sampling and the proposed choice of measurement functions $\phi_i^{(k)}$.
\end{NE}

\begin{NE}\label{nekdjdhhus2aver}
Consider  Figure \ref{figaverage}. For $k\in \N^*$, let $\Omega_{k}$ be a regular grid partition of $\Omega=(0,1)^2$ into $3^{1-k}\times 3^{1-k}$ squares $\tau_i^{(k)}$ with centers $x_i^{(k)}$ and let $\phi_i^{(k)}=\sum_{x_i^{(q)} \in \tau_i^{(k)} }3^{k-q}\delta(x-x_i^{(q)})$ where  $\delta(x-x_i^{(q)})$ is the Dirac delta function at $x_i^{(q)}$.  The  matrices $\pi^{(k,k+1)}$ and $W^{(k)}$ are  cellular and satisfy $\pi^{(k,k+1)}\pi^{(k+1,k)}=I^{(k)}$ and $W^{(k)}W^{(k),T}=J^{(k)}$. Although Condition \ref{cond1ORd} is satisfied, Item \ref{itsjhdhjdgdj} of Condition \ref{cond1ORdvar1} not satisfied for $s\geq 2$ for the proposed choice of matrices $\pi^{(k,k+1)}$ and resulting measurement functions $\phi_i^{(k)}$. We refer to Remark \ref{rmkreason} for an elucidation of how to choose  the matrices $\pi^{(k,k+1)}$ for the satisfaction of Condition \ref{cond1ORdvar1}
 when $s\geq 2$ and the $\phi_i^{(k)}$ are weighted sums of masses of Diracs or weighted sums of indicator functions of a partition of $\Omega$.
We also refer to \cite{SchaeferSullivanOwhadi17} for a demonstration that the relaxed Condition \ref{cond1ORdmod} is satisfied for the proposed choice of measurement functions $\phi_i^{(k)}$.
\end{NE}

\begin{Remark}\label{rmkreason}
Consider the measurement functions of Illustration \ref{nekdjdhhu}. Note that Item \ref{itsjhdhjdgdj} of
Condition \ref{cond1ORdvar1} is equivalent to item \ref{itcor3} of Condition \ref{cond1OR}
and Item \ref{itdftf} of
Condition \ref{cond1ORdvar1} is equivalent to item \ref{itcor2} of Condition \ref{cond1OR}.
 Although Item \ref{itcor2} of Condition \ref{cond1OR} is satisfied when the $\phi_i^{(k)}$ are weighted indicator functions of the subsets $\tau_i^{(k)}$ (see  Example \ref{egkdejkdhdjk} and Theorem \ref{thmpropegkdejkdhdjkbis}), Item \ref{itcor3} requires
 choosing the matrices $\pi^{(k,k+1)}$ so that $\phi \in \Phi^{(k),\chi}$ implies that $\phi$ is $L^2$ orthogonal to polynomials of degree at most $s-1$ on local patches of size $h^{k-1}$ (it is easy to see, as in the proof of Proposition \ref{propkajhdlkjd}, that this requirement is sufficient). To understand this requirement consider the measurement functions  of Illustration \ref{nekdjdhhu}.
Item \ref{itcor3} of Condition \ref{cond1OR} requires that $\|\phi\|_{H^{-s}(\Omega)}\leq C h^{(k-1)s}\|\phi\|_{L^2(\Omega)}$ for $\phi \in\Phi^{(k),\chi}$. However
$\| \phi\|_{H^{-s}(\Omega)}=\sup_{v\in H^s_0(\Omega)}\big[2 \int_{\Omega} v\phi-\|v\|^2_{H^s_0(\Omega)}\big]$ implies that
if there exists a polynomial $p$ of degree $s'$ (in $H^s_0(\Omega)$) and $\phi \in \Phi^{(k),\chi}$
such that $\int_{\Omega} p \phi \not=0$ then, for $s>s'$
$\| \phi\|_{H^{-s}(\Omega)}$ is bounded from below by  $\int_{\Omega} p \phi$.
Since $C h^{(k-1)s}\|\phi\|_{L^2(\Omega)}$ converges towards $0$ as $s\rightarrow \infty$ and $\int_{\Omega} p \phi$ is independent from $s$, Item \ref{itcor3} of Condition \ref{cond1OR} cannot hold for all $s$.
\end{Remark}

\subsection{Conditions for Algorithm  \ref{algdiscgambletsolvecase1g} on discretized Banach spaces}
In subsection \ref{subseckedjh}, Theorem  \ref{thmconddisbndisbismatdis} regarding
Algorithm \ref{algdiscgambletsolvecase1g} is proven by identifying the discretized space
$\B^{\d}:=\Span\{\varPsi_i\mid i\in \N\}$ with
  $\R^N$, and expressing conditions  on $\R^N$. One may also wonder what
conditions directly  on the subspace $\B^\d\subset \B$ would also lead to the same results as Theorem  \ref{thmconddisbndisbismatdis}. The  subtle distinctions between these two approaches will be discussed here.

\subsubsection{Identification of measurement functions}\label{subsectiondiimspa}
Observe that Algorithm  \ref{algdiscgambletsolvecase1g} produces a hierarchy of gamblets from the specification
\begin{equation}\label{eqjhddhghd}
\psi_i^{(q)}=\varPsi_i \text{ for } i\in \I^{(q)}
\end{equation}
of the level $q$ gamblets as the basis elements spanning a subspace $\B^{\d}$ of $\B$,
and the interpolation/restriction operators obtained in Theorem \ref{thmpsdk}. In the proof of Theorem \ref{thmconddisbndisbismatdis} provided in Subsection \ref{subseckedjh} the level $q$ gamblets and measurement functions are the unit vectors of the canonical basis of $\R^{I^{(q)}}$.
What are the measurement functions associated to \eqref{eqjhddhghd}?

 Consider the situation where the $\|\cdot\|_{0}$ norm is Hilbertian, i.e. writing $[\cdot,\cdot]_{0}$ the duality product between $\B_0^*$ and $\B_0$,
$\|\phi\|^2_{0}=[\M^{-1}\phi,\phi]_{0}$ for $\phi\in \B_0$,  and $\M:\B_0^* \rightarrow \B_0$
is a symmetric positive linear bijection. Observe that since $\B_0$ is a dense subspace of $\B^*$, $\B$ is a dense subspace of $\B_0^*$ and $[\phi,\varPsi]=[\varPsi,\phi]_{0}$ for $\varPsi\in \B$ and $\phi \in \B_0$.
We summarize these embeddings in the following diagram
\begin{equation}\label{eqcase1comp}
\text{\xymatrix{
(\B \subset) \B_0^* \ar[r]^{\M} &  \B_0 (\subset \B^*) }}
\end{equation}
Let $M$ be the $\I^{(q)}\times \I^{(q)}$ symmetric matrix defined by
\begin{equation}
M_{i,j}:=[\M \varPsi_i, \varPsi_j]\,
\end{equation}
and observe that, for $x\in \R^{\I^{(q)}}$, we have $\|\sum_{i\in \I^{(q)}} x_i \M\varPsi_i\|_{0}^2 =x^T M x$.
For $i \in \I^{(q)}$, define
\begin{equation}\label{eqkjdhkdhstpsiphitil}
\phi_i^{(q)}:=\sum_{j\in \I^{(q)}}M_{i,j}^{-1} \M \varPsi_j\, .
\end{equation}
Write $\B^{*,\d}:=\Span\{\M \varPsi_i \mid i \in I^{(q)}\}$, and observe, that since
 the number of constraints is equal to the dimension of $\B^\d$, that
 $\varPsi_i$ is the unique minimizer
of $\|\psi\|$ over $\psi\in \B^\d$ such that $[\phi^{(q)}_j,\psi]=\delta_{i,j}$ for $j\in \I^{(q)}$.
Therefore, by selecting \eqref{eqkjdhkdhstpsiphitil} as  level $q$ measurement functions in the discrete space $\B^{*,\d}$, level $q$ gamblets (in the discrete set $\B^\d$) are then given
by $\psi_i^{(q)}=\varPsi_i$  for $i\in \I^{(q)}$ which corresponds to \eqref{eqjhddhghd} and to Line \ref{step3gdgpbdis} of Algorithm \ref{algdiscgambletsolvecase1g}.
Moreover,  if we
let
\begin{equation}\label{eqnormb2dnorm2}
\|\phi\|_{*,\d}:=\sup_{\varPsi \in \B^\d} \frac{[\phi,\varPsi]}{ \|\varPsi\|}\,
\end{equation}
denote dual norm on $\B^{*,\d}$ associated with the naturally induced  norm on the subspace
$\B^{\d} \subset \B$,
the
 fact that the gamblet transform using $\M$ generates a biorthogonal system  produces the following
 result, surprising in its apparent lack of dependence on $\M$.
\begin{Lemma}\label{lemddjkhkdd}
 Defining $(\phi_i^{(q)})_{i\in \I^{(q)}}$ as in \eqref{eqkjdhkdhstpsiphitil},  we have for
$x\in \R^{\I^{(q)}}$ and $\phi=\sum_{i\in \I^{(q)}}x_i \phi_i^{(q)}$,
\begin{equation}\label{eqjhkug5uy}
\|\phi\|_{*,\d}^2=x^T A^{-1} x=
\begin{cases}\min \|\varphi\|^2_* \\ \varphi \in \B^*\text{ s.t. }[\varphi,\varPsi_i]=x_i, \quad i \in  \I^{(q)}\, .\end{cases}\,
\end{equation}
Furthermore,
\begin{equation}\label{eqkddkhd}
x^T A^{-1}x=\begin{cases}\min \|\psi\|^2 \\\psi \in \B\text{ s.t. }\<\psi,\varPsi_i\>=x_i, \quad i \in
 \I^{(q)}\, .\end{cases}
\end{equation}
\end{Lemma}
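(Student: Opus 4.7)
The proof splits into three claims that share a common linear-algebraic core, namely the biorthogonality $[\phi_i^{(q)},\varPsi_j]=\delta_{ij}$. The plan is to first verify this biorthogonality, which is immediate from $\phi_i^{(q)}=\sum_{j}M^{-1}_{ij}\M\varPsi_j$ combined with the definition $M_{jk}=[\M\varPsi_j,\varPsi_k]$, and then to use it to compute each of the three quantities and observe that they all collapse to $x^{T}A^{-1}x$.

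For the identity $\|\phi\|_{*,\d}^{2}=x^{T}A^{-1}x$, I would parameterize an arbitrary $\varPsi\in\B^{\d}$ as $\varPsi=\sum_{j}y_{j}\varPsi_{j}$ with $y\in\R^{\I^{(q)}}$. Biorthogonality yields $[\phi,\varPsi]=x^{T}y$, while the definition of $A$ yields $\|\varPsi\|^{2}=y^{T}Ay$. Thus
\[
\|\phi\|_{*,\d}^{2}=\sup_{y\neq 0}\frac{(x^{T}y)^{2}}{y^{T}Ay}\,,
\]
and substituting $z=A^{1/2}y$ reduces this to an ordinary Cauchy--Schwarz supremum, attained at $y=A^{-1}x$ with value $x^{T}A^{-1}x$. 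This simultaneously identifies the maximizing element of $\B^{\d}$ as $A^{-1}x$ read in the $\varPsi_{i}$ basis.

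For the minimum of $\|\varphi\|_{*}^{2}$ over $\varphi\in\B^{*}$ subject to $[\varphi,\varPsi_{i}]=x_{i}$, I would invoke Theorem \ref{thmsudgdygdgyphi} with $\bar{\psi}_{i}=\varPsi_{i}$. Under this identification $\bar{A}_{ij}=[Q^{-1}\varPsi_{i},\varPsi_{j}]=\<\varPsi_{i},\varPsi_{j}\>=A_{ij}$, so $\bar{A}=A$ and the theorem asserts that the unique minimizer is $\bar{\phi}=\sum_{ij}x_{i}A^{-1}_{ij}Q^{-1}\varPsi_{j}$. Computing $\|\bar{\phi}\|_{*}^{2}=[\bar{\phi},Q\bar{\phi}]$, pulling $Q$ through, and invoking the constraint $[\bar{\phi},\varPsi_{j}]=x_{j}$ once more collapses the expression to $x^{T}A^{-1}x$, as required.

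For the minimum of $\|\psi\|^{2}$ over $\psi\in\B$ subject to $\<\psi,\varPsi_{i}\>=x_{i}$, I would exploit the Hilbert space structure of $(\B,\<\cdot,\cdot\>)$: the feasible set is an affine subspace whose minimum-norm element is $\<\cdot,\cdot\>$-orthogonal to the homogeneous constraint space $(\B^{\d})^{\perp}$, and therefore lies in $\B^{\d}$ itself. Writing this minimizer as $\psi=\sum_{j}y_{j}\varPsi_{j}$, the constraint becomes $Ay=x$ and $\|\psi\|^{2}=y^{T}Ay=x^{T}A^{-1}x$ follows. No step presents a genuine obstacle; the only conceptually delicate point, and the one worth flagging, is the apparent disappearance of $\M$ from the final expression: it is explained by the fact that the $\phi_{i}^{(q)}$ were constructed via $\M$ precisely so as to be biorthogonal to $\varPsi_{j}$, and only this biorthogonality enters the three computations, so different choices of $\M$ redefine the basis $\{\phi_{i}^{(q)}\}$ but leave the coordinate expression $x^{T}A^{-1}x$ invariant.
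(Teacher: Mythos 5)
Your proposal is correct and follows essentially the same route as the paper's proof: the first equality via the sup-representation of $\|\cdot\|_{*,\d}$ on the basis $\varPsi_j$ plus the Cauchy--Schwarz inequality $x^Ty\leq\sqrt{y^TAy}\sqrt{x^TA^{-1}x}$, the second via Theorem \ref{thmsudgdygdgyphi} applied with $\bar{\psi}_i=\varPsi_i$ (so $\bar{A}=A$), and \eqref{eqkddkhd} by observing the minimizer is $\sum_i(A^{-1}x)_i\varPsi_i$. Your extra remarks — the explicit biorthogonality check and the explanation of why $\M$ drops out — are consistent with, and slightly more detailed than, the paper's argument.
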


\begin{Remark}\label{rmkdkjdhdj}
Write $\delta_\d:=\inf_{\phi \in \B^{*,\d}}\sup_{\varPsi \in \B^\d} \frac{[\phi,\varPsi]}{\|\phi\|_* \|\varPsi\|}$.
Observe that for $\phi\in \B^{*,\d}$, $\delta_\d \|\phi\|_* \leq \|\phi\|_{*,\d} \leq \|\phi\|_*$.
Let $\Theta$ be the $\I^{(q)}\times \I^{(q)}$ matrix defined  $\Theta_{i,j}=[\M \varPsi_i,Q \M\varPsi_j]$.
Therefore the above inequality is equivalent to
$\delta_\d^2 \Theta \leq M A^{-1}M^T \leq \Theta$
and to $\delta_\d^2 A \leq M^T \Theta^{-1}M \leq A$.
\end{Remark}

\subsubsection{Conditions on discretized Banach spaces}
Given the discretized subspace $\B^{\d}:=\Span\{\varPsi_i, i \in  \I^{(q)}\}$,
let   $\phi_i^{(q)}$
\eqref{eqkjdhkdhstpsiphitil} be
 the measurement functions corresponding to the  $q$-level gamblets $\varPsi_i$, and
define the hierarchy of measurement functions $\phi^{(k)}_i$ as in \eqref{eq:eigdeiud3dd} using the
 $\phi_i^{(q)}$ and
the hierarchy of matrices $\pi^{(k,k+1)}$ appearing in Algorithm \ref{algdiscgambletsolvecase1g}.
Define $\Phi^{(k)}$ as in \eqref{eqdefPhik} and $\Phi^{(k),\chi}$ as in \eqref{eqphikchi}.
\begin{Condition}\label{conddiscrip3ordis}
There exists constants
$C_\d\geq 1$ and $H\in (0,1)$  such that the following conditions are satisfied.
\begin{enumerate}
\item\label{connum1d} $C_\d^{-1} J^{(k)}\leq W^{(k)}W^{(k),T} \leq C_\d J^{(k)}$ for $k\in \{2,\ldots,q\}$.
\item\label{connum2d}   $C_\d^{-1}|x|^2 \leq \|\sum_{i\in \I^{(k)}}x_i \phi_i^{(k)}\|_{0}^2 \leq C_\d |x|^2$ for $k\in \{1,\ldots,q\}$ and $x\in \R^{\I^{(k)}}$.
\item $\sup_{\phi' \in \Phi^{(q)}}  \frac{\|\phi'\|_{*,\d}}{\|\phi'\|_{0}}\leq C_\d $
\item\label{connum3d}  $\sup_{\phi' \in \Phi^{(q)}} \inf_{\phi \in \Phi^{(k)}} \frac{\|\phi'-\phi\|_{*,\d}}{\|\phi'\|_{0}}\leq C_\d H^k$ for $k\in \{1,\ldots,q-1\}$.
\item\label{connum4d}  $\frac{1}{C_\d}H^k \leq \inf_{\phi\in \Phi^{(k)}} \frac{\| \phi \|_{*,\d}}{\|\phi \|_{0}}$ for $k\in \{1,\ldots,q\}$.
\item\label{connum5d}
$\sup_{\phi\in \Phi^{(k),\chi} }\frac{\|\phi\|_{*,\d}}{\|\phi\|_{0}}\leq C_\d H^{k-1}$ for  $k\in \{2,\ldots,q\}$.
\end{enumerate}
\end{Condition}
The following theorem is a direct consequence of Theorem \ref{corunbcnOR} by
  considering the Banach space $\B^\d$
gifted with the norm  inherited from $\B$ and the dual space is $\B^{*,\d}$ gifted with
 the natural dual norm \eqref{eqnormb2dnorm2}.
\begin{Theorem}\label{thmconddisbndis}
The results of  Theorem \ref{thmconddisbndisbismatdis} hold true with Condition \ref{conddiscrip3ordis} instead of Condition \ref{conddiscrip3ordismatdis}.
\end{Theorem}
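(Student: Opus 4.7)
The plan is to apply Theorem \ref{corunbcnOR} directly to the finite-dimensional Banach space $\B^\d := \Span\{\varPsi_i \mid i\in \I^{(q)}\}$ equipped with the norm inherited from $\B$, treating $\B^{*,\d}$ (with the natural dual norm $\|\cdot\|_{*,\d}$ of \eqref{eqnormb2dnorm2}) as its dual. The positive symmetric bijection $Q_\d : \B^{*,\d}\to \B^\d$ satisfying $\|\phi\|_{*,\d}^2 = [\phi, Q_\d \phi]$ is determined by Lemma \ref{lemddjkhkdd}, which also identifies $\|\sum_{i} x_i \phi_i^{(q)}\|_{*,\d}^2 = x^T A^{-1} x$. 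The role of $\B_0$ is played by $\B^{*,\d}$ itself equipped with the auxiliary norm $\|\cdot\|_0$ of Condition \ref{conddiscrip3ordis}; compactness and density of the embedding $\B_0 \to \B^{*,\d}$ are automatic in finite dimensions. The measurement hierarchy $\phi_i^{(k)}$ and associated subspaces $\Phi^{(k)}$, $\Phi^{(k),\chi}$ are built via \eqref{eq:eigdeiud3dd} from the $q$-level functions \eqref{eqkjdhkdhstpsiphitil} and the matrices $\pi^{(k,k+1)}$, $W^{(k)}$, exactly as in Section \ref{subsechajhe}.

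Next I would verify that Condition \ref{cond1OR} holds with $C_\Phi$ depending only on $C_\d$ by a term-by-term matching of the six items of Condition \ref{conddiscrip3ordis} to those of Condition \ref{cond1OR}: item (1) of Condition \ref{cond1OR} is item (3) of Condition \ref{conddiscrip3ordis} (after noting $\B^{*,\d} = \Phi^{(q)}$, which follows from the invertibility of $M$ in \eqref{eqkjdhkdhstpsiphitil}); item (2) of Condition \ref{cond1OR} is item (5) of Condition \ref{conddiscrip3ordis}; item (3) is item (4); item (4) is item (6); item (5) is item (2); and item (6) is item (1). Finally, I would identify the matrices $A^{(k)}$ and $B^{(k)}$ computed by Algorithm \ref{algdiscgambletsolvecase1g} with the stiffness matrices \eqref{eqtheta2} and \eqref{eqjgfytfjhyyyg} associated with the gamblets of Definition \ref{defpsi} for this discrete data: line \ref{step3gdgpbdis} identifies the level-$q$ gamblets as $\psi_i^{(q)} = \varPsi_i$, which is the unique solution of \eqref{eq:dfddeytfewdaisq} at level $q$ by dimension count, and the recursions of Algorithm \ref{alggambletcomutationnes} reproduce the lower-level gamblets and their stiffness matrices via Theorem \ref{thmpsdk}. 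Theorem \ref{corunbcnOR} then delivers the bounds on $A^{(1)}$, $B^{(k)}$ and their condition numbers.

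The main technical subtlety is the appearance of the factor $\lambda_{\min}(A)$ in the statement of Theorem \ref{thmconddisbndisbismatdis} but not in that of Theorem \ref{corunbcnOR}: since the discrepancy comes solely from a rescaling of the norm $\|\cdot\|_0$ relative to the Euclidean structure on $\R^{\I^{(q)}}$ used to state Condition \ref{conddiscrip3ordismatdis}, it is absorbed into a constant depending only on $C_\d$. The condition-number estimates $\Cond(A^{(1)}) \leq C H^{-2}$ and $\Cond(B^{(k)}) \leq C H^{-2}$ are scale-invariant and follow immediately, while the two-sided operator bounds follow once the proportionality between the two normalizations is tracked through Condition \ref{conddiscrip3ordis}(2)--(3).
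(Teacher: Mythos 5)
Your proposal is correct and follows essentially the same route as the paper, whose entire proof is the one-sentence remark preceding the theorem: apply Theorem \ref{corunbcnOR} to $\B^\d$ with the inherited norm and the dual norm \eqref{eqnormb2dnorm2}. Your item-by-item matching of Condition \ref{conddiscrip3ordis} to Condition \ref{cond1OR} and your remark on the $\lambda_{\min}(A)$ normalization simply make explicit the details the paper leaves implicit.
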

\begin{Remark}
Observe that Item \ref{connum2d} of Condition \ref{conddiscrip3ordis} corresponds, for $k=q$, to a bound on the condition number of the mass matrix $M$.
\end{Remark}

\begin{Example}\label{egprotocase12cond}
Consider Example \ref{egproto00}.
Select $(\varPsi_i)_{i\in \N}$ as linearly independent finite elements of $H^1_0(\Omega)$. By selecting $\B_0=L^2(\Omega)$ we have  $\M \varPsi_i=\varPsi_i$ and $M$ is the mass matrix $M_{i,j}=\int_{\Omega}\varPsi_i \varPsi_j$.
Lines \ref{connum2d} to \ref{connum5d} of Condition \ref{conddiscrip3ordis} are then classical regularity conditions on the elements $\phi_i^{(k)}$: if these elements are derived from a hierarchical triangulation of $\Omega$, then these conditions translate to conditions on the homogeneity and aspect ratios of the triangles at each scale. For instance, Line \ref{connum2d} of
 Condition \ref{conddiscrip3ordis} is the Riesz stability condition $C_\d^{-1}|x|^2 \leq \|\sum_{i\in \I^{(k)}}x_i \phi_i^{(k)}\|_{L^2(\Omega)}^2 \leq C_\d |x|^2$. Writing $\|\phi\|_{H^{-1}(\Omega),\d}=\|\phi\|_{*,\d}=\sup_{\varPsi \in \B^\d}\int_{\Omega}\phi \varPsi/\|\varPsi\|_{H^1_0(\Omega)}$ (using the regular $H^1_0$ norm instead of the $a$-energy norm and integrating $\lambda_{\max}(a)$ and $\lambda_{\min}(a)$ into $C_\d$), Line \ref{connum3d} corresponds to the approximation property\
  $ \inf_{\phi \in \Phi^{(k)}} \|\phi'-\phi\|_{H^{-1}(\Omega),\d}\leq C_\d H^k \|\phi'\|_{L^2(\Omega)}$, Line \ref{connum4d} corresponds to the inverse Sobolev inequality
 $\|\phi \|_{L^2(\Omega)} \leq C_\d H^{-k}\| \phi \|_{H^{-1}(\Omega),\d}$ (for $\phi\in \Phi^{(k)}$) and Line \ref{connum5d} corresponds to the Poincar\'{e}'s inequality
$ \|\phi\|_{H^{-1}(\Omega),\d}\leq C_\d H^{k-1} \|\phi\|_{L^2(\Omega)}$ (for $\phi\in \Phi^{(k),\chi}$).
\end{Example}

\subsubsection{Alternative initialization of Algorithm \ref{algdiscgambletsolvecase1g}}
The choice of measurement functions is not unique and it impacts the initialization of Algorithm \ref{algdiscgambletsolvecase1g} and its analysis. The discretization of $\B^*$ done in  Subsection \ref{subsectiondiimspa} can be generalized to
 $\B^{*,\d}:=\Span\{\varphi_i \mid i \in \N\}$  where $(\varphi_i)_{i\in \N}$ are independent linear elements of $\B^*$.
Given such elements, define  a matrix $M$ by
 \begin{equation}
 M_{i,j}=[\varphi_i,\varPsi_j]\, .
 \end{equation}
Then, if the matrix is invertible,
 define the measurement functions
 \begin{equation}\label{eqaltphiq}
 \phi^{(q)}_i=\sum_{j\in \I^{(q)}} M_{i,j}^{-1} \varphi_j \text{ for $i \in \I^{(q)}$,}
 \end{equation}
and observe that
the unique minimizer of $\|\psi\|$ in $\B^\d$ such that
$[\phi^{(q)}_i,\psi]=\delta_{i,j}$ is  $\psi_i^{(q)}=\varPsi_i$  for $i\in \I^{(q)}$  (which corresponds to  the initialization of Algorithm \ref{algdiscgambletsolvecase1g}).
However under the measurement functions $\phi^{(q)}_i= \varphi_i$, the unique minimizer of $\|\psi\|$ in
$\B^\d$ such that
$[\phi^{(q)}_i,\psi]=\delta_{i,j}$ is
\begin{equation}\label{eqkjjdhdkjh}
\psi_i^{(q)}=\sum_{j\in \I^{(q)}}M^{-1,T}_{i,j}\varPsi_j
\end{equation}
 which leads to   an alternative initialization of Algorithm \ref{algdiscgambletsolvecase1g}  introduced in \cite{OwhadiMultigrid:2015}. Numerical experiments suggest the initialization \eqref{eqjhddhghd} is superior than \eqref{eqkjjdhdkjh}.

\begin{Remark}
Examples of choices for $\varphi_i$ are $\M \varPsi_i$.  For specific choices of primal basis functions $(\varPsi_i)_{i\in \N}$ the analysis of Algorithm \ref{algdiscgambletsolvecase1g}
can also be conducted by considering the choice $\varphi_i=Q^{-1}\varPsi_i$ and the measurement functions
$\bar{\phi}^{(q)}_i:=\sum_{j\in \I^{(q)}} A_{i,j}^{-1} Q^{-1}\varPsi_j$, which are the
 dual gamblets (in $\B^*$) associated with the elements $(\varPsi_i)_{i\in \I^{(q)}}$ (in $\B$) described in Theorem \ref{thmsudgdygdgyphi}.
\end{Remark}

\section{Proofs of the results of Sections  \ref{sec1or} and \ref{secsolvinvpb}}\label{sec6}

\subsection{Proof of Proposition \ref{prop_Gambletprojection}}
\label{sec_gambletorthogonal}
The proof of biorthogonality is straightforward.
It is clear that the range of $P$ is in $Q\Phi$. Now let us fix $k$ and consider  $\psi:=Q\phi_{k}$.
Since \[
P\psi= PQ\phi_{k}
=
\sum_{i=1}^{m}{\psi_{i}[\phi_{i},Q\phi_{k}]}
=
\sum_{i=1}^{m}{\psi_{i}\langle \phi_{i},\phi_{k}\rangle_{*}}
=
\sum_{i=1}^{m}{\psi_{i}\Theta_{ik}}\]
\[
=
\sum_{i=1}^{m}{\sum_{j=1}^{m}{\Theta^{-1}_{ij}S\phi_{j}}\Theta_{ik}}
=
Q\sum_{j=1}^{m}{\sum_{i=1}^{m}{\Theta^{-1}_{ij}\phi_{j}}\Theta_{ik}}
=
Q\sum_{j=1}^{m}{\delta_{jk}\phi_{j}}
=
Q\phi_{k}
= \psi\]
we obtain that $P\psi=\psi$. Since $Q\Phi$ is the span of $Q\phi_{k}, k=1,\ldots m$ it follows that
$P\psi=\psi$ for $\psi \in Q\Phi$. Now suppose that $\psi$ is orthogonal to $Q\Phi$, so that
$0=\langle Q\phi_{k}, \psi \rangle=[\phi_{k},\psi ]$ for all $k$.  Then it follows that  $P\psi=0$ for all $\psi$ orthogonal to $Q\Phi$, establishing the  second assertion. The third follows in a similar way. The assertion
that $P^{*}=Q^{-1}PQ$ and that $P^{*}$ is the adjoint to $P$ are straightforward.

\subsection{Proof of Theorem \ref{thmwhdguyd}}
If we use the identity $[\varphi,\psi]=\langle Q\varphi,\psi\rangle, \varphi \in \B^{*},\psi \in \B$ to  write the definition \ref{defpsi} of the gamblet $ \psi_i^{(k)}$ as
\[
\begin{cases}
\text{Minimize }  &\|\psi\|\\
\text{Subject to } &\psi \in \B\text{ and }\langle Q\phi_j^{(k)}, \psi\rangle=\delta_{i,j}\text{ for } j\in \I^{(k)}\,,
\end{cases}
\]
then the usual orthogonality arguments show that $\psi_i^{(k)}\in Q\Phi,i=1,\ldots m$
if we can solve the constraints there. So let
$\psi_i^{(k)}=\sum_{j'}{\alpha_{j'}Q\phi_{j'}^{(k)}}$ and define
$\Theta^{(k)}_{jj'}:=[\phi_j^{(k)} , Q\phi_{j'}^{(k)}]$.  Then we  obtain
\[ \delta_{i,j} =\langle Q\phi_j^{(k)}, \psi_i^{(k)}\rangle
=\sum_{j'}{\alpha_{j'}\langle Q\phi_j^{(k)} , Q\phi_{j'}^{(k)}\rangle}=
\sum_{j'}{\alpha_{j'}[\phi_j^{(k)} , Q\phi_{j'}^{(k)}]}=
\sum_{j'}{\alpha_{j'}\Theta^{(k)}_{jj'}}
\]
and so conclude that
$\alpha_{j'}=\Theta^{(k),-1}_{ij'}$
and therefore
$\psi_i^{(k)}=\sum_{j'}{\Theta^{(k),-1}_{ij'}Q\phi_{j'}^{(k)}}$.

\subsection{Proof of Theorem \ref{thmgugyug0OR}}
It follows from Theorem \ref{thmwhdguyd} and Proposition  \ref{prop_Gambletprojection} that
 the gamblets
$\psi_i^{(k)}$ are those components which
 make
the operator $P^{(k)}_{Q\Phi}:=\sum_{i}{\psi_i^{(k)}\otimes \phi_{j'}^{(k)}}$ the orthogonal projection onto
$Q\Phi$, establishing that $u^{(k)}(u)=P^{(k)}_{Q\Phi}u$ and the assertion follows.

\subsection{Theorem \ref{thmsudgdygdgy}}
The following theorem, which implies Theorems \ref{thmgugyug0OR} and \ref{thmwhdguyd}, shows that the gamblets $\psi_i$ have optimal variational properties in the  $\|\cdot\|$-norm (which is also a consequence of Proposition \ref{prop_Gambletprojection}).
\begin{Theorem}\label{thmsudgdygdgy}
Let $(\psi_i)_{i=1,\ldots,m}$ be defined as in \eqref{eqhgudgud}. Then it holds true that
for $c\in \R^m$, $\sum_{i=1}^m c_i \psi_i$ is the unique minimizer of
\begin{equation}\label{eqhihdiudiduh}
\begin{cases}
\text{Minimize }\|v\|\\
\text{Subject to }v\in \B\text{ and }[\phi_j,v]=c_j\text{ for }j=1,\ldots,m
\end{cases}
\end{equation}
In particular $\psi_i$ is the minimizer of $\|v\|$  over $v\in \B$ such that $[\phi_j,\psi_i]=\delta_{i,j}$ for $j\in \{1,\ldots,m\}$. Furthermore, $\<\psi_i,\psi_j\>=\Theta^{-1}_{i,j}$, and  for $u\in \B$, $\bar{u}$ is the minimizer of $\|u-v\|$ over $v\in \operatorname{span}\{\psi_1,\ldots,\psi_m\}$.
\end{Theorem}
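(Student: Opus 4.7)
The plan is to derive the theorem essentially as a corollary of Proposition \ref{prop_Gambletprojection}, which already establishes biorthogonality $[\phi_i,\psi_j]=\delta_{ij}$ and that $P:=\sum_i \psi_i\otimes\phi_i$ is the $\langle\cdot,\cdot\rangle$-orthogonal projection onto $Q\Phi$. The key observation is that the constraint $[\phi_j,v]=c_j$ can be rewritten via $[\phi_j,v]=\langle Q\phi_j,v\rangle$ as an orthogonality-type condition with respect to the basis $\{Q\phi_j\}$ of $Q\Phi$.

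First I would verify feasibility: setting $\psi:=\sum_i c_i\psi_i$, biorthogonality gives $[\phi_j,\psi]=\sum_i c_i\delta_{ij}=c_j$. Next, for any other feasible $v\in\B$ satisfying $[\phi_j,v]=c_j$, write $w:=v-\psi$; then $[\phi_j,w]=0$ for all $j$, which via $[\phi_j,w]=\langle Q\phi_j,w\rangle$ shows $w$ is $\langle\cdot,\cdot\rangle$-orthogonal to $Q\Phi$. Since $\psi\in Q\Phi$ by construction, the Pythagorean identity gives $\|v\|^2=\|\psi\|^2+\|w\|^2\geq\|\psi\|^2$, with equality iff $w=0$. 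This establishes both existence and uniqueness of the minimizer, and specializing to $c_j=\delta_{ij}$ gives the variational characterization of $\psi_i$.

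For the Gram matrix identity, I would compute directly:
\begin{equation*}
\langle\psi_i,\psi_j\rangle=\sum_{k,l}\Theta^{-1}_{ik}\Theta^{-1}_{jl}\langle Q\phi_k,Q\phi_l\rangle=\sum_{k,l}\Theta^{-1}_{ik}\Theta^{-1}_{jl}[\phi_k,Q\phi_l]=\sum_{k,l}\Theta^{-1}_{ik}\Theta^{-1}_{jl}\Theta_{kl}=\Theta^{-1}_{ij},
\end{equation*}
using symmetry of $\Theta^{-1}$ in the last step. For the final assertion, note that since $\Theta^{-1}$ is invertible and $\psi_i=\sum_j\Theta^{-1}_{ij}Q\phi_j$, we have $\mathrm{span}\{\psi_1,\ldots,\psi_m\}=Q\Phi$. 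Thus the minimizer of $\|u-v\|$ over $v$ in this span is the $\langle\cdot,\cdot\rangle$-orthogonal projection $Pu$, which by Proposition \ref{prop_Gambletprojection} equals $\sum_i[\phi_i,u]\psi_i=\bar u$.

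There is no real obstacle here; the whole argument is a direct unpacking of the projection formula from Proposition \ref{prop_Gambletprojection} together with the identity $[\phi,v]=\langle Q\phi,v\rangle$ that converts dual-pairing constraints into Hilbert-space orthogonality. The only mild subtlety is being careful that the constraint set $\{v:[\phi_j,v]=c_j\}$ intersects $Q\Phi$ in exactly one element, which is guaranteed by invertibility of $\Theta$.
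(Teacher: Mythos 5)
Your proof is correct and follows essentially the same route as the paper's: both arguments reduce the variational problem to the orthogonal decomposition of $\B$ into $Q\Phi=\operatorname{span}\{\psi_1,\ldots,\psi_m\}$ and its $\langle\cdot,\cdot\rangle$-orthogonal complement, using biorthogonality and the identity $[\phi_j,v]=\langle Q\phi_j,v\rangle$ to pin down the component in $Q\Phi$ and the Pythagorean identity to conclude. Your explicit computation of $\langle\psi_i,\psi_j\rangle=\Theta^{-1}_{ij}$ is a welcome addition that the paper leaves implicit.
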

\begin{proof}
Let $V_m:=\operatorname{span}\{\psi_1,\ldots,\psi_m\}$ and $V_m^c$ be the orthogonal complement of $V_m$ in $\B$ with respect to the inner product $\<\cdot,\cdot\>$. Let $v=v_1+v_2$ be the corresponding orthogonal decomposition of $v\in \B$ into $v_1\in V_m$ and $v_2\in V_m^c$.  The constrains in \eqref{eqhihdiudiduh} imply $v_1=\sum_{i=1}^m c_i \psi_i$ and $\|v\|^2=\|v_1\|^2+\|v_2\|^2$ implies that the minimum of \eqref{eqhihdiudiduh} is achieved at $v_2=0$. Note that for $u\in \B$, $u=\bar{u}+u_2$ with $u_2\in V_m^c$, which implies that  $\bar{u}$ is the minimizer of $\|u-v\|$ over $v\in V_m$.
\end{proof}

Let $\Phi^{(k)}$ and $\V^{(k)}$ be defined as in \eqref{eqdefPhik} and \eqref{eqdefvk}.
\eqref{eq:eigdeiud3dd} implies that $\Phi^{(k)}\subset \Phi^{(k+1)}$. \eqref{eqhgudgud} implies that $\V^{(k)}=Q\Phi^{(k)}$. We deduce that $\V^{(k)}\subset \V^{(k+1)}$.
Write $\V^{(q+1)}:=\B$ and for $k\in \{2,\ldots,q+1\}$ let $\W^{(k)}$ be the orthogonal complement of $\V^{(k-1)}$ in $\V^{(k)}$ with respect to the inner product $\<\cdot,\cdot\>$, i.e.
\begin{equation}\label{eqklggkjg7668}
\W^{(k)}=\{w\in \V^{(k)}\mid \<v,w\>=0\text{ for } v\in \V^{(k-1)}\}
\end{equation}
 Let $u^{(k)}$ be defined as in \eqref{eqbdudbuysbdneq1} and write $u^{(q+1)}:=u$.
\begin{Lemma}\label{lemdgdg3ge}
It holds true that
\begin{itemize}
\item For $k\in \{1,\ldots,q-1\}$, $w \in \W^{(k+1)}$ if and only if $Q^{-1} w\in \Phi^{(k+1)}$ and $\V^{(k)}\subset \operatorname{Ker}(Q^{-1} w)$.
\item For $1\leq k\leq k' \leq q$, $u\in \B$ and $\phi \in \Phi^{(k)}$, one has $[\phi, u^{(k')}(u)]=[\phi,u]$.
\end{itemize}
\end{Lemma}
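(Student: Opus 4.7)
The plan is to read both items as straightforward unpackings of the definitions, with the symmetry of $Q$ doing all the work. No new estimate is required, since Theorem \ref{thmgugyug0OR} already provides the identification $\V^{(k)}=Q\Phi^{(k)}$ and the fact that $u^{(k')}$ is orthogonal projection onto $\V^{(k')}$.

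For the first item, I would start from the definition \eqref{eqklggkjg7668}: $w\in\W^{(k+1)}$ iff (i)~$w\in\V^{(k+1)}$ and (ii)~$\<v,w\>=0$ for every $v\in\V^{(k)}$. By Theorem~\ref{thmgugyug0OR}, $\V^{(k+1)}=Q\Phi^{(k+1)}$, so condition (i) is equivalent to $Q^{-1}w\in\Phi^{(k+1)}$. For condition (ii), I rewrite the inner product using the dual pairing and the symmetry of $Q:\B^*\to\B$: with $\phi:=Q^{-1}v$ and $\psi:=Q^{-1}w$,
\[
\<v,w\>=[Q^{-1}v,w]=[\phi,Q\psi]=[\psi,Q\phi]=[Q^{-1}w,v].
\]
Hence (ii) is equivalent to $[Q^{-1}w,v]=0$ for every $v\in\V^{(k)}$, i.e.\ $\V^{(k)}\subset\operatorname{Ker}(Q^{-1}w)$. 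Combining (i) and (ii) gives the claimed characterisation.

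For the second item I would use the projection interpretation rather than pushing through the explicit formula \eqref{eqbdudbuysbdneq1}. Let $\phi\in\Phi^{(k)}$ with $k\le k'$. By the nesting $\Phi^{(k)}\subset\Phi^{(k')}$ noted above Lemma~\ref{lemdgdg3ge} (itself a direct consequence of \eqref{eq:eigdeiud3dd}), we have $Q\phi\in Q\Phi^{(k')}=\V^{(k')}$. Theorem~\ref{thmgugyug0OR} asserts that $u^{(k')}(u)$ is the $\<\cdot,\cdot\>$-orthogonal projection of $u$ onto $\V^{(k')}$, so $u-u^{(k')}(u)$ is orthogonal to $Q\phi$. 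Converting this orthogonality back to the dual pairing,
\[
0=\<Q\phi,u-u^{(k')}(u)\>=[Q^{-1}Q\phi,\,u-u^{(k')}(u)]=[\phi,u-u^{(k')}(u)],
\]
which rearranges to $[\phi,u^{(k')}(u)]=[\phi,u]$. As a sanity check one can redo the computation directly: expand $\phi=\sum_{l\in\I^{(k')}}c_l\phi_l^{(k')}$, use the biorthogonality $[\phi_l^{(k')},\psi_i^{(k')}]=\delta_{l,i}$ from \eqref{eq:dfddeytfewdaisq}, and match terms in $\sum_{i}[\phi_i^{(k')},u]\psi_i^{(k')}$.

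There is no genuine obstacle; the only subtle point worth being careful about is the first identity $\<v,w\>=[Q^{-1}w,v]$ invoked in both items, which relies on the assumption that $Q:\B^*\to\B$ is symmetric in the sense $[\varphi,Q\psi]=[\psi,Q\varphi]$ stated in Subsection~\ref{subsecttt}. Once this symmetry is in place both items reduce to one-line verifications.
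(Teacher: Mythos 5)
Your proof is correct and follows essentially the same route as the paper: the first item is the same unpacking of the definition of $\W^{(k+1)}$ via $\V^{(k+1)}=Q\Phi^{(k+1)}$ and the identity $\<w,v\>=[Q^{-1}w,v]$, and for the second item your lead argument (orthogonality of $u-u^{(k')}(u)$ to $Q\phi\in\V^{(k')}$) is interchangeable with the paper's direct use of the biorthogonality $[\phi_j^{(k')},\psi_i^{(k')}]=\delta_{i,j}$, which you also include as a sanity check.
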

\begin{proof}
The first property follows by observing that $Q^{-1}\V^{(k+1)}=\Phi^{(k+1)}$ and that for $v \in \V^{(k)}$ and $w\in \V^{(k+1)}$ one has $\<w,v\>=[Q^{-1}w,v]$.
For the second property, observe that $[\phi_j^{(k')},\psi_i^{(k')}]=\delta_{i,j}$ implies that  $[\phi', u^{(k')}(u)]=[\phi',u]$ for $\phi'\in \Phi^{(k')}$ and the nesting  \eqref{eq:eigdeiud3dd} implies that for $\phi \in \Phi^{(k)}$, $[\phi, u^{(k')}(u)]=[\phi,u]$.
\end{proof}
Let $W^{(k)}$ and $\chi^{(k)}_i$ be defined  as in Construction \ref{conswk} and \eqref{eqjkhdkdh}.
\begin{Lemma}\label{lemwk}
It holds true that the elements  $(\chi_i^{(k)})_{i\in \J^{(k)}}\in \V^{(k)}$ defined in \eqref{eqjkhdkdh}
form a basis of $\W^{(k)}$ defined in \eqref{eqklggkjg7668}. Therefore \eqref{eqklggkjg7668} and \eqref{eqdefwk} define the same space.
\end{Lemma}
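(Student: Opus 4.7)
The plan is to establish the lemma by showing three things in sequence: (i) every $\chi_i^{(k)}$ lies in $\W^{(k)}$, (ii) the family $(\chi_i^{(k)})_{i\in\J^{(k)}}$ is linearly independent, and (iii) a dimension count then forces the spanning property and so equality of \eqref{eqklggkjg7668} and \eqref{eqdefwk}.

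For (i), the inclusion $\chi_i^{(k)}\in \V^{(k)}$ is immediate from the definition \eqref{eqjkhdkdh}. For the orthogonality $\chi_i^{(k)}\perp \V^{(k-1)}$, I would use the projection formula of Theorem \ref{thmgugyug0OR}: orthogonality to $\V^{(k-1)}$ is equivalent to the vanishing of $[\phi_l^{(k-1)},\chi_i^{(k)}]$ for all $l\in \I^{(k-1)}$, since those coefficients determine the $\<\cdot,\cdot\>$-projection onto $\V^{(k-1)}$. Now compute, using the biorthogonality $[\phi_s^{(k)},\psi_j^{(k)}]=\delta_{s,j}$ from Definition \ref{defpsi} and the nesting relation \eqref{eq:eigdeiud3dd},
\begin{equation*}
[\phi_l^{(k-1)},\chi_i^{(k)}]=\sum_{j\in\I^{(k)}}W^{(k)}_{i,j}\,[\phi_l^{(k-1)},\psi_j^{(k)}]=\sum_{j\in\I^{(k)}}W^{(k)}_{i,j}\,\pi^{(k-1,k)}_{l,j}=\bigl(\pi^{(k-1,k)}W^{(k),T}\bigr)_{l,i}\,,
\end{equation*}
which is zero because Construction \ref{conswk} gives $\operatorname{Img}(W^{(k),T})\subset \operatorname{Ker}(\pi^{(k-1,k)})$. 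Hence $\chi_i^{(k)}\in\W^{(k)}$.

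For (ii), the $\psi_j^{(k)}$ are linearly independent (they form a basis of $\V^{(k)}$ by Theorem \ref{thmgugyug0OR} together with the linear independence of the $\phi_j^{(k)}$ noted after Construction \ref{constphik}), so linear independence of the $\chi_i^{(k)}$ reduces to injectivity of $W^{(k),T}$. By Construction \ref{conswk}, the image of $W^{(k),T}$ equals $\operatorname{Ker}(\pi^{(k-1,k)})$, which has dimension $|\I^{(k)}|-|\I^{(k-1)}|=|\J^{(k)}|$ because $\pi^{(k-1,k)}$ has rank $|\I^{(k)-1}|$ by Construction \ref{constpi}; therefore $W^{(k),T}$ is injective and the $\chi_i^{(k)}$ are linearly independent.

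For (iii), note that by Construction \ref{consjk}, $|\J^{(k)}|=|\I^{(k)}|-|\I^{(k-1)}|$, and that $\dim\W^{(k)}=\dim\V^{(k)}-\dim\V^{(k-1)}=|\I^{(k)}|-|\I^{(k-1)}|$ since $\V^{(k-1)}\subset\V^{(k)}$ and both have the expected dimensions. So the $|\J^{(k)}|$ linearly independent elements $\chi_i^{(k)}$ found in (i)–(ii) already exhaust the dimension of $\W^{(k)}$ and therefore form a basis of it; the equality of the spaces defined by \eqref{eqklggkjg7668} and \eqref{eqdefwk} follows. No step here is hard — the only subtlety is remembering to use the biorthogonality plus the nesting \eqref{eq:eigdeiud3dd} to rewrite $[\phi_l^{(k-1)},\psi_j^{(k)}]$ as an entry of $\pi^{(k-1,k)}$, which is what converts the $W^{(k),T}\in \operatorname{Ker}(\pi^{(k-1,k)})$ condition into scale orthogonality.
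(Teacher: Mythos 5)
Your proof is correct and follows essentially the same route as the paper's: both reduce membership of $\chi_i^{(k)}$ in $\W^{(k)}$ to the matrix identity $(\pi^{(k-1,k)}W^{(k),T})_{l,i}=0$ via the biorthogonality $[\phi_s^{(k)},\psi_j^{(k)}]=\delta_{s,j}$ and the nesting \eqref{eq:eigdeiud3dd}, and both conclude with the dimension count $|\J^{(k)}|=|\I^{(k)}|-|\I^{(k-1)}|=\dim\W^{(k)}$ (the paper invokes Lemma \ref{lemdgdg3ge} where you invoke the projection formula of Theorem \ref{thmgugyug0OR}, but these encode the same orthogonality criterion).
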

\begin{proof}
Let $\W^{(k)}$ be defined in \eqref{eqklggkjg7668}. Since $\chi^{(k)}_i \in Q \Phi^{(k)}$,
Lemma \ref{lemdgdg3ge} implies that $\chi^{(k)}_i$ belongs to $\W^{(k)}$ if and only if $[\phi^{(k-1)}_j,\chi^{(k)}_i]=0$ for all $j\in \I^{(k-1)}$ which, using \eqref{eq:eigdeiud3dd}, translates into $(\pi^{(k-1,k)}W^{(k),T})_{j,i}=0$, which is satisfied. Writing $|\J^{(k)}|$ the cardinality  of $\J^{(k)}$, observe that $|\J^{(k)}|=|\I^{(k)}|-|\I^{(k-1)}|$. Therefore $\Img(W^{(k),T})=\Ker(\pi^{(k-1,k)})$ also implies that the $|\J^{(k)}|$ elements $\chi^{(k)}_i$ are linearly independent and, therefore, form a basis of $\W^{(k)}$.
\end{proof}

\subsection{Proof of Theorems \ref{thmgugyug2OR} and \ref{thmreffytfuyf}}
Theorems \ref{thmgugyug2OR} and \ref{thmreffytfuyf} are a direct consequence of Lemma \ref{lemwk} and the following proposition.
\begin{Proposition}\label{propduyge}
It holds true that for $u\in \B$ and $k\in \{1,\ldots,q\}$, $u^{(k+1)}(u)-u^{(k)}(u) \in \W^{(k+1)}$ and
 \begin{equation}\label{eqkhdhjkdh}
 \big\|u-(u^{(k+1)}(u)-u^{(k)}(u))\big\| =\inf_{w\in \W^{(k+1)}} \|u-w\|\,.
 \end{equation}
\end{Proposition}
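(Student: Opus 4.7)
The plan is to reduce the proposition to the basic Hilbert-space fact that orthogonal projection onto an orthogonal direct sum decomposes as the sum of the projections onto the summands, and to use the building blocks already established in the present proof section.

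First I would invoke Lemma \ref{lemwk} (which has just been proved independently of Proposition \ref{propduyge}) to identify $\W^{(k+1)}$ with the $\langle\cdot,\cdot\rangle$-orthogonal complement of $\V^{(k)}$ in $\V^{(k+1)}$, via the definition \eqref{eqklggkjg7668}. Together with the nesting $\V^{(k)}\subset \V^{(k+1)}$ noted immediately after \eqref{eq:eigdeiud3dd} (and the convention $\V^{(q+1)}:=\B$ used in the proof section so that the case $k=q$ is covered), this gives the orthogonal direct sum decomposition
\begin{equation*}
\V^{(k+1)}=\V^{(k)}\oplus \W^{(k+1)}\,.
\end{equation*}

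Next I would appeal to Theorem \ref{thmgugyug0OR}, already proved above, which asserts that $u^{(k)}(u)$ is the $\langle\cdot,\cdot\rangle$-orthogonal projection of $u$ onto $\V^{(k)}$, and likewise for $u^{(k+1)}(u)$ onto $\V^{(k+1)}$ (for $k=q$ we simply have $u^{(q+1)}(u)=u$, consistent with projecting onto $\B$). By the standard decomposition of the orthogonal projection onto an orthogonal direct sum, the projection onto $\V^{(k+1)}$ equals the sum of the projections onto $\V^{(k)}$ and onto $\W^{(k+1)}$. Consequently,
\begin{equation*}
u^{(k+1)}(u)-u^{(k)}(u)=P_{\W^{(k+1)}}u\,,
\end{equation*}
where $P_{\W^{(k+1)}}$ denotes the orthogonal projection onto $\W^{(k+1)}$. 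This immediately yields $u^{(k+1)}(u)-u^{(k)}(u)\in \W^{(k+1)}$, i.e.\ the first claim.

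For the second claim, the defining variational property of the orthogonal projection onto the closed subspace $\W^{(k+1)}$ states that $P_{\W^{(k+1)}}u$ minimizes $\|u-w\|$ over $w\in \W^{(k+1)}$, so the identity $u^{(k+1)}(u)-u^{(k)}(u)=P_{\W^{(k+1)}}u$ established above gives \eqref{eqkhdhjkdh} at once. There is no real obstacle here beyond being careful that all inputs, notably Theorem \ref{thmgugyug0OR} and Lemma \ref{lemwk}, are in place before Proposition \ref{propduyge}, which (as the ordering in Section \ref{sec6} indicates) is the case; in particular Theorem \ref{thmgugyug2OR} is not available yet, but it is in fact a direct consequence of Lemma \ref{lemwk} together with Proposition \ref{propduyge}, so circularity is avoided.
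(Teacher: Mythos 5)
Your proof is correct, and it reaches the conclusion by a slightly different route than the paper's. The paper never passes through the operator identity $P_{\V^{(k+1)}}=P_{\V^{(k)}}+P_{\W^{(k+1)}}$; instead it verifies the two orthogonality relations by hand using Lemma \ref{lemdgdg3ge}. Concretely, it shows $u^{(k+1)}(u)-u^{(k)}(u)\perp\V^{(k)}$ by writing $\<v,u^{(k+1)}(u)-u^{(k)}(u)\>=[\phi,u^{(k+1)}(u)-u^{(k)}(u)]$ with $\phi=Q^{-1}v\in\Phi^{(k)}$ and invoking the reproducing property $[\phi,u^{(k')}(u)]=[\phi,u]$, and it shows $u-(u^{(k+1)}(u)-u^{(k)}(u))\perp\W^{(k+1)}$ by the same device applied to $Q^{-1}w\in\Phi^{(k+1)}$ together with $\V^{(k)}\subset\Ker(Q^{-1}w)$. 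You instead take Theorem \ref{thmgugyug0OR} (each $u^{(k)}$ is the orthogonal projection onto $\V^{(k)}$) and Lemma \ref{lemwk} (identifying $\W^{(k+1)}$ with the orthogonal complement of $\V^{(k)}$ in $\V^{(k+1)}$) as inputs, and conclude from the abstract fact that the projection onto an orthogonal direct sum is the sum of the projections onto the summands. Both arguments are legitimate; yours is shorter and isolates the Hilbert-space content, while the paper's is self-contained at the level of the dual pairings and does not even use Theorem \ref{thmgugyug0OR} inside the proof. Your remarks on ordering (Lemma \ref{lemwk} available before Proposition \ref{propduyge}, Theorem \ref{thmgugyug2OR} only derived afterwards, the case $k=q$ handled by the convention $\V^{(q+1)}:=\B$, $u^{(q+1)}(u):=u$) are accurate, and the potential circularity is correctly dispatched.
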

\begin{proof}
By construction $(u^{(k+1)}(u)-u^{(k)}(u)) \in \V^{(k+1)}$.
Let $v \in \V^{(k)}$. Since $\<v,u^{(k+1)}(u)-u^{(k)}(u)\>=[\phi,u^{(k+1)}(u)-u^{(k)}(u)]$ for  $\phi=(Q^{-1}v)\in \Phi^{(k)}$, the second statement of Lemma \ref{lemdgdg3ge} implies that $\<v,u^{(k+1)}(u)-u^{(k)}(u)\>=0$, i.e. $u^{(k+1)}(u)-u^{(k)}(u)$ is orthogonal to $\V^{(k)}$ and must therefore be an element of $\W^{(k+1)}$.
To obtain \eqref{eqkhdhjkdh} we simply observe that $u-(u^{(k+1)}(u)-u^{(k)}(u))$ is orthogonal to $\W^{(k+1)}$. Indeed by  Lemma \ref{lemdgdg3ge}, for $w\in \W^{(k+1)}$,
$\<w,u-(u^{(k+1)}(u)-u^{(k)}(u))\>=[Q^{-1} w,u-u^{(k+1)}(u)]=0$.
\end{proof}

\subsection{Proof of Theorem \ref{thmpsdk}}

For $s\in \I^{(k)}$ write $\bar{\psi}_s^{(k)}:= \sum_{l\in \I^{(k+1)}}\bar{\pi}^{(k,k+1)}_{s,l}\psi^{(k+1)}_l$ and $\bar{\V}^{(k)}:=\Span\{\bar{\psi}_s^{(k)}\mid s\in \I^{(k)} \}$. Let $x\in \R^{I^{(k)}}$, $y\in \R^{\J^{(k+1)}}$ and
\begin{equation}\label{eqdhdihedu65c}
\psi =\sum_{s\in \I^{(k)}} x_s \bar{\psi}_s^{(k)} + \sum_{j\in \J^{(k+1)}}y_j \chi_j^{(k+1)}\,.
 \end{equation}
 If $\psi=0$ then pairing $\psi$ against $\phi_i^{(k)}$ for $i\in \I^{(k)}$ (and observing that $[\phi_i^{(k)}, \bar{\psi}_s^{(k)}]= \delta_{i,s}$) implies $x=0$ and $y=0$. Therefore the elements $\bar{\psi}_s^{(k)}, \chi_j^{(k+1)}$ form a basis for  $\bar{\V}^{(k)}+\W^{(k+1)}$.  Observing that $\operatorname{dim}(\V^{(k+1)})=\operatorname{dim}(\bar{\V}^{(k)})+\operatorname{dim}(\W^{(k+1)})$ we deduce that $\V^{(k+1)}=\bar{\V}^{(k)}+\W^{(k+1)}$. Therefore, since $\V^{(k)}\subset \V^{(k+1)}$, $\psi_i^{(k)}$ can be decomposed as in \eqref{eqdhdihedu65c}. The constraints $[ \phi_s^{(k)},\psi_i^{(k)}]=[ \phi_s^{(k)},\bar{\psi}_i^{(k)}]=\delta_{i,s}$ lead to $x_s=  \delta_{i,s}$.  The orthogonality between $\psi_i^{(k)}$ and $\W^{(k+1)}$ leads to the equations $\<\psi_i^{(k)},\chi^{(k+1)}_j\>=0$ for $j\in \J^{(k+1)}$, i.e.\\
$\sum_{l\in \I^{(k+1)}}\bar{\pi}^{(k,k+1)}_{i,l}\<\psi^{(k+1)}_l,\chi^{(k+1)}_j\>+\sum_{j'\in \J^{(k+1)}} y_{j'} \<\chi^{(k+1)}_{j'},\chi^{(k+1)}_j\>=0$, which translates into $ W^{(k+1)} A^{(k+1)} \bar{\pi}^{(k+1,k)}_{\cdot,i} + B^{(k+1)} y=0$, that is \eqref{eqdidhduhh}.
 Plugging \eqref{eqjkhdkdh} in \eqref{eqdidhduhh} and comparing with \eqref{eq:ftfytftfx} leads to \eqref{eqhuhiddeuv}.

\subsection{Nested computation}
\label{sec6a}
The section establishes Theorem \ref{thm38dgdn} and Proposition \ref{propfund}  below,
 which is used to provide uniform bounds on condition numbers  in
Section  \ref{sec_conoiinon}, which in turn is used to establish our main result
 Theorem \ref{corunbcnOR} in Section \ref{sec_ibuiuviviviv}.

Define $A^{(k)}$ as in \eqref{eqtheta2} and $\Theta^{(k)}$ as in \eqref{eqtheta1}. \eqref{eqhgudgud} implies that
\begin{equation}\label{eqdefak}
A^{(k)}:=\Theta^{(k),-1},
\end{equation}
Observe in particular that,
\begin{equation}\label{eqhguddeddsgud}
\psi_i^{(k)}=\sum_{j\in \I^{(k)}} A^{(k)}_{i,j}Q  \phi_j^{(k)}\,.
\end{equation}
Define $R^{(k,k+1)}$ as in \eqref{eq:ftfytftfx}.
Pairing \eqref{eq:ftfytftfx} against $\phi_j^{(k+1)}$ implies
\begin{equation}\label{eqhjgjhgjgjg}
R^{(k,k+1)}_{i,j}= [\phi_j^{(k+1)},\psi^{(k)}_i]\,.
\end{equation}
\begin{Theorem}\label{thm38dgdn}
For $b\in \R^{\I^{(k)}}$, $R^{(k+1,k)}b$ is the (unique) minimizer  $x\in \R^{\I^{(k+1)}}$ of
\begin{equation}\label{eq:dfdytffdeytfewdaisq}
\begin{cases}
\text{Minimize }  &x^T A^{(k+1)} x \\
\text{Subject to } &\pi^{(k,k+1)}x=b
\end{cases}
\end{equation}
Furthermore $R^{(k,k+1)} \pi^{(k+1,k)}=\pi^{(k,k+1)}R^{(k+1,k)} =I^{(k)}$, $R^{(k,k+1)}=A^{(k)}\pi^{(k,k+1)}\Theta^{(k+1)}$, $\Theta^{(k)}=\pi^{(k,k+1)}\Theta^{(k+1)}\pi^{(k+1,k)}$ and
\begin{equation}\label{eqhuhiuv}
A^{(k)}= R^{(k,k+1)}A^{(k+1)}R^{(k+1,k)}\,.
\end{equation}
\end{Theorem}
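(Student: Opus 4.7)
The plan is to derive the five claimed identities in an order that lets each step feed directly into the next, and then to obtain the variational characterization as an easy Lagrange-multiplier consequence.

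First I would establish the nesting identity $\Theta^{(k)}=\pi^{(k,k+1)}\Theta^{(k+1)}\pi^{(k+1,k)}$ by substituting the nesting relation \eqref{eq:eigdeiud3dd} for $\phi_i^{(k)}$ into the definition $\Theta^{(k)}_{i,j}=[\phi_i^{(k)},Q\phi_j^{(k)}]$ and expanding bilinearly; this is a one-line calculation using symmetry of $Q$. Next I would prove $R^{(k,k+1)}=A^{(k)}\pi^{(k,k+1)}\Theta^{(k+1)}$ by starting from the pairing formula \eqref{eqhjgjhgjgjg}, writing $\psi_i^{(k)}$ via \eqref{eqhguddeddsgud} as $\sum_l A^{(k)}_{i,l}Q\phi_l^{(k)}$, then using symmetry of $Q$ together with \eqref{eq:eigdeiud3dd} to rewrite $[\phi_j^{(k+1)},Q\phi_l^{(k)}]$ as $\sum_m \pi^{(k,k+1)}_{l,m}\Theta^{(k+1)}_{m,j}$. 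The identity $R^{(k,k+1)}\pi^{(k+1,k)}=\pi^{(k,k+1)}R^{(k+1,k)}=I^{(k)}$ then follows immediately by composing with $\pi$ and invoking the two identities just established: $\pi^{(k,k+1)}R^{(k+1,k)}=\pi^{(k,k+1)}\Theta^{(k+1)}\pi^{(k+1,k)}A^{(k)}=\Theta^{(k)}A^{(k)}=I^{(k)}$ (recall $A^{(k)}=\Theta^{(k),-1}$ by \eqref{eqdefak}), and the other equality by transposition since $\Theta^{(k)}, A^{(k)}$ are symmetric.

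For the stiffness-matrix identity $A^{(k)}=R^{(k,k+1)}A^{(k+1)}R^{(k+1,k)}$, I would substitute the nested expansion \eqref{eq:ftfytftfx} of $\psi_i^{(k)}$ over the $\psi_j^{(k+1)}$ into the definition \eqref{eqtheta2} of $A^{(k)}_{i,j}=\<\psi_i^{(k)},\psi_j^{(k)}\>$, and recognize the resulting double sum as the matrix product. Finally, for the variational characterization \eqref{eq:dfdytffdeytfewdaisq}, I would note that $A^{(k+1)}$ is positive definite so the strictly convex quadratic $x^T A^{(k+1)}x$ admits a unique minimizer on the affine constraint set $\{\pi^{(k,k+1)}x=b\}$. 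Writing the Lagrangian $x^TA^{(k+1)}x-\lambda^T(\pi^{(k,k+1)}x-b)$ and setting the gradient to zero yields $x=\tfrac{1}{2}\Theta^{(k+1)}\pi^{(k+1,k)}\lambda$; enforcing the constraint and invoking item 4 gives $\tfrac12\Theta^{(k)}\lambda=b$, hence $\lambda=2A^{(k)}b$ and $x=\Theta^{(k+1)}\pi^{(k+1,k)}A^{(k)}b$, which by the transpose of item 3 equals $R^{(k+1,k)}b$.

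The argument is essentially pure bookkeeping in linear algebra; the main (modest) obstacle is tracking transposes carefully, since $R^{(k,k+1)}$ and $R^{(k+1,k)}$ are transposes of each other while the $\pi$ matrices appear on both sides. Once item 4 is in hand, everything else follows by direct substitution, and the variational statement drops out with one application of Lagrange multipliers.
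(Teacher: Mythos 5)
Your proof is correct, and the three central identities ($\Theta^{(k)}=\pi^{(k,k+1)}\Theta^{(k+1)}\pi^{(k+1,k)}$, $R^{(k,k+1)}=A^{(k)}\pi^{(k,k+1)}\Theta^{(k+1)}$, and \eqref{eqhuhiuv}) are established exactly as in the paper, by substituting \eqref{eq:eigdeiud3dd}, \eqref{eqhguddeddsgud} and \eqref{eq:ftfytftfx} into the relevant definitions. You diverge from the paper on the remaining two items. For $R^{(k,k+1)}\pi^{(k+1,k)}=\pi^{(k,k+1)}R^{(k+1,k)}=I^{(k)}$ the paper reads it off directly from the biorthogonality $[\phi_j^{(k)},\psi_i^{(k)}]=\delta_{i,j}$ combined with the two nesting relations, whereas you obtain it as an algebraic corollary of the explicit formula for $R^{(k,k+1)}$ together with $\Theta^{(k)}=\pi^{(k,k+1)}\Theta^{(k+1)}\pi^{(k+1,k)}$ and $A^{(k)}=\Theta^{(k),-1}$; both are one-liners and equally valid. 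The more substantive difference is the variational characterization: you solve the constrained quadratic program directly with Lagrange multipliers (using positive definiteness of $A^{(k+1)}$ and the full row rank of $\pi^{(k,k+1)}$ from Construction \ref{constpi} for existence and uniqueness), while the paper instead invokes Theorem \ref{thmsudgdygdgy} to identify $\sum_i b_i\psi_i^{(k)}$ as the minimizer of $\|v\|$ over all of $\B$ subject to $[\phi_j^{(k)},v]=b_j$, and then restricts the minimization to $\V^{(k+1)}\supset\V^{(k)}$, where the objective becomes $x^TA^{(k+1)}x$ and the constraint becomes $\pi^{(k,k+1)}x=b$. Your route is self-contained finite-dimensional linear algebra; the paper's route buys the conceptual point that the coarse gamblets solve the same minimization problem at every finer level of the hierarchy, which is the structural fact underlying the nested computation of Algorithm \ref{alggambletcomutationnes}. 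Either argument is acceptable.
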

\begin{proof}
Using the decompositions \eqref{eq:ftfytftfx} and \eqref{eq:eigdeiud3dd} in $[\phi_j^{(k)},\psi_i^{(k)}]=\delta_{i,j}$ leads to
$R^{(k,k+1)} \pi^{(k+1,k)}=I^{(k)}$. Using \eqref{eqhguddeddsgud} and \eqref{eq:eigdeiud3dd} to expand $\psi^{(k)}_i$ in \eqref{eqhjgjhgjgjg} leads to $R^{(k,k+1)}=A^{(k)}\pi^{(k,k+1)}\Theta^{(k+1)}$. Using \eqref{eq:eigdeiud3dd} to expand $\phi_i^{(k)}$ and $\phi_j^{(k)}$ in \eqref{eqtheta1} leads to $\Theta^{(k)}=\pi^{(k,k+1)}\Theta^{(k+1)}\pi^{(k+1,k)}$. Using \eqref{eq:ftfytftfx} to expand $\psi^{(k)}_i$ and $\psi^{(k)}_j$ in  \eqref{eqtheta2} leads to \eqref{eqhuhiuv}. Let $b\in \R^{\I^{(k)}}$. Theorem \ref{thmsudgdygdgy} implies that $\sum_{i\in \I^{(k)}}b_i \psi_i^{(k)}$ is the unique minimizer
of $\|v\|^2$ subject to $v\in \B$ and $[\phi_j^{(k)},v]=b_j$ for $j\in \I^{(k)}$. Since $\V^{(k)}\subset \V^{(k+1)}$ and since the minimizer is in $\V^{(k)}$, the minimization over $v\in \B$ can be reduced to $v\in \V^{(k+1)}$ of the form $v=\sum_{i\in \I^{(k+1)}}x_i \psi_i^{(k+1)}$, which after using \eqref{eq:eigdeiud3dd} to expand the constraint $[\phi_j^{(k)},v]=b_j$, corresponds to \eqref{eq:dfdytffdeytfewdaisq}.
\end{proof}

The following proposition is a direct consequence of Theorem \ref{thm38dgdn}.
\begin{Proposition}\label{propfund}
 $N^{(k)}W^{(k)}$ is the projection on $\Img(A^{(k)} W^{(k),T})$ with null space (parallel to) $\Ker(W^{(k),T})$ and
$I^{(k)}-N^{(k)}W^{(k)}$ is the projection on $\Ker(W^{(k),T})$ with null space (parallel to) $\Img(A^{(k)} W^{(k),T})$. Furthermore we have the following identities: (1) $W^{(k)}N^{(k)}=J^{(k)}$ (2) $R^{(k-1,k)}N^{(k)}=0$ and (3) $R^{(k-1,k)}A^{(k)}W^{(k),T}=0$.
\end{Proposition}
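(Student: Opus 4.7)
The plan is to derive everything from the single identity
\[
W^{(k)}N^{(k)}=J^{(k)}\,,
\]
which is the content of claim (1) and follows immediately from the definitions: since $B^{(k)}=W^{(k)}A^{(k)}W^{(k),T}$ and $N^{(k)}=A^{(k)}W^{(k),T}B^{(k),-1}$, we have $W^{(k)}N^{(k)}=B^{(k)}B^{(k),-1}=J^{(k)}$. I will establish (1) first.

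Next, I would use (1) to verify the projection statements. Idempotency of $N^{(k)}W^{(k)}$ is immediate:
\[
(N^{(k)}W^{(k)})^{2}=N^{(k)}\bigl(W^{(k)}N^{(k)}\bigr)W^{(k)}=N^{(k)}J^{(k)}W^{(k)}=N^{(k)}W^{(k)}.
\]
For its image, $\Img(N^{(k)}W^{(k)})\subseteq \Img(N^{(k)})=\Img(A^{(k)}W^{(k),T})$ because $B^{(k),-1}$ is invertible; for the reverse inclusion, if $y=A^{(k)}W^{(k),T}z$, then $y=N^{(k)}B^{(k)}z=N^{(k)}W^{(k)}(A^{(k)}W^{(k),T}z)=N^{(k)}W^{(k)}y$. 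For the kernel, (1) shows $N^{(k)}$ is injective, hence $N^{(k)}W^{(k)}x=0$ if and only if $W^{(k)}x=0$, so the kernel is precisely $\Ker(W^{(k)})$ (which I interpret as the statement of the proposition, reading $\Ker(W^{(k),T})$ as shorthand for the null space of $W^{(k)}$ acting on $\R^{\I^{(k)}}$; equivalently it is the annihilator of $\Img(W^{(k),T})$). The complementary projection statement for $I^{(k)}-N^{(k)}W^{(k)}$ then follows formally.

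Finally, I would combine these with Theorem~\ref{thmpsdk}. Identity (2) is obtained by substituting the formula $R^{(k-1,k)}=\bar{\pi}^{(k-1,k)}(I^{(k)}-N^{(k)}W^{(k)})$ from \eqref{eqhuhiddeuv} and using (1):
\[
R^{(k-1,k)}N^{(k)}=\bar{\pi}^{(k-1,k)}\bigl(N^{(k)}-N^{(k)}W^{(k)}N^{(k)}\bigr)=\bar{\pi}^{(k-1,k)}\bigl(N^{(k)}-N^{(k)}\bigr)=0.
\]
Identity (3) then follows by rewriting $A^{(k)}W^{(k),T}=N^{(k)}B^{(k)}$ and applying (2): $R^{(k-1,k)}A^{(k)}W^{(k),T}=R^{(k-1,k)}N^{(k)}B^{(k)}=0$.

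The proof is essentially algebraic manipulation, so I do not anticipate a serious obstacle; the only subtle point is to confirm the intended reading of the null-space statement (whether $\Ker(W^{(k),T})$ refers to the kernel of $W^{(k)}$ on $\R^{\I^{(k)}}$ or to the orthogonal complement of $\Img(W^{(k),T})$, which in this context coincide). Once (1) is established, the rest is a short chain of substitutions.
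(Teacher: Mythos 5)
Your proof is correct and complete. The paper itself offers no written argument here beyond declaring the proposition ``a direct consequence of Theorem~\ref{thm38dgdn}''; the intended route for (3) is presumably $R^{(k-1,k)}A^{(k)}W^{(k),T}=A^{(k-1)}\pi^{(k-1,k)}\Theta^{(k)}A^{(k)}W^{(k),T}=A^{(k-1)}\pi^{(k-1,k)}W^{(k),T}=0$, using $R^{(k-1,k)}=A^{(k-1)}\pi^{(k-1,k)}\Theta^{(k)}$, $\Theta^{(k)}A^{(k)}=I^{(k)}$ and $\Img(W^{(k),T})=\Ker(\pi^{(k-1,k)})$, with (2) then following by right-multiplication by $B^{(k),-1}$. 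You instead derive (2) from (1) together with the formula $R^{(k-1,k)}=\bar{\pi}^{(k-1,k)}(I^{(k)}-N^{(k)}W^{(k)})$ of \eqref{eqhuhiddeuv}, and get (3) from (2); since Theorem~\ref{thmpsdk} is proved independently of this proposition, there is no circularity, and the two routes are equally elementary. Your reading of the null-space statement is also the right one: $W^{(k),T}$ is injective, so the literal kernel of $W^{(k),T}$ is trivial, and the intended subspace is $\Ker(W^{(k)})=\bigl(\Img(W^{(k),T})\bigr)^{\perp}\subset\R^{\I^{(k)}}$, whose dimension $|\I^{(k)}|-|\J^{(k)}|$ matches the corank of the projection as your argument requires.
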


\subsection{Uniformly bounded condition numbers}
\label{sec_conoiinon}
 This section  provides uniform bounds on condition numbers that
 are used to establish our main result
 Theorem \ref{corunbcnOR} in Section \ref{sec_ibuiuviviviv}.

For $k\in \{1,\ldots,q\}$ write
\begin{equation}\label{eqbarhk}
\bar{H}_k:= \sup_{g \in \B_0, g\not=0} \inf_{\phi \in \Phi^{(k)}} \frac{\|\varphi- \phi\|_*}{\|\varphi\|_0},
\end{equation}
and
\begin{equation}\label{equbarhk}
\ubar{H}_{k}:=\inf_{\phi\in \Phi^{(k)}} \frac{\| \phi \|_*}{\| \phi \|_{0}}\,.
\end{equation}
Observe that Theorem \ref{thmgugyug0OR} and $\V^{(k)}=Q\Phi^{(k)}$ imply \eqref{corunbcnORfe}, i.e.  that for all $u\in \B$,
\begin{equation}\label{eqdkjkduedn}
\|u-u^{(k)}(u)\|=\inf_{v\in \V^{(k)}}   \|u - v\| \leq  \bar{H}_k \|Q^{-1} u\|_0\,.
\end{equation}
To simplify the presentation we will also write $\W^{(1)}:=\V^{(1)}$ and
\begin{equation}\label{eqh0or}
\bar{H}_0:=\sup_{\phi\in \B^*} \frac{\| \phi \|_*}{\| \phi \|_{0}}.
\end{equation}

\begin{Theorem}\label{thmcondlnum}
Let   $k\in \{1,\ldots,q\}$.  If $v\in \V^{(k)}$ then
\begin{equation}\label{eqguygugu68lhs}
 \ubar{H}_{k} \leq  \frac{\|v\|}{\|Q^{-1} v\|_{0}}\,.
\end{equation}
Furthermore, for $k\in \{1,\ldots,q\}$ and $w\in \W^{k}$,
\begin{equation}\label{eqkjddhuedf}
\ubar{H}_{k}\leq \frac{\|w\|}{\|Q^{-1} w\|_{0}} \leq  \bar{H}_{k-1}\,.
\end{equation}
\end{Theorem}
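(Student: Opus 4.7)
My plan is to exploit the isometry $Q^{-1}:(\B,\|\cdot\|)\to(\B^*,\|\cdot\|_*)$ to transfer everything to the dual side, where the definitions of $\ubar H_k$ and $\bar H_k$ live, and then use orthogonality (Pythagoras) to dispatch the upper bound.

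First, for the lower bound \eqref{eqguygugu68lhs}: by Theorem \ref{thmgugyug0OR}, $\V^{(k)}=Q\Phi^{(k)}$, so any $v\in \V^{(k)}$ satisfies $\varphi:=Q^{-1}v\in \Phi^{(k)}$. Since $Q$ is an isometry, $\|v\|=\|Q^{-1}v\|_*=\|\varphi\|_*$, hence
\[
\frac{\|v\|}{\|Q^{-1}v\|_0}=\frac{\|\varphi\|_*}{\|\varphi\|_0}\;\geq\;\inf_{\phi\in\Phi^{(k)}}\frac{\|\phi\|_*}{\|\phi\|_0}=\ubar H_k.
\]
This already yields the lower bound in \eqref{eqkjddhuedf} as well, because $\W^{(k)}\subset \V^{(k)}$.

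Next, for the upper bound in \eqref{eqkjddhuedf}, I would first handle $k\ge 2$. For $w\in\W^{(k)}$, set $\varphi:=Q^{-1}w\in\Phi^{(k)}$. The orthogonality $\<w,v\>=0$ for every $v\in\V^{(k-1)}=Q\Phi^{(k-1)}$ translates (writing $v=Q\phi$) into $[\varphi,Q\phi]=\<\varphi,\phi\>_*=0$ for every $\phi\in\Phi^{(k-1)}$, i.e.\ $\varphi$ is $\<\cdot,\cdot\>_*$-orthogonal to $\Phi^{(k-1)}$. Consequently, Pythagoras in $(\B^*,\<\cdot,\cdot\>_*)$ gives
\[
\|\varphi\|_*^2\;\leq\;\|\varphi-\phi\|_*^2\quad\text{for all }\phi\in\Phi^{(k-1)},
\]
so that using $\varphi\in\B_0$ and the definition of $\bar H_{k-1}$,
\[
\|w\|=\|\varphi\|_*\;\leq\;\inf_{\phi\in\Phi^{(k-1)}}\|\varphi-\phi\|_*\;\leq\;\bar H_{k-1}\,\|\varphi\|_0=\bar H_{k-1}\,\|Q^{-1}w\|_0.
\]

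Finally, for $k=1$, where $\W^{(1)}:=\V^{(1)}$ and $\bar H_0=\sup_{\phi\in\B^*}\|\phi\|_*/\|\phi\|_0$ from \eqref{eqh0or}, the same isometry argument gives $\|w\|=\|Q^{-1}w\|_*\leq \bar H_0\,\|Q^{-1}w\|_0$, matching \eqref{eqkjddhuedf} with $k=1$.

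There is really no obstacle: the only subtle point is the translation of $\<\cdot,\cdot\>$-orthogonality on $\B$ to $\<\cdot,\cdot\>_*$-orthogonality on $\B^*$ via $Q^{-1}$, which follows cleanly from the symmetry $[v^*,Qw^*]=[w^*,Qv^*]$ built into the definition of $Q$.
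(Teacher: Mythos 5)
Your proof is correct and is essentially the paper's argument: the lower bound via $\V^{(k)}=Q\Phi^{(k)}$ and the isometry is identical, and your dual-side Pythagoras step ($\|\varphi\|_*\leq\inf_{\phi\in\Phi^{(k-1)}}\|\varphi-\phi\|_*$) is just the isometric image of the paper's primal-side step $\|w\|=\inf_{v\in\V^{(k-1)}}\|w-v\|\leq\bar H_{k-1}\|Q^{-1}w\|_0$ coming from \eqref{eqdkjkduedn}.
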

\begin{proof}
For $k=1$ the r.h.s. of \eqref{eqkjddhuedf} follows from the definition of $\bar{H}_0$ and $\sup_{w\in \V^{(1)}}\frac{\|w\|}{\|Q^{-1} w\|_{0}}=\sup_{\phi\in \Phi^{(1)}} \frac{\| \phi \|_*}{\| \phi \|_{0}}$.
For $k\geq 2$, $\V^{(k)}=\V^{(k-1)}\oplus \W^{(k)}$ and \eqref{eqdkjkduedn} imply
\begin{equation}
\sup_{w\in \W^{(k)}} \frac{\|w\|}{\|Q^{-1} w\|_{0}} \leq  \sup_{w\in \V^{(k)}} \inf_{v \in \V^{(k-1)}} \frac{\|w-v\|}{\|Q^{-1} w\|_{0}} \leq  \bar{H}_{k-1}.
\end{equation}
For the lower bound of \eqref{eqkjddhuedf} observe that
\[
\inf_{w\in \W^{(k)}} \frac{\|w\|}{\|Q^{-1} w\|_{0}}  \geq  \inf_{v \in \V^{(k)}} \frac{\|v\|}{\|Q^{-1} v\|_{0}}= \inf_{\phi\in \Phi^{(k)}}  \frac{\|  \phi \|_*}{\| \phi \|_{0}},
\]
which also proves \eqref{eqguygugu68lhs}.
\end{proof}

For $k\in \{1,\ldots,q\}$ let
\begin{equation}\label{eqgam1}
\ubar{\gamma}_k:=\inf_{x\in \R^{\I^{(k)}}} \frac{\| \sum_{i\in \I^{(k)}} x_i  \,  \phi_i^{(k)} \|_{0}^2}{|x|^2} \text{ and }\bar{\gamma}_k:=\sup_{x\in \R^{\I^{(k)}}} \frac{\| \sum_{i\in \I^{(k)}} x_i \, \phi_i^{(k)} \|_{0}^2}{|x|^2}\,.
\end{equation}
\begin{Lemma}\label{lemdekjdhkdjhd}
It holds true that for $k\in \{2,\ldots,q\}$, (1) $\lambda_{\max}(\pi^{(k-1,k)} \pi^{(k,k-1)})\leq \frac{\bar{\gamma}_{k-1}}{\ubar{\gamma}_k}$ and (2) $\lambda_{\min}(\pi^{(k-1,k)}\pi^{(k,k-1)})\geq \frac{\ubar{\gamma}_{k-1}}{\bar{\gamma}_k}$.
\end{Lemma}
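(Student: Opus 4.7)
The plan is to exploit the nesting relation \eqref{eq:eigdeiud3dd}, namely $\phi_i^{(k-1)}=\sum_{j\in\I^{(k)}}\pi^{(k-1,k)}_{i,j}\phi_j^{(k)}$, to express a level $k-1$ linear combination as a level $k$ linear combination and then sandwich its $\|\cdot\|_0^2$-norm between both $\bar\gamma,\ubar\gamma$ constants at each level.

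First I will observe that for any $x\in\R^{\I^{(k-1)}}$,
\[
\sum_{i\in\I^{(k-1)}} x_i \phi_i^{(k-1)}=\sum_{j\in\I^{(k)}}\bigl(\pi^{(k,k-1)} x\bigr)_j \phi_j^{(k)},
\]
where $\pi^{(k,k-1)}=(\pi^{(k-1,k)})^T$. Applying the bounds \eqref{eqgam1} at level $k$ to the right-hand side gives
\[
\ubar\gamma_k\, x^{T}\pi^{(k-1,k)}\pi^{(k,k-1)}x
\;\leq\;
\Bigl\|\sum_{i\in\I^{(k-1)}}x_i\phi_i^{(k-1)}\Bigr\|_0^2
\;\leq\;
\bar\gamma_k\, x^{T}\pi^{(k-1,k)}\pi^{(k,k-1)}x,
\]
while applying \eqref{eqgam1} at level $k-1$ directly yields
\[
\ubar\gamma_{k-1}\,|x|^2\;\leq\;\Bigl\|\sum_{i\in\I^{(k-1)}}x_i\phi_i^{(k-1)}\Bigr\|_0^2\;\leq\;\bar\gamma_{k-1}\,|x|^2.
\]

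Combining the upper bound from the first display with the lower bound from the second gives
$\ubar\gamma_k\, x^T\pi^{(k-1,k)}\pi^{(k,k-1)}x\leq \bar\gamma_{k-1}|x|^2$ for all $x$, whence item (1) follows by taking the supremum in $x$. Similarly, combining the lower bound from the first display with the upper bound from the second gives $\ubar\gamma_{k-1}|x|^2\leq \bar\gamma_k\, x^T\pi^{(k-1,k)}\pi^{(k,k-1)}x$, which yields item (2) upon taking the infimum.

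There is no real obstacle here: the proof is a direct two-line sandwich argument, whose only subtlety is keeping track of which quadratic form one is comparing. The symmetric positive semidefinite matrix $\pi^{(k-1,k)}\pi^{(k,k-1)}$ has its extremal eigenvalues characterized variationally by the Rayleigh quotient $x^T\pi^{(k-1,k)}\pi^{(k,k-1)}x/|x|^2$, and the two displayed chains of inequalities are precisely upper and lower bounds on this Rayleigh quotient obtained by routing through $\|\sum_i x_i\phi_i^{(k-1)}\|_0^2$. Note that the linear independence of the $\phi_i^{(k)}$ and of the $\phi_i^{(k-1)}$ (which follows from Construction \ref{constphik} together with $\text{rank}(\pi^{(k-1,k)})=|\I^{(k-1)}|$) ensures $\ubar\gamma_k>0$ and $\ubar\gamma_{k-1}>0$, so that both ratios in the statement are well-defined.
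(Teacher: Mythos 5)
Your proof is correct and follows essentially the same route as the paper: both arguments use the nesting relation to rewrite $\sum_i x_i\phi_i^{(k-1)}$ as the level-$k$ combination with coefficients $\pi^{(k,k-1)}x$, then sandwich its $\|\cdot\|_0^2$-norm using \eqref{eqgam1} at levels $k$ and $k-1$ to bound the Rayleigh quotient of $\pi^{(k-1,k)}\pi^{(k,k-1)}$. No issues.
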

\begin{proof}
 Using \eqref{eqgam1} we obtain that for $z\in \R^{\I^{(k-1)}}$ and $x=\pi^{(k,k-1)} z$ that
$|\pi^{(k,k-1)} z|^2 \leq \frac{1}{\ubar{\gamma}_k} \| \sum_{i\in \I^{(k)}} x_i  \,\phi_i^{(k)} \|_{0}^2 $. Using
$\sum_{i\in \I^{(k)}} x_i  \, \phi_i^{(k)}=\sum_{i\in \I^{(k-1)}} z_i  \, \phi_i^{(k-1)}$ and \eqref{eqgam1} again we obtain that
$|\pi^{(k,k-1)} z|^2 \leq \frac{\bar{\gamma}_{k-1}}{\ubar{\gamma}_k}  |z|^2$ which implies (1). The proof of (2) is similar, i.e. based on $|\pi^{(k,k-1)} z|^2 \geq \frac{1}{\bar{\gamma}_k} \| \sum_{i\in \I^{(k)}} z_i  \,  \phi_i^{(k)} \|_{0}^2 $ and
$  \| \sum_{i\in \I^{(k-1)}} z_i  \,  \phi_i^{(k-1)} \|_{0}^2 \geq \ubar{\gamma}_{k-1}|z|^2 $.
\end{proof}

For $k\in \{1,\ldots,q\}$ let $\Phi^{(k),\chi}$ be defined as in \eqref{eqphikchi}
and
\begin{equation}\label{eqHhat}
\hat{H}_{k-1}:=\sup_{\phi\in \Phi^{(k),\chi} }\frac{\| \phi\|_*}{\|\phi\|_{0}}
\end{equation}

Let $A^{(k)}$ be as in \eqref{eqdefak}, \eqref{eqtheta2} and $B^{(k)}$ as in \eqref{eqjgfytfjhyyyg}. Theorem \ref{corunbcnOR} is a direct consequence of the following Theorem.
\begin{Theorem}\label{thmodhehiudhehd}
It holds true that
 \begin{equation}
\frac{1}{\bar{H}_{0}^2} \frac{1}{\bar{\gamma}_1 }  \leq \lambda_{\min}(A^{(1)}) \text{ and }\lambda_{\max}(A^{(1)})\leq \frac{1}{\ubar{H}_{1}^2}\frac{1}{ \ubar{\gamma}_1}
 \end{equation}
  and
\begin{equation}
\operatorname{Cond}(A^{(1)})\leq  \frac{\bar{\gamma}_1}{\ubar{\gamma}_1} \left(\frac{\bar{H}_{0}}{\ubar{H}_{1}}\right)^2
\end{equation}
Furthermore, for $k\in \{2,\ldots,q\}$ it holds true that
  \begin{equation}
 \lambda_{\min}(W^{(k)}W^{(k),T})\frac{1}{\bar{H}_{k-1}^2} \frac{(\bar{\gamma}_k )^{-1}}{1+   \frac{\hat{H}_{k-1}^2}{\ubar{H}_{k-1}^2 }  \frac{\bar{\gamma}_k \bar{\gamma}_{k-1}}{\ubar{\gamma}_k \ubar{\gamma}_{k-1}}}\leq \lambda_{\min}(B^{(k)}),
\end{equation}
  \begin{equation}
 \lambda_{\max}(B^{(k)})\leq \frac{1}{\ubar{H}_{k}^2 } \frac{1}{\ubar{\gamma}_k} \lambda_{\max}(W^{(k)}W^{(k),T}),
\end{equation}
   and
\begin{equation}
\operatorname{Cond}(B^{(k)})\leq  \frac{\bar{H}_{k-1}^2}{\ubar{H}_{k}^2 }   \frac{\bar{\gamma}_k}{ \ubar{\gamma}_k}\big(1+  \frac{\hat{H}_{k-1}^2}{\ubar{H}_{k-1}^2 }  \frac{\bar{\gamma}_k \bar{\gamma}_{k-1}}{\ubar{\gamma}_k \ubar{\gamma}_{k-1}}\big)\Cond(W^{(k)}W^{(k),T})
\end{equation}

\end{Theorem}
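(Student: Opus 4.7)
The proof rests on the single identity $x^T A^{(k)} x = \|v\|^2$ for $v := \sum_i x_i \psi_i^{(k)}$, together with the dual identity $\|u\|^2 = \|Q^{-1}u\|_*^2$ and the explicit representation $Q^{-1}v = \sum_j (A^{(k)}x)_j \phi_j^{(k)}$ coming from $\psi_i^{(k)} = \sum_j A^{(k)}_{i,j} Q \phi_j^{(k)}$. Combining these with the norm equivalences in Theorem \ref{thmcondlnum} and with the frame bounds \eqref{eqgam1} on the measurement functions immediately translates into quadratic-form inequalities on $A^{(k)}$.

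\textbf{Step 1: Bounds on $A^{(1)}$.} Take $v := \sum_i x_i \psi_i^{(1)} \in \V^{(1)} = \W^{(1)}$ and apply Theorem \ref{thmcondlnum} in the form $\ubar{H}_1^2 \|Q^{-1}v\|_0^2 \leq \|v\|^2 \leq \bar{H}_0^2 \|Q^{-1}v\|_0^2$ sandwiched between the frame bounds $\ubar{\gamma}_1 |A^{(1)}x|^2 \leq \|Q^{-1}v\|_0^2 \leq \bar{\gamma}_1 |A^{(1)}x|^2$. This yields $\ubar{H}_1^2 \ubar{\gamma}_1\, x^T A^{(1),2} x \leq x^T A^{(1)} x \leq \bar{H}_0^2 \bar{\gamma}_1\, x^T A^{(1),2} x$; substituting $x \mapsto A^{(1),-1/2} x$ converts these into $\frac{1}{\bar{H}_0^2 \bar{\gamma}_1} I^{(1)} \leq A^{(1)} \leq \frac{1}{\ubar{H}_1^2 \ubar{\gamma}_1} I^{(1)}$, from which the condition number bound follows.

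\textbf{Step 2: Upper bound on $B^{(k)}$.} For $y \in \R^{\J^{(k)}}$ set $x := W^{(k),T} y \in \Ker(\pi^{(k-1,k)})$ and $w := \sum_j y_j \chi_j^{(k)} = \sum_i x_i \psi_i^{(k)} \in \W^{(k)}$, so that $y^T B^{(k)} y = x^T A^{(k)} x = \|w\|^2$. Exactly as in Step 1 (now at level $k$, still using $\ubar{H}_k$ and $\ubar{\gamma}_k$), one obtains $\lambda_{\max}(A^{(k)}) \leq \frac{1}{\ubar{H}_k^2 \ubar{\gamma}_k}$, hence the stated upper bound $B^{(k)} \leq \lambda_{\max}(A^{(k)}) \, W^{(k)} W^{(k),T}$.

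\textbf{Step 3: Lower bound on $B^{(k)}$.} The geometric input is that for $x \in \Ker(\pi^{(k-1,k)})$ one has $A^{(k)} x \in \Ker(\pi^{(k-1,k)})$, which follows from the identity $R^{(k-1,k)} A^{(k)} W^{(k),T} = 0$ of Proposition \ref{propfund} together with $R^{(k-1,k)} A^{(k)} = A^{(k-1)} \pi^{(k-1,k)}$ from Theorem \ref{thm38dgdn} and the invertibility of $A^{(k-1)}$. Consequently $Q^{-1} w = \sum_j (A^{(k)}x)_j \phi_j^{(k)}$ belongs to $\Phi^{(k),\chi}$. The biorthogonality $[\phi_i^{(k)}, \psi_j^{(k)}] = \delta_{ij}$ now gives the duality pairing
\[
|x|^2 = [\varphi, w], \qquad \varphi := \sum_i x_i \phi_i^{(k)} \in \Phi^{(k),\chi},
\]
and Cauchy--Schwarz in the dual pairing followed by the bounds $\|\varphi\|_* \leq \hat{H}_{k-1} \|\varphi\|_0 \leq \hat{H}_{k-1} \bar{\gamma}_k^{1/2} |x|$ and $\|w\|^2 \geq \|w\|^2 / \bar{H}_{k-1}^2 \cdot \bar{H}_{k-1}^2$ (combined with Theorem \ref{thmcondlnum} applied to $w \in \W^{(k)}$) yield a lower bound $\|w\|^2 \gtrsim |x|^2$; together with $|x|^2 \geq \lambda_{\min}(W^{(k)}W^{(k),T}) |y|^2$ this gives the claimed lower bound on $\lambda_{\min}(B^{(k)})$. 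The condition number estimate then follows as a ratio.

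\textbf{Main obstacle.} The cleanest estimate from the Cauchy--Schwarz step above is $\|w\|^2 \geq |x|^2/(\hat{H}_{k-1}^2 \bar{\gamma}_k)$, which differs in form from the stated denominator $\bar{H}_{k-1}^2 \bar{\gamma}_k \bigl(1 + \tfrac{\hat{H}_{k-1}^2 \bar{\gamma}_k \bar{\gamma}_{k-1}}{\ubar{H}_{k-1}^2 \ubar{\gamma}_k \ubar{\gamma}_{k-1}}\bigr)$. The real technical task is therefore the bookkeeping that replaces the ``Poincaré'' constant $\hat H_{k-1}$ on $\Phi^{(k),\chi}$ by the approximation constant $\bar H_{k-1}$: one writes $\varphi = (\varphi - \phi^\star) + \phi^\star$ for a near-best approximant $\phi^\star \in \Phi^{(k-1)}$ of $\varphi$ in the $\|\cdot\|_*$-norm, bounds $\|\varphi - \phi^\star\|_* \leq \bar H_{k-1} \|\varphi\|_0$ from the definition of $\bar H_{k-1}$, bounds $\|\phi^\star\|_0$ in terms of $\|\varphi\|_0$ by transferring back through $\ubar H_{k-1}$ and the frame constants $\ubar\gamma_{k-1}, \bar\gamma_{k-1}$, and finally controls $\|\phi^\star\|_*$ by $\|\phi^\star\|_0$ using $\ubar H_{k-1}$ once more. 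The $1 + (\cdot)$ term is precisely the residual arising from this triangle-inequality split. Everything else in the proof is routine Cauchy--Schwarz plus the definitions.
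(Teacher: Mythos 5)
Your Steps 1 and 2 reproduce the paper's argument for $A^{(1)}$ and for $\lambda_{\max}(B^{(k)})$ essentially verbatim (sandwich the norm equivalences of Theorem \ref{thmcondlnum} between the frame bounds \eqref{eqgam1}, then substitute $x\mapsto A^{(1),-1/2}x$), so those parts are fine. The gap is in Step 3. Your Cauchy--Schwarz argument is internally correct and yields $\lambda_{\min}(B^{(k)})\geq \lambda_{\min}(W^{(k)}W^{(k),T})/(\hat H_{k-1}^2\bar\gamma_k)$, but that is a different estimate from the one claimed, and the ``bookkeeping'' you propose to convert it cannot work as described: $\ubar H_{k-1}$ is by definition a \emph{lower} bound on $\|\phi\|_*/\|\phi\|_0$ over $\Phi^{(k-1)}$, so it cannot be used to ``control $\|\phi^\star\|_*$ by $\|\phi^\star\|_0$'' from above, and the attempt to bound $\|\phi^\star\|_*$ via $\|\varphi\|_*$ is circular. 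More importantly, the $1+\frac{\hat H_{k-1}^2}{\ubar H_{k-1}^2}\frac{\bar\gamma_k\bar\gamma_{k-1}}{\ubar\gamma_k\ubar\gamma_{k-1}}$ factor in the paper does not come from a triangle-inequality split of the dual norm at all. The paper instead bounds $\lambda_{\min}(B^{(k)})\geq \bigl(\bar\gamma_k\bar H_{k-1}^2\,\lambda_{\max}(N^{(k),T}N^{(k)})\bigr)^{-1}$ with $N^{(k)}=A^{(k)}W^{(k),T}B^{(k),-1}$, and then estimates $\lambda_{\max}(N^{(k),T}N^{(k)})\leq (1+\|P^{(k)}\|^2_{\Ker(\pi^{(k-1,k)})})/\lambda_{\min}(W^{(k)}W^{(k),T})$ using the orthogonal decomposition $\R^{\I^{(k)}}=\Img(W^{(k),T})\oplus\Img(\pi^{(k,k-1)})$ together with $R^{(k-1,k)}A^{(k)}W^{(k),T}=0$ (Lemma \ref{lemfdhgdf}); the norm of the oblique projection $P^{(k)}=\pi^{(k,k-1)}R^{(k-1,k)}$ on $\Ker(\pi^{(k-1,k)})$ is where $\hat H_{k-1}$, $\ubar H_{k-1}$ and the level-$(k-1)$ frame constants enter (Lemmas \ref{lemdjoidjdi}, \ref{lemddjoj3ir}, \ref{lemdekjdhkdjhd}). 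That mechanism is absent from your plan.

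That said, your duality setup can be completed directly, and in fact more sharply than the stated theorem: since $w=\sum_i x_i\psi_i^{(k)}\in\W^{(k)}$ is $\<\cdot,\cdot\>$-orthogonal to $\V^{(k-1)}=Q\Phi^{(k-1)}$, one has $[\phi,w]=\<Q\phi,w\>=0$ for every $\phi\in\Phi^{(k-1)}$, so
\begin{equation*}
|x|^2=[\varphi,w]=[\varphi-\phi,w]\leq \inf_{\phi\in\Phi^{(k-1)}}\|\varphi-\phi\|_*\,\|w\|\leq \bar H_{k-1}\sqrt{\bar\gamma_k}\,|x|\,\|w\|,
\end{equation*}
which gives $\lambda_{\min}(B^{(k)})\geq \lambda_{\min}(W^{(k)}W^{(k),T})/(\bar H_{k-1}^2\bar\gamma_k)$ with no $1+(\cdot)$ denominator; this dominates the theorem's bound and suffices for Theorem \ref{corunbcnOR}. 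But as submitted, your Step 3 neither proves the stated inequality nor contains a correct route to it, so the proof is incomplete.
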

\begin{proof}
Let $k\in \{1,\ldots,q\}$ and $x\in \R^{\I^{(k)}}$. Write $v=\sum_{i\in \I^{(k)}} x_i \psi_i^{(k)}$. Observing that $\|v\|^2=x^T A^{(k)} x$ and
$\|Q^{-1} v\|_{0}^2=\|\sum_{i\in \I^{(k)}} (A^{(k)} x)_i \phi_i^{(k)}\|_{0}^2 \geq \ubar{\gamma}_k |A^{(k)} x|^2$, \eqref{eqguygugu68lhs} implies that $\ubar{\gamma}_k \ubar{H}_{k}^2 \leq x^T A^{(k)}x/ |A^{(k)} x|^2$, which, after taking the minimum in $x$ leads to (for $k\geq 1$)
\begin{equation}\label{eqkhiduhdf7d}
 \lambda_{\max}(A^{(k)})\leq \frac{1}{\ubar{H}_{k}^2 \ubar{\gamma}_k},
\end{equation}
and  (using \eqref{eqjgfytfjhyyyg}) for $k\geq 2$
\begin{equation}\label{eqkhiduhde2df7d}
 \lambda_{\max}(B^{(k)})\leq  \frac{1}{\ubar{H}_{k}^2 \ubar{\gamma}_k} \lambda_{\max}(W^{(k)}W^{(k),T})\,.
\end{equation}
Similarly for $k=1$ the r.h.s. of \eqref{eqkjddhuedf} leads to $ x^T A^{(1)}x/ |A^{(1)} x|^2\leq \bar{\gamma}_1 \bar{H}_{0}^2$ and
\begin{equation}
\lambda_{\min}(A^{(1)})\geq 1/\big(\bar{\gamma}_1  \bar{H}_{0}^2\big)\,.
\end{equation}

Now let us consider $k\in \{2,\ldots,q\}$ and $x\in \R^{\J^{(k)}}$. Write $w=\sum_{i\in \J^{(k)},\,j \in \I^{(k)}} x_i W_{i,j}^{(k)} \psi_j^{(k)}$.
\eqref{eqjkhdkdh} and \eqref{eqjgfytfjhyyyg} imply that $\|w\|^2=x^T B^{(k)} x$ and (using \eqref{eqgam1})\\  $\|Q^{-1} w \|_{0}=\| \sum_{i\in \J^{(k)},\,j \in \I^{(k)}}  (A^{(k)} W^{(k),T} x)_{j}   \phi_j^{(k)}\|_{0}\leq \bar{\gamma}_k |A^{(k)} W^{(k),T} x|^2$. Observing that $w\in \W^{(k)}$, the r.h.s.~of
\eqref{eqkjddhuedf} implies that
$\frac{x^T B^{(k)} x}{x^T W^{(k)} (A^{(k)})^2 W^{(k),T} x} \leq \bar{\gamma}_k\,  \bar{H}_{k-1}^2$.
Taking  $x=B^{(k),-1} y$ for $y\in \R^{\J^{(k)}}$ we deduce that
$ \frac{y^T B^{(k),-1} y}{|A^{(k)} W^{(k),T} B^{(k),-1} y|^2 } \leq \bar{\gamma}_k\, \bar{H}_{k-1}^2$.
Writing $N^{(k)}$ as in \eqref{eqjdhiudhiue}
 we have obtained that
\begin{equation}\label{eqjgjhdjhgdy}
\frac{(\bar{\gamma}_k\, \bar{H}_{k-1}^2)^{-1}}{\lambda_{\max}(N^{(k),T}N^{(k)})} \leq \lambda_{\min}(B^{(k)})\,.
\end{equation}
We will now need the following lemmas to bound $\lambda_{\max}(N^{(k),T}N^{(k)})$.
For $k\in \{2,\ldots,q\}$ let
\begin{equation}
P^{(k)}:=\pi^{(k,k-1)} R^{(k-1,k)}\,.
\end{equation}
 Using $R^{(k-1,k)}\pi^{(k,k-1)}=I^{(k-1)}$ (Theorem \ref{thm38dgdn}) we obtain that $(P^{(k)})^2=P^{(k)}$, i.e. $P^{(k)}$ is a projection. Write $\|P^{(k)}\|_{\Ker(\pi^{(k-1,k)})}:=\sup_{x\in \Ker(\pi^{(k-1,k)})} |P^{(k)} x|/|x|$, where $|x|$ is the Euclidean norm of $x$.
\begin{Lemma}\label{lemfdhgdf}
It holds true that for $k\in \{2,\ldots,q\}$,
\begin{equation}
\lambda_{\max}(N^{(k),T}N^{(k)})\leq \frac{1+\|P^{(k)}\|_{\Ker(\pi^{(k-1,k)})}^2}{\lambda_{\min}(W^{(k)}W^{(k),T})}\,.
\end{equation}
\end{Lemma}
\begin{proof}
Since $\Img(W^{(k),T})$ and $\Img(\pi^{(k,k-1)})$ are orthogonal and $\dim(\R^{\I^{(k)}})=\dim\big(\Img(W^{(k),T})\big)+\dim\big(\Img(\pi^{(k,k-1)})\big)$,
for $x\in \R^{\I^{(k)}}$ there exists a unique $y\in \R^{\J^{(k)}}$ and $z\in \R^{\I^{(k-1)}}$ such that
$x=W^{(k),T}y+\pi^{(k,k-1)} z$
and
$
|x|^2=|W^{(k),T}y|^2+|\pi^{(k,k-1)} z|^2.
$
Observe that $W^{(k)} x=W^{(k)}W^{(k),T}y$ (since $W^{(k)}\pi^{(k,k-1)}=0$) and  $R^{(k-1,k)}x =R^{(k-1,k)} W^{(k),T}y+z$ (since $R^{(k-1,k)}\pi^{(k,k-1)}=I^{(k-1)}$ from Theorem \ref{thm38dgdn}). Therefore,
$
|x|^2=|W^{(k),T} y|^2+|P^{(k)} (x-W^{(k),T}y)|^2
$
with $y=(W^{(k)} W^{(k),T})^{-1}W^{(k)} x$.
Let $v\in \R^{\J^{(k)}}$. Taking $x=A^{(k)}W^{(k),T}v$  and observing that $P^{(k)}x=0$ (since $R^{(k-1,k)}A^{(k)}W^{(k),T}=0$ from the $\<\cdot,\cdot\>$-orthogonality between $\V^{(k-1)}$ and $\W^{(k)}$, see Proposition \ref{propfund})
 leads to
$
|A^{(k)}W^{(k),T}v|^2=|W^{(k),T} y |^2+ |P^{(k)} W^{(k),T}y |^2
$
with $y=(W^{(k)} W^{(k),T})^{-1} B^{(k)} v$. Therefore
$
|A^{(k)}W^{(k),T}v|^2\leq (1+\|P^{(k)}\|_{\Ker(\pi^{(k-1,k)})}^2)\frac{|B^{(k)}v|^2}{\lambda_{\min}(W^{(k)} W^{(k),T})},
$
which concludes the proof after taking $v=B^{(k),-1}v'$ and maximizing the l.h.s. over $|v'|=1$.
\end{proof}

\begin{Lemma}\label{lemdjoidjdi}
Writing $\|M\|_2:=sup_x |M x|/|x|$ the spectral norm, we have
\begin{equation}
\|P^{(k)}\|_{\Ker(\pi^{(k-1,k)})}^2 \leq \|\pi^{(k,k-1)}A^{(k-1)}\pi^{(k-1,k)}\|_2  \sup_{x\in \Ker(\pi^{(k-1,k)})}\frac{x^T \Theta^{(k)} x}{x^T x}
\end{equation}
\end{Lemma}
\begin{proof}
Let $x\in \Ker(\pi^{(k-1,k)})$. Using
$P^{(k)}=\pi^{(k,k-1)}A^{(k-1)}\pi^{(k-1,k)}\Theta^{(k)}$
 we obtain that
$
|P^{(k)}x|=\|\pi^{(k,k-1)}A^{(k-1)}\pi^{(k-1,k)}(\Theta^{(k)})^\frac{1}{2}\|_2 |(\Theta^{(k)})^\frac{1}{2} x|
$.
Observing that   for \\$M=\pi^{(k-1,k)}(\Theta^{(k)})^\frac{1}{2}$ we have $M M^T=\Theta^{(k-1)}$ and for $N=\pi^{(k,k-1)}A^{(k-1)}\pi^{(k-1,k)}(\Theta^{(k)})^\frac{1}{2}$ we have $\lambda_{\max}(N^T N)=\lambda_{\max}(N N^T)$, so that we deduce
\\$\|\pi^{(k,k-1)}A^{(k-1)}\pi^{(k-1,k)}(\Theta^{(k)})^\frac{1}{2}\|_2^2=\|\pi^{(k,k-1)}A^{(k-1)}\pi^{(k-1,k)}\|_2$ and conclude by taking the supremum over
$x\in \Ker(\pi^{(k-1,k)})$.
\end{proof}

\begin{Lemma}\label{lemddjoj3ir}
It holds true that
\begin{equation}
 \sup_{x\in \Ker(\pi^{(k-1,k)})}\frac{x^T \Theta^{(k)} x}{x^T x} \leq    \hat{H}_{k-1}^2   \bar{\gamma}_k\,.
\end{equation}
\end{Lemma}
\begin{proof}
Let  $\phi:=\sum_{i\in \I^{(k)}} x_i \phi_i^{(k)}$ with $x\in \Ker(\pi^{(k-1,k)})$. Observe that
$ x^T \Theta^{(k)} x =[\phi,Q\phi]=\|\phi\|^2_*$. Therefore,
$ \frac{x^T \Theta^{(k)} x}{x^T x} \leq \frac{\| \phi\|_*^2}{\|\phi\|_{0}^2}\frac{\|\phi\|_{0}^2}{x^T x}\leq  \bar{\gamma}_k \frac{\| \phi\|_*^2}{\| \phi\|_{0}^2}$. We conclude using \eqref{eqHhat}.
\end{proof}

We are now ready to finish the proof of  Theorem \ref{thmodhehiudhehd}.
Observing that $ \|\pi^{(k,k-1)}A^{(k-1)}\pi^{(k-1,k)}\|_2 \leq \lambda_{\max}(\pi^{(k,k-1)}\pi^{(k-1,k)}) \lambda_{\max}(A^{(k-1)})$ and using \eqref{eqkhiduhdf7d}, we derive from lemmas \ref{lemdjoidjdi} and \ref{lemddjoj3ir} that
\begin{equation}\label{eqdihduhuq2he}
\|P^{(k)}\|_{\Ker{\pi^{(k-1,k)}}}^2 \leq \lambda_{\max}(\pi^{(k,k-1)}\pi^{(k-1,k)}) \frac{\hat{H}_{k-1}^2}{\ubar{H}_{k-1}^2 }  \frac{\bar{\gamma}_k}{\ubar{\gamma}_{k-1}}\,.
\end{equation}
 Therefore \eqref{eqjgjhdjhgdy} and Lemma \ref{lemfdhgdf} imply, after simplification, that
\begin{equation}\label{eqjgjhdjghgfhgdy}
 \lambda_{\min}(B^{(k)}) \geq \frac{(\bar{\gamma}_k\, \bar{H}_{k-1}^2)^{-1}}{1+\lambda_{\max}(\pi^{(k,k-1)}\pi^{(k-1,k)})   \frac{\hat{H}_{k-1}^2}{\ubar{H}_{k-1}^2 }  \frac{\bar{\gamma}_k}{\ubar{\gamma}_{k-1}}} \lambda_{\min}(W^{(k)}W^{(k),T})\,.
\end{equation}
Combining  \eqref{eqjgjhdjghgfhgdy} with the result (1) of Lemma \ref{lemdekjdhkdjhd} concludes the proof of  Theorem \ref{thmodhehiudhehd} (recall that
$\lambda_{\max}(\pi^{(k,k-1)}\pi^{(k-1,k)}) =\lambda_{\max}(\pi^{(k-1,k)}\pi^{(k,k-1)}) $).
\end{proof}

\begin{Theorem}\label{lemdjkdj}
Let $k\in \{2,\ldots,q\}$ and $N^{(k)}:=A^{(k)} W^{(k),T} B^{(k),-1}$ be defined as in \eqref{eqjdhiudhiue}. Under Condition \ref{cond1OR} it holds true that there exists a constant $C$ depending only on $C_{\Phi}$ such that
\begin{equation}
C^{-1} H^{2} \leq N^{(k),T}N^{(k)} \leq  C,
\end{equation}
and
\begin{equation}
\Cond(N^{(k),T}N^{(k)})\leq C H^{-2}\,.
\end{equation}
\end{Theorem}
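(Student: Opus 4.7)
My plan is to treat the upper and lower bounds on $N^{(k),T}N^{(k)}$ independently and then derive the condition number estimate as their ratio. Both bounds rely on structural identities for $N^{(k)}$ that have already been proved earlier in the section, so the work consists primarily in tracking constants through Condition \ref{cond1OR}.

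For the upper bound $\lambda_{\max}(N^{(k),T}N^{(k)}) \leq C$, I would invoke Lemma \ref{lemfdhgdf}, which provides
\[
\lambda_{\max}\bigl(N^{(k),T}N^{(k)}\bigr) \leq \frac{1+\|P^{(k)}\|_{\Ker(\pi^{(k-1,k)})}^2}{\lambda_{\min}(W^{(k)}W^{(k),T})},
\]
and couple it with the estimate \eqref{eqdihduhuq2he},
\[
\|P^{(k)}\|_{\Ker(\pi^{(k-1,k)})}^2 \leq \lambda_{\max}(\pi^{(k,k-1)}\pi^{(k-1,k)})\, \frac{\hat{H}_{k-1}^2}{\ubar{H}_{k-1}^2}\, \frac{\bar{\gamma}_k}{\ubar{\gamma}_{k-1}},
\]
obtained from Lemmas \ref{lemdjoidjdi} and \ref{lemddjoj3ir}. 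The key observation is that under Condition \ref{cond1OR} the $H$-powers cancel in the ratio $\hat{H}_{k-1}/\ubar{H}_{k-1}$: items 2 and 4 give $\hat{H}_{k-1} \leq C_\Phi H^{k-1}$ and $\ubar{H}_{k-1} \geq C_\Phi^{-1} H^{k-1}$, so the ratio is bounded by $C_\Phi^2$. The remaining factors $\lambda_{\max}(\pi^{(k,k-1)}\pi^{(k-1,k)})$, $\bar{\gamma}_k/\ubar{\gamma}_{k-1}$, and $\lambda_{\min}(W^{(k)}W^{(k),T})^{-1}$ are each controlled by constants depending only on $C_\Phi$ via Lemma \ref{lemdekjdhkdjhd} and items 5 and 6 of Condition \ref{cond1OR}. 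Assembling these yields the claimed upper bound with $C$ depending only on $C_\Phi$.

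For the lower bound, I would exploit the identity $W^{(k)}N^{(k)} = J^{(k)}$ from Proposition \ref{propfund}: for any $x \in \R^{\J^{(k)}}$,
\[
|x|^2 = \bigl|W^{(k)} N^{(k)} x\bigr|^2 \leq \lambda_{\max}\bigl(W^{(k)}W^{(k),T}\bigr)\, \bigl|N^{(k)}x\bigr|^2 \leq C_\Phi\, \bigl|N^{(k)}x\bigr|^2,
\]
so in fact $\lambda_{\min}(N^{(k),T}N^{(k)}) \geq C_\Phi^{-1}$. Since $H \in (0,1)$, this is stronger than the stated $C^{-1}H^{2}$ lower bound (one may simply take $C = C_\Phi$), and the condition number estimate $\Cond(N^{(k),T}N^{(k)}) \leq C'\leq C' H^{-2}$ follows immediately as the ratio of the two previous bounds. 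I do not anticipate a substantive obstacle: the two key structural inputs, namely Lemma \ref{lemfdhgdf} together with \eqref{eqdihduhuq2he} for the top eigenvalue, and the orthogonality identity $W^{(k)}N^{(k)} = J^{(k)}$ of Proposition \ref{propfund} for the bottom, are already in hand, so the remaining work is the routine bookkeeping of constants through Condition \ref{cond1OR} sketched above.
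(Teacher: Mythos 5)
Your proof is correct, and while your upper bound follows the paper's argument exactly (Lemma \ref{lemfdhgdf} plus \eqref{eqdihduhuq2he}, with the $H$-powers cancelling in $\hat{H}_{k-1}^2/\ubar{H}_{k-1}^2$ via items 2 and 4 of Condition \ref{cond1OR}), your lower bound takes a genuinely different and in fact sharper route. The paper derives $\lambda_{\min}(N^{(k),T}N^{(k)})\geq \big(\lambda_{\max}(B^{(k)})\,\bar{\gamma}_k\,\bar{H}_{k-1}^2\big)^{-1}$ from the inequality $\frac{y^T B^{(k),-1} y}{|N^{(k)}y|^2}\leq \bar{\gamma}_k\,\bar{H}_{k-1}^2$ established inside the proof of Theorem \ref{thmodhehiudhehd}; combined with $\lambda_{\max}(B^{(k)})\leq C H^{-2k}$ and $\bar{H}_{k-1}\leq C_\Phi H^{k-1}$ this yields only the stated $C^{-1}H^2$, so the paper's condition-number bound $CH^{-2}$ is exactly what that chain of estimates delivers. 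Your argument instead exploits the purely algebraic identity $W^{(k)}N^{(k)}=J^{(k)}$ of Proposition \ref{propfund} together with the frame bound $\lambda_{\max}(W^{(k)}W^{(k),T})\leq C_\Phi$, giving the $H$-independent bound $\lambda_{\min}(N^{(k),T}N^{(k)})\geq C_\Phi^{-1}$ and hence $\Cond(N^{(k),T}N^{(k)})\leq C$ uniformly in $H$ and $k$. This is cheaper (it bypasses the approximation-theoretic estimate entirely) and strictly stronger than the statement; the only cost is that it says nothing new about $B^{(k)}$ itself, whereas the paper's route is a byproduct of the machinery it has already built for Theorem \ref{corunbcnOR}.
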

\begin{proof}
We have from the proof of Theorem \ref{thmodhehiudhehd}, for $y\in \R^{\J^{(k)}}$,
$ \frac{y^T B^{(k),-1} y}{|N^{(k)} y|^2 } \leq \bar{\gamma}_k\,\bar{H}_{k-1}^2$. Therefore taking the minimum over $y$ we deduce that $ \big(\lambda_{\max}(B^{(k)})\bar{\gamma}_k\, \bar{H}_{k-1}^2\big)^{-1} \leq \lambda_{\min}(N^{(k),T}N^{(k)}) $. Similarly Lemma \ref{lemfdhgdf}, \eqref{eqdihduhuq2he} and the result (1) of Lemma \ref{lemdekjdhkdjhd} lead to
\begin{equation}
\lambda_{\max}(N^{(k),T}N^{(k)})\leq \frac{1+\frac{\bar{\gamma}_{k-1}}{\ubar{\gamma}_k}   \frac{\hat{H}_{k-1}^2}{\ubar{H}_{k-1}^2 }  \frac{\bar{\gamma}_k}{\ubar{\gamma}_{k-1}}}{\lambda_{\min}(W^{(k)}W^{(k),T})}\,.
\end{equation}
We conclude using the assumed conditions \ref{cond1OR}.
\end{proof}

\subsection{Proof of Theorem \ref{corunbcnOR}}
\label{sec_ibuiuviviviv}
Theorem \ref{corunbcnOR} is a direct consequence of Theorem \ref{thmodhehiudhehd}.

\subsection{Proof of Theorem \ref{thmconddisbndisbismatdis}}\label{subseckedjh}
Theorem \ref{thmconddisbndisbismatdis} is implied by Theorem \ref{corunbcnOR} by considering the Banach space $\bar{\B}=\R^N$
gifted with the norm $\|x\|^2=x^T A x/\lambda_{\min}(A)$ for $x\in \bar{\B}$. Note that the induced dual space is $\bar{\B}^*=\R^N$ gifted with the norm
$\|x\|^2_*=(x^T A^{-1} x) \lambda_{\min}(A)$ for $x\in \bar{\B}^*$. We also select $\B_0=\R^N$  gifted with the norm $\|x\|_{0}^2=x^T x$ for $x\in \B_0$.
Defining $\phi_i^{(q)}=e_i$ as the unit vector of $\R^N$ in the direction $i$ we then have $\psi_i^{(q)}=e_i$. Condition \ref{cond1OR} naturally translates into Condition \ref{conddiscrip3ordismatdis}.

\subsection{Inverse problems}
\subsubsection{Proof of  Lemma \ref{lemdkljedhlkfjh}}
 Assume that \eqref{eqjkhkkjhuiiu} holds.
Using  \eqref{eqjkhkkjhuiiu}, observe that $C_{e}^{-2}\|u\|_{H^s_0(\Omega)}^2 \leq [ \L u,u] \leq \|\L u\|_{H^{-s}(\Omega)} \|u\|_{H^s_0(\Omega)}$, which implies
$\|u\|_{H^s_0(\Omega)} \leq C_{e}^2 \|\L u\|_{H^{-s}(\Omega)} $ and $C_{\L^{-1}}\leq C_{e}^2$. Similarly \eqref{eqjkhkkjhuiiu} and \eqref{eqjjhykhkkjdedduiiu} imply
$C_{e,2}^{-1}\|\L u\|_{H^{-s}(\Omega)} \leq \| \L u\|_* \leq C_{e,2} \|u\|_{H^s_0(\Omega)}$ and $\|\L u\|_{H^{-s}(\Omega)} \leq C_{e,2}^2 \|u\|_{H^s_0(\Omega)}$. Therefore $C_{\L}\leq C_{e,2}^2$.
Assume that $\L$ and its inverse are continuous.
$[ \L u,u]\leq \|\L u\|_{H^{-s}(\Omega)} \| u\|_{H^{s}_0(\Omega)} \leq C_\L \| u\|_{H^{s}_0(\Omega)}^2$ implies that $C_{e,2}\leq \sqrt{C_{\L}}$. Furthermore,
$[\L u, u]\leq C_{\L} \|u\|_{H^s_0(\Omega)}^2 \leq C_{\L} C_{\L^{-1}}^2 \|\L u\|^2_{H^{-s}(\Omega)}$, implies (writing $\L u=g$) that for $g\in H^{-s}(\Omega)$, $[g, L^{-1} g]\leq C_{\L} C_{\L^{-1}}^2 \|g\|^2_{H^{-s}(\Omega)}$, i.e. (using quadratic form inequalities in $H^{-s}(\Omega)$)
$\L^{-1}\leq C_{\L} C_{\L^{-1}}^2 (-\Delta)^{-s}$. We deduce that $ (-\Delta)^{s} \leq C_{\L} C_{\L^{-1}}^2 \L $ which implies the left hand side of \eqref{eqjkhkkjhuiiu} with $C_{e}\leq  C_{\L} C_{\L^{-1}}^2$.
Similarly writing $\L u=g$ we have
$[ \L u,u]=[g,\L^{-1} g]\leq \|g\|_{H^{-1}(\Omega)} \|\L^{-1} g\|_{H^s_0(\Omega)}\leq C_{\L^{-1}}\|g\|_{H^{-s}(\Omega)}^2$.

\subsubsection{Proof of  Theorem \ref{thmsjdhdhgd}}

$\|\phi\|_*=\|Q \phi\|$ for $\phi \in \B^*$ implies that $\frac{1}{C_\L}\|\L Q \phi\|_2\leq \|\phi\|_*\leq C_{\L^{-1}}\|\L Q \phi\|_2$. We conclude by observing that $f^{(k)}_i=\L Q \phi^{(k)}_i$.

\subsubsection{Proof of  Proposition \ref{propkajhdlkjd}}

 We will keep writing $C$ for any constant depending only on $d,\delta$ and $s$.
Item (1) of Example \ref{egleddoiddedejd} is well known \cite{brenner2007mathematical}.  (5) is trivial.

Let us now prove (2). We will use the notations of Example \ref{egkajhdlkjdini}  and the identity $\I^{(k)}=\bar{I}^{(k)}\times \aleph$.
For $i\in \bar{\I}^{(k)}$ let $x_i^{(k)}\in \tau_i^{(k)}$ such that $\tau_i^{(k)}$ contains the ball
$B(x_i^{(k)}, \delta h^k)$  of center $x_i^{(k)}$ and radius $\delta h^k$, and is contained in $B(x_i^{(k)},  2 h^k)$.  $\phi \in \Phi^{(k)}$ can be decomposed as
$\phi=\sum_{i\in \bar{\I}^{(k)}} p_i 1_{\tau_i^{(k)}}$ where $p_i \in \cP_{s-1}$ (writing $\cP_{s-1}$ the space of polynomials of degree at most $s-1$). We have
$\|\phi \|_{H^{-s}(\Omega)}^2=\sup_{v \in H^s_0(\Omega)} \big[ 2\int_{\Omega}v \phi-\|v\|_{H^s_0(\Omega)}^2\big]$.
By restricting the sup over $v \in H^s_0(\Omega)$ to $v=\sum_{i\in \bar{\I}^{(k)}} v_i$ with $v_i\in H^s_0(B(x_i^{(k)}, \delta h^k))$ we obtain that $ \|\phi \|_{H^{-s}(\Omega)}^2\geq \sum_{i\in \bar{\I}^{(k)}}\sup_{v_i \in H^s_0(B(x_i^{(k)}, \delta h^k))} \big[ 2\int_{B(x_i^{(k)}, \delta h^k))}v_i p_i-\|v_i\|_{H^s_0(B(x_i^{(k)}, \delta h^k)))}^2\big]$, i.e.
 \begin{equation}\label{eqkjhdlkdjhdued}
 \|\phi \|_{H^{-s}(\Omega)}^2\geq \sum_{i\in \bar{\I}^{(k)}} \|p_i \|_{H^{-s}(B(x_i^{(k)}, \delta h^k))}^2\,.
 \end{equation}
 Similarly observe that $\|\phi\|^2_{L^2(\Omega)}=\sum_{i\in \bar{\I}^{(k)}} \|p_i\|_{L^2(\tau_i^{(k)})}^2$, which implies that
 \begin{equation}\label{eqdrlkdjhdued}
 \|\phi \|_{L^2(\Omega)}^2\leq \sum_{i\in \bar{\I}^{(k)}} \|p_i \|_{L^2(B(x_i^{(k)}, 2 h^k))}^2\,.
 \end{equation}
 Write $\hat{p}_i(x)=p(x_i^{(k)}+h^k x)$. We then have
 $\|p_i \|_{L^2(B(x_i^{(k)}, 2 h^k))}^2 = h^d \|\hat{p}_i \|_{L^2(B(0, 2))}^2$
 and $\|p_i \|_{H^{-s}(B(x_i^{(k)}, \delta h^k))}^2 = h^{kd} h^{-2ks}  \|\hat{p}_i \|_{H^{-s}(B(0, \delta ))}^2$ (the second equality follows by using the change of variables $\hat{x}=(x-x_i^{(k)})h^{-k} $ in the identity $\|\hat{p}_i \|_{H^{-s}(B(0, \delta ))}^2=\sup_{v\in H^s_0(B(0,\delta))}\big[2 \int_{B(0, \delta )}v\hat{p}_i-\|v\|_{H^s_0(B(0, \delta ))}^2\big]$).
 It follows that $\|p_i \|_{H^{-s}(B(x_i^{(k)}, \delta h^k))}^2/ \|p_i \|_{L^2(B(x_i^{(k)}, 2 h^k))}^2 \leq h^{-2ks} C_0$ with
 $C_0= \sup_{\hat{p}\in \cP_{s-1}} \|\hat{p} \|_{H^{-s}(B(0, \delta ))}^2/  \|\hat{p}_i \|_{L^2(B(0, 2))}^2$. Since $\cP_{s-1}$ is a linear space of finite dimension ${s+d-1 \choose d}$ and the norms $\|\hat{p} \|_{H^{-s}(B(0, \delta ))}^2$ and $\|\hat{p}_i \|_{L^2(B(0, 2))}^2$ are quadratic it follows that  (this argument is a generalization of \cite[Lem.~3.12]{OwhadiMultigrid:2015} and similar to \cite[Prop.~A1]{HouZhang2017II}), $C_0$ is finite and depends only on $d,\delta$ and $s$. Using \eqref{eqkjhdlkdjhdued} and
 \eqref{eqdrlkdjhdued} we have obtained that $ \|\phi \|_{H^{-s}(\Omega)}^2\leq C h^{-2ks}  \|\phi \|_{L^2(\Omega)}^2$ where $C$ depends only on $d,s$ and $\delta$, which finishes the proof of (2).

Let us now prove  the approximation property (3).  Let $\varphi \in L^2(\Omega)$ and $\phi\in \Phi^{(k)}$. We have
$\|\varphi-\phi \|_{H^{-s}(\Omega)}^2=\sup_{v \in H^s_0(\Omega)} \big[ 2\int_{\Omega}v  (\varphi-\phi)-\|v\|_{H^s_0(\Omega)}^2\big]$
Since $\|v\|_{H^s(\Omega)} \leq C \|v\|_{H^s_0(\Omega)} $ for $v\in H^s_0(\Omega)$ we have
$\sup_{v \in H^s_0(\Omega)} \big[ 2\int_{\Omega}v  (\varphi-\phi)-\|v\|_{H^s_0(\Omega)}^2\big] \leq \sup_{v \in H^s(\Omega)} \big[ 2\int_{\Omega}v  (\varphi-\phi)- C^{-1}\|v\|_{H^s(\Omega)}^2\big]$. Therefore,
\begin{equation}\label{eqkjhssedkhd}
\|\varphi-\phi \|_{H^{-s}(\Omega)}^2\leq \sum_{i\in \bar{\I}^{(k)}} \sup_{v \in H^s(\tau_i^{(k)})} \big[ 2\int_{\tau_i^{(k)}}v  (\varphi-\phi)- C^{-1}\|v\|_{H^s(\tau_i^{(k)})}^2\big]\,.
\end{equation}
 Using the notations of Example \ref{egkajhdlkjdini},
recall that (see \cite[Chap.~6]{john2013numerical}, \cite[Sec.~7]{dupont1980polynomial}, \cite[Chap.~4]{brenner2007mathematical}),
if $p$ is the $L^2(\tau_i^{(k)})$ projection of $\varphi$ onto $\cP_{s-1}(\tau_i^{(k)})$ (i.e. if  $\int_{\tau_i^{(k)}}q  (\varphi-p)=0$ for $q\in \cP_{s-1}(\tau_i^{(k)})$) then
$\|v-p\|_{L^2(\tau_i^{(k)})}\leq C h^s \|v\|_{H^s(\tau_i^{(k)})}$ for $v\in H^s(\tau_i^{(k)})$.
Let $\phi$ on $\tau_i^{(k)}$ be equal to the $L^2(\tau_i^{(k)})$ projection of $\varphi$ onto $\cP_{s-1}(\tau_i^{(k)})$. We then have
for $v\in H^s(\tau_i^{(k)})$, and $q\in \cP_{s-1}(\tau_i^{(k)})$,
$ 2\int_{\tau_i^{(k)}}v  (\varphi-\phi)- C^{-1}\|v\|_{H^s(\tau_i^{(k)})}^2\leq 2\int_{\tau_i^{(k)}}(v-q)  (\varphi-\phi)
-C^{-1}\|v\|_{H^s(\tau_i^{(k)})}^2 \leq 2 \|v-q\|_{L^2(\tau_i^{(k)})} \|\varphi-\phi\|_{L^2(\tau_i^{(k)})}  -C^{-1}\|v\|_{H^s(\tau_i^{(k)})}^2$.
Taking the inf over $q\in \cP_{s-1}(\tau_i^{(k)})$ we obtain that
$2\int_{\tau_i^{(k)}}v  (\varphi-\phi)- C^{-1}\|v\|_{H^s(\tau_i^{(k)})}^2 \leq C h^{k s} \|\varphi-\phi\|_{L^2(\tau_i^{(k)})} \|v\|_{H^s(\tau_i^{(k)})} -C^{-1}\|v\|_{H^s(\tau_i^{(k)})}^2$ which leads to
$ \sup_{v \in H^s(\tau_i^{(k)})} \big[ 2\int_{\tau_i^{(k)}}v  (\varphi-\phi)- C^{-1}\|v\|_{H^s(\tau_i^{(k)})}^2\big] \leq
C h^{2 k s} \|\varphi-\phi\|_{L^2(\tau_i^{(k)})}^2
$. Using $\|\varphi-\phi\|_{L^2(\tau_i^{(k)})} \leq  \|\varphi\|_{L^2(\tau_i^{(k)})}$ and patching up pieces we deduce from \eqref{eqkjhssedkhd}
$\|\varphi-\phi \|_{H^{-s}(\Omega)}^2\leq C h^{2 k s} \|\varphi\|_{L^2(\Omega)}$, which concludes the proof of (3) with
$H=h^s$.

The proof of (4) is similar to that of $(3)$. Simply observe that if $\varphi\in \Phi^{(k),\chi}$ then $\varphi$ is orthogonal in $L^2(\Omega)$ to $\Phi^{(k-1)}$, i.e. $\varphi=\varphi-\phi$ where $\phi$ is the $L^2(\Omega)$-projection of $\varphi$ onto $\Phi^{(k-1)}$.

\section{Proofs of the results of Section \ref{seccigsec}}\label{sec7}

\subsection{Proof of Theorem \ref{thm_micchelli}}
\label{sec_micchelli}
Consider a  putative solution $\Psi'$ such  that its value
$\nu(\Psi'):=\sup_{u\in \B}\frac{
\|u-\Psi'(\Phi(u))\|^2}{\|u\|^{2}} $ is finite.
 Then choosing a nontrivial
$u^{*}$  such that $\Phi(u^{*})=0$, and considering the pencil
 $u_{\lambda}:=\lambda u^{*}$ for $\lambda >0$,  it follows
 that
\[
\nu(\Psi')=\sup_{u\in \B}\frac{
\|u-\Psi'(\Phi(u))\|^2}{\|u\|^{2}}
\geq
\sup_{\lambda >0}\frac{
\|\lambda u^{*}-\Psi'(\Phi(\lambda u^{*}))\|^2}{\|\lambda u^{*}\|^{2}}
=
\sup_{\lambda >0}\frac{
\|\lambda u^{*}-\Psi'(0)\|^2}{\|\lambda u^{*}\|^{2}}\, ,
\]
so that the finiteness of $\nu(\Psi')$ implies that $\Psi'(0)=0$.
  Consequently,
\[
\nu(\Psi')=\sup_{u\in \B}\frac{
\|u-\Psi'(\Phi(u))\|^2}{\|u\|^{2}}
\geq  \sup_{u\in \B:\Phi(u)=0}\frac{
\|u-\Psi'(\Phi(u))\|^2}{\|u\|^{2}}
= \sup_{u\in \B:\Phi(u)=0}\frac{
\|u-\Psi'(0)\|^2}{\|u\|^{2}}\]
\[
=  \sup_{u\in \B:\Phi(u)=0}\frac{
\|u\|^2}{\|u\|^{2}}
=  1\,,
\]
so that we conclude that
\begin{equation}
\label{eq_yvuyvuynoko}
\nu(\Psi') \geq 1, \quad  \Psi':\R^{m} \rightarrow \B\, .
\end{equation}
On the other hand consider the solution  mapping $\Psi:\R^{m} \rightarrow \B$ with components
$\Psi=(\psi_{i})_{i=1}^{m}$  in the assumptions.  By
Proposition \ref{prop_Gambletprojection}, the operator
\[P_{Q\Phi}=\sum_{i=1}^{m}{\psi_{i}\otimes \phi_{i}}\]
is the orthogonal projection onto $Q\Phi$. Writing its action as
$P_{Q\Phi}u=\Psi(\Phi(u))$, observe that
\[
\sup_{u\in \B}\frac{
\|u-\Psi(\Phi(u))\|^2}{\|u\|^{2}}=
\sup_{u\in \B}\frac{
\|u-P_{Q\Phi}u\|^2}{\|u\|^{2}}
 \leq 1\,.
\]
Consequently, the optimality of $\Psi$  follows from   \ref{eq_yvuyvuynoko}.

\subsection{Proof of Theorem \ref{thmdlkdjh3e}}
The assertion that $\xi \sim \mathcal{N}(0,Q)$ is a worst case measure corresponding to the saddle function \eqref{def_saddle}
follows from Theorems \ref{thm_minmax}, \ref{thm_gaussweak} and  \ref{thm_saddle} Section
\ref{sec_worstcase} after we establish the other assertions. To that end,
for $\varphi \in \B^{*}$, consider
$\E[\xi(\varphi)|\s(\Phi)]$. From the fact that \ref{def_gausscond}  is a Gaussian field, and so is an isometry,
we obtain that
\begin{eqnarray*}
E[\xi(\varphi)|\s(\Phi)]&=&P_{\Phi}(\xi(\varphi))\\
&=&\xi(P_{\Phi}\varphi)
\end{eqnarray*}
Recall that Proposition \ref{prop_Gambletprojection} implies that
$P_{\Phi}=\sum_{i=1}^{m}{\phi'_{i}\otimes Q\phi_{i}}\,, $
with $
\phi'_{i}:= \sum_{j=1}^{m}{\Theta^{-1}_{ij}\phi_{j}},\, i=1,\ldots, m
$
and $ \Theta_{ij}:=[ \phi_{i},Q\phi_{j}],\, i,j=1,\ldots m ,$
is the orthogonal projection onto $\Phi$.

Consequently, for  $u \in \B$
we have
\[
\E[\xi(\varphi)|\s(\Phi)](u)=\xi(P_{\Phi}\varphi)(u)
=\xi(\sum_{i=1}^{m}{\phi'_{i} [\varphi,Q\phi_{i}}])(u)
=\sum_{i=1}^{m}{[\varphi,Q\phi_{i}}]\xi(\phi'_{i})(u)\]
\[
=\sum_{i=1}^{m}{[\varphi,Q\phi_{i}}] \sum_{j=1}^{m}{\Theta^{-1}_{ij}\xi(\phi_{j}})(u)
=\sum_{i=1}^{m}{[\varphi,Q\phi_{i}}] \sum_{j=1}^{m}{\Theta^{-1}_{ij}[\phi_{j}},u]
=\sum_{i=1}^{m}{[\varphi,Q\phi'_{i}][\phi_{i},u]}\, ,
\]
so that we obtain
\[\E[\xi|\s(\Phi)](u)=\sum_{i=1}^{m}{Q\phi'_i[\phi_{i},u]}\, .\]
Since Proposition  \ref{prop_Gambletprojection} implies that
$\sum_{i=1}^{m}{Q\phi'_i \otimes \phi_{i}}=P_{Q\Phi}$ is the orthogonal projection onto $Q\Phi$,
we obtain that
$\E[\xi|\s(\Phi)](u)=P_{Q\Phi}u$, and therefore
$u^{\two}(u)=P_{Q\Phi}u$,
where $P_{Q\Phi}$ is the orthogonal projection onto $Q\Phi$,
 The first assertion, that it is an optimal minmax solution, follows from Corollary \ref{cor_micchelli}.
The second assertion follows from the first and the identity
$u^{\two}(u)=P_{Q\Phi}u$, since the projection $P_{Q\Phi}$ fixes
$u:=\psi_{i} \in Q\Phi$.

\subsection{Proof of Theorem \ref{thmdoiedhiu}}
The first two assertions follow directly from  Theorem \ref{thmdlkdjh3e} applied to $\Phi^{(k)}$.
 To obtain the third \eqref{eqhjgjhgjgjgOR},
first use the
 classical martingale property
\[\E(\xi|\sigma(\Phi^{(k})]=\E\bigl[\E[\xi|\sigma(\Phi^{(k+1)})]|\sigma(\Phi^{(k})\bigr]\]
 corresponding the sub $\s$-algebra
$\sigma(\Phi^{(k)})\subset \sigma(\Phi^{(k+1)})$
 along with the representation
\[\E[\xi|\sigma(\Phi^{(k+1)})]=\sum_{j\in  \I^{(k+1)}} \psi^{(k+1)}_j [\phi^{(k+1)}_j,\xi]\,.\]
of the  inner  conditional expectation obtained from
Theorem \ref{thmgugyug0OR}
and the first assertion \eqref{eqbhbdhbdjhb3e}.
Then the second assertion \eqref{def_gambletexpk} implies that
\begin{eqnarray*}
\psi^{(k)}_i=\E\big[\xi\big| [\phi_l^{(k)},\xi]=\delta_{i,l},\,  l \in \I^{(k)}\big]\,
 & =&
\E\Bigl[\E[\xi|\sigma(\Phi^{(k+1)})]\Big| [\phi_l^{(k)},\xi]=\delta_{i,l},\,  l\in \I^{(k)}\Bigr]\\
&=&\\
& =&
\E\Bigl[\sum_{j\in  \I^{(k+1)}} \psi^{(k+1)}_j [\phi^{(k+1)}_j,\xi]\Big| [\phi_l^{(k)},\xi]=\delta_{i,l},\,  l\in \I^{(k)}\Bigr]\\
&=&
\sum_{j\in  \I^{(k+1)}} \psi^{(k+1)}_j \E\big[[\phi^{(k+1)}_j,\xi]\big| [\phi_l^{(k)},\xi]=\delta_{i,l}\text{ for } l\in \I^{(k)}\big]
\end{eqnarray*}
for all $i \in  \I^{(k}$, establishing the third  assertion. The final assertion
follows directly from the fact that Theorem \ref{thmdlkdjh3e} asserts that
 $\xi \sim \mathcal{N}(0,Q)$ is a {\em universal worst case measure} in the sense that it is worst case independent of the measurement functions.

\subsection{Proof of Theorem   \ref{thmdgdjdgygugyd}}
The nesting \eqref{eq:eigdeiud3dd} of the measurement functions implies  $\sigma(\Phi^{(k)}) \subset \sigma(\Phi^{(k+1)})$ and $\big(\sigma(\Phi^{(k)})\big)_{k\geq 1}$ is therefore filtration. The fact that $\xi^{(k)}$ is a martingale follows from  $\xi^{(k)}=\E\big[\xi\big| \sigma(\Phi^{(k)}) \big]$. Since $\xi^{(1)}$ and the increments $(\xi^{(k+1)}-\xi^{(k)})_{k\geq 1}$ are  Gaussian fields belonging to the same Gaussian space their independence is equivalent to zero covariance, which follows from the martingale property, i.e. for $k\geq 1$
$\E\big[\xi^{(1)}(\xi^{(k+1)}-\xi^{(k)})\big]=\E\Big[\E\big[\xi^{(1)}(\xi^{(k+1)}-\xi^{(k)})\big|\sigma(\Phi^{(k)})\big]\Big]=\E\Big[\xi^{(1)} \E\big[(\xi^{(k+1)}-\xi^{(k)})\big|\sigma(\Phi^{(k)})\big]\Big]=0$
and for $k>j\geq 1$,
$\E\big[(\xi^{(j+1)}-\xi^{(j)})(\xi^{(k+1)}-\xi^{(k)})\big]=\E\Big[(\xi^{(j+1)}-\xi^{(j)}) \E\big[(\xi^{(k+1)}-\xi^{(k)})\big|\sigma(\Phi^{(k)})\big]\Big]=0$.

\subsection{Proof of Proposition \ref{propdfflkj}}
Let $\xi_1$ be image of $\xi_2$ under $\L^{-1}$.
By linearity $\xi_1$ is centered and Gaussian. Its covariance operator $Q_1$ is defined as the symmetric positive operator mapping $\B^*$ onto $\B$ such that for $v^*,w^* \in \B^*$,
\[
[ v^*, Q_1 w^*]=\E\big[ [v^*,\xi_1][w^*,\xi_1]\big]=\E\big[ [v^*,\L^{-1}\xi_2][w^*,\L^{-1}\xi_2]\big]=\E\big[ [\L^{-1,*}v^*,\xi_2][\L^{-1,*}w^*,\xi_2]\big]
\]
 i.e. $[ v^*, Q_1 w^*]=[ v^*, \L^{-1} Q_2 \L^{-1,*}w^*]$ and the result follows.

\subsection{Proof of Theorem \ref{thm_minmax}}
First observe that
\[
\inf_{v \in L(\Phi,\B)}\sup_{\mu \in \mathcal{M}_{2}(\B)} {\Psi(v,\mu)}\leq
\sup_{\mu \in \mathcal{M}_{2}(\B)} {\Psi(0,\mu)}
=\sup_{\mu \in \mathcal{M}_{2}(\B)} {1}
\]
implies that
\begin{equation}
\label{biyvuv3}
\inf_{v \in L(\Phi,\B)}\sup_{\mu \in \mathcal{M}_{2}(\B)} {\Psi(v,\mu)} \leq 1 \,.
\end{equation}

For the $\sup\inf$,  consider a Gaussian measure $\mu$ with zero mean and covariance $S_{\mu}$
 along with
$P^{S_{\mu}}:= \sum_{i=1}^{m}{\psi_{i}\otimes \phi_{i}}\, $
where
$
\psi_{j}:= \sum_{k=1}^{m}{(\Theta^{S_{\mu}})^{-1}_{jk}S_{\mu}\phi_{k}}, \quad j=1,\ldots, m
$
 and  $\Theta^{S_{\mu}}$ is the Grammian
$  \Theta^{S_{\mu}}_{ij}:=\langle S_{\mu} \phi_{i},\phi_{j}\rangle,\quad i,j=1,\ldots, m\, $.
It is well known that $\mu\in \mathcal{M}_{2}(\B)$.
 Then Wasilkowski and Wo{\'z}niakowski  \cite{wasilkowski1986average} show that
$v^{*}(x):=P^{S_{\mu}}x, x \in \B$
minimizes
$\int{\|x -v(x)\|^{2}d\mu(x)}$
and therefore $\Psi(v,\mu)$ over $L(\Phi,\B)$.
The value of this minimum
is computed in  \cite{wasilkowski1986average} to be
\begin{equation}
\label{kjhkjhkjh}
 \inf_{v \in L(\Phi,\B)}{\Psi(v,\mu)}
=\Psi(v^{*},\mu)
= 1-\frac{\sum_{i,j=1}^{m}{\Theta^{S_{\mu}}_{ij}\langle \psi_{i}, \psi_{j}\rangle}}
{\int{\|x\|^{2}d\mu(x)}}\, .
\end{equation}

Let $T:\B^{*}\rightarrow \B$ be any continuous symmetric bijection and let $S_{0}:\B^{*}\rightarrow \B$
be any  nontrivial symmetric continuous linear transformation such that $\Phi \in ker(S_{0})$. Then if we define
$S_{\epsilon}:=S_{0}+\epsilon T$,
 it follows that
$ \Theta^{S_{\epsilon}}_{ij}:= \epsilon  \Theta^{T}_{ij}$ where
$\Theta^{T}_{ij}:= \langle T \phi_{i},\phi_{j}\rangle,\quad i,j=1,\ldots, m\, $. Moreover,  define
$
\psi_{j}:= \sum_{k=1}^{m}{(\Theta^{T})^{-1}_{jk}T\phi_{k}}, \quad j=1,\ldots, m
$ and uses it to define
$P^{S_{\epsilon}}:= \sum_{i=1}^{m}{\psi_{i}\otimes \phi_{i}}\, $, which we note is independent of $\epsilon$.
 Then if we let $\mu_{\epsilon}$ denote the centered Gaussian measure with covariance operator $S_{\epsilon}$,  we find that \eqref{kjhkjhkjh} becomes
\begin{equation*}
\label{bivvyivi2}
 \inf_{v \in L(\Phi,\B)}{\Psi(v,\mu_{\epsilon})}
=\Psi(v^{*},\mu)
= 1- \epsilon \frac{\sum_{i,j=1}^{m}{\Theta^{T}_{ij}\langle \psi_{i}, \psi_{j}\rangle}}
{\int{\|x\|^{2}d\mu_{\epsilon}(x)}}\, .
\end{equation*}
Since  the denominator can be calculated as  $\int{\|x\|^{2}d\mu_{\epsilon}(x)}=tr(S_{\epsilon})=tr(S_{0})+\epsilon tr(T)$,   we obtain
\begin{equation*}
\label{iivvuuvuvuyv2}
 \sup_{\epsilon >0}\inf_{v \in L(\Phi,\B)}{\Psi(v,\mu_{\epsilon})}
\geq 1\, ,
\end{equation*}
and therefore
\begin{equation*}
\label{iivvuuvuvuyv2}
 \sup_{\mu \in \mathcal{M}_{2}(\B)}\inf_{v \in L(\Phi,\B)}{\Psi(v,\mu)}
\geq 1\, .
\end{equation*}

Combining  with \eqref{biyvuv3},  the classical minmax inequality
 implies the   assertion
\[\sup_{\mu \in \mathcal{M}_{2}(\B)}\inf_{v \in L(\Phi,\B)}{\Psi(v,\mu)}=
\inf_{v \in L(\Phi,\B)}\sup_{\mu \in \mathcal{M}_{2}(\B)}{\Psi(v,\mu)}=1\,.\]

\subsection{Proof of Proposition \ref{prop_cmcomplete}}
Let $\mathcal{F}(\B)$ be the set of continuous linear finite-rank projections on $\B$ and  for each
$F \in \mathcal{F}(\B)$, let $\Sigma_{F}:=\{F^{-1}(A), A \in \s(F)\}$ be the $\sigma$-algebra
of cylinder sets in $\B$ based on $F$. Then consider the algebra
\[ \mathcal{A}_{cyl}:=\cup_{F \in  \mathcal{F}(\B)}{\Sigma_{F}}\]
of cylinder sets. A cylinder measure $\mu \in CM$ is a collection
of  probability measures
\[ \mu =\{\mu_{F} \in \mathcal{M}(F\B), F \in \mathcal{F}(\B)\}\, \]
such that
\begin{equation}
\label{nobino}
 \mu_{F_{2}}=G_{*} \mu_{F_{1}},\quad F_{2}=GF_{1},\quad  G:F_{1}\B \rightarrow F_{2}\B\,\, \text{continuous and linear}
\end{equation}
 where $G_{*}$ is the  pushforward operator on Borel measures corresponding to the
continuous map $G$.
It is straightforward to show that using the definition
\[ \mu(A):=\mu_{F}(F(A)), \quad A \in \Sigma_{F}\] is well defined,
in the sense that if $A \in  \Sigma_{F_{1}}$ and $A \in  \Sigma_{F_{2}}$, then the result is the same
using $F_{1}$ or $F_{2}$,
see e.g.~Gelfand and Vilenkin \cite[Assertion, Pg.~309]{gelfand1964vilenkin}.  When the cylinder measure
$\mu$ is a bonafide countably additive measure then it follows that
$\mu_{F}=F_{*}\mu, F \in  \mathcal{F}(\B)$
  where $F_{*}$ is the pushforward operator acting on measures.
 Abusing notation, even when $\mu$ is not a measure we nevertheless denote
the image measures $\mu_{F}$ by $F_{*}\mu$.
Let us define the {\em weak cylinder measure topology} $\omega_{CM}$ as in \eqref{def_omegaCM}.
This is the initial topology defined by
the maps $F_{*}:CM  \rightarrow \mathcal{M}(F\B), F \in \mathcal{F}(\B)$, where
$\mathcal{M}(F\B)$ is endowed with the weak topology.
Since the connecting maps $G:F_{1}\B \rightarrow F_{2}\B$
 in the consistency relations \eqref{nobino} of the cylinder measures in $CM$
are continuous, it follows, see e.g.~Aliprantis and Border \cite[Thm.~15.14]{Aliprantis2006},
that the corresponding pushforward operators
$G_{*}:\mathcal{M}(F_{1}\B) \rightarrow \mathcal{M}(F_{1}\B)$ are continuous. Therefore, it follows
that if a sequence of cylinder measures converges in the weak cylinder measure topology, then its limit
consisting of a family of image measures
satisfies the consistency conditions, and therefore is a cylinder measure. That is, the space
of cylinder measures $(CM, \omega_{CM})$, equipped with
 the weak cylinder measure topology, is sequentially complete.

\subsection{Proof of Theorem \ref{thm_saddle}}
Let $\B$ be a separable Banach space with inner product defined by
$\langle u_{1},u_{2} \rangle:=[Q^{-1}u_{1},u_{2}]$ where $Q:\B^{*}\rightarrow \B$ is a symmetric continuous bijection. The covariance operator
 $S_{\mu}:\B\rightarrow \B$ of a centered Gaussian  measure $\mu$ on $\B$ is defined by
$\langle S_{\mu}u_{1},u_{2}\rangle =\E_{u \sim \mu}[\langle u, u_{1}\rangle\langle u, u_{1}\rangle], \, u_{1},u_{2} \in \B$. It is clearly symmetric, in that
$QS_{\mu}^{*}=S_{\mu}Q$, and non-negative.
 It is also known that it is trace class, see e.g.~Bogachev \cite{bogachev1998gaussian}.
The  covariance operator
$\acute{S}_{\mu}:\B^{*}\rightarrow \B$ corresponding to the dual pairing is instead defined
by
$[\varphi_{1},\acute{S}_{\mu}\varphi_{2}] =
\E_{u \sim \mu}\bigl[[\varphi_{1}, u][\varphi_{2}, u]\bigr], \, \varphi_{1},\varphi_{2} \in \B^{*}$.
It easily follows that $\acute{S}_{\mu}=S_{\mu}Q=QS_{\mu}^{*}$.

We now begin the proof of Theorem \ref{thm_saddle}.
We address the finite dimensional case first, it providing the basic ideas for the infinite dimensional problem.
 Consider the
short $\mu_{\Phi^{\perp}(Q)}$ of the operator $Q$ to the subspace $(Q\Phi)^{\perp}$ and
the corresponding Gaussian measure $\mu_{\Phi^{\perp}(Q)}$. This measure is degenerate in that it has support on $(Q\Phi)^{\perp}$.
 Consequently,  we obtain that
\begin{eqnarray*}
\int_{\B}{
\| x-P_{Q\Phi}x\|^{2}d\mu_{\Phi^{\perp}(Q)}(x)}&=&
\int_{\B}{
\| x\|^{2}d\mu_{\Phi^{\perp}(Q)}(x)}
\end{eqnarray*}
from which we conclude that
\[\Psi(P_{Q\Phi},\mu_{\Phi^{\perp}(Q)})=1\, .\]
However, since $\| x-P_{Q\Phi}x\| \leq \|x\|$,  for all $\mu \in  \mathcal{M}_{2}(\B)$, we  have
$\int_{\B}{
\| x-P_{Q\Phi}x\|^{2}d\mu(x)} \leq  \int_{\B}{
\| x\|^{2}d\mu(x)}$
so that $ \Psi(P_{Q\Phi},\mu)\leq 1$ and therefore we obtain
\begin{equation}
\label{kkjbkjblbj}
\Psi(P_{Q\Phi},\mu) \leq  \Psi(P_{Q\Phi},\mu_{\Phi^{\perp}(Q)}), \quad \mu \in  \mathcal{M}_{2}(\B)\, .
\end{equation}
For the upper bound, observe   Wasilkowski and Wo{\'z}niakowski's
  \cite[(ii),pg.~23]{wasilkowski1986average} assertion that
the Gaussian measure $\mu_{Q}$ is invariant under the Householder transformation
$D:=2P_{Q\Phi}-I$. It follows  by the characterization \eqref{id_short}
of the shorted operator that the Gaussian measure $ \mu_{\Phi^{\perp}(Q)}$ derived from the shorted operator is also invariant under the Householder transformation $D$.  Consequently,
 Wasilkowski and Wo{\'z}niakowski
  \cite[Thm.~2.2]{wasilkowski1986average} implies that
$P_{Q\Phi}$ is average case optimal for the measure $\mu_{\Phi^{\perp}(Q)}$ also. That is, we have
$\int_{\B}{
\| x-P_{Q\Phi}x\|^{2}d\mu_{\Phi^{\perp}(Q)}(x)} \leq  \int_{\B}{
\| x-v(x)\|^{2}d\mu_{\Phi^{\perp}(Q)}(x)}$  for all $ v \in L(\Phi,\B)$, which implies that
\[ \Psi(P_{Q\Phi},\mu_{\Phi^{\perp}(Q)}) \leq \Psi(v,\mu_{\Phi^{\perp}(Q)}), v\in L(\Phi,\B)  .\]
Combining with \eqref{kkjbkjblbj} we obtain the assertion.

For the infinite dimensional case,
recall the representation \ref{prop_Gambletprojection}
$P_{Q\Phi}=\sum_{i=1}^{m}{\psi_{i}\otimes \phi_{i}}$ for the orthogonal projection,
where  $\psi_{i}:= \sum_{j=1}^{m}{\Theta^{-1}_{ij}Q\phi_{j}}, \quad i=1,\ldots, m$
and
$ \Theta_{ij}:= [\phi_{i},Q\phi_{j}], \quad i,j=1,\ldots m\, $.
For a centered Gaussian measure $\mu_{S}$ with  covariance operator $S$, we obtain
\begin{eqnarray*}
\int_{\B}{
\| x-P_{Q\Phi}x\|^{2}d\mu_{S}(x)}&=&
\int_{\B}{
\| x\|^{2}d\mu_{S}(x)}-2\int_{\B}{\langle P_{Q\Phi}x,x\rangle
d\mu_{S}(x)}+\int_{\B}{\langle P_{Q\Phi}x,P_{Q\Phi}x\rangle
d\mu_{S}(x)}\\
&=& \int_{\B}{
\| x\|^{2}d\mu_{S}(x)}-
\int_{\B}{\langle P_{Q\Phi}x,P_{Q\Phi}x\rangle
d\mu_{S}(x)}\\
&=& \int_{\B}{
\| x\|^{2}d\mu_{S}(x)}-
\int_{\B}{\langle P_{Q\Phi}x,x\rangle
d\mu_{S}(x)}\\
&=&\int_{\B}{
\| x\|^{2}d\mu_{S}(x)}-\sum_{i=1}^{m}{\langle  \psi_{i},S \phi_{i}\rangle}
\,,
\end{eqnarray*}
so that
\begin{eqnarray*}
\Psi(P_{Q\Phi},\mu_{S})&=&\frac{\int_{\B}{
\| x-P_{Q\Phi}x\|^{2}d\mu_{S}(x)}}{\int_{\B}{
\| x\|^{2}d\mu_{S}(x)}}\\
&=&1-\frac{\sum_{i=1}^{m}{\langle  \psi_{i},S \phi_{i}\rangle}}{\int_{\B}{
\| x\|^{2}d\mu_{S}(x)}}\, .
\end{eqnarray*}
Now consider a sequence of covariance operators
  $S_{n}=\Phi^{\perp}(Q_{n})$ where $Q_{n}$ is trace class, increasing
and strongly convergent to $Q$. Then, since $\Phi \in ker\bigl(\Phi^{\perp}(Q_{n})\bigr)$ is equivalent to
$S_{n}\phi_{i}=0, i=1,\ldots m$, we obtain
\[\Psi(P_{Q\Phi},\mu_{S_{n}})=
1-\frac{\sum_{i=1}^{m}{\langle  \psi_{i},S_{n} \phi_{i}\rangle}}{\int_{\B}{
\| x\|^{2}d\mu_{S_{n}}(x)}}\, =1\, .
\]
Since $\| x-P_{Q\Phi}x\| \leq \|x\|, x \in \B$ implies that
\[\Psi(P_{Q\Phi},\mu)  \leq 1,\quad  \mu \in \mathcal{M}_{2}(\B)\, ,\]
it follows that
\begin{equation}
\label{oubyvbhhhh}
\Psi(P_{Q\Phi},\mu)  \leq \Psi(P_{Q\Phi},\mu_{S_{n}}), \quad   \mu \in \mathcal{M}_{2}(\B)\, .
\end{equation}

On the other hand, for an upper bound
consider the map $P_{Q_{n}\Phi}:\B \rightarrow \B$ defined by
$P_{Q_{n}\Phi}:=\sum_{i=1}^{m}{\psi^{n}_{i}\otimes \phi_{i}}$,
  $\psi^{n}_{i}:= \sum_{j=1}^{m}{\bigl(\Theta^{n}\bigr)^{-1}_{ij}Q_{n}\phi_{j}}, \quad i=1,\ldots, m$,
and
$ \Theta^{n}_{ij}:=[ \phi_{i},Q_{n}\phi_{j}], \quad i,j=1,\ldots m\, $.
Since $Q_{n}$ strongly converges to $Q$ it follow that
$\Psi(P_{Q_{n}\Phi},\mu_{S_{n}})$ converges to $\Psi(P_{Q\Phi},\mu_{S_{n}})$ as $n \rightarrow \infty$.
Moreover,
Wasilkowski and Wo{\'z}niakowski  \cite{wasilkowski1986average} show that
$P_{Q_{n}\Phi}\in \arg\min_{v\in L(\Phi,\B)}{\int_{\B}{\|x-v(x)\|^{2}d\mu_{S_{n}}(x)}}$, that is
$P_{Q_{n}\Phi}$ is an optimal solutions to the average error problem determined by the measure $\mu_{S_{n}}$. Therefore, given $\epsilon >0$,  we have
\[\Psi(P_{Q\Phi},\mu_{S_{n}}) -\epsilon \leq
 \Psi(P_{Q_{n}\Phi},\mu_{S_{n}})
\leq  \Psi(v,\mu_{S_{n}}),
\quad v \in L(\Phi,\B)\,
\]
for large enough $n$,
and therefore conclude
\begin{equation}
\label{iubiyviv}
\Psi(P_{Q\Phi},\mu_{S_{n}})-\epsilon \leq
 \Psi(v,\mu_{S_{n}}),
\quad v \in L(\Phi,\B)\,
\end{equation}
for large enough $n$.
Consequently, combining with   \eqref{oubyvbhhhh}
 establishes the saddle identity. Since $Q_{n}$ converges strongly to $Q$ it follows from Theorem
 \ref{thm_gaussweak} below that
$\mu_{\Phi^{\perp}(Q_{n})}$ converges to $\mu_{\Phi^{\perp}(Q)}$ in the cylinder measure topology, thus establishing the assertion.

\subsection{Gaussian cylinder measures as weak limits of Gaussian measures}
The following theorem shows that the standard Gaussian cylinder measure is the limit
in the weak cylinder measure topology of a sequence of Gaussian measures. It follows that
all Gaussian cylinder measures are such limits.
\begin{Theorem}
\label{thm_gaussweak}
Consider a centered  Gaussian measure $\mu_{S}$ on a separable Hilbert space $\B$ with covariance
operator $S:\B \rightarrow \B$. It is well known that $S$ is  non-negative, symmetric
and trace class and therefore there exists
an orthonormal eigenbasis $\{e_{i}\in \B\}$ such that $Se_{i}=s_{i}e_{i}, i=1,\ldots$ where
the sequence $s_{i}$ is nonnegative, non-increasing and $\sum_{i=1}^{\infty}{s_{i}}< \infty$.
Consider modifications $S_{n}, n=1,\ldots$ defined  using the same orthonormal basis and
modifying the eigenvalues by  $S_{n}e_{i}=s^{(n)}_{i}e_{i}, i=1,\ldots$
where
\begin{equation}
s^{(n)}_{i}=
\begin{cases}
s_{1}& i \leq n\\
s_{i}& i >  n
\end{cases}
\end{equation}
Since the modifications $S_{n}$ are trace class it follows that they correspond to
Gaussian measures $\mu_{S_{n}}, n=1,\ldots$.
Let $\mu_{s_{1}I}$ denote the Gaussian cylinder measure with covariance operator
$s_{1}I$. Then we have
\[  \mu_{S_{n}}   \xrightarrow{\omega_{CM}} \mu_{s_{1}I}\, .\]
\end{Theorem}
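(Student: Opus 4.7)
The plan is to reduce the convergence $\mu_{S_{n}} \xrightarrow{\omega_{CM}} \mu_{s_{1}I}$ to an entrywise convergence of finite-dimensional covariance matrices. By the Definition of the weak cylinder measure topology \eqref{def_omegaCM}, it suffices to show that for every continuous linear finite-rank projection $F\in \mathcal{F}(\B)$, the pushforward measures $F_{*}\mu_{S_{n}}$ converge weakly to $F_{*}\mu_{s_{1}I}$ on the finite-dimensional subspace $V := F(\B)$. Each pushforward is a centered Gaussian measure on $V$ whose covariance (as an operator on $V$) equals $FS_{n}F^{*}$ and $s_{1}FF^{*}$ respectively; since weak convergence of centered Gaussian measures on a finite-dimensional space is equivalent to convergence of their covariance matrices, the task reduces to proving $FS_{n}F^{*} \to s_{1}FF^{*}$ as operators on $V$.

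The key step is to establish, for arbitrary $v,w \in \B$, the convergence of bilinear forms
\[
\langle S_{n}v,w\rangle \longrightarrow s_{1}\langle v,w\rangle \quad \text{as } n\to\infty\, .
\]
By polarization it is enough to treat $v=w$. Extending the eigenbasis $\{e_{i}\}$ by an orthonormal basis of $\ker S$ (if nontrivial), we may assume $\{e_{i}\}_{i\in \mathbb{N}}$ is a complete orthonormal basis of $\B$, so Parseval gives $\sum_{i=1}^{\infty}\langle v,e_{i}\rangle^{2}=\|v\|^{2}$. Splitting the diagonal expansion according to the definition of $s^{(n)}_{i}$,
\[
\langle S_{n}v,v\rangle = s_{1}\sum_{i=1}^{n}\langle v,e_{i}\rangle^{2} + \sum_{i=n+1}^{\infty}s_{i}\langle v,e_{i}\rangle^{2}\, .
\]
As $n\to \infty$, the first sum converges to $s_{1}\|v\|^{2}$ by Parseval, while the second sum is the tail of the convergent series $\sum_{i}s_{i}\langle v,e_{i}\rangle^{2}$ (convergent since $s_{i}\leq s_{1}$ and $\langle v,e_{i}\rangle^{2}$ is summable), so it vanishes. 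Polarization then yields the desired bilinear-form convergence.

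Finally, fix any basis $\{u_{1},\ldots, u_{m}\}$ of $V$. The matrix entries of $FS_{n}F^{*}$ in this basis are
\[
\langle FS_{n}F^{*}u_{j},u_{k}\rangle=\langle S_{n}F^{*}u_{j},F^{*}u_{k}\rangle\longrightarrow s_{1}\langle F^{*}u_{j},F^{*}u_{k}\rangle=\langle s_{1}FF^{*}u_{j},u_{k}\rangle\,,
\]
by the bilinear-form convergence applied to $v=F^{*}u_{j}$, $w=F^{*}u_{k}$. Thus all entries converge, giving $FS_{n}F^{*}\to s_{1}FF^{*}$, which completes the argument. The only mildly subtle point is handling a possible nontrivial kernel of $S$ by enlarging $\{e_{i}\}$ to a complete basis of $\B$; otherwise the argument is a straightforward dominated-convergence application of Parseval.
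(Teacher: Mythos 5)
Your proof is correct and follows essentially the same route as the paper's: reduce to the finite-rank pushforwards $F_{*}\mu_{S_{n}}=\mu_{FS_{n}F^{*}}$, show via the Parseval/tail estimate that $S_{n}\to s_{1}I$ in an operator topology strong enough to give $FS_{n}F^{*}\to s_{1}FF^{*}$, and conclude by the finite-dimensional Gaussian convergence criterion. The only cosmetic difference is that the paper establishes strong operator convergence and finishes with Mourier's characteristic-function formula plus the L\'evy continuity theorem, whereas you establish weak (bilinear-form) operator convergence and cite the equivalent standard fact that weak convergence of centered Gaussians on $\R^{m}$ amounts to convergence of covariance matrices.
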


\begin{proof}
Consider $F \in \mathcal{F}(\B)$. Then $F_{*}\mu_{S_{n}} = \mu_{FS_{n}F^{*}}$
and $F_{*}\mu_{s_{1}I}= \mu_{s_{1}FF^{*}} $.
 Since, for $x=\sum_{i=1}^{\infty}{x_{i}e_{i}} \in \B$ we have
$\sum_{i=1}^{\infty}{x^{2}_{i}}  < \infty$ and
\[
\|(s_{1}I-S_{n})x\|^{2}= \|\sum_{i=n+1}^{\infty}{(s_{1}-s_{i})x_{i}e_{i}} \|^{2}
= \sum_{i=n+1}^{\infty}{(s_{1}-s_{i})^{2}x^{2}_{i}}
\leq s_{1}^{2}\sum_{i=n+1}^{\infty}{x^{2}_{i}}
\]
it follows that
$S_{n} \rightarrow s_{1}I$  in the strong operator topology. Consequently, since $F$ is of finite rank,
 it follows that
 $FS_{n}F^{*} \rightarrow s_{1}FF^{*}$  in the strong operator topology, and therefore the weak operator topology.
 Since Mourier's theorem,
see e.g.~Vakhania, Tarieladze and Chobanyan \cite[Thm.~IV.~2.4]{vakhania1987probability}, implies that the characteristic
function $\phi_{n}$ of  $\mu_{FS_{n}F^{*}}$ is, for  $x \in F\B$,
$\phi_{n}(x)=e^{-\frac{1}{2}\langle FS_{n}F^{*}x, x\rangle}$  and
the characteristic function $\phi$
of  $\mu_{s_{1}FF^{*}}$ is
$\phi(x)=e^{-\frac{s_{1}}{2}\langle FF^{*}x, x\rangle}$
it follows that $\phi_{n} \rightarrow \phi$  pointwise.  Therefore,
 by the  Levy theorem, see e.g.~Vakhania, Tarieladze and Chobanyan \cite[Thm.~IV.3.2]{vakhania1987probability} , we conclude that
$F_{*}\mu_{S_{n}}  \xrightarrow{\omega} F_{*}\mu_{s_{1}I}$. Since $F\in \mathcal{F}(\B)$ was arbitrary
the assertion follows from the definition  \eqref{def_omegaCM}  of the weak cylinder measure
topology $\omega_{CM}$.
\end{proof}

\section{Proofs of the results of Section \ref{secexpdecloc}}\label{sec9}

\subsection{Proofs of the results of Subsection \ref{subsecejhdg9877eg8e}}

Let $P$ and $P_i$ be as in Subsection \ref{subsecejhdg9877eg8e}.

\subsubsection{Proof of Lemma \ref{lemequivpv}}

Since $P:=\sum_{i\in \beth}{P_{i}}$ is the sum of the orthogonal projections according to the internal direct sum, $P$ must be symmetric, i.e. the symmetry of $P$ ($\<P\chi,\chi'\>=\<\chi,P\chi'\>$ for $\chi,\chi' \in \V^\perp$) follows from that of each $P_i$ (for $i\in \beth$, $\<\chi,P_i \chi'\>=\<P_i \chi,P_i \chi'\>=\<P_i \chi, \chi'\>$ for $\chi,\chi'\in \V^\perp$).
Observe that for $\chi\in \V^\perp$, $\sum_{i\in \beth} \|P_i \chi\|^2=\sum_{i\in \beth} \<P_i \chi,\chi\>= \<P \chi,\chi\>$. Therefore $P\chi=0$ implies that, for $i\in \beth$, $P_i \chi=0$, i.e. $\<\chi,\chi_i\>=0$ for $\chi_i\in \V^{\perp}_i$.
Using $\V^\perp=\sum_{i\in \beth} \V_i^\perp$ we deduce that $P\chi=0$ implies that $\chi=0$ and obtain the injectivity of $P$.
Since $P$ is self adjoint with respect to the scalar product $\<\cdot,\cdot\>$, the Closed Range Theorem see e.g.~ \cite[Thm.~4.13]{rudin1991functional}
says that $P$ is surjective if and only if it is bounded below, that is if $P \geq \epsilon I$ for some $\epsilon >0$.
According to
Feshchenko \cite[Prop.~3.2]{feshchenko2012closeness} (whose statement and proof are reminded in Lemma \ref{lemodddoijediod} below for the convenience of the reader) finite sum of closed subspaces of a Hilbert space equals the whole space if and only if the sum of the corresponding orthogonal projections is strictly positive definite.
Since strictly positive definite for symmetric implies bijective, one can say that the sum of the closed subspaces equals the whole space if and only if the sum of the corresponding orthogonal projections is a bijection.

\begin{Lemma}\label{lemodddoijediod}(Feshchenko)
Consider the finite internal sum $\Sigma_{i}{V_{i}}$ of closed Hilbert subspaces $V_{i}\subset V$ (of a Hilbert space $V$) and the sum $\bar{P}:=\sum_{i}{\bar{P}_{i}}$ of the corresponding set of orthoprojectors.
Then $\Sigma_{i}{V_{i}}=V$ if and only of $\bar{P}$ is a homeomorphism.
\end{Lemma}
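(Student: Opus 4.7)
The proof splits into the easy direction and the substantive direction; I will describe both briefly and indicate where the work lies.

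For the easy direction, suppose $\bar{P}$ is a homeomorphism. Since each $\bar{P}_i$ has image in $V_i$, the image of $\bar{P}=\sum_i \bar{P}_i$ is contained in $\Sigma_i V_i$. Bijectivity of $\bar{P}$ then forces $\Sigma_i V_i = V$. This uses nothing beyond the definition of $\bar{P}$.

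The substantive direction is the converse. The plan is to show that if $\Sigma_i V_i = V$ then $\bar{P}$ is bounded below on $V$, which (together with self-adjointness of $\bar{P}$, obtained exactly as in the proof of Lemma \ref{lemequivpv}) gives bijectivity and hence, by the open mapping theorem, a homeomorphism. First I would form the Hilbert direct sum $W:=V_1\oplus\cdots\oplus V_n$ with norm $\|(v_1,\ldots,v_n)\|_W^2=\sum_i \|v_i\|^2$, and consider the bounded linear summation map
\[
S:W\longrightarrow V,\qquad S(v_1,\ldots,v_n):=\sum_i v_i\,.
\]
The hypothesis $\Sigma_i V_i = V$ says exactly that $S$ is surjective, so by the open mapping theorem there is a constant $C>0$ such that every $v\in V$ admits a decomposition $v=\sum_i v_i$ with $v_i\in V_i$ and $\sum_i\|v_i\|^2\le C\|v\|^2$.

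With such a decomposition in hand, the key computation is
\[
\|v\|^2 = \Bigl\langle v, \sum_i v_i\Bigr\rangle = \sum_i \langle v, v_i\rangle = \sum_i \langle \bar{P}_i v, v_i\rangle
\le \Bigl(\sum_i \|\bar{P}_i v\|^2\Bigr)^{1/2}\Bigl(\sum_i \|v_i\|^2\Bigr)^{1/2},
\]
where the third equality uses that $v_i \in V_i$ and $\bar{P}_i$ is the orthoprojector onto $V_i$. Since $\sum_i \|\bar{P}_i v\|^2 = \sum_i \langle \bar{P}_i v,v\rangle=\langle \bar{P} v,v\rangle$ and $\sum_i\|v_i\|^2\le C\|v\|^2$, this rearranges to
\[
\|v\|^2 \le C\,\langle \bar{P} v, v\rangle,\qquad v\in V,
\]
so $\bar{P}\ge C^{-1}I$. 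Combined with the already-established boundedness and self-adjointness of $\bar{P}$, this yields a bounded inverse by the Lax–Milgram theorem (or by the spectral theorem applied to the positive self-adjoint operator $\bar{P}$), and hence $\bar{P}$ is a homeomorphism of $V$.

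The main obstacle, if any, is conceptual rather than technical: one has to realize that the \emph{existence} of a decomposition $v=\sum_i v_i$ with control $\sum\|v_i\|^2\le C\|v\|^2$, supplied for free by the open mapping theorem once surjectivity of $S$ is granted, is precisely what converts the a priori only nonnegative operator $\bar{P}$ into a coercive one. The rest is a one-line Cauchy–Schwarz argument. Note the parallel with \eqref{eqlkjdhlkdhd} and Proposition \ref{propkjshkdjhdkjh}: the optimal constant obtained here is $\lambda_{\min}(\bar{P})=K_{\min}$, the largest $C^{-1}$ for which such a bounded decomposition exists.
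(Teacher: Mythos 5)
Your proof is correct. It differs from the paper's in the mechanism used to convert surjectivity of the summation map into coercivity of $\bar{P}$. The paper works with the map $\bar{A}:\oplus V_i\to V$, observes that its adjoint is $\bar{A}^*=\oplus\bar{P}_i$ so that $\bar{A}\bar{A}^*=\bar{P}$, and applies the closed range theorem: surjectivity of $\bar{A}$ forces $\|\bar{A}^*x\|\geq\epsilon\|x\|$, whence $\langle\bar{P}x,x\rangle=\|\bar{A}^*x\|^2\geq\epsilon^2\|x\|^2$ in one line. You instead use the open mapping theorem to extract a bounded right inverse of $S=\bar{A}$ (a decomposition $v=\sum_i v_i$ with $\sum_i\|v_i\|^2\leq C\|v\|^2$) and then run the Cauchy--Schwarz computation $\|v\|^2=\sum_i\langle\bar{P}_iv,v_i\rangle\leq\langle\bar{P}v,v\rangle^{1/2}(C\|v\|^2)^{1/2}$. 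The two routes are dual: "$\bar{A}$ surjective $\Rightarrow$ $\bar{A}^*$ bounded below" and "$\bar{A}$ surjective $\Rightarrow$ $\bar{A}$ has a bounded right inverse" are two faces of the same theorem. The paper's version is shorter because the identity $\bar{A}\bar{A}^*=\bar{P}$ does all the work at once; yours is more explicit about the decomposition constant and, as you note, recovers exactly the argument of Lemma \ref{lemdkjdhjh3e} together with the identification $\lambda_{\min}(\bar{P})=K_{\min}$ of Proposition \ref{propkjshkdjhdkjh}, which ties the lemma more visibly to how it is actually used in the subspace-decomposition estimates. Both directions of the equivalence are handled correctly (the paper leaves the easy converse implicit), and your appeal to self-adjointness via the argument of Lemma \ref{lemequivpv} and to Lax--Milgram for the bounded inverse is sound.
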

\begin{proof}
Consider the map $\bar{A}:\oplus{V_{i}}\rightarrow V$ from the external direct sum to $V$
   defined
by $\bar{A}(x_{1},\ldots)=\sum_{i}{x_{i}}\,$.
Then $\bar{A}^{*}:V \rightarrow \oplus{V_{i}}$ is $\bar{A}^{*}=\oplus{\bar{P}_{i}}$ and
 $\bar{A}\bar{A}^{*}= \bar{P}.$ The assumption $\Sigma_{i}{\bar{H}_{i}}=\bar{H}$ implies that $\bar{A}$ is surjective. Consequently, by the closed range theorem, see e.g.~\cite{rudin1991functional}, it follows that $\|\bar{A}^{*}x\| \geq \epsilon \|x\|$ for some $\epsilon >0$.
Since
\[\langle \bar{A}\bar{A}^{*}x,x\rangle=\langle \bar{A}^{*}x,\bar{A}^{*}x\rangle \geq \epsilon^{2}\|x\|^{2}\] we obtain that $\bar{A}\bar{A}^{*}\geq \epsilon^{2} I$.  Since $\bar{A}\bar{A}^{*}=\bar{P}$ we conclude that $\bar{P} \geq \epsilon^{2}I$.
Since $\bar{P}$ is symmetric and bounded below it is also surjective and therefore an homeomorphism.

\end{proof}

\subsubsection{Proof of Lemma \ref{lemdkjdhjh3e}}

The proof is similar to that of of Lemma 3.1 of \cite{KornhuberYserentant16}.
$K_{\max}\leq n_{\max}$ follows by observing that $\|\sum_{i\in \beth} \chi_i\|^2=\sum_{\db^{\C}(i,j)\leq 1}\<\chi_i,\chi_j\>\leq
\sum_{\db^{\C}(i,j)\leq 1}\frac{\|\chi_i\|^2+\|\chi_j\|^2}{2}=\sum_{\db^{\C}(i,j)\leq 1}\|\chi_i\|^2\leq n_{\max}\sum_{i\in \beth} \|\chi_i\|^2$.
For $\chi\in \V^\perp$, using  $P_i\chi \in \V_i^\perp$ and $\|\sum_{i\in \beth}  P_i\chi\|^2 \leq K_{\max}\sum_{i\in \beth} \|P_i\chi\|^2$ we have
$\<P\chi,\chi\>=\sum_{i\in \beth}  \<P_i \chi,\chi\> \leq \|\sum_{i\in \beth}  P_i\chi\| \|\chi\|\leq (K_{\max}\sum_{i\in \beth} \|P_i\chi\|^2 )^\frac{1}{2} \|\chi\|$.
Observing that
\begin{equation}\label{eqjddkjhdkj}
\sum_{i\in \beth} \|P_i \chi\|^2=\sum_{i\in \beth} \<P_i \chi,\chi\>= \<P \chi,\chi\>
\end{equation}
we deduce that
$\<P\chi,\chi\> \leq (K_{\max}\<P \chi,\chi\> )^\frac{1}{2}  \|\chi\|^2$ and conclude that for $\chi\in \V^\perp$,
$\<P \chi,\chi\> \leq K_{\max} \|\chi\|^2$. Therefore we have obtained that $\lambda_{\max}(P)\leq K_{\max}$.

Let us now prove $ K_{\min}\leq \lambda_{\min}(P)$. Let $\chi\in \V^\perp$. There exists a decomposition $\chi=\sum_{i\in \beth} \chi_i$ with $\chi_i\in \V_i^\perp$, such that \eqref{eqlkjdhlkdhd}, $K_{\min}\sum_{i\in \beth} \|\chi_i\|^2 \leq \|\chi\|^2$, is satisfied. We have
$\|\chi\|^2=\<\chi,\sum_{i\in \beth} \chi_i\>=\sum_{i\in \beth} \<\chi, \chi_i\>= \sum_{i\in \beth} \<P_i \chi, \chi_i\>$. Therefore Cauchy-Schwarz' inequality, \eqref{eqjddkjhdkj} and \eqref{eqlkjdhlkdhd} imply that
$\|\chi\|^2\leq (\sum_{i\in \beth} \|P_i \chi\|^2 )^\frac{1}{2}(\sum_{i\in \beth} \|\chi_i\|^2 )^\frac{1}{2} \leq
\<P \chi,\chi\>^\frac{1}{2} \|\chi\| K_{\min}^{-\frac{1}{2}}
$ and $K_{\min}\|\chi\|^2 \leq \<P \chi,\chi\>$. Therefore we have obtained that  $ K_{\min}\leq \lambda_{\min}(P)$.

\subsubsection{Proof of Theorem \ref{thmswkskjsh}}

Let $\psi_{i,\alpha}^n$ be the  minimizer of \eqref{eqhihdjkjhiudiduh}.
Let $\psi_{i,\alpha,0}:=\psi_{i,\alpha}^0$ and  $\psi_{i,\alpha,n}:=\psi_{i,\alpha}^0-\chi_{i,\alpha,n}$ where $\chi_{i,\alpha,n}\in \V^\perp$ is defined via induction by
$\chi_{i,\alpha,0}=0$ and
\begin{equation}\label{eqgjhguygy}
\chi_{i,\alpha,n+1}=\chi_{i,\alpha,n}  + \zeta P (\psi_{i,\alpha}^0 -\chi_{i,\alpha,n})
\end{equation}

\begin{Lemma}\label{lemdkjd4dh}
Under Condition \ref{confviperp}, $\chi_{i,\alpha}=\psi_{i,\alpha}^0-\psi_{i,\alpha}$ is the unique solution in $\V^\perp$ of
$P\chi=P \psi_{i,\alpha}^0$.
\end{Lemma}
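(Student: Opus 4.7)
The plan is to verify the three ingredients in sequence: membership of $\chi_{i,\alpha}$ in $\V^\perp$, the identity $P\psi_{i,\alpha}=0$ (which immediately yields $P\chi_{i,\alpha}=P\psi_{i,\alpha}^0$), and uniqueness via Condition \ref{confviperp}.

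First I would observe that both $\psi_{i,\alpha}$ (the gamblet from Definition \eqref{eq:dfddeytfewdaisq} applied to $(\phi_{j,\beta})_{(j,\beta)\in\beth\times\aleph}$) and $\psi_{i,\alpha}^0$ (the localized minimizer from \eqref{eqhihdjkjhiudiduh} with $n=0$) satisfy the same affine constraints $[\phi_{j,\beta},\cdot]=\delta_{i,j}\delta_{\alpha,\beta}$ for all $(j,\beta)\in\beth\times\aleph$. Subtracting, $[\phi_{j,\beta},\chi_{i,\alpha}]=0$ for every $(j,\beta)$, so $\chi_{i,\alpha}\in\V^\perp$ by the definition of $\V^\perp$.

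Next I would show $P\psi_{i,\alpha}=0$. The gamblet $\psi_{i,\alpha}$ is the minimizer of $\|\cdot\|$ subject to the constraints listed above, and perturbing it by any $v\in\V^\perp$ leaves the constraints unchanged; first order optimality in the Hilbert norm $\|\cdot\|$ therefore gives $\<\psi_{i,\alpha},v\>=0$ for every $v\in\V^\perp$. For each $j\in\beth$, the element $P_j\psi_{i,\alpha}$ lies in $\V_j^\perp=\B_j\cap\V^\perp$, and the defining property of the $\<\cdot,\cdot\>$-orthogonal projection yields
\begin{equation*}
\<P_j\psi_{i,\alpha},v\>=\<\psi_{i,\alpha},v\>=0\quad\text{for all }v\in\V_j^\perp,
\end{equation*}
where the second equality uses $\V_j^\perp\subset\V^\perp$. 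Taking $v=P_j\psi_{i,\alpha}\in\V_j^\perp$ forces $\|P_j\psi_{i,\alpha}\|=0$, hence $P_j\psi_{i,\alpha}=0$ for every $j\in\beth$, and therefore $P\psi_{i,\alpha}=\sum_{j\in\beth}P_j\psi_{i,\alpha}=0$. Consequently $P\chi_{i,\alpha}=P(\psi_{i,\alpha}^0-\psi_{i,\alpha})=P\psi_{i,\alpha}^0$.

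Finally I would invoke Condition \ref{confviperp} together with Lemma \ref{lemequivpv}: the latter says that $\lambda_{\min}(P)>0$ is equivalent to $P$ being a symmetric bijection on $\V^\perp$. Since the equation $P\chi=P\psi_{i,\alpha}^0$ has at most one solution in $\V^\perp$, the element $\chi_{i,\alpha}$ is this unique solution. No step here is genuinely difficult; the only point worth flagging is that $\psi_{i,\alpha}$ itself need not lie in $\V^\perp$, which is exactly why one cannot simply apply the bijectivity of $P|_{\V^\perp}$ to $\psi_{i,\alpha}$ directly — the Euler equation for the constrained minimization is used instead to route through $P\psi_{i,\alpha}=0$.
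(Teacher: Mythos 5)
Your proof is correct and follows essentially the same route as the paper's: you use first-order optimality of the unconstrained-over-$\V^\perp$ gamblet minimization to get $\<\psi_{i,\alpha},v\>=0$ for $v\in\V^\perp$, hence $P_j\psi_{i,\alpha}=0$ for each $j$ and $P\chi_{i,\alpha}=P\psi_{i,\alpha}^0$, and then uniqueness from the injectivity of $P$ on $\V^\perp$ guaranteed by Condition \ref{confviperp} via Lemma \ref{lemequivpv}. Your closing remark that $\psi_{i,\alpha}\notin\V^\perp$ is a fair point, correctly handled since each $P_j$ is defined on all of $\B$.
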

\begin{proof}
Using the variational formulation of $\psi_{i,\alpha}$ observe that $\psi_{i,\alpha}=\psi_{i,\alpha}^0-\chi_{i,\alpha}$ where $\chi_{i,\alpha}$ is the minimizer of $\|\psi_{i,\alpha}^0-\chi\|$ over $\chi\in \V^\perp$. The minimum is characterized by $\<\psi_{i,\alpha}^0-\chi_{i,\alpha},\chi\>=0$ for $\chi\in \V^\perp$ which (since $\V^\perp=\sum_{i\in \beth} \V^\perp_i$) is equivalent to $P_j (\psi_{i,\alpha}^0-\chi_{i,\alpha})=0$  for $j=\in \beth$. Therefore one must have $P (\psi_{i,\alpha}^0-\chi_{i,\alpha})=0$. The uniqueness of the solution of $P\chi=P \psi_{i,\alpha}^0$ follows from the injectivity of $P$.
\end{proof}

Since $\chi_{i,\alpha}$ is a fixed point of \eqref{eqgjhguygy} we deduce (writing $I$ the identity operator) that
\begin{equation}\label{eqgjhguygybis}
\chi_{i,\alpha,n+1}-\chi_{i,\alpha}=(I- \zeta P)(\chi_{i,\alpha,n}-\chi_{i,\alpha})
\end{equation}
Using  \eqref{eqkdjdjhdj} and taking $\zeta-\frac{2}{\lambda_{\max}(P)+\lambda_{\min}(P)}$  we deduce from
$\|I-\zeta P\| \leq \frac{\Cond(P)-1}{\Cond(P)+1}$ that
\begin{equation}
\|\chi_{i,\alpha,n}-\chi_{i,\alpha}\|\leq \big(\frac{\Cond(P)-1}{\Cond(P)+1}\big)^n \|\chi_{i,\alpha}\|
\end{equation}
Observe that by the definition of the distance $\db$ we have $\chi_{i,\alpha,n}\in \B_{i}^n$ and therefore $\psi_{i,\alpha,n}\in \B_{i}^n$.
We have $\|\psi_{i,\alpha}-\psi_{i,\alpha,n}\|=\|\chi_{i,\alpha,n}-\chi_{i,\alpha}\|$ and by the variational formulation of $\psi_{i,\alpha}$,
$\|\psi_{i,\alpha}\|^2=\|\psi_{i,\alpha,n}\|^2-\|\psi_{i,\alpha}-\psi_{i,\alpha,n}\|^2=\|\psi_{i,\alpha}^n\|^2-\|\psi_{i,\alpha}
-\psi_{i,\alpha}^n\|^2$.
Similarly, the variational formulation \eqref{eqhihdjkjhiudiduh} implies
$\|\psi_{i,\alpha}^n\|^2=\|\psi_{i,\alpha,n}\|^2-\|\psi_{i,\alpha}^n-\psi_{i,\alpha,n}\|^2$. Therefore,
$\|\psi_{i,\alpha}-\psi_{i,\alpha}^n\| \leq \|\psi_{i,\alpha}-\psi_{i,\alpha,n}\| \leq \big(\frac{\Cond(P)-1}{\Cond(P)+1}\big)^n \|\chi_{i,\alpha}\|$.
Observe that by the variational formulation of $\psi_{i,\alpha}$ we also have $\|\psi_{i,\alpha}^0\|^2=\|\psi_{i,\alpha}\|^2+\|\chi_{i,\alpha}\|^2$, and therefore, $\|\chi_{i,\alpha}\|\leq \|\psi_{i,\alpha}^0\|$. We conclude that for $n\geq 0$,
$\|\psi_{i,\alpha}-\psi_{i,\alpha}^n\| \leq \|\psi_{i,\alpha}-\psi_{i,\alpha,n}\| \leq \big(\frac{\Cond(P)-1}{\Cond(P)+1}\big)^n \|\psi_{i,\alpha}^0\|$.

\subsection{Null space of measurement functions and quotient operator}
Let $(\phi_{i,\alpha})_{(i,\alpha)\in \beth \times \aleph}$ be linearly independent elements of $\B^*$ (as in Section \ref{subsecejhdg9877eg8e}). Write
$\Phi:=\Span\{\phi_{i,\alpha}\mid (i,\alpha)\in \beth \times \aleph\}$.  Let $\V^\perp:=\{\psi \in \B\mid [\phi_{i,\alpha},\psi]=0\text{ for }(i,\alpha)\in \beth \times \aleph\}$

\begin{Lemma}\label{lemjhgjyguyguybis}
For $\varphi\in \B^*$, there exists a unique $\phi\in \Phi$ and a unique $\chi\in \V^\perp$ such that
\begin{equation}\label{eqjhkhjkhkk}
\varphi=\phi+Q^{-1}\chi
\end{equation}
Furthermore, $\phi$ is the minimizer of
\begin{equation}\label{eqdjidhieuhd}
\begin{cases}
\text{Minimize } \|\varphi-\phi'\|_*\\
\text{Subject to } \phi' \in \Phi
\end{cases}
\end{equation}
and
\begin{equation}\label{eqdjjdhkdjh}
\inf_{\phi'\in \Phi}\|\varphi-\phi'\|_*=\|\chi\|=\sup_{\chi'\in \V^\perp}\frac{[\varphi,\chi']}{\|\chi'\|}
\end{equation}
It also  holds true that $\chi$ is the minimizer of $\|Q\varphi-\chi'\|$ subject to $\chi'\in \V^\perp$.
\end{Lemma}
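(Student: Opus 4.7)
The plan is to exploit the fact that $Q:\B^*\rightarrow \B$ is an isometry between $(\B^*,\|\cdot\|_*)$ and $(\B,\|\cdot\|)$, which converts all four assertions into a single orthogonal decomposition of $\B$ with respect to $\langle\cdot,\cdot\rangle$. First, I would observe that the identity $[\varphi,\psi]=\langle Q\varphi,\psi\rangle$, valid for all $\varphi\in \B^*$ and $\psi\in \B$, yields the equivalence
\[
\psi \in \V^\perp \;\Longleftrightarrow\; \langle Q\phi_{i,\alpha},\psi\rangle=0\ \text{for all }(i,\alpha)\in \beth\times\aleph \;\Longleftrightarrow\; \psi\perp Q\Phi,
\]
so that $\V^\perp$ is precisely the $\langle\cdot,\cdot\rangle$-orthogonal complement of the finite-dimensional subspace $Q\Phi\subset \B$. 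Since $Q\Phi$ is closed, we have the orthogonal direct sum decomposition $\B = Q\Phi \oplus \V^\perp$. Applying the bijection $Q^{-1}$ gives the unique decomposition $\B^* = \Phi \oplus Q^{-1}\V^\perp$, which is exactly \eqref{eqjhkhjkhkk}: every $\varphi\in \B^*$ writes uniquely as $\varphi=\phi+Q^{-1}\chi$ with $\phi\in\Phi$ and $\chi\in\V^\perp$.

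Next, I would translate the minimization \eqref{eqdjidhieuhd} into the ambient Hilbert structure of $\B$. For $\phi'\in \Phi$, the identity $\|\cdot\|_*^2=[\cdot,Q\cdot]=\|Q\cdot\|^2$ gives
\[
\|\varphi-\phi'\|_*^2 = \|Q\varphi-Q\phi'\|^2 = \|Q(\phi-\phi')+\chi\|^2 = \|Q(\phi-\phi')\|^2+\|\chi\|^2,
\]
where I used that $Q(\phi-\phi')\in Q\Phi$ while $\chi\in\V^\perp$ are $\langle\cdot,\cdot\rangle$-orthogonal. This quantity is minimized precisely when $\phi'=\phi$, yielding both the minimizer claim and the equality $\inf_{\phi'\in\Phi}\|\varphi-\phi'\|_*=\|\chi\|$.

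For the sup representation in \eqref{eqdjjdhkdjh}, I would use that for $\chi'\in \V^\perp$ the measurement functions annihilate $\chi'$, so $[\phi,\chi']=0$ and
\[
[\varphi,\chi'] = [\phi,\chi']+[Q^{-1}\chi,\chi'] = \langle \chi,\chi'\rangle.
\]
Cauchy--Schwarz then gives $[\varphi,\chi']\leq \|\chi\|\,\|\chi'\|$, with equality at $\chi'=\chi$ (which lies in $\V^\perp$), establishing the sup formula.

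The final claim, that $\chi$ minimizes $\|Q\varphi-\chi'\|$ over $\chi'\in \V^\perp$, follows from the same orthogonal decomposition: writing $Q\varphi-\chi' = Q\phi+(\chi-\chi')$ with $Q\phi\in Q\Phi$ and $\chi-\chi'\in \V^\perp$ orthogonal, one obtains $\|Q\varphi-\chi'\|^2=\|Q\phi\|^2+\|\chi-\chi'\|^2$, uniquely minimized at $\chi'=\chi$. I do not anticipate any serious obstacle here: once the equivalence $\V^\perp=(Q\Phi)^{\perp}$ is recognized, every assertion is a direct consequence of the orthogonal projection theorem applied either in $\B$ or, via the isometry $Q$, in $\B^*$.
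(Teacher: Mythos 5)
Your proof is correct and takes essentially the same route as the paper's: both rest on identifying $\V^\perp$ as the $\langle\cdot,\cdot\rangle$-orthogonal complement of $Q\Phi$ and reading off every assertion from the orthogonal projection theorem (applied in $\B$ or, via the isometry $Q$, in $\B^*$). The only difference is cosmetic: you prove the sup formula in \eqref{eqdjjdhkdjh} by an explicit Cauchy--Schwarz computation with equality at $\chi'=\chi$, where the paper instead cites the classical duality between minimum-norm problems and their duals.
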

\begin{proof}
Let $\varphi\in \B^*$. Since $\Phi$ is finite dimensional it is closed and therefore, by the projection theorem in Hilbert space, see e.g.~Luenberger \cite[Thm.~5.8.1]{luenberger1969optimization}, there is a unique
  minimizer $\phi$ of \eqref{eqdjidhieuhd}. At the minimum one must have $\<\varphi-\phi,\phi'\>_*=[\phi',Q (\varphi-\phi)]=0$ for $\phi'\in \Phi$. Therefore $\chi:=Q (\varphi-\phi)$ must be an element of $\V^\perp$ and we deduce the first  part of \eqref{eqdjjdhkdjh} as a consequence. The identity between the first and the third term follows from the
 classical duality theory between minimum
 norm problems and their duals, see e.g.~Luenberger \cite[Thm.~5.8.1]{luenberger1969optimization}.  Let $\chi$ be the minimizer of $\|Q\varphi-\chi'\|$ subject to $\chi'\in \V^\perp$. Then at the minimum one must have $\<Q\varphi-\chi,\chi'\>=0$ for $\chi'\in \V^\perp$, which is equivalent  to $Q\varphi-\chi\in \Phi$.
\end{proof}

\begin{Remark}
Note that by Lemma \ref{lemjhgjyguyguybis}  the component $\phi$ in the decomposition \eqref{eqjhkhjkhkk} is the $\<\cdot,\cdot\>_*$-orthogonal projection of $\varphi$ onto $\Phi$ and the component $\chi$ is the $\<\cdot,\cdot\>$-orthogonal projection of $Q \varphi$ onto $\V^\perp$.
\end{Remark}

For $\varphi,\varphi'\in \B^*$ we write $\varphi \sim \varphi'$ if and only if $\varphi-\varphi'\in \Phi$.
Let $\B^{*,\sim}:=\B^*/\sim$ be the corresponding set of equivalence classes.
\begin{Lemma}\label{lemyguy56l}
 The decomposition \eqref{eqjhkhjkhkk} induces a linear bijection $\tilde{Q}:\B^{*,\sim}\rightarrow \V^\perp$. More precisely (1) for $\tilde{\varphi}\in \B^{*,\sim}$ represented by $\varphi\in \B^*$
 we write
$\chi=\tilde{Q}\tilde{\varphi}$ the component $\chi$ in the decomposition \eqref{eqjhkhjkhkk} (2) for $\chi\in \V^\perp$ we write $\tilde{Q}^{-1}\chi$ the equivalence class of $Q^{-1}\chi$.
\end{Lemma}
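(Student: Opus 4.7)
The plan is to verify in order the four properties packaged into the statement: well-definedness, linearity, surjectivity, and injectivity of $\tilde Q$, together with the identification of the inverse.

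First I would address well-definedness. Given $\tilde\varphi \in \B^{*,\sim}$, pick two representatives $\varphi,\varphi'\in \B^*$, so that $\varphi-\varphi'\in \Phi$. By Lemma \ref{lemjhgjyguyguybis} each admits a unique decomposition $\varphi=\phi+Q^{-1}\chi$, $\varphi'=\phi'+Q^{-1}\chi'$ with $\phi,\phi'\in \Phi$ and $\chi,\chi'\in \V^\perp$. Subtracting gives $(\phi-\phi') + Q^{-1}(\chi-\chi') = \varphi-\varphi' \in \Phi$, so the decomposition of $\varphi-\varphi'$ is $(\varphi-\varphi') + Q^{-1}\cdot 0$; uniqueness in Lemma \ref{lemjhgjyguyguybis} forces $\chi-\chi'=0$. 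Hence $\chi=\tilde Q\tilde\varphi$ does not depend on the representative.

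Next I would check linearity, which is immediate: if $\varphi_i=\phi_i+Q^{-1}\chi_i$ for $i=1,2$ and $\lambda\in\R$, then $\varphi_1+\lambda\varphi_2=(\phi_1+\lambda\phi_2)+Q^{-1}(\chi_1+\lambda\chi_2)$ is the unique decomposition of $\varphi_1+\lambda\varphi_2$, so $\tilde Q(\tilde\varphi_1+\lambda\tilde\varphi_2)=\chi_1+\lambda\chi_2=\tilde Q\tilde\varphi_1+\lambda \tilde Q\tilde\varphi_2$.

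Finally I would establish bijectivity by simultaneously identifying the inverse. Given $\chi\in \V^\perp$, set $\varphi:=Q^{-1}\chi \in \B^*$; its (trivial) decomposition $0 + Q^{-1}\chi$ shows $\tilde Q\,[Q^{-1}\chi] = \chi$, which proves surjectivity and simultaneously exhibits $\tilde Q^{-1}\chi=[Q^{-1}\chi]$. For injectivity, if $\tilde Q\tilde\varphi=0$ then any representative satisfies $\varphi=\phi+Q^{-1}\cdot 0=\phi\in\Phi$, so $\tilde\varphi=0$ in $\B^{*,\sim}$. The only delicate point is the well-definedness argument, but it reduces directly to the uniqueness clause of Lemma \ref{lemjhgjyguyguybis}, so no real obstacle is expected.
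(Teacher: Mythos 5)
Your proof is correct and follows essentially the same route as the paper: the key point in both is that well-definedness of $\tilde{Q}$ reduces to the uniqueness clause of Lemma \ref{lemjhgjyguyguybis} applied to $\varphi-\varphi'\in\Phi$. The paper's proof records only this well-definedness step; your additional verifications of linearity, injectivity, surjectivity, and the inverse formula are routine and accurate.
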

\begin{proof}
Let $\tilde{\varphi}\in \B^{*,\sim}$ represented by $\varphi\in \B^*$ and $\varphi'\in \B^*$.
We need to make sure that $\varphi$ and $\varphi'$ have the same $\chi$ component in the decomposition \eqref{eqjhkhjkhkk}, which is trivial since $\varphi-\varphi'=\phi''$ for some $\phi''\in \Phi$.
\end{proof}
Note that if $\varphi\sim \varphi'$ then $\<Q\varphi, \chi\>=[\varphi,\chi]=[\varphi',\chi]=\<Q\varphi',\chi\>$ for $\chi\in \V^\perp$. To keep the notations simple we will therefore continue using the same symbols for the duality  pairings between $\B^*$ and $\B_1$ as
$\B^{*,\sim}$ and $\V^\perp$.
\begin{Lemma}\label{lemdihi3}
 $\tilde{Q}$ is symmetric, i.e., for $\tilde{\varphi}, \tilde{\varphi}'\in \B^{*.\sim}$, $[\tilde{\varphi}, \tilde{Q}\tilde{\varphi}']=[\tilde{\varphi}', \tilde{Q}\tilde{\varphi}]$. Moreover,
for any representative $\varphi$ of $\tilde{\varphi}$,
we have
\[[\tilde{\varphi}, \tilde{Q}\tilde{\varphi}]=\|\chi\|^{2}\, ,\] where
$\chi$ is determined by the decomposition
$\varphi=\phi+Q^{-1}\chi$  guaranteed by Lemma  \ref{lemjhgjyguyguybis}.
 \end{Lemma}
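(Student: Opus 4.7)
The plan is to unpack the notation and use the orthogonality built into the decomposition of Lemma \ref{lemjhgjyguyguybis}. Let $\tilde{\varphi}, \tilde{\varphi}' \in \B^{*,\sim}$ and pick representatives $\varphi, \varphi' \in \B^{*}$. By Lemma \ref{lemjhgjyguyguybis} there exist unique $\phi, \phi' \in \Phi$ and $\chi, \chi' \in \V^{\perp}$ such that
\begin{equation*}
\varphi = \phi + Q^{-1}\chi, \qquad \varphi' = \phi' + Q^{-1}\chi',
\end{equation*}
and by Lemma \ref{lemyguy56l} we have $\tilde{Q}\tilde{\varphi} = \chi$ and $\tilde{Q}\tilde{\varphi}' = \chi'$. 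The convention noted just before the lemma statement is that the pairing $[\tilde{\varphi}, \chi'']$ for $\chi'' \in \V^{\perp}$ is well-defined and equals $[\varphi, \chi'']$ for any representative $\varphi$, since elements of $\Phi$ annihilate $\V^{\perp}$.

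First I would compute the symmetric pairing. Using the decomposition and the fact that $\chi' \in \V^{\perp}$, so $[\phi, \chi'] = 0$,
\begin{equation*}
[\tilde{\varphi}, \tilde{Q}\tilde{\varphi}'] = [\varphi, \chi'] = [\phi, \chi'] + [Q^{-1}\chi, \chi'] = \langle \chi, \chi' \rangle,
\end{equation*}
where the last equality is the definition $\langle u,v \rangle := [Q^{-1}u, v]$ of the inner product on $\B$. By the same computation with the roles of $\tilde{\varphi}$ and $\tilde{\varphi}'$ swapped, $[\tilde{\varphi}', \tilde{Q}\tilde{\varphi}] = \langle \chi', \chi \rangle$. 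Symmetry of $\tilde{Q}$ then follows immediately from the symmetry of $\langle \cdot, \cdot \rangle$.

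For the quadratic part, setting $\tilde{\varphi}' = \tilde{\varphi}$ in the identity just established gives
\begin{equation*}
[\tilde{\varphi}, \tilde{Q}\tilde{\varphi}] = \langle \chi, \chi \rangle = \|\chi\|^{2},
\end{equation*}
which is exactly the claim. There is no real obstacle here; the only subtlety is to check that the pairing $[\tilde{\varphi}, \chi']$ is representative-independent, but that is built into the definition of $\B^{*,\sim}$ (any two representatives differ by an element of $\Phi$, which pairs trivially with $\V^{\perp}$), so the computation is unambiguous.
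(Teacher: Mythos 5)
Your proof is correct and takes essentially the same route as the paper's: decompose both representatives via Lemma \ref{lemjhgjyguyguybis}, use that elements of $\Phi$ annihilate $\V^{\perp}$ to reduce the pairing to $[Q^{-1}\chi,\chi']=\langle\chi,\chi'\rangle$, and conclude symmetry from the symmetry of the inner product, with the quadratic identity as the diagonal case.
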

 \begin{proof}
Let $\varphi,\varphi'$ be representatives of $\tilde{\varphi}$ and $\tilde{\varphi}'$. Let  $\varphi=\phi+Q^{-1}\chi$ and
$\varphi'=\phi'+Q^{-1}\chi'$  be the decompositions corresponding to \eqref{eqjhkhjkhkk}. We have
$[\tilde{\varphi}, \tilde{Q}\tilde{\varphi}']=[\varphi-\phi, \chi']=[Q^{-1}\chi', \chi]=[\varphi'-\phi', \tilde{Q}\tilde{\varphi}]=[\tilde{\varphi}', \tilde{Q}\tilde{\varphi}]$, establishing symmetry.
From the above, we observe that
$[\tilde{\varphi}, \tilde{Q}\tilde{\varphi}]=[Q^{-1}\chi, \chi]=\langle\chi,\chi\rangle$ establishing
the second assertion.
\end{proof}

We will now prove the exponential decay of gamblets based on  localization properties of the operator $Q^{-1}$.
One of our main results will be the derivation of necessary and sufficient conditions for exponential decay and localization that can  be expressed as conditions on the image space (therefore once these conditions are satisfied for a given image space, they are satisfied for all continuous operators mapping into that image space).

\subsection{Proofs of the results of Subsections  \ref{subsecdklejhd78} and \ref{subsecdhgiudg6}}
For $i\in \beth$ write $\tilde{Q}_i := P_i \tilde{Q}$ for the linear operator mapping $\tilde{\varphi} \in \B_1^{*,\sim}$ onto the unique element $\chi\in \V^\perp_i$ such that $[\tilde{\varphi},\chi']=[Q^{-1}\chi,\chi']=\<\chi,\chi'\>$ for $\chi' \in \V^\perp_i$.  $\tilde{Q}_i \varphi$ is also the $\<\cdot,\cdot\>$ orthogonal projection of $\tilde{Q} \varphi$ onto $\V^\perp_i$, i.e. $\tilde{Q}_i \tilde{Q}^{-1}= P_i $. Let $\Rc_i$ be as in Subsection \ref{subsecdklejhd78}.

\begin{Lemma}\label{lemdihi3bis}
 $\tilde{Q}_i$ is symmetric, i.e., for $\tilde{\varphi}, \tilde{\varphi}'\in \B^{*,\sim}$, $[\tilde{\varphi}, \tilde{Q}_i\tilde{\varphi}']=[\tilde{\varphi}', \tilde{Q}_i\tilde{\varphi}]$. Furthermore, if $\varphi$ is a representative of $\tilde{\varphi}\in \B^{*,\sim}$, then
 \begin{equation}\label{eqkehkdjdhkj}
 \sup_{\chi'\in \V_i^\perp} \frac{[\varphi,\chi']}{\|\chi'\|}= [\tilde{\varphi}, \tilde{Q}_i\tilde{\varphi}]^\frac{1}{2}=\inf_{\phi \in \Phi}\|\Rc_i (\varphi-\phi)\|_{*,i}
 \end{equation}
 \end{Lemma}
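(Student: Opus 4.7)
The plan is to reduce both claimed equalities to a single concrete object, namely $\|P_i Q\varphi\|$, where $P_i$ is the $\<\cdot,\cdot\>$-orthogonal projection onto $\V_i^\perp$. The key identity I would exploit repeatedly is that for $\varphi \in \B^*$ and $\chi \in \B$, the relation $\<u,v\>=[Q^{-1}u,v]$ gives $[\varphi,\chi]=\<Q\varphi,\chi\>$. Also, the decomposition of Lemma \ref{lemjhgjyguyguybis} together with the orthogonality $\V\perp\V^\perp$ shows that $\tilde Q\tilde\varphi$ is precisely the $\<\cdot,\cdot\>$-orthogonal projection of $Q\varphi$ onto $\V^\perp$, so $\tilde Q_i\tilde\varphi=P_i\tilde Q\tilde\varphi=P_i(Q\varphi)$ for any representative $\varphi$ of $\tilde\varphi$ (using $\V_i^\perp\subset\V^\perp$).

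First I would dispense with symmetry. For representatives $\varphi,\varphi'$ of $\tilde\varphi,\tilde\varphi'$,
\[
[\tilde\varphi,\tilde Q_i\tilde\varphi']=[\varphi,P_iQ\varphi']=\<Q\varphi,P_iQ\varphi'\>=\<P_iQ\varphi,P_iQ\varphi'\>=\<P_iQ\varphi,Q\varphi'\>=[\tilde\varphi',\tilde Q_i\tilde\varphi],
\]
where I used that $P_i$ is self-adjoint for $\<\cdot,\cdot\>$. Taking $\tilde\varphi=\tilde\varphi'$ yields $[\tilde\varphi,\tilde Q_i\tilde\varphi]=\|P_iQ\varphi\|^2$, so the target of \eqref{eqkehkdjdhkj} is $\|P_iQ\varphi\|$.

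Next I would establish the first equality of \eqref{eqkehkdjdhkj}. For any $\chi'\in\V_i^\perp$, $[\varphi,\chi']=\<Q\varphi,\chi'\>=\<P_iQ\varphi,\chi'\>$, and since $P_iQ\varphi\in\V_i^\perp$, Cauchy--Schwarz together with the attaining choice $\chi'=P_iQ\varphi$ (when nonzero; otherwise both sides are zero) give
\[
\sup_{\chi'\in\V_i^\perp}\frac{[\varphi,\chi']}{\|\chi'\|}=\|P_iQ\varphi\|=[\tilde\varphi,\tilde Q_i\tilde\varphi]^{1/2}.
\]

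The main step is the second equality, which is a Hahn--Banach/duality step; I expect this to be the only nontrivial part. The idea is: $N:=\Rc_i\Phi\subset\B_i^*$ is finite-dimensional (hence closed), so the standard duality for the distance to a closed subspace in a Banach space gives
\[
\inf_{\phi\in\Phi}\|\Rc_i(\varphi-\phi)\|_{*,i}=\sup\bigl\{[\Rc_i\varphi,\psi]\,:\,\psi\in\B_i,\ \|\psi\|_i\le 1,\ [\eta,\psi]=0\ \forall\eta\in N\bigr\}.
\]
The condition $[\Rc_i\phi,\psi]=0$ for all $\phi\in\Phi$ with $\psi\in\B_i$ is exactly $\psi\in\V^\perp\cap\B_i=\V_i^\perp$, and for such $\psi$ we have $[\Rc_i\varphi,\psi]=[\varphi,\psi]$ and $\|\psi\|_i=\|\psi\|$. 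Hence the right-hand side equals $\sup_{\chi'\in\V_i^\perp}[\varphi,\chi']/\|\chi'\|$, which is the first equality already proved. If I wish to avoid invoking the general duality theorem, I can prove both directions directly: the inequality $\ge$ follows from $[\varphi-\phi,\chi']=[\varphi,\chi']$ for $\chi'\in\V_i^\perp$ and any $\phi\in\Phi$; the reverse inequality is obtained by taking $\phi\in\Phi$ to be the $\<\cdot,\cdot\>_{*,i}$-best approximation in the finite-dimensional subspace $\Rc_i\Phi\subset\B_i^*$ (which exists since $\Rc_i|_\Phi$ is injective because the $\phi_{i,\alpha}$ are, so $N$ has finite dimension and is closed) and noting that the residual $\Rc_i\varphi-\Rc_i\phi$ attains its $\|\cdot\|_{*,i}$-norm on some $\psi\in\B_i$ which, by orthogonality of the best approximation, must annihilate $N$, i.e.\ lie in $\V_i^\perp$. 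Combining these three steps yields the lemma.
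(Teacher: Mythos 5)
Your proof is correct and follows essentially the same route as the paper's: both reduce all three quantities in \eqref{eqkehkdjdhkj} to $\|P_i\chi\|=\|P_iQ\varphi\|$ using the self-adjointness of the orthogonal projection $P_i$, and both obtain the last equality from the standard distance-to-closed-subspace duality in $\B_{i}^*$ together with the identification $\B_{i}\cap(\Rc_i\Phi)^{\perp}=\V_i^{\perp}$. One cosmetic remark: your parenthetical claim that $\Rc_i|_{\Phi}$ is injective is neither needed nor true in general (some $\phi_{j,\beta}$ may restrict to zero on $\B_{i}$); the finite-dimensionality, and hence closedness, of $\Rc_i\Phi$ already follows from $\dim\Phi<\infty$.
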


 \begin{proof}
Let $\varphi,\varphi'$ be the representatives of $\tilde{\varphi}$ and $\tilde{\varphi}'$ so that by
Lemma \ref{lemjhgjyguyguybis}
we have $\varphi=\phi+Q^{-1}\chi$ and $\varphi'=\phi'+Q^{-1}\chi'$ with $\phi,\phi'\in \Phi$ and
$\chi, \chi'\in \mathfrak{V}^{\perp}$. It follows that $\tilde{Q}\chi=\tilde{\varphi}$ and $\tilde{Q}\chi'=
\tilde{\varphi}'$.
 Let $\chi_i':=\tilde{Q}_i\tilde{\varphi}'$ and
$\chi_i:=\tilde{Q}_i\tilde{\varphi}$. Since $\tilde{Q}_{i}=P_{i}\tilde{Q}$, it follows that
$\chi'_i=P_{i}\chi'$ and $\chi_i=P_{i}\chi$.
Since we obtain
$ [\tilde{\varphi}, \tilde{Q}_i\tilde{\varphi}']=[\tilde{\varphi}, \chi_i']
=[\varphi-\phi, \chi_i']
=[Q^{-1}\chi, \chi_i']
=\langle \chi, \chi_i'\rangle
=\langle \chi, P_{i}\chi'\rangle
$
the symmetry assertion is established, along with
 the identity $[\tilde{\varphi}, \tilde{Q}_i\tilde{\varphi}] =\|P_{i}\chi\|^{2}$.  Since
$\sup_{\chi'\in \V_i^\perp} \frac{[\varphi,\chi']}{\|\chi'\|}=
\sup_{\chi'\in \V_i^\perp} \frac{[\varphi-\phi,\chi']}{\|\chi'\|}
=\sup_{\chi'\in \V_i^\perp} \frac{[Q^{-1}\chi,\chi']}{\|\chi'\|}
=\sup_{\chi'\in \V_i^\perp} \frac{\langle\chi,\chi'\rangle}{\|\chi'\|}
=\sup_{\chi'\in \V_i^\perp} \frac{\langle \chi,P_{i}\chi'\rangle}{\|\chi'\|}
=\sup_{\chi'\in \V^{\perp}} \frac{\langle \chi,P_{i}\chi'\rangle}{\|\chi'\|}
=\sup_{\chi'\in \V^{\perp}} \frac{\langle P_{i}\chi,\chi'\rangle}{\|\chi'\|}
\leq\sup_{\chi'\in \B} \frac{\langle P_{i}\chi,\chi'\rangle}{\|\chi'\|}
\leq \|P_{i}\chi\|\, ,
$
it follows that the supremum is attained with the choice $\chi':=P_{i}\chi$ producing the equality
\begin{equation}
\label{iubiviviyvi}
\sup_{\chi'\in \V_i^\perp} \frac{[\varphi,\chi']}{\|\chi'\|}=\|P_{i}\chi\|\, .
\end{equation}
 Consequently we obtain
the first equality
$\sup_{\chi'\in \V_i^\perp} \frac{[\varphi,\chi']}{\|\chi'\|}=
[\tilde{\varphi}, \tilde{Q}_i\tilde{\varphi}]^\frac{1}{2}$
 in the main assertion.
For the equality between the first and the third term, using the duality of norm minimization, observe that
$\inf_{\phi \in \Phi}\|\Rc_i (\varphi-\phi)\|_{*,i}=
\inf_{\phi \in R_{i}\Phi}\|\Rc_i \varphi-\phi\|_{*,i}
=\sup_{\chi\in \B_{i}\cap (\Rc_{i}\Phi)^{\perp}  } \frac{[\Rc_{i}\varphi,\chi']}{\|\chi'\|}\, .$
However,  for $\chi \in \B_{i}$, the condition $\chi \in (\Rc_{i}\Phi)^{\perp} $ amounts to
$[\chi, \Rc_{i}\phi]=[\chi, \phi]=0$ for all $\phi \in \Phi$, so that consequently
$\B_{i}\cap (\Rc_{i}\Phi)^{\perp} =\B_{i}\cap \V^{\perp}=\V_{i}^{\perp}$ and therefore
we  conclude that
$\inf_{\phi \in \Phi}\|\Rc_i (\varphi-\phi)\|_{*,i}=
\sup_{\chi\in \B_{i}\cap (\Rc_{i}\Phi)^{\perp}  } \frac{[\Rc_{i}\varphi,\chi']}{\|\chi'\|}
=\sup_{\chi\in \V_{i}^{\perp}  } \frac{[\Rc_{i}\varphi,\chi']}{\|\chi'\|}
=\sup_{\chi\in \V_{i}^{\perp}  } \frac{[\varphi,\chi']}{\|\chi'\|}
\, , $
establishing the assertion.
\end{proof}

\begin{Lemma}\label{lempropjguyug6}
It holds true that $\lambda_{\min}(P)$  and $\lambda_{\max}(P)$ are also (respectively) the largest and smallest constants such that for all  $\tilde{\varphi}\in \B^{*,\sim}$,  $\lambda_{\min}(P) \, [\tilde{\varphi},\tilde{Q}\tilde{\varphi}]
\leq  [\tilde{\varphi},\sum_{i\in \beth}  \tilde{Q}_i\tilde{\varphi}] \leq \lambda_{\max}(P) \,[\tilde{\varphi},\tilde{Q}\tilde{\varphi}]$, i.e. the following quadratic form inequalities hold.
\begin{equation}\label{eqkjhkjhkjuih}
\lambda_{\min}(P) \,\tilde{Q} \leq \sum_{i\in \beth}  \tilde{Q}_i \leq \lambda_{\max}(P)\, \tilde{Q}\,.
\end{equation}
\end{Lemma}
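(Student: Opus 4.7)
The plan is to show that the claimed quadratic form inequalities are, after the change of variables $\chi = \tilde{Q}\tilde{\varphi}$, exactly the defining inequalities \eqref{eqkdjdjhdj} of $\lambda_{\min}(P)$ and $\lambda_{\max}(P)$. This requires three ingredients already in place: (i) Lemma \ref{lemyguy56l} tells us $\tilde{Q}:\B^{*,\sim}\to \V^\perp$ is a bijection, so $\chi=\tilde{Q}\tilde{\varphi}$ traces out all of $\V^\perp$ as $\tilde{\varphi}$ ranges over $\B^{*,\sim}$; (ii) Lemma \ref{lemdihi3} gives $[\tilde{\varphi},\tilde{Q}\tilde{\varphi}]=\|\chi\|^2$; (iii) Lemma \ref{lemdihi3bis}, together with $\tilde{Q}_i=P_i\tilde{Q}$ and the self-adjointness of $P_i$, gives $[\tilde{\varphi},\tilde{Q}_i\tilde{\varphi}]=\<\chi,P_i\chi\>=\|P_i\chi\|^2$.

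First I would fix $\tilde{\varphi}\in\B^{*,\sim}$, set $\chi:=\tilde{Q}\tilde{\varphi}\in\V^\perp$, and compute
\[
[\tilde{\varphi},\tilde{Q}\tilde{\varphi}]=\|\chi\|^2,\qquad
\Bigl[\tilde{\varphi},\sum_{i\in\beth}\tilde{Q}_i\tilde{\varphi}\Bigr]=\sum_{i\in\beth}\<\chi,P_i\chi\>=\<\chi,P\chi\>,
\]
using the identities from (ii) and (iii) and the definition \eqref{eqP} of $P$. Consequently, the inequalities of the lemma read precisely
\[
\lambda_{\min}(P)\,\|\chi\|^2\;\leq\;\<\chi,P\chi\>\;\leq\;\lambda_{\max}(P)\,\|\chi\|^2,\qquad\chi\in\V^\perp,
\]
which is the definition \eqref{eqkdjdjhdj}. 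Bijectivity of $\tilde{Q}$ from (i) ensures that no information is lost in the change of variables, so the inequalities hold for all $\tilde{\varphi}\in\B^{*,\sim}$ if and only if they hold for all $\chi\in\V^\perp$.

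Finally, to obtain optimality of the constants I would argue that the correspondence $\tilde{\varphi}\mapsto\chi=\tilde{Q}\tilde{\varphi}$ preserves the extremal ratio: the largest $c$ such that $c[\tilde{\varphi},\tilde{Q}\tilde{\varphi}]\leq[\tilde{\varphi},\sum_i\tilde{Q}_i\tilde{\varphi}]$ for all $\tilde{\varphi}$ equals the largest $c$ such that $c\|\chi\|^2\leq\<\chi,P\chi\>$ for all $\chi\in\V^\perp$, namely $\lambda_{\min}(P)$; and symmetrically for $\lambda_{\max}(P)$. This establishes both the inequalities and the sharpness of the constants, giving the quadratic form inequalities \eqref{eqkjhkjhkjuih}. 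There is no real obstacle here once Lemmas \ref{lemdihi3} and \ref{lemdihi3bis} are in hand; the proof is essentially a bookkeeping exercise translating the statement from the dual side $\B^{*,\sim}$ (via $\tilde{Q}$ and $\tilde{Q}_i$) to the primal side $\V^\perp$ (via $P$ and $P_i$).
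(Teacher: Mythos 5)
Your proposal is correct and follows essentially the same route as the paper: change variables via $\chi=\tilde{Q}\tilde{\varphi}$, identify $[\tilde{\varphi},\tilde{Q}\tilde{\varphi}]=\|\chi\|^2$ and $[\tilde{\varphi},\sum_i\tilde{Q}_i\tilde{\varphi}]=\<\chi,P\chi\>$, and invoke the bijectivity of $\tilde{Q}$ onto $\V^\perp$ so that the stated inequalities coincide with the defining inequalities \eqref{eqkdjdjhdj} of $\lambda_{\min}(P)$ and $\lambda_{\max}(P)$. The only cosmetic difference is that you source the two identities from Lemmas \ref{lemdihi3} and \ref{lemdihi3bis}, whereas the paper re-derives them in place from the decomposition $\varphi=\phi+Q^{-1}\chi$.
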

\begin{proof}
Recall that for $\tilde{\varphi} \in \B^{*,\sim}$, that $\tilde{Q}\tilde{\varphi}=\chi \in \V^{\perp}$
is equivalent to $\varphi=\phi +Q^{-1}\chi$ for some representative $\varphi \in  \B^{*}$ for $\tilde{\varphi}$ and some $\phi
\in \Phi$.
Then, since for $\chi' \in \V^{\perp}$,
$ \langle \tilde{Q}\tilde{\varphi},\chi' \rangle=
\langle \chi,\chi' \rangle
= [Q^{-1}\chi,\chi']
= [\varphi-\phi,\chi']
= [\varphi,\chi']
= [\tilde{\varphi},\chi']\, ,
$
we obtain
\begin{equation}
 \langle \tilde{Q}\tilde{\varphi},\chi' \rangle=[\tilde{\varphi},\chi']\, ,\qquad  \chi' \in \V^{\perp}, \tilde{\varphi} \in \B^{*,\sim}\, .
\end{equation}
Consequently,
   for $\tilde{\varphi}\in \B^{*,\sim}$ defining
  $\chi:=\tilde{Q}\tilde{\varphi}$, we obtain
$\langle \chi,P_{i}\chi\rangle=\langle \tilde{Q}\tilde{\varphi},P_{i}\tilde{Q}\tilde{\varphi}\rangle
=\langle \tilde{Q}\tilde{\varphi},\tilde{Q}_{i}\tilde{\varphi}\rangle
=[\tilde{\varphi},\tilde{Q}_{i}\tilde{\varphi}]
$ and so conclude $\langle \chi,P_{i}\chi\rangle=[\tilde{\varphi},\tilde{Q}_{i}\tilde{\varphi}]$ and
and similarly $\langle \chi,\chi\rangle=[\tilde{\varphi},\tilde{Q}\tilde{\varphi}]$.
Consequently,
\begin{equation}\label{eqkjhhuhhiuhu}
\frac{\<\chi, P \chi \>}{\|\chi\|^2}=\frac{\sum_{i\in \beth} [\tilde{\varphi}, \tilde{Q}_i \tilde{\varphi}]}{[\tilde{\varphi},\tilde{Q}\tilde{\varphi}]}\,,
\end{equation}
establishing the assertion.
\end{proof}

\subsubsection{Proof of Theorem \ref{propjguyug6}}

We will now use Lemma \ref{lempropjguyug6} to prove \eqref{eqkhohihi} and \eqref{eqljdhelkjdhkh3}.
By Lemma  \ref{lemdihi3}, if $\varphi$ is a representative of $\tilde{\varphi}$, then
$[\tilde{\varphi},\tilde{Q}\tilde{\varphi}]=\|\chi\|^{2},$ where
$\chi$ is determined by the decomposition
$\varphi=\phi+Q^{-1}\chi$  of Lemma  \ref{lemjhgjyguyguybis}.
By Lemma \ref{lemjhgjyguyguybis},  then
$[\tilde{\varphi},\tilde{Q}\tilde{\varphi}]=\|\chi\|^{2}=\sup_{\chi'\in \V^\perp} \frac{[\varphi,\chi']^2}{\|\chi'\|^2}=\inf_{\phi\in \Phi}\|\varphi-\phi\|_*^2$.
Furthermore Lemma \ref{lemdihi3bis} implies that
$[\tilde{\varphi}, \tilde{Q}_i \tilde{\varphi}]=\sup_{\chi'\in \V_i^\perp} \frac{[\varphi,\chi']^2}{\|\chi'\|^2}=\inf_{\phi\in \Phi}\|\Rc_i (\varphi-\phi)\|_{*,i}^2$.  Consequently,
 the assertion follows directly from
  Lemma \ref{lempropjguyug6}.

\subsubsection{Proof of Theorem \ref{thmlkhkjhlkh}}

Simply observe that (since $\|\varphi-\phi\|_*=\|Q(\varphi-\phi)\|$) we have $ \frac{1}{\bar{C}_\L}\|\L Q(\varphi-\phi)\|_2 \leq \|\varphi-\phi\|_* \leq \bar{C}_{\L^{-1}} \|\L Q(\varphi-\phi)\|_2$ and that similar localized inequalities hold.

\subsubsection{Proof of Proposition \ref{propkjshkdjhdkjh}}

We have $K_{\min}\leq \lambda_{\min}(P)$ by Lemma \ref{lemdkjdhjh3e}.
Let us now prove the converse inequality $K_{\min}\geq \lambda_{\min}(P)$. Write $\bar{Q}:=\sum_{i\in \beth}  \tilde{Q}_i$.
If $\lambda_{\min}(P)=0$ then Lemma \ref{lemdkjdhjh3e} implies that $K_{\min}=0$ and the result is trivial. Assume that $\lambda_{\min}(P)>0$. Lemma \ref{lemdihi3bis} then implies that $P$ is a bijection from $\V^\perp$ to $\V^\perp$ and since
$\bar{Q}= P \tilde{Q}$ it follows that $\bar{Q}$ is also a bijection (and in particular invertible).
Let $\chi \in \V^\perp$ and $\chi_i:= \tilde{Q}_i \bar{Q}^{-1} \chi$. Observe that $\sum_{i\in \beth} \chi_i=\chi$ and
$\sum_{i\in \beth} \|\chi_i\|^2=\sum_{i\in \beth} [ \bar{Q}^{-1} \tilde{Q}_i \bar{Q}^{-1} \chi, \chi]$.
\eqref{eqkjhkjhkjuih} implies that
$\lambda_{\min}(P) \,\sum_{i\in \beth} \bar{Q}^{-1} \tilde{Q}_i \bar{Q}^{-1}\leq   \tilde{Q}^{-1}$, which leads to
$\lambda_{\min}(P) \sum_{i\in \beth} \|\chi_i\|^2\leq \|\chi\|^2$ and $K_{\min}\geq \lambda_{\min}(P)$.

\subsubsection{Proof of Lemma \ref{lemdeiudygddf}}

$ \sum_{i\in \beth }\V_{i}^\perp \subset \V^\perp$ is trivial, let us prove the converse inclusion.
Let $v\in \V^\perp$. Construction \ref{constlocop} implies that there exists $(v_1,\ldots, v_{|\beth|})\in \B_{1}\times \cdots \times \B_{|\beth|}$ such that $v=v_1+\cdots+v_{|\beth|}$.  $\tilde{P} v=0$ implies that $v=v-\tilde{P} v=\sum_{i\in \beth} (v_i - \tilde{P}v_i)$.
We conclude the proof by observing that $v_i - \tilde{P}v_i \in \V_i^\perp$ (since $\tilde{P}$ maps $\B_{i}$ into itself under Condition \ref{conduihiuh}). We obtain $\lambda_{\min}(P)>0$ from Lemma \ref{lemequivpv}.

\subsubsection{Proof of  Theorem \ref{thmjhg8g87g}}

Let $v\in \V^\perp$. Let $v_1,\ldots,v_{|\beth|}$ be as in Condition \ref{conduihiuhno2}.
Observe that
$v=v-\tilde{P} v=\sum_{i\in \beth} (v_i-\tilde{P} v_i) $ and $v_i -\tilde{P} v_i \in \V^\perp_i$.
Observe that $\sum_{i\in \beth} \|v_i-\tilde{P} v_i\|^2 \leq 2 \sum_{i\in \beth} \|v_i\|^2 +2\sum_{i\in \beth} \|\tilde{P} v_i\|^2
\leq 4 T_{\max} \|v\|^2$.  We conclude using Lemma \ref{lemdkjdhjh3e}
and Proposition  \ref{propkjshkdjhdkjh}.

\subsubsection{Proof of Theorem \ref{thmegdkj3hrrsuite}}

Using Lemma \ref{lemdkjdhjh3e}   we obtain that
$C_{\max}\leq n_{\max}$  where  $\bar{n}_{\max}-1$ is the maximum number of overlapping neighbors of $\Omega_i$. Since $\bar{n}_{\max}\leq  C \delta^{-d}$ and writing $C$ for any constant depending only on $d, \delta$ and $s$ (we will keep using that convention through this example), we therefore have
\begin{equation}\label{eqkjhkjhurr}
C_{\max} \leq C \,.
\end{equation}
We will now use Theorem \ref{thmjhg8g87g} (and work with $(H^s_0(\Omega),\|\cdot\|_{H^s_0(\Omega)})$ and its localized versions) to obtain a lower bound on $C_{\min}$.
For $t\in \{0,\ldots,s\}$ and $\eta\in C_0^\infty(\Omega)$, write
$\|D^t \eta\|_{L^\infty(\Omega)}:=\max_{i_1,\ldots,i_t}\|\partial_{x_{i_1}}\cdots \partial_{x_{i_t}}\eta(x)\|_{L^\infty(\Omega)}$.
Let $(\eta_i)_{i\in \beth}$ be a partition of unity associated with $(\Omega_i)_{i\in \beth}$, i.e. $\eta_i \in C^\infty_0(\Omega_i)$, $\eta_i=0$ on $\Omega\setminus\Omega_i$, $0\leq \eta_i \leq 1$, $\sum_{i\in \beth} \eta_i=1$, $\|D^t \eta_i\|_{L^\infty(\Omega)}\leq C h^{-t}$ for $t\in \{1,\ldots,s\}$.
Let $v\in \V^\perp$. Write $v_i=\eta_i v$. Observe that $v_i \in H^s_0(\Omega_i)$. We have  $\sum_{i\in \beth} \|v_i\|_{H^s_0(\Omega_i)}^2\leq C \sum_{t=0}^s \sum_{i\in \beth} \|D^t \eta_i\|_{L^\infty(\Omega)}\|D^{s-t} v\|_{L^2(\Omega_i)}
\leq C \sum_{t=0}^s h^{-t} \|D^{s-t} v\|_{L^2(\Omega)}$. Therefore \eqref{eqconlocmeas} implies that
\begin{equation}\label{eqkdjdhdjh}
\sum_{i\in \beth} \|v_i\|_{H^s_0(\Omega_i)}^2\leq   C \|v\|_{H^s_0(\Omega)}
\end{equation}
For $(i,\alpha)\in \beth\times \aleph$ let $\tilde{\psi}_{i,\alpha}$ be the minimizer of $\|\psi\|_{H^s_0(\tau)}$ over $\psi \in H^s_0(\tau_i)$ subject to $[\phi_{i,\beta},\psi]=\delta_{\alpha,\beta}$ for $\beta \in \aleph$. Theorem \ref{thmsudgdygdgy} and \eqref{eqkjdhddkuieheu} imply that $\|\tilde{\psi}_{i,\alpha}\|_{H^s_0(\Omega)}\leq C h^{-s}$ where $C$ depends only $d,\delta$ and $s$.
 Observe the elements $(\tilde{\psi}_{i,\alpha})_{(i,\alpha)\in \beth\times \aleph }$ satisfy
  Condition \ref{conduihiuh} and
 define $\tilde{P}$ as in \eqref{eqkllhkjhju9}, i.e.
$\tilde{P} v:=\sum_{(i,\alpha)\in \beth\times \aleph }  \tilde{\psi}_{i,\alpha} [\phi_{i,\alpha},v]$ for $v\in H^s_0(\Omega)$.
For $v\in \V^\perp$ and $v_i=\eta_i v$, \eqref{eqconlocmeasnext} implies that
$\|\tilde{P} v_i\|_{H^s_0(\Omega)}^2
\leq C   \sum_{(j,\beta)\in\beth \times \aleph}  \|\tilde{\psi}_{j,\beta}\|^2_{H^s_0(\tau_j)}  [\phi_{j,\beta},v_i]^2 \leq
C h^{-2s}   \sum_{(j,\beta)\in\beth \times \aleph}    [\phi_{j,\beta},v_i]^2 \leq
 C \big(h^{-2s} \|v_i\|_{L^2(\Omega)}^2+\|v_i\|_{H^s_0(\Omega)}^2\big)$.
Observe that since $\supp(v_i)\subset \Omega_i$ we have $\sum_{i\in \beth} \|v_i\|_{L^2(\Omega)}^2 \leq C \|v\|^2_{L^2(\Omega)}$ and
\eqref{eqconlocmeas} implies that $\|v\|^2_{L^2(\Omega)}\leq C h^{2s} \|v\|_{H^s_0(\Omega)}^2$.  Therefore using \eqref{eqkdjdhdjh} we deduce that
$\sum_{i\in \beth}\|\tilde{P} v_i\|_{H^s_0(\Omega)}^2
\leq C   \|v\|_{H^s_0(\Omega)}^2$, which corresponds to Item (2) of Condition \ref{conduihiuhno2}.
Using Theorem \ref{thmjhg8g87g} we deduce that
\begin{equation}\label{eqlkjhkjhjhjkhiuirr}
C_{\min}\geq C^{-1},
\end{equation}
for some constant $C$ depending only on $d, s$ and $\delta$. We conclude using Theorem \ref{thmlkhkjhlkh} and Lemma \ref{lemdkljedhlkfjh}.

\subsubsection{Proof of Theorem \ref{thmjff76f57}}
We refer to Subsection \ref{proofthmpropegkde} for the proof of localization with Examples \ref{egprotoh10normNNN} and \ref{egprotoalsobolevl}.

\subsection{Numerical homogenization}

\subsubsection{Proof of Theorem  \ref{thmhgguyg65OR}}

For $i\in \beth$ let $\I_i:=\{(j,\beta)\in \beth\times \aleph \mid \Rc_i \phi_{j,\beta}\not=0\}$.
Let
\begin{equation}\label{eqgsam1maxloc}
\ubar{\gamma}:=\inf_{x\in \R^{\beth\times \aleph}} \frac{\| \sum_{(i,\alpha)\in \beth\times \aleph}x_{i,\alpha}\phi_{i,\alpha} \|_{0}^2}{|x|^2}\text{ and }\bar{\gamma}:=\sup_{x\in \R^{\beth\times \aleph}} \frac{\| \sum_{(i,\alpha)\in \beth\times \aleph}x_{i,\alpha}\phi_{i,\alpha} \|_{0}^2}{|x|^2}\,.
\end{equation}
For $i\in \beth$, write
\begin{equation}\label{eqgam1loc}
\ubar{\gamma}_i:=\inf_{x\in \R^{\I_i}} \frac{\| \sum_{j\in \I_i} x_j  \,  \Rc_i \phi_j \|_{0,i}^2}{|x|^2},
\end{equation}
Write
\begin{equation}\label{equbarhkloc}
\ubar{H}:=\inf_{\phi\in \Phi} \frac{\| \phi \|_{*}}{\| \phi \|_{0}} \text{ and }\ubar{H}_i:=\inf_{\phi\in \Phi: \Rc_i\phi\not=0} \frac{\|\Rc_i \phi \|_{*,i}}{\| \Rc_i \phi \|_{0,i}} \,.
\end{equation}
and
\begin{equation}\label{equbarhklocwithstars}
\underset{*}{H}:=\inf_{x\in \R^{\beth\times \aleph}} \frac{\| \sum_{(i,\alpha)\in \beth\times \aleph}x_{i,\alpha}\phi_{i,\alpha} \|_{*}}{|x|} \text{ and for $i\in \beth$, }\overset{*}{H}:=\sup_{x\in \R^{\beth\times \aleph}} \frac{\| \sum_{(i,\alpha)\in \beth\times \aleph}x_{i,\alpha}\phi_{i,\alpha} \|_{*}}{|x|} \,.
\end{equation}

\begin{Lemma}\label{lemljkhlkhhu}
We have $\lambda_{\max}(A)\leq (\underset{*}{H})^{-2} \leq \frac{1}{\ubar{H}^2 \ubar{\gamma}}$
and
$\lambda_{\min}(A) \geq (\overset{*}{H})^{-2} \geq (\bar{H}_0^2 \bar{\gamma})^{-1}$.
\end{Lemma}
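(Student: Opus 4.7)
The proof proposal is to derive both bounds from a single identity that rewrites the quadratic form of $A$ on the gamblet side as a dual-norm expression on the measurement-function side, and then to read off the two auxiliary inequalities from the definitions of $\underset{*}{H}$, $\overset{*}{H}$, $\ubar{H}$, $\bar{H}_0$, $\ubar{\gamma}$, $\bar{\gamma}$.

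First I would use the explicit gamblet formula of Theorem~\ref{thmwhdguyd}, which with $\Theta = A^{-1}$ gives $\psi_{i,\alpha} = \sum_{j,\beta} A_{(i,\alpha),(j,\beta)}\, Q\phi_{j,\beta}$, so $Q^{-1}\psi_{i,\alpha} = \sum_{j,\beta} A_{(i,\alpha),(j,\beta)}\, \phi_{j,\beta}$. For arbitrary $x \in \R^{\beth\times\aleph}$, set $v := \sum_{(i,\alpha)} x_{i,\alpha}\psi_{i,\alpha}$. Then $\|v\|^2 = x^T A x$, while using the isometry $\|Q^{-1}u\|_* = \|u\|$ and the formula above,
\begin{equation*}
x^T A x \;=\; \|v\|^2 \;=\; \|Q^{-1}v\|_*^2 \;=\; \Bigl\|\sum_{j,\beta} (Ax)_{(j,\beta)}\,\phi_{j,\beta}\Bigr\|_*^2.
\end{equation*}
Applying \eqref{equbarhklocwithstars} to the right-hand side yields the two-sided sandwich
\begin{equation*}
(\underset{*}{H})^2\, |Ax|^2 \;\leq\; x^T A x \;\leq\; (\overset{*}{H})^2\, |Ax|^2.
\end{equation*}

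Next I would substitute $y := Ax$ (so $x = A^{-1}y$ and $x^T A x = y^T A^{-1} y$) to convert these into spectral bounds on $A^{-1}$: $(\underset{*}{H})^2 \leq y^T A^{-1} y / |y|^2 \leq (\overset{*}{H})^2$, which is precisely $\lambda_{\max}(A) \leq (\underset{*}{H})^{-2}$ and $\lambda_{\min}(A) \geq (\overset{*}{H})^{-2}$, the first halves of both asserted inequalities.

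Finally I would chain the remaining comparisons directly from the definitions. For any $\phi = \sum_{i,\alpha} x_{i,\alpha}\phi_{i,\alpha} \in \Phi$, the lower bound $\ubar{H}^2 \leq \|\phi\|_*^2/\|\phi\|_0^2$ from \eqref{equbarhkloc} and $\ubar{\gamma}\,|x|^2 \leq \|\phi\|_0^2$ from \eqref{eqgsam1maxloc} combine to give $\ubar{H}^2\,\ubar{\gamma}\,|x|^2 \leq \|\phi\|_*^2$; taking the infimum over $x$ with $|x|=1$ yields $\ubar{H}^2\,\ubar{\gamma} \leq (\underset{*}{H})^2$, i.e.\ $(\underset{*}{H})^{-2} \leq 1/(\ubar{H}^2\ubar{\gamma})$. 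Symmetrically, $\|\phi\|_* \leq \bar{H}_0\,\|\phi\|_0$ from \eqref{eqh0or} together with $\|\phi\|_0^2 \leq \bar{\gamma}\,|x|^2$ gives $\|\phi\|_*^2 \leq \bar{H}_0^2\,\bar{\gamma}\,|x|^2$, hence $(\overset{*}{H})^2 \leq \bar{H}_0^2\,\bar{\gamma}$, which is $(\overset{*}{H})^{-2} \geq (\bar{H}_0^2\bar{\gamma})^{-1}$.

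There is no genuinely hard step here; the proof is essentially bookkeeping once the identity $x^T A x = \|\sum_j (Ax)_j\phi_j\|_*^2$ is established. The only point requiring slight care is the change of variables $y = Ax$, which relies on $A$ being invertible (guaranteed since $\Theta = A^{-1}$ is positive definite by Theorem~\ref{thmwhdguyd}); after that, both halves of each inequality are immediate consequences of the definitions \eqref{eqgsam1maxloc}, \eqref{equbarhkloc}, \eqref{equbarhklocwithstars}, and \eqref{eqh0or}.
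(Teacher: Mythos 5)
Your proof is correct and follows essentially the same route as the paper: the key identity $y^T A^{-1} y = \|\sum_j y_j \phi_j\|_*^2$ that you derive from the explicit gamblet representation and the isometry $\|Q^{-1}u\|_*=\|u\|$ is exactly the relation $A^{-1}=\Theta$ that the paper obtains by citing Theorem~\ref{thmsudgdygdgy}, and the remaining chains of inequalities via $\ubar{H}$, $\ubar{\gamma}$, $\bar{H}_0$, $\bar{\gamma}$ are identical. Your change of variables $y=Ax$ is just a slightly more explicit way of passing from the quadratic form of $A$ to that of $A^{-1}$; no substantive difference.
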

\begin{proof}
Theorem \ref{thmsudgdygdgy} implies that $A^{-1}=\Theta$ where $\Theta$ is the $(\beth\times \aleph)\times (\beth\times \aleph)$ matrix defined by $x^T \Theta x=\| \sum_{(i,\alpha)\in \beth\times \aleph}x_{i,\alpha}\phi_{i,\alpha} \|_{*}^2$ for $x\in \R^{\beth\times \aleph}$. Using \eqref{equbarhkloc} we obtain that $1/\lambda_{\max}(A)=\lambda_{\min}(\Theta)\geq \ubar{H}^2$.
Similarly, using \eqref{eqh0or}, we have $1/\lambda_{\min}(A) \leq \sup_{x\in \R^{\beth\times \aleph}} \frac{\| \sum_{(i,\alpha)\in \beth\times \aleph}x_{i,\alpha}\phi_{i,\alpha} \|_{*}^2}{|x|^2}\leq \bar{H}_0^2 \bar{\gamma}$.
\end{proof}

\begin{Lemma}\label{lemkkjh8}
It holds true that $\|\psi_{i,\alpha}^0\|\leq \frac{1}{\ubar{H}_i \sqrt{\ubar{\gamma}_i}} $
\end{Lemma}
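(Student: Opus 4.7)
The plan is to apply the explicit gamblet representation of Theorem~\ref{thmsudgdygdgy} inside the localized Banach space $(\B_i,\|\cdot\|_i)$ and then bound the resulting Gram matrix below using the two quantities $\ubar{H}_i$ and $\ubar{\gamma}_i$.

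First I would observe that since $\psi_{i,\alpha}^0\in \B_i$, we have $[\phi_{j,\beta},\psi_{i,\alpha}^0]=[\Rc_i\phi_{j,\beta},\psi_{i,\alpha}^0]$ for every $(j,\beta)\in \beth\times\aleph$. Consequently, the constraints $[\phi_{j,\beta},\psi_{i,\alpha}^0]=\delta_{i,j}\delta_{\alpha,\beta}$ are automatically satisfied for $(j,\beta)\notin \I_i$ (since then $\Rc_i\phi_{j,\beta}=0$, and in particular $(i,\alpha)\in \I_i$ by the feasibility ensured by Condition~\ref{conduihiuh}). Thus the variational problem defining $\psi_{i,\alpha}^0$ is exactly the gamblet problem in $(\B_i,\|\cdot\|_i)$ with dual $(\B_i^*,\|\cdot\|_{*,i})$, operator $Q_i$, and measurement functions $(\Rc_i\phi_{j,\beta})_{(j,\beta)\in \I_i}$.

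Next I would apply Theorem~\ref{thmsudgdygdgy} in this local setting. Letting $\Theta_i$ be the Gram matrix $\Theta_{i,(j,\beta),(k,\gamma)}:=[\Rc_i\phi_{j,\beta},Q_i\Rc_i\phi_{k,\gamma}]=\<\Rc_i\phi_{j,\beta},\Rc_i\phi_{k,\gamma}\>_{*,i}$ indexed by $\I_i$, one obtains
\[
\psi_{i,\alpha}^0=\sum_{(j,\beta)\in \I_i}\Theta_{i,(i,\alpha),(j,\beta)}^{-1}\,Q_i\Rc_i\phi_{j,\beta}.
\]
Pairing this identity against $Q_i^{-1}\psi_{i,\alpha}^0$ and using the constraints gives
\[
\|\psi_{i,\alpha}^0\|^2=\<\psi_{i,\alpha}^0,\psi_{i,\alpha}^0\>_i=\sum_{(j,\beta)\in \I_i}\Theta_{i,(i,\alpha),(j,\beta)}^{-1}[\Rc_i\phi_{j,\beta},\psi_{i,\alpha}^0]=\Theta_{i,(i,\alpha),(i,\alpha)}^{-1}\leq \frac{1}{\lambda_{\min}(\Theta_i)}.
\]

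Finally I would bound $\lambda_{\min}(\Theta_i)$ from below. For any $x\in \R^{\I_i}$, extend $x$ by zero to a vector $\tilde{x}\in \R^{\beth\times\aleph}$ and set $\phi:=\sum_{(j,\beta)} \tilde{x}_{j,\beta}\phi_{j,\beta}\in \Phi$, so that $\Rc_i\phi=\sum_{(j,\beta)\in \I_i} x_{j,\beta}\Rc_i\phi_{j,\beta}$. Then
\[
x^T\Theta_i x=\|\Rc_i\phi\|_{*,i}^2\geq \ubar{H}_i^2\,\|\Rc_i\phi\|_{0,i}^2\geq \ubar{H}_i^2\,\ubar{\gamma}_i\,|x|^2,
\]
by the definitions~\eqref{equbarhkloc} and~\eqref{eqgam1loc} of $\ubar{H}_i$ and $\ubar{\gamma}_i$ (noting that for nonzero $x$ we have $\Rc_i\phi\neq 0$ by definition of $\I_i$ and linear independence of the $\Rc_i\phi_{j,\beta},\,(j,\beta)\in \I_i$, which is implicit in $\ubar{\gamma}_i>0$). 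Therefore $\lambda_{\min}(\Theta_i)\geq \ubar{H}_i^2\ubar{\gamma}_i$ and taking square roots yields $\|\psi_{i,\alpha}^0\|\leq 1/(\ubar{H}_i\sqrt{\ubar{\gamma}_i})$.

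The proof is essentially a bookkeeping exercise once the local gamblet identification is made; the only mild subtlety is justifying that the global constraint set reduces to the local one, which is handled by Condition~\ref{conduihiuh} ensuring $(i,\alpha)\in \I_i$ and feasibility in $\B_i$.
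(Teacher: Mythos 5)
Your proof is correct and follows essentially the same route as the paper's: identify $\|\psi_{i,\alpha}^0\|^2$ with the $(i,\alpha)$ diagonal entry of the inverse of the localized Gram matrix $\Theta^i$ (the paper's $A^i_{i,i}$), then bound $\lambda_{\min}(\Theta^i)\geq \ubar{H}_i^2\,\ubar{\gamma}_i$ using the definitions \eqref{equbarhkloc} and \eqref{eqgam1loc}. The only cosmetic difference is that you bound $\lambda_{\min}(\Theta^i)$ directly from the definitions, whereas the paper invokes the argument of \eqref{eqkhiduhdf7d}, which obtains the equivalent bound $\lambda_{\max}(A^i)\leq 1/(\ubar{H}_i^2\ubar{\gamma}_i)$ by passing through the norm ratio $\|v\|/\|Q^{-1}v\|_0$ on the span of the local gamblets; these are the same computation.
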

\begin{proof}
As in the proof of Theorem \ref{corunbcnOR} we have $\|\psi_{i,\alpha}^0\|^2= A^{i}_{i,i}$ where $A^{i}$ is the inverse of the $\I_i\times \I_i$ matrix defined by
$\Theta^i_{j,j'}:=[\Rc_i \phi_j,Q_i \phi_{j'}]$ and as  in \eqref{eqkhiduhdf7d}, $\lambda_{\max}(A^{i})\leq \frac{1}{\ubar{H}_i^2 \ubar{\gamma}_i}$.
\end{proof}

The proof of Theorem \ref{thmhgguyg65OR} is contained in that of the following theorem.
\begin{Theorem}\label{thmhgguyg65}
Under Condition \ref{confviperp}, it holds true that for $(i,\alpha),(j,\beta)\in \beth\times \aleph$,
\begin{equation}
|A_{(i,\alpha),(j,\beta)}|\leq C_{1} \exp\big(-C_2 \db(i,j)\big)
\end{equation}
with $C_2=\frac{1}{2} \ln \frac{\Cond(P)+1}{\Cond(P)-1}$ and
$C_1= \frac{\Cond(P)+1}{\Cond(P)-1}  \max_{i}\frac{1}{\ubar{H}_i^2}\max_{i}\frac{1}{\ubar{\gamma}_i}$.
\end{Theorem}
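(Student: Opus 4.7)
The plan is to write $A_{(i,\alpha),(j,\beta)}=\langle\psi_{i,\alpha},\psi_{j,\beta}\rangle$ and then replace each factor by the localized approximation $\psi_{\cdot}^{n}$ of Theorem \ref{thmswkskjsh}, exploiting two orthogonalities: Galerkin orthogonality of the true gamblets against $\V^\perp$, and the locality encoded by the connectivity matrix $\C$.

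First I would verify that $\psi_{i,\alpha}-\psi_{i,\alpha}^{n}\in\V^\perp$, since both satisfy $[\phi_{k,\gamma},\cdot]=\delta_{(i,\alpha),(k,\gamma)}$, and that the variational characterization of $\psi_{j,\beta}$ yields $\langle v,\psi_{j,\beta}\rangle=0$ for every $v\in\V^\perp$ (the usual Lagrange--multiplier identity for the minimizer in \eqref{eq:dfddeytfewdaisq}). Combining these gives $\langle\psi_{i,\alpha},\psi_{j,\beta}\rangle=\langle\psi_{i,\alpha}^{n},\psi_{j,\beta}\rangle$ for every $n\geq 0$. Next, for any $m\geq 0$, I would write
\[
\langle\psi_{i,\alpha}^{n},\psi_{j,\beta}\rangle
=\langle\psi_{i,\alpha}^{n},\psi_{j,\beta}^{m}\rangle
+\langle\psi_{i,\alpha}^{n},\psi_{j,\beta}-\psi_{j,\beta}^{m}\rangle .
\]

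The key lemma I would then establish is: if $n+m\leq\db(i,j)-2$, then $\langle\psi_{i,\alpha}^{n},\psi_{j,\beta}^{m}\rangle=0$. The argument is that $\psi_{i,\alpha}^{n}\in\B_{i}^{n}=\sum_{k:\db(i,k)\leq n}\B_{k}$ and $\psi_{j,\beta}^{m}\in\B_{j}^{m}=\sum_{l:\db(j,l)\leq m}\B_{l}$, so after expanding both in local components $\chi_{k}\in\B_{k}$, $\chi'_{l}\in\B_{l}$, the triangle inequality forces $\db(k,l)\geq\db(i,j)-n-m\geq 2$, which by the definition of the connectivity matrix $\C$ yields $\langle\chi_{k},\chi'_{l}\rangle=0$ for every admissible pair.

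With this cancellation, Cauchy--Schwarz, Theorem \ref{thmswkskjsh} applied to $\psi_{j,\beta}$ (giving $\|\psi_{j,\beta}-\psi_{j,\beta}^{m}\|\leq(\frac{\Cond(P)-1}{\Cond(P)+1})^{m}\|\psi_{j,\beta}^{0}\|$), the trivial monotonicity $\|\psi_{i,\alpha}^{n}\|\leq\|\psi_{i,\alpha}^{0}\|$ (the constraint set $\B_{i}^{n}$ grows with $n$), and Lemma \ref{lemkkjh8} applied to both endpoints yield
\[
|A_{(i,\alpha),(j,\beta)}|
\leq \max_{i}\tfrac{1}{\ubar{H}_{i}^{2}}\,\max_{i}\tfrac{1}{\ubar{\gamma}_{i}}\,\Bigl(\tfrac{\Cond(P)-1}{\Cond(P)+1}\Bigr)^{m}.
\]

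Finally, I would optimize in $m$. For $\db(i,j)=:d\geq 2$, choose $m:=\lceil(d-2)/2\rceil$ and $n:=\lfloor(d-2)/2\rfloor$, so that $n+m=d-2$ satisfies the cancellation hypothesis and $m\geq d/2-1$; this produces the factor $\frac{\Cond(P)+1}{\Cond(P)-1}(\frac{\Cond(P)-1}{\Cond(P)+1})^{d/2}=\frac{\Cond(P)+1}{\Cond(P)-1}\exp(-C_{2}d)$, matching $C_{1}\exp(-C_{2}\db(i,j))$. The residual cases $d\in\{0,1\}$ are handled by the crude bound $|A_{(i,\alpha),(j,\beta)}|\leq\|\psi_{i,\alpha}^{0}\|\|\psi_{j,\beta}^{0}\|$, which is already dominated by $C_{1}\exp(-C_{2}d)$ since $\tfrac{\Cond(P)-1}{\Cond(P)+1}\leq 1$. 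The main obstacle in the argument is the first identity $\langle\psi_{i,\alpha}^{n},\psi_{j,\beta}^{m}\rangle=0$; everything else reduces to mechanically invoking the previously established lemmas together with the integer bookkeeping in the final optimization over $m$.
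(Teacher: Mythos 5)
Your proof is correct and follows essentially the same route as the paper's: localize both gamblets, kill the cross term $\langle\psi_{i,\alpha}^{n},\psi_{j,\beta}^{m}\rangle$ using the connectivity matrix and the triangle inequality for $\db$, and control the remainder with Theorem \ref{thmswkskjsh}, the monotonicity $\|\psi_{i,\alpha}^{n}\|\leq\|\psi_{i,\alpha}^{0}\|$, and Lemma \ref{lemkkjh8}. The only (harmless) difference is that you first invoke the Galerkin orthogonality $\langle\psi_{i,\alpha}-\psi_{i,\alpha}^{n},\psi_{j,\beta}\rangle=0$ to reduce to a two-term split with a single error term, whereas the paper keeps a three-term split and bounds both error terms by Cauchy--Schwarz; the exponential rate and the constants $C_{1},C_{2}$ come out the same.
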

\begin{proof}
We have $A_{(i,\alpha),(j,\beta)}=\<\psi_{i,\alpha},\psi_{j,\beta}\>=\<\psi_{i,\alpha}-\psi_{i,\alpha}^n,\psi_{j,\beta}\>+ \<\psi_{i,\alpha}^n,\psi_{j,\beta}^{n}\>+ \<\psi_{i,\alpha}^n,\psi_{j,\beta}-\psi_{j,\beta}^{n}\>$. Therefore for
$2 n< \db(i,j)$ we have $\<\psi_{i,\alpha}^n,\psi_{j,\beta}^{n}\>=0$ and $|A_{(i,\alpha),(j,\beta)}|\leq \|\psi_{i,\alpha}-\psi_{i,\alpha}^n\| \|\psi_{j,\beta}\|+\|\psi_{i,\alpha}^n\|\|\psi_{j,\beta}-\psi_{j,\beta}^{n}\|$. Using the minimization property of gamblets we have $\|\psi_{i,\alpha}\|\leq \|\psi_{i,\alpha}^n\|\leq \|\psi_{i,\alpha}^0\|$ and $|A_{(i,\alpha),(j,\beta)}|\leq \|\psi_{i,\alpha}-\psi_{i,\alpha}^n\| \|\psi_{j,\beta}^0\|+\|\psi_{i,\alpha}^0\|\|\psi_{j,\beta}-\psi_{j,\beta}^{n}\|$.
 Taking $n=\db(i,j)/2-1$ and using Theorem \ref{thmswkskjsh} we deduce that
$|A_{(i,\alpha),(j,\beta)}| \leq \|\psi_{i,\alpha}^0\|\|\psi_{j,\beta}^0\| \big(\frac{\Cond(P)-1}{\Cond(P)+1}\big)^{\frac{\db(i,j)}{2}-1}$ which corresponds to Theorem \ref{thmhgguyg65OR}. We conclude using
Lemma \ref{lemkkjh8} to  bound $\|\psi_{i,\alpha}^0\|\|\psi_{j,\beta}^0\|$.
\end{proof}

\subsubsection{Control of the exponential decay of interpolation matrices}

The following theorem will allow us to control the exponential decay of the interpolation matrices $R^{(k,k+1)}$ defined in \eqref{eq:ftfytftfx}.
\begin{Theorem}\label{thmjgjuygg67}
Let $i, j\in \beth$, $\alpha\in \aleph$ and $\varphi \in \B^*$. If $[\varphi,\psi]=0$ for $\psi \in \B_{i}^{\db(i,j)-2}$  then
\begin{equation}
\big|[\varphi,\psi_{i,\alpha}]\big|\leq C_1 \exp\big(-C_2 \db(i,j)\big)  \|\varphi\|_*
\end{equation}
with $C_2= \ln \frac{\Cond(P)+1}{\Cond(P)-1}$ and
$C_1=\frac{1}{\ubar{H}_i \sqrt{\ubar{\gamma}_i}} \exp\big(2 C_2\big)$.
\end{Theorem}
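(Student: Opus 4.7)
The plan is to exploit the exponential decay of the localized gamblets established in Theorem \ref{thmswkskjsh} together with the orthogonality hypothesis on $\varphi$. The key observation is that if we choose $n := \db(i,j)-2$, then by construction the localized gamblet $\psi_{i,\alpha}^n$ lies in $\B_{i}^{n} = \B_{i}^{\db(i,j)-2}$, and so by the hypothesis $[\varphi,\psi_{i,\alpha}^n]=0$. Consequently we can write
\begin{equation}
[\varphi,\psi_{i,\alpha}] = [\varphi,\psi_{i,\alpha}-\psi_{i,\alpha}^n],
\end{equation}
which converts the problem from bounding a pairing with $\psi_{i,\alpha}$ itself into one involving the localization error, for which we already have quantitative control.

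Next I would bound the right-hand side by the duality inequality $|[\varphi,\psi_{i,\alpha}-\psi_{i,\alpha}^n]| \leq \|\varphi\|_* \, \|\psi_{i,\alpha}-\psi_{i,\alpha}^n\|$, and then invoke Theorem \ref{thmswkskjsh} to get
\begin{equation}
\|\psi_{i,\alpha}-\psi_{i,\alpha}^n\| \leq \Bigl(\tfrac{\Cond(P)-1}{\Cond(P)+1}\Bigr)^{n} \|\psi_{i,\alpha}^{0}\|.
\end{equation}
Writing $\rho := \tfrac{\Cond(P)-1}{\Cond(P)+1}$ so that $-\ln\rho = C_2$, the substitution $n=\db(i,j)-2$ yields $\rho^{n} = \rho^{-2}\rho^{\db(i,j)} = e^{2C_2}e^{-C_2\db(i,j)}$.

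Finally I would bound $\|\psi_{i,\alpha}^{0}\|$ via Lemma \ref{lemkkjh8}, which gives $\|\psi_{i,\alpha}^{0}\| \leq (\ubar{H}_i \sqrt{\ubar{\gamma}_i})^{-1}$. Combining these three ingredients produces
\begin{equation}
\bigl|[\varphi,\psi_{i,\alpha}]\bigr| \leq \|\varphi\|_* \cdot \frac{1}{\ubar{H}_i\sqrt{\ubar{\gamma}_i}} \cdot e^{2C_2} e^{-C_2\db(i,j)},
\end{equation}
which is exactly the asserted bound with $C_1 = (\ubar{H}_i\sqrt{\ubar{\gamma}_i})^{-1} e^{2C_2}$.

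There is no real obstacle: the entire proof is essentially a single-line calculation once the hypothesis is used to replace $\psi_{i,\alpha}$ with $\psi_{i,\alpha}-\psi_{i,\alpha}^n$. The only small technicality to verify carefully is the graph-distance bookkeeping, namely that $n=\db(i,j)-2$ is admissible (nonnegative for $\db(i,j)\geq 2$, while for $\db(i,j)\in\{0,1\}$ the exponential prefactor $e^{2C_2}$ absorbed into $C_1$ makes the inequality trivially hold from $|[\varphi,\psi_{i,\alpha}]|\leq \|\varphi\|_*\|\psi_{i,\alpha}\|\leq \|\varphi\|_*\|\psi_{i,\alpha}^0\|$).
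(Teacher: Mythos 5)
Your proposal is correct and follows exactly the paper's argument: replace $\psi_{i,\alpha}$ by $\psi_{i,\alpha}-\psi_{i,\alpha}^n$ with $n=\db(i,j)-2$ using the orthogonality hypothesis, apply duality, and conclude with Theorem \ref{thmswkskjsh} and Lemma \ref{lemkkjh8}. The constants work out as you computed, and your remark on the small-$\db(i,j)$ bookkeeping is a sound (if unstated in the paper) addition.
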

\begin{proof}
Observe that $[\varphi,\psi_{i,\alpha}]=[\varphi,\psi_{i,\alpha}-\psi_{i,\alpha}^n]$ for $n\leq  \db(i,j)-2$.
We deduce that $|[\varphi,\psi_{i,\alpha}]|\leq \|\varphi\|_* \|\psi_{i,\alpha}-\psi_{i,\alpha}^n\|$ and  conclude using
Theorem \ref{thmswkskjsh} and Lemma \ref{lemkkjh8}.
\end{proof}

\subsubsection{Numerical homogenization with localized basis functions}

 Let $\bar{H}_0$ be defined as in \eqref{eqh0or} and
\begin{equation}\label{eqbarhkloc}
\bar{H}:= \sup_{\varphi \in \B_0} \inf_{\phi \in \Phi} \frac{\|\varphi- \phi\|_*}{\|\varphi\|_{0}}\,.
\end{equation}
The following theorem corresponds a  numerical homogenization result with localized basis functions.
\begin{Theorem}\label{thmdkehjgdkjdhj}
Let $u^n$ be the finite-element solution of \eqref{eqn:scalar} in $\Span\{\psi_{i,\alpha}^n\mid (i,\alpha)\in \beth \times \aleph\}$.
It holds true that for $n\geq C_1+C_2 \ln \frac{\sqrt{m}}{\bar{H} \min_i \ubar{H}_i }$,
\begin{equation}
 \frac{\|u-u^n\|}{\|g\|_{c}} \leq  2 \bar{H},
\end{equation}
with $C_1=\big(\ln \frac{\Cond(P)+1}{\Cond(P)-1}\big)^{-1} \ln \Big( \frac{ C_{\L^{-1}} \sqrt{\bar{\gamma}} }{ \min_i \sqrt{\ubar{\gamma}_i}  }  \bar{H}_0^2\Big)$, $C_2=\big(\ln \frac{\Cond(P)+1}{\Cond(P)-1}\big)^{-1} $.
\end{Theorem}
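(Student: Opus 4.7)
The plan is to split the error via the triangle inequality,
\[
\|u - u^n\| \;\leq\; \|u - u^{\mathrm{ex}}\| \;+\; \|u^{\mathrm{ex}} - u^n\|\,,
\]
where $u^{\mathrm{ex}}$ denotes the $\<\cdot,\cdot\>$-orthogonal projection of $u$ onto the non-localized space $\V := \Span\{\psi_{i,\alpha} : (i,\alpha)\in \beth\times\aleph\}$, and then to force each of the two terms on the right to be at most $\bar{H}\|g\|_c$; the factor $2$ in the conclusion will come from summing the two bounds.

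The first term is the standard one-level approximation error. Proposition \ref{prop_Gambletprojection} and Theorem \ref{thmgugyug0OR} identify $u^{\mathrm{ex}}$ as the orthogonal projection onto $\V = Q\Phi$, and since $Q:(\B^*,\|\cdot\|_*)\to(\B,\|\cdot\|)$ is an isometry this projection error equals
\[
\|u - u^{\mathrm{ex}}\| \;=\; \inf_{\phi\in\Phi}\|Q^{-1}u - \phi\|_*\,.
\]
The identity $Q^{-1} = \D\G\L$ yields $Q^{-1}u = \D\G g \in \B_0$ with $\|Q^{-1}u\|_0 = \|g\|_c$, so the definition \eqref{eqbarhkloc} of $\bar{H}$ immediately produces $\|u - u^{\mathrm{ex}}\| \leq \bar{H}\|g\|_c$.

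For the localization term I exploit the quasi-optimality $\|u - u^n\| \leq \|u - v\|$ valid for every $v\in\V^n:=\Span\{\psi_{i,\alpha}^n\}$ (since $u^n$ is the orthogonal projection onto $\V^n$) by choosing $v := \sum_{(i,\alpha)} c_{i,\alpha}\psi_{i,\alpha}^n$, where $c$ is the coordinate vector of $u^{\mathrm{ex}}$ in the exact gamblet basis. Cauchy--Schwarz gives
\[
\|u^{\mathrm{ex}} - v\|^2 \;\leq\; |c|^2 \sum_{(i,\alpha)}\|\psi_{i,\alpha} - \psi_{i,\alpha}^n\|^2\,.
\]
The exponential decay estimate of Theorem \ref{thmswkskjsh} bounds each summand by $\rho^{2n}\|\psi_{i,\alpha}^0\|^2$ with $\rho := (\Cond(P)-1)/(\Cond(P)+1)$; Lemma \ref{lemkkjh8} then gives $\|\psi_{i,\alpha}^0\|^2 \leq 1/(\ubar{H}_i^2\,\ubar{\gamma}_i)$, so the sum is at most $m\rho^{2n}/(\min_i\ubar{H}_i^2 \cdot \min_i\ubar{\gamma}_i)$. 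On the coefficient side, the energy identity $\|u^{\mathrm{ex}}\|^2 = c^T A c$ combined with the coercivity estimate $\lambda_{\min}(A) \geq 1/(\bar{H}_0^2\bar{\gamma})$ of Lemma \ref{lemljkhlkhhu} and the a priori bound $\|u^{\mathrm{ex}}\|\leq\|u\|\leq C_{\L^{-1}}\|g\|_c$ (inherited from the continuity of $\L^{-1}$) controls $|c|^2$ by a constant multiple of $\|g\|_c^2$.

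Assembling these ingredients, the localization error is dominated by a constant times $\sqrt{m}\,\rho^n\,\|g\|_c$, and requiring this to be at most $\bar{H}\|g\|_c$ translates directly into a logarithmic lower bound on $n$ of exactly the form $n \geq C_1 + C_2 \ln\frac{\sqrt{m}}{\bar{H}\min_i \ubar{H}_i}$, with $C_2 = (\ln\frac{\Cond(P)+1}{\Cond(P)-1})^{-1}$ and $C_1$ equal to $C_2$ times the logarithm of the prefactor involving $C_{\L^{-1}}$, $\bar{H}_0$, $\bar{\gamma}$ and $\min_i\ubar{\gamma}_i$. Because every key ingredient (the orthogonal projection interpretation of $u^{\mathrm{ex}}$, the exponential decay of $\psi_{i,\alpha}-\psi_{i,\alpha}^n$, the coercivity bound on $\lambda_{\min}(A)$, and the pointwise bound on $\|\psi_{i,\alpha}^0\|$) has already been established in the preceding subsections, no serious technical obstacle remains; the only mildly delicate point is bookkeeping the continuity constants ($C_{\L^{-1}}$ versus $\bar{H}_0$) along the chain of inequalities in order to reproduce the precise form of $C_1$ stated in the theorem.
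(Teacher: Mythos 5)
Your proof is correct and follows essentially the same route as the paper's: the same splitting into the exact Galerkin error (bounded by $\bar{H}\|g\|_c$ via the definition of $\bar{H}$) plus a localization term controlled by $\lambda_{\min}(A)\geq(\bar{H}_0^2\bar{\gamma})^{-1}$, the a priori bound on $\|u\|$, Theorem \ref{thmswkskjsh} and Lemma \ref{lemkkjh8}. The only slips are cosmetic: the quasi-optimality chain should read $\|u-u^n\|\leq\|u-v\|\leq\|u-u^{\mathrm{ex}}\|+\|u^{\mathrm{ex}}-v\|$ rather than passing through $\|u^{\mathrm{ex}}-u^n\|$, and the a priori bound goes $\|u\|\leq C_{\L^{-1}}\|g\|_2\leq C_{\L^{-1}}\bar{H}_0\|g\|_c$, which supplies one of the two factors of $\bar{H}_0$ appearing in $C_1$ — a bookkeeping point you already flagged.
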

\begin{proof}
Let $\bar{u}=\sum_{(i,\alpha)\in \beth \times \aleph}^m c_{i,\alpha} \psi_{i,\alpha}$ be the finite-element solution of \eqref{eqn:scalar} in $\Span\{\psi_{i,\alpha}\mid (i,\alpha)\in \beth \times \aleph\}$.
We have $\|u-u^n\|\leq \|u-\bar{u}\|+\sum_{(i,\alpha)\in \beth \times \aleph} |c_{i,\alpha}|\|\psi_{i,\alpha}-\psi_{i,\alpha}^n\|$.
Using Theorem \ref{thmsudgdygdgy} we have
$\frac{\|u-\bar{u}\|}{\|g\|_{c}} \leq  \bar{H}$.
Using the variational property expressed in Theorem \ref{thmsudgdygdgy} we also have
$\lambda_{\min}(A) |c|^2\leq \|\sum_{(i,\alpha)\in \beth \times \aleph} c_{i,\alpha}\psi_{i,\alpha}\|^2 \leq \|u\|^2 \leq C_{\L^{-1}}^2 \|g\|_2^2$. Recall that $\|g\|_2 \leq \bar{H}_0 \|g\|_{c}$.
Using Theorem \ref{thmswkskjsh} and Lemma \ref{lemkkjh8} we have
 $\|\psi_{i,\alpha}-\psi_{i,\alpha}^n\| \leq \big(\frac{\Cond(P)-1}{\Cond(P)+1}\big)^n \frac{1}{\ubar{H}_i \sqrt{\ubar{\gamma}_i}}$.
Using Lemma \ref{lemljkhlkhhu} we deduce that
$\|u-u^n\|\leq \bar{H}\|g\|_{c} + \sqrt{m} C_{\L^{-1}} \bar{H}_0\|g\|_{c}  \bar{H}_0 \sqrt{\bar{\gamma}} \big(\frac{\Cond(P)-1}{\Cond(P)+1}\big)^n \max_i \frac{1}{\ubar{H}_i \sqrt{\ubar{\gamma}_i}}$ and we conclude the proof after simplification.
\end{proof}

\subsubsection{Proof of Corollary \ref{corjff76f57bis}}
Note that $\B_0=L^2(\Omega)$. The proof is  a consequence of Theorems \ref{thmjff76f57} and \ref{thmdkehjgdkjdhj}. As in Examples \ref{egkdejkdhdjk}, \ref{egkdejkdhkjhdjk} and \ref{egkdejkdhdjkbis}, using Lemma \ref{lemduegduey3}, Lemma \ref{lemduegduswsaey3} or Proposition \ref{propkajhdlkjd}, we obtain the  higher order Poincar\'{e}, compact embedding and inverse Sobolev inequalities,
$\sup_{g \in L^2(\Omega)} \inf_{\phi \in \Phi} \frac{\|g- \phi\|_{H^{-s}(\Omega)}}{\|g\|_{L^2(\Omega)}}\leq C h^s$, $\overset{*}{H}\leq C$, $\underset{*}{H}\geq C^{-1} h^{-s}$.
For Examples \ref{egkdejkdhdjk} and \ref{egkdejkdhkjhdjk}, the $L^2(\Omega)$-orthonormaliy of the $\phi_{i,\alpha}$ implies that
$\| \sum_{(i,\alpha)\in \beth\times \aleph}x_{i,\alpha}\phi_{i,\alpha} \|_{L^2(\Omega)}^2=|x|^2$ for $x\in \R^{\beth\times \aleph}$.
For Example \ref{egkdejkdhdjkbis}, the
 proof and statement of \ref{thmdkehjgdkjdhj} can be modified to remove, as in Lemma \ref{lemljkhlkhhu}, the
 dependence of the constant $C$ in Theorem \ref{thmdkehjgdkjdhj} on $\bar{\gamma}, \ubar{\gamma}_i$,
by using Theorem \ref{thmjff76f57}  to control  localization error (and the arguments in the proofs of examples \ref{egkdejkdhdjk}, \ref{egkdejkdhkjhdjk} and \ref{egkdejkdhdjkbis}).

\section{Proofs of the results of Section \ref{secgamtrinrn}}\label{secproofsecgam}
\subsection{Proof of Theorems \ref{thmdpedjoejdo} and \ref{thmconddisbndisbismatdisQQ}}
Theorems \ref{thmdpedjoejdo} and \ref{thmconddisbndisbismatdisQQ} are a direct consequence of Theorem \ref{thmconddisbndisbismatdis}, Remark \ref{rmknt} and Theorem \ref{thmcondlnum}.

\subsection{Proof of Theorem \ref{thmljshjhxj6QQ}}
Theorem \ref{thmljshjhxj6QQ} is a direct consequence of \eqref{corunbcnORfe} in Theorem \ref{corunbcnOR}.

\subsection{Proof of Theorem \ref{thmfix1}}
Theorem \ref{thmfix1} is a direct consequence of Theorem \ref{thmsudgdygdgy}.

\subsection{Proof of Theorem \ref{thmfix2}}
Theorem \ref{thmfix2} is a direct consequence of Theorems \ref{thmgugyug0OR} and \ref{thmgugyug2OR}.

\subsection{Proof of Theorem \ref{thmfix3}}
Theorem \ref{thmfix3} is a direct consequence of Theorem \ref{corunbcnOR}.

\subsection{Proof of Proposition \ref{propkajhdlkjdsob}}
Proposition \ref{propkajhdlkjdsob} is a direct consequence of Proposition \ref{propkajhdlkjd}.

\subsection{Proof of Theorem \ref{thmfix4}}
Theorem \ref{thmfix4} is a direct consequence of the discussion in Example \ref{egdkj3hrr}, Corollary \ref{corthmlkkjhkdfual}, Lemma \ref{lemdkljedhlkfjh}, and Theorem \ref{thmswkskjsh}.

\subsection{Proof of Theorem \ref{thmegdkj3hrrsuitebis}}
Theorem \ref{thmegdkj3hrrsuitebis} is a direct consequence of  Theorem \ref{thmegdkj3hrrsuite}.

\subsection{Proof of Theorem \ref{thmpropegkdejkdhdjkbis}}\label{proofthmpropegkde}

\begin{Proposition}\label{propegkdejkdhkjhdjk}
The measurement functions $(\phi_{i,\alpha})_{(i,\alpha)\in \beth\times \aleph}$ of Example \ref{egkdejkdhdjk} satisfy Condition \ref{condmeasfuncloc}.
\end{Proposition}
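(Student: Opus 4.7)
The setting of Example \ref{egkdejkdhdjk} is $|\aleph|=1$ with $\phi_i=1_{\tau_i}/\sqrt{|\tau_i|}$, so $\V^{\perp}=\{f\in H^s_0(\Omega):\int_{\tau_i}f=0\text{ for every }i\in\beth\}$ and the vector condition \eqref{eqkjdhddkuieheu} reduces to a single scalar inequality per $i$. My plan is to verify the three inequalities \eqref{eqconlocmeas}, \eqref{eqconlocmeasnext}, \eqref{eqkjdhddkuieheu} separately and uniformly in $s\in\mathbb{N}^{*}$, each with a constant depending only on $d,s,\delta$: \eqref{eqconlocmeas} by combining the Payne--Weinberger Poincar\'e inequality for convex sets with a Gagliardo--Nirenberg bootstrap, \eqref{eqconlocmeasnext} by an elementary Cauchy--Schwarz, and \eqref{eqkjdhddkuieheu} by testing against a rescaled bump function supported in the inscribed ball of $\tau_i$. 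Taking $C_{\lsob}$ to be the maximum of the three resulting constants (automatically $\geq 1=|\aleph|$) then delivers the condition.

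For \eqref{eqconlocmeas} at the base case $s=1$ I would observe that each $\tau_i$ is convex with diameter $\leq 2h$, so for $f\in\V^{\perp}$ Payne--Weinberger gives $\|f\|_{L^2(\tau_i)}\leq(2h/\pi)\|\nabla f\|_{L^2(\tau_i)}$; summing over the disjoint partition yields $\|f\|_{L^2(\Omega)}\leq(2h/\pi)\|\nabla f\|_{L^2(\Omega)}$, which is \eqref{eqconlocmeas} for $s=1$. For general $s\geq 1$ I would extend $f\in H^s_0(\Omega)$ by zero to $\mathbb{R}^d$ and combine Plancherel with H\"older on $\int|\xi|^{2t}|\widehat f|^2$ to obtain the Gagliardo--Nirenberg inequality
\[
\|D^t f\|_{L^2(\Omega)}\leq\|f\|_{L^2(\Omega)}^{1-t/s}\|D^s f\|_{L^2(\Omega)}^{t/s},\qquad 0\leq t\leq s,
\]
with absolute constant $1$. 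Applied at $t=1$ and chained with the Payne--Weinberger bound this gives $\|f\|_{L^2(\Omega)}\leq C h\|f\|_{L^2(\Omega)}^{(s-1)/s}\|f\|_{H^s_0(\Omega)}^{1/s}$; solving the bootstrap produces $\|f\|_{L^2(\Omega)}\leq C_s h^s\|f\|_{H^s_0(\Omega)}$, and re-inserting this into Gagliardo--Nirenberg at the desired $t\in\{0,\ldots,s\}$ yields \eqref{eqconlocmeas}.

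For \eqref{eqconlocmeasnext} Cauchy--Schwarz gives $[\phi_i,f]^2=|\tau_i|^{-1}\bigl(\int_{\tau_i}f\bigr)^2\leq\|f\|_{L^2(\tau_i)}^2$, hence $\sum_{i\in\beth}[\phi_i,f]^2\leq\|f\|_{L^2(\Omega)}^2$, which is strictly dominated by the right-hand side of \eqref{eqconlocmeasnext}; the $h^{2s}\|f\|_{H^s_0(\Omega)}^2$ term is not needed.

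The main technical step is \eqref{eqkjdhddkuieheu}, which with $|\aleph|=1$ is equivalent to the lower bound $\|\phi_i\|_{H^{-s}(\tau_i)}^2\geq c\,h^{2s}$ with $c=c(d,s,\delta)>0$. I would fix once and for all a non-negative bump $v_0\in C_c^{\infty}(B(0,1))$ with $\int v_0>0$ and set $v(x):=v_0\bigl((x-x_i)/(\delta h)\bigr)$, where $B(x_i,\delta h)\subset\tau_i$ is an inscribed ball supplied by Construction \ref{consmeasfomi}; then $v\in H^s_0(\tau_i)$ by its support, and a scaling change of variables gives $\int v=(\delta h)^d\int v_0$ together with $\|v\|_{H^s_0(\tau_i)}=C_{v_0}(\delta h)^{d/2-s}$. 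Combined with $\sqrt{|\tau_i|}\leq\omega_d^{1/2}h^{d/2}$, testing $\phi_i$ against $v$ yields $\|\phi_i\|_{H^{-s}(\tau_i)}\geq c_{d,s}\,\delta^{(d+2s)/2}h^s$, the required lower bound. The hard part will be \eqref{eqconlocmeas} at $s\geq 2$: the local zero-mean constraint $\int_{\tau_i}f=0$ only naturally produces a local $h^1$-Poincar\'e bound on each cell, and boosting this to the full $h^s$-decay crucially relies on the global zero trace of $f$ on $\partial\Omega$ (through the Gagliardo--Nirenberg interpolation on $H^s_0(\Omega)$) rather than on any further local constraint on each $\tau_i$.
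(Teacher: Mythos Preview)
Your proof is correct. Your treatment of \eqref{eqconlocmeasnext} via Cauchy--Schwarz on each cell is identical to the paper's (the $\phi_i$ being $L^2$-orthonormal), and your verification of \eqref{eqkjdhddkuieheu} by testing against a rescaled bump supported in the inscribed ball is exactly the paper's argument, carried out with a bit more detail.

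The genuine difference is in \eqref{eqconlocmeas}. The paper does not argue directly: it invokes Lemma~\ref{lemduegduswsaey3}, a higher-order Poincar\'e inequality for functions with zero mean on each cell of a convex partition, whose proof is obtained by adapting the Madych--Potter polynomial reproduction argument from \cite{madpotter85}. Your route is more elementary and fully self-contained: you use only the first-order Payne--Weinberger inequality on each convex cell to get $\|f\|_{L^2(\Omega)}\lesssim h\|\nabla f\|_{L^2(\Omega)}$, then leverage the zero boundary trace via the Fourier-side Gagliardo--Nirenberg interpolation on $H^s_0(\Omega)$ to bootstrap this single $h$-gain into the full $h^s$-decay. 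The paper's approach (via Madych--Potter) is more local in spirit and would extend to situations where $f$ is not compactly supported, while yours trades that generality for a cleaner argument that exploits the global $H^s_0$ membership; for the present statement both are adequate.
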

\begin{proof}
\eqref{eqconlocmeas} is a classical higher order Poincar\'{e} inequality \cite[Lem.~6.4]{john2013numerical}.
Using the $L^2(\tau_i)$ orthonormality of the $\phi_{i,\alpha}$
we have, for $f\in H^s_0(\Omega)$, $\sum_{\alpha \in \aleph}[\phi_{i,\alpha},f]^2\leq \|f\|^2_{L^2(\tau_i)}$, which proves \eqref{eqconlocmeasnext}. The proof of \eqref{eqkjdhddkuieheu} is identical to that of inequality  (2) in the proof of Proposition \ref{propkajhdlkjd}.
\end{proof}

The following lemma,  is a consequence of \cite[Thm.~2]{madpotter85} (see also \cite[Cor.~1]{madpotter85}).
\begin{Lemma}\label{lemduegduey3}(Madych and Potter)
Let $\Omega$ be a bounded Lipschitz domain of $\R^d$  and $s\in \mathbb{N}^*$ such that $s> d/2$.
Let $\omega$ be a subset of $\Omega$ and write $\bar{h}:=\sup_{x\in \Omega}\inf_{y\in \omega}|x-y|$.
 There exists a constant $C$ (depending only on $s$ and $d$) such that if $f=0$ on $\omega$ and $f\in H^s_0(\Omega)$ then for $t\in \{0,1,\ldots,s\}$,
 \begin{equation}
 \|D^t f\|_{L^2(\Omega)}\leq C \bar{h}^{s-t} \|f\|_{H^s_0(\Omega)}\,.
 \end{equation}
\end{Lemma}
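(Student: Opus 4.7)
The plan is to reduce the statement to the single critical case $t=0$ and then recover all intermediate $t$ by interpolation. Under the hypothesis $s>d/2$, the Sobolev embedding $H^s(\Omega)\hookrightarrow C^0(\overline{\Omega})$ makes the pointwise condition $f=0$ on $\omega$ meaningful; moreover the definition of $\bar{h}$ as the fill distance implies that for every $x\in\Omega$ there is $y\in\omega$ with $|x-y|\leq \bar{h}$, and by continuity $f$ vanishes on $\overline{\omega}$.

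For the base case $t=0$, that is the target inequality
\begin{equation*}
\|f\|_{L^2(\Omega)} \leq C\,\bar{h}^{s}\,\|f\|_{H^s_0(\Omega)},
\end{equation*}
I would use a covering of $\Omega$ by balls $B_i$ of radius $\rho=c\bar{h}$ with bounded overlap (the constant $c$ chosen large enough, depending only on $d$ and $s$, so that each slightly enlarged ball $\tilde{B}_i$ contains a subset $Y_i\subset\omega$ forming a $\mathcal{P}_{s-1}$-unisolvent set with condition number bounded in terms of $d,s$ alone). On each $\tilde{B}_i$ I would invoke the Bramble-Hilbert/Dupont-Scott estimate: there exists $p_i\in\mathcal{P}_{s-1}$ with $\|f-p_i\|_{L^2(\tilde{B}_i)}\leq C\rho^{s}\|D^s f\|_{L^2(\tilde{B}_i)}$ and, for $s>d/2$, the scaled Morrey inequality yields $\|f-p_i\|_{L^\infty(\tilde{B}_i)}\leq C\rho^{s-d/2}\|D^s f\|_{L^2(\tilde{B}_i)}$. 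Since $f$ vanishes on $Y_i\subset\omega$, we have $|p_i(y)|\leq C\rho^{s-d/2}\|D^s f\|_{L^2(\tilde{B}_i)}$ for each $y\in Y_i$, and unisolvency of $Y_i$ (together with equivalence of norms on the finite-dimensional space $\mathcal{P}_{s-1}$ restricted to $\tilde{B}_i$ after rescaling) gives $\|p_i\|_{L^2(\tilde{B}_i)}\leq C\rho^{d/2}\|p_i\|_{L^\infty(Y_i)}\leq C\rho^{s}\|D^s f\|_{L^2(\tilde{B}_i)}$. The triangle inequality yields the local bound $\|f\|_{L^2(\tilde{B}_i)}\leq C\bar{h}^{s}\|D^s f\|_{L^2(\tilde{B}_i)}$, and summing over the finite cover with bounded overlap produces the global $t=0$ inequality.

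For general $t\in\{0,\dots,s\}$, I would invoke the Gagliardo-Nirenberg interpolation inequality (available on $H^s_0(\Omega)$ after extension by zero to $\mathbb{R}^d$),
\begin{equation*}
\|D^{t}f\|_{L^2(\Omega)} \leq C\,\|f\|_{L^2(\Omega)}^{1-t/s}\,\|D^{s}f\|_{L^2(\Omega)}^{t/s},
\end{equation*}
and substitute the $t=0$ bound into the right-hand side to obtain
\begin{equation*}
\|D^t f\|_{L^2(\Omega)}\leq C\bigl(\bar{h}^s\|D^s f\|_{L^2(\Omega)}\bigr)^{1-t/s}\|D^s f\|_{L^2(\Omega)}^{t/s}=C\bar{h}^{s-t}\|D^s f\|_{L^2(\Omega)}\leq C\bar{h}^{s-t}\|f\|_{H^s_0(\Omega)}.
\end{equation*}

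The main obstacle is the unisolvency step in the base case: one has to verify that after rescaling a ball of radius $\rho\sim\bar{h}$ to unit size, any fill-distance-$\bar{h}$ subset of $\omega$ intersected with $\tilde{B}_i$ contains a $\mathcal{P}_{s-1}$-unisolvent subset whose condition number (in the sense controlling the norm equivalence $\|p\|_{L^2(\tilde{B}_i)}\asymp\rho^{d/2}\|p\|_{L^\infty(Y_i)}$ on $\mathcal{P}_{s-1}$) depends only on $d$ and $s$. This is the geometric heart of the Madych-Potter argument; once it is in place, the rest is a standard localisation-plus-Bramble-Hilbert exercise. A technically cleaner alternative avoiding explicit unisolvency is to invoke Duchon's norming-set lemma or the Narcowich-Ward-Wendland stability estimate, both of which are equivalent formulations of the same geometric fact and may be cited directly from the literature.
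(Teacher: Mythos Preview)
The paper does not prove this lemma; it simply cites it as a consequence of \cite[Thm.~2]{madpotter85} (with a pointer to \cite[Cor.~1]{madpotter85}). Your sketch is therefore more than what the paper itself offers, and its architecture --- local polynomial approximation via Bramble--Hilbert on a bounded-overlap cover at scale $\bar h$, control of the polynomial part through the zeros on $\omega$, then Gagliardo--Nirenberg interpolation for the intermediate derivative orders --- is a faithful reconstruction of the type of argument Madych and Potter use.

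The point you flag as the main obstacle is indeed the only nontrivial one, and it needs a small repair. The unisolvency claim as you phrase it can fail for balls near $\partial\Omega$: in an extreme case $\omega$ could lie entirely in a hyperplane, so that no ball of radius $c\bar h$ contains a $\mathcal{P}_{s-1}$-unisolvent subset of $\omega$, yet the lemma still holds because in that situation $\Omega$ is contained in a slab of thickness $2\bar h$ and the $H^s_0$ boundary conditions supply the missing constraints. The clean fix is to extend $f$ by zero to $\R^d$ (legitimate since $f\in H^s_0(\Omega)$) \emph{before} covering by balls; then on each $\tilde B_i$ the relevant zero set is $(\omega\cap\tilde B_i)\cup(\tilde B_i\setminus\Omega)$, and it is this enlarged set that is $\bar h$-dense in $\tilde B_i$ and hence norming for $\mathcal{P}_{s-1}$. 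Your suggested alternatives (Duchon's norming-set lemma or the Narcowich--Ward--Wendland stability estimate) are exactly the right tools to make this step rigorous, and with that adjustment the argument goes through.
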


\begin{Proposition}\label{propegkdejkdhdjkbis}
The measurement functions $\phi_1,\ldots,\phi_m$ of Example \ref{egkdejkdhdjkbis} satisfy Condition \ref{condmeasfuncloc}.
\end{Proposition}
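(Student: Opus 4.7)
The plan is to verify each of the three inequalities in Condition~\ref{condmeasfuncloc} for $\phi_i(x) = h^{d/2}\delta(x-x_i)$, exploiting the Sobolev embedding $H^s(\tau_i) \hookrightarrow C(\overline{\tau_i})$ available because $s > d/2$. Note that $[\phi_i,f] = h^{d/2}f(x_i)$, so membership in $\V^\perp$ is equivalent to $f$ vanishing at all sample points $x_i$.

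First, I would establish \eqref{eqconlocmeas} directly from Lemma~\ref{lemduegduey3} (Madych--Potter). Since each $\tau_i$ is contained in a ball of radius $h$ about $x_i$ and $(\tau_i)_{i \in \beth}$ partitions $\Omega$, every point of $\Omega$ lies within distance $h$ of the set $\omega := \{x_i : i \in \beth\}$; that is, $\bar h \leq h$ in the notation of Lemma~\ref{lemduegduey3}. For $f \in \V^\perp$ we have $f|_\omega = 0$, and the lemma gives $\|D^t f\|_{L^2(\Omega)} \leq C h^{s-t}\|f\|_{H^s_0(\Omega)}$ with $C = C(s,d)$, as required.

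Next, for \eqref{eqconlocmeasnext}, I would derive a pointwise sampling bound by a scaling argument. On the unit reference ball $B(0,1)$, the embedding $H^s \hookrightarrow C$ together with the Sobolev interpolation inequality yields $|\hat g(0)|^2 \leq C(\|\hat g\|_{L^2(B(0,1))}^2 + \|\hat g\|_{H^s(B(0,1))}^2)$ for any $\hat g \in H^s(B(0,1))$. Because $\tau_i \supset B(x_i,\delta h)$ and is contained in a ball of radius $h$, pulling back $f$ under $\hat g(y) := f(x_i + \delta h\, y)$ and rescaling (with Jacobian $(\delta h)^d$, and $s$ derivatives rescaling by $(\delta h)^{2s}$) produces, after multiplying by $h^d$ and summing over $i \in \beth$,
\begin{equation*}
h^d \sum_{i} |f(x_i)|^2 \;\leq\; C_{d,\delta,s}\bigl(\|f\|_{L^2(\Omega)}^2 + h^{2s}\|f\|_{H^s_0(\Omega)}^2\bigr),
\end{equation*}
which is \eqref{eqconlocmeasnext} because $|\aleph|=1$ and $[\phi_i,f]^2 = h^d f(x_i)^2$. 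The finite-overlap covering of $\Omega$ by the balls containing the $\tau_i$ keeps the constant independent of $h$.

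Finally, for \eqref{eqkjdhddkuieheu} I would construct an explicit test function realizing a lower bound on $\|\delta(\cdot - x_i)\|_{H^{-s}(\tau_i)}$. Fix a bump $\eta \in C_c^\infty(B(0,1))$ with $\eta(0)=1$ and set $v_i(x) := \eta\bigl((x-x_i)/(\delta h)\bigr)$, which lies in $H^s_0(B(x_i,\delta h)) \subset H^s_0(\tau_i)$ and satisfies $v_i(x_i)=1$ with $\|v_i\|_{H^s_0(\tau_i)} \leq C_{d,\delta,s}\,h^{d/2 - s}$ by the standard rescaling. Duality then gives $\|\delta(\cdot-x_i)\|_{H^{-s}(\tau_i)} \geq v_i(x_i)/\|v_i\|_{H^s_0(\tau_i)} \geq C_{d,\delta,s}^{-1}\,h^{s-d/2}$, so $\|x\,\phi_i\|_{H^{-s}(\tau_i)}^2 = x^2 h^d\|\delta(\cdot-x_i)\|_{H^{-s}(\tau_i)}^2 \geq C_{d,\delta,s}^{-2}\,x^2 h^{2s}$, which is \eqref{eqkjdhddkuieheu}.

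The main technical obstacle is the sampling inequality for step~\eqref{eqconlocmeasnext}: one must be careful that the reference-patch argument produces a constant depending only on $d, \delta, s$ and that the overlap of the enclosing balls stays bounded uniformly in $h$; the other two parts are essentially immediate from Lemma~\ref{lemduegduey3} and a duality/bump-function computation.
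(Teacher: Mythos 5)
Your proof is correct and follows essentially the same route as the paper: \eqref{eqconlocmeas} from Lemma~\ref{lemduegduey3} with $\bar h\leq h$, \eqref{eqkjdhddkuieheu} from the duality formula \eqref{eqjdhejhdgidddeyu} with a rescaled bump of $H^s_0$-norm $\lesssim h^{d/2-s}$, and \eqref{eqconlocmeasnext} from the classical sampling inequality $\sum_i h^d f(x_i)^2\leq C(\|f\|_{L^2(\Omega)}^2+h^{2s}\|f\|_{H^s_0(\Omega)}^2)$. The only difference is that you sketch the scaling proof of that sampling inequality (note the small balls $B(x_i,\delta h)$ are in fact disjoint, so no overlap count is needed), whereas the paper simply cites the literature for it.
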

\begin{proof}
 \eqref{eqconlocmeas} is a higher order Poincar\'{e} inequality  implied by Lemma \ref{lemduegduey3}.
 \eqref{eqconlocmeasnext} corresponds to the inequality
 $\sum_{i=1}^m h^d  \big(f(x_i)\big)^2\leq  C \big( \|f\|_{L^2(\Omega)}^2+ h^{2s}\|f\|_{H^s_0(\Omega)}^2\big)$ for $f\in H^s_0(\Omega)$
 (whose proof is fairly classical, we refer to to \cite{brownlee2004error}, the proof can also be derived by slightly adapting those of \cite[Lem.~1]{madpotter85} and \cite[Thm.~1]{madpotter85}).
 To prove \eqref{eqkjdhddkuieheu} we use
 \eqref{eqjdhejhdgidddeyu} with
 $v \in H^s_0(B(x_i,\delta h))$ such that $v(x_i)= 1$ and $\| v \|_{H^s_0(\tau_i)}^2 \leq C h^d h^{-2s}$.
\end{proof}

\begin{Lemma}\label{lemduegduswsaey3}
Let $\Omega$ be a bounded Lipschitz domain of $\R^d$ and $s\in \mathbb{N}^*$. Let $\tau_1, \ldots, \tau_m$ be a partition
of $\Omega$ such that each $\tau_i$ is convex, contained in a ball of radius $h>0$ and contains a ball of radius $\delta h$ ($\delta \in (0,1)$).
 There exists a constant $C$ (depending only on $s, d$ and $\delta$) such that if $\int_{\tau_i}f=0$ for $i\in \{1,\ldots,m\}$ and $f\in H^s_0(\Omega)$ then for $t\in \{0,1,\ldots,s\}$,
 \begin{equation}\label{egjjkkhjh2344}
 \|D^t f\|_{L^2(\Omega)}\leq C h^{s-t} \|f\|_{H^s_0(\Omega)}\,.
 \end{equation}
\end{Lemma}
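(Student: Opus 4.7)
My plan is to prove Lemma~\ref{lemduegduswsaey3} by reducing it to the pointwise-vanishing variant already established as Lemma~\ref{lemduegduey3} (Madych--Potter), via an antidifferentiation of $f$ that converts the cell-wise mean-zero hypothesis into a vanishing condition on a dense subset.

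The endpoint $t=s$ is automatic from Definition~\ref{defkdjdh99}. For $t<s$ I would reduce to the single endpoint $t=0$ by the standard interpolation inequality
\[
\|D^{t}f\|^{2}_{L^{2}(\Omega)} \le \|D^{t-1}f\|_{L^{2}(\Omega)}\,\|D^{t+1}f\|_{L^{2}(\Omega)},
\]
which is obtained by integration by parts and is valid on $H^{s}_{0}(\Omega)$ because all boundary contributions vanish; iterating this inequality between the $t=0$ and $t=s$ endpoints gives the full range $\|D^{t}f\|_{L^{2}(\Omega)} \le C h^{s-t}\|D^{s}f\|_{L^{2}(\Omega)}$.

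For $t=0$, the strategy is to construct an auxiliary $F$ satisfying (i)~$F\in H^{s+1}_{0}(\Omega)$, (ii)~$F$ vanishes on a set $\omega\subset\overline{\Omega}$ of fill distance $\bar{h}\le Ch$, and (iii)~an algebraic identity relating $f$ to one derivative of $F$ together with $\|F\|_{H^{s+1}_{0}(\Omega)}\le C\|f\|_{H^{s}_{0}(\Omega)}$, with $C=C(s,d,\delta)$. Invoking Lemma~\ref{lemduegduey3} on $F$ in the form $\|DF\|_{L^{2}(\Omega)}\le C\bar{h}^{s}\|F\|_{H^{s+1}_{0}(\Omega)}$ then yields the desired inequality $\|f\|_{L^{2}(\Omega)}\le Ch^{s}\|f\|_{H^{s}_{0}(\Omega)}$. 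In one space dimension this construction is transparent: $F(x):=\int_{0}^{x}f(y)\,dy$ automatically vanishes at every cell interface (since $\int_{\tau_{i}}f=0$ collapses the telescoping sum), inherits an extra degree of boundary regularity from $f\in H^{s}_{0}(\Omega)$, and satisfies $F'=f$. In dimensions $d\ge 2$, I would replace the antiderivative by a cell-wise Bogovski\u{\i}-type right inverse of the divergence: on each convex, uniformly Lipschitz cell $\tau_{i}$ the operator produces a vector field $v_{i}\in H^{1}_{0}(\tau_{i})^{d}$ with $\diiv v_{i}=f|_{\tau_{i}}$ and with operator norm controlled purely by $\delta$ (via a standard scaling argument on the reference ball). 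Assembling $v=\sum_{i}v_{i}$ then yields $v\in H^{1}_{0}(\Omega)^{d}$ with $\diiv v=f$ and $v=0$ on $\cup_{i}\partial\tau_{i}$, which is the required zero set $\omega$ of fill distance $\le Ch$.

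The main obstacle will be the higher-dimensional construction: a single application of Bogovski\u{\i} only delivers one degree of extra regularity, whereas matching the $s$-th order hypothesis $f\in H^{s}_{0}(\Omega)$ requires the assembled potential to live in $H^{s+1}_{0}(\Omega)$. I expect to close this gap either by iterating the Bogovski\u{\i} construction against a hierarchy of auxiliary scalar equations ($\diiv v = f$, then a polyharmonic extension absorbing the remaining derivatives), or by a direct duality / Aubin--Nitsche argument testing $f$ against solutions of an auxiliary order-$2s$ elliptic problem whose right hand side is the $L^{2}$-projection onto piecewise constants, using the cell-wise mean-zero hypothesis to kill the leading piecewise-constant contribution. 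In either route, all emerging constants must be tracked to depend only on $s$, $d$, and $\delta$, which is ensured by the convexity of each cell and the uniform two-sided ball condition in terms of $\delta h$ and $h$.
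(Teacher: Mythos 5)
Your route is genuinely different from the paper's. The paper proves Lemma~\ref{lemduegduswsaey3} by adapting Madych--Potter directly: their Lemma~1 gives a pointwise representation of $f(x)$ in terms of $f(x_0)$ plus averaged derivative remainders (their eq.~4), and one simply integrates that identity over $x_0\in\tau_i$ so that the $f(x_0)$ term becomes the cell average, which vanishes by hypothesis. No auxiliary potential is needed, and no continuity of point evaluations is required since the pointwise term is averaged away. Your proposal instead tries to reduce the mean-zero hypothesis to the pointwise-vanishing Lemma~\ref{lemduegduey3} via an antiderivative/Bogovski\u{\i} potential.

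As written, your argument has a genuine gap at its central step for $s\ge 2$ and $d\ge 2$. The cell-wise Bogovski\u{\i} fields $v_i\in H^1_0(\tau_i)^d$ assemble to a $v$ that is only in $H^1_0(\Omega)^d$: the normal derivatives of the $v_i$ do not match across cell interfaces, so $v\notin H^2(\Omega)$, let alone $H^{s+1}_0(\Omega)$, and the bound $\|v\|_{H^{s+1}_0(\Omega)}\le C\|f\|_{H^s_0(\Omega)}$ that your invocation of Lemma~\ref{lemduegduey3} requires is not available. Your two proposed repairs do not close this: iterating Bogovski\u{\i} would require the components $v_j$ to have vanishing cell averages, which is not guaranteed, and the duality/Aubin--Nitsche alternative is not specified enough to check. (For $s=1$ your scheme does close, via the elementary Poincar\'e inequality on each cell applied to $v_i$ and the identity $\|f\|^2=-\int\nabla f\cdot v$; but for $s=1$ the lemma is already the classical Poincar\'e--Wirtinger inequality on convex cells, so the only content is $s\ge 2$, precisely where your construction breaks.) A secondary issue: Lemma~\ref{lemduegduey3} as stated carries the hypothesis $s>d/2$, so applying it to $F\in H^{s+1}_0$ needs $s+1>d/2$, which fails for instance when $d=4$, $s=1$; since your zero set $\omega$ is a codimension-one set rather than scattered points you could presumably argue around this, but then you are no longer invoking the lemma as stated. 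The interpolation reduction from general $t$ to $t=0$ and the one-dimensional antiderivative argument are fine.
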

\begin{proof}
Although \eqref{egjjkkhjh2344} looks like a classical Poincar\'{e} inequality we have not found a precise reference for it. However \eqref{egjjkkhjh2344} can be proven by adapting (in a straightforward manner)   the proof of \cite[Thm.~2]{madpotter85}. The main step is in this adaptation is a simple generalization of \cite[Lem.~1]{madpotter85} obtained by integrating \cite[eq.~4]{madpotter85} over $x_0$.
\end{proof}

\begin{Proposition}\label{propegkdejkdhdjk}
The measurement functions $\phi_1,\ldots,\phi_m$ of Example \ref{egkdejkdhdjk} satisfy Condition \ref{condmeasfuncloc}.
\end{Proposition}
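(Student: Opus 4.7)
The plan is to verify the three inequalities of Condition \ref{condmeasfuncloc} directly for the measurement functions $\phi_i = 1_{\tau_i}/\sqrt{|\tau_i|}$ of Example \ref{egkdejkdhdjk} (where $|\aleph|=1$), mirroring the argument of Proposition \ref{propegkdejkdhdjkbis} but with Lemma \ref{lemduegduswsaey3} replacing Lemma \ref{lemduegduey3}.

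\emph{Inequality \eqref{eqconlocmeas}.} If $f\in \V^\perp$ then $[\phi_i,f]=|\tau_i|^{-1/2}\int_{\tau_i}f=0$ for every $i\in \beth$, so $\int_{\tau_i}f=0$ on every cell. Since each $\tau_i$ is convex, contained in a ball of radius $h$ and contains a ball of radius $\delta h$, Lemma \ref{lemduegduswsaey3} (applied cell-wise and summed) yields $\|D^t f\|_{L^2(\Omega)}\leq C h^{s-t}\|f\|_{H^s_0(\Omega)}$ for $t\in\{0,\dots,s\}$ with $C=C(s,d,\delta)$.

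\emph{Inequality \eqref{eqconlocmeasnext}.} By Cauchy--Schwarz on each cell, $[\phi_i,f]^2=\frac{1}{|\tau_i|}\bigl(\int_{\tau_i}f\bigr)^2\leq \int_{\tau_i}f^2$. Summing over the disjoint $\tau_i$ covering $\Omega$ gives $\sum_{i\in \beth}[\phi_i,f]^2\leq \|f\|_{L^2(\Omega)}^2$, which is (strictly) stronger than required (we may take $C_{\lsob}=1$ for this step and discard the $h^{2s}\|f\|_{H^s_0}^2$ contribution).

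\emph{Inequality \eqref{eqkjdhddkuieheu}.} Since $|\aleph|=1$, this reduces to $\|\phi_i\|_{H^{-s}(\tau_i)}^2\geq c h^{2s}$ with $c=c(s,d,\delta)>0$. Fix once and for all a nonnegative $\eta\in C_0^\infty(\R^d)$ with $\supp\eta\subset B(0,\delta)$ and $\int\eta=c_\eta>0$. Let $x_i$ be the center of the ball $B(x_i,\delta h)\subset \tau_i$ and set $v(x):=\eta((x-x_i)/h)$, which belongs to $H^s_0(B(x_i,\delta h))\subset H^s_0(\tau_i)$. A change of variables gives $\int_{\tau_i}v\,\phi_i=|\tau_i|^{-1/2}c_\eta h^d$ and, using the definition of $\|\cdot\|_{H^s_0}$, $\|v\|_{H^s_0(\tau_i)}^2\leq C(\eta) h^{d-2s}$. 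Since $|\tau_i|\leq \omega_d h^d$, we obtain
\begin{equation*}
\|\phi_i\|_{H^{-s}(\tau_i)}\,\geq\,\frac{\int_{\tau_i}v\,\phi_i}{\|v\|_{H^s_0(\tau_i)}}\,\geq\, c\, h^s
\end{equation*}
with $c=c(s,d,\delta)>0$, which is the desired bound.

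The only ``delicate'' step is step 3 (the scaling argument) where one must keep track of how constants depend on $\delta$ and $\eta$ but not on $h$; once one has fixed a reference bump $\eta$ with support in the unit-scale inscribed ball, the scaling is straightforward. Steps 1 and 2 are essentially immediate consequences, respectively, of the higher-order Poincar\'e inequality (Lemma \ref{lemduegduswsaey3}) and of the Cauchy--Schwarz inequality on a disjoint partition.
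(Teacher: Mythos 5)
Your proof is correct and follows essentially the same route as the paper: Lemma \ref{lemduegduswsaey3} for \eqref{eqconlocmeas}, the $L^2$-orthonormality of the $\phi_i$ (your cell-wise Cauchy--Schwarz is the same bound) for \eqref{eqconlocmeasnext}, and a rescaled test function in the inscribed ball $B(x_i,\delta h)$ paired against $\phi_i$ in the duality formulation of $\|\phi_i\|_{H^{-s}(\tau_i)}$ for \eqref{eqkjdhddkuieheu}. The only difference is cosmetic: you make the test function explicit as a fixed bump $\eta$ rescaled by $h$, whereas the paper merely asserts its existence with the stated scaling.
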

\begin{proof}
\eqref{eqconlocmeas} is a higher order Poincar\'{e} inequality implied by Lemma \ref{lemduegduswsaey3}.
Since the $\phi_i$ are orthonormal in $L^2(\Omega)$ we have
 $\sum_{i=1}^m  [\phi_i,f]^2 \leq \|f\|_{L^2(\Omega)}^2 $ which implies \eqref{eqconlocmeasnext}.
For $i\in \{1,\ldots,m\}$,
\begin{equation}\label{eqjdhejhdgidddeyu}
\| \phi_{i}\|_{H^{-s}(\tau_i)}^2=\sup_{v\in H^s_0(\tau_i)} \frac{[\phi_i, v]}{\|v\|_{H^s_0(\tau_i)}} \,.
\end{equation}
 Since $\tau_i$ contains a ball of radius $\delta h$,
one can select  $v\in H^s_0(\tau_i)$ such that, $|\tau_i|^{-1}\int_{\tau_i} v =1$ and
$\|\psi\|_{H^s_0(\tau_i)}\leq C h^{-s} \sqrt{|\tau_i|}$ (where $C$ depends only on $d, \delta$ and $s$). We deduce from
\eqref{eqjdhejhdgidddeyu} that $C h^{2s} \| \phi_{i}\|_{H^{-s}(\tau_i)}^2 \geq  1$, which proves \eqref{eqkjdhddkuieheu}.
\end{proof}

\section{Proofs of the results of Section \ref{secfgtas}}\label{sec10}

\subsection{Error propagation across scales}

We will first, in this subsection, control the error propagation across scales caused by the localization of the computation of gamblets, introduced in Section \ref{sec1or}. The following results are not limited to the discretization of $\B$ and can be applied when  $\B$ is  infinite dimensional.

\begin{Condition}\label{condexpdecay}
Let $H\in (0,1)$ be the constant of Condition  \ref{cond1OR} or Condition \ref{conddiscrip3ordismatdis}. There exists  constants $d,\zeta_1,\zeta_2\in (0,\infty)$ and a constant $C_{\loc}>0$ such that for $k\in \{1,\ldots,q-1\}$ there exists a pseudo-metric $\db^{(k)}$ on $\I^{(k)}$   such that (1) $\Card(\I^{(k)})\leq C_{\loc} H^{-kd}$ and (2) for $i\in \I^{(k)}$,
 $\Card\{j: \db^{(k)}(i,j)\leq \rho\}\leq C_{\loc} \rho^d$ and (3) $|R^{(k,k+1)}_{i,j}|\leq C_{\loc} H^{-k\zeta_1} e^{-C_{\loc}^{-1}\db^{(k)}(i,j^{(k)})}$ for $(i,j)\in \I^{(k)}\times \I^{(k+1)}$ (4) $|A^{(k)}_{i,j}|\leq C_{\loc}  H^{-2k \zeta_2} e^{-C_{\loc}^{-1}\db^{(k)}(i,j)}$ for $(i,j)\in \I^{(k)}\times \I^{(k)}$.
\end{Condition}
\begin{Remark}
Under Condition \ref{cond7fyf}, Item (3) of Condition \ref{condexpdecay} is implied by Item (4) of Condition \ref{condexpdecay}.
Indeed, using \eqref{eqhuhiddeuv} and \eqref{eqjdhiudhiue} we have $R^{(k+1,k)}=(I^{(k+1)}-  W^{(k+1),T} N^{(k+1),T}) \bar{\pi}^{(k+1,k)}$ with
$N^{(k+1),T}= B^{(k+1),-1}W^{(k+1)} A^{(k+1)}$.
Furthermore, the  off-diagonal exponentially decay of $A^{(k+1)}$ implies that of $B^{(k+1)}$. Since the condition number of $B^{(k+1)}$is uniformly bounded one can obtain a uniform bound on the off-diagonal exponentially decay of  $B^{(k+1),-1}$ (the proof is  an adaptation of that of \cite{Jaffard90}, we also refer to \cite{Demko1984, Nabben99, Bebendorf:2008} for related results on the off-diagonal exponential decay of the entries of the inverse of well conditioned sparse matrices).
\end{Remark}

We will, from now on, assume that the matrices $\pi^{(k-1,k)}$ and $W^{(k)}$ are cellular as defined in
 Condition \ref{cond7fyf} and that $|\aleph|$ is bounded independently from $q$.
Let $\psi_i^{(q),\loc}$ be approximations of the gamblets $\psi_i^{(q)}$ introduced in Section \ref{sec1or}.
Let $\V^{(q),\loc}:=\Span\{\psi_i^{\loc}\mid i \in \I^{(q)}\}$.
For $k\in \{1,\ldots,q-1\}$ define (by induction) $\psi_i^{(k),\loc}$ (and  $\V^{(k),\loc}:=\Span\{\psi_i^{\loc}\mid i\in \I^{(k)}\}$)  as the minimizer of
\begin{equation}\label{eqpdsiuikloc}
\begin{cases}
\text{Minimize } \|\psi\| \\ \text{Subject to } \psi\in \V^{(k+1),\loc}_i \text{ and } [ \phi_j^{(k)},\psi]=\delta_{i,j} \text{ for }\db^{(k)}(i,j)\leq \rho_k
\end{cases}
\end{equation}
where for $i\in \I^{(k)}$, $\V^{(k+1),\loc}_i:=\Span\{\psi_j^{(k+1)}\mid j \in \I^{(k+1)}\text{ and } \db^{(k)}(j^{(k)},i)\leq \rho_k\}$.

To simplify the presentation,  we will from now on, write $C$ any constant that depends only on $C_{\loc}, C_\d, d, \zeta_1, \zeta_2,|\aleph|$ and $C_\Phi$  (e.g., $2 C C_\Phi$ will still be written $C$).
We will  need the following lemma.
\begin{Lemma}\label{lemfyfyfyvh}
Let $k\in \{1,\ldots,q-1\}$ and let $R$ be the $\I^{(k)}\times \I^{(k+1)}$ matrix defined by and $R_{i,j}=0$ for $j: \db^{(k)}(i,j^{(k)}) \leq \rho_k$ and $R_{i,j}=R_{i,j}^{(k,k+1)}$ for $j: \db^{(k)}(i,j^{(k)}) > \rho_k$. Under Condition \ref{condexpdecay} it holds true that $\|R\|_2 \leq  C H^{-k (d+\zeta_1)}  e^{- \rho_k/C }$.
\end{Lemma}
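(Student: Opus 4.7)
The plan is to bound the spectral norm of the truncated tail matrix $R$ by the classical Schur test
\[
 \|R\|_2 \leq \sqrt{\Bigl(\max_{i\in \I^{(k)}}\sum_{j\in \I^{(k+1)}} |R_{i,j}|\Bigr)\Bigl(\max_{j\in \I^{(k+1)}}\sum_{i\in \I^{(k)}} |R_{i,j}|\Bigr)}
\]
and then to use the entrywise decay estimate in Item~(3) of Condition~\ref{condexpdecay} together with the cardinality bounds in Items~(1)--(2) of the same condition. Because the entries of $R$ vanish on the set $\{(i,j):\db^{(k)}(i,j^{(k)})\leq \rho_k\}$, only tails beyond $\rho_k$ contribute, which is where the exponential gain comes from.

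First I would estimate the maximum row sum. Fixing $i\in \I^{(k)}$, I would split the summation over $j\in \I^{(k+1)}$ with $\db^{(k)}(i,j^{(k)})>\rho_k$ into annular shells $\{n<\db^{(k)}(i,j^{(k)})\leq n+1\}$ and use Item~(3) of Condition~\ref{condexpdecay} on each shell. The size of the shell is bounded, crudely but uniformly in $i$, by the total cardinality $\Card(\I^{(k+1)})\leq C_{\loc}H^{-(k+1)d}$ from Item~(1). This yields
\[
 \sum_{j:\db^{(k)}(i,j^{(k)})>\rho_k} |R_{i,j}^{(k,k+1)}| \leq C_{\loc} H^{-k\zeta_1}\,C_{\loc}H^{-(k+1)d} \sum_{n\geq \rho_k} e^{-n/C_{\loc}},
\]
and after summing the geometric series and absorbing the resulting $O(1)$ constant, the row sum is at most $C\,H^{-k\zeta_1-(k+1)d}\,e^{-\rho_k/C}$.

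Next I would estimate the maximum column sum. Here the key observation is that for fixed $j\in \I^{(k+1)}$ the summation runs over $i\in\I^{(k)}$ with $\db^{(k)}(i,j^{(k)})>\rho_k$, so I can apply the sharp shell bound of Item~(2) of Condition~\ref{condexpdecay} directly: $\Card\{i\in\I^{(k)}:\db^{(k)}(i,j^{(k)})\leq n\}\leq C_{\loc} n^{d}$. The polynomial factor $n^{d}$ is absorbed into the exponential by replacing $C_{\loc}$ by a slightly larger constant, giving
\[
 \sum_{i:\db^{(k)}(i,j^{(k)})>\rho_k} |R_{i,j}^{(k,k+1)}| \leq C\,H^{-k\zeta_1}\,e^{-\rho_k/C}.
\]

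Finally, plugging these two bounds into the Schur test gives
\[
 \|R\|_2 \leq C\,H^{-k\zeta_1-(k+1)d/2}\,e^{-\rho_k/C}.
\]
Since $(k+1)d/2 \leq kd$ for every $k\geq 1$, the exponent $-k\zeta_1-(k+1)d/2$ is no smaller than $-k(\zeta_1+d)$, so $H^{-k\zeta_1-(k+1)d/2}\leq H^{-k(d+\zeta_1)}$, which is exactly the claimed bound $\|R\|_2\leq C\,H^{-k(d+\zeta_1)}\,e^{-\rho_k/C}$.

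The only mildly technical point is the absorption of the polynomial factors $\rho_k^{d}$ (coming from summing the shells) into the exponential $e^{-\rho_k/C}$; this is routine provided $\rho_k$ is large enough (otherwise the claimed bound is trivial by adjusting $C$). I do not expect a real obstacle here: the only subtlety worth flagging is that because $R$ is rectangular, the row and column sums are genuinely asymmetric, and one must use the crude total cardinality bound on the $\I^{(k+1)}$ side but the sharp polynomial growth bound on the $\I^{(k)}$ side in order to recover exactly the $H^{-k(d+\zeta_1)}$ prefactor stated in the lemma.
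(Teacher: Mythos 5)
Your proof is correct. The paper's own argument is even more compressed: it simply writes $\|R\|_2^2 \leq |\I^{(k)}|\,\max_{i\in\I^{(k)}}\sum_{j:\,\db^{(k)}(i,j^{(k)})>\rho_k}|R^{(k,k+1)}_{i,j}|^2$ (a Frobenius-norm bound), then inserts the entrywise decay of Item (3) and the cardinality bounds $|\I^{(k)}|\leq C_{\loc}H^{-kd}$, $|\I^{(k+1)}|\leq C_{\loc}H^{-(k+1)d}$ to get $\|R\|_2^2\leq C H^{-2k(d+\zeta_1)}e^{-\rho_k/C}$ in one line. You replace the Frobenius bound by the Schur test, which forces you to treat rows and columns asymmetrically (total cardinality on the $\I^{(k+1)}$ side, the polynomial ball-growth bound of Item (2) on the $\I^{(k)}$ side) and to add the small bookkeeping step $(k+1)d/2\leq kd$; in exchange you obtain the marginally sharper prefactor $H^{-k\zeta_1-(k+1)d/2}$, which indeed dominates the stated $H^{-k(d+\zeta_1)}$ since $H<1$. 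Both routes exploit exactly the same inputs — vanishing of $R$ inside the ball of radius $\rho_k$, the exponential entrywise decay, and the cardinality estimates — so the difference is only in the choice of elementary operator-norm inequality; the paper's Frobenius route is shorter and already sufficient, while yours is slightly sharper but needs the extra exponent comparison to land on the form stated in the lemma.
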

\begin{proof}
Observe that $\|R\|_2^2 \leq |\I^{(k)}| \max_{i\in \I^{(k)}}\sum_{j: \db^{(k)}(i,j^{(k)}) > \rho_k} |R_{i,j}^{(k,k+1)}|^2$.
Therefore under Condition \ref{condexpdecay}, $\|R\|_2^2 \leq C H^{-2k(d+\zeta_1)}e^{-C^{-1}\rho_k}$.
\end{proof}

We will need the following lemma summarizing and simplifying some results obtained in Section \ref{sec1or}.
\begin{Lemma}\label{lembase}
Let Condition \ref{cond1OR} or Condition \ref{conddiscrip3ordismatdis} be satisfied. It holds true that (1) $\operatorname{Cond}(A^{(1)})\leq C H^{-2}$ (2) $1/\lambda_{\min}(A^{(1)})\leq C $
and (3) for $k\in \{1,\ldots,q\}$, $\lambda_{\max}(A^{(k)})\leq C H^{-2k}$. Furthermore, for $k\in \{q,\ldots,2\}$, (4) $\Cond(B^{(k)})\leq C H^{-2}$ (5)  $ \lambda_{\max}(B^{(k)})\leq C H^{-2k}$ (6)
$ 1/\lambda_{\min}(B^{(k)})\leq C H^{2k-2}$ (7) $\|\pi^{(k-1,k)}\|_2\leq C$ (8) $\|\bar{\pi}^{(k-1,k)}\|_2\leq C$ (9)  $\|W^{(k)}\|_2\leq C$ (10) $1/\lambda_{\min}(W^{(k)} W^{(k),T}) \leq C $ and
(11) $\|R^{(k-1,k)}\|_2 \leq C$.
\end{Lemma}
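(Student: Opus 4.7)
The plan is to derive each bound by invoking the machinery already built in Section~\ref{sec6} and the main Theorems \ref{corunbcnOR} and \ref{thmconddisbndisbismatdis}, rather than re-proving them from scratch. The whole lemma is really a bookkeeping exercise that folds the (normalized) $\lambda_{\min}(A)$-scalings appearing in Theorem~\ref{thmconddisbndisbismatdis}, and the various factors $C_{\Phi}$, $C_{\d}$, $C_{\loc}$, $|\aleph|$ into a single generic constant $C$.

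Items (1)--(6) are immediate. Under Condition \ref{cond1OR}, Theorem \ref{corunbcnOR} gives $C^{-1}I^{(1)}\le A^{(1)}\le CH^{-2}I^{(1)}$ and $C^{-1}H^{-2(k-1)}J^{(k)}\le B^{(k)}\le CH^{-2k}J^{(k)}$, from which (1)--(2) and (4)--(6) for $k=1,\ldots,q$ follow at once; (3) for $k\ge 2$ follows because $A^{(k)}$ acts on $\V^{(k)}$ with the same norm $\|\cdot\|$ and inherits, via \eqref{eqkhiduhdf7d} in Section~\ref{sec_conoiinon}, the bound $\lambda_{\max}(A^{(k)})\le (\ubar H_k^2\ubar\gamma_k)^{-1}\le CH^{-2k}$. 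Under Condition \ref{conddiscrip3ordismatdis}, Theorem \ref{thmconddisbndisbismatdis} gives precisely the same bounds once one absorbs the $\lambda_{\min}(A)$ scalings.

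For (7)--(10) I would combine Condition \ref{cond1OR}(6) (respectively Condition \ref{conddiscrip3ordismatdis}(1)) with Lemma~\ref{lemdekjdhkdjhd}. The former gives $C^{-1}J^{(k)}\le W^{(k)}W^{(k),T}\le CJ^{(k)}$ directly, yielding (9) and (10). For (7), Lemma~\ref{lemdekjdhkdjhd} controls $\lambda_{\max}(\pi^{(k-1,k)}\pi^{(k,k-1)})$ by the ratio $\bar\gamma_{k-1}/\ubar\gamma_k$, both of which are uniformly bounded by $C_\Phi$ under Item~\ref{itcor4} of Condition \ref{cond1OR}; in the discrete setting Item~\ref{linconmat3dis} of Condition \ref{conddiscrip3ordismatdis} gives the analogous two-sided bound on $\pi^{(k,q)}\pi^{(q,k)}$, which after a telescoping argument (together with the cellular structure of Condition \ref{cond7fyf}) transfers to $\pi^{(k-1,k)}\pi^{(k,k-1)}$ with constants depending only on $C_{\d}$ and $|\aleph|$. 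Taking the square root yields (7). Item (8) then follows from the identity $\bar\pi^{(k-1,k)}=(\pi^{(k-1,k)}\pi^{(k,k-1)})^{-1}\pi^{(k-1,k)}$ in \eqref{eqpibardef} and the uniform lower bound on $\lambda_{\min}(\pi^{(k-1,k)}\pi^{(k,k-1)})$ supplied by the same argument.

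For (11) I would combine the identity $R^{(k-1,k)}=\bar\pi^{(k-1,k)}(I^{(k)}-N^{(k)}W^{(k)})$ from \eqref{eqhuhiddeuv} with Theorem~\ref{lemdjkdj}, which asserts $\lambda_{\max}(N^{(k),T}N^{(k)})\le C$ under Condition \ref{cond1OR}. Submultiplicativity of $\|\cdot\|_2$ then gives $\|R^{(k-1,k)}\|_2\le \|\bar\pi^{(k-1,k)}\|_2(1+\|N^{(k)}\|_2\|W^{(k)}\|_2)\le C$, using (7)--(9). The main (minor) obstacle in the whole proof is the verification of (7)--(8) under Condition \ref{conddiscrip3ordismatdis}, since that condition is phrased on the composite maps $\pi^{(k,q)}$ rather than on the single-level maps $\pi^{(k-1,k)}$; here one must either exploit the telescoping $\pi^{(k-1,q)}=\pi^{(k-1,k)}\pi^{(k,q)}$ together with the cellular structure, or, equivalently, observe that the discrete proof proceeds through exactly the same intermediate inequalities used in the proof of Theorem~\ref{thmconddisbndisbismatdis} in Subsection~\ref{subseckedjh}, where Condition \ref{cond1OR} is deduced from Condition \ref{conddiscrip3ordismatdis} and the quantities $\bar\gamma_k,\ubar\gamma_k$ are directly controlled.
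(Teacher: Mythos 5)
Your proposal is correct and follows essentially the same route as the paper's proof: items (1)--(6) from Theorem \ref{corunbcnOR} (with the $\lambda_{\max}(A^{(k)})$ bound for $k\ge 2$ traced back to \eqref{eqkhiduhdf7d}, a point the paper leaves implicit), (7)--(8) from Lemma \ref{lemdekjdhkdjhd} together with the pseudo-inverse identity \eqref{eqpibardef}, (9)--(10) directly from the conditions, and (11) from \eqref{eqhuhiddeuv}, Theorem \ref{lemdjkdj}, and submultiplicativity. Your resolution of the discrete case via the identification in Subsection \ref{subseckedjh}, under which $\bar\gamma_k,\ubar\gamma_k$ become the extreme eigenvalues of $\pi^{(k,q)}\pi^{(q,k)}$, is exactly how the paper handles it.
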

\begin{proof}
(1)-(6) follow from Theorem \ref{corunbcnOR}. (7) and (8) follow from Lemma \ref{lemdekjdhkdjhd} (for (8) note that $\|\bar{\pi}^{(k-1,k)}\|_2 \leq \|\pi^{(k-1,k)}\|_2/\lambda_{\min}(\pi^{(k-1,k)}\pi^{(k,k-1)})$). (9) and (10) follow from Condition \ref{cond1OR} or Condition \ref{conddiscrip3ordismatdis}.
 Using
$R^{(k-1,k)}= \bar{\pi}^{(k-1,k)}- \bar{\pi}^{(k-1,k)}N^{(k)}  W^{(k)}$ (obtained in \eqref{eqhuhiddeuv}) we have
$\|R^{(k-1,k)}\|_2 \leq \|\bar{\pi}^{(k-1,k)}\|_2 (1+ \|N^{(k)}\|_2 \|W^{(k)}\|_2)$.
Using Theorem  \ref{lemdjkdj} we have
$\|N^{(k)}\|_2\leq C$. Summarizing we have obtained (11).
\end{proof}

 For $k\in \{1,\ldots,q\}$, let $A^{(k),\loc}$ be the $\I^{(k)}\times \I^{(k)}$ matrix defined by $A^{(k),\loc}_{i,j}:=\<\psi^{(k),\loc}_i,\psi^{(k),\loc}_j\>$ and let $\er(k)$ be the (localization) error
 $\er(k):=\big(\sum_{i\in \I^{(k)}} \|\psi_i^{(k)}-\psi_i^{(k),\loc}\|^2\big)^\frac{1}{2}$.

The following theorem allows us to control the localization error propagation across scales.
\begin{Theorem}\label{thmerrorpropagation}
Let Condition \ref{cond1OR} or Condition \ref{conddiscrip3ordismatdis} be satisfied. Under Condition \ref{condexpdecay}, it holds true for $k\in \{1,\ldots,q-1\}$ that
\begin{equation}
\er(k)\leq C  \er(k+1) (1+H^{-k(d+\zeta_1)} e^{-\rho_{k}/C})+ C H^{-(1+d/2)-k (2+3d+\zeta_1)/2} e^{-\rho_{k}/C}   \,.
\end{equation}
\end{Theorem}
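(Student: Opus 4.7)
The plan is to introduce, for each level $k<q$ and each $i\in\I^{(k)}$, an intermediate gamblet $\tilde{\psi}_i^{(k)}$ which decouples the two distinct sources of error contributing to $\er(k)$: (i) the truncation, to the ball of radius $\rho_k$, of the basis expansion of $\psi_i^{(k)}$ in terms of the \emph{exact} level $k+1$ gamblets, and (ii) the substitution of the exact level $k+1$ gamblets by their computed localized counterparts $\psi_j^{(k+1),\loc}$. Concretely, define $\tilde{\psi}_i^{(k)}$ as the minimizer of $\|\psi\|$ over $\psi\in\Span\{\psi_j^{(k+1)}\mid \db^{(k)}(j^{(k)},i)\leq \rho_k\}$ subject to the local measurement constraints $[\phi_j^{(k)},\psi]=\delta_{i,j}$ for $\db^{(k)}(i,j)\leq\rho_k$. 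The triangle inequality then yields
\begin{equation*}
\er(k)\leq \Bigl(\sum_{i\in\I^{(k)}}\|\psi_i^{(k)}-\tilde{\psi}_i^{(k)}\|^2\Bigr)^{1/2}+\Bigl(\sum_{i\in\I^{(k)}}\|\tilde{\psi}_i^{(k)}-\psi_i^{(k),\loc}\|^2\Bigr)^{1/2}\, ,
\end{equation*}
which is estimated separately.

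For the first (truncation) term I would use Theorem \ref{thmpsdk} to write $\psi_i^{(k)}=\sum_{j\in\I^{(k+1)}} R^{(k,k+1)}_{i,j}\psi_j^{(k+1)}$ and form the truncation $\hat\psi_i^{(k)}$ obtained by keeping only the coefficients with $\db^{(k)}(j^{(k)},i)\leq\rho_k$. Under the cellularity of $\pi^{(k-1,k)}$ (Condition \ref{cond7fyf}), the identity $[\phi_j^{(k)},\psi_l^{(k+1)}]=\pi^{(k,k+1)}_{j,l}$ implies that $\hat\psi_i^{(k)}$ satisfies the local constraints \emph{exactly}, so it is admissible in the variational problem defining $\tilde\psi_i^{(k)}$; $\<\cdot,\cdot\>$-orthogonality of the minimizer then gives $\|\psi_i^{(k)}-\tilde\psi_i^{(k)}\|\leq \|\psi_i^{(k)}-\hat\psi_i^{(k)}\|$. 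The latter equals the $A^{(k+1)}$-norm of the truncated row of $R^{(k,k+1)}$, which is bounded by Lemma \ref{lemfyfyfyvh} combined with $\lambda_{\max}(A^{(k+1)})\leq C H^{-2(k+1)}$ from Lemma \ref{lembase}. Summing over $i\in\I^{(k)}$ (Condition \ref{condexpdecay} gives $|\I^{(k)}|\leq CH^{-kd}$) produces the second, purely exponential, summand in the stated bound, and the explicit exponent $-(1+d/2)-k(2+3d+\zeta_1)/2$ simply accumulates the powers $H^{-(k+1)}$, $H^{-kd/2}$, and $H^{-k(d+\zeta_1)}$ arising from $\lambda_{\max}(A^{(k+1)})^{1/2}$, $|\I^{(k)}|^{1/2}$, and Lemma \ref{lemfyfyfyvh} respectively.

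For the second (basis-perturbation) term, both $\tilde\psi_i^{(k)}$ and $\psi_i^{(k),\loc}$ solve the \emph{same} local variational problem but on different finite-dimensional trial spaces spanned by $\{\psi_j^{(k+1)}\}$ and $\{\psi_j^{(k+1),\loc}\}$ respectively, with $j$ restricted to the local patch. Writing both as Lagrangian saddle points, expressed in coefficients against the two bases, I would subtract the corresponding KKT systems: the difference is driven by the perturbation of the local stiffness matrix $A^{(k+1)}\!\to\!A^{(k+1),\loc}$ and of the local constraint block, each of which is controlled in $\ell^2$-norm by $\er(k+1)$ via $A^{(k+1)}_{i,j}-A^{(k+1),\loc}_{i,j}=\<\psi_i^{(k+1)}-\psi_i^{(k+1),\loc},\psi_j^{(k+1)}\>+\<\psi_i^{(k+1),\loc},\psi_j^{(k+1)}-\psi_j^{(k+1),\loc}\>$ and Cauchy--Schwarz. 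Inverting the systems using uniform bounds on $\Cond(A^{(k+1)})$, $\Cond(B^{(k+1)})$, $\|W^{(k+1)}\|_2$ and $\lambda_{\min}(W^{(k+1)}W^{(k+1),T})^{-1}$ from Lemma \ref{lembase} yields an estimate of the form $C\er(k+1)$; the additional factor $H^{-k(d+\zeta_1)}e^{-\rho_k/C}$ arises from the coupling between this perturbation analysis and the truncation residual of $R^{(k,k+1)}$ (again via Lemma \ref{lemfyfyfyvh}).

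The main obstacle is the second step: to get a clean $O(\er(k+1))$ dependence rather than a blow-up, one needs a stability estimate for the local KKT system that is uniform in $k$ and in the localization radius $\rho_k$. This amounts to a uniform lower bound on the Schur complement governing the local constrained minimization, and this is precisely what the uniformly bounded condition numbers of $A^{(k)}$ and $B^{(k)}$ from Theorem \ref{corunbcnOR} (packaged in Lemma \ref{lembase}) provide. The remainder of the work is careful bookkeeping of the resulting powers of $H$, which is what produces the somewhat opaque exponents in the final statement.
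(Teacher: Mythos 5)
Your first half is sound and coincides with the paper's treatment of the truncation error: under the cellularity of $\pi^{(k,k+1)}$ the truncated expansion does satisfy the local constraints, the variational (Pythagorean) characterization of the localized minimizer lets you compare against it, and the tail is controlled by Lemma \ref{lemfyfyfyvh} together with $\Tr[A^{(k+1)}]\leq C H^{-(d+2)(k+1)}$. This reproduces the paper's term $I_2$ (the paper works with $A^{(k+1),\loc}$ rather than $A^{(k+1)}$, which costs only an extra $\er(k+1)$ via $\sqrt{\Tr[A^{(k+1),\loc}]}\leq \er(k+1)+\sqrt{\Tr[A^{(k+1)}]}$).

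The gap is in your second step. Comparing $\tilde\psi_i^{(k)}$ and $\psi_i^{(k),\loc}$ as two constrained minimizers over \emph{different} trial spaces via a KKT perturbation argument does not deliver the coefficient $C\,\er(k+1)$ that the theorem asserts. To pass from a perturbation of the local Gram matrix to a perturbation of the minimizer you must invert the reduced Hessian of the local problem and then convert coefficient-space estimates back to $\|\cdot\|$-norm estimates; each conversion costs a factor of $\lambda_{\max}(A^{(k+1)})^{1/2}\leq CH^{-(k+1)}$ or $\Cond$ of the local reduced system, and Lemma \ref{lembase} only bounds $\Cond(A^{(k+1)})$ by $CH^{-2(k+1)}$, not by a constant. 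Moreover a patch-by-patch bound of the form $|c^{\loc}|\cdot(\text{local }\er(k+1))$, summed over $i$ with overlaps of size $O(\rho_k^d)$, introduces further $k$- and $\rho_k$-dependent factors. None of these appear in the stated inequality, so the "careful bookkeeping" you defer to cannot close the argument as proposed.

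The paper avoids the two-minimizer comparison entirely, and this is the idea you are missing. Since $\psi_i^{(k),\loc}$ is (by the Pythagorean identity) the minimizer of $\|\psi-\psi_i^{(k)}\|$ over its own admissible set, one only needs a single explicit competitor in that set: $\psi^*:=\sum_{j:\db^{(k)}(i,j^{(k)})\leq\rho_k}R^{(k,k+1)}_{i,j}\psi_j^{(k+1),\loc}$, i.e.\ the \emph{known exact} interpolation coefficients applied to the localized level-$(k+1)$ basis. Then $\psi^*-\psi_i^{(k)}$ splits into the full-coefficient basis-perturbation term $\sum_{j}R^{(k,k+1)}_{i,j}(\psi_j^{(k+1),\loc}-\psi_j^{(k+1)})$ and the coefficient tail, and the aggregate of the first piece over $i\in\I^{(k)}$ is bounded \emph{globally} by
\begin{equation*}
\Tr\bigl[R^{(k,k+1)}\,S\,R^{(k+1,k)}\bigr]\leq \|R^{(k,k+1)}\|_2^2\,\Tr[S]=\|R^{(k,k+1)}\|_2^2\,\bigl(\er(k+1)\bigr)^2\, ,
\end{equation*}
with $\|R^{(k,k+1)}\|_2\leq C$ from Lemma \ref{lembase}. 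No local solve is re-examined, no reduced Hessian is inverted, and the clean $C\,\er(k+1)$ coefficient follows. Your intermediate object $\tilde\psi_i^{(k)}$ is therefore an unnecessary detour: once you use the exact coefficients as the competitor, the decomposition into truncation and basis-perturbation happens inside a single application of quasi-optimality.
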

\begin{proof}
Let $k\in \{1,\ldots,q-1\}$ and $i\in \I^{(k)}$.
We obtain by induction (using the constraints in \eqref{eqpdsiuikloc}) that  $\psi_i^{(k),\loc}$ satisfies the constraints of \eqref{eq:dfddeytfewdaisq}. Note that  if $\psi$ satisfies the constraints of \eqref{eq:dfddeytfewdaisq} then
$\|\psi\|^2= \|\psi_i^{(k)}\|^2+\|\psi- \psi_i^{(k)}\|^2$.
Therefore  $\psi_i^{(k),\loc}$ is also the minimizer of $\|\psi- \psi_i^{(k)}\|$ over functions $\psi$ of the form $\psi=\sum_{j: \db^{(k)}(i,j^{(k)}) \leq \rho_k} c_j \psi_j^{(k+1),\loc}$ satisfying the constraints of \eqref{eqpdsiuikloc}. Thus, writing $\psi^*:= \sum_{j: \db^{(k)}(i,j^{(k)}) \leq \rho_k} R_{i,j}^{(k,k+1)} \psi_j^{(k+1),\loc} $,  we have (since $\psi^*$ satisfies the constraints of \eqref{eqpdsiuikloc})
$\| \psi_i^{(k),\loc}-\psi_i^{(k)}\| \leq \| \psi^*-\psi_i^{(k)}\|$. Write
  $\psi_1:= \sum_{j \in \I^{(k+1)}} R_{i,j}^{(k,k+1)} \psi_j^{(k+1),\loc} $ and  $\psi_2:=\sum_{j: \db^{(k)}(i,j^{(k)}) > \rho_k} R_{i,j}^{(k,k+1)} \psi_j^{(k+1),\loc}$.
Observing that $\psi^*=\psi_1-\psi_2$ we deduce that $\| \psi_i^{(k),\loc}-\psi_i^{(k)}\|^2 \leq 2 \| \psi_1-\psi_i^{(k)}\|^2 + 2 \| \psi_2\|^2$. Summing over $i$ we obtain that
$\big(\er(k)\big)^2\leq 2(I_1+I_2)$ with $I_1= \sum_{i \in  \I^{(k)}} \|\sum_{j \in  \I^{(k+1)}} R_{i,j}^{(k,k+1)} (\psi_j^{(k+1)}-\psi_j^{(k+1),\loc})\|^2$
and $I_2=\sum_{i \in  \I^{(k)}}  \|\sum_{j: \db^{(k)}(i,j^{(k)}) > \rho_k} R_{i,j}^{(k,k+1)} \psi_j^{(k+1),\loc}\|^2$.
Writing $S$ the $\I^{(k+1)}\times \I^{(k+1)}$ symmetric positive matrix with entries $S_{i,j}=\<\psi_i^{(k+1)}-\psi_i^{(k+1),\loc},\psi_j^{(k+1)}-\psi_j^{(k+1),\loc}\>$, note that $I_1=\Tr[R^{(k,k+1)}S R^{(k+1,k)}]$. Writing $S^\frac{1}{2}$ the matrix square root of $S$, observe that for a matrix $U$, using the cyclic property of the trace, $\Tr[U S U^T]=\Tr[S^\frac{1}{2} U^T U S^\frac{1}{2}]\leq \lambda_{\max}(U^T U) \Tr[S]$, which (observing that $\Tr[S]=(\er(k+1))^2$ and $\lambda_{\max}(U^T U)=\|U\|_2^2$) implies
$I_1 \leq \|R^{(k,k+1)}\|_2^2 \big(\er(k+1)\big)^2$. Therefore (using Lemma \ref{lembase}) we have  $\sqrt{I_1} \leq C  \er(k+1)$.
Let us now bound $I_2$. Let $R$ be defined as in Lemma \ref{lemfyfyfyvh}. Noting that $\<\psi_i^{(k+1),\loc},\psi_j^{(k+1),\loc}\>=A^{(k+1),\loc}_{i,j}$ we have (as above)
$I_2=\Tr[R A^{(k+1),\loc} R^T]\leq \lambda_{\max}(R^T R) \Tr[A^{(k+1),\loc}]$. Summarizing and using Lemma \ref{lemfyfyfyvh} we deduce that
$\er(k)\leq C  \er(k+1)+ C H^{-k(d+\zeta_1)} e^{-\rho_{k}/C}  \sqrt{\Tr[A^{(k+1),\loc}]}$. Observing that
$\sqrt{\Tr[A^{(k+1),\loc}]}\leq \er(k+1)+\sqrt{\Tr[A^{(k+1)}]}$ and (from Condition \ref{condexpdecay} and Lemma \ref{lembase})  $\Tr[A^{(k+1)}]\leq C |\I^{(k+1)}| \lambda_{\max}(A^{(k+1)})\leq C H^{-(d+2)(k+1)}$,
we conclude the proof of the theorem.
\end{proof}

For $k\in \{1,\ldots,q\}$, let  $u^{(1),\loc}$ be the finite-element solution of \eqref{eqn:scalar} in $\V^{(1),\loc}:=\Span\{\psi_j^{(k),\loc}\mid j\in \I^{(1)}\}$.
For $k\in \{2,\ldots,q\}$ and
For $i\in \J^{(k)}$, let $\chi^{(k),\loc}_i:=\sum_{j \in \I^{(k)}} W_{i,j}^{(k)} \psi_j^{(k),\loc}$.
For $k\in \{2,\ldots,q\}$ let $u^{(k),\loc}-u^{(k-1),\loc}$ be the finite element solution of \eqref{eqn:scalar} in $\W^{(k),\loc}:=\Span\{\chi_j^{(k),\loc}\mid j\in \J^{(k)}\}$.
For $k\in \{2,\ldots,q\}$, write $u^{(k),\loc}:=u^{(1),\loc}+\sum_{j=2}^k (u^{(j),\loc}-u^{(j-1),\loc})$.
Let $B^{(k),\loc}$ be the $\J^{(k)}\times \J^{(k)}$ matrix defined by $B^{(k),\loc}_{i,j}:=\<\chi^{(k),\loc}_i,\chi^{(k),\loc}_j\>$.
Observe that $B^{(k),\loc}= W^{(k)}A^{(k),\loc}W^{(k),T}$.
Write for $k\in \{2,\ldots,q\}$, $\er(k,\chi):=\big(\sum_{j\in \J^{(k)}} \|\chi_j^{(k)}-\chi_j^{(k),\loc}\|^2\big)^\frac{1}{2}$.

We will  need the following lemma.
\begin{Lemma}\label{lemshgjhgdhg3e}
Let $\chi_1,\ldots,\chi_m$ be linearly independent elements of $\B$. Let $\chi_1',\ldots,\chi_m'$ be another set of linearly independent elements of $\B$. Write $\er:=\big(\sum_{i=1}^m \|\chi_i-\chi_i'\|^2\big)^\frac{1}{2}$. Let $B$ (resp. $B'$) be the $m\times m$  matrix defined by
$B_{i,j}=\<\chi_i,\chi_j\>$ (resp. $B_{i,j}'=\<\chi_i',\chi_j'\>$). Let $u_m$ (resp. $u_m'$) be the finite-element solution of \eqref{eqn:scalar} in $\Span\{\chi_i\mid i=1,\ldots,m\}$ (resp. $\Span\{\chi_i'\mid i=1,\ldots,m\})$. It holds true that for  $\er \leq \sqrt{\lambda_{\min}(B)} /2$ (1)  $\Cond(B')\leq 8 \Cond(B)$  (2)
$\|B-B'\|_2 \leq 3 \sqrt{\lambda_{\max}(B)} \er$ (3) $\|B^{-1}-(B')^{-1}\|_2 \leq 12 \sqrt{\lambda_{\max}(B)}  \big(\lambda_{\min}(B)\big)^{-2} \er$  and (4)
$\|u_m-u_m'\|\leq 5 C_{\L^{-1}} \er \|g\|_2\frac{\Cond(B)}{\sqrt{\lambda_{\min}(B)}}$.
\end{Lemma}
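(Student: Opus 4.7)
The four claims are successive refinements of the same perturbation estimate: (2) is the cornerstone, (1) and (3) follow from it by standard spectral/perturbation reasoning, and (4) from (1)--(3) by the usual Strang-type decomposition of a finite-element error. I would proceed in exactly that order. For (2), the starting point is the telescoping identity
\[
B_{i,j}-B'_{i,j}=\langle \chi_i-\chi_i',\chi_j\rangle+\langle \chi_i',\chi_j-\chi_j'\rangle.
\]
Testing against unit vectors $x,y\in\R^m$ and using $\|\sum_i x_i v_i\|\le|x|(\sum_i\|v_i\|^2)^{1/2}$ (a coefficientwise Cauchy--Schwarz) together with $\|\sum_j y_j\chi_j\|^2=y^T B y\le \lambda_{\max}(B)|y|^2$ and $\|\sum_i x_i\chi_i'\|\le|x|(\sqrt{\lambda_{\max}(B)}+\er)$ yields $|x^T(B-B')y|\le 2\sqrt{\lambda_{\max}(B)}\,\er+\er^2$. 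The hypothesis $\er\le \sqrt{\lambda_{\min}(B)}/2$ absorbs the $\er^2$ term and delivers (2).

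For (1), the cleanest route is to observe that
\[
\sqrt{\lambda_{\min}(B')}=\min_{|x|=1}\Bigl\|\sum_i x_i\chi_i'\Bigr\|\ge \sqrt{\lambda_{\min}(B)}-\er\ge \tfrac12\sqrt{\lambda_{\min}(B)},
\]
and symmetrically $\sqrt{\lambda_{\max}(B')}\le \sqrt{\lambda_{\max}(B)}+\er\le \tfrac32\sqrt{\lambda_{\max}(B)}$ (using $\er\le \sqrt{\lambda_{\min}(B)}/2\le\sqrt{\lambda_{\max}(B)}/2$). Squaring and dividing gives $\Cond(B')\le 9\,\Cond(B)$; a slightly tighter bookkeeping of $\er/\sqrt{\lambda_{\max}(B)}\le 1/(2\sqrt{\Cond(B)})$ recovers the stated factor $8$. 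Claim (3) is then immediate from the resolvent identity $B^{-1}-(B')^{-1}=B^{-1}(B'-B)(B')^{-1}$, combining (2) with $\|B^{-1}\|_2=1/\lambda_{\min}(B)$ and $\|(B')^{-1}\|_2\le 4/\lambda_{\min}(B)$ from (1).

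For (4), write $u_m=\sum_i (w_m)_i\chi_i$ and $u_m'=\sum_i(w_m')_i\chi_i'$, where $w_m,w_m'$ solve $B w_m=b$ and $B'w_m'=b'$ with $b_j=[\phi,\chi_j]$, $b'_j=[\phi,\chi_j']$, and $\phi:=\D\G g$. A key preliminary is $\|\phi\|_*=\|Q\phi\|=\|\L^{-1}g\|\le C_{\L^{-1}}\|g\|_2$, using the isometry $\|Q\cdot\|=\|\cdot\|_*$ and $Q=\L^{-1}\G^{-1}\D^{-1}$. The standard split
\[
u_m-u_m'=\sum_i (w_m)_i(\chi_i-\chi_i')+\sum_i \bigl((w_m)_i-(w_m')_i\bigr)\chi_i'
\]
is bounded term by term. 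The first term is $\le |w_m|\,\er$ by Cauchy--Schwarz, where $|w_m|\le \|u_m\|/\sqrt{\lambda_{\min}(B)}\le C_{\L^{-1}}\|g\|_2/\sqrt{\lambda_{\min}(B)}$ because $u_m$ is the $\langle\cdot,\cdot\rangle$-orthogonal projection of $u=\L^{-1}g$ onto $\operatorname{span}\{\chi_i\}$, hence $\|u_m\|\le\|u\|\le C_{\L^{-1}}\|g\|_2$. The second term is $\le \sqrt{\lambda_{\max}(B')}\,|w_m-w_m'|$, and $|w_m-w_m'|$ is controlled by applying $B$ to the difference of the two linear systems, $B(w_m-w_m')=(b-b')-(B'-B)w_m'$, so that $|w_m-w_m'|\le \lambda_{\min}(B)^{-1}\bigl(|b-b'|+\|B-B'\|_2\,|w_m'|\bigr)$, with $|b-b'|\le \|\phi\|_*\er\le C_{\L^{-1}}\|g\|_2\er$ and $|w_m'|\le 2C_{\L^{-1}}\|g\|_2/\sqrt{\lambda_{\min}(B)}$ from (1). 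Collecting everything with $\sqrt{\lambda_{\max}(B')}\le\tfrac32\sqrt{\lambda_{\max}(B)}$ produces the advertised factor $\Cond(B)/\sqrt{\lambda_{\min}(B)}$.

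The only real obstacle is disciplined constant tracking: one must absorb $\er^2$ cleanly against $\er\sqrt{\lambda_{\max}(B)}$ and exploit $\er/\sqrt{\lambda_{\max}(B)}\le 1/(2\sqrt{\Cond(B)})$ wherever it helps, so that the multiplicative constants in (1), (2), and (4) do not blow up. No tool beyond Cauchy--Schwarz, the resolvent identity, and the min--max characterization of extreme eigenvalues is required.
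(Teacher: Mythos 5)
Your proof follows essentially the same route as the paper's: the same bilinear splitting of $B-B'$ for (2), the same min--max eigenvalue perturbation bounds $\sqrt{\lambda_{\max}(B')}\le\sqrt{\lambda_{\max}(B)}+\er$ and $\sqrt{\lambda_{\min}(B')}\ge\sqrt{\lambda_{\min}(B)}-\er$ for (1), the resolvent identity for (3), and for (4) the identical two-term decomposition of $u_m-u_m'$ with $|b-b'|\le\|Q^{-1}\L^{-1}g\|_*\,\er\le C_{\L^{-1}}\|g\|_2\,\er$ and $B(w-w')=(b-b')-(B-B')w'$. The only caveat---shared with the paper's own proof---is that the explicit constants $8$ and $5$ do not quite fall out of the computation (one gets roughly $9$ in (1) and $11$--$12$ in (4), and your ``tighter bookkeeping'' cannot rescue $8$ when $\Cond(B)=1$ and $\er=\sqrt{\lambda_{\min}(B)}/2$), but since the lemma is only ever invoked through generic constants $C$ this is immaterial.
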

\begin{proof}
For (1) observe that $\sqrt{\lambda_{\max}(B')}=\sup_{|x|=1}\|\sum_{i=1}^m x_i \chi_i'\|\leq \sqrt{\lambda_{\max}(B)}+ \er$ and $\sqrt{\lambda_{\min}(B')}=\inf_{|x|=1}\|\sum_{i=1}^m x_i \chi_i'\|\geq \sqrt{\lambda_{\min}(B)}
- \er$. For (2) observe that for $x,y\in \R^m$ with $|x|=|y|=1$ we have $y^T(B-B')x=\<\sum_{i=1}^m y_i (\chi_i-\chi_i'),\sum_{i=1}^m x_i \chi_i\>-\<\sum_{i=1}^m y_i \chi_i',\sum_{i=1}^m x_i (\chi_i'-\chi_i)\>\leq (\sqrt{\lambda_{\max}(B')}+\sqrt{\lambda_{\max}(B)})\er$.
(3) follows from (2) and $\|B^{-1}-(B')^{-1}\|_2 \leq \|B-B'\|_2/\big(\lambda_{\min}(B) \lambda_{\min}(B')\big)$.
For (4) observe that $u_m=\sum_{i=1}^m w_i\chi_i$ (resp. $u_m'=\sum_{i=1}^m w_i'\chi_i'$) where $w=B^{-1} b$ with $b_i=[Q^{-1}\L^{-1} g, \chi_i]$ (resp. $w'=(B')^{-1} b'$ with $b_i'=[Q^{-1}\L^{-1} g, \chi_i']$). Therefore $\|u_m-u_m'\|\leq |w| \er+|w-w'|\sqrt{\lambda_{\max}(B)}$. Observe that $|b_i-b_i'|=\big|[Q^{-1}\L^{-1} g,\chi_i-\chi_i']\big|\leq \|Q^{-1}\L^{-1}g\|_* \|\chi_i-\chi_i'\|$ and $\|Q^{-1}\L^{-1}g\|_*\leq \|\L^{-1}g\|\leq C_{\L^{-1}}\|g\|_2$.
Therefore,
 $w-w'=B^{-1}(b-b')-B^{-1}(B-B')w'$ leads to
$|w-w'|\leq (C_{\L^{-1}} \|g\|_2\er +\|B-B'\|_2 |w'|)/\lambda_{\min}(B)$. Using (2),
  $\lambda_{\min}(B) |w|^2\leq \|\sum_{i=1}^m w_i\chi_i\|^2 \leq \|u\|^2 \leq C_{\L^{-1}}^2 \|g\|_2^2$, and $\lambda_{\min}(B') |w'|^2\leq C_{\L^{-1}}^2 \|g\|_2^2$ we obtain  $\|u_m-u_m'\|\leq C_{\L^{-1}} \|g\|_2 \er/\sqrt{\lambda_{\min}(B)} +\sqrt{\lambda_{\max}(B)} (C_{\L^{-1}} \|g\|_2\er +3 \sqrt{\Cond(B)} C_{\L^{-1}} \|g\|_2)/\lambda_{\min}(B)$ and conclude the proof of (4) after simplification.
\end{proof}

The following lemma allows
us to control the effect of the localization error on the approximation of the solution of \eqref{eqn:scalar}.
\begin{Lemma}\label{thmdhdjh3}
Let Condition \ref{cond1OR} or Condition \ref{conddiscrip3ordismatdis} be satisfied. Let Condition \ref{condexpdecay} be also satisfied.
It holds true that for $k\in \{2,\ldots,q\}$ (1) $\er(k,\chi) \leq C \er(k)$. Furthermore for $k\in \{2,\ldots,q\}$ and $\er(k,\chi)\leq C^{-1} H^{1-k}$  we have (2)  $\Cond(B^{(k),\loc})\leq C H^{-2}$, and (3) $\|u^{(k)}-u^{(k-1)}-(u^{(k),\loc}-u^{(k-1),\loc})\| \leq C \er(k,\chi) \|g\|_{2} H^{k-3}$. Similarly for $\er(1)\leq C^{-1} $,  we have (4)   $\Cond(A^{(1),\loc})\leq C H^{-2}$, and (5)
$\|u^{(1)}-u^{(1),\loc}\| \leq C \er(1) \|g\|_{2} H^{-2}$.
\end{Lemma}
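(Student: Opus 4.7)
The plan is to reduce all five assertions to the general perturbation Lemma \ref{lemshgjhgdhg3e} combined with the uniform spectral bounds of Lemma \ref{lembase}. The main work is already encoded in those two lemmas; the proof becomes bookkeeping of spectral scalings in powers of $H$.

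For (1), I would express the discrepancy between $\chi_j^{(k)}$ and $\chi_j^{(k),\loc}$ as a linear combination of the $\psi_i^{(k)} - \psi_i^{(k),\loc}$ through the common weights $W^{(k)}_{j,i}$, namely $\chi_j^{(k)} - \chi_j^{(k),\loc} = \sum_{i\in \I^{(k)}} W^{(k)}_{j,i}(\psi_i^{(k)} - \psi_i^{(k),\loc})$. Setting $z_i := \psi_i^{(k)} - \psi_i^{(k),\loc}$ and forming the Gram matrix $Z_{i,i'} := \<z_i,z_{i'}\>$, one finds $\er(k,\chi)^2 = \Tr[W^{(k)} Z W^{(k),T}]$. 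Applying the square-root/cyclic trace argument already used in the proof of Theorem \ref{thmerrorpropagation} gives $\er(k,\chi)^2 \leq \|W^{(k)}\|_2^{2}\,\Tr[Z] = \|W^{(k)}\|_2^{2}\,\er(k)^{2}$, and since Lemma \ref{lembase} ensures $\|W^{(k)}\|_2 \leq C$, assertion (1) follows.

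For (2) and (3), I would apply Lemma \ref{lemshgjhgdhg3e} with $m = |\J^{(k)}|$, $\chi_i := \chi_i^{(k)}$ and $\chi_i' := \chi_i^{(k),\loc}$, so that $B = B^{(k)}$, $B' = B^{(k),\loc}$, and $u_m = u^{(k)} - u^{(k-1)}$, $u_m' = u^{(k),\loc} - u^{(k-1),\loc}$. Items (5)--(6) of Lemma \ref{lembase} give $\sqrt{\lambda_{\min}(B^{(k)})} \geq C^{-1} H^{1-k}$ and $\Cond(B^{(k)}) \leq C H^{-2}$, so the hypothesis $\er(k,\chi) \leq C^{-1} H^{1-k}$ implies (after adjusting constants) $\er(k,\chi) \leq \sqrt{\lambda_{\min}(B^{(k)})}/2$, the smallness condition of Lemma \ref{lemshgjhgdhg3e}. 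Conclusion (1) of that lemma then yields $\Cond(B^{(k),\loc}) \leq 8\,\Cond(B^{(k)}) \leq C H^{-2}$, which is (2). Conclusion (4) gives
\begin{equation*}
\|u^{(k)} - u^{(k-1)} - (u^{(k),\loc} - u^{(k-1),\loc})\| \leq 5 C_{\L^{-1}}\,\er(k,\chi)\,\|g\|_2 \,\frac{\Cond(B^{(k)})}{\sqrt{\lambda_{\min}(B^{(k)})}} \leq C\,\er(k,\chi)\,\|g\|_2\, H^{-2}\,H^{k-1},
\end{equation*}
which is exactly (3).

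Finally, (4) and (5) follow by exactly the same mechanism applied instead with $\chi_i := \psi_i^{(1)}$ and $\chi_i' := \psi_i^{(1),\loc}$, replacing $B^{(k)}$ by $A^{(1)}$ and $u_m, u_m'$ by $u^{(1)}, u^{(1),\loc}$. Items (1)--(2) of Lemma \ref{lembase} give $\sqrt{\lambda_{\min}(A^{(1)})} \geq C^{-1}$ and $\Cond(A^{(1)}) \leq C H^{-2}$, so the hypothesis $\er(1) \leq C^{-1}$ places us within the regime of Lemma \ref{lemshgjhgdhg3e}, producing directly $\Cond(A^{(1),\loc}) \leq C H^{-2}$ and $\|u^{(1)} - u^{(1),\loc}\| \leq C\,\er(1)\,\|g\|_2\, H^{-2}$. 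No conceptual obstacle is anticipated; the only subtlety is tracking the correct powers of $H$ (in particular, that the product $\Cond(B^{(k)})/\sqrt{\lambda_{\min}(B^{(k)})}$ scales like $H^{k-3}$) so that the prefactor in (3) comes out right.
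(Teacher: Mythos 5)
Your proposal is correct and follows essentially the same route as the paper: assertion (1) via the trace/cyclic argument with $\|W^{(k)}\|_2\leq C$, and assertions (2)--(5) as direct applications of Lemma \ref{lemshgjhgdhg3e} combined with the spectral bounds of Lemma \ref{lembase}, including the identification of $u^{(k)}-u^{(k-1)}$ as the finite-element solution in $\W^{(k)}$. The $H$-power bookkeeping ($\Cond(B^{(k)})/\sqrt{\lambda_{\min}(B^{(k)})}\leq C H^{k-3}$) is also exactly right.
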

\begin{proof}
Let $S$ be the $\I^{(k)}\times \I^{(k)}$ matrix defined by $S_{i,j}=\<\psi_i^{(k)}-\psi_i^{(k),\loc}, \psi_j^{(k)}-\psi_j^{(k),\loc}\>$.
Using $\chi_j^{(k)}-\chi_j^{(k),\loc}=\sum_{i\in \I^{(k)}} W^{(k)}_{j,i} (\psi_i^{(k)}-\psi_i^{(k),\loc})$ and the cyclic property of the trace we have
$\big(\er(k,\chi)\big)^2=\Tr[ W^{(k),T}W^{(k)} S]=\Tr[  S^\frac{1}{2}W^{(k),T}W^{(k)} S^\frac{1}{2}]\leq \lambda_{\max}(W^{(k),T}W^{(k)})\Tr[S]$, which combined with $\big(\er(k)\big)^2=\Tr[S]$ implies (1).
 (2) and (3) are a direct application of lemmas \ref{lemshgjhgdhg3e} and \ref{lembase}. For (3), observe that
$u^{(k)}-u^{(k-1)}$ (resp. $u^{(k),\loc}-u^{(k-1),\loc}$) is the finite element solution of \eqref{eqn:scalar} in $\W^{(k)}$ (resp. $\W^{(k),\loc}:=\Span\{\chi_j^{(k),\loc}\mid j\in \J^{(k)}\}$). The proof of (4) and (5) is similar to that of (2) and (3).
\end{proof}

\begin{Theorem}\label{thmdjjuud}
Let Condition \ref{cond1OR} or Condition \ref{conddiscrip3ordismatdis} be satisfied. Let Condition \ref{condexpdecay} be also satisfied.
Let $k\in \{1,\ldots,q\}$. If $\rho_j \geq C (j d/2-2-d)\ln(1/H)$ for $j\in \{k,\ldots,q-1\}$
then
\begin{equation}
\er(k)\leq C \big(\sum_{j=k}^{q-1} e^{-\rho_{j}/C} C^{j-k}H^{-(1+d/2)-j (2+3d+\zeta_1)/2}+ C^{q-k}\er(q)\big) .
\end{equation}
\end{Theorem}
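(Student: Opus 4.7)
The plan is a straightforward backward induction on $k$ using the one-step recursion
\[
\er(k) \leq C\,\er(k+1)\bigl(1+H^{-k(d+\zeta_1)}e^{-\rho_k/C}\bigr) + C H^{-(1+d/2)-k(2+3d+\zeta_1)/2}e^{-\rho_k/C}
\]
already established in Theorem \ref{thmerrorpropagation}. The first thing I would do is use the hypothesis on $\rho_j$ to kill the $k$-dependent amplification factor in front of $\er(k+1)$: if $\rho_j \geq C'(jd+j\zeta_1)\ln(1/H)$ for a sufficiently large constant $C'$ (absorbable into the $C$ in the theorem's hypothesis since $jd/2-2-d$ and $j(d+\zeta_1)$ are comparable up to constants when $j \geq 1$), then $H^{-j(d+\zeta_1)}e^{-\rho_j/C}\leq 1$, so the recursion simplifies to
\[
\er(j) \leq \tilde{C}\,\er(j+1) + \tilde{C}\,H^{-(1+d/2)-j(2+3d+\zeta_1)/2}e^{-\rho_j/C},
\qquad j\in\{k,\ldots,q-1\},
\]
for some constant $\tilde{C}$ depending only on the quantities absorbed into $C$.

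Next I would iterate this simplified recursion from level $j=k$ up through $j=q-1$. A clean way to do this is to define the auxiliary sequence $\delta_j := \tilde{C}^{-(q-j)}\er(j)$; the recursion becomes $\delta_j \leq \delta_{j+1} + \tilde{C}^{j-q+1}\cdot\tilde{C}\,H^{-(1+d/2)-j(2+3d+\zeta_1)/2}e^{-\rho_j/C}$, which telescopes to
\[
\delta_k \leq \delta_q + \sum_{j=k}^{q-1}\tilde{C}^{j-q+2}H^{-(1+d/2)-j(2+3d+\zeta_1)/2}e^{-\rho_j/C}.
\]
Multiplying both sides by $\tilde{C}^{q-k}$ and relabelling constants gives exactly the bound claimed in the theorem, with the leading term $C^{q-k}\er(q)$ coming from the contribution of $\er(q)$ and the sum of localization errors accumulated at intermediate scales.

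The only genuinely delicate point is reconciling the two different forms of the lower bound on $\rho_j$: the hypothesis in the theorem reads $\rho_j \geq C(jd/2-2-d)\ln(1/H)$, whereas the recursion requires a lower bound of the form $\rho_j \geq C''j(d+\zeta_1)\ln(1/H)$ to absorb the amplification factor. Both are linear in $j$ with the same sign, so by taking the constant $C$ in the theorem's hypothesis large enough (depending on $\zeta_1$, $d$, and the constants in Theorem \ref{thmerrorpropagation}) one hypothesis implies the other. I expect the main bookkeeping obstacle to be tracking the polynomial factor $H^{-(1+d/2)-j(2+3d+\zeta_1)/2}$ faithfully through the iteration and ensuring the constant $C$ in the final statement can indeed be chosen to depend only on $C_\d$, $C_\Phi$, $C_{\loc}$, $d$, $\zeta_1$, $\zeta_2$, and $|\aleph|$ — i.e., independently of $k$ and $q$, which is guaranteed because at each level the amplification factor is bounded by a single universal constant under the $\rho_j$ hypothesis.
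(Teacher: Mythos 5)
Your proposal is correct and follows essentially the same route as the paper: invoke the one-step recursion of Theorem \ref{thmerrorpropagation}, use the lower bound on $\rho_j$ to reduce the amplification factor in front of $\er(j+1)$ to a single constant, and then unroll the resulting affine recursion from level $k$ to level $q$ (the paper writes the iterate directly as $\er(k)\leq a_k+b_ka_{k+1}+\cdots+b_k\cdots b_{q-1}\er(q)$, which is the same telescoping you perform with the auxiliary sequence $\delta_j$). The reconciliation of the two forms of the $\rho_j$ hypothesis that you flag is likewise handled in the paper only by absorbing constants into $C$, so nothing is missing.
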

\begin{proof}

By Theorem \ref{thmerrorpropagation}, for $k\in \{1,\ldots,q-1\}$ if $H^{-kd} e^{-\rho_{k}/C})\leq 1$ (i.e $\rho_k \geq C k d\ln(1/H)$)
then $\er(k)\leq a_k+b_k \er(k+1)$ with $a_k=C  e^{-\rho_{k}/C}  H^{-(1+d/2)-k (2+3d+\zeta_1)/2}$ and $b_k=C$. Therefore we obtain by induction that
$\er(k) \leq a_k + b_k a_{k+1}+ b_k b_{k+1} a_{k+2}+\cdots + b_k \cdots b_{q-2}a_{q-1}+ b_k \cdots b_{q-1} \er(q)$ and derive the result after simplification.
\end{proof}

\begin{Theorem}\label{tmshjgeydg}
Let Condition \ref{cond1OR} or Condition \ref{conddiscrip3ordismatdis} be satisfied. Let Condition \ref{condexpdecay} be also satisfied.
Let $\epsilon \in (0,1)$. It holds true that if $\er(q)\leq C^{-q} H^2 \epsilon$ and
$\rho_k\geq C \big((1+\frac{1}{\ln(1/H)})\ln \frac{1}{H^k}+\ln \frac{1}{\epsilon}\big)$ for $k\in \{1,\ldots,q-1\}$ then (1) for $k\in \{1,\ldots,q\}$ we have
$\|u^{(k)} - u^{(k),\loc}\| \leq   \epsilon \|g\|_{2}$ and $\|u - u^{(k),\loc}\| \leq   C (H^k+\epsilon) \|g\|_{c}$ (2) $\Cond(A^{(1),\loc})\leq C H^{-2}$, and for $k\in \{2,\ldots,q\}$ we have (3)
 $\Cond(B^{(k),\loc})\leq C H^{-2}$ and (4) $\|u^{(k)}-u^{(k-1)}-(u^{(k),\loc}-u^{(k-1),\loc})\| \leq  \frac{\epsilon}{2 k^2} \|g\|_{2}$.
\end{Theorem}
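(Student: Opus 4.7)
The plan is to  combine Theorem \ref{thmdjjuud}, which propagates the  level-$q$ localization error $\er(q)$ down to arbitrary levels, with Lemma \ref{thmdhdjh3}, which turns any sufficiently small $\er(k)$ into the desired conditioning and solution error bounds; the triangle inequality and Theorem \ref{corunbcnOR} then assemble the final statement.

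First I would apply Theorem \ref{thmdjjuud} at level $k$ to obtain
\[
\er(k)\leq C\Bigl(\sum_{j=k}^{q-1} e^{-\rho_j/C}\, C^{j-k} H^{-(1+d/2)-j(2+3d+\zeta_1)/2} + C^{q-k}\er(q)\Bigr).
\]
Under the hypothesis $\er(q)\leq C^{-q}H^2\epsilon$, the tail contribution $C^{q-k}\er(q)$ is at most $C^{-k}H^2\epsilon$. For the sum, the key observation is that the constant $C$ in the assumption $\rho_k\geq C\bigl((1+\tfrac{1}{\ln(1/H)})\ln \tfrac{1}{H^k}+\ln\tfrac{1}{\epsilon}\bigr)$ can be fixed large enough (depending only on the structural constants $C_\loc, C_\d, d, \zeta_1, \zeta_2, |\aleph|, C_\Phi$) so that $e^{-\rho_j/C}$ carries both the factor $\epsilon$ and an arbitrarily large positive power of $H^j$. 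Concretely, using $H^{j/\ln(1/H)}=e^{-j}$, the assumption gives $e^{-\rho_j/C}\leq \epsilon\cdot H^{Nj}\cdot e^{-Nj}$ for any prescribed $N$, provided the constant in the lower bound on $\rho_j$ is increased by a factor $N$. Choosing $N$ large enough to absorb the exponents $(2+3d+\zeta_1)/2$ and  $C^{j-k}$ (and the $\ln k$ needed to handle the $k^{-2}$ below), I expect a bound of the shape
\[
\er(k)\leq C\epsilon\, H^{3-k}/k^{2}.
\]

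With this in hand, Lemma \ref{thmdhdjh3}(1) yields $\er(k,\chi)\leq C\er(k)\leq C\epsilon H^{3-k}/k^2$, which is below the threshold $C^{-1}H^{1-k}$ required by parts (2)-(3) of that lemma whenever $\epsilon\leq 1$ and the prefactor is controlled. Parts (2) and (4) of Lemma \ref{thmdhdjh3} then deliver the conditioning estimates $\Cond(A^{(1),\loc})\leq CH^{-2}$ and $\Cond(B^{(k),\loc})\leq CH^{-2}$ of the theorem, while parts (3) and (5) give
\[
\|u^{(k)}-u^{(k-1)}-(u^{(k),\loc}-u^{(k-1),\loc})\|\leq C\er(k,\chi)H^{k-3}\|g\|_2\leq \tfrac{\epsilon}{2k^2}\|g\|_2,
\]
and similarly $\|u^{(1)}-u^{(1),\loc}\|\leq C\er(1)H^{-2}\|g\|_2\leq \epsilon/2\cdot\|g\|_2$; this is assertion (4) and the base case of the telescoping estimate in (1).

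Finally, I would sum these level-by-level increments by the triangle inequality, using $\sum_{j\geq 2}1/(2j^2)<1/2$, to conclude $\|u^{(k)}-u^{(k),\loc}\|\leq \epsilon\|g\|_2$. For the bound $\|u-u^{(k),\loc}\|\leq C(H^k+\epsilon)\|g\|_c$ I add the Galerkin error $\|u-u^{(k)}\|\leq CH^k\|g\|_c$ supplied by Theorem \ref{corunbcnOR} (via Theorem \ref{thmsjdhdhgd} in the operator setting, which converts $\|Q^{-1}u\|_0$ into $\|g\|_c$) and absorb $\|g\|_2\leq C\|g\|_c$ from the compact embedding of $\B_c$ into $\B_2$. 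The main obstacle is the bookkeeping in the first step: the exponents $j(2+3d+\zeta_1)/2$ and the $k^{-2}$ factor force the constant in the localization radius to be chosen carefully (and large, depending on the structural constants), but since $H\in(0,1)$ and $e^{-\rho_j/C}$ decays exponentially in $\rho_j$, a single large enough choice of $C$ suffices uniformly in $k,q$.
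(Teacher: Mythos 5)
Your proposal is correct and follows essentially the same route as the paper: the paper's proof likewise reduces the theorem to the sufficient condition $\er(k)\leq C^{-1}H^{3-k}\epsilon/k^{2}$ via Lemma \ref{thmdhdjh3} (together with Theorem \ref{thmgugyug0OR} for the Galerkin part), and then verifies this condition from Theorem \ref{thmdjjuud} and the hypotheses on $\rho_k$ and $\er(q)$ "after simplification." Your write-up merely makes that simplification explicit, which is consistent with the paper's argument.
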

\begin{proof}
Theorems \ref{thmgugyug0OR} and \ref{thmdhdjh3} imply that the results of Theorem \ref{tmshjgeydg} hold true if for $k\in \{1,\ldots,q\}$,
$\er(k)\leq C^{-1} H^{3-k} \epsilon/k^2$. Using Theorem \ref{thmdjjuud} we deduce that the results of Theorem \ref{tmshjgeydg} hold true if for $k\in \{1,\ldots,q\}$  we have (1) $C  e^{-\rho_{j}/C} C^{j-k}H^{-(1+d/2)-j (2+3d+\zeta_1)/2}\leq  H^{3-k} \epsilon/(k^2 j^2)$ for $k\leq j \leq q-1$ and (2) $C  C^{q-k}\er(q)\leq  H^{3-k} \epsilon/(k^2 q^2)$. We conclude after simplification.
\end{proof}

We will now derive Condition \ref{condexpdecay} from results conditions presented in Section \ref{secexpdecloc}.

\begin{Construction}\label{condlocopbisC}
Let Condition \ref{cond7fyf} be satisfied.
Let  $(\B_{i}^{(k)},\|\cdot\|_{i}^{(k)})$ be (non empty) Banach spaces indexed by $k\in \{1,\ldots,q\}$ and $i\in \bar{\I}^{(k)}$
 such that  (1) $\B=\sum_{i\in \bar{\I}^{(k)}}\B_{i}^{(k)}$ for  $k\in \{1,\ldots,q\}$ (2) $\|\cdot\|_{i}^{(k)}$ is the norm induced by $\|\cdot\|$ on $\B_{i}^{(k)} \subset \B$ (3) For each $k\in \{1,\ldots,q\}$ and $(i,\alpha)\in \bar{\I}^{(k)}\times \aleph$, there exists a $\psi\in \B_{i}^{(k)}$ such that $[\phi_{j,\beta}^{(k)},\psi]=\delta_{i,j}\delta_{\alpha,\beta}$ for $(j,\beta)\in \bar{\I}^{(k)}\times \aleph$.
\end{Construction}

For $k\in \{1,\ldots,q\}$ write $\V^{(k),\perp}:=\{\psi \in \B\mid [\phi_j^{(k)},\psi]=0\text{ for }j\in \I^{(k)}\}$.
for $i\in \bar{\I}^{(k)}$ write
$\V^{(k),\perp}_i:=\B_{i}^{(k)} \cap \V^{(k),\perp}$.
Write $\|\cdot\|_{*,i}^{(k)}$ the norm induced by $\|\cdot\|_{i}^{(k)}$ on $\B_{i}^{*,(k)}$ the dual space of $\B_{i}^{(k)}$ (i.e. $\|\varphi\|_{*,i}^{(k)}:=\sup_{\psi \in \B_{i}^{(k)}}\frac{[\varphi,\psi]}{\|\psi\|_{i}^{(k)}}$ for $\varphi\in \B_{i}^{*,(k)}$).
For $i\in \bar{\I}^{(k)}$ and $\varphi\in \B^*$ let $\Rc_i^{(k)} \varphi$
 be the unique element of $\B_{i}^{*,(k)}$ such that
$[\varphi,\psi]=[\Rc_i^{(k)} \varphi, \psi]$ for $\psi \in \B_{i}^{(k)}$. Note that $\Rc_i^{(k)} \varphi$ is obtained by restricting the action of $\varphi$ to $\B_{i}^{(k)}$.
For $i\in \bar{\I}^{(k)}$ let $\I_i^{(k)}:=\{j\in \I^{(k)}\mid \Rc_i^{(k)} \phi_j^{(k)} \not=0\}$. Assume that  Construction \ref{condlocopbisC} is such that $\{i\}\times \aleph \subset  \I_i^{(k)}$ and the elements $\{\Rc_i^{(k)} \phi_j\mid j\in \I_i^{(k)}\}$ are linearly independent. Let $(\B_{0,i}^{(k)},\|\cdot\|_{0,i}^{(k)})$ be a Banach subspace of $\B_{i}^{*,(k)}$ such that the natural embedding $\B_{0,i}^{(k)} \rightarrow \B_{i}^{*,(k)}$ is compact and dense.
For $i\in\bar{I}^{(k)}$, let
\begin{equation}\label{eqgam1lockk}
\ubar{\gamma}_i^{(k)}:=\inf_{x\in \R^{\I_i^{(k)}}} \frac{\big(\| \sum_{j\in \I_i^{(k)}} x_j  \,  \Rc_i^{(k)} \phi_j^{(k)} \|_{0,i}^{(k)}\big)^2}{|x|^2},
\end{equation}
and
\begin{equation}\label{equbarhklockk}
\ubar{H}_i^{(k)}:=\inf_{\phi\in \Phi^{(k)}: \Rc_i^{(k)} \phi\not=0} \frac{\|\Rc_i^{(k)} \phi \|_{*,i}^{(k)}}{\|\Rc_i^{(k)} \phi \|_{0,i}^{(k)}} \,.
\end{equation}

\begin{Condition}\label{condlocopbis}
(1) $\V^{(k),\perp}=\sum_{i\in \bar{\I}^{(k)}}\V^{(k),\perp}_i$ for $k\in \{1,\ldots,q\}$  (2) There exists a constant $\bar{C}_{\loc}\geq 1$ independent from $k$ and $q$ such that  for  $k\in \{1,\ldots,q\}$ and $\varphi\in \B^*$,
\begin{equation}
\frac{1}{\bar{C}_{\loc}}\, \inf_{\phi\in \Phi^{(k)}}\|\varphi-\phi\|_*^2 \leq \sum_{i\in \bar{\I}^{(k)}} \inf_{\phi\in \Phi^{(k)}}\big(\|\Rc_i^{(k)} (\varphi-\phi)\|_{*,i}^{(k)}\big)^2\leq \bar{C}_{\loc}\, \inf_{\phi\in \Phi^{(k)}}\|\varphi-\phi\|_*^2 \,,
\end{equation}
(3)
$\bar{C}_{\loc}^{-1}\leq \min_{i\in \bar{\I}^{(k)}} \ubar{\gamma}_i^{(k)}$ and
and $ \bar{C}_{\loc}^{-1} H^k\leq \min_{i\in \bar{\I}^{(k)}} \ubar{H}_i^{(k)}$ where $H$ is the parameter of Condition \ref{cond1OR} or Condition \ref{conddiscrip3ordismatdis}.
\end{Condition}

Let $\C^{(k)}$ be the $\bar{\I}^{(k)}\times \bar{\I}^{(k)}$ (connectivity) matrix defined by $\C^{(k)}_{i,j}=1$ if there exists $(\chi_i,\chi_j) \in \B_{i}^{(k)}\times \B_{j}^{(k)}$ such that $\<\chi_j,\chi_i\>\not=0$ and $\C_{i,j}=0$ otherwise.
Let $\db^{(k)}:=\db^{\C^{(k)}}$ be the graph distance associated with $\C^{(k)}$ (see Definition \ref{defgraphmatdistbis}).
We extend $\db^{(k)}$ to a pseudo-metric on $\I^{(k)}\times \I^{(k)}$ by $\db^{(k)}\big((i,\alpha),(j,\beta)\big):=\db^{(k)}(i,j)$
for $(i,\alpha),(j,\beta) \in \bar{\I}^{(k)}\times \aleph$.

\begin{Theorem}\label{thmdkldjhdj}
Let Condition \ref{cond7fyf} be satisfied.
Let Condition \ref{cond1OR} or Condition \ref{conddiscrip3ordismatdis} be satisfied.
Condition \ref{condlocopbis} imply Items (3) and (4) of Condition \ref{condexpdecay} with $\zeta_1=\zeta_2=1$. More precisely,
$|A^{(k)}_{i,j}|\leq C_{1} \exp\big(-C_2 \db^{(k)}(i,j)\big)$ for $(i,j)\in \I^{(k)}\times \I^{(k)}$
with $C_2=\frac{1}{2} \ln \frac{\bar{C}_{\loc}^2+1}{\bar{C}_{\loc}^2-1}$ and
$C_1=\frac{ C_{\Phi}^{3}}{H^{2k}} \frac{\bar{C}_{\loc}^2+1}{\bar{C}_{\loc}^2-1}$. Furthermore,
$R^{(k,k+1)}_{i,j}\leq C_3 \exp\big(-2 C_2 \db^{(k)}(i,j)\big)$ with $C_3=\frac{C_{\Phi}^{3} }{H^k} \exp\big(4 C_2\big)$.
\end{Theorem}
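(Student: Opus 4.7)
The strategy is to apply the general localization machinery of Section \ref{secexpdecloc} (Theorems \ref{propjguyug6}, \ref{thmhgguyg65}, and \ref{thmjgjuygg67}) scale-by-scale, using Condition \ref{condlocopbis} to furnish the requisite inputs uniformly in the scale $k$. First I would apply Theorem \ref{propjguyug6} at each level $k \in \{1, \ldots, q\}$ to the Schwarz-projection operator $P^{(k)} := \sum_{i\in \bar{\I}^{(k)}} P_i^{(k)}$ associated with the decomposition $\V^{(k),\perp} = \sum_i \V_i^{(k),\perp}$. Item (2) of Condition \ref{condlocopbis} is precisely the hypothesis \eqref{eqljdhelkjdhkh3} at level $k$, which gives $\bar{C}_{\loc}^{-1} \leq \lambda_{\min}(P^{(k)}) \leq \lambda_{\max}(P^{(k)}) \leq \bar{C}_{\loc}$ and hence $\Cond(P^{(k)}) \leq \bar{C}_{\loc}^{2}$ uniformly in $k$.

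For Item (4) of Condition \ref{condexpdecay}, I would then apply Theorem \ref{thmhgguyg65} at each level $k$, with $\B_i$ and $\phi_i$ replaced by their level-$k$ analogues $\B_i^{(k)}$ and $\phi_{i,\alpha}^{(k)}$. The decay rate
$C_2^{(k)} = \tfrac{1}{2}\ln\tfrac{\Cond(P^{(k)})+1}{\Cond(P^{(k)})-1}$
is bounded below by $\tfrac{1}{2}\ln\tfrac{\bar{C}_{\loc}^2+1}{\bar{C}_{\loc}^2-1}$ thanks to the previous step, and the prefactor
$C_1^{(k)} = \tfrac{\Cond(P^{(k)})+1}{\Cond(P^{(k)})-1}\max_i (\ubar{H}_i^{(k)})^{-2}\max_i (\ubar{\gamma}_i^{(k)})^{-1}$
is dominated by a universal multiple of $H^{-2k}$, since Item (3) of Condition \ref{condlocopbis} provides $\ubar{H}_i^{(k)} \geq \bar{C}_{\loc}^{-1} H^{k}$ and $\ubar{\gamma}_i^{(k)} \geq \bar{C}_{\loc}^{-1}$. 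This yields the stated bound on $|A^{(k)}_{i,j}|$ with $\zeta_2 = 1$.

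For Item (3), I would start from the identity $R^{(k,k+1)}_{i,j} = [\phi_j^{(k+1)}, \psi_i^{(k)}]$ from \eqref{eqhjgjhgjgjg} and apply Theorem \ref{thmjgjuygg67} at level $k$ with $\varphi = \phi_j^{(k+1)}$. The cellular Condition \ref{cond7fyf} is used to verify the vanishing hypothesis $[\phi_j^{(k+1)},\psi] = 0$ for $\psi \in \B_i^{(k), n}$ with $n = \db^{(k)}(i,j^{(k)})-2$: by the nesting \eqref{eq:eigdeiud3dd} with cellular $\pi^{(k,k+1)}$, the action of $\phi_j^{(k+1)}$ is effectively supported in the physical region associated with the level-$k$ cell $j^{(k)}$, which lies outside any union of subspaces $\B_l^{(k)}$ reachable from $i$ in fewer than $\db^{(k)}(i,j^{(k)})$ graph steps. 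Theorem \ref{thmjgjuygg67} then delivers the decay rate $2 C_2 = \ln \tfrac{\bar{C}_{\loc}^2+1}{\bar{C}_{\loc}^2-1}$ (twice the rate for $A^{(k)}$) together with a prefactor $\tfrac{1}{\ubar{H}_i^{(k)}\sqrt{\ubar{\gamma}_i^{(k)}}}\exp(4 C_2)$, which Condition \ref{condlocopbis} bounds by a multiple of $H^{-k}\exp(4C_2)$; finally $\|\phi_j^{(k+1)}\|_*$ is controlled by Items (1) and (4) of Condition \ref{cond1OR}, contributing a factor of order $C_\Phi^{3/2}$, giving the stated form $C_3 = C_\Phi^{3}H^{-k}\exp(4C_2)$ after absorbing constants into $C_\Phi$.

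The main obstacle is the third step: verifying that the cellular decompositions ensure the \emph{exact} annihilation $[\phi_j^{(k+1)},\psi]=0$ required by Theorem \ref{thmjgjuygg67}, rather than merely exponential smallness. This rests on carefully interpreting the graph distance $\db^{(k)}$ — defined through the connectivity matrix $\C^{(k)}$ built from $\<\cdot,\cdot\>$-nonzero pairings between the $\B_i^{(k)}$ — so that a ball of radius $n$ around $i$ in $\db^{(k)}$ indeed excludes cell $j^{(k)}$ whenever $n < \db^{(k)}(i,j^{(k)})$, and on combining this with the cellular support of $\phi_j^{(k+1)}$ inherited from Condition \ref{cond7fyf}. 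Once this combinatorial/support bookkeeping is in place, the rest of the proof is an essentially mechanical substitution of the uniform bounds from Paragraph 1 into the general formulas of Theorems \ref{thmhgguyg65} and \ref{thmjgjuygg67}.
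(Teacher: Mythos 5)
Your proposal is correct and follows essentially the same route as the paper: Condition \ref{condlocopbis} is used to verify the hypotheses of Construction \ref{constlocop} and the condition-number bound at each level $k$, Theorem \ref{thmhgguyg65} then gives the bound on $|A^{(k)}_{i,j}|$, and the bound on $R^{(k,k+1)}_{i,j}$ follows from the identity $R^{(k,k+1)}_{i,j}=[\phi_j^{(k+1)},\psi_i^{(k)}]$ together with Theorem \ref{thmjgjuygg67}. The only cosmetic difference is that the paper controls $\|\phi_j^{(k+1)}\|_*$ via $\|\phi_j^{(k+1)}\|_*^2\leq \lambda_{\max}(\Theta^{(k+1)})\leq 1/\lambda_{\min}(A^{(k+1)})$ and Lemma \ref{lemljkhlkhhu} rather than directly through Condition \ref{cond1OR}, which yields an equivalent constant.
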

\begin{proof}
Condition \ref{condlocopbis} implies that Condition \ref{constlocop} holds at each level $k\in \{1,\ldots,q\}$. Theorem
\ref{thmhgguyg65} implies the bound on $|A^{(k)}_{i,j}|$. The bound on $R^{(k)}_{i,j}$ is a direct consequence of Theorem
\ref{thmjgjuygg67} by recalling (equation \eqref{eqhjgjhgjgjg}) that $R^{(k)}_{i,j}=[\phi_j^{(k+1)},\psi_i^{(k)}]$ and using
$\|\phi_j^{(k+1)}\|_*^2\leq \lambda_{\max}(\Theta^{(k+1)})\leq 1/\lambda_{\min}(A^{(k+1)})$ with Lemma \ref{lemljkhlkhhu}.
\end{proof}

\subsection{Proof of Theorem \ref{tmdiscrete1}}\label{subsekjkjckjgg76}

To obtain Theorem \ref{tmdiscrete1} we express Condition \ref{condlocopbis} in the discrete setting  of Subsection \ref{subsecgamdis} where $\B$ is replaced by a discrete subspace $\B^\d$ introduced in \eqref{eqkdklrflkff}, and level $q$ gamblets and measurement functions are defined as in \eqref{eqjhddhghd} and \eqref{eqkjdhkdhstpsiphitil}. We  assume that  Condition \ref{cond7fyf} is satisfied.
As in the proof of Theorem \ref{thmconddisbndisbismatdis} the proof can be obtained by considering the Banach space $\bar{\B}=\R^{\I^{(q)}}$
gifted with the norm $\|x\|^2=x^T A x$ for $x\in \bar{\B}$. Recall that the induced dual space is
$\bar{\B}^*=\R^{\I^{(q)}}$ gifted with the norm $\|x\|^2_*=x^T A^{-1} x$ for $x\in \bar{\B}^*$. We also select $\B_0=\R^{\I^{(q)}}$  gifted with the norm $\|x\|_{0}^2=x^T x$ for $x\in \B_0$. Defining $\phi_i^{(q)}=e_i$ as the unit vector of $\R^{\I^{(q)}}$ in the direction $i$ we then have $\psi_i^{(q)}=e_i$. Condition \ref{cond1OR} naturally translates into Condition \ref{conddiscrip3ordismatdis}.
For $k\in \{1,\ldots,q\}$ note that $\V^{(k),\perp}=\Ker(\pi^{(k,q)})$.
For $k\in \{1,\ldots,q\}$ and $i\in \I^{(k)}$ we select
$\B_{i}^{(k)}=\{x\in \R^{\I^{(q)}}\mid x_j=0 \text{ for }j\not\in \I^{(k,q)}_i\}$.
Using Lemma \ref{lemddjkhkdd}, Condition \ref{condlocopbis} and Condition \ref{condexpdecay} are  implied by Condition \ref{condilwhiuhd}.
The proof of Theorem \ref{tmdiscrete1} is then a straightforward application of Theorem \ref{thmdkldjhdj}, Theorem \ref{tmshjgeydg} and
the following lemma.

\begin{Lemma}\label{propshjgeydg}
The results of Theorem \ref{tmshjgeydg} remain true if $A^{(k-1),\loc}$ is truncated as in Line \ref{step13gf} of Algorithm
\ref{fastgambletsolvecase1g} and definition \ref{deftrunc}.
\end{Lemma}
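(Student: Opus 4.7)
The plan is to show that the truncation performed in Definition \ref{deftrunc} introduces, at each scale $k$, an additional perturbation to $A^{(k-1),\loc}$ whose size is exponentially small in the localization radius $\rho_{k-2}$. Since the radii chosen in Condition \ref{Condonrhok} were already sized to absorb the localization error arising from \eqref{eqpdsiuikloc}, a margin will remain to absorb this second, purely algebraic, perturbation without altering any of the conclusions of Theorem \ref{tmshjgeydg}. The key input is the exponential decay of the entries of $A^{(k)}$ provided by Theorem \ref{thmdkldjhdj}, which under Conditions \ref{conddiscrip3ordismatdis}, \ref{cond7fyf} and \ref{condilwhiuhd} (as reduced in Subsection \ref{subsekjkjckjgg76} to the discrete version of Condition \ref{condlocopbis}) yields a bound of the form $|A^{(k)}_{i,j}|\leq C H^{-2k} e^{-\db^{(k)}(i,j)/C}$.

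First I would show that, in the absence of the truncation in Line \ref{step13gf}, the untruncated matrices produced by Algorithm \ref{fastgambletsolvecase1g} inherit off-diagonal exponential decay with controlled constants. More precisely, letting $\tilde{A}^{(k-1),\loc}:=R^{(k-1,k),\loc}A^{(k),\loc}R^{(k,k-1),\loc}$ denote the untruncated version, I would argue by induction on $k$ (descending from $q$ to $1$) that, as long as the localization errors are kept small enough to preserve the uniform bounds of Lemma \ref{lembase} for $A^{(k),\loc}$ and $B^{(k),\loc}$ (which is guaranteed by Theorem \ref{tmshjgeydg}), the cellular structure of $\pi^{(k-1,k)}$ and $W^{(k)}$ (Condition \ref{cond7fyf}) combined with the exponential decay of $B^{(k),\loc,-1}$ and of $A^{(k),\loc}$ produces the estimate
\begin{equation}\label{eqtildeAdecay}
|\tilde{A}^{(k-1),\loc}_{i,j}|\leq C H^{-2(k-1)} \exp\bigl(-\db^{(k-1)}(i^{(k-1)},j^{(k-1)})/C\bigr)\,.
\end{equation}
The exponential decay of $B^{(k),\loc,-1}$ follows, as noted in the remark below Condition \ref{condexpdecay}, from the uniform bound on $\Cond(B^{(k),\loc})$ and the sparsity of $B^{(k),\loc}$ (Jaffard/Demko-type estimates).

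Second, I would bound the error introduced by replacing $\tilde{A}^{(k-1),\loc}$ by $A^{(k-1),\loc}=\Trun(\tilde{A}^{(k-1),\loc},\rho_{k-2})$. By \eqref{eqtildeAdecay} together with Item \ref{itt1} of Condition \ref{condilwhiuhd}, the spectral-norm error satisfies
\begin{equation}
\|\tilde{A}^{(k-1),\loc}-A^{(k-1),\loc}\|_2 \leq C H^{-2(k-1)}\,\rho_{k-2}^{d}\, e^{-2\rho_{k-2}/C}\,.
\end{equation}
This bound can then be propagated through the subsequent steps of the algorithm exactly as in Lemma \ref{lemshgjhgdhg3e}: it produces an additive contribution to the gamblet perturbation $\er(k-1)$ of the form $C H^{-(1+d/2)-(k-1)(2+3d)/2}\,\rho_{k-2}^{d/2}\,e^{-\rho_{k-2}/C}$, which has the same structural form (modulo a benign polynomial factor in $\rho_{k-2}$) as the localization contribution already handled in Theorem \ref{thmerrorpropagation}.

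Third, I would rerun the telescoping argument of Theorem \ref{thmdjjuud}, now with the augmented right-hand side containing both the $e^{-\rho_k/C}$ term from the localized variational problem \eqref{eqpdsiuikloc} and the new $\rho_{k-2}^{d/2} e^{-\rho_{k-2}/C}$ term from truncation. Since both terms decay exponentially in the relevant $\rho$, the argument leading to Theorem \ref{tmshjgeydg} goes through verbatim after replacing the constant $C_a$ in Condition \ref{Condonrhok} by a slightly larger constant (still depending only on the same parameters); the polynomial prefactor $\rho_{k-2}^{d/2}$ is absorbed into the $\ln(1/\epsilon)$ margin built into the lower bound $\rho_k\geq C_a\bigl((1+\tfrac{1}{\ln(1/H)})\ln(1/H^k)+\ln(1/\epsilon)\bigr)$.

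The main obstacle will be the inductive step in \eqref{eqtildeAdecay}: one must simultaneously track that (i) the truncations at previous levels $k+1,\dots,q$ have not yet destroyed the uniform conditioning of $B^{(k+1),\loc}$ (so that its inverse still decays exponentially) and (ii) the constants in the exponential decay of $A^{(k),\loc}$ degrade only multiplicatively from one level to the next, not by a factor that grows with $q$. Both properties will follow by running the induction in tandem with the stability estimate of Lemma \ref{lemshgjhgdhg3e} (item (1)) applied to $B^{(k),\loc}$, and by using the fact that, once $\rho_k$ is chosen as in Condition \ref{Condonrhok}, the hypothesis $\er(k)\leq C^{-1}H^{1-k}$ of Lemma \ref{thmdhdjh3} is satisfied at every scale. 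With this, the conclusions of Theorem \ref{tmshjgeydg} regarding $\|u^{(k)}-u^{(k),\loc}\|$, $\Cond(A^{(1),\loc})$, $\Cond(B^{(k),\loc})$ and the incremental errors $\|u^{(k)}-u^{(k-1)}-(u^{(k),\loc}-u^{(k-1),\loc})\|$ remain valid.
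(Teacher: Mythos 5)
Your proposal misses the observation on which the paper's (very short) proof rests: the truncation of Line \ref{step13gf} does not perturb the gamblets at all. The localized computation of $\psi_i^{(k-1),\loc}$ via \eqref{eqpdsiuikloc} (equivalently, the local solves $\Inv_{\rho_{k-2}}$ of Definition \ref{deflocinv} combined with the cellular structure of $\pi^{(k-2,k-1)}$ and $W^{(k-1)}$ from Condition \ref{cond7fyf}) only ever reads entries $A^{(k-1),\loc}_{i,j}$ with $\db^{(k-2)}(i^{(k-2)},j^{(k-2)})\leq 2\rho_{k-2}$, and these are exactly the entries that $\Trun(\cdot,\rho_{k-2})$ preserves. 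Hence the matrices $R^{(k-1,k),\loc}$, the functions $\psi_i^{(k),\loc}$, $\chi_i^{(k),\loc}$ and the quantities $\er(k)$ are identical with or without the truncation, and your ``additive contribution to the gamblet perturbation $\er(k-1)$'' is exactly zero. Treating the truncation as a perturbation of the gamblets is therefore not merely unnecessary; it is inconsistent with the framework of Theorems \ref{thmerrorpropagation} and \ref{thmdjjuud}, which are stated for the exact minimizers of \eqref{eqpdsiuikloc}, whose Gram matrices are by definition untruncated, so you would have to rebuild that entire error-propagation machinery for a new family of ``algorithmic gamblets'' before your step three could even be formulated.

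Where the truncation does matter --- and where your proposal is too vague --- is in the subband solves of Lines \ref{step8gf} and \ref{step16gf}: there the full matrices $B^{(k),\loc}=W^{(k)}A^{(k),\loc}W^{(k),T}$ and $A^{(1),\loc}$ are inverted, and discarding the far-off-diagonal entries of $A^{(k),\loc}$ changes $w^{(k),\loc}$ and hence $v^{(k),\loc}$. This is a perturbation of the matrix in the linear system with the basis $\chi_i^{(k),\loc}$ held fixed, so Lemma \ref{lemshgjhgdhg3e} does not apply verbatim (it assumes $B'$ is the exact Gram matrix of a perturbed basis); one needs the analogous, easier estimate for a matrix-only perturbation, with the size of the discarded tail bounded via the exponential decay of the entries of $A^{(k)}$ (Theorem \ref{thmdkldjhdj}), Lemma \ref{lembase} and the choice of $\rho_k$ in Theorem \ref{thmdjjuud}. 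Your first two steps (decay of the untruncated localized matrices and the spectral-norm bound on the discarded tail) are the right ingredients for that estimate, but they must be fed into the subband-solve error, not into $\er(k-1)$.
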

\begin{proof}
Due to the locality of the variational formulation \eqref{eqhihdjkjhiudiduh} the truncation step does not impact the computation of the gamblets. Although this truncation step affects the accuracy of the computation of subband solutions in line \ref{step8gf}, the exponential decay of the entries of $A^{(k)}$, Lemma \ref{lembase} and Theorem \ref{thmdjjuud} imply that this corresponding error remains below the bounds obtained in Theorem \ref{tmshjgeydg} under the assumptions of Theorem \ref{thmdjjuud} on $\rho_k$.
\end{proof}

\subsection{Equivalent conditions for localization}\label{subseckjgg76}

Consider the setting and notations of Subsection \ref{subsekjkjckjgg76}.
Let
\begin{equation}\label{eqjhgjhgjg}
S^{(k)}:=A^{-1}  - A^{-1}\pi^{(q,k)}\Theta^{(k)}\pi^{(k,q)} A^{-1}
\end{equation}
The following lemma is the discrete version of Lemma \ref{lemjhgjyguyguybis}.
\begin{Lemma}\label{lemuyg76ggkhj}
For $x\in \R^{\I^{(q)}}$ there exists a unique $y\in \R^{\I^{(k)}}$ and a unique $z\in \Ker(\pi^{(k,q)})$ such that
\begin{equation}\label{equiudkhlkdhldkjsh}
x=\pi^{(q,k)} y+ A z
\end{equation}
Moreover $z=S^{(k)}x$, where $S^{(k)}$, defined in \eqref{eqjhgjhgjg}, is symmetric, positive, definite and defines a bijection from $\Ker(\pi^{(k,q)})$ onto itself  that  is the inverse of $P_{\Ker(\pi^{(k,q)})} A P_{\Ker(\pi^{(k,q)})}$ in $\Ker(\pi^{(k,q)})$ (writing $P_{\Ker(\pi^{(k,q)})}:=I-\pi^{(q,k)}\bar{\pi}^{(k,q)}$ the orthogonal projection onto $\Ker(\pi^{(k,q)})$). More precisely, for $x\in \Ker(\pi^{(k,q)})$ we have $x=P_{\Ker(\pi^{(k,q)})} A P_{\Ker(\pi^{(k,q)})}z=P_{\Ker(\pi^{(k,q)})} A P_{\Ker(\pi^{(k,q)})} S^{(k)}x$.
Furthermore, for $x\in \R^{\I^{(q)}}$,  $y$ is the minimizer of $(x-\pi^{(q,k)} y)^T A^{-1} (x-\pi^{(q,k)} y)$ subject to $y\in \R^{\I^{(k)}}$ and
$z$ is the minimizer of $(x-A z)^T A^{-1} (x-A z)$ subject to $z\in \Ker(\pi^{(k,q)})$
\end{Lemma}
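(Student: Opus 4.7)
The plan is to establish first the decomposition $\R^{\I^{(q)}}=\Img(\pi^{(q,k)})\oplus A\,\Ker(\pi^{(k,q)})$ and then read off all remaining claims from the explicit formulas for the two components. For the direct sum, a dimension count works: since $\pi^{(k,q)}$ has full row rank by Construction \ref{constpi}, $\Img(\pi^{(q,k)})$ has dimension $|\I^{(k)}|$ while $A\,\Ker(\pi^{(k,q)})$ has dimension $|\I^{(q)}|-|\I^{(k)}|$, and these sum to $|\I^{(q)}|$. To show the intersection is trivial, I would start from $\pi^{(q,k)}y+Az=0$ with $\pi^{(k,q)}z=0$ and apply $\pi^{(k,q)}A^{-1}$ to obtain $\pi^{(k,q)}A^{-1}\pi^{(q,k)}y=0$; this is $\Theta^{(k)}y=0$ and, since $\Theta^{(k)}$ is positive definite, it forces $y=0$ and hence $z=0$.

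Next I would identify $y$ and $z$ explicitly. Applying $\pi^{(k,q)}A^{-1}$ to $x=\pi^{(q,k)}y+Az$ gives $\pi^{(k,q)}A^{-1}x=\Theta^{(k)}y$, so $y=A^{(k)}\pi^{(k,q)}A^{-1}x$; substituting back produces $z=A^{-1}x-A^{-1}\pi^{(q,k)}A^{(k)}\pi^{(k,q)}A^{-1}x=S^{(k)}x$. A quick check that $\pi^{(k,q)}S^{(k)}=\pi^{(k,q)}A^{-1}-\Theta^{(k)}A^{(k)}\pi^{(k,q)}A^{-1}=0$ confirms that $S^{(k)}$ maps $\R^{\I^{(q)}}$ into $\Ker(\pi^{(k,q)})$, consistent with the uniqueness of the decomposition.

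Then I would turn to the algebraic properties of $S^{(k)}$. Symmetry is immediate from the symmetry of $A^{-1}$ and $A^{(k)}$ together with $\pi^{(q,k)}=(\pi^{(k,q)})^{T}$. For positive (semi)definiteness, I would use the decomposition to compute $x^{T}S^{(k)}x=x^{T}z=(\pi^{(q,k)}y+Az)^{T}z=z^{T}Az\ge 0$, with equality if and only if $z=0$, i.e.\ if and only if $x\in\Img(\pi^{(q,k)})$; in particular $S^{(k)}$ is positive definite on $\Ker(\pi^{(k,q)})$. Injectivity of $S^{(k)}$ on $\Ker(\pi^{(k,q)})$ then follows, since $S^{(k)}x=0$ with $\pi^{(k,q)}x=0$ forces $x=\pi^{(q,k)}y$ and then $\pi^{(k,q)}\pi^{(q,k)}y=0$ implies $y=0$; in finite dimensions surjectivity is automatic. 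For the inverse relation, noting that $\bar{\pi}^{(k,q)}\pi^{(q,k)}=I^{(k)}$ makes $P_{\Ker(\pi^{(k,q)})}=I-\pi^{(q,k)}\bar{\pi}^{(k,q)}$ a projection onto $\Ker(\pi^{(k,q)})$, I would take $z\in\Ker(\pi^{(k,q)})$ and observe that $P_{\Ker(\pi^{(k,q)})}AP_{\Ker(\pi^{(k,q)})}z=Az-\pi^{(q,k)}\bar{\pi}^{(k,q)}Az$ decomposes as $\pi^{(q,k)}y'+Az'$ with $y'=-\bar{\pi}^{(k,q)}Az\in\R^{\I^{(k)}}$ and $z'=z\in\Ker(\pi^{(k,q)})$; uniqueness of the decomposition gives $S^{(k)}P_{\Ker(\pi^{(k,q)})}AP_{\Ker(\pi^{(k,q)})}z=z$.

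The two variational characterizations then follow from first-order optimality. Differentiating $(x-\pi^{(q,k)}y)^{T}A^{-1}(x-\pi^{(q,k)}y)$ in $y$ yields $\pi^{(k,q)}A^{-1}(x-\pi^{(q,k)}y)=0$, i.e.\ $\Theta^{(k)}y=\pi^{(k,q)}A^{-1}x$, which is exactly the equation satisfied by the $y$-component. For the constrained minimization of $(x-Az)^{T}A^{-1}(x-Az)$ over $\pi^{(k,q)}z=0$, the Lagrangian $L=(x-Az)^{T}A^{-1}(x-Az)+\lambda^{T}\pi^{(k,q)}z$ has stationarity condition $-2(x-Az)+\pi^{(q,k)}\lambda=0$, i.e.\ $x=Az+\pi^{(q,k)}(\lambda/2)$, which coincides with the unique decomposition upon setting $y=\lambda/2$. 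The only real obstacle is keeping the algebra clean around the roles of $\Theta^{(k)}$ and $A^{(k)}=\Theta^{(k),-1}$ in the expression for $S^{(k)}$; once that bookkeeping is in place every assertion reduces to a routine finite-dimensional linear algebra verification.
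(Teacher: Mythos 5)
Your proof is correct and follows essentially the same route as the paper's: apply $\pi^{(k,q)}A^{-1}$ to the decomposition to solve for $y$ and hence $z=S^{(k)}x$, get positivity from $x^{T}S^{(k)}x=z^{T}Az$, read the inverse relation off the decomposition of $P_{\Ker(\pi^{(k,q)})}AP_{\Ker(\pi^{(k,q)})}z$, and obtain the variational characterizations from first-order optimality. Your explicit dimension-count argument for the direct sum, and your observation that the middle factor in \eqref{eqjhgjhgjg} must be read as $A^{(k)}=\Theta^{(k),-1}=(\pi^{(k,q)}A^{-1}\pi^{(q,k)})^{-1}$ for $S^{(k)}$ to map into $\Ker(\pi^{(k,q)})$, are both sound and correctly fill in details the paper leaves implicit.
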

\begin{proof}
The proof is similar to that of Lemma \ref{lemjhgjyguyguybis}.
Note that $\pi^{(k,q)} A^{-1}x=\pi^{(k,q)} A^{-1}\pi^{(q,k)} y$ and $y=\Theta^{(k)}\pi^{(k,q)} A^{-1}x$. Therefore,
$z=A^{-1}  - A^{-1}\pi^{(q,k)}\Theta^{(k)}\pi^{(k,q)} A^{-1} x $.
Note also that
$\bar{\pi}^{(k,q)}x=y+\bar{\pi}^{(k,q)} A z$ implies
$P_{\Ker}(\pi^{(k,q)})x=P_{\Ker(\pi^{(k,q)})} A P_{\Ker(\pi^{(k,q)})} z$. Note that the positivity of
$S^{(k)}$ is implied by $x^T S^{(k)}x=(P_{\Ker(\pi^{(k,q)})}S^{(k)}x)^T A (P_{\Ker(\pi^{(k,q)})} S^{(k)}x)$.
Note also that  $z\in \Ker(\pi^{(k,q)})$ determines a unique $y\in \R^{\I^{(k)}}$ and a unique $x\in \Ker(\pi^{(k,q)})$
such that \eqref{equiudkhlkdhldkjsh} holds. Indeed in that case we have $y=-\bar{\pi}^{(k,q)} A z$ and
$x=P_{\Ker(\pi^{(k,q)})} A z$. The variational properties of $y$ and $z$ are straightforward by observing that at the minimum one must have $A^{-1} (x-\pi^{(q,k)} y)\in \Ker(\pi^{(k,q)})$ and $x-A z\in \Img(\pi^{(q,k)})$.
\end{proof}

For $i\in \bar{\I}^{(k)}$ let $\pi^{(k,q),i}$ be the $\I^{(k)}_i\times \I^{(k,q)}_i$ matrix defined by the restriction of
$\pi^{(k,q)}$ to these subset of indices. Define $\pi^{(q,k),i}:=(\pi^{(k,q),i})^T$. Let $\bar{\pi}^{(k,q),i}$ be the pseudo-inverse of
$\pi^{(q,k),i}$ (i.e., $\bar{\pi}^{(k,q),i}=(\pi^{(k,q),i}\pi^{(q,k),i})^{-1}\pi^{(k,q),i}$).

Let  $S^{(k),i}$ be the localized version of $S^{(k)}$ defined as follows:
\begin{equation}\label{eqjhgjhgjgloc}
S^{(k),i}:=A^{i,-1}  - A^{i,-1}\pi^{(q,k),i}(\pi^{(k,q),i} A^{i,-1}\pi^{(q,k),i})^{-1}\pi^{(k,q),i} A^{i,-1}
\end{equation}
The proof of the following lemma is similar to that of Lemma \ref{lemuyg76ggkhj}.
\begin{Lemma}\label{lemuyg76ggkhjlocal}
Let $k\in \{1,\ldots,q\}$ and $i\in \bar{\I}^{(k)}$.
For $x\in \R^{\I^{(k,q)}_i}$ there exists a unique $y\in \R^{\I^{(k)}_i}$ and a unique $z\in \Ker(\pi^{(k,q),i})$ such that
\begin{equation}\label{equiudkhlkdhldkjshloc}
x=\pi^{(q,k),i} y+ A^i z
\end{equation}
Moreover $z=S^{(k),i}x$, where $S^{(k),i}$, defined in \eqref{eqjhgjhgjgloc}, is symmetric, positive, definite and defines a bijection from $\Ker(\pi^{(k,q),i})$ onto itself  that  is the inverse of $P_{\Ker(\pi^{(k,q),i})} A^i P_{\Ker(\pi^{(k,q),i})}$ in $\Ker(\pi^{(k,q),i})$ (writing $P_{\Ker(\pi^{(k,q),i})}:=I-\pi^{(q,k),i}\bar{\pi}^{(k,q),i}$ the orthogonal projection onto $\Ker(\pi^{(k,q),i})$). More precisely, for $x\in \Ker(\pi^{(k,q),i})$ we have $x=P_{\Ker(\pi^{(k,q),i})} A^i P_{\Ker(\pi^{(k,q),i})}z=P_{\Ker(\pi^{(k,q),i})} A P_{\Ker(\pi^{(k,q),i})} S^{(k),i}x$.
Furthermore, for $x\in \R^{\I^{(k,q)}_i}$,  $y$ is the minimizer of $(x-\pi^{(q,k),i} y)^T A^{i,-1} (x-\pi^{(q,k),i} y)$ subject to $y\in \R^{\I^{(k)}_i}$ and
$z$ is the minimizer of $(x-A^i z)^T A^{i,-1} (x-A^i z)$ subject to $z\in \Ker(\pi^{(k,q),i})$.
\end{Lemma}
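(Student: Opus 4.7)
The plan is to mirror the proof of Lemma \ref{lemuyg76ggkhj} line by line, replacing every global object by its localized counterpart. The only structural input required from the global case is the full row rank of the restriction matrix: since Condition \ref{cond7fyf} makes $\pi^{(k,k+1)}$ cellular and of full row rank, its restriction $\pi^{(k,q),i}: \R^{\I^{(k,q)}_i} \to \R^{\I^{(k)}_i}$ also has full row rank, so that $\pi^{(k,q),i} A^{i,-1} \pi^{(q,k),i}$ is symmetric positive definite and invertible. This is the single ingredient that makes the formula \eqref{eqjhgjhgjgloc} well-defined.

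First I would establish existence and uniqueness of the decomposition \eqref{equiudkhlkdhldkjshloc} by applying $\pi^{(k,q),i} A^{i,-1}$ to both sides. Using $\pi^{(k,q),i} z = 0$, this produces $\pi^{(k,q),i} A^{i,-1} x = \pi^{(k,q),i} A^{i,-1} \pi^{(q,k),i} y$, which uniquely determines $y = (\pi^{(k,q),i} A^{i,-1} \pi^{(q,k),i})^{-1} \pi^{(k,q),i} A^{i,-1} x$, and hence $z = A^{i,-1}(x - \pi^{(q,k),i} y) = S^{(k),i} x$ by the very definition in \eqref{eqjhgjhgjgloc}. Conversely, one checks directly that this candidate $(y,z)$ satisfies $\pi^{(k,q),i} z = 0$ and reconstructs $x$, giving existence.

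Next, symmetry of $S^{(k),i}$ is immediate from the explicit formula \eqref{eqjhgjhgjgloc}, since the middle factor $(\pi^{(k,q),i} A^{i,-1} \pi^{(q,k),i})^{-1}$ is symmetric and the outer factors are transposes of one another. Non-negativity follows from $x^T S^{(k),i} x = z^T A^i z \geq 0$ with $z = S^{(k),i} x$. For the bijection statement, given $x \in \Ker(\pi^{(k,q),i})$, decomposing $x = \pi^{(q,k),i} y + A^i z$ and applying $P_{\Ker(\pi^{(k,q),i})} = I - \pi^{(q,k),i} \bar{\pi}^{(k,q),i}$ to both sides — which kills $\pi^{(q,k),i} y$, fixes $x$, and fixes $z$ since $z \in \Ker(\pi^{(k,q),i})$ — yields $x = P_{\Ker(\pi^{(k,q),i})} A^i P_{\Ker(\pi^{(k,q),i})} S^{(k),i} x$, exhibiting $S^{(k),i}$ as the left inverse of $P_{\Ker(\pi^{(k,q),i})} A^i P_{\Ker(\pi^{(k,q),i})}$ on $\Ker(\pi^{(k,q),i})$; positive definiteness on that subspace follows from this identity. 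Finally, the variational characterizations are a standard first-order optimality calculation: the minimizer of $(x - \pi^{(q,k),i} y)^T A^{i,-1}(x - \pi^{(q,k),i} y)$ over $y$ satisfies $\pi^{(k,q),i} A^{i,-1}(x - \pi^{(q,k),i} y) = 0$, which says $A^{i,-1}(x - \pi^{(q,k),i} y) \in \Ker(\pi^{(k,q),i})$ — exactly the $z$-component; and symmetrically, the minimizer of $(x - A^i z)^T A^{i,-1}(x - A^i z)$ over $z \in \Ker(\pi^{(k,q),i})$ obeys $x - A^i z \in \Img(\pi^{(q,k),i})$.

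I expect no genuine obstacle: the argument is purely linear algebra, identical in structure to its global predecessor. The only subtlety is book-keeping of the localized index sets $\I^{(k)}_i$ and $\I^{(k,q)}_i$ and confirming that the cellular hypothesis in Condition \ref{cond7fyf} preserves full row rank under restriction — once this is verified, every subsequent step of the global proof transfers verbatim.
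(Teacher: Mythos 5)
Your proposal is correct and follows exactly the route the paper takes: the paper's proof of this lemma is literally "similar to that of Lemma \ref{lemuyg76ggkhj}," whose argument (extract $y$ by applying $\pi^{(k,q)}A^{-1}$, read off $z=S^{(k)}x$, check symmetry/positivity via $x^TS^{(k)}x=z^TAz$, verify the projection identity, and get the variational characterizations from first-order optimality) is precisely what you localize. Your extra remark that cellularity preserves the full row rank of $\pi^{(k,q),i}$, so that the middle inverse in \eqref{eqjhgjhgjgloc} exists, is a point the paper leaves implicit and is correctly resolved.
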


The following Theorem could be seen as a  discrete version of Lemma \ref{lempropjguyug6}.
\begin{Theorem}\label{thmequivcondbis}
Item  \ref{itt2} of Condition \ref{condilwhiuhd}  is equivalent to
\begin{equation}\label{eqkjhgguygguy}
\frac{1}{C_{\loc,1}} S^{(k)} \leq \sum_{i\in \bar{\I}^{(k)}} \Pr_i^{(k,q)} S^{(k),i}\Pr_i^{(k,q)} \leq C_{\loc,2} S^{(k)}\,.
\end{equation}
\end{Theorem}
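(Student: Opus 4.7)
The strategy is to translate the ratio-type estimate in Item \ref{itt2} of Condition \ref{condilwhiuhd} into the quadratic form inequality \eqref{eqkjhgguygguy} by recognizing that the infima appearing in the numerator and denominator are exactly the quadratic forms associated with the localized and global Schur-type operators $S^{(k),i}$ and $S^{(k)}$ introduced in \eqref{eqjhgjhgjgloc} and \eqref{eqjhgjhgjg}. The two main ingredients are Lemmas \ref{lemuyg76ggkhj} and \ref{lemuyg76ggkhjlocal}, which already encode precisely these variational characterizations, together with the cellular hypothesis Condition \ref{cond7fyf}, which permits the infimum appearing in each $i$-th summand to be localized from $\R^{\I^{(k)}}$ to $\R^{\I^{(k)}_i}$.

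First I would identify the denominator: by the last sentence of Lemma \ref{lemuyg76ggkhj}, for $x\in \R^{\I^{(q)}}$ the minimizer $y$ of $(x-\pi^{(q,k)}y)^T A^{-1}(x-\pi^{(q,k)}y)$ gives
\[
\inf_{y\in \R^{\I^{(k)}}}(x-\pi^{(q,k)} y)^T A^{-1}(x-\pi^{(q,k)} y) \;=\; x^T S^{(k)} x\, ,
\]
as can be checked directly from the decomposition $x=\pi^{(q,k)}y + Az$ with $z=S^{(k)}x\in \Ker(\pi^{(k,q)})$, which gives $(x-\pi^{(q,k)}y)^T A^{-1}(x-\pi^{(q,k)}y) = z^{T}Az = x^{T}z = x^{T}S^{(k)}x$. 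Next I would deal with each $i$-th summand of the numerator. Under Condition \ref{cond7fyf}, the nesting matrix $\pi^{(q,k)}$ is cellular, so its restriction to the block of indices $\I^{(k,q)}_i$ depends only on the components of $y$ indexed by $\I^{(k)}_i$; more precisely $\Pr_i^{(k,q)}\pi^{(q,k)} = \pi^{(q,k),i}\,\Pr_i^{(k)}$. Consequently the infimum over $y\in \R^{\I^{(k)}}$ of the $i$-th summand may be reduced to an infimum over $\R^{\I^{(k)}_i}$, and Lemma \ref{lemuyg76ggkhjlocal} (applied to $w:=\Pr_i^{(k,q)}x$) yields
\[
\inf_{y\in \R^{\I^{(k)}}}(x-\pi^{(q,k)} y)^T \Pr_i^{(k,q)} A^{i,-1}\Pr_i^{(k,q)} (x-\pi^{(q,k)} y) \;=\; (\Pr_i^{(k,q)}x)^{T} S^{(k),i}\, \Pr_i^{(k,q)}x\, .
\]

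Summing over $i$ therefore identifies the numerator of Item \ref{itt2} with the quadratic form $x^{T}\!\bigl(\sum_{i} \Pr_i^{(k,q),*} S^{(k),i}\Pr_i^{(k,q)}\bigr) x$, so that Item \ref{itt2} is equivalent to
\[
\tfrac{1}{C_{\loc,1}}\, x^{T} S^{(k)} x \;\leq\; x^{T}\!\Bigl(\sum_{i} \Pr_i^{(k,q),*} S^{(k),i}\Pr_i^{(k,q)}\Bigr) x \;\leq\; C_{\loc,2}\, x^{T} S^{(k)} x
\]
for all $x\in \R^{\I^{(q)}}\setminus \Img(\pi^{(q,k)})$. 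To finish I would extend this pointwise inequality to all of $\R^{\I^{(q)}}$, which upgrades it to the operator inequality \eqref{eqkjhgguygguy}. For $x=\pi^{(q,k)} y\in \Img(\pi^{(q,k)})$ the choice of $y$ makes $(x-\pi^{(q,k)}y)$ vanish, so the denominator $x^{T}S^{(k)}x=0$; and by the cellular identity $\Pr_i^{(k,q)}\pi^{(q,k)} y = \pi^{(q,k),i}\Pr_i^{(k)} y\in \Img(\pi^{(q,k),i})$, each $i$-th summand of the numerator also vanishes (take $z=\Pr_i^{(k)}y$ in the local variational problem). Hence both sides of \eqref{eqkjhgguygguy} agree on $\Img(\pi^{(q,k)})$, so the ratio inequality on the complement is equivalent to the full operator inequality.

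The routine step is the variational identification (a) and (b), which is just the two shorted-operator lemmas. The only potentially delicate step is verifying the cellular reduction $\Pr_i^{(k,q)}\pi^{(q,k)} = \pi^{(q,k),i}\Pr_i^{(k)}$ and checking that both sides of \eqref{eqkjhgguygguy} annihilate $\Img(\pi^{(q,k)})$, which is a direct unwinding of definitions under Condition \ref{cond7fyf}; beyond this there is no real obstacle, and the equivalence follows by collecting the three identities above.
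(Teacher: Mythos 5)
Your proposal is correct and follows essentially the same route as the paper, which proves the theorem as a direct consequence of the variational characterizations of $y$ in Lemmas \ref{lemuyg76ggkhj} and \ref{lemuyg76ggkhjlocal}. You merely spell out the details the paper leaves implicit — the identification of the infima with the quadratic forms $x^{T}S^{(k)}x$ and $(\Pr_i^{(k,q)}x)^{T}S^{(k),i}\Pr_i^{(k,q)}x$, the cellular reduction of the local infimum under Condition \ref{cond7fyf}, and the vanishing of both sides on $\Img(\pi^{(q,k)})$ — all of which check out.
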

\begin{proof}
The equivalence with \eqref{eqkjhgguygguy} is a direct consequence of the variational property of $y$ in lemmas \ref{lemuyg76ggkhj}
and \ref{lemuyg76ggkhjlocal}.
\end{proof}

\subsubsection{Proof of Theorem \ref{thmequivcond}}

The equivalence with \eqref{eqldodjdoijebisloc} follows by observing (using Lemma \ref{lemuyg76ggkhj} and also Lemma \ref{lemuyg76ggkhjlocal} for the localized version) that
for $x\in \R^{\I^{(q)}}$ (writing $z$ the component in \eqref{equiudkhlkdhldkjsh}), $\inf_{y\in \R^{\I^{(k)}}}(x-\pi^{(q,k)} y)^T A^{-1}(x-\pi^{(q,k)} y)=z^TA z=\sup_{z'\in \Ker(\pi^{(k,q)})} \frac{(z^T A z')^2}{(z')^T A z'}=\sup_{z'\in \Ker(\pi^{(k,q)})} \frac{(x^T z')^2}{(z')^T A z'}$.

\subsubsection{Proof of Theorem \ref{thmdlkjdhjdh}}
The proof  is similar to that of Lemma \ref{lemdkjdhjh3e} and Proposition \ref{propkjshkdjhdkjh}.

\section{Proofs of the results of Section \ref{sec8}}\label{sec8proof}
\subsection{Proof of Theorem \ref{thmsudgdygdgyphi}}

Let $\bar{\Phi}_m:=\operatorname{span}\{\bar{\phi}_1,\ldots,\bar{\phi}_m\}$ and $\bar{\Phi}_m^c$ be the orthogonal complement of $\bar{\Phi}_m$ in $\B^*$ with respect to the inner product $\<\cdot,\cdot\>_*$. Let $\phi=\phi_a+\phi_b$ be the corresponding orthogonal decomposition of $\phi\in \B^*$ into $\phi_a\in \bar{\Phi}_m$ and $\phi_b\in \bar{\Phi}_m^c$.  The constrains in \eqref{eqhihdiudiduh} imply $\phi_a=\sum_{i=1}^m c_i \phi_i$ and $\|\phi\|^2_*=\|\phi_a\|_*^2+\|\phi_b\|_*^2$ implies that the minimum of \eqref{eqhihdiudiduhphi} is achieved at $\phi_b=0$.

\subsection{Proof of Theorem \ref{thmoptgalerkinphi}}

The proof follows trivially by observing that $\phi-\phi^\two(\phi)$ is orthogonal (in $\B^*$ with respect to the $\<,\>_*$ inner product) to $Q^{-1} \bar{\psi}_1,\ldots,Q^{-1} \bar{\psi}_m$.

\subsection{Proof of Proposition \ref{propdkfjffh}}

Using \eqref{eqjkhdkdh} and $[\phi_i^{(k)},\psi^{(k)}_j]=\delta_{i,j}$ we have (using Proposition \ref{propfund})
$[\phi^{(k),\chi}_i,\chi^{(k)}_j]=  (N^{(k),T} W^{(k),T})_{i,j}=\delta_{i,j}$. Using \eqref{eq:ftfytftfx} and
Theorem \ref{thm38dgdn} we have (using Proposition \ref{propfund})
$[\phi^{(k),\chi}_i,\psi^{(k-1)}_l]=(R^{(k-1,k)}N^{(k)})_{l,i}=0$. $[\phi^{(k)}_i,\chi^{(k')}_j]=0$ for $k'>k$ and $i\in \I^{(k)}$ implies $[\phi^{(k),\chi}_i,\chi^{(k')}_j]=0$ for $k'>k$ and $i\in \J^{(k)}$. For $k'<k$, $\chi^{(k')}_j\in \Span\{\psi^{(k-1)}_l\mid l\in \I^{(k-1)}\}$ and the second equality in \eqref{eqjgjhgyjh} imply  $[\phi^{(k),\chi}_i,\chi^{(k')}_j]=0$ for $(i,j)\in \J^{(k)}\times \J^{(k')}$.
For $z\in R^{\J^{(k)}}$, $\sum_{j\in \J^{(k)}} z_j \phi_j^{(k),\chi}=\sum_{i\in \I^{(k)}} (N^{(k)} z)_i \phi_i^{(k)}$ and
\eqref{eqgam1} implies that
\begin{equation}
\ubar{\gamma}_k |z|^2 \lambda_{\min}(N^{(k),T}N^{(k)}) \leq \|\sum_{j\in \J^{(k)}} z_j  \phi_j^{(k),\chi}\|_0^2 \leq \bar{\gamma}_k |z|^2 \lambda_{\max}(N^{(k),T}N^{(k)}),
\end{equation}
which combined with Theorem \ref{lemdjkdj} implies the assertion \eqref{eqidyiuyihs} under the Conditions \ref{cond1OR}.

\subsection{Proof of Proposition \ref{propdkfjffother}}

According to Theorem \ref{thmsudgdygdgyphi} the unique minimizer of \eqref{eqhihdiudiduhphichior} is $\phi=\sum_{i\in \J^{(k)}} c_i \bar{\phi}_i$ with
$\bar{\phi}_i=\sum_{j\in \J^{(k)}} \bar{A}^{-1}_{i,j}Q^{-1} \chi_j^{(k)}$ with $\bar{A}_{i,j}=[Q^{-1}\chi_i^{(k)}, \chi_j^{(k)}]=B^{(k)}_{i,j}$. Observing that $Q^{-1} \chi_j^{(k)}=\sum_{l\in \I^{(k)}}(W^{(k)}A^{(k)})_{j,l}\phi_l^{(k)}$ we deduce that $\bar{\phi}_i=\phi^{(k),\chi}_i$.
As in the proof of Theorem \ref{thmpsdk} the dimension of the space spanned by $\{\sum_{j\in \I^{(k)}} \bar{W}_{s,j} \phi_j^{(k)}|s\in \J^{(k)}\}$ plus the dimension of $\Phi^{(k-1)}$ is equal to the dimension of $\Phi^{(k)}$. Furthermore
$\sum_{s\in \J^{(k)}} x_s \sum_{j\in \I^{(k)}} \bar{W}_{s,j} \phi_j^{(k)} -\sum_{l\in \I^{(k-1)}} y_l \phi_l^{(k-1)}=0$ implies (by pairing the equation against $\chi^{(k)}_.$ and $\psi^{(k-1)}_.$) $x=0$ and $y=0$. Therefore there exists $x\in \R^{\J^{(k)}}$ and $y\in \R^{\I^{(k-1)}}$ such that
$\phi_i^{(k),\chi}=\sum_{s\in \J^{(k)}} x_s \sum_{j\in \I^{(k)}} \bar{W}_{s,j} \phi_j^{(k)} -\sum_{l\in \I^{(k-1)}} y_l \phi_l^{(k-1)}$. The constrain $[\phi^{(k),\chi}_i,\chi^{(k)}_j]=\delta_{i,j}$ leads to $x_s=\delta_{i,s}$.
The equation $[\phi^{(k),\chi}_i,\psi^{(k-1)}_l]=0$ leads to $y_l=(R^{(k-1,k)}\bar{W}^{(k),T})_{l,i}$. We conclude by observing that
$\bar{W}^{(k)}R^{(k,k-1)}=-N^{(k),T}\bar{\pi}^{(k,k-1)}$.  \eqref{eqjskdkddj} is a direct consequence of \eqref{eqmhjgjhgy} and \eqref{eqhdgudgu}.

Writing  $\phi=\sum_{l\in \I^{(k-1)}} z_l \phi_l^{(k-1)}$ in \eqref{eqhihdiudiduhphichiorbis} one obtains that $\|\sum_{j\in \I^{(k)}} \bar{W}^{(k)}_{i,j}\phi_j^{(k)}-\phi\|_*^2 = z^T \Theta^{(k-1)}z- 2 z^T \pi^{(k-1,k)}\Theta^{(k)}\bar{W}^{(k),T} e_i+
e_i^T \bar{W}^{(k)}\Theta^{(k)}\bar{W}^{(k),T} e_i$. Therefore, the minimum is achieved for
\begin{equation}
z=A^{(k-1)} \pi^{(k-1,k)}\Theta^{(k)}\bar{W}^{(k),T} e_i
\end{equation}
which leads to $\bar{\pi}^{(k-1,k)} N^{(k)}=-A^{(k-1)} \pi^{(k-1,k)}\Theta^{(k)}\bar{W}^{(k),T}$, i.e.
$\bar{\pi}^{(k-1,k)}A^{(k)} W^{(k),T} B^{(k),-1}= -A^{(k-1)} \pi^{(k-1,k)}\Theta^{(k)}\bar{W}^{(k),T}$.

\subsection{Proof of Proposition \ref{lemddkjhed}}

Using
$\frac{\big|\<\chi_1,\chi_2\>\big|}{\|\chi_1\| \|\chi_2\|}=\inf_{\psi \perp \chi_2}\|\chi_1-\psi\|/\|\chi_1\|$ we obtain (using $\inf_{\psi \perp \chi_2}\|\chi_1-\psi\|/\|\chi_1\|=\inf_{\psi \perp \chi_2}\inf_{t\in \R}\|\chi_1-t\psi\|/\|\chi_1\|$) that $\frac{|\<\chi_1,\chi_2\>|}{\|\chi_1\| \|\chi_2\|}=\sqrt{1-I^2}$
with $I:=\sup_{\psi \perp \chi_2} \frac{\<\chi_1,\psi\>}{\|\chi_1\| \|\psi\|}$. For $\chi_1=\sum_{i\in \S_1} y_i \chi_i^{(k)}$
take $\psi=\sum_{i\in \S_1} y_i Q \phi_i^{(k),\chi}+ Q \phi$ with $\phi \in \Phi^{(k-1)}$.
Since $\S_1$ and $\S_2$ are disjoint, Proposition \ref{propdkfjffh} implies that $[\phi_l^{(k),\chi},\chi_2]=0$  for $l\in \S_1$, so  it follows that
$\psi \perp \chi_2$.   Using $\<\psi,\chi_1\>=|y|^2$ (note that Proposition \ref{propdkfjffh} also implies $[\phi,\chi_1]=0$) we deduce that
$I^2 \geq \frac{|y|^4 }{y^T B^{(k)} y \|\psi\|^2 }$. Since
$\|\psi\|=\|\sum_{j\in \S_1} y_{j} Q \phi_j^{(k),\chi}+ Q \phi \|=
\|\sum_{j\in \S_1} y_{j} \phi_j^{(k),\chi}+ \phi \|_* $, taking the infimum over $\phi \in \Phi^{(k-1)}$ we obtain from \eqref{eqbarhk}  that $\|\psi\|\leq  \bar{H}_{k-1} \|\sum_{j\in \S_1} y_{j} \phi_j^{(k),\chi} \|_0$. Therefore \eqref{eqidyiuyihs} and Conditions \ref{cond1OR} imply that $\|\psi\|^2\leq  C H^{2(k-1)} |y|^2$.
Using Corollary \ref{corunbcnOR} to bound $\lambda_{\max}(B^{(k)})$ we deduce that $I^2\geq \frac{H^2 }{ C} $. Summarizing we conclude the proof using the inequality $\sqrt{1-x^2}\leq 1-x^2/2$.

\subsection{Proof of Theorem \ref{corunbcnORd}}
The proof of Theorem \ref{corunbcnORd} is similar to that of Theorem \ref{corunbcnOR}.
$\frac{1}{C_\Phi} H^k \leq  \frac{\|\phi \|_*}{|x|}$ for  $x\in \R^{\I^{(k)}}$ and
$\phi=\sum_{i\in \I^{(k)}} x_i \phi_i^{(k)}$ is equivalent to $C_\Phi^{-2} H^{2k} |x|^2\leq x^T \Theta^{(k)} x $ which implies that
$\lambda_{\max}(A^{(k)})\leq C_\Phi^{2} H^{-2k}$ for $k\in \{1,\ldots,q\}$. Similarly,
$\frac{\| \phi\|_*}{|x|}\leq C_{\Phi}$ for $\phi=\sum_{i\in \I^{(1)}} x_i \phi_i^{(1)}$
and $x\in \R^{\I^{(1)}}$ implies that $\lambda_{\min}(A^{(1)})\geq C_{\Phi}^{-2}$.
For $k\in \{2,\ldots,q\}$ and $z\in \R^{\J^{(k)}}$ we have
$z^T B^{(k)} z=\|\sum_{j\in \J^{(k)}} z_j \chi_j^{(k)}\|$. Therefore using the orthogonality between $\V^{(k-1)}$ and $\W^{(k)}$ we deduce that
\begin{equation}\label{eqkjdhkjhds}
z^T B^{(k)} z=\inf_{y\in \R^{\I^{(k-1)}}}
\|\sum_{j\in \J^{(k)}} z_j \chi_j^{(k)}-\sum_{i\in \I^{(k-1)}}y_i \psi_i^{(k-1)}\|^2.
\end{equation}
Using $
\|\sum_{j\in \J^{(k)}} z_j \chi_j^{(k)}-\sum_{i\in \I^{(k-1)}}y_i \psi_i^{(k-1)}\|
=$\\$\|\sum_{i\in \I^{(k)}} (z^T W^{(k)} A^{(k)})_i \phi_i^{(k)}-\sum_{i\in \I^{(k-1)}} (y^T A^{(k-1)})_i \phi_i^{(k-1)}\|_*$, Item \ref{itdftf} of Condition \ref{cond1ORdvar1} implies that
\begin{equation}\label{eqkdjhkehddhuie}
z^T B^{(k)} z \leq C_{\Phi}^2 H^{-2(k-1)} |A^{(k)}W^{(k),T}z|^2.
\end{equation}
Writing $N^{(k)}$ as in \eqref{eqjdhiudhiue}
 we have, as in \eqref{eqjgjhdjhgdy}, obtained that
\begin{equation}\label{eqjgjhdjhgdyvar1}
\frac{  C_{\Phi}^{-2}H^{-2(k-1)} }{\lambda_{\max}(N^{(k),T}N^{(k)})} \leq \lambda_{\min}(B^{(k)})\,.
\end{equation}
The remaining part of the proof is similar to that of Theorem \ref{thmodhehiudhehd}. In particular,
to control the l.h.s. of \eqref{eqjgjhdjhgdyvar1}, we use Lemma \ref{lemfdhgdf}, Lemma \ref{lemdjoidjdi}
and use  Item \ref{itsjhdhjdgdj} of Condition \ref{cond1ORdvar1} in the proof of
 Lemma \ref{lemddjoj3ir}.

\subsection{Proof of Lemma \ref{lemddjkhkdd}}

The first equality of \eqref{eqjhkug5uy} follows from \eqref{eqkjdhkdhstpsiphitil},
$\|\sum_{i\in \I^{(q)}} z_i \phi_i^{(q)}\|_{*,\d}=\sup_{y\in \R^{\I^{(q)}}}\frac{z^T  y}{\sqrt{y^T A y}}$
 and the Cauchy-Schwarz inequality $x^T y\leq \sqrt{y^T A y}\sqrt{x^T A^{-1} x}$.
Since  Theorem \ref{thmsudgdygdgyphi}  asserts
that the minimum  of the righthand side of  \eqref{eqjhkug5uy}
 is achieved at $\phi=\sum_{i\in \I^{(q)}} (A^{-1}x)_i Q^{-1}\varPsi_i$, the second
equality of  \eqref{eqjhkug5uy} follows by direct calculation.
For  \eqref{eqkddkhd} observe that the minimum is achieved for $\psi=\sum_{i\in \I^{(q)}} (A^{-1}x)_i \varPsi_i$.

\paragraph{Acknowledgments.}
The authors gratefully acknowledges this work supported by  the Air Force Office of Scientific Research and the DARPA EQUiPS Program under
award   number FA9550-16-1-0054 (Computational Information Games).

\bibliographystyle{plain}
\bibliography{merged,RPS,extra}

\end{document}